\title{Tropical contractions to integral affine manifolds with singularities}
\author{Yuto Yamamoto}
\address{
Center for Geometry and Physics, Institute for Basic Science (IBS), Pohang 37673, Korea}
\email{yuto.yamamoto@riken.jp}
\date{}
\begin{document}

\maketitle

\begin{abstract}
We consider a toric degeneration of Calabi--Yau complete intersections of Batyrev--Borisov in the Gross--Siebert program.
One can associate two types of tropical spaces with it.
One is a tropical variety obtained by tropicalization.
The other one is an integral affine manifold with singularities, which arises as the dual intersection complex of the toric degeneration.
In this article, we show that the latter is contained in the former as a subset, and construct an integral affine contraction map from the former to the latter.
We also show that the contraction preserves tropical cohomology groups, and sends the eigenwave to the radiance obstruction.
\end{abstract}

\section{Introduction}\label{sc:intro}

The aim of this article and the sequel \cite{Yam24} is to link three different notions in tropical/non-archimedean geometry and mirror symmetry:
\begin{itemize}
\item tropical varieties,
\item integral affine manifolds with singularities/the Gross--Siebert program, and
\item essential skeletons/non-archimedean SYZ fibrations.
\end{itemize}
Tropical varieties appear as the tropicalizations of algebraic varieties over valued fields.
They are polyhedral complexes equipped with some kind of affine structures.
We also sometimes enlarge the class of tropical varieties so that it includes spaces constructed by gluing polyhedral complexes in tropical affine spaces by integral affine isomorphisms (e.g.~\cite{MR2275625, MR3330789}).
Integral affine manifolds with singularities (we call them IAMS for short throughout the article) arise as the dual intersection complexes of toric degenerations in the Gross--Siebert program \cite{MR2213573, MR2669728, MR2846484}.
They are also expected to be the base spaces of Lagrangian torus fibrations (SYZ fibrations) in Strominger--Yau--Zaslow conjecture \cite{MR1429831}, and the limits of maximally degenerating families of Calabi--Yau manifolds with Ricci-flat K\"{a}hler metrics in the Gromov--Hausdorff topology \cite{MR1863732, MR2181810}.
Non-archimedean SYZ fibrations are an analogue of SYZ fibrations in non-archimedean geometry.
They are constructed by using the Berkovich retractions associated with good minimal dlt-models for maximally degenerate Calabi--Yau varieties, which are maps from the Berkovich analytifications to their subsets called the essential skeletons \cite{MR2181810, MR3946280, MPS21, PS22}.

In order to study the relations between these three notions, we construct contraction maps from tropical varieties to IAMS, which preserve integral affine structures.
We call the contraction maps \emph{tropical contractions}.
The outline of the construction is as follows:
Let $d$ and $r$ be positive integers.
Consider a free $\bZ$-module $N'$ of rank $d$, and its dual $M':=\Hom (N', \bZ)$.
Let further $e_1, \cdots, e_r$ denote the standard basis of $\bZ^r$, and $e_1^\ast, \cdots, e_r^\ast$ be the dual basis of $\lb \bZ^r \rb^\ast$.
We set
\begin{align}
N:=N' \oplus \lb \oplus_{i=1}^r \bZ e_i \rb, \quad M:=M' \oplus \lb \oplus_{i=1}^r \bZ e_i^\ast \rb,
\end{align}
and $N_\bR':=N' \otimes_\bZ \bR$, $N_\bR:=N \otimes_\bZ \bR$, $M_\bR':=M' \otimes_\bZ \bR$, $M_\bR:=M \otimes_\bZ \bR$.
Let $\lc \Delta_i \subset M'_\bR \rc_{i=1}^r$ $\lc \Deltav_i \subset N'_\bR \rc_{i=1}^r$ be lattice polytopes such that $\la m, n \ra=0$ for any $m \in \Delta_i$, $n \in \Deltav_j$ $(1 \leq i, j \leq r)$.
We associate a stable intersection of tropical hypersurfaces in a tropical toric variety with these polytopes.
We consider the cone $C$ in $N_\bR$ defined by
\begin{align}\label{eq:cone}
C:=\mathrm{cone} \lb \bigcup_{i=1}^r \Deltav_i \times \lc e_i \rc \rb,
\end{align}
and the tropical polynomials $f_i \colon N_\bR \to \bR$ $(1 \leq i \leq r)$ defined by
\begin{align}\label{eq:polynomial}
f_i(n):=\min_{m \in A_i} \la m, n \ra,
\end{align}
where $A_i:=\lc 0 \rc \cup \lb M \cap \lb \Delta_i \times \lc -e_i^\ast \rc \rb \rb$.
Let $X(f_i)^\circ \subset N_\bR$ denote the tropical hypersurface defined by $f_i$, i.e., the corner locus of the function $f_i$.
In general, a polyhedral complex in $\bR^n$ is called \emph{pure} if the dimension of every maximal polyhedron is the same.
A \emph{weighted balanced polyhedral complex} $\scrP$ in $\bR^n$ is a pure polyhedral complex such that each maximal-dimensional polyhedron in $\scrP$ is assigned a natural number called \emph{weight}, and it satisfies the \emph{balancing condition} (cf.~e.g.~\cite[Definition 3.3.1]{MR3287221}).
Recall that for two weighted balanced polyhedral complexes $\scrP_1$ and $\scrP_2$ in $\bR^n$, the \emph{stable intersection} $\scrP_1 \cap_{\mathrm{st}}  \scrP_2$ is the polyhedral complex defined by
\begin{align}
\scrP_1 \cap_{\mathrm{st}}  \scrP_2:=\lc \sigma_1 \cap \sigma_2 \relmid \sigma_1 \in \scrP_1, \sigma_2 \in \scrP_2, \dim \lb \sigma_1 + \sigma_2 \rb=n \rc.
\end{align}
By assigning multiplicities to maximal-dimensional polyhedra in $\scrP_1 \cap_{\mathrm{st}}  \scrP_2$ appropriately, the stable intersection $\scrP_1 \cap_{\mathrm{st}}  \scrP_2$ becomes a weighted balanced polyhedral complex (cf.~e.g.~\cite[Definition 3.6.5, Theorem 3.6.10]{MR3287221}).
We consider the stable intersection
\begin{align}\label{eq:intersection}
X(f_1, \cdots, f_r)^\circ :=X(f_1)^\circ \cap_{\mathrm{st}}  \cdots \cap_{\mathrm{st}}  X(f_r)^\circ \subset N_\bR,
\end{align}
and its closure in the tropical affine toric variety $X_{C}(\bT) \supset N_\bR$ associated with the cone $C$, which will be denoted by $X(f_1, \cdots, f_r)$.
See \pref{sc:toric} for the definition of tropical toric varieties.
We define the subset $B \subset N_\bR$ by
\begin{align}\label{eq:B}
B:=\bigcap_{i=1}^r \lc n \in N_\bR \relmid \exists m_i \in A_i \setminus \lc 0 \rc \ \mathrm{s.t.}\ f_i(n)=\la m_i, n \ra =0 \rc.
\end{align}
One can check $B \subset X(f_1, \cdots, f_r)^\circ$ (\pref{lm:BX}), and $B$ is a subcomplex of $X(f_1, \cdots, f_r)$.
In \pref{sc:construction}, we equip a subset $U_\xi \subset B$ with an integral affine structure with singularities by using projections to tropical torus orbits of the tropical toric variety $X_{C}(\bT)$.
The first main result of this article (\pref{th:local-cont}) is a construction of a contraction map from a subset of $X(f_1, \cdots, f_r)$ to $U_\xi \subset B$, which preserve the integral affine structures.
We call such contraction maps \emph{local models of tropical contractions} (\pref{df:local-contraction}).
By the contractions, one can produce many local models of IAMS such as focus-focus singularities, positive/negative vertices which appear in base spaces of topological $3$-torus fibrations of \cite{MR1821145}, and tropical (positive/negative) nodes introduced in \cite{MR3228462}.
See \pref{eg:ff} and \pref{eg:pos-neg}.
We also say that a map from a tropical variety to IAMS is a \emph{tropical contraction} if it is locally isomorphic to a local model of tropical contractions (\pref{df:contraction}).
The sheaves of integral affine functions on tropical spaces are regarded as the structure sheaves in tropical geometry.
In this sense, the tropical contraction is a \emph{morphism of tropical spaces} (cf.~\pref{rm:morphism}).
Tropical modifications of \cite{MR2275625} are tropical contractions of a special case (Example 3.31), and tropical contractions can be regarded as a generalization of tropical modifications of \cite{MR2275625}.

We also show that if an IAMS $B$ satisfies some condition ($B$ is \emph{quasi-simple} (\pref{df:quasi})), then the IAMS $B$ can locally be produced by a tropical contraction.
(For instance, if the IAMS $B$ is simple in the sense of Gross--Siebert program (\cite[Definition 1.60]{MR2213573}), then it is quasi-simple.)
Furthermore, if the IAMS $B$ satisfies some stronger condition ($B$ is \emph{very simple} (\pref{df:quasi})), then the tropical contraction satisfies a good condition ($\delta$ is very good (\pref{df:local-contraction})) which is necessary for \pref{cr:cohisom}.
That is the second main result.
\begin{theorem}\label{th:reconstruction}
Let $B$ be a quasi-simple IAMS.
Then for any point $x \in B$, there exists a neighborhood $U_x \subset B$ of $x$, a local model of tropical contractions $\delta \colon V \to U$, and a homeomorphism from $U_x$ to $U$, which is an integral isomorphism on the complement of the singular loci.
Furthermore, when $B$ be a very simple IAMS, the local model of tropical contraction $\delta \colon V \to U$ is very good in the sense of \pref{df:local-contraction} and $V$ is a tropical manifold.
\end{theorem}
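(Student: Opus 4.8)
The plan is to localise the statement: fix $x \in B$ and argue in a small neighbourhood of $x$, distinguishing two cases. If $x$ does not lie on the singular locus, then $x$ has a neighbourhood $U_x$ which is integral affine isomorphic to an open subset of $\bR^{d}$, and this is realised by the trivial instance of the construction of \pref{sc:construction} in which all the $\Delta_i$ and $\Deltav_i$ are points (so that $B = N'_\bR$, $X(f_1, \cdots, f_r) = B$ and $\delta$ is the identity); both of the extra properties asserted in the second part of the theorem are then vacuous. So the content is concentrated at points of the singular locus, which is where the quasi-simplicity hypothesis is used.

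The first step in that case is to read off, from the quasi-simple structure of $B$ near $x$ (\pref{df:quasi}), the combinatorial input required by \pref{th:local-cont}: an integer $r \geq 1$ together with lattice polytopes $\{\Delta_i \subset M'_\bR\}_{i=1}^{r}$ and $\{\Deltav_i \subset N'_\bR\}_{i=1}^{r}$ satisfying $\la m, n \ra = 0$ for all $m \in \Delta_i$ and $n \in \Deltav_j$. The notion ``quasi-simple'' is tailored so that this is possible: the local monodromy of $B$ around the singular locus decomposes into $r$ commuting transvections, the $i$-th of which is governed by the monodromy polytopes $\Delta_i$ and $\Deltav_i$ in the sense of the Gross--Siebert program, and the integral affine structure on $U_x$ is reconstructed from these data together with the local stratification by the singular locus. (When $B$ is simple in the sense of \cite{MR2213573}, this is precisely the fan and monodromy data of that reference, which explains why simple implies quasi-simple.) Feeding $(\{\Delta_i\}_{i=1}^{r}, \{\Deltav_i\}_{i=1}^{r})$ into \pref{th:local-cont} produces a local model of tropical contractions $\delta \colon V \to U := U_\xi \subset B$.

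The second step is to exhibit the homeomorphism $U_x \to U$ and to check that it restricts to an integral affine isomorphism on the complement of the singular loci. Both sides carry explicit atlases: on $U = U_\xi$ the charts are those built in \pref{sc:construction} from the projections of $N_\bR$ to the tropical torus orbits of $X_{C}(\bT)$, while on $U_x$ the charts are those packaged into the quasi-simple structure of $B$. Away from the singular locus the relevant part of the local model is an integral affine manifold whose smooth part is connected and dense, and in this local situation it is rigid: it is determined up to integral affine isomorphism by one chart together with its monodromy representation. Matching the $r$ monodromy pieces identified in the first step therefore yields an integral affine isomorphism of the smooth loci; one then checks that this map stays bounded near each singular stratum and extends continuously across it, compatibly with the stratifications, which promotes it to a homeomorphism defined on all of $U_x$. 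I expect this to be the main obstacle: the difficulty is not the underlying topology but verifying that the identification is genuinely \emph{integral affine}, which forces one to unwind both the torus-orbit-projection charts coming out of \pref{sc:construction} and the charts hidden inside the definition of quasi-simplicity and to confirm that the two monodromy representations, and the identifications of the lattices $N'$ and $M'$ underlying them, agree on the nose.

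For the final assertion one invokes the stronger hypothesis that $B$ is very simple (\pref{df:quasi}), which further constrains the polytopes $\Delta_i$ produced in the first step, and the way the tropical polynomials $f_i$ sit together, to the special shape for which each tropical hypersurface $X(f_i)^\circ$ is smooth and the collection $X(f_1)^\circ, \cdots, X(f_r)^\circ$ meets transversally. Consequently the stable intersection $X(f_1, \cdots, f_r)$, and with it $V$, is locally a matroid fan, i.e.\ a tropical manifold. The same strengthening forces the transversality condition built into the definition of a very good local model, so that $\delta$ is very good in the sense of \pref{df:local-contraction}; this is verified directly by inspecting the construction of $\delta$ on the very simple data. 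The one further point here that requires genuine work, rather than bookkeeping, is the combinatorial claim that a stable intersection of tropical hypersurfaces with the prescribed Newton polytopes is locally matroidal.
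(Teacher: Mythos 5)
Your overall strategy is the paper's: extract polytopes from the local monodromy data of the quasi-simple structure, feed them into \pref{th:local-cont} to get a local model $\delta \colon X \to U_\xi$, and then check that the integral affine structure produced by the construction agrees with the original one by matching the chart at one vertex and propagating via the monodromy (the paper does this by induction on the edge-distance between vertices of $\xi$, which is exactly your ``rigidity from one chart plus the monodromy representation''). The compatibility of the polyhedral structures (\pref{cd:loccont'}) is also a point you leave implicit; the paper verifies it in a separate lemma using positivity.

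There is, however, one concrete error in your first step. The polytopes you feed into \pref{th:local-cont} cannot be the monodromy polytopes $\Delta_i(\xi)$, $\Deltav_i(\xi)$ themselves, as your phrasing suggests. By \pref{pr:monodromy}, the monodromy of the local model around $\omega \in \Omega_i$, $\rho \in R_i$ is $n \mapsto n + \la m_i^{\sigma_2}-m_i^{\sigma_1}, n\ra\,(\phi_{\xi,\Deltav_i}(v_2)-\phi_{\xi,\Deltav_i}(v_1))$, so the coefficient of the transvection is the \emph{product} of the lattice lengths of $\Delta_i$ and $\Deltav_i$ in the relevant directions. The monodromy polytopes of a quasi-simple $U_\xi$ each already have lattice length $\kappa_i$ in those directions, so using them directly would reproduce monodromy $\kappa_i^2$ rather than $\kappa_i$. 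The paper instead chooses a factorization $\kappa_i = \alpha_i\beta_i$ and takes $\Deltav_i := \tfrac{1}{\alpha_i}\Deltav_i(\xi)$, $\Delta_i := \tfrac{1}{\beta_i}\Delta_i(\xi)$; this choice (non-unique when $\kappa_i>1$) is what makes the two monodromy representations agree. For the last assertion this issue disappears, since very simple forces $\kappa_i=\alpha_i=\beta_i=1$. Finally, for the tropical-manifold claim the paper does not argue via transversality of the $X(f_i)^\circ$ in general position, but via the direct product decomposition $X(f_1,\cdots,f_r) \cong N_{0,\bR}\times X'(f_1)\times\cdots\times X'(f_r)$ available under the very good hypothesis, which reduces the ``locally matroidal'' claim you flag as hard to the case of a single hypersurface with standard-simplex data, where it is immediate.
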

In this article, we call a space that is locally isomorphic to the Bergman fan of a matroid a \emph{tropical manifold} (\pref{df:trop-mfd}).

Main examples that we have in mind are tropical Calabi--Yau complete intersections.
Let $R$ be a discrete valuation ring and $k$-algebra with algebraically closed residue class field $k$, and $K$ be the quotient field of $R$.
Let further $\Delta \subset M_\bR$ be a reflexive polytope, and $\Delta =\Delta_1+ \cdots + \Delta_r$ be a nef partition.
From this with some additional data, Gross \cite{MR2198802} constructed a toric degeneration $f \colon \scX \to \Spec R$ of Calabi--Yau complete intersections of Batyrev--Borisov \cite{MR1463173}, and its dual intersection complex $B^{\check{h}}_\nabla$ as a subset of $N_\bR$.
We recall the construction in \pref{sc:tcy}.
Let further $\trop \lb X \rb \subset N_\bR$ denote the tropicalization of the base change $f' \colon X \to \Spec K$ of the toric degeneration $f \colon \scX \to \Spec R$ to $K$.
The following is the third main result:

\begin{theorem}\label{th:glcontr}
The following hold:
\begin{enumerate}
\item One has 
\begin{align}
B^{\check{h}}_\nabla \subset \trop (X).
\end{align}
\item There exists a tropical contraction
\begin{align}
\delta \colon \trop \lb X \rb \to B^{\check{h}}_\nabla.
\end{align}
\end{enumerate}
\end{theorem}

In the case where $f \colon \scX \to \Spec R$ is a toric degeneration of Calabi--Yau hypersurfaces in projective spaces, Pille-Schneider \cite{PS22} showed that the essential skeleton of $X$ is isomorphic to $B^{\check{h}}_\nabla$ as piecewise integral affine manifolds, and the composition of the tropicalization map and the tropical contraction $\delta$ of \pref{th:glcontr} becomes a non-arhicmedean SYZ fibration.
We also generalize his result to the case of Calabi--Yau complete intersections in toric varieties in the sequel \cite{Yam24} of this article.
We refer the reader to \cite{Yam24} for the precise statement of the result.
In view of the relation with non-archimedean SYZ fibrations, the tropical contraction $\delta$ of \pref{th:glcontr} can be regarded as a tropical analogue of SYZ fibrations.

Next, we focus on the (co)homology theories of tropical varieties and IAMS.
Let $B$ be an IAMS, and $\iota_\ast \colon B_0 \hookrightarrow B$ be the complement of the singular locus.
We consider the local system $\check{\Lambda}$ (resp. $\Lambda$) on $B_0$ of integral cotangent vectors (resp. of integral tangent vectors).
It is known that the cohomology groups $H^q \lb B, \iota_\ast \bigwedge^p \check{\Lambda} \otimes_\bZ \bC \rb$ correspond to logarithmic Dolbeault cohomology groups of the log Calabi--Yau space associated with $B$, and the cup product of the radiance obstruction $c_B \in H^1 \lb B, \iota_\ast \Lambda \otimes_\bZ \bR \rb$ of $B$ coincides with the residue of the logarithmic extension of the Gauss--Manin connection \cite{MR2669728, MR2681794}.
A radiance obstruction is an invariant of integral affine manifolds, which was introduced in \cite{MR760977}.
See \pref{sc:radiance} for its definition.
There is a similar theory also for tropical varieties.
In \cite{MR3330789, MR3961331}, they introduced \emph{tropical cohomology groups} and \emph{tropical wave groups} for tropical varieties, which are sheaf cohomology groups of certain constructible sheaves $\scF_Q^p$ and $\scW_p^Q$ with $Q=\bZ, \bQ, \bR$ on tropical varieties.
There is also an invariant of tropical varieties, which is called an \emph{eigenwave}.
It is a class in the first cohomology group of $\scW_1^\bR$.
See \pref{sc:rat} for the definitions of these.
For a smooth tropical projective variety $V$, the tropical (co)homology groups $H^q \lb V, \scF^p_\bQ \rb$ of $V$ correspond to the grade pieces of the limiting mixed Hodge structure of the corresponding degenerating family of complex projective varieties \cite{MR3961331}, and the action of the eigenwave $c_V \in H^1 \lb V, \scW_1^\bR \rb$ of $V$ also coincides with the the residue of the logarithmic extension of the Gauss--Manin connection \cite{MR3330789}.

In \cite{MR3330789, MR3961331}, they also defined \emph{tropical homology groups} of tropical varieties, which are homology groups of certain constructible cosheaves on tropical varieties.
There are also definitions as singular and cellular homology groups.
More recently, Gross and Shokrieh \cite{MR4637248} showed that tropical homology groups as well as tropical Borel--Moore homology groups can be defined also as the hypercohomology of the Verdier dual of the sheaf $\scF_\bZ^p$.
Following their idea, we define tropical homology groups and tropical Borel--Moore homology groups of IAMS as the hypercohomology of the Verdier dual of the sheaf $\iota_\ast \bigwedge^p \check{\Lambda}$ (\pref{df:trophom}).
They are also defined as singular and cellular homology groups (\pref{df:singular}, \pref{df:cellular}).
For instance, \emph{tropical $1$-cycles} of \cite{MR4179831, MR21} and \emph{tropical $2$-cycles} of \cite{MR3228462} can be naturally regarded as singular cycles defining an element of the tropical homology group (\pref{eg:1-cycle}, \pref{eg:2-cycle}).
Tropical $1$-cycles were used to compute period integrals of toric degenerations \cite{MR4179831} and to construct deformations of tropical polarized K3 surfaces \cite{MR21}.
Tropical $2$-cycles were used to construct ordinary singular cycles which in turn govern smoothings and resolutions of the Calabi--Yau variety associated with a tropical conifold \cite{MR3228462}.

Let $\delta \colon V \to B$ be a tropical contraction.
We impose some condition for the lattice polytopes $\lc \Delta_i \subset M'_\bR \rc_{i=1}^r$ and $\lc \Deltav_i \subset N'_\bR \rc_{i=1}^r$ that we use to construct local models of tropical contractions.
When it is satisfied, we say $\delta$ is \emph{good/very good} (\pref{df:local-contraction}, \pref{df:contraction}).
One can compare the (co)homology theories for tropical varieties and IAMS via the good/very good tropical contraction $\delta$.
\begin{corollary}\label{cr:cohisom}
For a good (resp. very good) tropical contraction $\delta \colon V \to B$ and any integers $p, q \geq 0$, there exist natural isomorphisms
\begin{align}\label{eq:cohisom}
H^q \lb V, \scF^p_Q \rb \cong H^q \lb B, \iota_\ast \bigwedge^{p} \check{\Lambda} \otimes_\bZ Q \rb, &\quad 
H^q_c \lb V, \scF^p_Q \rb \cong H^q_c \lb B, \iota_\ast \bigwedge^{p} \check{\Lambda} \otimes_\bZ Q \rb, \\
\label{eq:hisom}
H_{p, q}^\mathrm{BM} \lb V, Q \rb \cong H_{p, q}^\mathrm{BM} \lb B, Q \rb, &\quad
H_{p, q} \lb V, Q \rb \cong H_{p, q} \lb B, Q \rb
\end{align}
for $Q=\bQ$ (resp. $Q=\bZ$).
\end{corollary}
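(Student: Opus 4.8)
The plan is to deduce all four isomorphisms from a single statement about the derived pushforward of $\scF^p_Q$ along $\delta$. First note that $\delta$ is proper: each local model $X(f_1, \cdots, f_r)$ is by construction the closure of $X(f_1, \cdots, f_r)^\circ$ in a tropical toric variety $X_C(\bT)$, so every fibre of $\delta$ is a compact polyhedral complex, and properness is local on the target; in particular $R\delta_! \cong R\delta_\ast$. The assertion to establish is that the natural comparison morphism
\begin{equation}\label{eq:cr-key}
\iota_\ast \bigwedge^{p} \check{\Lambda} \otimes_\bZ Q \longrightarrow R\delta_\ast \scF^p_Q
\end{equation}
is an isomorphism in the derived category of $B$ — equivalently, that $R\delta_\ast \scF^p_Q$ is concentrated in degree $0$ and there identified with $\iota_\ast \bigwedge^{p} \check{\Lambda} \otimes_\bZ Q$ — for $Q = \bQ$ when $\delta$ is good and for $Q = \bZ$ when $\delta$ is very good. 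Granting \eqref{eq:cr-key}, the first row of \eqref{eq:cohisom} follows from $H^q(V, \scF^p_Q) \cong H^q\bigl(B, R\delta_\ast \scF^p_Q\bigr)$ and $H^q_c(V, \scF^p_Q) \cong H^q_c\bigl(B, R\delta_! \scF^p_Q\bigr)$. For \eqref{eq:hisom}, write $\mathrm{D}_V, \mathrm{D}_B$ for the Verdier dualizing functors; the exchange isomorphisms $R\delta_\ast \circ \mathrm{D}_V \cong \mathrm{D}_B \circ R\delta_!$ and $R\delta_! \circ \mathrm{D}_V \cong \mathrm{D}_B \circ R\delta_\ast$ combined with \eqref{eq:cr-key} and properness give
\[
R\delta_\ast \bigl( \mathrm{D}_V \scF^p_Q \bigr) \;\cong\; \mathrm{D}_B \Bigl( \iota_\ast \bigwedge^{p} \check{\Lambda} \otimes_\bZ Q \Bigr) \;\cong\; R\delta_! \bigl( \mathrm{D}_V \scF^p_Q \bigr),
\]
and, taking hypercohomology and compactly supported hypercohomology respectively, the description of tropical homology and tropical Borel--Moore homology as the (compactly supported) hypercohomology of the Verdier dual of $\scF^p_\bZ$ on $V$ (\cite{GS19}) and of $\iota_\ast \bigwedge^{p} \check{\Lambda}$ on $B$ (\pref{df:trophom}) yields \eqref{eq:hisom}. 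Naturality in $(p, q)$, and compatibility with the standard maps relating these groups, is automatic, since all of the isomorphisms above are induced by the single natural transformation \eqref{eq:cr-key}.

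To prove that \eqref{eq:cr-key} is an isomorphism, I would argue locally: a morphism of complexes of constructible sheaves on $B$ is an isomorphism in the derived category iff it is a quasi-isomorphism on every stalk, and by \pref{df:contraction} the contraction $\delta$ is locally isomorphic over $B$ to a local model $\delta \colon V \to U_\xi \subset B$ as in \pref{th:local-cont}, so we may assume $\delta$ is such a local model. The morphism \eqref{eq:cr-key} comes from the description, over the smooth locus $B_0$, of $\delta$ as given by projections to the tropical torus orbits of $X_C(\bT)$, which are integral affine maps (\pref{df:local-contraction}): via the Newton-polytope duality for the tropical hypersurfaces $X(f_i)$ — which turns the multitangent directions of $V$ into cotangent data on these orbits — together with the hypothesis $\langle m, n \rangle = 0$ for $m \in \Delta_i$, $n \in \Deltav_j$, pullback along $\delta$ carries $\iota_\ast \bigwedge^{p} \check{\Lambda} \otimes_\bZ Q$ into $\delta_\ast \scF^p_Q$. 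That this extends over all of $U_\xi$, and not merely over $B_0$, is the point at which the monodromy of $\check{\Lambda}$ around the singular locus $B \setminus B_0$ has to be matched with the way the maximal cells of $V$ are glued over $B \setminus B_0$; this matching is exactly what the construction of the affine structure on $U_\xi$ in \pref{sc:construction} arranges.

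The remaining point — and the heart of the proof — is the stalk computation. Fix $x \in B$. Since $\delta$ is proper, proper base change gives $\bigl( R^j \delta_\ast \scF^p_Q \bigr)_x \cong H^j\bigl( \delta^{-1}(x), \scF^p_Q|_{\delta^{-1}(x)} \bigr)$, and one must show that this vanishes for $j > 0$ and, for $j = 0$, equals the stalk $\bigl( \iota_\ast \bigwedge^{p} \check{\Lambda} \otimes_\bZ Q \bigr)_x$ prescribed by the construction (\pref{sc:construction}, \pref{df:local-contraction}). The fibre $\delta^{-1}(x)$ is a compact contractible polyhedral complex; using the explicit description of the multitangent spaces of $X(f_1, \cdots, f_r)$ coming from the projections to the tropical torus orbits, one filters the restriction $\scF^p_Q|_{\delta^{-1}(x)}$ by the orbit stratification and computes its cohomology by the resulting cellular (Mayer--Vietoris) spectral sequence. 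The hypothesis that $\delta$ is good is precisely what forces the combinatorial complexes appearing here to be exact and the comparison map \eqref{eq:cr-key} to be an isomorphism after $\otimes_\bZ \bQ$; the hypothesis that $\delta$ is very good upgrades this to an isomorphism of integral stalks. The local models of focus-focus singularities and of positive/negative vertices (\pref{eg:ff}, \pref{eg:pos-neg}) are the prototype cases, and one should expect the general argument to follow the same pattern case by case.

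The main obstacle is exactly this last stalk computation, in two interlocking points. First, the vanishing $R^j \delta_\ast \scF^p_Q = 0$ for $j > 0$ does not follow from contractibility of the fibres alone, since $\scF^p_Q$ restricts to a non-constant sheaf there; the acyclicity has to be extracted from the combinatorics of the good polytopes $\Delta_i, \Deltav_i$. Second — and this is why the case $Q = \bZ$ requires the stronger very-good hypothesis — although rationally the two sides of \eqref{eq:cr-key} have the same rank by a dimension count, ruling out a nontrivial finite cokernel means pinning down the integral structure of the stalks of $\iota_\ast \bigwedge^{p} \check{\Lambda}$ near the singular locus, for which one genuinely has to use the extra rigidity of very-simple $\Rightarrow$ very-good (\pref{th:reconstruction}) rather than argue by dimensions.
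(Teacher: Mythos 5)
Your reduction of the corollary to the single statement that the comparison morphism $\iota_\ast \bigwedge^{p} \check{\Lambda} \otimes_\bZ Q \to R\delta_\ast \scF^p_Q$ is an isomorphism is precisely the structure of the paper's proof: properness gives $R\delta_! = R\delta_\ast$, the first row of \eqref{eq:cohisom} then follows by taking (compactly supported) hypercohomology on $B$, and the second row follows from the Verdier exchange $R\delta_\ast \circ \mathrm{D}_V \cong \mathrm{D}_B \circ R\delta_!$ applied to $\scF^p_Q$ together with the definitions of the tropical (Borel--Moore) homology groups on $V$ (from \cite{GS19}) and on $B$ (\pref{df:trophom}). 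That part of your argument is correct and is essentially what the paper writes.

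Where the proposal falls short is that the key input \eqref{eq:cr-key} is left as a sketch with acknowledged gaps, and you then treat it as the hard open step. In fact it is not open: it is exactly the content of \pref{th:local-cont}.2 and \pref{th:local-cont}.3, which are proved earlier in the paper (Sections 3.4 and 3.5). \pref{th:local-cont}.2 gives the degree-zero identification $\iota_\ast \bigwedge^{p}\check{\Lambda} \cong \delta_\ast \scF^p_\bZ$ (over $\bZ$, regardless of goodness); \pref{th:local-cont}.3 gives the vanishing $\delta_\ast \scF^p_Q \xrightarrow{\sim} R\delta_\ast \scF^p_Q$, for $Q = \bQ$ when $\delta$ is good and for $Q = \bZ$ when $\delta$ is very good. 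Citing these two parts makes \eqref{eq:cr-key} immediate, after which your deduction closes the proof; re-deriving them from scratch is redundant and, as written, incomplete. (Your sketch is in fact on the right track --- the paper establishes the vanishing of higher direct images on the fibre $\delta^{-1}(x)$ via an explicit acyclic covering $\lc U_I^x \rc$, not via contractibility, and the good/very-good dichotomy controls whether the relevant restriction maps of multitangent spaces are surjective over $\bQ$ versus primitive embeddings over $\bZ$ --- but that material belongs to the proof of \pref{th:local-cont}, which the corollary should simply invoke.)

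One small imprecision worth noting: you assert that the fibres $\delta^{-1}(x)$ are contractible and want to use that; the paper never relies on contractibility (which would not suffice anyway, since $\scF^p_Q$ is not constant on the fibre), but instead shows directly that the covering is acyclic for $\scF^p_Q$ under the goodness hypothesis. The statement you actually need is $H^j(\delta^{-1}(x), \scF^p_Q|_{\delta^{-1}(x)}) = 0$ for $j>0$, and that is a sheaf-theoretic acyclicity claim, not a topological one.
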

Here the left hand sides of \eqref{eq:cohisom} and \eqref{eq:hisom} denote the tropical cohomology groups (with compact support), and the tropical (Borel--Moore) homology groups of $V$ respectively.
The right hand sides of \eqref{eq:hisom} denote the tropical (Borel--Moore) homology groups of $B$ mentioned above.
In \pref{rm:cy-good}, we give some sufficient conditions that make the tropical contractions $\delta$ for tropical Calabi--Yau complete intersections in \pref{th:glcontr} good/very good.
The fact that tropical (co)homology groups are preserved under tropical modifications of \cite{MR2275625} was shown in \cite[Theorem 4.13]{Sha15}, \cite[Proposition 5.6]{MR3894860}, and \cite[Proposition 4.22]{MR3903579}.

\begin{corollary}\label{cr:wave}
For a tropical contraction $\delta \colon V \to B$ and any integers $p, q \geq 0$, there is a natural group homomorphism
\begin{align}\label{eq:natu}
H^q \lb V, \scW_p^Q \rb \to H^q \lb B, \iota_\ast \bigwedge^p \Lambda \otimes_\bZ Q \rb,
\end{align}
where $Q=\bZ, \bQ, \bR$.
Furthermore, when $\delta$ is good, the map \eqref{eq:natu} with $p=q=1$ and $Q=\bR$ sends the eigenwave of $V$ to the radiance obstruction of $B$.
\end{corollary}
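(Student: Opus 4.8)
The plan is to realise the homomorphism \eqref{eq:natu} as the map induced on cohomology by a morphism of sheaves on $B$, and then to match the eigenwave of $V$ with the radiance obstruction of $B$ through that morphism on the level of connecting homomorphisms. Since a tropical contraction is locally isomorphic to a local model $\delta \colon V \to U$ (\pref{df:contraction}, \pref{df:local-contraction}) and everything below is natural, it suffices to work over a local model, bearing in mind that by \pref{th:local-cont} such a $\delta$ is built from the projections of $N_\bR$ onto quotients by tropical torus orbit directions, so that $d\delta$ is at each point a surjection of integral tangent data which is an isomorphism off the collapsed locus, and the fibres of $\delta$ are contractible polyhedral cones. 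Write $\iota \colon B_0 \hookrightarrow B$ for the inclusion of the complement of the singular locus and $\delta_0 \colon \delta^{-1}(B_0) \to B_0$ for the restriction of $\delta$. Over $B_0$ the map $\bigwedge^p d\delta$ descends to a morphism of sheaves $(\delta_0)_\ast \scW_p^Q \to \bigwedge^p \Lambda \otimes_\bZ Q$: a wave germ at a point of $\delta^{-1}(B_0)$ is built from the integral tangent lattices of the cells of $V$ through that point, $d\delta$ carries these compatibly into the tangent lattice of $B_0$, and a section over a neighbourhood of a connected contractible fibre is therefore sent to one element of $\bigwedge^p \Lambda$ at the image point. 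Since $\iota$ is an open immersion, $R\delta_\ast \scW_p^Q$ lies in non-negative degrees, and its target is a sheaf, this morphism corresponds, under the adjunction $\iota^{-1} \dashv \iota_\ast$, to a morphism $\psi \colon R\delta_\ast \scW_p^Q \to \iota_\ast \bigwedge^p \Lambda \otimes_\bZ Q$ in the derived category of $B$; composing with $H^q(V, \scW_p^Q) \cong H^q(B, R\delta_\ast \scW_p^Q)$ gives \eqref{eq:natu} for $Q = \bZ, \bQ, \bR$. No hypothesis on $\delta$ is needed, and the local constructions glue because $d\delta$ is canonical.

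For the second assertion, recall from \pref{sc:rat} and \pref{sc:radiance} that $c_V$ and $c_B$ are the images of $1$ under the connecting homomorphisms $\partial_V \colon H^0(V, \underline{\bR}) \to H^1(V, \scW_1^\bR)$ and $\partial_B \colon H^0(B, \underline{\bR}) \to H^1(B, \iota_\ast \Lambda \otimes_\bZ \bR)$ of the short exact sequences $0 \to \scW_1^\bR \to \mathcal{A}_V \to \underline{\bR}_V \to 0$ on $V$ and $0 \to \iota_\ast \Lambda \otimes_\bZ \bR \to \mathcal{A}_B \to \underline{\bR}_B \to 0$ on $B$, where $\mathcal{A}_V$ and $\mathcal{A}_B$ are the sheaves of ``radial plus constant'' affine vector fields (so that $c_B$, say, is the obstruction to a global Euler vector field on $B$). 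Because the fibres of $\delta$ are contractible one has $R\delta_\ast \underline{\bR}_V \cong \underline{\bR}_B$, so applying $R\delta_\ast$ to the first sequence yields a distinguished triangle with last term $\underline{\bR}_B$. I would then exhibit a morphism from this triangle to the triangle underlying the second sequence whose left arrow is $\psi$ with $p = 1$, whose right arrow is $\mathrm{id}_{\underline{\bR}_B}$, and whose middle arrow is induced by $d\delta$ on radial-plus-constant vector fields; note that $d\delta$ sends the Euler field of $V$ to that of $B$ because it annihilates the collapsed directions. Granting this, naturality of the connecting homomorphisms gives $\psi_\ast(c_V) = \psi_\ast(\partial_V(1)) = \partial_B(\mathrm{id}(1)) = \partial_B(1) = c_B$, as desired. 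Equivalently, one can run the argument with \v{C}ech cocycles subordinate to compatible atlases of $V$ and $B$, where $c_V$ and $c_B$ are represented by the translation parts of the affine transition maps and $d\delta$ matches them up directly.

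The main obstacle is the middle arrow above, that is, the compatibility of $d\delta$ with the affine-vector-field sheaves near the singular locus of $B$. Over $B_0$ it is transparent, since there $\delta$ has the local form (a neighbourhood in $B_0$) $\times$ (a contractible fibre) and the Euler field splits accordingly. Near the singular strata, however, the local structure of $V$ above them is intricate, and it is precisely here that the hypothesis that $\delta$ be \emph{good} enters: it constrains the lattice polytopes $\lc \Delta_i \rc_{i=1}^r$ and $\lc \Deltav_i \rc_{i=1}^r$ used to build the local model so that the collapsed directions sit inside the local models of \pref{th:local-cont} compatibly with the integral structures, in the same spirit as the use of quasi-simplicity and very-simplicity for \pref{cr:cohisom}. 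The remaining ingredients --- functoriality of $R\delta_\ast$, the adjunction $\iota^{-1} \dashv \iota_\ast$, the contractibility of the fibres, and naturality of connecting homomorphisms --- are formal.
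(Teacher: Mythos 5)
Your first assertion, the existence of the natural homomorphism \eqref{eq:natu}, is in the right spirit but takes a different route from the paper, and the route the paper takes is shorter. Instead of constructing a morphism $(\delta_0)_\ast\scW_p^Q\to\bigwedge^p\Lambda\otimes_\bZ Q$ over $B_0$ by hand using $d\delta$ and then transporting it across the adjunction $\iota^{-1}\dashv\iota_\ast$ (which requires you to separately verify that $R^{>0}(\delta_0)_\ast$ vanishes over $B_0$ and that the resulting map factors correctly through $R\delta_\ast$), the paper simply unwinds the definition $\scW_p^Q=\cHom(\scF^p_\bZ,Q)$: by \pref{th:local-cont}.4 one has $R\delta_\ast\scW_p^Q\cong\delta_\ast\scW_p^Q=\delta_\ast\cHom(\scF^p_\bZ,Q)$, there is always a canonical map $\delta_\ast\cHom(\scF^p_\bZ,Q)\to\cHom(\delta_\ast\scF^p_\bZ,\delta_\ast Q)$, and the target is $\cHom(\iota_\ast\bigwedge^p\check\Lambda,Q)=\iota_\ast\bigwedge^p\Lambda\otimes_\bZ Q$ by \pref{th:local-cont}.2 and connectedness of fibers. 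You are right that no goodness is needed here; it is also worth noting that your $d\delta$-based construction is not obviously the same homomorphism as the $\cHom$-based one, and the second assertion is only stated for the latter.

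Your second assertion has a genuine gap, and you have honestly flagged where it is. You replace the paper's characterization of $c_V$ and $c_B$ as (minus) extension classes of $0\to\bR\to\mathrm{Aff}_V\to\scF^1_\bZ\to 0$ and $0\to\bR\to\iota_\ast\mathrm{Aff}_{B_0}\to\iota_\ast\check\Lambda\to 0$ (\pref{pr:eigext}, \pref{pr:radiance}) with a dual characterization as $\partial_V(1)$, $\partial_B(1)$ for the $\cHom(-,\bR)$-dual sequences. That reformulation is fine, but the crucial content — the existence of the middle arrow $R\delta_\ast\mathcal{A}_V\to\mathcal{A}_B$ making both squares of the morphism of triangles commute near $\Gamma$, and the precise identification of the sheaves $\mathcal{A}_V$, $\mathcal{A}_B$ — is asserted, not constructed. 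In the paper the analogous nontriviality is handled concretely: one tensors by $\bQ$, works with the $\mathrm{Aff}$-sequences directly, regards the extension class as the image of $\id$ under the connecting homomorphism $\Hom(\scF^1_\bQ,\scF^1_\bQ)\to R^1\Gamma\,R\cHom(\scF^1_\bQ,\bR)$, and verifies commutativity of the resulting six-term diagram by showing that for an injective resolution $0\to I^\bullet\to J^\bullet\to K^\bullet\to 0$ of the $V$-sequence, $0\to\delta_\ast I^\bullet\to\delta_\ast J^\bullet\to\delta_\ast K^\bullet\to 0$ is an injective resolution of the $B$-sequence. That exactness is where goodness enters — via \pref{th:local-cont}.3, which gives $R^{>0}\delta_\ast\scF^1_\bQ=0$ — together with \pref{th:local-cont}.1 for the middle term; the constant sheaf $\bR$ is handled by contractibility of the fibers as you say. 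If you want to keep your triangle-morphism formulation, you would need to take $\mathcal{A}_V:=\cHom(\mathrm{Aff}_V,\bR)$ and $\mathcal{A}_B:=\cHom(\iota_\ast\mathrm{Aff}_{B_0},\bR)$, and then supply the middle arrow from the same natural $\delta_\ast\cHom(\mathrm{Aff}_V,\bR)\to\cHom(\delta_\ast\mathrm{Aff}_V,\bR)=\cHom(\iota_\ast\mathrm{Aff}_{B_0},\bR)$ used for $\scW_p$, and prove the needed vanishing of $R^{>0}\delta_\ast$ on each term — at which point you have essentially reproduced the paper's argument in slightly different packaging. As it stands, the phrase ``I would then exhibit a morphism \dots'' is the heart of the proof, and it is the part that remains to be done.
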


There is another approach by Ruddat \cite{MR4347312} to homology theory for IAMS using sheaf homology, which has some applications such as computation of period integrals \cite{MR4179831}.
In the recent work \cite{RZ21, RZ20} of constructing a topological SYZ fibration $X \to B$ for a given IAMS $B$, they also construct natural injections from the tropical sheaf homology groups of $B$ into the subquotients of the ordinary cohomology groups of $X$, which become isomorphisms when $B$ is very simple.
In general, the tropical sheaf homology groups of \cite{MR4347312} are not isomorphic to the tropical homology groups that we consider in this article.
See \pref{rm:sheaf-homology} and \pref{eg:1-cycle} where we discuss relations between these two homology groups.

The organization of this paper is as follows:
In \pref{sc:tropical}, we recall basic notions such as integral affine manifolds with singularities, tropical (co)homology groups, tropical toric varieties, and tropicalizations.
In \pref{sc:loccont}, we construct local models of tropical contractions and show their properties.
In \pref{sc:reconstruction}, we prove \pref{th:reconstruction}.
In \pref{sc:cy}, we study tropical Calabi--Yau complete intersections.
\pref{th:glcontr} is proved in this section.
In \pref{sc:coh}, we study the relations with tropical (co)homology groups.
\pref{cr:cohisom} and \pref{cr:wave} are proved in this section.
In \pref{sc:lem}, we show some technical lemmas about convex polytopes, which are used to prove main theorems of this article.

\par
{\it Acknowledgment: } 
I am greatly indebted to Grigory Mikhalkin for inspiring discussions, from which this project originates.
I learned many useful things on sheaf theory from Yuichi Ike, and on integral affine manifolds from Yuki Tsutsui.
I also learned about \cite{MR2405763} from Akiyoshi Tsuchiya.
I am grateful to them all.
I thank Johannes Rau and Andreas Gross for their kind answers to my questions on \cite{MR3894860} and \cite{MR4637248}.
I am also grateful to Mark Gross, Enrica Mazzon, and Ilia Zharkov for helpful communications.
I thank Helge Ruddat and Kazushi Ueda for many valuable comments on the draft of this paper.
This work was supported by the Institute for Basic Science (IBS-R003-D1). \\

\section{Tropical geometry}\label{sc:tropical}

\subsection{Convex geometry}\label{sc:conv}

We fix some notation in convex geometry, which we will use in this article.
Let $d$ be a positive integer.
For a subset $S \subset \bR^d$, the \emph{convex hull} $\conv \lb S \rb \subset \bR^d$, the \emph{affine hull} $\aff(S) \subset \bR^d$, the \emph{conic hull} $\cone \lb S \rb \subset \bR^d$, and the \emph{linear span} $\vspan \lb S \rb \subset \bR^d$ are defined by
\begin{align}
\conv \lb S \rb&:=\lc \sum_{i=1}^n \lambda_i s_i \relmid n \geq 1, s_i \in S, \lambda_i \in \bR_{\geq 0}, \sum_{i=1}^n \lambda_i =1 \rc \\
\aff \lb S \rb&:=\lc \sum_{i=1}^n \lambda_i s_i \relmid n \geq 1, s_i \in S, \lambda_i \in \bR, \sum_{i=1}^n \lambda_i =1 \rc \\
\cone \lb S \rb&:=\lc \sum_{i=1}^n \lambda_i s_i \relmid n \geq 0, s_i \in S, \lambda_i \in \bR_{\geq 0} \rc \\
\vspan \lb S \rb&:=\lc \sum_{i=1}^n \lambda_i s_i \relmid n \geq 0, s_i \in S, \lambda_i \in \bR \rc.
\end{align}
The relative interior of $S$ will be denoted by $\rint (S) \subset S$.
We also define
\begin{align}\label{eq:T}
T(S)&:=\lc \lambda (s_2-s_1) \relmid s_1, s_2 \in S, \lambda \in \bR \rc \\
T_\bZ(S)&:=T(S) \cap \bZ^d.
\end{align}
A \emph{rational polyhedron} in $\bR^d$ is the intersection of finitely many half spaces of the form $\lc x \in \bR^d \relmid \la m, x \ra \leq a \rc$ with $m \in \lb \bZ^d \rb^\ast$ and $a \in \bR$.
A bounded rational polyhedron is called a \emph{rational polytope}.
We also call a polytope whose vertices are all in $\bZ^d \subset \bR^d$ a \emph{lattice polytope}.
For subsets $K, K' \subset \bR^d$, the \emph{Minkowski sum} $K+K' \subset \bR^d$ and the \emph{scalar multiple} $c \cdot K \subset \bR^d$ for $c \in \bR_{\geq 0}$ are defined by
\begin{align}
K+K':=\lc k+k' \relmid k \in K, k' \in K' \rc, \quad c \cdot K:=\lc ck \relmid k \in K \rc.
\end{align}

Let $M$ denote a free $\bZ$-module of rank $d$, and $N:=\Hom_\bZ (M,\bZ)$ denote the dual lattice of $M$.
We set $M_\bR:=M \otimes_\bZ \bR$ and $N_\bR:=N \otimes_\bZ \bR=\Hom_\bZ (M,\bR)$.
We have a canonical $\bR$-bilinear pairing
\begin{align}
	\la \bullet ,\bullet \ra \colon M_\bR \times N_\bR \to \bR.
\end{align}
Let further $\Sigma$ be a complete fan in $N_\bR$.
We say that a function $h \colon N_\bR \to \bR$ is \emph{convex} if it satisfies $h \lb t n_1+(1-t)n_2 \rb \leq t h(n_1)+(1-t)h(n_2)$ for any $t \in \ld 0, 1\rd$ and $n_1, n_2 \in N_\bR$.
We also say that a convex function $h \colon N_\bR \to \bR$ is \emph{strictly convex with respect to the fan $\Sigma$} if it is convex and is linear on each cone of dimension $d$ in $\Sigma$, and distinct cones of dimension $d$ correspond to distinct linear functions.
For a strictly convex function $h \colon N_\bR \to \bR$ on $\Sigma$, the \emph{Newton polytope} $\Delta^h \subset M_\bR$ of $h$ is defined by
\begin{align}\label{eq:newton}
\Delta^h:=\lc m \in M_\bR \relmid \la m, n \ra \geq -h(n), \forall n \in N_\bR \rc.
\end{align}

Let $\Deltav \subset N_\bR$ be a rational polytope.
The \emph{support function} $\check{h}_\Deltav \colon M_\bR \to \bR$ for $\Deltav$ is defined by
\begin{align}\label{eq:supp}
\check{h}_\Deltav (m):=-\inf_{n \in \Deltav} \la m, n \ra.
\end{align}
One has
\begin{align}\label{eq:supp2}
\Deltav= \Deltav^{\check{h}_{\Deltav}}, \quad \check{h}_{\Deltav_1+\Deltav_2}=\check{h}_{\Deltav_1}+\check{h}_{\Deltav_2}, \quad \check{h}_{c \cdot \Deltav}=c \cdot \check{h}_\Deltav,
\end{align}
where $\Deltav^{\check{h}_{\Deltav}}$ is the Newton polytope of $\check{h}_{\Deltav}$, $\Deltav_1, \Deltav_2 \subset N_\bR$ are rational polytopes, and $c \in \bR_{\geq 0}$ (cf.~e.g.~\cite[Theorem A.18]{MR922894}).
Let $\scrP_{\Deltav}$ denote the set of all faces of $\Deltav$.
For an element $m \in M_\bR$ and a face $\sigma \in \scrP_{\Deltav}$ of $\Deltav$, we define
\begin{align}
\scN(m, \Deltav)&:=\lc n \in \Deltav \relmid - \la m, n \ra =\check{h}_\Deltav (m) \rc \\ \label{eq:scM}
\scM(\sigma, \Deltav)&:=\lc m \in M_\bR \relmid \scN(m, \Deltav)=\sigma \rc.
\end{align}
Then for $\sigma \in \scrP_{\Deltav}$, we can write
\begin{align}\label{eq:ss}
\sigma=\lc n \in \Deltav \relmid - \la m_0, n \ra =\check{h}_\Deltav (m_0) \rc
\end{align}
with $m_0 \in \scM(\sigma, \Deltav)$.
One also has
\begin{align}\label{eq:n-cone}
\overline{\scM(\sigma, \Deltav)}=\lc m \in M_\bR \relmid - \la m, n \ra =\check{h}_\Deltav (m), \forall n \in \sigma \rc,
\end{align}
where the overline denotes the closure in $M_\bR$.
The \emph{normal fan} $\Sigmav$ of $\Deltav$ is the collection $\lc \overline{\scM(\sigma, \Deltav)} \rc_{\sigma \in \scrP_{\Deltav}}$.
One has the natural bijection
\begin{align}\label{eq:poly-fan}
\delta_\Deltav \colon \scrP_{\Deltav} \to \Sigmav, \quad \sigma \mapsto \overline{\scM(\sigma, \Deltav)}.
\end{align}
Let $\Deltav' \subset N_\bR$ be another rational polytope whose normal fan $\Sigmav'$ is a refinement of the fan $\Sigmav$. 
We write the map $\Sigmav' \to \Sigmav$ that sends each cone $C' \in \Sigmav'$ to the minimal cone in $\Sigmav$ containing $C'$ as $\iota \colon \Sigmav' \to \Sigmav$.
We define
\begin{align}\label{eq:phi}
\phi_{\Deltav', \Deltav}:=\delta_{\Deltav}^{-1} \circ \iota \circ \delta_{\Deltav'} \colon \scrP_{\Deltav'} \to \scrP_{\Deltav},
\end{align}
where $\scrP_{\Deltav'}$ is the set of the faces of $\Deltav'$, and $\delta_{\Deltav'} \colon \scrP_{\Deltav'} \to \Sigmav'$ is the map of \pref{eq:poly-fan} for $\Deltav'$.
We have the following commutative diagram:
\begin{align}\label{eq:face-diag}
  \begin{CD}
     \scrP_{\Deltav'} @>{\phi_{\Deltav', \Deltav}}>> \scrP_{\Deltav} \\
  @V{\delta_{\Deltav'}}VV    @V{\delta_\Deltav}VV \\
     \Sigmav'  @>{\iota}>>  \Sigmav
  \end{CD}
\end{align}
The map \eqref{eq:phi} will be frequently used when we construct tropical contractions.
For $\sigma \in \scrP_{\Deltav'}$, we can see from \eqref{eq:ss} that we have 
\begin{align}\label{eq:ss2}
\phi_{\Deltav', \Deltav} \lb \sigma \rb=\lc n \in \Deltav \relmid - \la m_0, n \ra =\check{h}_\Deltav (m_0) \rc.
\end{align}
with $m_0 \in \scM(\sigma, \Deltav') \subset \scM \lb \phi_{\Deltav', \Deltav} \lb \sigma \rb, \Deltav \rb$.

\subsection{Integral affine structures with singularities}\label{sc:iass}

We recall the definition of integral affine manifolds with singularities.
We set $\mathrm{Aff}(N_\bR):= N_\bR \rtimes \GL(N)$.
Note that the linear part of $\mathrm{Aff}(N_\bR)$ is integral, while the translational part is real.

\begin{definition}
An \emph{integral affine manifold} is a real topological manifold $B$ with an atlas of coordinate charts $\psi_i \colon U_i \to N_\bR$ such that the restriction of any transition function $\psi_i \circ \psi_j^{-1} \colon U_i \cap U_j \to N_\bR$ to any connected component of $U_i \cap U_j$ is contained in $\mathrm{Aff}(N_\bR)$.
We write the sheaf on $B$ of integral affine functions (the linear part is integral, and the constant part is real) as $\mathrm{Aff}_{B}$.
\end{definition}

\begin{definition}{\rm(\cite[Definition 1.15]{{MR2213573}})}
An \emph{integral affine manifold with singularities (IAMS)} is a topological manifold $B$ with an integral affine structure on $B_0:=B \setminus \Gamma$, where $\Gamma \subset B$ is a locally finite union of locally closed submanifolds of codimension greater than or equal to $2$.
We call $\Gamma$ the \emph{discriminant locus} (or the \emph{singular locus}) of $B$.
\end{definition}

We recall the construction of IAMS using fan structures and the description of the monodromies of integral affine structures in the Gross--Siebert program.
We refer the reader to \cite[Section 1]{MR2213573} for more details.

\begin{construction}{\rm(\cite[Construction 1.26]{{MR2213573}})}\label{construction}
Suppose that we have a topological manifold $B$ (possibly with boundary) equipped with a rational polytopal structure $\scrP$, i.e., a finite set of rational polytopes satisfying $B=\bigcup_{\sigma \in \scrP} \sigma$ and the following condition:
\begin{condition}\label{cd:complex}
The following hold:
\begin{enumerate}
\item For any $\sigma \in \scrP$, all faces of $\sigma$ are also in $\scrP$. 
\item For $\sigma_1, \sigma_2 \in \scrP$, the intersection $\sigma_1 \cap \sigma_2$ is a face of both $\sigma_1$ and $\sigma_2$.
\end{enumerate}
\end{condition}
For every integer $k \geq 0$, we set $\scrP(k):=\lc \sigma \in \scrP \relmid \dim (\sigma)=k \rc$.
For an element $\tau \in \scrP$, let $\mathrm{St}(\tau) \subset B$ denote the open star of $\tau$, i.e.,
\begin{align}
\mathrm{St}(\tau):=\bigcup_{\sigma \succ \tau} \rint (\sigma).
\end{align}
A \emph{fan structure} (e.g. \cite[Definition 1.1]{{MR2846484}}) along $\tau \in \scrP$ is a continuous map $S_\tau \colon \mathrm{St}(\tau) \to \bR^k$ satisfying the following conditions:
\begin{enumerate}
\item $S_\tau^{-1}(0) = \rint (\tau)$.
\item For $\sigma \succ \tau$, the restriction $\left. S_\tau \right|_{\rint(\sigma)}$ is an integral affine submersion onto its image.
\item The collection of cones $\lc K_\sigma := \bR_{\geq 0} \cdot S_\tau \lb \sigma \cap \mathrm{St}(\tau) \rb \relmid \sigma \succ \tau \rc$ defines a finite fan in $\bR^k$.
\end{enumerate}
We say that two fan structures along the same polytope are \emph{equivalent} if they differ only by an integral linear transformation of $\bR^k$.
For $\sigma \succ \tau$, the fan structure structure $S_\tau \colon \mathrm{St}(\tau) \to \bR^k$ along $\tau$ induces the fan structure $S_\sigma \colon \mathrm{St}(\sigma) \to \bR^l$ along $\sigma$ given by
\begin{align}
\mathrm{St}(\sigma) \hookrightarrow \mathrm{St}(\tau) \xrightarrow{S_\tau} \bR^k \to \bR^k / \vspan(S_\tau \lb \rint(\sigma)\rb) \cong \bR^l.
\end{align} 
The space $(B, \scrP)$ equipped with a fan structure along every vertex $v \in \scrP(0)$, which satisfies the following condition is called a \emph{tropical manifold} in \cite[Definition 1.2]{MR2846484}.

\begin{condition}\label{cd:toric}
For any $\tau \in \scrP$ and its vertices $v_1, v_2 \prec \tau$, the fan structures along $\tau$ induced from $S_{v_1}$ and $S_{v_2}$ are equivalent.
\end{condition}

Let $(B, \scrP)$ be a tropical manifold of the above sense.
For each $\tau \in \scrP$, we take an element $a_\tau \in \rint (\tau)$, and consider the subdivision $\widetilde{\scrP}$ of $\scrP$ defined by
\begin{align}\label{eq:subdivision}
\widetilde{\scrP}:=\lc \conv \lb \lc a_{\tau_0}, a_{\tau_1}, \cdots, a_{\tau_l} \rc \rb \relmid \tau_0 \prec \tau_1 \prec \cdots \prec \tau_l, l \geq 0, \tau_i \in \scrP \rc.
\end{align}
Every fan structure $S_v$ along a vertex $v \in \scrP$ induces an integral affine structure on the open star of $v$ in $\widetilde{\scrP}$.
The interior of every maximal-dimensional polyhedron in $\scrP$ also has an integral affine structure.
We can equip $(B, \scrP)$ with an integral affine structure with singularities by giving these local integral affine structures.
The discriminant locus $\Gamma$ is given by
\begin{align}\label{eq:discriminant}
\Gamma:=\bigcup_{\substack{\tau_0 \prec \tau_1 \prec \cdots \prec \tau_l, l \geq 0, \tau_i \in \scrP \\ \dim(\tau_0) \geq 1, \dim(\tau_l) \leq \dim B-1}} \conv \lb \lc a_{\tau_0}, a_{\tau_1}, \cdots, a_{\tau_l} \rc \rb.
\end{align}
We set $B_0:=B \setminus \Gamma$.
\end{construction}

Let $\Lambda$ be the locally constant sheaf on $B_0$ of integral tangent vectors.
For each polytope $\tau \in \scrP$, let $\Lambda_\tau \subset \Lambda_x$ denote the subspace 
in the stalk at a point $x \in \rint(\tau) \cap B_0$, which consists of integral tangent vectors on $\tau$ (cf.~\cite[Definition 1.31]{MR2213573}).
Let $\omega \in \scrP(1), \rho \in \scrP(d-1)$ be polytopes such that $\omega \prec \rho$.
$\rho$ is contained in two maximal-dimensional polytopes $\sigma_{+}, \sigma_{-} \in \scrP(d)$.
Let further $v_{+}, v_{-} \in \scrP(0)$ be the vertices that $\omega$ contains.
We consider a path that starts from the vertex $v_+$, passes through the interior of $\sigma_+$, the vertex $v_-$, and the interior of $\sigma_-$ in this order, and comes back to the original point $v_+$.
Let $N$ denote the stalk of $\Lambda$ at $v_+$.
The monodromy transformation $T_{\omega}^\rho \colon N \to N$ with respect to this path has the following form:
\begin{align}\label{eq:monodromy0}
T_\omega^\rho(n)=n+\kappa_{\omega, \rho} \la \check{d}_\rho, n \ra d_\omega,
\end{align}
where $d_\omega \in \Lambda_\omega \subset N$ is the primitive integral vector pointing from $v_+$ to $v_-$, $\check{d}_\rho \in \Lambda_\rho^{\perp} \subset M:=\Hom(N, \bZ)$ is the primitive integral vector evaluating $\sigma_+$ positively, and $\kappa_{\omega, \rho} \in \bZ$ is some constant.
See \cite[Section 1.5]{MR2213573} for details.

\begin{definition}{\rm(\cite[Definition 1.54]{{MR2213573}})}\label{df:positive}
An IAMS $(B, \scrP)$ is \emph{positive} if $\kappa_{\omega, \rho}  \geq 0$ for all $\omega \in \scrP(1), \rho \in \scrP(d-1)$ such that $\omega \prec \rho$.
\end{definition}

Next, we will recall the definition of \emph{monodromy polytopes} \cite[Definition 1.58]{MR2213573}.
Assume that $(B, \scrP)$ is positive.
Let $\tau \in \scrP$ be a polytope such that $1 \leq \dim \tau \leq d-1$.
For each $\rho \in  \scrP(d-1)$ such that $\rho \succ \tau$, the monodromy polytope $\Deltav_\rho(\tau)$ is defined as follows:
Let $v_1, v_2$ be two vertices contained in $\tau$, and $\sigma_{+}, \sigma_{-}$ be the maximal-dimensional polytopes containing $\rho$.
We consider a path that starts from the vertex $v_1$, passes through the interior of $\sigma_+$, the vertex $v_2$, and the interior of $\sigma_-$ in this order, and comes back to the original point $v_1$.
The monodromy transformation $T^{\rho}_{v_1, v_2} \colon N \to N$ with respect to this path has the following form:
\begin{align}\label{eq:monodromy1}
T^{\rho}_{v_1, v_2}(n)= n + \la \check{d}_\rho, n \ra n^{\rho}_{v_1, v_2},
\end{align}
where $N$ is the stalk of $\Lambda$ at $v_+$, and $n^{\rho}_{v_1, v_2} \in \Lambda_ \tau$.
We fix a vertex $v_0 \prec \rho$, and define
\begin{align}
\Deltav_\rho(\tau):= \mathrm{conv} \lc n^{\rho}_{v_0, v} \relmid v \in \scrP(0)\ \mathrm{s.t.}\ v \prec \tau \rc \subset \Lambda_ \tau. 
\end{align}
A different choice of $v_0$ leads to a translation of $\Deltav_\rho(\tau)$.
Hence, the monodromy polytope $\Deltav_\rho(\tau)$ is well-defined up to translation.

Let $\Sigmav_\tau$ be the normal fan of $\tau$, and $\check{K}_v \in \Sigmav_\tau$ be the maximal-dimensional cone corresponding to a vertex $v \prec \tau$.
We define the piecewise linear function $\check{\psi}_{\tau, \rho}$ on $\Sigmav_\tau$ by setting $\left. \check{\psi}_{\tau, \rho} \right|_{\check{K}_v}:=-n_{v_0, v}^\rho$ for every vertex $v \prec \tau$.
When $(B, \scrP)$ is positive, the function $\check{\psi}_{\tau, \rho}$ is convex (not necessarily strictly convex) on $\Sigmav_\tau$ (cf.~\cite[Remark 1.56]{MR2213573}).
The polytope $\Deltav_\rho(\tau)$ is the Newton polytope of the function $\check{\psi}_{\tau, \rho}$, and 
the fan $\Sigmav_\tau$ is a refinement of the normal fan of $\Deltav_\rho(\tau)$ (cf.~\cite[Remark 1.59]{MR2213573}).

For each $\omega \in \scrP(1)$ such that $\omega \prec \tau$, the monodromy polytope $\Delta_\omega(\tau)$ is also defined in a similar way.
Let $v_{+}, v_{-}$ be the two vertices contained in $\omega$, and $\sigma_1, \sigma_2$ be two maximal-dimensional polytopes containing $\tau$.
We consider a path that starts from the vertex $v_+$, passes through the interior of $\sigma_1$, the vertex $v_-$, and the interior of $\sigma_2$ in this order, and comes back to the original point $v_+$.
The monodromy transformation $T_\omega^{\sigma_1, \sigma_2} \colon N \to N$ with respect to this path has the following form:
\begin{align}\label{eq:monodromy2}
T_\omega^{\sigma_1, \sigma_2}(n)= n + \la m_\omega^{\sigma_1, \sigma_2}, n \ra d_\omega,
\end{align}
where $m_\omega^{\sigma_1, \sigma_2} \in \Lambda_\tau^{\perp}$. 
We fix a maximal-dimensional polytope $\sigma_0 \succ \tau$, and define 
\begin{align}
\Delta_\omega(\tau):= \mathrm{conv} \lc m_\omega^{\sigma_0, \sigma} \relmid \sigma \in \scrP(d)\ \mathrm{s.t.}\ \sigma \succ \tau \rc \subset \Lambda_\tau^{\perp}.
\end{align}
A different choice of $\sigma_0$ leads to a translation of $\Delta_\omega(\tau)$, and the monodromy polytope $\Delta_\omega(\tau)$ is also well-defined up to translation.

\subsection{Radiance obstructions}\label{sc:radiance}

We recall the definition of radiance obstructions of \cite{MR760977}.
Let $B$ be an integral affine manifold.
We give an affine bundle structure to the tangent bundle $TB$ of $B$ as follows:
For each chart $\psi_i \colon U_i \to N_\bR$ and a point $x \in U_i$, we set an affine isomorphism
\begin{align}
\theta_{i,x} \colon T_xB \to N_\bR,\quad v \mapsto \psi_i(x)+d\psi_i(x)v,
\end{align}
and define an affine trivializations by
\begin{align}
	\theta_i \colon TU_i \to U_i \times N_\bR,\quad (x, v) \mapsto (x, \theta_{i,x}(v)),
\end{align}
where $v \in T_xB$.
This gives an affine bundle structure to $TB$.
We write $TB$ with this affine bundle structure as $T^{\mathrm{aff}}B$.
Let $\Lambda$ (resp. $\check{\Lambda}$) be the locally constant sheaf on $B$ of integral tangent vectors (resp. of integral cotangent vectors).
We set $\Lambda_\bR:=\Lambda \otimes_\bZ \bR$.

\begin{definition}{\rm(\cite{{MR760977}})}\label{df:rad}
We choose a sufficiently fine open covering $\scU:=\lc U_i \rc_i$ of $B$ so that there is a flat section $s_i \in \Gamma (U_i, T^{\mathrm{aff}}B)$ for each $U_i$.
When we set $c_B((U_i, U_j)):=s_j-s_i$ for each $1$-simplex $(U_i, U_j)$ of $\scU$, the element $c_B$ becomes a \v{C}ech $1$-cocycle for $\Lambda_\bR$.
We call $c_B \in H^1(B, \Lambda_\bR)$ the \emph{radiance obstruction} of $B$.
\end{definition}

\begin{remark}
Note that the sign convention of radiance obstructions in \pref{df:rad} is different from \cite{MR2213573}, and is the same as \cite{MR760977}.
\end{remark}

We have the exact sequence of sheaves on $B$
\begin{align}\label{eq:exaff}
0 \to \bR \to \mathrm{Aff}_{B} \to \check{\Lambda} \to 0,
\end{align}
where $\mathrm{Aff}_{B}$ is the sheaf on $B$ of integral affine functions, i.e, continuous functions $f \colon B \to \bR$ such that for every chart $\psi_i \colon U_i \to N_\bR$ of $B$, the map $f \circ \psi_i^{-1} \colon \psi_i(U_i) \to \bR$ is of the form $n \mapsto \la m, n \ra + a$ with some $m \in M$ and $a \in \bR$.

\begin{proposition}{\rm(\cite[Proposition 1.12]{MR2213573})}\label{pr:radiance}
The extension class of \eqref{eq:exaff} in $\Ext^1 \lb \check{\Lambda}, \bR \rb=H^1(B, \Lambda_\bR)$
coincides with $-c_B \in H^1(B, \Lambda_\bR)$.
\end{proposition}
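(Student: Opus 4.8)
The plan is to represent both the extension class of \eqref{eq:exaff} and the radiance obstruction $c_{B}$ by explicit \v{C}ech $1$-cocycles with respect to a common atlas of affine charts, and then to read off that the two cocycles differ exactly by a sign.

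First I would set up the algebraic side. Since $\check{\Lambda}$ is a locally constant sheaf of finite free $\bZ$-modules it is locally free over $\bZ_{B}$, so $\cHom_{\bZ_{B}}(\check{\Lambda},\bR)\cong\Lambda\otimes_{\bZ}\bR=\Lambda_{\bR}$ canonically (via the evaluation pairing between $\Lambda$ and $\check{\Lambda}$), and the local-to-global spectral sequence for $\operatorname{Ext}$ degenerates to give $\Ext^{1}(\check{\Lambda},\bR)\cong H^{1}(B,\Lambda_{\bR})$. Under this identification the class of a short exact sequence $0\to\bR\to\mathcal{E}\to\check{\Lambda}\to 0$ is computed as follows: over a sufficiently fine open cover $\lc U_{i}\rc$ choose $\bZ_{B}$-linear splittings $\sigma_{i}\colon\check{\Lambda}|_{U_{i}}\to\mathcal{E}|_{U_{i}}$ of the surjection (a cover by connected affine charts works, since over such a chart every such sequence splits); then $\sigma_{j}-\sigma_{i}$ takes values in the subsheaf $\bR$, hence is a section of $\cHom(\check{\Lambda},\bR)=\Lambda_{\bR}$ over $U_{i}\cap U_{j}$, and $\lc\sigma_{j}-\sigma_{i}\rc$ is a \v{C}ech $1$-cocycle representing the extension class.

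Next I would compute this cocycle for \eqref{eq:exaff}. Fix an atlas $\lc\psi_{i}\colon U_{i}\to N_{\bR}\rc$ with each $U_{i}$ connected, so that on each component of $U_{i}\cap U_{j}$ the transition map is $\psi_{i}\circ\psi_{j}^{-1}(y)=A_{ij}y+b_{ij}$ with $A_{ij}\in\GL(N)$ and $b_{ij}\in N_{\bR}$. Using $d\psi_{i}$ to identify $\check{\Lambda}|_{U_{i}}$ with the constant sheaf $M$, define the splitting of $\mathrm{Aff}_{B}\to\check{\Lambda}$ by $\sigma_{i}(m):=\la m,\psi_{i}(\cdot)\ra$, an integral affine function with differential $m$. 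A short computation, using that $m$ is represented in the $\psi_{j}$-chart by the covector $m'$ with $\la m,A_{ij}(\,\cdot\,)\ra=\la m',\,\cdot\,\ra$ together with $\psi_{i}=A_{ij}\psi_{j}+b_{ij}$, shows $\sigma_{j}(m)-\sigma_{i}(m)=-\la m,b_{ij}\ra$, a constant function on $U_{i}\cap U_{j}$; equivalently, read in the $\psi_{i}$-frame, the section $\sigma_{j}-\sigma_{i}$ of $\Lambda_{\bR}$ equals $-b_{ij}$. Hence the extension class is represented by $\lc -b_{ij}\rc$.

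Finally I would compute the radiance cocycle and compare. The trivializations $\theta_{i}\colon TU_{i}\xrightarrow{\ \sim\ }U_{i}\times N_{\bR}$ of \pref{df:rad} satisfy $\theta_{i}\circ\theta_{j}^{-1}(x,w)=(x,A_{ij}w+b_{ij})$, so the sections of $T^{\mathrm{aff}}B$ that are constant in some $\theta_{i}$-chart are precisely the flat ones. Taking $s_{i}:=\theta_{i}^{-1}(\,\cdot\,,0)\in\Gamma(U_{i},T^{\mathrm{aff}}B)$, one computes $s_{j}-s_{i}=b_{ij}$ in the $\psi_{i}$-frame, so $c_{B}$ is represented by $\lc b_{ij}\rc$. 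Comparing, the extension class of \eqref{eq:exaff} is $-c_{B}\in H^{1}(B,\Lambda_{\bR})$. The step needing the most care is not any single computation but the bookkeeping of sign conventions — the order of the difference in the connecting homomorphism, the orientation $s_{j}-s_{i}$ fixed in \pref{df:rad}, and the canonical identification $\cHom(\check{\Lambda},\bR)=\Lambda_{\bR}$ — which must be tracked uniformly so that the two cocycles are genuinely compared in the same frame; everything else is routine.
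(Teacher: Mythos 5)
The paper does not actually prove this proposition: it is stated with a citation to \cite[Proposition 1.12]{MR2213573}, and the remark immediately following it explains that the sign convention for $c_B$ used here differs from that of Gross--Siebert, which accounts for the minus sign. Your proposal therefore supplies a proof where the paper gives only a reference, and the computation is correct. The two Čech cocycles do come out to $-b_{ij}$ and $+b_{ij}$ respectively in the $\psi_i$-frame: for the extension class, writing a cotangent vector's $\psi_j$-representation as $A_{ij}^{T}m$ so that $\sigma_j(m)=\la A_{ij}^{T}m,\psi_j(\cdot)\ra=\la m,\psi_i(\cdot)-b_{ij}\ra$ gives $\sigma_j-\sigma_i=-\la\cdot,b_{ij}\ra$; for the radiance obstruction, $s_i=\theta_{i,\cdot}^{-1}(0)$, $d\psi_i=A_{ij}d\psi_j$, and $\psi_i=A_{ij}\psi_j+b_{ij}$ give $d\psi_i(x)(s_j(x)-s_i(x))=b_{ij}$, matching the $s_j-s_i$ orientation fixed in \pref{df:rad}. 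The point you flag as requiring care — that the extension-class convention $\sigma_j-\sigma_i$, the ordering $s_j-s_i$ in \pref{df:rad}, and the identification $\cHom(\check{\Lambda},\bR)\cong\Lambda_\bR$ all be read in a single common frame — is indeed the only place where a sign could slip, and you have tracked it consistently. One minor point worth making explicit: the identification $\Ext^1(\check\Lambda,\bR)\cong H^1(B,\Lambda_\bR)$ uses both that $\check\Lambda$ is locally free over $\bZ_B$ (so $\cExt^{q>0}(\check\Lambda,\bR)=0$) and the canonical perfect pairing $\Lambda\otimes\check\Lambda\to\bZ$, which you invoke implicitly but should state so that the sign of the final comparison is unambiguous.
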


Let $\lb B, \scrP \rb$ be an IAMS constructed in Construction \ref{construction}, and $\iota \colon B_0 \hookrightarrow B$ be the complement of the discriminant locus.
Let further $\Lambda$ (resp. $\check{\Lambda}$) be the locally constant sheaf on $B_0$ of integral tangent vectors (resp. of integral cotangent vectors).
Then the pushforward of the exact sequence \eqref{eq:exaff} for $B_0$ by the inclusion $\iota$ 
\begin{align}\label{eq:exaff2}
0 \to \bR \to \iota_\ast \mathrm{Aff}_{B_0} \to \iota_\ast \check{\Lambda} \to 0
\end{align}
is also exact (\cite[Proposition 1.42]{MR2213573}).
Furthermore, the radiance obstruction $c_{B_0}$ of $B_0$ is contained in $H^1(B, \iota_\ast \Lambda_\bR) \subset H^1(B_0, \Lambda_\bR)$ (\cite[Remark 1.30]{MR2213573}), and the extension class of \eqref{eq:exaff2} in 
$H^1(B, \iota_\ast \Lambda_\bR)= H^1\lb B, \cHom \lb \iota_\ast \check{\Lambda}, \bR \rb \rb \subset \Ext^1 \lb \iota_\ast \check{\Lambda}, \bR \rb$ coincides with $-c_{B_0}$ (cf.~\cite[Section 5.1]{MR2669728}).
We call $c_B:=c_{B_0} \in H^1(B, \iota_\ast \Lambda_\bR)$ the radiance obstruction of $B$.

\subsection{Tropical (co)homology groups and wave groups}\label{sc:rat}

We recall the definitions of tropical (co)homology groups and wave groups introduced in \cite{MR3330789, MR3961331}.
Let $(\bT:= \bR \cup \lc \infty \rc, \min, +)$ be the tropical number semifield.
We equip the set $\bT$ with the topology which makes it homeomorphic to a half line.
The tropical affine space $\bT^d$ has a natural stratification
\begin{align}
\bT^d=\bigsqcup_{I \subset \lc 1, \cdots, r\rc} \bR_I^d,
\end{align}
where $\bR^d_I:=\lc (x_1, \cdots, x_d) \in \bT^d \relmid x_i=\infty\ \mathrm{if\ and\ only\ if\ } i \in I \rc \cong \bR^{d-|I|}$.
A \emph{rational polyhedron} in $\bT^d$ is the closure in $\bT^d$ of a rational polyhedron in some stratum $\bR^d_I \subset \bT^d$.
Let $\overline{P} \subset \bT^d$ be the polyhedron which is the closure of a rational polyhedron $P \subset \bR^d_I$.
A \emph{finite face} of $\overline{P}$ is the closure of a face of $P$.
An \emph{infinite face} of $\overline{P}$ is the closure of a non-empty intersection $\overline{P} \cap \bR^d_J$ with some $J \supsetneq I$.
A \emph{rational polyhedral complex} $\scrP$ in $\bT^d$ is a finite set of polyhedra in $\bT^d$ satisfying \pref{cd:complex}.
We call $\tau \in \scrP$ an \emph{infinite polyhedron} if there exists a polyhedron $\sigma \in \scrP$ such that $\tau$ is an infinite face of $\sigma$.
We also call a polyhedron in $\scrP$ which is not an infinite polyhedron a \emph{finite polyhedron}.
The \emph{support} of a rational polyhedral complex $\scrP$ in $\bT^d$ is the union of all polyhedra in $\scrP$.
For every integer $k \geq 0$, we set
$\scrP(k):=\lc \sigma \in \scrP \relmid \dim (\sigma)=k \rc$.

An \emph{integral affine function} on a subset $X \subset \bT^d$ is a continuous function $f \colon X \to \bR$ that is of the form 
$x \mapsto \la m, x \ra + a$ for some $m=\lb m_1, \cdots, m_d \rb \in \lb \bZ^d \rb^\ast$ and $a \in \bR$ locally around every point in $X$.
Here we use the convention $0 \cdot \infty=0$, and around any point $x \in X \cap \bR_I^d$, we only consider functions $f=\la m, \bullet \ra+a$ with $m_i=0$ for every $i \in I$.
We write the sheaf on $X$ of integral affine functions as $\mathrm{Aff}_X$.

\begin{definition}\label{df:rat-sp}
A \emph{rational polyhedral space} $(X, \mathrm{Aff}_X)$ is a pair of a second-countable Hausdorff topological space $X$ and a sheaf $\mathrm{Aff}_X$ of continuous functions such that 
for every $x \in X$ there exists an open neighborhood $U \subset X$, an open subset $V$ of the support of a rational polyhedral complex $\scrP$ in $\bT^d$, and a homeomorphism $\varphi \colon U \to V$ that induces an isomorphism 
$\mathrm{Aff}_V \to \varphi_\ast \mathrm{Aff}_U$ via the pullback of functions.
We call $\varphi \colon U \to V$ a \emph{chart}.
\end{definition}

\begin{definition}{\rm(\cite[Definition 1.14]{MR3330789})}\label{df:trop-mfd}
A rational polyhedral space $(X, \mathrm{Aff}_X)$ is called a \emph{tropical manifold}, if every point $x \in X$ has a neighborhood that is isomorphic to an open subset of the direct product of $\bT^k$ $(k \geq 0)$ and the tropical linear space $L_M$ associated with a matroid $M$.
\end{definition}
We refer the reader to \cite[Section 4.2]{MR3287221} for details about matroids and tropical linear spaces.

Let $\scrP$ be a rational polyhedral complex in $\bT^d$.
For a subset $I \subset \lc 1, \cdots, d \rc$, we set
\begin{align}
\scrP_I:=\lc \sigma \in \scrP \relmid \rint(\sigma) \subset \bR^d_I \rc.
\end{align}
Let $p \geq 0$ be an integer.

\begin{definition}{\rm(\cite{MR3330789, MR3961331, MR4637248})}
For a polyhedron $\tau \in \scrP_I$, we define 
the \emph{$p$-th integral multi-tangent space} and 
the \emph{$p$-th integral multi-cotangent space} of $\scrP$ at $\tau$ by
\begin{align}
F_p^\bZ (\tau)&:= \lb \sum_{\sigma \in \scrP_I, \sigma \succ \tau} \bigwedge^p T(\sigma) \rb \cap \bigwedge^p \bZ^d_I\\
F^p_\bZ (\tau)&:= \Hom \lb F_p^\bZ (\tau), \bZ \rb.
\end{align}
Here $T(\sigma) \subset \bR^d_I$ is the linear subspace generated by tangent vectors on $\sigma$, which we defined in \eqref{eq:T}.
$\bZ^d_I$ is the natural lattice in $\bR^d_I$.
For polyhedra $\tau_1, \tau_2 \in \scrP$ such that $\tau_2 \prec \tau_1$, we define the map
\begin{align}\label{eq:ext}
e_{\tau_2, \tau_1} \colon F_p^\bZ (\tau_1) \to F_p^\bZ (\tau_2)
\end{align}
as follows:
If $\tau_1, \tau_2 \in \scrP_I$ for some common $I \subset \lc 1, \cdots, d \rc$, we define \eqref{eq:ext} to be the inclusion.
If not, then $\tau_1 \in \scrP_{I_1}, \tau_2 \in \scrP_{I_2}$ for some $I_1 \subset I_2 \subset \lc 1, \cdots, d \rc$.
We define the map \eqref{eq:ext} to be the map induced by the quotient $\bZ_{I_1}^d \twoheadrightarrow \bZ_{I_2}^d=\left. \bZ_{I_1}^d \middle/ \bZ_{I_2 \setminus I_1}^d \right.$.
We also define 
\begin{align}
r_{\tau_2, \tau_1} \colon F^p_\bZ (\tau_2) \to F^p_\bZ (\tau_1)
\end{align}
as the dual of \eqref{eq:ext}.
\end{definition}

\begin{definition}{\rm(\cite[Section 2.4]{{MR3330789}})}\label{df:scF}
Let $V$ be an open subset of the support of a rational polyhedral complex $\scrP$ in $\bT^d$.
For any open set $U \subset V$, we consider the poset $P(U)$ whose elements are connected components $v$
of the intersection $\tau_v \cap U$ with some polyhedron $\tau_v \in \scrP$. 
The order $\leq$ on $P(U)$ is defined by setting $v_1 \leq v_2$ if $v_1 \supset v_2$.
We set $F_p^\bZ (v):=F_p^\bZ (\tau_v)$, and for a pair $v_1 \leq v_2$, we consider $e_{\tau_{v_2}, \tau_{v_1}} \colon F_p^\bZ (\tau_{v_1}) \to F_p^\bZ (\tau_{v_2})$.
The \emph{cosheaf of $p$-th integral multi-tangent spaces} $\scF^\bZ_p$ on $V$ is defined by
\begin{align}
\scF^\bZ_p \lb U \rb:=\varinjlim_{v \in P(U)} F_p^\bZ (v),
\end{align}
where the right hand side is the limit of $\lb \lc F_p^\bZ (v) \rc_{v \in P(U)}, \lc e_{\tau_{v_2}, \tau_{v_1}} \rc_{v_1 \leq v_2} \rb$.
The \emph{sheaf of $p$-th integral multi-cotangent spaces} $\scF_\bZ^p$ on $V$ is defined by
\begin{align}
\scF_\bZ^p \lb U \rb:=\Hom \lb \scF^\bZ_p \lb U \rb, \bZ \rb.
\end{align}
For a general rational polyhedral space $(X, \mathrm{Aff}_X)$, we define the cosheaf $\scF^\bZ_p$ and the sheaf $\scF_\bZ^p$ on $X$ by gluing along charts.
For $Q=\bQ, \bR$, we also set $\scF^Q_p:=\scF^\bZ_p \otimes_\bZ Q, \scF_Q^p:=\scF_\bZ^p \otimes_\bZ Q$.
\end{definition}

Let $(X, \mathrm{Aff}_X)$ be a rational polyhedral space, $p, q \geq 0$ be integers, and $Q=\bZ, \bQ, \bR$.

\begin{definition}
The \emph{$(p,q)$-th tropical cohomology group} $H^{p,q} \lb X, Q \rb$ and 
the \emph{$(p,q)$-th tropical cohomology group with compact support} $H^{p,q}_c \lb X, Q \rb$ of $X$ are defined by
\begin{align}
H^{p,q} \lb X, Q \rb:=H^q \lb X, \scF_Q^p \rb, \quad H^{p,q}_c \lb X, Q \rb:=H^q_c \lb X, \scF_Q^p \rb.
\end{align}
\end{definition}

\begin{definition}{\rm(\cite[Definition 4.3]{{MR4637248}})}
The \emph{$(p,q)$-th tropical Borel--Moore homology group} $H_{p,q}^{\mathrm{BM}} \lb X, Q \rb$ and 
the \emph{$(p,q)$-th tropical homology group with compact support} $H_{p,q} \lb X, Q \rb$ of $X$ are defined by
\begin{align}
H_{p,q}^{\mathrm{BM}} \lb X, Q \rb:=R^{-q} \Gamma R \cHom \lb \scF^p_Q, \omega_X \rb, 
\quad H_{p,q} \lb X, Q \rb:=R^{-q} \Gamma_c R \cHom \lb \scF^p_Q, \omega_X \rb,
\end{align}
where $\omega_X \in D^b(\bZ_X)$ is the dualizing complex of $X$ (cf.~e.g.~\cite[Definition 3.1.16]{MR1299726}).
Here $\bZ_X$ denotes the constant sheaf on $X$ whose stalk is $\bZ$, and $D^b(\bZ_X)$ denotes the bounded derived category of $\bZ_X$-modules on $X$.
\end{definition}

Next, we recall the definition of tropical wave groups introduced in \cite{MR3330789}.
Although the following definition of the sheaf of wave tangent spaces looks different from that in \cite{MR3330789} at first glance, they actually agree with each other as we will see in \pref{pr:w-stalk}.

\begin{definition}\label{df:wave}
The \emph{sheaf of $p$-th wave tangent spaces} on a rational polyhedral space $(X, \mathrm{Aff}_X)$ is defined by
\begin{align}
\scW_p^Q:=\cHom \lb \cF^p_\bZ, Q \rb.
\end{align}
The \emph{$(p, q)$-th tropical wave group} of $X$ is the cohomology group $H^q \lb X, \scW_p^Q \rb$.
\end{definition}

For a rational polyhedral complex $\scrP$ in $\bT^d$, we consider the following condition:
\begin{condition}\label{cd:parent}
For any infinite polyhedron $\tau \in \scrP$, the following hold:
\begin{enumerate}
\item There uniquely exists a finite polyhedron $\tilde{\tau} \in \scrP$ such that $\tau$ is an infinite face of $\tilde{\tau}$.
\item One has
\begin{align}\label{eq:t-tau}
\lc \sigma \in \scrP \relmid \sigma \mathrm{\ is\ a\ finite\ polyhedron\ s.t.\ } \sigma \succ \tau \rc
=\lc \sigma \in \scrP \relmid \sigma \succ \tilde{\tau} \rc.
\end{align}
\end{enumerate}
\end{condition}

Let $\scrP$ be a rational polyhedral complex in $\bT^d$, which consists of the closures of rational polyhedra in $\bR^d \subset \bT^d$ and their faces.
In \cite{MR3330789}, it is said that $\scrP$ is \emph{regular at infinity} if for any subset $I \subset \lc 1, \cdots, d \rc$ and any $\sigma \in \scrP$ such that $\sigma \cap \bR^d \neq \emptyset$, the intersection $\sigma \cap \bR^d_I$ is either of dimension $\lb \dim \sigma - |I| \rb$ or empty.
The condition of being regular at infinity is imposed throughout the work \cite{MR3330789}.

\begin{lemma}
If a rational polyhedral complex $\scrP$ in $\bT^d$ is regular at infinity, then it satisfies \pref{cd:parent}.
\end{lemma}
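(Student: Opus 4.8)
The plan is to extract from the hypothesis exactly one geometric input and then deduce \pref{cd:parent} by a dimension count. Recall that here $\scrP$ consists of the closures $\overline{P}$ of rational polyhedra $P\subseteq\bR^d$ together with their faces; call a polyhedron of $\scrP$ \emph{finite} if its relative interior lies in $\bR^d$, and note that every polyhedron of $\scrP$ is a face of a finite one, and that the faces of a finite polyhedron $\overline{P}$ are the polyhedra $\overline{\overline{F}\cap\bR^d_J}$ with $F$ a face of $P$ and $J$ (possibly empty). For a finite $\sigma\in\scrP$ and a nonempty $I\subseteq\lc 1,\cdots,d\rc$ write $\sigma^I:=\overline{\sigma\cap\bR^d_I}$; by the definition of an infinite face, the infinite faces of $\sigma$ are precisely the nonempty polyhedra $\sigma^I$, each for a unique $I$, and each is a proper face of $\sigma$. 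Regularity at infinity is exactly the assertion that $\dim\sigma^I=\dim\sigma-|I|$ whenever $\sigma^I\neq\emptyset$; I will also use freely the two further consequences of it that $\overline{\sigma\cap\bR^d_I}\cap\bR^d_J=\sigma\cap\bR^d_J$ for finite $\sigma$ and $I\subseteq J$, and $\dim\overline{P}=\dim P$. Fix an infinite polyhedron $\tau\in\scrP$ and let $I\neq\emptyset$ be defined by $\Int(\tau)\subseteq\bR^d_I$.

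\emph{A finite parent exists.} I argue by induction on $|I|$ that $\tau$ is an infinite face of some finite polyhedron. By definition $\tau$ is an infinite face of some $\nu\in\scrP$; writing $\Int(\nu)\subseteq\bR^d_{I_0}$ gives $I_0\subsetneq I$ and $\tau=\overline{\nu\cap\bR^d_I}$. If $I_0=\emptyset$ then $\nu$ is finite and we are done (this includes the base case $|I|=1$). Otherwise $\nu$ is infinite with $|I_0|<|I|$, so inductively $\nu=\tilde\nu^{I_0}$ for a finite $\tilde\nu$; then $\nu\cap\bR^d_I=\overline{\tilde\nu\cap\bR^d_{I_0}}\cap\bR^d_I=\tilde\nu\cap\bR^d_I$, hence $\tau=\tilde\nu^{I}$. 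Fix such a finite polyhedron and call it $\tilde\tau$, so that $\tau=\tilde\tau^{I}$ and in particular $\dim\tau=\dim\tilde\tau-|I|$.

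\emph{Uniqueness and comparison with larger cells.} Let $\sigma\in\scrP$ be any finite polyhedron with $\tau\preceq\sigma$, and set $\rho:=\tilde\tau\cap\sigma$, a common face by \pref{cd:complex}. Then $\tau\preceq\rho$, and $\rho\cap\bR^d_I=(\tilde\tau\cap\bR^d_I)\cap(\sigma\cap\bR^d_I)=(\tau\cap\bR^d_I)\cap(\sigma\cap\bR^d_I)=\tau\cap\bR^d_I$, using $\tilde\tau^{I}=\tau$ and $\tau\subseteq\sigma$; hence $\rho^{I}=\tau$. The one point that requires the hypothesis is that $\rho$ is again \emph{finite}; granting it, $\dim\rho=\dim\rho^{I}+|I|=\dim\tau+|I|=\dim\tilde\tau$, so $\rho$ is a face of $\tilde\tau$ of full dimension, i.e.\ $\rho=\tilde\tau$ and $\tilde\tau\preceq\sigma$. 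Applying this when $\sigma$ is a second finite polyhedron with $\sigma^{I}=\tau$ yields $\tilde\tau\preceq\sigma$ together with $\dim\sigma=\dim\sigma^{I}+|I|=\dim\tilde\tau$, hence $\sigma=\tilde\tau$: this is the uniqueness in \pref{cd:parent}(1). The same conclusion $\tilde\tau\preceq\sigma$ for an arbitrary finite $\sigma\succ\tau$ shows that the set of finite polyhedra $\sigma$ with $\sigma\succ\tau$ is contained in the set of polyhedra $\sigma$ with $\sigma\succ\tilde\tau$; the reverse inclusion is immediate, since any $\sigma\succ\tilde\tau$ contains the finite polyhedron $\tilde\tau$, hence meets $\bR^d$ and is finite, and satisfies $\sigma\succ\tilde\tau\succ\tau$. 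Together these give \pref{cd:parent}(2).

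\emph{The main obstacle.} Everything above is formal once two inputs are in place. The first is the dictionary between ``regular at infinity'' and the properties of $\sigma\mapsto\sigma^{I}$ used in the first paragraph (the dimension identity, the compatibility $\overline{\sigma\cap\bR^d_I}\cap\bR^d_J=\sigma\cap\bR^d_J$, and the description of infinite faces), which is routine manipulation with the stratification of $\bT^d$. The second, and the real content, is the assertion --- used both in the uniqueness step and in part (2) --- that if $\tilde\tau$ and $\sigma$ are finite polyhedra of $\scrP$ with $\tilde\tau^{I}=\tau$ and $\tau\preceq\tilde\tau\cap\sigma$, then $\tilde\tau\cap\sigma$ already meets $\bR^d$, i.e.\ is finite. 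This is precisely where regularity is indispensable: without it a single face at infinity can be shared by two distinct finite cells, which is exactly the way \pref{cd:parent}(1) can fail. I expect the proof of this assertion to be the delicate step; the natural route is to show, using $\dim\sigma^{I}=\dim\sigma-|I|$, that the recession cones of the rational polyhedra $\tilde\tau\cap\bR^d$ and $\sigma\cap\bR^d$ are large enough in the coordinate directions indexed by $I$ that $\tilde\tau\cap\sigma$ must contain a point at finite distance. With these two inputs, the argument above goes through as stated.
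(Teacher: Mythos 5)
You correctly reduce the lemma to a single geometric claim, and you are candid about not proving it: that whenever $\sigma$ is a finite polyhedron with $\sigma \succ \tau$, the common face $\rho := \tilde{\tau} \cap \sigma$ must already meet $\bR^d$. This is indeed where the content lies, and the gap is genuine. Regularity at infinity, as stated, constrains the dimension of $\nu \cap \bR^d_J$ only for \emph{finite} $\nu$, so nothing in what you have written rules out that $\rho$ is an infinite face of $\tilde{\tau}$, say $\rho = \overline{\tilde{\tau} \cap \bR^d_J}$ for some nonempty $J \subsetneq I$; in that case $\rho^I = \tau$ still holds, so your dimension count never gets off the ground. Your sketch of a recession-cone argument is plausible, but it is a plan, not a proof.

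The paper fills exactly this hole by citing \cite[Proposition 1.6]{MR3330789}, which asserts directly that any finite $\sigma \succ \tau$ contains points of the relative interior of $\tilde{\tau}$; by \pref{cd:complex} this forces $\tilde{\tau} \prec \sigma$, giving the hard inclusion of \pref{cd:parent}.2 at once, while uniqueness in \pref{cd:parent}.1 is deferred to \cite[Section 1.2]{MR3330789}. Your inductive construction of a finite parent $\tilde{\tau}$ is correct and more self-contained than the paper's citation, and the rest of your formal scaffolding is sound, but without a proof of the finiteness claim the argument is open at precisely the step you flag.
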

\begin{proof}
It is mentioned in \cite[Section 1.2]{MR3330789} that a rational polyhedral complex $\scrP$ which is regular at infinity satisfies \pref{cd:parent}.1.
We show \eqref{eq:t-tau}.
For an infinite polyhedron $\tau \in \scrP$, the polyhedron $\tilde{\tau} \in \scrP$ of \pref{cd:parent}.1 is the closure of a polyhedron in $\bR^d \subset \bT^d$.
Let $\sigma \in \scrP$ be an element in the right hand side of \eqref{eq:t-tau}, i.e., a polyhedron such that $\sigma \succ \tilde{\tau}$.
Then $\sigma$ is also the closure of a polyhedron in $\bR^d \subset \bT^d$.
In particular, it is a finite polyhedron.
Since we also have $\sigma \succ \tilde{\tau} \succ \tau$, the polyhedron $\sigma$ is contained in the left hand side of \eqref{eq:t-tau}.
Thus the right hand side of \eqref{eq:t-tau} is contained in the left hand side.

We show the opposite inclusion.
Let $\sigma \in \scrP$ be a finite polyhedron such that $\sigma \succ \tau$.
From \cite[Proposition 1.6]{MR3330789}, we can see that the polyhedron $\sigma$ contains points in the relative interior of $\tilde{\tau}$.
This implies that one has $\sigma \succ \tilde{\tau}$, and $\sigma$ is contained in the right hand side of \eqref{eq:t-tau}.
We obtained the claim.
\end{proof}

\begin{example}
Let $\lc e_1, e_2 \rc$ be the standard basis of $\bZ^2 \subset \bT^2$.
The closure of $\bR (e_1+e_2)$ in $\bT^2$ satisfies \pref{cd:parent}, and is not regular at infinity.
The closure of the union of $\bR (e_1+e_2)$ and $e_2+\bR (e_1+e_2)$ in $\bT^2$ does not satisfy \pref{cd:parent}.
The space $X(f_1, \cdots, f_r)$ that we consider for local models of tropical contractions also satisfies \pref{cd:parent} (\pref{lm:parent}), although it is not regular at infinity in general.
\end{example}

\begin{proposition}\label{pr:w-stalk}
Let $\scrP$ be a rational polyhedral complex in $\bT^d$ satisfying \pref{cd:parent}, and $x$ be a point in the support $X$ of $\scrP$.
Let further $\tau_x \in \scrP$ be the polyhedron which contains $x$ in its relative interior.
We suppose $\tau_x \in \scrP_I$.
When $\tau_x$ is a finite polyhedron, the stalk of $\scW_{p}^Q$ at $x$ is 
\begin{align}\label{eq:wavestalk}
\scW_{p, x}^Q=\lc \bigwedge^p \lb \bigcap_{\substack{\sigma \in \scrP_I, \sigma \succ \tau_x \\ \sigma: \mathrm{maximal}}} T(\sigma) \rb \rc \cap \bigwedge^p \lb \bZ^d_I \otimes_\bZ Q \rb.
\end{align}
When $\tau_x$ is an infinite face, let $\tilde{\tau}_x \in \scrP$ be the finite polyhedron  such that $\tau_x$ is the closure of $\tilde{\tau}_x \cap \bR^d_I$.
($\tilde{\tau}_x$ is unique by \pref{cd:parent}.1.)
Then we have
\begin{align}\label{eq:Wpx}
\scW_{p, x}^Q=\scW_{p, \tilde{x}}^Q,
\end{align}
where $\tilde{x}$ is a point in $\rint \lb \tilde{\tau}_x \rb$.
\end{proposition}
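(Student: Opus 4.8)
The plan is to compute the stalk $\scW_{p,x}^Q=\lb\cHom\lb\scF^p_\bZ,Q\rb\rb_x$ directly, to reduce the case of an infinite $\tau_x$ to that of its finite parent by means of \pref{cd:parent}, and to identify the finite case with a concrete intersection of exterior powers. First I would fix a sufficiently small contractible open neighbourhood $U$ of $x$ meeting only the cells $\sigma\succ\tau_x$, with each $\sigma\cap U$ contractible; since $\scrP$ is locally finite, on such a $U$ the sheaf $\scF^p_\bZ$ is constructible (indeed cellular) for the stratification $\lc\Int(\sigma)\cap U\rc_{\sigma\succ\tau_x}$, with stalk $F^p_\bZ(\sigma)$ on $\Int(\sigma)$ and generization maps equal to the transposes $r_{\tau,\sigma}$ of the maps $e_{\tau,\sigma}$ of \eqref{eq:ext}. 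A morphism from such a sheaf to a constant sheaf is the same datum as a compatible family of maps out of the stalks, and dualizing this yields, for $U$ small enough (the resulting group being independent of $U$),
\begin{align}\label{eq:wavefam}
\scW_{p,x}^Q=\lc(u_\sigma)_{\sigma\succ\tau_x}\relmid u_\sigma\in F_p^\bZ(\sigma)\otimes_\bZ Q,\ e_{\tau,\sigma}(u_\sigma)=u_\tau\ \mathrm{whenever}\ \tau_x\prec\tau\prec\sigma\rc.
\end{align}

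When $\tau_x$ is finite, $\tau_x$ is a finite face of every $\sigma\succ\tau_x$, so all such $\sigma$ lie in $\scrP_I$ and each map $e_{\tau,\sigma}$ in \eqref{eq:wavefam} is the inclusion $F_p^\bZ(\sigma)\hookrightarrow F_p^\bZ(\tau)\subset\bigwedge^p\bZ^d_I$; a compatible family is then nothing but a single element of $\bigcap_{\sigma\succ\tau_x}\lb F_p^\bZ(\sigma)\otimes_\bZ Q\rb$, the intersection taken inside $\bigwedge^p\lb\bZ^d_I\otimes_\bZ Q\rb$. Because $F_p^\bZ(\sigma)\subset F_p^\bZ(\tau)$ for $\tau\prec\sigma$, this intersection only feels the maximal-dimensional $\sigma\in\scrP_I$ containing $\tau_x$, for which $F_p^\bZ(\sigma)=\bigwedge^pT(\sigma)\cap\bigwedge^p\bZ^d_I$. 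Combining this with the elementary identity $\bigcap_j\bigwedge^pV_j=\bigwedge^p\lb\bigcap_jV_j\rb$ for finitely many linear subspaces of a vector space (reduce to two subspaces and pick a basis of the ambient space adapted to them: a wedge monomial lying in both $\bigwedge^pV_1$ and $\bigwedge^pV_2$ involves only basis vectors of $V_1\cap V_2$; then induct), and with the fact that $T(\sigma)$ is a rational subspace so that exterior powers of the relevant saturated sublattices stay saturated, one rewrites the answer as the right-hand side of \eqref{eq:wavestalk} (with the intersection there understood over the maximal-dimensional cells).

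When $\tau_x$ is an infinite face, with finite parent $\tilde\tau_x$ (unique by \pref{cd:parent}.1), I would compare \eqref{eq:wavefam} for $x$ with the same description for $\tilde x\in\Int(\tilde\tau_x)$. By \pref{cd:parent}.2 the finite cells $\sigma\succ\tau_x$ are precisely the cells $\sigma\succ\tilde\tau_x$, and an arbitrary infinite cell $\sigma\succ\tau_x$ has its finite parent $\tilde\sigma$ again among them. Sending a compatible family $(u_\sigma)_{\sigma\succ\tau_x}$ to the subfamily indexed by the finite cells gives a homomorphism $\scW_{p,x}^Q\to\scW_{p,\tilde x}^Q$, which I claim is an isomorphism: it is injective because the relation $e_{\sigma,\tilde\sigma}(u_{\tilde\sigma})=u_\sigma$ forces every infinite component to vanish once the finite ones do, and surjective because a family $(w_\sigma)_{\sigma\succ\tilde\tau_x}$ extends to $\tau_x$ by setting $u_\sigma:=e_{\sigma,\tilde\sigma}(w_{\tilde\sigma})$ on each infinite $\sigma\succ\tau_x$, the required compatibilities then following from the functoriality $e_{\tau,\tilde\tau}\circ e_{\tilde\tau,\sigma}=e_{\tau,\sigma}$ of the maps of \eqref{eq:ext} together with \pref{cd:parent}.2. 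This proves \eqref{eq:Wpx}.

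The hard part is this last comparison. Even granting \pref{cd:parent}, which is designed precisely for it, the argument requires keeping careful track of which cells $\sigma\succ\tau_x$ are finite and which are infinite, of where their parents lie, and of the several cases in the surjectivity check ($\tau,\sigma$ both finite; $\tau$ infinite and $\sigma$ finite; $\tau$ and $\sigma$ both infinite), each of which is a routine diagram chase with the $e$-maps but which together are where every hypothesis of \pref{cd:parent} gets used. A secondary technical point, worth stating carefully but not hard, is the identification \eqref{eq:wavefam}: that $\cHom$ of the cellular sheaf $\scF^p_\bZ$ into a constant sheaf is computed by compatible families of cospecialization maps, for which one wants $U$ and each $U\cap\sigma$ contractible and needs to observe that the group so obtained does not depend on the small neighbourhood $U$.
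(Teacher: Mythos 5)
Your proof is correct and follows essentially the paper's route: compute $\scW_{p,x}^Q$ as morphisms $\scF^p_\bZ\to Q$ over a small open star, reduce the finite case to an intersection of the multi-tangent lattices $F^\bZ_p(\sigma)$, and use \pref{cd:parent} to pass from an infinite $\tau_x$ to its finite parent $\tilde\tau_x$. Your presentation in terms of compatible families of cosheaf elements $(u_\sigma)$ is just the dual of the paper's description via factorizations of $\scF^p_{\bZ,x}$ through its quotients $\scF^p_{\bZ,y}$; and the wedge-intersection identity $\bigcap_j\bigwedge^pV_j=\bigwedge^p\bigl(\bigcap_jV_j\bigr)$ that you spell out (together with your correct caveat that the intersection in \eqref{eq:wavestalk} is effectively over the maximal-dimensional cells, since those are where the constraints come from) is the same final step the paper carries out implicitly.
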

\begin{proof}
Let $U_x \subset X$ be a small open neighborhood of $x$.
We will compute the section $\scW_p^Q \lb U_x \rb$ over $U_x$.
We write the lattice $\bZ^d \subset \bT^d$ as $N$, and its dual as $M:=\Hom \lb N, \bZ \rb$.
We also set $N_Q:=N \otimes_\bZ Q$.
We have
\begin{align}
\scF^\bZ_{p, x}&=F_p^\bZ (\tau_x)
=\lb \sum_{\sigma \in \scrP_I, \sigma \succ \tau_x} \bigwedge^p T(\sigma) \rb \cap \bigwedge^p \bZ^d_I\\
\scF_{\bZ, x}^p&=\left. \bigwedge^p M \middle/ \lb \scF^\bZ_{p, x} \rb^\perp \right.
=\left. \bigwedge^p M \middle/ \bigcap_{\sigma \in \scrP_I, \sigma \succ \tau_x} \lb \bigwedge^p T(\sigma) \rb^\perp \right. .
\end{align}

First, suppose $\tau_x$ is a finite polyhedron.
Then for any point $y \in U_x$, the natural map $\scF_{\bZ, x}^p \to \scF_{\bZ, y}^p$ which is induced by the restriction map of the sheaf $ \scF_{\bZ}^p$ is a quotient map.
It turns out that the section $\scW_p^Q \lb U_x \rb \subset \bigwedge^p N_Q$ consists of maps $\scF_{\bZ, x}^p \to Q$ that factors through $\scF_{\bZ, x}^p \to \scF_{\bZ, y}^p$ for any point $y$ in $U_x$.
Hence, we get
\begin{align}
\scW_{p}^Q \lb U_x \rb=\bigcap_{\substack{\sigma \in \scrP_I, \sigma \succ \tau_x \\ \sigma: \mathrm{maximal}}} \lb \lb \bigwedge^p T(\sigma) \rb^\perp \rb^\perp
=\lb \bigcap_{\substack{\sigma \in \scrP_I, \sigma \succ \tau_x \\ \sigma: \mathrm{maximal}}} \bigwedge^p T(\sigma) \rb \cap \bigwedge^p N_Q.
\end{align}
This is equal to the right hand side of \eqref{eq:wavestalk}.
Therefore, we have \eqref{eq:wavestalk}.

Next, suppose $\tau_x$ is an infinite face.
Take a point $\tilde{x} \in \rint \lb \tilde{\tau}_x \rb \cap U_x$.
Let further $\sigma \in \scrP$ be an arbitrary infinite polyhedron such that $\sigma \succ \tau_x$, and $\tilde{\sigma} \in \scrP$ be the unique finite polyhedron such that $\sigma$ is the closure of $\tilde{\sigma} \cap \bR^d_J$ with some $J \subset \lc 1, \cdots, r \rc$.
For any points $y \in \rint \lb \sigma \rb \cap U_x$ and $\tilde{y} \in \rint \lb \tilde{\sigma} \rb \cap U_x$, the natural map $\scF_{\bZ, y}^p \to \scF_{\bZ, \tilde{y}}^p$ induced by the restriction map of the sheaf $ \scF_{\bZ}^p$ is an inclusion.
It turns out that for any open subset $U \subset U_x$, the restriction map
\begin{align}
\scF^p_{\bZ} \lb U \rb \to \scF^p_{\bZ} \lb U \setminus \bigcup_{\substack{\sigma : \mathrm{infinite} \\ \sigma \succ \tau_x}} \rint \lb \sigma \rb \rb
\end{align}
is an inclusion.
From this, one can see $\scW_{p}^Q \lb U_x \rb =\scW_{p}^Q \lb U_x \setminus \bigcup_{\substack{\sigma : \mathrm{infinite} \\ \sigma \succ \tau_x}} \rint \lb \sigma \rb \rb$.
From \eqref{eq:t-tau}, we can also see that $\lb U_x \setminus \bigcup_{\substack{\sigma : \mathrm{infinite} \\ \sigma \succ \tau_x}} \rint \lb \sigma \rb \rb$ is a small open neighborhood of $\tilde{x}$.
Thus we obtain \eqref{eq:Wpx}.
\end{proof}

\pref{eq:wavestalk} and \pref{eq:Wpx} agree with the definition of wave tangent spaces in \cite{MR3330789}.

\begin{remark}\label{rm:MZ}
To be precise, the definition of the wave tangent space at a finite polyhedron, which was given in \cite{MR3330789} does not agree with \eqref{eq:wavestalk} in general, if it is interpreted literally.
Let $\scrP$ be a rational polyhedral complex in $\bT^d$, and $X$ be its support.
Let further $\tau \in \scrP_I$ be a finite polyhedron, and $x \in \rint (\tau)$ be a point.
We define the \emph{local cone} $\Sigma(x)$ at $x$ by
\begin{align}
\Sigma(x):=\lc y \in \bR^d_I \relmid x +\varepsilon y \in X \mathrm{\ for\ a\ sufficiently\ small\ } \varepsilon>0 \rc.
\end{align}
In \cite[Section 1.3]{MR3330789}, the first wave tangent space $\scW_{1, x}^\bR$ over $\bR$ at $x$ is defined as the intersection of all maximal linear subspaces contained in the local cone $\Sigma(x)$, and $\scW_{p, x}^\bR:=\bigwedge^p \scW_{1, x}^\bR$ for $p \geq 2$.
In general, this does not coincide with \eqref{eq:wavestalk}.
For instance, consider the support $|\Sigma|$ of the fan $\Sigma$ in $N_\bR \cong \bR e_1 \oplus \bR e_2$ given by
\begin{align}
\Sigma:= \lc \bR_{\geq 0} e_1, \bR_{\geq 0} (-e_1),  \bR_{\geq 0} (2e_1+e_2),  \bR_{\geq 0} (-2e_1+e_2),  \bR_{\geq 0} (e_1-e_2), \bR_{\geq 0} (-e_1-e_2), \lc 0 \rc \rc.
\end{align}
\eqref{eq:wavestalk} with $p=1$ and $x=0$ is $\lc 0 \rc$, whereas the only maximal linear subspace contained in $|\Sigma|$ is $\bR e_1$.
The author learned from Ilia Zharkov that what they actually meant in the definition of wave tangent spaces in \cite{MR3330789} is \eqref{eq:wavestalk}.
\end{remark}

\begin{definition}
We say that two points $x_1, x_2 \in X$ are \emph{equivalent} when there exists a path connecting $x_1$ and $x_2$, along which the dimension of the stalk of $\scW_1^\bR$ is constant. 
We also call the equivalence classes \emph{strata}.
For two strata $\scE_1, \scE_2$ of $X$, we write $\scE_1 \prec \scE_2$ when $\scE_1$ is on the boundary of $\scE_2$.
\end{definition}

Lastly, we recall the definition of eigenwaves introduced in \cite{MR3330789}.
It is represented as an element of the \v{C}ech cohomology group as follows:
\begin{definition}\label{df:eigenwave}
Let $(X, \mathrm{Aff}_X)$ be a rational polyhedral space.
Take an open covering $\scU:=\lc U_i \rc_{i \in I}$ of $X$ so that it satisfies the following:
\begin{enumerate}
\item For any subset $I_0 \subset I$ such that $\bigcap_{i \in I_0} U_i \neq \emptyset$, there exists a minimum stratum $\scE_{I_0}^{\min}$ of $\bigcap_{i \in I_0} U_i$ such that $\scE \succ \scE_{I_0}^{\min}$ for any stratum $\scE$ of $\bigcap_{i \in I_0} U_i$.
\item For any open set $U_{i} \in \scU$, there is a chart $\varphi \colon U \to V$ of $X$ such that $U$ contains $U_{i}$. 
\end{enumerate}
The covering $\scU$ is an acyclic covering for the sheaf $\scW_p^Q$ as we check in \pref{lm:acyclic} below.
We further take points $p_i \in U_i, p_{i, j} \in U_i \cap U_j$ from every open set $U_i$ and every $1$-simplex $(U_{i}, U_{j})$ of $\scU$ so that they sit in the relative interior of a finite polyhedron and $p_i \in \scE_{\lc i \rc}^{\min}$, $p_{i, j} \in \scE_{\lc i, j \rc}^{\min}$.
The vector $\lb p_{i}-p_{i, j} \rb$ in a chart containing $U_i$ can be regarded as an element in $\scW_1^\bR \lb U_{i} \cap U_{j} \rb$.
We set $c_X \lb (U_{i}, U_{j}) \rb:=\lb p_{j}-p_{i, j} \rb - \lb p_{i}-p_{i, j} \rb \in \scW_1^\bR \lb U_{i} \cap U_{j} \rb$ for every $1$-simplex $(U_{i}, U_{j})$ of $\scU$.
Then it defines a class in $H^1 \lb X, \scW_1^\bR \rb$, which we will write as $c_X \in H^1 \lb X, \scW_1^\bR \rb$.
We call it the \emph{eigenwave} of $X$.
\end{definition}

\begin{remark}
The reason why we take an intermediate point $p_{i, j} \in U_i \cap U_j$ is that there might not be a chart $\varphi \colon U \to V$ such that $U$ contains both $p_i$ and $p_j$ in general.
If there is such a chart, then the vector $\lb p_j-p_i \rb$ can be regarded as an element in $\scW_1^\bR \lb U_{i} \cap U_{j} \rb$, and $c_X \lb (U_{i}, U_{j}) \rb=p_j-p_i$.
\end{remark}

\begin{remark}
The eigenwave was originally defined as an element of the singular cohomology group.
We refer the reader to \cite[Section 5.1]{MR3330789} for its precise definition.
When a rational polyhedral space $X$ has a polyhedral structure $\scrP$ in the sense of \cite[Definition 1.10]{MR3330789}, one can see that the definition of eigenwaves in \cite{MR3330789} agrees with \pref{df:eigenwave} as follows:
One can take a triangulation $\scrT$ of $\scrP$ (cf.~\cite[Section 1.4]{MR3330789}).
The \v{C}ech cohomology group with respect to the covering $\lc U_p \rc_{p \in \scrT(0)}$ of open stars of vertices of $\scrT$ is naturally isomorphic to the simplicial cohomology group with respect to $\scrT$.
We further identify the simplicial cohomology group with the singular cohomology group.
For each $p \in \scrT(0)$, we take a mobile point $p' \in U_p$ which is mapped to $p$ by the projection along the divisorial directions of $p$.
(We refer the reader to \cite[Section 1]{MR3330789} for the terminologies \emph{mobile} and \emph{divisorial directions}.)
Then the value of the eigenwave of \cite{MR3330789} for the singular $1$-simplex corresponding to $\lb U_{p_1}, U_{p_2} \rb$ is $p_2'-p_1'$.
Here each point $p'$ is in the relative interior of a finite polyhedron and in the minimum stratum of $U_p$.
Hence, the definition in \cite{MR3330789} agrees with \pref{df:eigenwave}.
\end{remark}

\begin{lemma}\label{lm:acyclic}
The covering $\scU$ in \pref{df:eigenwave} is an acyclic covering for the sheaf $\scW_p^Q$ $(p \geq 0, Q=\bZ, \bQ, \bR)$.
\end{lemma}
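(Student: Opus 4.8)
The statement amounts to showing that every nonempty finite intersection $U_0 := \bigcap_{i \in I_0} U_i$ of members of $\scU$ satisfies $H^q \lb U_0, \scW_p^Q \rb = 0$ for all $q \geq 1$. By condition (2) in \pref{df:eigenwave}, $U_0$ lies in the domain of a single chart, so via this chart we may view $U_0$ as an open subset of the support of a rational polyhedral complex $\scrP$ in $\bT^d$ with ambient lattice $N := \bZ^d$. The computation carried out in the proof of \pref{pr:w-stalk} shows that $\scW_p^Q|_{U_0}$ is then a subsheaf of the constant sheaf on $U_0$ with stalk $\bigwedge^p \lb N \otimes_\bZ Q \rb$, with $\scW_p^Q \lb U' \rb = \bigcap_{x \in U'} \scW_{p, x}^Q$ for every open $U' \subset U_0$ and all restriction maps inclusions; in particular $\scW_p^Q$ is constant on each (path-connected) stratum. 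By condition (1), $U_0$ has a minimum stratum $\scE_0 := \scE_{I_0}^{\min}$; fix a point $x_0 \in \scE_0$. Using that $\scE_0$ meets the closure of every cell of $\scrP$ that meets $U_0$, a specialization argument gives $\scW_{p, x_0}^Q \subset \scW_{p, x}^Q$ for every $x \in U_0$; hence $\scW_p^Q \lb U_0 \rb = \scW_{p, x_0}^Q =: A$, which proves acyclicity in degree $0$ and identifies the restriction $\scW_p^Q|_{\scE_0}$ with the constant sheaf $A$.

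For $q \geq 1$ I would run a Vietoris--Begle argument resting on the following geometric input, which is what the minimum-stratum hypothesis should provide: the set $U_0$ (which, like every finite intersection in the covering $\scU$, is contractible) admits a \emph{cell-adapted} deformation retraction onto $\scE_0$, namely one in which each point $x$ is moved along the straight segment joining it to a point $r(x) \in \scE_0$ lying in the closure of the cell of $x$, so that the cell containing the moving point stays constant until the very end of the homotopy. Since $\scE_0$ is then a retract of the contractible $U_0$, it is contractible; and since $\scW_p^Q$ is constant on the interiors of cells, it is constant along the open part of every retraction fibre, so that $j^{-1} \lb \scW_p^Q|_{U_0} \rb$ is pulled back, along the projection $U_0 \setminus \scE_0 \cong \scE_0 \times L \times \lb 0, 1 \rb \to L$, from a sheaf $\scG_L$ on the compact link $L$, where $j \colon U_0 \setminus \scE_0 \hookrightarrow U_0$ is the open inclusion with closed complement $i \colon \scE_0 \hookrightarrow U_0$. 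Now consider the short exact sequence $0 \to j_! j^{-1} \lb \scW_p^Q|_{U_0} \rb \to \scW_p^Q|_{U_0} \to i_\ast \lb \scW_p^Q|_{\scE_0} \rb \to 0$. Its last term has $R \Gamma \lb U_0, i_\ast ( \scW_p^Q|_{\scE_0} ) \rb = R \Gamma \lb \scE_0, A \rb = A$ because $\scE_0$ is contractible; and the standard computation of the cohomology of an open cone, applied fibrewise over the contractible $\scE_0$, gives $R \Gamma \lb U_0, j_! j^{-1} \scW_p^Q \rb = 0$: this complex is the cone (equivalently, the shifted fibre) of the natural isomorphism, induced by the cone structure, between the ``global'' link cohomology $R \Gamma \lb U_0 \setminus \scE_0, j^{-1} \scW_p^Q \rb$ and the stalk at $x_0$ of $R j_\ast j^{-1} \scW_p^Q$ (the ``local'' link cohomology). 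The long exact cohomology sequence of the displayed short exact sequence then yields $H^q \lb U_0, \scW_p^Q \rb = 0$ for $q \geq 1$ (and recovers $H^0 = A$). Nothing in this uses features specific to $Q$, so the argument is uniform in $Q = \bZ, \bQ, \bR$ and all $p \geq 0$.

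The step I expect to be the main obstacle is the geometric input itself: showing that the minimum-stratum condition forces $U_0$ to ``cone down'' to $\scE_0$ through descending chains of faces, and building the cell-adapted retraction accordingly. The naive straight-line retraction of $U_0$ onto a single point $x_0$ does not work, because the segment from a point $x$ to $x_0$ may leave the closure of the cell of $x$, passing from a low-dimensional cell into a higher-dimensional one before reaching $\scE_0$; along such a path the stalks of $\scW_p^Q$ are not monotone, and $\scW_p^Q$ fails to be constant along it. One must instead choose, compatibly over all cells of $\scrP$ meeting $U_0$, a face of each cell whose relative interior lies in $\scE_0$ (one exists since $\scE_0$ meets the closure of the cell), use this to retract each cell onto that face by a linear projection, and verify that these glue to a continuous deformation retraction of $U_0$ onto $\scE_0$ with the ``radial constancy'' of $\scW_p^Q$ recorded above. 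Once this geometry is in place, the cohomological vanishing is formal.
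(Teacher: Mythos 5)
Your approach differs from the paper's, and while it is plausible in spirit, it has a genuine gap that you yourself flag as the main obstacle.

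The paper's proof (which it borrows from \cite[Lemma~5.5]{MR2213573}) does not attempt a deformation retraction of $U_0 := \bigcap_{i \in I_0} U_i$ onto the minimum stratum $\scE_0$. Instead it exploits the conical structure around a single point $x \in \scE_0$: one exhibits a fundamental system of neighborhoods $\lc W_j \rc_j$ of $x$ inside $U_0$, each homeomorphic to $U_0$ via a map $\varphi_j$ satisfying $\varphi_j^{-1}\scW_p^Q = \scW_p^Q$ (a scaling/self-similarity homeomorphism). This gives $H^q(U_0, \scW_p^Q) \cong H^q(W_j, \scW_p^Q)$ for every $j$, identifies these groups with the stalk $(R^q\,\mathrm{id}_\ast \scW_p^Q)_x$, and then uses the triviality $R^q\,\mathrm{id}_\ast \scA = 0$ for $q \geq 1$. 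Once the self-similar family $\lc W_j, \varphi_j \rc$ is in hand, no retraction, no open--closed decomposition, and no link-cohomology computation are needed; the vanishing is entirely formal.

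Your route would instead (a) construct a cell-adapted deformation retraction $U_0 \to \scE_0$, (b) globally trivialize $U_0 \setminus \scE_0 \cong \scE_0 \times L \times (0,1)$ and show $j^{-1}\scW_p^Q$ is pulled back from the link $L$, and (c) run the recollement triangle $j_! j^{-1}\scW \to \scW|_{U_0} \to i_\ast \scW|_{\scE_0}$ with the vanishing $R\Gamma\lb U_0, j_! j^{-1}\scW \rb = 0$. Step (a) is the main missing ingredient: you correctly diagnose why the naive straight-line retraction onto $x_0$ fails (the segment may exit the closure of a cell, over which $\scW_p^Q$ is not constant), and you correctly describe what is needed (a retraction moving each point within its own cell toward a face whose relative interior lies in $\scE_0$, glued continuously over all incident cells), but you do not construct it or verify that such choices exist compatibly. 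Step (b)'s global product decomposition and step (c)'s identification of $R\Gamma\lb \scE_0, i^{-1}Rj_\ast j^{-1}\scW \rb$ with the stalk $\lb Rj_\ast j^{-1}\scW\rb_{x_0}$ (which requires constancy of $i^{-1}Rj_\ast j^{-1}\scW$ along $\scE_0$, i.e.\ local triviality of the link over $\scE_0$) also deserve justification. You also assert that $U_0$ is contractible, which is not immediate from conditions~(1) and~(2) of \pref{df:eigenwave} alone and itself rests on the same cone-like structure.

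In short: your route is reasonable but substantially harder than the paper's, and as written it is incomplete at exactly the places you flag. The paper's device — scaling toward a point and invoking $R^q\,\mathrm{id}_\ast = 0$ — sidesteps the retraction, the trivialization, and the link cohomology entirely, at the (shared) cost of needing $U_0$ to be cone-like near $\scE_0$, which is used there only to produce the sheaf-preserving homeomorphisms $\varphi_j$.
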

\begin{proof}
This can be checked by the same argument as the one in \cite[Lemma 5.5]{MR2213573}.
For any subset $I_0 \subset I$ such that $\bigcap_{i \in I_0} U_i \neq \emptyset$, take a point $x \in \scE_{I_0}^{\min}$, a fundamental system of neighborhoods $\lc W_j \rc_j$ of $x$ in $\bigcap_{i \in I_0} U_i$, and homeomorphism $\lc \varphi_j \colon W_j \to \bigcap_{i \in I_0} U_i \rc_j$ such that $\varphi_j^{-1} \scW_p^Q=\scW_p^Q$.
One has the isomorphism
\begin{align}\label{eq:retract-stalk}
H^q \lb \bigcap_{i \in I_0} U_i, \scW_p^Q \rb \cong H^q \lb W_j, \scW_p^Q \rb
\end{align}
for all $j$.
Thus $H^q \lb W_j, \scW_p^Q \rb$ is isomorphic to the stalk of $R^q \id_\ast \scW_p^Q$ at $x$.
In general, one has $R^q \id_\ast \scA=0$ $(p \geq 1)$ for any sheaf $\scA$ of abelian groups.
Hence, we can see that \eqref{eq:retract-stalk} is trivial, and the covering $\scU$ is acyclic for the sheaf $\scW_p^Q$.
\end{proof}

Recall that one has the \emph{tropical exponential sheaf sequence} (cf.~e.g.~\cite[Section 3]{MR3894860})
\begin{align}\label{eq:expseq}
0 \to \bR \to \mathrm{Aff}_X \to \scF^1_\bZ \to 0.
\end{align}

\begin{proposition}\label{pr:eigext}
The extension class of the tropical exponential sheaf sequence \eqref{eq:expseq} in 
$H^1 \lb X, \scW_1^\bR \rb =H^1 \lb X, \cHom \lb \scF^1_\bZ, \bR \rb \rb \subset \Ext^1 \lb \scF^1_\bZ, \bR \rb$
coincides with $-c_X \in H^1 \lb X, \scW_1^\bR \rb$.
\end{proposition}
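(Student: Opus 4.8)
The plan is to prove \pref{pr:eigext} in parallel with \pref{pr:radiance} (\cite[Proposition 1.12]{MR2213573}): I will represent both the extension class of the tropical exponential sheaf sequence \eqref{eq:expseq} and the eigenwave $c_X$ by explicit \v{C}ech cocycles with respect to a single open covering and compare them. The covering will be $\scU=\lc U_i\rc_{i\in I}$ from \pref{df:eigenwave}, which by \pref{lm:acyclic} is acyclic for $\scW_1^\bR=\cHom\lb\scF^1_\bZ,\bR\rb$, so that $H^1\lb X,\scW_1^\bR\rb$ is computed as the first \v{C}ech cohomology of $\scU$. I will use the standard description of the edge map $H^1\lb X,\cHom\lb\scF^1_\bZ,\bR\rb\rb\hookrightarrow\Ext^1\lb\scF^1_\bZ,\bR\rb$ appearing in the statement: if over each $U_i$ one chooses a section $\sigma_i\colon\scF^1_\bZ|_{U_i}\to\mathrm{Aff}_X|_{U_i}$ of the surjection $\pi$ in \eqref{eq:expseq}, then, since $\pi\circ\lb\sigma_j-\sigma_i\rb=0$, the difference $\sigma_j-\sigma_i$ takes values in $\ker\pi=\bR$ and defines a \v{C}ech $1$-cocycle $\lc\sigma_j-\sigma_i\rc$ with values in $\cHom\lb\scF^1_\bZ,\bR\rb=\scW_1^\bR$, whose class is the extension class (with the sign normalization already fixed in \pref{pr:radiance}). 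Thus it suffices to construct such local sections $\sigma_i$ with $\sigma_j-\sigma_i=-c_X\lb\lb U_i,U_j\rb\rb$ in $\scW_1^\bR\lb U_i\cap U_j\rb$.

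First I would construct the local sections. For each $i$, condition (2) of \pref{df:eigenwave} provides a chart $\varphi_i\colon\widetilde U_i\to\widetilde V_i$ of $X$ with $\widetilde U_i\supseteq U_i$, where $\widetilde V_i$ is open in the support of a rational polyhedral complex $\scrP_i$ in $\bT^{d_i}$, and condition (1) provides the basepoint $p_i\in\scE_{\lc i\rc}^{\min}\subset U_i$, sitting in the relative interior of a finite polyhedron. Given a local section $\mu$ of $\scF^1_\bZ$ on an open $U\subseteq U_i$, I choose a representative integral cotangent vector $m$ for $\mu$ whose coordinates vanish in the sedentary directions of $p_i$ --- this is possible because such vectors generate the stalk $\scF^1_{\bZ,p_i}$, and because the minimality of the stratum $\scE_{\lc i\rc}^{\min}$ forces the sections of $\scF^1_\bZ$ over $U_i$ to be governed by this stalk --- and I set $\sigma_i\lb\mu\rb:=\la m,\varphi_i\lb\bullet\rb-\varphi_i\lb p_i\rb\ra\in\mathrm{Aff}_X\lb U\rb$. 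I would then check that this is independent of the choice of $m$ (two choices differ by a cotangent vector annihilating $\scF^\bZ_{1,p_i}$, so their difference is an affine function vanishing on $X$ near $p_i$, hence, again by minimality, on all of $U_i$), that it is compatible with restrictions, and that $\pi\circ\sigma_i=\mathrm{id}$. Replacing the additive constant $-\la m,\varphi_i\lb p_i\rb\ra$ by any other admissible one alters $\sigma_i$ by a section of $\scW_1^\bR\lb U_i\rb$ and hence does not change the \v{C}ech class; the particular choice above is what makes the comparison with $c_X$ succeed.

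Finally I would compute $\sigma_j-\sigma_i$ on an overlap $U_i\cap U_j$ using the intermediate basepoint $p_{i,j}\in\scE_{\lc i,j\rc}^{\min}$ from \pref{df:eigenwave}, which lies in both charts. Writing the transition function on the overlap as $\varphi_i=A_{ij}\circ\varphi_j+b_{ij}$ with $A_{ij}$ integral linear and $b_{ij}$ a translation, and inserting $\varphi_i\lb p_i\rb=\varphi_i\lb p_{i,j}\rb+\lb\varphi_i\lb p_i\rb-\varphi_i\lb p_{i,j}\rb\rb$ together with $\varphi_i\lb p_{i,j}\rb=A_{ij}\varphi_j\lb p_{i,j}\rb+b_{ij}$, a direct computation cancels the terms involving $A_{ij}^{-1}b_{ij}$ and gives, for a section $\mu$ with chart representatives $m^{\lb i\rb},m^{\lb j\rb}$,
\begin{align}
\lb\sigma_i-\sigma_j\rb\lb\mu\rb
&=\la m^{\lb j\rb},\varphi_j\lb p_j\rb-\varphi_j\lb p_{i,j}\rb\ra-\la m^{\lb i\rb},\varphi_i\lb p_i\rb-\varphi_i\lb p_{i,j}\rb\ra\\
&=\la\mu,c_X\lb\lb U_i,U_j\rb\rb\ra,
\end{align}
where the last equality is precisely \pref{df:eigenwave}: the vectors $\varphi_j\lb p_j\rb-\varphi_j\lb p_{i,j}\rb$ and $\varphi_i\lb p_i\rb-\varphi_i\lb p_{i,j}\rb$ represent the classes $\lb p_j-p_{i,j}\rb$ and $\lb p_i-p_{i,j}\rb$ in $\scW_1^\bR\lb U_i\cap U_j\rb$, so their difference is the eigenwave cocycle. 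Hence $\sigma_j-\sigma_i=-c_X\lb\lb U_i,U_j\rb\rb$ and the extension class of \eqref{eq:expseq} equals $-c_X$. The step I expect to be the main obstacle is exactly the construction and well-definedness of the local sections $\sigma_i$ over all of $U_i$ (not merely near $p_i$): this requires controlling the behaviour of $\scF^1_\bZ$ and $\mathrm{Aff}_X$ along the various strata at infinity inside $U_i$, which is what the conditions on $\scU$ and on the mobile basepoints in \pref{df:eigenwave} are tailored for, and which runs parallel both to the argument that $c_X$ is itself a well-defined \v{C}ech class and to the role of mobile points in the original singular-cohomology definition of the eigenwave.
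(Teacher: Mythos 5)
Your proposal is correct and takes essentially the same approach as the paper: fix the covering $\scU$ of \pref{df:eigenwave}, build the local splitting of \eqref{eq:expseq} over $U_i$ so that it vanishes at the basepoint $p_i$, and compare $\sigma_j - \sigma_i$ on overlaps via the intermediate point $p_{i,j}$ to recover $-c_X$. The only difference is presentational: the paper states the splitting by the normalization ``all elements in the image of $\scF^1_{\bZ,p_i}\to\mathrm{Aff}_{X,p_i}$ take value $0$ at $p_i$'' and gives a two-line computation $\lc(s_j-s_i)(m)\rc(q)=\la m,p_{i,j}-p_j\ra-\la m,p_{i,j}-p_i\ra=-c_X((U_i,U_j))(m)$, whereas you spell out the construction of $\sigma_i$, the choice of representative $m$, the well-definedness over all of $U_i$ (using the minimum-stratum condition), and the handling of transition functions $A_{ij},b_{ij}$ explicitly --- all of which the paper leaves implicit but which do need the conditions in \pref{df:eigenwave} exactly as you say.
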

\begin{proof}
We take an open covering $\scU:=\lc U_i \rc_{i \in I}$ of $X$ and points $p_i, p_{i, j}$ as we did in \pref{df:eigenwave}.
Let $s_i \colon \scF^1_\bZ \to \mathrm{Aff}_X$ be the splitting on each open set $U_i$ such that all elements in the image of the induced homomorphism $\scF^1_{\bZ, p_i} \to \mathrm{Aff}_{X, p_i}$ between the stalks at $p_i$ take the value $0$ at $p_i$.
The extension class of \eqref{eq:expseq} is given by the $1$-cocycle $\lc s_j - s_i \colon \scF^1_{\bZ} \to \bR \rc_{i, j}$.
For any $m \in \Gamma \lb U_i \cap U_j, \scF^1_{\bZ} \rb$ and $q \in U_i \cap U_j$, we have 
\begin{align}
\lc \lb s_j - s_i \rb (m) \rc(q)
&=\la m, q-p_j \ra-\la m, q-p_i \ra \\
&=\la m, p_{i, j}-p_j \ra-\la m, p_{i, j}-p_i \ra \\
&=-c_X \lb (U_i, U_j) \rb(m).
\end{align}
Hence, the extension class of \eqref{eq:expseq} coincides with $-c_X$.
\end{proof}

\subsection{Tropical toric varieties and tropicalizations}\label{sc:toric}

We recall the definition of tropical toric varieties of \cite{MR2428356, MR2511632}.
We refer the reader to \cite[Section 1]{MR2428356} or \cite[Section 3]{MR2511632} for more details.
Let $\Sigma$ be a fan in $N_\bR$.
For each cone $C \in \Sigma$, we set
\begin{align}
	C^\vee&:= \lc m \in M_\bR \relmid \la m,n \ra \geq 0  \mathrm{\ for\ all\ } n \in C \rc \\
	C^\perp&:= \lc m \in M_\bR \relmid \la m,n \ra =0  \mathrm{\ for\ all\ } n \in C \rc.
\end{align}
For each cone $C \in \Sigma$, we define $X_C(\bT)$ as the set of monoid homomorphisms $C^\vee \cap M \to (\bT, + )$
\begin{align}
	X_C(\bT):=\Hom(C^\vee \cap M, \bT)
\end{align}
with the compact open topology.
For elements $p_1, p_2 \in X_C(\bT)$, we define $p_1+p_2 \in X_C(\bT)$ as
\begin{align}
C^\vee \cap M \to \bT, \quad m \mapsto \lb p_1+p_2 \rb (m):=p_1(m)+p_2(m).
\end{align}
Then $\lb X_C(\bT), + \rb$ also becomes a monoid.
For an element $s \in C \subset X_C(\bT)$ and $\lambda \in \bT_{\geq 0}$, we also define $\lambda s \in X_C(\bT)$ as
\begin{align}
C^\vee \cap M \to \bT, \quad m \mapsto \lambda s(m).
\end{align}
Here we use the convention $\infty \cdot 0 =0$.
For a subset $S \subset C \subset X_C(\bT)$, we define 
\begin{align}\label{eq:tcone}
\cone_\bT \lb S \rb:=\lc \sum_{i=1}^n \lambda_i s_i \in X_C(\bT) \relmid n \geq 0, s_i \in S, \lambda_i \in \bT_{\geq 0} \rc. 
\end{align}
When a cone $C_1 \in \Sigma$ is a face of a cone $C_2 \in \Sigma$, we have a natural immersion,
\begin{align}
	X_{C_1}(\bT) \to X_{C_2}(\bT),\quad \left(p \colon C_1^\vee \cap M \to \bT\right) \mapsto ({C_2}^\vee \cap M \subset C_1^\vee \cap M \xrightarrow{p} \bT).
\end{align}
By gluing $\lc X_C(\bT) \rc_{C \in \Sigma}$ together, we obtain the \emph{tropical toric variety} $X_\Sigma(\bT)$ associated with the fan $\Sigma$
\begin{align}
	X_\Sigma(\bT):=\lb \bigsqcup_{C \in \Sigma} X_C(\bT) \rb \bigg/ \sim .
\end{align}
We also set $O_C(\bT):=\Hom(C^\perp \cap M,\bR)=N_\bR / \vspan(C)$ which we call the \emph{tropical torus orbit} corresponding to the cone $C \in \Sigma$.
For a face $C' \prec C$, there is a natural inclusion
\begin{align}\label{eq:O-emb}
O_{C'}(\bT) \hookrightarrow X_C(\bT), \quad \left(p \colon {C'}^\perp \cap M \to \bR \right) \mapsto 
\lb m \mapsto 
\left\{
\begin{array}{ll}
p(m) & m \in {C'}^\perp \cap M \\
\infty & \mathrm{otherwise} \\
\end{array}
\right.
\rb.
\end{align}
We think of the set $O_C(\bT)$ as a subset of $X_C(\bT)$ by this inclusion.
One has
\begin{align}
X_\Sigma(\bT)=\bigsqcup_{C \in \Sigma} O_{C}(\bT).
\end{align}
There is also a natural projection map
\begin{align}\label{eq:proj}
\pi_C \colon X_C(\bT) \to O_C(\bT), \quad \left(p \colon C^\vee \cap M \to \bT\right) \mapsto (C^\perp \cap M \subset C^\vee \cap M \xrightarrow{p} \bT).
\end{align}

In \pref{sc:rat}, we defined polyhedra and polyhedral complexes in $\bT^d$.
In a similar spirit, we call the closure in $X_\Sigma(\bT)$ of a rational polyhedron in some tropical torus orbit $O_{C}(\bT) \subset X_\Sigma(\bT)$ $\lb C \in \Sigma \rb$ a \emph{rational polyhedron} in $X_\Sigma(\bT)$.
Let $\overline{P} \subset X_\Sigma(\bT)$ be the polyhedron which is the closure of a rational polyhedron $P \subset O_{C}(\bT)$.
We call the closure of a face of $P$ a \emph{finite face} of $\overline{P}$, and the closure of a non-empty intersection $\overline{P} \cap O_{C'}(\bT)$ with some $C' \in \Sigma$ such that $C' \succ C$ an \emph{infinite face} of $\overline{P}$.
We also call a finite set $\scrP$ of rational polyhedra in $X_\Sigma(\bT)$ satisfying \pref{cd:complex} a \emph{rational polyhedral complex} in $X_\Sigma(\bT)$.

Let $C \subset N_\bR$ be a rational cone, and $X_{C}(\bT)$ be the associated tropical toric variety.
By Gordan's lemma (cf.~e.g.~\cite[Proposition 1.2.17]{MR2810322}), the monoid $C^\vee \cap M$ is finitely generated.
Hence, there exists a surjective monoid homomorphism from $\lb \bZ_{\geq 0} \rb^n$ to $C^\vee \cap M$, which induces a closed embedding 
\begin{align}\label{eq:G-emb}
\iota \colon X_{C}(\bT)= \Hom \lb C^\vee \cap M, \bT \rb \hookrightarrow \bT^n=\Hom \lb \lb \bZ_{\geq 0} \rb^n, \bT \rb.
\end{align}

\begin{lemma}\label{lm:}
Let $C' \prec C$ be a face.
The image of the corresponding tropical torus orbit $O_{C'}(\bT)=\Hom(C'^\perp \cap M,\bR) \subset X_{C}(\bT)$ by the embedding \eqref{eq:G-emb} is contained in 
\begin{align}
\bR^n_I:=\lc (x_1, \cdots, x_n) \in \bT^n \relmid x_i=\infty\ \mathrm{if\ and\ only\ if\ } i \in I \rc
\end{align}
with some subset $I \subset \lc 1, \cdots, n \rc$, and the restriction of the embedding \eqref{eq:G-emb} to the lattice $\Hom \lb {C'}^\perp \cap M, \bZ \rb=\left. N \middle/ \lb \vspan \lb C' \rb \cap N \rb \right. \subset O_{C'}(\bT)$ is a primitive embedding into the lattice $\bZ_I^n$ of $\bR^n_I$.
\end{lemma}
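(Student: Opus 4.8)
The plan is to read everything off the generators defining \eqref{eq:G-emb}. Fix generators $m_1,\dots,m_n$ of the monoid $C^\vee\cap M$, so that $\iota$ sends a point $p\colon C^\vee\cap M\to\bT$ to $\lb p(m_1),\dots,p(m_n)\rb\in\bT^n$, and set $I:=\lc i\relmid m_i\notin C'^\perp\rc$. By the inclusion \eqref{eq:O-emb}, a point $p\in O_{C'}(\bT)=\Hom\lb C'^\perp\cap M,\bR\rb$, viewed inside $X_C(\bT)$, is the homomorphism that restricts to $p$ on $C'^\perp\cap M$ and equals $\infty$ elsewhere; hence $\iota(p)_i=p(m_i)\in\bR$ when $m_i\in C'^\perp$ and $\iota(p)_i=\infty$ when $m_i\notin C'^\perp$. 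This proves $\iota\lb O_{C'}(\bT)\rb\subset\bR^n_I$, and shows that, after dropping the coordinates indexed by $I$, the restriction of $\iota$ to the lattice $\Hom\lb C'^\perp\cap M,\bZ\rb$ is the $\bZ$-linear map $\psi\colon\Hom\lb C'^\perp\cap M,\bZ\rb\to\bZ^n_I$, $p\mapsto\lb p(m_i)\rb_{m_i\in C'^\perp}$.

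Now $\psi$ is visibly the $\bZ$-dual of the homomorphism $\beta\colon\bZ^n_I\to C'^\perp\cap M$ carrying the $i$-th standard generator to $m_i$ for $i\notin I$, so it suffices to show $\beta$ is surjective: the dual of a surjection of finitely generated free $\bZ$-modules is a split injection, and a split injection is exactly a primitive embedding (which includes injectivity), giving the lemma. To see that $\beta$ is surjective I would use the face of $C^\vee$ dual to the face $C'$ of $C$, namely $F:=C^\vee\cap C'^\perp$, in two steps. First, $\lc m_i\relmid m_i\in C'^\perp\rc$ generates the monoid $F\cap M$: if $m=\sum_i a_i m_i\in F\cap M$ with $a_i\in\bZ_{\geq0}$, then for any $n'\in C'$ we get $0=\la m,n'\ra=\sum_i a_i\la m_i,n'\ra$ with every $\la m_i,n'\ra\geq0$, so $a_i>0$ forces $m_i\in C'^\perp$. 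Second, $F$ spans the linear subspace $C'^\perp=\lb\vspan C'\rb^\perp$; since $F$ is the dual cone of the image $\overline C$ of $C$ in $N_\bR/\vspan C'$, this is equivalent to $\overline C$ being strongly convex, which holds because $C'$ is an exposed face: if $\ell$ is a linear functional with $\ell\geq0$ on $C$ and $C'=C\cap\ker\ell$, then any $v_1,v_2\in C$ with $v_1+v_2\in\vspan C'\subset\ker\ell$ satisfy $\ell(v_1)=\ell(v_2)=0$, hence lie in $C'$, so their classes in $N_\bR/\vspan C'$ vanish. Combining the two steps with the fact that a full-dimensional rational cone meets the ambient lattice in a subset generating that lattice as a group, $F\cap M$ --- and therefore $\lc m_i\relmid m_i\in C'^\perp\rc$ --- generates $C'^\perp\cap M$ as a group, which is the surjectivity of $\beta$.

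The routine inputs (the conventions $0\cdot\infty=0$ and the stratification $\bT^n=\bigsqcup_I\bR^n_I$, Gordan's lemma for \eqref{eq:G-emb}, and the identity $\Hom\lb C'^\perp\cap M,\bZ\rb=N/\lb\vspan C'\cap N\rb$ quoted in the statement) are unproblematic. The step I expect to require the most care is the upgrade from the monoid statement ``$\lc m_i:m_i\in C'^\perp\rc$ generates $F\cap M$'' to the group statement ``they generate $C'^\perp\cap M$'', which is exactly where the strong convexity of the quotient cone $\overline C$ --- and hence the exposedness of the face $C'$ --- is used; getting this bookkeeping right is the crux.
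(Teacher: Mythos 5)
Your proof is correct and follows essentially the same route as the paper: reduce primitivity to surjectivity of the dual map $\bZ_I^n \to C'^\perp \cap M$, observe that the monoid $C^\vee \cap C'^\perp \cap M$ is generated by $\{m_j\}_{j \in J}$, and then upgrade this monoid statement to the group statement $\sum_{j\in J}\bZ m_j = C'^\perp \cap M$. The only difference is that where the paper cites \cite[Proposition~1.1(iii)]{MR922894} for this last upgrade, you supply the underlying argument explicitly (strong convexity of the quotient cone $\overline{C}$, hence full-dimensionality of $F = C^\vee \cap C'^\perp$ in $C'^\perp$, hence $F\cap M$ generating $C'^\perp\cap M$ as a group), which is a reasonable unpacking of that citation.
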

\begin{proof}
Let $\lc m_1, \cdots, m_n \rc$ be generators of $C^\vee \cap M$.
We set
\begin{align}
J:= \lc j \in \lc 1, \cdots, n \rc \relmid m_j \in {C'}^\perp \rc,
\end{align}
and $I:= \lc 1, \cdots, n \rc \setminus J$.
We can see from \eqref{eq:O-emb} that the image of $O_{C'}(\bT)$ by the embedding \eqref{eq:G-emb} is contained in $\bR^n_I \subset \bT^n$.
We need to check that the lattice $\Hom \lb {C'}^\perp \cap M, \bZ \rb \subset O_{C'}(\bT)$ is primitively embedded into the lattice $\bZ_I^n \subset \bR^n_I$.
It suffices to show that its dual $\lb \bZ_I^n \rb^\ast \to {C'}^\perp \cap M$ is surjective.
It is equivalent to
\begin{align}\label{eq:CM}
\sum_{j \in J} \bZ m_j={C'}^\perp \cap M.
\end{align}
The monoid $C^\vee \cap {C'}^\perp \cap M$ is generated by $\lc m_j \rc_{j \in J}$, and \eqref{eq:CM} follows from \cite[Proposition 1.1.$\rm(i\hspace{-.08em}i\hspace{-.08em}i)$]{MR922894}.
\end{proof}

Let $\scrP$ be a rational polyhedral complex in $X_{C}(\bT)$.
The image of a rational polyhedron in $\scrP$ by the embedding \eqref{eq:G-emb} is a rational polyhedron in $\bT^n$.
By identifying each polyhedron in $\scrP$ with its image by the embedding \eqref{eq:G-emb}, one can think of $\scrP$ as a rational polyhedral complex in $\bT^n$.
We also identify each tropical torus orbit $O_{C'}(\bT)$ $\lb C' \prec C \rb$ with its image by the embedding \eqref{eq:G-emb}.
Then for each $\tau \in \scrP$ such that $\rint \lb \tau \rb \subset O_{C'}(\bT)$, one has 
\begin{align}\label{eq:Fp}
F_p^\bZ (\tau)= \lb \sum_{\sigma} \bigwedge^p T(\sigma) \rb \cap \bigwedge^p \lb N / \vspan \lb C' \rb \cap N \rb,
\end{align}
where the sum is taken over $\sigma \in \scrP$ such that $\rint \lb \sigma \rb \subset O_{C'}(\bT)$ and $\sigma \succ \tau$.
Here $T(\sigma) \subset O_{C'}(\bT)=N_\bR/ \vspan \lb C' \rb$ is the subspace generated by tangent vectors on $\sigma$, which we defined in \eqref{eq:T}.

Next, we recall tropicalizations of algebraic varieties in toric varieties.
We refer the reader to \cite[Section 2, Section 3.1]{MR3064984} for details of the following.
Let $K$ be an algebraically closed field equipped with a valuation $v_K \colon K \to \bT$.
Let further $R$ be the valuation ring, and $k$ be the residue field.
For an element $n \in N_\bR$, the tilted group ring $R \ld M \rd^n \subset K \ld M \rd$ is defined by
\begin{align}
R \ld M \rd^n := \lc \sum_m a_m x^m \in K \ld M \rd \relmid v_K \lb a_m \rb+ \la m, n \ra \geq 0\ \mathrm{for\ all}\ m \in M\ \mathrm{such\ that}\ a_m \neq 0 \rc,
\end{align}
where $x^m$ is the character associated with a point $m \in M$.
Let $T_n$ denote the closed subscheme of $\Spec R \ld M \rd^n$, which is obtained by cutting out by monomials $a_m x^m$ such that $v_K \lb a_m \rb+ \la m, n \ra>0$.
When the valuation $v_K \colon K \to \bT$ is not trivial, $\Spec R \ld M \rd^n \times_{\Spec R} \Spec K$ is naturally identified with $\Spec K \ld M \rd$, and $T_n$ is the special fiber of $\Spec R \ld M \rd^n$.
For a closed subscheme $X$ of $\Spec K \ld M \rd$, we write the closure of $X$ in $\Spec R \ld M \rd^n$ as $\scX^n$.
The \emph{initial degeneration} $X_n$ of $X$ at $n \in N_\bR$ is the closed subscheme of $T_n$ obtained by intersecting with $\scX^n$.
The \emph{tropicalization} $\trop \lb X \rb$ of $X$ is defined by
\begin{align}
\trop \lb X \rb:=\lc n \in N_\bR \relmid X_n \neq \emptyset \rc.
\end{align}
The \emph{multiplicity} $m_X(\sigma)$ of a facet $\sigma$ in $\trop \lb X \rb$ is defined as the sum of the multiplicities of the irreducible components of $X_n$ for a point $n \in \rint(\sigma)$.
The \emph{tropicalization} of a pure-dimensional cycle $C=\sum_i a_i Z_i$ $(a_i \in \bZ_{>0})$ in $\Spec K \ld M \rd$ is defined by
\begin{align}
\trop \lb C \rb:=\bigcup_i \trop \lb Z_i \rb,
\end{align}
and the multiplicity of a facet $\sigma$ is defined by
\begin{align}
m_C(\sigma)=\sum_i a_i m_{Z_i} \lb \sigma \rb.
\end{align}
We also define the \emph{tropicalization} $\trop \lb X \rb$ of a closed subscheme $X$ of the toric variety $X_\Sigma$ associated with a fan $\Sigma$ as the disjoint union of the tropicalizations of its intersections with torus orbits of $X_\Sigma$, i.e.,
\begin{align}
\trop \lb X \rb:=\bigsqcup_{C \in \Sigma} \lc n \in O_C(\bT) \relmid \lb X \cap O_C \rb_n \neq \emptyset \rc,
\end{align}
where $O_C$ is the torus orbit in $X_\Sigma$ associated with a cone $C$.

\subsection{Stable intersections of tropical hypersurfaces}\label{sc:tci}

We recall a description of stable intersections of tropical hypersurfaces.
Let $f_i \colon N_\bR \to \bR$ $(1 \leq i \leq r)$ be tropical Laurent polynomials given by
\begin{align}
f_i(n)= \min_{m \in A_i} \lc a_{i, m} + \la m, n\ra \rc,
\end{align}
where $A_i$ is a finite subset of $M$, and $a_{i, m} \in \bR$ is a non-zero constant.
Each tropical hypersurface $X(f_i)^\circ \subset N_\bR$ defined by $f_i$ defines a polyhedral subdivison $\Xi_i$ of $N_\bR$.
It is dual to the subdivision $\scrS_i$ of the Newton polytope $\Delta_i:=\conv \lb A_i \rb$ of $f_i$, which is induced by $\lc a_{i, m} \relmid m \in A_i \rc$ (cf.~e.g.~\cite[Proposition 3.1.6]{MR3287221}).
The cell $F$ in $\scrS_i$, which is dual to $\sigma \in \Xi_i$ is given by 
\begin{align}
F=\conv \lb \lc m \in A_i \relmid a_{i, m} + \la m, n\ra =f_i(n), \forall n \in \sigma \rc \rb.
\end{align}

We consider the polyhedral subdivision $\Xi$ of $N_\bR$ induced by the union of tropical hypersurfaces $X(f_i)^\circ$ $(1 \leq i \leq r)$.
Each cell $\sigma \in \Xi$ can be uniquely written as $\sigma =\bigcap_{i=1}^r \sigma_i$, where $\sigma_i$ is a cell in $\Xi_i$ that  contains $\sigma$ in its relative interior.
We also consider the mixed subdivision $\scrS$ of $\Delta:=\sum_{i=1}^r \Delta_i$ induced by $\lc a_{i, m} \relmid m \in A_i, 1 \leq i \leq r \rc$.

\begin{proposition}{\rm(\cite[Proposition 4.2]{BB07})}\label{pr:dual}
There is a one-to-one duality correspondence between $\Xi$ and $\scrS$, which reverses the inclusion relations.
If $\sigma =\bigcap_{i=1}^r \sigma_i \in \Xi$ $(\sigma_i \in \Xi_i)$ corresponds to $F \in  \scrS$, then $F$ can be written as $F=\sum_{i=1}^r F_i$, 
where 
\begin{align}
F_i &= \conv \lb \lc m \in A_i \relmid a_{i, m} + \la m, n\ra =f_i(n), \forall n \in \sigma \rc \rb \\
&= \conv \lb \lc m \in A_i \relmid a_{i, m} + \la m, n\ra =f_i(n), \forall n \in \sigma_i \rc \rb
\end{align}
is the cell in $\scrS_i$ that is dual to $\sigma_i \in \Xi_i$.
One also has $\dim \sigma+\dim F=\dim N_\bR$.
\end{proposition}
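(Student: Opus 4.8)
The plan is to deduce the statement from the duality for a single tropical hypersurface recalled above as \cite[Proposition 3.1.6]{MR3287221}, applied to the tropical product of the $f_i$, and then to split the resulting Newton cell into a Minkowski sum by a convexity argument. First I would set
\begin{align*}
g := f_1 + \cdots + f_r \colon N_\bR \to \bR, \qquad g(n) = \min_{m_i \in A_i} \lc \sum_{i=1}^r a_{i, m_i} + \la m_1 + \cdots + m_r, n \ra \rc,
\end{align*}
so that $g$ is the tropical product of the $f_i$. Its Newton polytope is $\Delta = \Delta_1 + \cdots + \Delta_r$, and the coefficient of the monomial $m$ in $g$ is $\min \lc \sum_{i=1}^r a_{i, m_i} \relmid m_i \in A_i,\ \sum_{i=1}^r m_i = m \rc$; by definition the regular subdivision of $\Delta$ induced by the coefficients of $g$ is then exactly the mixed subdivision $\scrS$.

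Next I would check that the subdivision of $N_\bR$ induced by the corner locus $X(g)^\circ$ is exactly $\Xi$. Since each $f_i$ is a minimum of affine functions, it is concave, so if $g$ is affine on some region, then summing the concavity inequalities for the $f_i$ and using that their sum is an equality forces each $f_i$ to be affine on that region. Hence a region is a domain of linearity of $g$ if and only if it is a domain of linearity of every $f_i$; consequently $X(g)^\circ = \bigcup_{i=1}^r X(f_i)^\circ$, and the two induced subdivisions of $N_\bR$ coincide, both being the common refinement of the $\Xi_i$, namely $\Xi$.

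I would then apply \cite[Proposition 3.1.6]{MR3287221} to $g$, which yields an inclusion-reversing bijection $\sigma \mapsto F$ between $\Xi$ and $\scrS$ with $\dim \sigma + \dim F = \dim N_\bR$, where $F$ is the convex hull of the monomials of $g$ that achieve the minimum defining $g$ identically on $\sigma$; this already gives the first sentence of the proposition and the dimension formula. It remains to identify this $F$ with $\sum_{i=1}^r F_i$. Writing $\sigma = \bigcap_{i=1}^r \sigma_i$ as in the text, I would fix $n$ in the relative interior of $\sigma$, so that $n$ lies in the relative interior of each $\sigma_i$. Then the set $\lc m_i \in A_i \relmid a_{i, m_i} + \la m_i, n \ra = f_i(n) \rc$ is independent of the choice of such $n$ and is unchanged whether the equality is imposed at the single point $n$, on all of $\sigma$, or on all of $\sigma_i$, since passing to a face of $\sigma_i$ can only enlarge the minimizing set; its convex hull is the cell $F_i$ of $\scrS_i$ dual to $\sigma_i$, which in particular shows that the two displayed formulas for $F_i$ agree. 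Finally, for $m = m_1 + \cdots + m_r$ with $m_i \in A_i$ one has $\sum_{i=1}^r \lb a_{i, m_i} + \la m_i, n \ra \rb \geq \sum_{i=1}^r f_i(n) = g(n)$, with equality if and only if each $m_i$ minimizes $f_i$ at $n$; combined with the formula for the coefficient of $m$ in $g$, this means $m$ attains the minimum defining $g(n)$ precisely when $m \in F_1 + \cdots + F_r$, whence $F = \sum_{i=1}^r F_i$.

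The only steps that need genuine care are the convexity argument identifying $\Xi$ with the subdivision attached to $g$ (equivalently, that the common refinement of the $\Xi_i$ is the regular subdivision of $\Delta$ dual to the mixed subdivision $\scrS$) and the bookkeeping matching the three descriptions of the minimizing sets, so that the two expressions for $F_i$ in the statement coincide verbatim; everything else is formal once the single-hypersurface duality is granted, so this is where I would expect the only real friction.
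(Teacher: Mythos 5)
Your proof is correct. Note that the paper does not actually prove this proposition: it is imported verbatim from Bertrand--Bihan \cite[Proposition 4.2]{BB07}, so there is no in-paper argument to compare against. Your route --- passing to the tropical product $g=f_1+\cdots+f_r$, using concavity of each $f_i$ to show that the domains of linearity of $g$ are exactly the common domains of linearity of the $f_i$ (so the $g$-subdivision is $\Xi$ and the regular subdivision of $\Delta$ induced by the coefficients $c_m=\min\lc \sum_i a_{i,m_i} \relmid \sum_i m_i=m\rc$ is $\scrS$), then invoking the single-hypersurface duality and splitting the minimizing set of $g$ as the Minkowski sum of the minimizing sets of the $f_i$ --- is a clean and complete derivation, and is essentially the standard argument behind the cited result. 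The only point worth stating explicitly, which you gesture at with ``by definition,'' is the identification of the regular subdivision of $\Delta$ induced by the convexified coefficients $c_m$ with the mixed subdivision induced by $\lc a_{i,m}\rc$; this is the usual fact that the Minkowski sum of the lifted polytopes $\widetilde{\Delta}_i=\conv\lc (m,a_{i,m}) \relmid m\in A_i\rc$ has the same lower hull as the point configuration $\lc (m,c_m)\rc$, and your step identifying $F=\sum_i F_i$ then recovers the Minkowski decomposition data that makes $\scrS$ a mixed (rather than merely polyhedral) subdivision.
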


The stable intersection $X(f_1, \cdots, f_r)^\circ \subset N_\bR$ of the tropical hypersurfaces $X(f_i)^\circ \subset N_\bR$ $(1 \leq i \leq r)$ is the support of a subcomplex of $\Xi$, and whether each cell of $\Xi$ is contained in the stable intersection $X(f_1, \cdots, f_r)^\circ$ or not is determined by the following theorem:

\begin{theorem}{\rm(cf.~e.g.~\cite[Theorem 4.6.9]{MR3287221})}\label{th:st-intersection}
Let $\sigma =\bigcap_{i=1}^r \sigma_i \in \Xi$ be a cell, and $F=\sum_{i=1}^r F_i \in \scrS$ be the cell that is dual to $\sigma$.
The cell $\sigma$ is contained in the stable intersection $X(f_1, \cdots, f_r)^\circ \subset N_\bR$ if and only if 
$
\dim \lb \sum_{i \in I} F_i \rb \geq |I|
$
for any subset $I \subset \lc 1, \cdots, r \rc$.
\end{theorem}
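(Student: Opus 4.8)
The plan is to reduce the claim to a local assertion about star fans and then to settle the two implications, one by a dimension count after a linear projection and the other by Rado's theorem on independent transversals. Throughout set $n := \dim N_\bR$.

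First I would localize at a point $x \in \Int(\sigma)$. If $\dim F_i = 0$ for some $i$, then the singleton $I = \lc i \rc$ violates the inequality while at the same time $\sigma \not\subset X(f_i)^\circ$, hence $\sigma \not\subset X(f_1, \cdots, f_r)^\circ$, so both conditions fail; I therefore assume $\dim F_i \geq 1$ for every $i$. Since $\Int(\sigma) \subset \Int(\sigma_i)$, the hypersurface $X(f_i)^\circ$ coincides near $x$ with a translate of the tropical hypersurface $Y_i \subset N_\bR$ cut out by the tropical polynomial $\min_{m \in F_i} \la m, \bullet \ra$; it is a balanced fan of pure dimension $n-1$ with lineality space $T(F_i)^\perp$, and its facets are the codimension-one cones of the normal fan of $F_i$, indexed by the edges of $F_i$, the facet $C_e$ dual to an edge $e$ having linear span $w^\perp$ where $w$ denotes the direction of $e$. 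Because stable intersection is computed via the fan displacement rule, i.e.\ through generic translations, it is compatible with passing to germs; thus $\sigma \subset X(f_1, \cdots, f_r)^\circ$ holds if and only if the stable intersection $Y_1 \cap_{\mathrm{st}} \cdots \cap_{\mathrm{st}} Y_r$ is non-empty. It therefore remains to show that this stable intersection is non-empty exactly when $\dim \lb \sum_{i \in I} F_i \rb \geq |I|$ for every $I \subset \lc 1, \cdots, r \rc$.

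For the ``only if'' direction I would argue by contraposition. Suppose $d' := \dim \lb \sum_{i \in I} F_i \rb < |I|$ for some $I$, and let $\pi \colon N_\bR \to N_\bR / L$ be the quotient by $L := \lb \sum_{i \in I} T(F_i) \rb^\perp$, so that $\dim (N_\bR / L) = d'$. For $i \in I$ the lineality space of $Y_i$ contains $L$, so $Y_i = \pi^{-1}(\bar Y_i)$ for a codimension-one tropical cycle $\bar Y_i := \pi(Y_i)$ in $N_\bR/L$, and a direct check of the cone-wise definition shows that the stable intersection of the $Y_i$ with $i \in I$ is the preimage under $\pi$ of the stable intersection of the $\bar Y_i$ with $i \in I$. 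The latter is a stable intersection of $|I|$ codimension-one cycles in a space of dimension $d' < |I|$; since a non-empty stable intersection of codimension-one cycles has codimension equal to their number, and $|I| > d'$ would force that codimension to exceed the ambient dimension, it must be empty. Hence the stable intersection of the $Y_i$, $i \in I$, is empty, and so is the full stable intersection $Y_1 \cap_{\mathrm{st}} \cdots \cap_{\mathrm{st}} Y_r$, because enlarging the index set can only shrink it.

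The ``if'' direction is the crux, and its key point is that the inequalities $\dim \lb \sum_{i \in I} F_i \rb \geq |I|$ are precisely the hypothesis of Rado's theorem. I would take the linear matroid on the set $E$ of all edge directions of $F_1, \cdots, F_r$ and put $A_i \subset E$ equal to the set of edge directions of $F_i$; since the edges of a polytope span its direction space and $T(\bullet)$ is additive under Minkowski sums, the rank of $\bigcup_{i \in I} A_i$ equals $\dim \lb \sum_{i \in I} F_i \rb$, so the standing hypothesis is exactly Rado's rank condition. Rado's theorem then yields edges $e_i \prec F_i$ whose direction vectors $w_1, \cdots, w_r$ are linearly independent. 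With $C_i := C_{e_i}$ the corresponding facet of $Y_i$, a cone with apex $0$ and linear span $w_i^\perp$, the independence of the $w_i$ gives, for every $k < r$,
\begin{align}
T \lb C_1 \cap \cdots \cap C_k \rb + T(C_{k+1}) = \lb \vspan \lc w_1, \cdots, w_k \rc \rb^\perp + w_{k+1}^\perp = N_\bR,
\end{align}
so that $\dim \lb \lb C_1 \cap \cdots \cap C_k \rb + C_{k+1} \rb = n$ at every stage of the iterated stable intersection. Consequently $C_1 \cap \cdots \cap C_r$ is a cell of $Y_1 \cap_{\mathrm{st}} \cdots \cap_{\mathrm{st}} Y_r$, and since it contains $0$, the stable intersection is non-empty, which finishes the argument. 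I expect the main obstacle to be exactly this last implication: recognizing the combinatorial inequalities as Rado's condition and verifying that the facets selected by an independent transversal genuinely meet in a cell of the iterated stable intersection. The localization step, where one checks that forming star fans commutes with stable intersection, is more routine and follows from the fan displacement rule.
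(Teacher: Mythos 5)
The paper cites this result to Maclagan--Sturmfels (Theorem~4.6.9 in MR3287221) without giving a proof, so there is no in-paper argument to compare against; I evaluate your proposal on its own terms. The localization step and the projection argument for the ``only if'' direction look correct, but the ``if'' direction via Rado's theorem has a genuine gap. After Rado supplies edges $e_i \prec F_i$ with independent directions $w_i$, you write $T \lb C_1 \cap \cdots \cap C_k \rb + T(C_{k+1}) = \lb \vspan \lc w_1, \cdots, w_k \rc \rb^\perp + w_{k+1}^\perp = N_\bR$, which implicitly asserts $T(C_1 \cap \cdots \cap C_k) = \lb \vspan \lc w_1, \cdots, w_k \rc \rb^\perp$. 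Only the inclusion $\subseteq$ is automatic. The $C_i$ are proper cones rather than subspaces, and $C_1 \cap \cdots \cap C_k$ can collapse far below $\bigcap_i w_i^\perp$, which destroys the inductive dimension condition that you need to keep the chain alive in the iterated stable intersection.

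A concrete failure with $n = r = 3$: identify $N_\bR = \bR^3$ with coordinates $(x_1, x_2, x_3)$ and let $a_1, a_2, a_3$ be the dual basis of $M_\bR$. Take $F_1 = \conv \lc 0, a_1, a_2, a_3 \rc$, $F_2 = \conv \lc 0, a_1, a_2, -a_3 \rc$, $F_3 = \conv \lc 0, a_3 \rc$; the Rado condition holds. The independent transversal $w_1 = a_1$, $w_2 = a_2$, $w_3 = a_3$ yields $C_1 = \lc x_1 = 0,\ x_2 \geq 0,\ x_3 \geq 0 \rc$, $C_2 = \lc x_2 = 0,\ x_1 \geq 0,\ x_3 \leq 0 \rc$, $C_3 = \lc x_3 = 0 \rc$, so $C_1 \cap C_2 = \lc 0 \rc$, $T(C_1 \cap C_2) = \lc 0 \rc \subsetneq \bR(0,0,1) = \lb \vspan \lc w_1, w_2 \rc \rb^\perp$, and $\dim \lb (C_1 \cap C_2) + C_3 \rb = 2 < 3$. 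This chain does not certify a cell of the iterated stable intersection. The stable intersection is in fact nonempty: the transversal $w_1 = a_3 - a_1$, $w_2 = a_2 - a_1$, $w_3 = a_3$ produces $C_1 \cap C_2 = \bR_{\leq 0}(1,1,1)$, which does pass the test. But Rado's theorem alone gives no way to single out such a well-nested transversal, and your argument claims that every independent transversal works. To close the gap you would either need to prove that \emph{some} independent transversal yields the required transversality at every stage, or pass to the standard route: identify the weight of $\sigma$ in the stable intersection with the mixed volume of $F_1, \cdots, F_r$ and invoke Minkowski's positivity criterion for mixed volumes, which is precisely the Rado-type condition.
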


\section{Local models of tropical contractions}\label{sc:loccont}

\subsection{Local models of IAMS}\label{sc:local-iams}

Let $\xi$ be a rational polytope in $\bR^d$.
We consider a rational polytopal complex $\scrP$ satisfying the following condition:
\begin{condition}\label{cd:loccont}
The following hold:
\begin{enumerate}
\item The whole space $U:=\bigcup_{\sigma \in \scrP} \sigma$ is a topological manifold (with boundary).
\item $\xi \in \scrP$.
\item For any $\tau \in \scrP$, there exists $\sigma \in \scrP$ satisfying $\sigma \succ \xi$ and $\sigma \succ \tau$.
\item For any $\tau \in \scrP$ such that $\tau \succ \xi$, the whole space $U$ contains $\rint \lb \tau \rb$ in its interior.
\end{enumerate}
\end{condition}
We further suppose that it is equipped with fan structures $\lc S_v \colon \mathrm{St}(v) \to \bR^d \rc_{v \prec \xi}$ satisfying \pref{cd:toric}.
For each $\tau \in \scrP$, we take an element $a_\tau \in \rint (\tau)$ and consider the associated subdivision $\widetilde{\scrP}$ of $\scrP$ that we considered in \eqref{eq:subdivision}.
We define the subset $U_{\xi} \subset U$ as the open star of the point $a_{\xi}$ in $\widetilde{\scrP}$, i.e.,
\begin{align}\label{eq:uxi}
U_{\xi} := \bigcup_{\substack{\tau_0 \prec \tau_1 \prec \cdots \prec \tau_l, \\ l \geq 0, \tau_i \in \scrP, \\ \xi \in \lc \tau_0, \cdots, \tau_l \rc}} \rint \lb \conv \lb \lc a_{\tau_0}, a_{\tau_1}, \cdots, a_{\tau_l} \rc \rb \rb,
\end{align}
and $\overline{U}_{\xi}$ as its closure in $U$.
As mentioned in \pref{sc:iass}, the fan structure $S_v \colon \mathrm{St}(v) \to \bR^d$ along a vertex $v \prec \xi$ induces an integral affine structure on $\mathrm{St}(v)$.
By restricting it to $\mathrm{St}(v) \cap U_{\xi}$, we can equip $U_{\xi}$ with an integral affine structure with singularities.

\begin{definition}\label{df:local-iams}
We call the IAMS $U_{\xi}$ constructed as above a \emph{local model of IAMS}.
\end{definition}

\begin{remark}
Let $(B, \scrP)$ be an IAMS constructed in Construction \ref{construction}.
Choose a point $a_\tau \in \rint (\tau)$ for each $\tau \in \scrP$, and consider the associated subdivision $\widetilde{\scrP}$ of $\scrP$ as we did in \eqref{eq:subdivision}.
Local models of IAMS of \pref{df:local-iams} arise as the open stars of $a_\tau$ $(\tau \in \scrP)$ in $\widetilde{\scrP}$, and the IAMS $B$ is covered by these open stars.
This is the reason why we call them the local models of IAMS.
\end{remark}

\begin{definition}\label{df:quasi}
We say that a local model of IAMS $U_{\xi}$ is \emph{quasi-simple} if $(U, \scrP)$ is positive in the sense of \pref{df:positive} and satisfies the following condition:
There exist disjoint subsets 
$\Omega_1, \cdots, \Omega_r \subset \lc \omega \in \scrP(1) \relmid \omega \prec \xi \rc$ 
and 
$R_1, \cdots, R_r \subset \lc \rho \in \scrP(d-1) \relmid \rho \succ \xi \rc$ such that
\begin{enumerate}
\item For $\omega \in \scrP(1)$ and $\rho \in \scrP(d-1)$, one has $\kappa_{\omega, \rho} = 0$ unless $\omega \in \Omega_i$ and $\rho \in R_i$ for some common $i \in \lc 1, \cdots, r \rc$.
\item For any $i \in \lc 1, \cdots, r \rc$, the monodromy polytopes $\Delta_\omega(\xi)$ and $\Deltav_\rho(\xi)$ are independent of $\omega \in \Omega_i$ and $\rho \in R_i$ up to translation respectively.
We write them as $\Delta_i(\xi)$ and $\Deltav_i(\xi)$ respectively.
\end{enumerate}
See \pref{sc:iass} for the definitions of $\kappa_{\omega, \rho}$, $\Delta_\omega(\xi)$, and $\Deltav_\rho(\xi)$.
We also say that a local model of IAMS $U_{\xi}$ is \emph{very simple} if it is quasi-simple and satisfies the following additional condition:
\begin{enumerate}
\setcounter{enumi}{2}
\item The polytopes $\Delta(\xi)$ and $\Deltav(\xi)$ defined by
\begin{align}
\Delta(\xi)&:= \mathrm{conv} \lb \bigcup_{i=1}^r \Delta_i(\xi) \times \lc e_i \rc \rb \subset \lb \Lambda_{\xi}^\perp\oplus \bZ^r \rb \otimes_\bZ \bR\\
\Deltav(\xi)&:= \mathrm{conv} \lb \bigcup_{i=1}^r \Deltav_i(\xi) \times \lc e_i \rc \rb \subset \lb \Lambda_{\xi} \oplus \bZ^r \rb \otimes_\bZ \bR
\end{align}
are standard simplices. Here $e_1, \cdots, e_r$ are the standard basis of $\bZ^r$.
\end{enumerate}
\end{definition}

\begin{definition}\label{df:simple}
We say that an IAMS $B$ is \emph{quasi-simple} (resp. \emph{very simple}) if for any point $x \in B$ there exists a quasi-simple (resp. \emph{very simple}) local model of IAMS $U_{\xi}$ and a homeomorphism from $U_{\xi}$ onto a neighborhood of the point $x$ whose restriction to the complement of the discriminant locus is an integral affine isomorphism.
\end{definition}

\begin{remark}
An IAMS $(B, \scrP)$ constructed in Construction \ref{construction} is called \emph{simple} in the Gross--Siebert program (\cite[Definition 1.60]{MR2213573}) if for any $\tau \in \scrP$ with $1 \leq \dim \tau \leq d-1$, the open star of $a_\tau$ is a quasi-simple local model of IAMS and the polytopes $\Delta(\tau)$ and $\Deltav(\tau)$ are elementary simplices.
In particular, if $(B, \scrP)$ is simple, then it is quasi-simple.
\end{remark}

\subsection{Constructions of local models of tropical contractions}\label{sc:construction}

We work on the same setup as in the second paragraph of \pref{sc:intro}.
In \pref{sc:intro}, we associated the cone $C \subset N_\bR$ of \eqref{eq:cone} and the tropical polynomials $f_i \colon N_\bR \to \bR$ $(1 \leq i \leq r)$ of \pref{eq:polynomial} with lattice polytopes $\lc \Delta_i \subset M'_\bR \rc_{i=1}^r$ $\lc \Deltav_i \subset N'_\bR \rc_{i=1}^r$ such that $\la m, n \ra=0$ for any $m \in \Delta_i$, $n \in \Deltav_j$ $(1 \leq i, j \leq r)$.
We further considered the stable intersection $X(f_1, \cdots, f_r)^\circ \subset N_\bR$ of the tropical hypersurfaces $X(f_i)^\circ \subset N_\bR$ defined by $f_i$, and its closure $X(f_1, \cdots, f_r)$ in the tropical toric variety $X_{C}(\bT)$ associated with the cone $C$.
In this subsection, we construct a local model of IAMS by contracting the space $X(f_1, \cdots, f_r)$ to the space $B \subset N_\bR$ of \eqref{eq:B}.
Some examples will be given in \pref{sc:ex}.

\begin{lemma}\label{lm:c-face}
For any face $C' \prec C$, there exist faces $\lc C_i \prec \cone \lb \Deltav_i \times \lc e_i \rc \rb \rc_{1 \leq i \leq r}$ such that $C'=\sum_{i=1}^r C_i$.
\end{lemma}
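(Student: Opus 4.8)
The plan is to realize $C'$ as the face of $C$ exposed by a functional $m$ in the dual cone, to check that the same $m$ simultaneously cuts out the required faces of the subcones $\cone \lb \Deltav_i \times \lc e_i \rc \rb$, and then to reassemble these pieces into $C'$ via an elementary identity for conic hulls of unions.

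First I would use that $C$ is a polyhedral cone — it is the conic hull of finitely many lattice polytopes, hence finitely generated — so that every face of $C$ is exposed. Thus there is an $m \in C^\vee$ with $C' = C \cap m^\perp$, where $m^\perp := \lc n \in N_\bR \relmid \la m, n \ra = 0 \rc$. Since each subcone $\cone \lb \Deltav_i \times \lc e_i \rc \rb$ is contained in $C$, the functional $\la m, \bullet \ra$ is nonnegative on it as well; moreover this subcone is pointed, since $\la e_i^\ast, \bullet \ra$ equals $1$ on $\Deltav_i \times \lc e_i \rc$ and is therefore strictly positive away from the origin. Hence $C_i := \cone \lb \Deltav_i \times \lc e_i \rc \rb \cap m^\perp$ is a genuine face of $\cone \lb \Deltav_i \times \lc e_i \rc \rb$ (possibly improper, possibly $\lc 0 \rc$).

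The one computation to record is the identity $\cone \lb P \rb \cap m^\perp = \cone \lb P \cap m^\perp \rb$, valid for any subset $P \subset N_\bR$ on which $\la m, \bullet \ra \geq 0$. The inclusion $\supseteq$ is clear because the left-hand side is a cone; for $\subseteq$ one writes an element of $\cone \lb P \rb \cap m^\perp$ as $\sum_j \lambda_j p_j$ with $\lambda_j \geq 0$ and $p_j \in P$, observes that $0 = \la m, \sum_j \lambda_j p_j \ra$ is a sum of nonnegative terms $\lambda_j \la m, p_j \ra$, so each vanishes, so every $p_j$ with $\lambda_j > 0$ lies in $P \cap m^\perp$. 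Applying this with $P = \Deltav_i \times \lc e_i \rc$ gives $C_i = \cone \lb \lb \Deltav_i \times \lc e_i \rc \rb \cap m^\perp \rb$, and applying it with $P = \bigcup_{i=1}^r \Deltav_i \times \lc e_i \rc$ gives $C' = \cone \lb \bigcup_{i=1}^r \lb \lb \Deltav_i \times \lc e_i \rc \rb \cap m^\perp \rb \rb$.

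It then remains to combine these using $\cone \lb A \cup B \rb = \cone \lb A \rb + \cone \lb B \rb$ — one inclusion by splitting a conic combination according to membership in $A$ or in $B$, the other because $\cone \lb A \cup B \rb$ contains both $\cone \lb A \rb$ and $\cone \lb B \rb$ and is closed under addition — iterated over $i = 1, \dots, r$, to conclude $C' = \sum_{i=1}^r \cone \lb \lb \Deltav_i \times \lc e_i \rc \rb \cap m^\perp \rb = \sum_{i=1}^r C_i$. I do not expect a serious obstacle; the only points requiring care are that all faces of the polyhedral cone $C$ are exposed (so that $m$ exists) and that the intersections $C_i$ are honest faces rather than merely intersections with a hyperplane, which is the purpose of the pointedness remark via $e_i^\ast$.
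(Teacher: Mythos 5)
Your proof is correct and follows essentially the same route as the paper's: choose a linear functional $m$ exposing $C'$, set $C_i$ to the intersection of the $i$-th generating subcone with $m^\perp$ (which equals $\cone(\Deltav_i \times \lc e_i\rc) \cap C'$), and use the fact that a sum of nonnegative pairings $\la m, n_i\ra$ vanishing forces each to vanish. One cosmetic remark: the pointedness digression is unneeded — $C_i$ is a face of $\cone(\Deltav_i \times \lc e_i\rc)$ simply because $m$ is nonnegative on that subcone, which you already noted.
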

\begin{proof}
Since $C'$ is a face of $C$, there exists an element $m_0 \in M_\bR$ such that $m_0=0$ on $C'$ and $m_0> 0$ on $C \setminus C'$.
We set $C_i:=\cone\lb \Deltav_i \times \lc e_i \rc \rb \cap C'$.
Then $C_i \prec \cone\lb \Deltav_i \times \lc e_i \rc \rb$ since $m_0=0$ on $C_i$ and $m_0 > 0$ on $\cone\lb \Deltav_i \times \lc e_i \rc \rb \setminus C_i$.
It is obvious that we have $C' \supset \sum_{i=1}^r C_i$.
We check the opposite inclusion.
Take an arbitrary point $n_0 \in C'$.
Since $n_0 \in C$, there exist elements $\lc n_i \in \cone\lb \Deltav_i \times \lc e_i \rc \rb \rc_{1 \leq i \leq r}$ such that $n_0=\sum_{i=1}^r n_i$.
Here we have $\la m_0, n_0 \ra=0$ and $\la m_0, n_i \ra \geq 0$, which imply $\la m_0, n_i \ra = 0$.
Hence, we get $n_i \in C_i$ and $n_0 \in \sum_{i=1}^r C_i$.
We obtained $C' = \sum_{i=1}^r C_i$.
\end{proof}

\begin{lemma}\label{lm:BX}
One has $B \subset X(f_1, \cdots, f_r)^\circ$.
If $\sum_{i=1}^r T \lb \Delta_i \rb$ is an internal direct sum of $\lc T \lb \Delta_i \rb \rc_{i \in \lc 1, \cdots, r \rc}$, the stable intersection $X(f_1, \cdots, f_r)^\circ$ coincides with the set-theoretic intersection $\bigcap_{i=1}^r X(f_i)^\circ$.
\end{lemma}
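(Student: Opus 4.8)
The plan is to deduce both statements from the combinatorial criterion for membership in a stable intersection, \pref{th:st-intersection}, applied to the subdivision $\Xi$ of $N_\bR$ induced by $X(f_1)^\circ \cup \cdots \cup X(f_r)^\circ$. Fix $n \in N_\bR$ and, for each $i$, let $F_i \subset \conv(A_i)$ be the cell dual to the cell of $\Xi_i$ containing $n$ in its relative interior, so that $F_i = \conv\lb \lc m \in A_i \relmid \la m, n \ra = f_i(n) \rc \rb$ (there are no constant terms, since $f_i = \min_{m \in A_i}\la m, \bullet \ra$). By \pref{pr:dual} and \pref{th:st-intersection}, and since the stable intersection is a subcomplex of $\Xi$, to show $n \in X(f_1, \cdots, f_r)^\circ$ it is enough to check $\dim\lb \sum_{i \in I} F_i \rb \geq |I|$ for every $I \subset \lc 1, \cdots, r \rc$; here I will use repeatedly that $\dim\lb \sum_{i \in I} F_i \rb = \dim\lb \sum_{i \in I} T(F_i) \rb$. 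The one structural fact that drives everything is that every nonzero element of $A_i$ has the form $\lb m', -e_i^\ast \rb$ with $m' \in \Delta_i$, so its image under the projection $\mathrm{pr} \colon M_\bR \to \oplus_{j=1}^r \bR e_j^\ast$ is $-e_i^\ast$; hence directions arising from distinct indices $i$ are automatically linearly independent.

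For $B \subset X(f_1, \cdots, f_r)^\circ$: let $n \in B$. For each $i$ there is some $m_i = \lb m_i', -e_i^\ast \rb \in A_i \setminus \lc 0 \rc$ with $f_i(n) = \la m_i, n \ra = 0$, and as $\la 0, n \ra = 0 = f_i(n)$ as well, the cell $F_i$ contains the segment $[0, m_i]$, which points in direction $m_i$. For any $I$, the vectors $\mathrm{pr}(m_i) = -e_i^\ast$ $(i \in I)$ are linearly independent, hence so are the $m_i$ $(i \in I)$, and therefore $\dim\lb \sum_{i \in I} F_i \rb \geq \dim\lb \sum_{i \in I} [0, m_i] \rb = |I|$. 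By \pref{th:st-intersection}, $n \in X(f_1, \cdots, f_r)^\circ$.

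For the second assertion, assume $\sum_{i=1}^r T(\Delta_i) = \bigoplus_{i=1}^r T(\Delta_i)$. The inclusion $X(f_1, \cdots, f_r)^\circ \subset \bigcap_{i=1}^r X(f_i)^\circ$ is the case $|I| = 1$ of the criterion ($\dim F_i \geq 1$ means exactly that $n$ lies on the corner locus of $f_i$). Conversely, let $n \in \bigcap_i X(f_i)^\circ$, so $\dim F_i \geq 1$ for every $i$, and fix $I$. Split $I = I_a \sqcup I_b$ with $I_a = \lc i \in I \relmid 0 \in F_i \rc$. For $i \in I_a$, the cell $F_i$ contains $0$ together with some $\lb m_i', -e_i^\ast \rb$, so $\mathrm{pr}(T(F_i)) = \bR e_i^\ast$; for $i \in I_b$, every vertex of $F_i$ lies in $\Delta_i \times \lc -e_i^\ast \rc$, so $T(F_i) \subset T(\Delta_i) \times \lc 0 \rc$ and $\dim T(F_i) \geq 1$. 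Since the $T(\Delta_i)$ are in direct sum, $\sum_{i \in I_b} T(F_i)$ has dimension $\geq |I_b|$ and lies in $\ker(\mathrm{pr})$, whereas $\mathrm{pr}\lb \sum_{i \in I} T(F_i) \rb = \sum_{i \in I_a} \bR e_i^\ast$ has dimension $|I_a|$; by rank--nullity applied to $\mathrm{pr}$ restricted to $\sum_{i \in I} T(F_i)$, we get $\dim\lb \sum_{i \in I} F_i \rb \geq |I_a| + |I_b| = |I|$, and \pref{th:st-intersection} again yields $n \in X(f_1, \cdots, f_r)^\circ$.

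The identifications of the cells $F_i$ and of the spaces $T(F_i)$, as well as the Minkowski-sum identity $T\lb \sum_i F_i \rb = \sum_i T(F_i)$, are routine; the actual content is the dimension count, and its two ingredients are that the $\oplus_j \bZ e_j^\ast$-coordinates force independence across distinct indices and that the direct-sum hypothesis on $\lc T(\Delta_i) \rc$ is exactly what is needed to handle those indices with $0 \notin F_i$. I expect that stating the case split $0 \in F_i$ versus $0 \notin F_i$ cleanly, and keeping track of which directions land in $\ker(\mathrm{pr})$, will be the part that requires the most care.
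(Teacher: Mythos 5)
Your proof is correct and fills in precisely what the paper's one-line proof ("one can easily check this by \pref{th:st-intersection}") is gesturing at. Both parts reduce to verifying the dimension criterion of \pref{th:st-intersection} for the cells $F_i \subset \conv(A_i)$ dual (via \pref{pr:dual}) to the cell of $\Xi$ containing $n$, and you supply the two observations that make the dimension count work: (i) every nonzero element of $A_i$ projects to $-e_i^\ast$ under $\mathrm{pr}\colon M_\bR \to \oplus_j \bR e_j^\ast$, giving automatic independence across distinct indices whenever $0 \in F_i$, and (ii) when $0 \notin F_i$ one has $T(F_i) \subset T(\Delta_i) \subset \ker(\mathrm{pr})$, which is where the direct-sum hypothesis on $\lc T(\Delta_i) \rc$ enters. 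The rank--nullity split along $I = I_a \sqcup I_b$ is a clean way to combine the two cases, and I verified each step (in particular that for $n \in B$ the segment $[0,m_i]$ lies in $F_i$ and that $\dim\bigl(\sum_{i\in I_b} T(F_i)\bigr) \geq |I_b|$ really does need the direct-sum assumption). This is the intended argument, just written out.
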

\begin{proof}
One can easily check this by \pref{th:st-intersection}.
\end{proof}

For each $i \in \lc 1, \cdots, r \rc$ and $n \in N_\bR$, we set
\begin{align}\label{eq:F_i}
\scrM_i(n):=\lc m \in A_i \relmid \la m, n \ra =f_i \lb n \rb \rc.
\end{align}
This is the set of indices of monomials that attain the minimum of $f_i$ at the point $n$.
We consider tuples of faces $\lb F_1, \cdots, F_r, C_1, \cdots, C_r \rb$ such that $F_i \prec \conv (A_i), C_i \prec \cone(\Deltav_i \times \lc e_i \rc)$ $\lb 1 \leq i \leq r \rb$, which satisfy the following condition:

\begin{condition}
The tuple $\lb F_1, \cdots, F_r, C_1, \cdots, C_r \rb$ satisfies the following:
\begin{enumerate}
\item The polytope $\sum_{i=1}^r F_i$ is a cell in the mixed subdivision $\scrS$ of $\sum_{i=1}^r \conv (A_i)$, which is induced by $\lc a_{i, m} =0 \relmid m \in A_i, 1 \leq i \leq r \rc$.
\item For any subset $J \subset \lc 1, \cdots, r \rc$, we have $\dim \lb \sum_{j \in J} F_j \rb \geq  | J |$.
\item The cone $\sum_{i=1}^r C_i$ is a face of $C$.
\end{enumerate}
\end{condition}
For such a tuple of faces, we define the subset $\tau \lb F_1, \cdots, F_r \rb \subset X_C(\bT)$ as the closure in $X_C(\bT)$ of the cell that is dual to $\sum_{i=1}^r F_i$ in \pref{pr:dual}, i.e., the closure of
\begin{align}
\lc n \in N_\bR \relmid 
\conv \lb \scrM_i(n) \rb=F_i, \forall i \in \lc 1, \cdots, r \rc 
\rc,
\end{align}
and the subset $\tau \lb F_1, \cdots, F_r, C_1, \cdots, C_r \rb \subset X_C(\bT)$ as the closure of 
\begin{align}\label{eq:tpolyhedra}
\tau \lb F_1, \cdots, F_r\rb \cap \bigcap_{i=1}^r O_{C'} (\bT),
\end{align}
where $C':=\sum_{i=1}^r C_i$.
By \pref{th:st-intersection} again, one can see that all subsets $\tau \lb F_1, \cdots, F_r \rb$ are contained in $X(f_1, \cdots, f_r)$ and cover the whole space $X(f_1, \cdots, f_r)$.
The set of polyhedra
\begin{align}\label{eq:nat-poly}
\scrP_{X(f_1, \cdots, f_r)}:=\lc \tau \lb F_1, \cdots, F_r, C_1, \cdots, C_r \rb \relmid \tau \lb F_1, \cdots, F_r, C_1, \cdots, C_r \rb \neq \emptyset \rc
\end{align}
forms a natural polyhedral structure on $X(f_1, \cdots, f_r)$.
The subset $B$ is the support of a subcomplex of $\scrP_{X(f_1, \cdots, f_r)}$, which will be denoted by $\scrP_B \subset \scrP_{X(f_1, \cdots, f_r)}$.

We identify the space $X(f_1, \cdots, f_r) \subset X_{C}(\bT)$ with its image by the embedding \eqref{eq:G-emb}, and think of $\scrP_{X(f_1, \cdots, f_r)}$ as a rational polyhedral complex in $\bT^n$.

\begin{lemma}\label{lm:parent}
The rational polyhedral complex $\scrP_{X(f_1, \cdots, f_r)}$ in $\bT^n$ satisfies \pref{cd:parent}.
\end{lemma}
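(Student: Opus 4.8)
We need to verify Condition \ref{cd:parent} for the rational polyhedral complex $\scrP_{X(f_1, \cdots, f_r)}$, viewed inside $\bT^n$ via the embedding \eqref{eq:G-emb}. The complex is stratified by the tropical torus orbits $O_{C'}(\bT)$ for faces $C' \prec C$, and these orbits sit in the coordinate strata $\bR^n_I \subset \bT^n$ by \pref{lm:}. So an infinite polyhedron of $\scrP_{X(f_1, \cdots, f_r)}$ is one of the form $\tau(F_1, \cdots, F_r, C_1, \cdots, C_r)$ with $C' = \sum_i C_i$ a \emph{nonzero} face of $C$, and it is an infinite face of $\tau(F_1, \cdots, F_r, 0, \cdots, 0)$, the closure of the corresponding cell $\tau(F_1, \cdots, F_r) \subset N_\bR$. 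First I would establish the candidate: given an infinite polyhedron $\tau = \tau(F_1, \cdots, F_r, C_1, \cdots, C_r)$, set $\tilde\tau := \tau(F_1, \cdots, F_r, 0, \cdots, 0)$, which is a finite polyhedron (its relative interior lies in $N_\bR = O_{\{0\}}(\bT)$). I must show (1) $\tilde\tau$ is the \emph{unique} finite polyhedron having $\tau$ as an infinite face, and (2) the set of finite polyhedra $\succ \tau$ equals the set of polyhedra $\succ \tilde\tau$.

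\emph{Uniqueness (Condition \ref{cd:parent}.1).} Suppose $\sigma$ is a finite polyhedron with $\tau$ an infinite face of $\sigma$. By definition of infinite face in $X_C(\bT)$, $\tau$ is the closure of $\sigma \cap O_{C'}(\bT)$; since $\sigma$ is finite, $\Int(\sigma) \subset N_\bR$, so $\sigma = \tau(G_1, \cdots, G_r, 0, \cdots, 0)$ for some faces $G_i$. The relative interior of $\sigma \cap O_{C'}(\bT)$, being dense in $\tau$, must then recover the same Newton-cell data $F_i = \conv(\scrM_i(n))$ along $\Int(\tau)$; tracing through the description \eqref{eq:tpolyhedra} and the duality of \pref{pr:dual}, I expect $G_i$ to be forced to equal $F_i$, giving $\sigma = \tilde\tau$. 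The careful point here is that passing to the torus orbit $O_{C'}(\bT)$ can only enlarge the set $\scrM_i$ of minimizing monomials (the function $f_i$ extended to $X_C(\bT)$ takes value $-\infty$ or acquires new minimizers in the directions killed by $C'$), so the cell of $\tilde\tau$ in $N_\bR$ is uniquely determined by which monomials survive.

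\emph{Face matching (Condition \ref{cd:parent}.2).} For \eqref{eq:t-tau}: if $\sigma \succ \tilde\tau$ then, since $\tilde\tau$ is finite and faces of a finite polyhedron (in the sense of polyhedral subdivisions of $N_\bR$ refined to $X_C(\bT)$) containing it are again finite—their relative interiors meet $N_\bR$—and $\sigma \succ \tilde\tau \succ \tau$, so $\sigma$ lies in the left-hand set. Conversely, if $\sigma$ is a finite polyhedron with $\sigma \succ \tau$, I would argue $\sigma$ contains points of $\Int(\tilde\tau)$: the finite polyhedra containing $\tau$ all arise as $\tau(G_1, \cdots, G_r, 0, \cdots, 0)$ with each $G_i$ a face of $F_i$ (by the same monomial-survival analysis, the cells of $\scrS$ lying over $\tilde\tau$ correspond to faces of the $F_i$), hence $\sigma \prec \tilde\tau$ is impossible unless $\sigma = \tilde\tau$, and in fact $\sigma \succ \tilde\tau$; this is the analogue of \cite[Proposition 1.6]{MR3330789} in our non-regular-at-infinity setting, and I would prove it directly from \pref{pr:dual} and \pref{th:st-intersection} rather than citing it. Combined with Condition \ref{cd:loccont} / the cone structure of $C = \mathrm{cone}\lb \bigcup_i \Deltav_i \times \lc e_i \rc \rb$ and \pref{lm:c-face}, which guarantees that the faces $C'$ decompose compatibly with the index set $\lc 1, \cdots, r \rc$, the matching follows.

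\emph{Main obstacle.} The crux is the combinatorial bookkeeping in the uniqueness step: understanding precisely how the minimizing-monomial sets $\scrM_i(n)$ and the dual cells $F_i$ of \pref{pr:dual} transform when one passes from $N_\bR$ to a boundary torus orbit $O_{C'}(\bT)$ of $X_C(\bT)$, and confirming that this transformation is ``monotone'' enough that the finite polyhedron $\tilde\tau$ is recovered unambiguously and that \eqref{eq:t-tau} holds. Once the correspondence ``finite faces over $\tau$ $\leftrightarrow$ faces of $F_i$'' is pinned down, both parts of Condition \ref{cd:parent} are formal. I expect the proof to be short modulo this analysis, which in turn rests on the explicit form \eqref{eq:polynomial} of the $f_i$ (each monomial set $A_i$ contains $0$ and is contained in $\Delta_i \times \{-e_i^\ast\}$) and on the product-of-cones shape of $C$.
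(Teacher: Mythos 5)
Your proposal takes essentially the same approach as the paper: identify $\tilde\tau := \tau(F_1, \cdots, F_r, \{0\}, \cdots, \{0\})$ as the unique finite parent and verify both parts of Condition~\ref{cd:parent}. The paper's own proof is terser, simply asserting uniqueness and observing that both sides of \eqref{eq:t-tau} coincide with the set $\{\tau(F_1', \cdots, F_r', \{0\}, \cdots, \{0\}) : F_i' \subset F_i \text{ for all } i\}$.
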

\begin{proof}
A polyhedron $\tau \lb F_1, \cdots, F_r, C_1, \cdots, C_r \rb \in \scrP_{X(f_1, \cdots, f_r)}$ is an infinite polyhedron if and only if $C_i \neq \lc 0 \rc$ for some $i \in \lc 1, \cdots r \rc$.
The unique finite polyhedron containing $\tau \lb F_1, \cdots, F_r, C_1, \cdots, C_r \rb$ as its infinite face is $\tau \lb F_1, \cdots, F_r, \lc 0 \rc, \cdots, \lc 0 \rc \rb$.
Furthermore, we have
\begin{align}\nonumber
&\lc \sigma \in \scrP_{X(f_1, \cdots, f_r)} \relmid \sigma \mathrm{\ is\ a\ finite\ polyhedron\ s.t.\ } \sigma \succ \tau \lb F_1, \cdots, F_r, C_1, \cdots, C_r \rb \rc \\
&=\lc \tau \lb F_1', \cdots, F_r', \lc 0\rc, \cdots, \lc 0 \rc \rb \in \scrP_{X(f_1, \cdots, f_r)} \relmid F_i' \subset F_i, \forall i \in \lc 1, \cdots, r\rc \rc \\
&=\lc \sigma \in \scrP_{X(f_1, \cdots, f_r)} \relmid \sigma \succ \tau \lb F_1, \cdots, F_r, \lc 0 \rc, \cdots, \lc 0 \rc \rb \rc.
\end{align}
Hence, the rational polyhedral complex $\scrP_{X(f_1, \cdots, f_r)}$ satisfies \pref{cd:parent}.
\end{proof}

We define the subset $D \subset B$ by
\begin{align}
D:&=\bigcap_{i=1}^r \lc n \in N_\bR \relmid \forall m_i \in A_i, \ \la m_i, n \ra =0 \rc,
\end{align}
where $A_i:=\lc 0 \rc \cup \lb M \cap \lb \Delta_i \times \lc -e_i^\ast \rc \rb \rb$.
This is a linear subspace of $N_\bR$.
By the assumption $\la m, n \ra=0$ for any $m \in \Delta_i$, $n \in \Deltav_j$ $(1 \leq i, j \leq r)$, we have $\Deltav_i \subset D$.
We take a rational polytope $\xi \subset D$ satisfying the following condition:
\begin{condition}\label{cd:refinement}
The normal fan $\Sigma$ of the polytope $\xi \subset D$ is a refinement of the normal fan of $\Deltav_i \subset D$ for all $i \in \lc 1, \cdots, r \rc$.
\end{condition}
For instance, one can take the Minkowski sum $\sum_{i=1}^r \Deltav_i$ as $\xi$.
(In general, the normal fan of the Minkowski sum $\sum_{i=1}^r \Deltav_i$ is a refinement of the normal fan of $\Deltav_i$ for all $i$ (cf.~e.g.~\cite[Proposition 7.12]{MR1311028})).
We consider a $d$-dimensional rational polytopal complex $\scrP$, which satisfies \pref{cd:loccont} (using the above polytope $\xi$ as the polytope $\xi$ appearing in \pref{cd:loccont}) and the following condition:
\begin{condition}\label{cd:loccont'}
Every element $\tau \in \scrP$ sits in a polyhedron in $\scrP_B$.
Let $\theta \colon \scrP \to \scrP_B$ denote the map that sends each polytope $\tau \in \scrP$ to the minimal polyhedron in $\scrP_B$ containing $\tau$. 
\end{condition}

For the space $U:=\bigcup_{\sigma \in \scrP} \sigma$, we will construct a fan structure at every vertex of $\xi$.
Let $\phi_{\xi, \Deltav_i} \colon \scrP_{\xi} \to \scrP_{\Deltav_i}$ be the map of \eqref{eq:phi} for $\xi, \Deltav_i$. 
For every face $\tau \prec \xi$, we define the subset $C_\tau \subset C$ by
\begin{align}\label{eq:ctau}
C_\tau&:=\mathrm{cone} \lb \bigcup_{i=1}^r \phi_{\xi, \Deltav_i} (\tau) \times \lc e_i \rc \rb.
\end{align}
\begin{lemma}\label{lm:cone}
For any face $\tau \prec \xi$, the subset $C_\tau$ is a face of $C$.
\end{lemma}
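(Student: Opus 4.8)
The plan is to realize $C_\tau$ as the face of $C$ exposed by a suitable linear functional. Concretely, I would exhibit $m_1 \in M_\bR$ with $\la m_1, \bullet \ra \geq 0$ on $C$ and $\lc n \in C \relmid \la m_1, n \ra = 0 \rc = C_\tau$; since the set of points of a convex cone on which a nonnegative linear functional vanishes is always a face, this is enough. The mechanism is that, by \pref{cd:refinement}, the faces $\phi_{\xi, \Deltav_i}(\tau) \prec \Deltav_i$ for the various $i$ are all cut out by restricting one and the same covector on $N'_\bR$, which can then be completed to the desired functional on $N_\bR$ by adding the right multiples of the $e_i^\ast$.

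First I would pick $m_0 \in \scM(\tau, \xi)$, which is non-empty because $\tau$ is a face of $\xi$ (it is the relative interior of the normal cone of $\tau$). By \pref{cd:refinement} the map $\phi_{\xi, \Deltav_i}$ of \eqref{eq:phi} is defined for every $i$, and \eqref{eq:ss2} gives $\phi_{\xi, \Deltav_i}(\tau) = \scN(m_0, \Deltav_i) = \lc n \in \Deltav_i \relmid -\la m_0, n \ra = \check{h}_{\Deltav_i}(m_0) \rc$. Using the decomposition $M_\bR = M'_\bR \oplus \lb \oplus_{i=1}^r \bR e_i^\ast \rb$, I would then let $m_1 \in M_\bR$ be the covector whose $M'_\bR$-component equals that of $m_0$ and whose coefficient of $e_i^\ast$ equals $\check{h}_{\Deltav_i}(m_0)$ for each $i$. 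Since each $\Deltav_i$ lies in $N'_\bR$, one computes $\la m_1, n' \oplus e_i \ra = \la m_0, n' \ra + \check{h}_{\Deltav_i}(m_0)$ for $n' \in \Deltav_i$; by the definition of the support function this quantity is $\geq 0$, and it vanishes exactly when $n' \in \scN(m_0, \Deltav_i) = \phi_{\xi, \Deltav_i}(\tau)$. As $C = \cone \lb \bigcup_{i=1}^r \Deltav_i \times \lc e_i \rc \rb$, this already shows $\la m_1, \bullet \ra \geq 0$ on $C$.

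Finally I would invoke the elementary fact that for any set $S$ and any covector $m$ with $\la m, \bullet \ra \geq 0$ on $\cone(S)$, a combination $\sum_j \lambda_j s_j$ with $\lambda_j \geq 0$, $s_j \in S$, on which $m$ vanishes forces $\la m, s_j \ra = 0$ whenever $\lambda_j > 0$; hence $\lc n \in \cone(S) \relmid \la m, n \ra = 0 \rc = \cone \lb \lc s \in S \relmid \la m, s \ra = 0 \rc \rb$. Applying this with $S = \bigcup_{i=1}^r \Deltav_i \times \lc e_i \rc$ and $m = m_1$, and feeding in the identification of where $m_1$ vanishes from the previous step, yields $\lc n \in C \relmid \la m_1, n \ra = 0 \rc = \cone \lb \bigcup_{i=1}^r \phi_{\xi, \Deltav_i}(\tau) \times \lc e_i \rc \rb = C_\tau$, so $C_\tau$ is indeed a face of $C$. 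I do not expect any real obstacle: the only point requiring care is that a single choice of $m_0$ works simultaneously for all $i$, which is precisely what \pref{cd:refinement} guarantees; everything else is routine convex geometry.
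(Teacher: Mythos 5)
Your proof is correct and takes essentially the same approach as the paper's: both construct a covector $m_1$ that is nonnegative on $C$ and vanishes exactly on $C_\tau$, starting from an $m_0$ that exposes $\tau$ as a face of $\xi$ and using \pref{cd:refinement} via \eqref{eq:ss2} to identify the minimum loci on each $\Deltav_i$. Your $m_1$ (keeping the $M'_\bR$-component and setting the $e_i^\ast$-coefficient to $\check{h}_{\Deltav_i}(m_0)$) is in fact the same covector as the paper's $m_1 = m_0 - \sum_i c_i e_i^\ast$, since $c_i$ differs from the $e_i^\ast$-coefficient of $m_0$ by exactly $\min_{n\in\Deltav_i}\langle m_0,n\rangle$.
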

\begin{proof}
We show that there exists an element $m \in M_\bR$ such that $C \subset \lc m \geq 0 \rc$ and $C_\tau = C \cap \lc m = 0 \rc$.
Take an element $m_0 \in M_\bR$ such that the restriction of $m_0$ to $\xi \subset N_\bR$ attains the minimum along $\tau \prec \xi$.
Then for any $i \in \lc 1, \cdots, r \rc$, the restriction of $m_0$ to $\Deltav_i \times \lc e_i \rc \subset N_\bR$ attains the minimum along $\phi_{\xi, \Deltav_i} (\tau) \times \lc e_i \rc \prec \Deltav_i \times \lc e_i \rc$.
We set $c_i:=\min_{n \in \Deltav_i \times \lc e_i \rc} \la m_0, n \ra$ and $m_1:=m_0- \sum_{i=1}^r c_i e_i^\ast$.
For any $k_i \in \bR_{\geq 0}$ and $n_i \in \Deltav_i$ $(1 \leq i \leq r)$, one has
\begin{align}
\la m_1, \sum_{i=1}^r k_i \lb n_i + e_i \rb \ra \geq \sum_{i=1}^r c_i k_i - \sum_{i=1}^r c_i k_i=0,
\end{align}
which becomes an equality if and only if $n_i \in \phi_{\xi, \Deltav_i} (\tau)$ for all $i \in \lc 1, \cdots, r \rc$.
This implies that we have $C \subset \lc m_1 \geq 0 \rc$ and $C_\tau = C \cap \lc m_1 = 0 \rc$.
\end{proof}

For every vertex $v \prec \xi$, we consider the projection $\pi_{C_v} \colon X_{C_v}(\bT) \to O_{C_v}(\bT)$ of \eqref{eq:proj} with respect to the cone of \eqref{eq:ctau} with $\tau=v$.

\begin{lemma}\label{lm:aff-str}
For any vertex $v \prec \xi$, the restriction of the map $\pi_{C_v}$ to $B$ is a bijection.
Furthermore, the restriction of $\pi_{C_v}$ to the interior of any face of $B$ is an integral affine isomorphism onto the image.
\end{lemma}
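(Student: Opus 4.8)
The plan is to analyze the restriction of $\pi_{C_v}$ to $B$ face by face, using the combinatorial description of $B$ as a subcomplex of $\scrP_{X(f_1,\cdots,f_r)}$ together with the fact that $B\subset N_\bR$ is a genuine polyhedral subset (not touching any boundary orbit). First I would recall that every polyhedron of $\scrP_B$ has the form $\tau(F_1,\cdots,F_r)$ with each $F_i\prec\conv(A_i)$ containing at least one $m_i\in A_i\setminus\{0\}$ with $\langle m_i,n\rangle=0$ on the polyhedron; in particular $B$ lives inside $N_\bR$, so $O_{C'}(\bT)$ plays no role on $B$ itself and $\pi_{C_v}$ restricted to $B$ is just the linear projection $N_\bR\to N_\bR/\vspan(C_v)=O_{C_v}(\bT)$. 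So the content of the lemma is: (i) this linear projection is injective on the set $B$; (ii) it is surjective onto $O_{C_v}(\bT)$; (iii) on the relative interior of each face it is an \emph{integral} affine iso onto its image, i.e.\ it carries the lattice of tangent vectors of that face isomorphically onto a saturated sublattice (or onto all) of the lattice of $O_{C_v}(\bT)$.

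For injectivity and surjectivity I would argue as follows. By \pref{lm:BX}, $B\subset\bigcap_i X(f_i)^\circ$, and the defining condition \eqref{eq:B} says that a point $n\in B$ satisfies $f_i(n)=\langle m_i,n\rangle=0$ for some $m_i=(\delta_i,-e_i^\ast)$ with $\delta_i\in\Delta_i\cap M'$; writing $n=(n',\sum t_ie_i)\in N'_\bR\oplus(\oplus\bZ e_i)$ this forces $t_i=\langle\delta_i,n'\rangle$ — but $\langle\delta_i,n'\rangle=0$ by the orthogonality hypothesis $\langle m,n\rangle=0$ for $m\in\Delta_i$, $n\in\Deltav_j$, once we know $n'$ ranges in the span of the $\Deltav_j$'s (which is exactly $D$, and $\xi\subset D$, $\Deltav_i\subset D$). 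The upshot should be a canonical identification of $B$ with a polyhedral subset of $D\subset N'_\bR$, and $C_v=\cone(\bigcup_i\phi_{\xi,\Deltav_i}(v)\times\{e_i\})$, with $\phi_{\xi,\Deltav_i}(v)$ a vertex $w_i$ of $\Deltav_i$. Then $\vspan(C_v)$ is spanned by the $w_i+e_i$, and because each $w_i\in D$ is orthogonal to all the $\Delta_j$-directions, the projection $N_\bR\to N_\bR/\vspan(C_v)$ restricted to the slice $\{n : t_i=\langle\delta_i,n'\rangle\}$ is a linear isomorphism onto $O_{C_v}(\bT)$: the $e_i$-coordinates are pinned down by $n'$, so killing the $w_i+e_i$ directions loses exactly the information that was redundant. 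Injectivity of this restriction on $B$ and surjectivity onto $O_{C_v}(\bT)$ then both follow; surjectivity uses that \pref{cd:refinement} makes $v$ a vertex of (a common refinement of) the $\Deltav_i$, so the $w_i+e_i$ together with the directions of $D$ span $N_\bR$.

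For the integral statement on faces, the key point is that $\phi_{\xi,\Deltav_i}$ is built from normal fans (diagram \eqref{eq:face-diag}) and that $\xi,\Deltav_i$ are lattice polytopes in the lattice $D\cap N$, so $\vspan(C_v)\cap N$ is generated by $\{w_i+e_i\}$, a primitive set; hence $N/(\vspan(C_v)\cap N)$ is torsion-free and the projection respects the integral structures. On the relative interior of a face $\tau(F_1,\cdots,F_r)$ of $\scrP_B$, the tangent lattice is a saturated sublattice of $N$ determined by which $m_i$'s are ``active'', and one checks that the projection maps it isomorphically onto the corresponding tangent lattice in $O_{C_v}(\bT)$ — this is where I would invoke \pref{lm:c-face} (decomposing faces of $C$) and the commuting square \eqref{eq:face-diag} to match up faces on the two sides. \textbf{The main obstacle} I anticipate is precisely this last bookkeeping: keeping straight, over all faces simultaneously and compatibly, the identification between faces of $B$ (indexed by tuples $(F_1,\cdots,F_r)$) and faces of the target $O_{C_v}(\bT)$, and verifying that primitivity/saturation is preserved — the surjectivity and injectivity on the whole of $B$ are comparatively soft consequences of the orthogonality hypothesis, but the ``integral isomorphism on each face'' requires the careful lattice argument and is the step where \pref{cd:refinement} and \pref{cd:loccont'} really get used.
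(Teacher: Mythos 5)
Your core idea --- recognize $B$ as a graph over a complement of $\vspan(C_v)$, so that the projection inverts the graph and is integral-affine piecewise --- is exactly the paper's approach, but your intermediate steps contain a concrete error. The claims ``$\la\delta_i,n'\ra=0$'' (which would force $t_i=0$) and ``a canonical identification of $B$ with a polyhedral subset of $D$'' are both false: they would put $B$ inside a linear subspace, whereas $B$ is a $d$-dimensional \emph{nonflat} piecewise-linear set. What is true is $t_i=\la\delta_i,n'\ra$ with the ``active'' $\delta_i$ \emph{varying} as $n'$ moves between domains of linearity of $n'\mapsto\min_{\delta\in\Delta_i}\la\delta,n'\ra$; and $B$ projects onto the \emph{whole} complement, not into $D$ (which is a single linear subspace of large codimension). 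The paper makes this precise by fixing a lattice complement $N''$ to $\bigoplus_i\bZ\lb\phi_{\xi,\Deltav_i}(v)+e_i\rb$, introducing the dual functionals $d_i^*\in M$ with $\la d_i^*,\phi_{\xi,\Deltav_j}(v)+e_j\ra=\delta_{i,j}$ and $\la d_i^*, N''\ra=0$, and rewriting $B=\lc n\in N_\bR \relmid \la d_i^*,n\ra=\min_{m\in A_i\setminus\lc 0\rc}\la m+d_i^*,n\ra,\ \forall i\rc$. Since each $m+d_i^*$ annihilates all of $\vspan(C_v)$, it descends to an \emph{integral} functional on $N''$, so $B$ is literally the graph of $r$ piecewise integral-affine functions on $N''_\bR=O_{C_v}(\bT)$.

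Once that description is in hand, both assertions of the lemma are immediate: the projection inverts a graph (bijectivity), and on each linearity domain the section $n''\mapsto n''+\sum_i(\text{affine in }n'')\cdot\lb\phi_{\xi,\Deltav_i}(v)+e_i\rb$ is an integral-affine map, so the projection restricted to each face of $B$ is an integral-affine isomorphism onto its image. The ``bookkeeping over all faces'' you flag as the main obstacle therefore does not arise, and the proof uses neither \pref{lm:c-face} nor \pref{cd:loccont'}; \pref{cd:refinement} enters only in \emph{defining} the map $\phi_{\xi,\Deltav_i}$ (hence $C_v$), not in the argument for the lemma itself.
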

\begin{proof}
Take a sublattice $N'' \subset N$ such that $N=N'' \oplus \lb \oplus_{i=1}^r \bZ \lb \phi_{\xi, \Deltav_i} \lb v \rb+e_i \rb \rb$.
Let $d_i^\ast \in M$ $(1 \leq i \leq r)$ be the element such that $\la d_i^\ast, \phi_{\xi, \Deltav_j} \lb v \rb+e_j \ra=\delta_{i,j}$ and $\la d_i^\ast, n \ra=0$ for all $n \in N''$.
One has
\begin{align}
B&=\lc n \in N_\bR \relmid 0=\min_{m \in A_i \setminus \lc 0 \rc} \la m, n \ra, \forall i \in \lc 1, \cdots, r \rc \rc \\
&=\lc n \in N_\bR \relmid \la d_i^\ast, n \ra = \min_{m \in A_i \setminus \lc 0 \rc} \la m+d_i^\ast, n \ra, \forall i \in \lc 1,\cdots, r \rc \rc. \label{eq:B2}
\end{align}
Since we have $\la m+d_i^\ast, \phi_{\xi, \Deltav_j} \lb v \rb+e_j \ra=\delta_{i,j}-\delta_{i,j}=0$ for any $m \in A_i \setminus \lc 0 \rc$ and $i, j \in \lc 1, \cdots, r \rc$, the polynomials $\min_{m \in A_i \setminus \lc 0 \rc} \la m+d_i^\ast, n \ra$ in \eqref{eq:B2} can be regarded as functions on 
\begin{align}
N_\bR / \oplus_{i=1}^r \bR \lb \phi_{\xi, \Deltav_i} \lb v \rb+e_i \rb \cong N''_\bR.
\end{align}
The subset $B \subset N_\bR$ can be thought of as the graph of these functions on $N''_\bR$.
On the other hand, the restriction of the map $\pi_{C_v}$ to $B$ is the projection
\begin{align}
B \hookrightarrow N_\bR \xrightarrow{\left. \pi_{C_v} \right|_{N_\bR}} 
\left. N_\bR \middle/ \vspan \lb \bigcup_{i=1}^r \phi_{\xi, \Deltav_i} \lb v \rb \times \lc e_i \rc \rb \right. 
\cong N''_\bR.
\end{align}
The claim is now obvious from these.
\end{proof}

By \pref{lm:aff-str}, it turns out that for any $v \prec \xi$, the map
\begin{align}\label{eq:fanstr}
S_v \colon \mathrm{St}(v) \hookrightarrow N_\bR \xrightarrow{\left. \pi_{C_v} \right|_{N_\bR}} 
\left. N_\bR \middle/ \vspan \lb \bigcup_{i=1}^r \phi_{\xi, \Deltav_i} \lb v \rb \times \lc e_i \rc \rb \right.
\end{align}
defines a fan structure along $v$.
Here $\mathrm{St}(v) \subset U$ is the open star of $v$ in $\scrP$.

\begin{lemma}
The set of the above fan structures $\lc S_v \rc_{v \prec \xi}$ satisfies \pref{cd:toric}.
\end{lemma}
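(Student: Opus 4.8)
The plan is to prove the slightly stronger statement: for every $\tau \in \scrP$ and any two vertices $v_1, v_2 \prec \tau$ (necessarily $v_1, v_2 \prec \xi$, since the $S_v$ are only defined there), the fan structures along $\tau$ induced from $S_{v_1}$ and $S_{v_2}$ are \emph{equal}, not just equivalent. The key observation is that such an induced fan structure depends on the vertex only through a single linear subspace of $N_\bR$.

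First I would compute the induced fan structure explicitly. Fix $v \prec \tau$ with $v \prec \xi$. By the recipe of Construction~\ref{construction}, the fan structure along $\tau$ induced from $S_v$ is the composite $\mathrm{St}(\tau) \hookrightarrow \mathrm{St}(v) \xrightarrow{S_v} N_\bR/\vspan(C_v) \to \lb N_\bR/\vspan(C_v) \rb/\vspan \lb S_v(\Int(\tau)) \rb$. By \eqref{eq:fanstr} the map $S_v$ is the restriction of the linear projection $\pi_{C_v}\colon N_\bR \to N_\bR/\vspan(C_v)$, so $\vspan \lb S_v(\Int(\tau)) \rb = \pi_{C_v} \lb \vspan(\tau) \rb$ and the induced fan structure along $\tau$ is simply the projection $\mathrm{St}(\tau)\hookrightarrow N_\bR \to N_\bR/\lb \vspan(C_v)+\vspan(\tau)\rb$, with its target carrying the lattice $N/\lb N \cap (\vspan(C_v)+\vspan(\tau))\rb$ — a genuine free lattice because $\vspan(C_v)+\vspan(\tau)$ is a rational subspace ($C_v$ and $\tau$ being rational). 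Hence everything reduces to showing that the subspace $\vspan(C_v)+\vspan(\tau) \subseteq N_\bR$ is independent of the vertex $v \prec \tau$; the summand $\vspan(\tau)$ is of course already independent of $v$.

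Then I would set $\tau' := \tau \cap \xi$, which by \pref{cd:complex} is a common face of $\tau$ and $\xi$, so $v_1, v_2 \prec \tau' \prec \xi$ and $\vspan(\tau') \subseteq \vspan(\tau)$. Since the normal fan of $\xi$ refines that of $\Deltav_j$ (\pref{cd:refinement}) and the normal cone of a vertex is full-dimensional, the cone of the normal fan of $\Deltav_j$ containing the (full-dimensional) normal cone of $v_k$ must itself be full-dimensional, i.e.\ the normal cone of a vertex; thus $\phi_{\xi, \Deltav_j}(v_k)$ is a single vertex $q_j^{(k)}$ of $\Deltav_j$, and by \eqref{eq:ctau} one has $C_{v_k} = \cone \lb \lc q_j^{(k)}+e_j \relmid 1 \leq j \leq r \rc \rb$ and $\vspan(C_{v_k}) = \bigoplus_{j=1}^r \bR \lb q_j^{(k)}+e_j \rb$ (as already used in \pref{lm:aff-str}). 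As $v_k \prec \tau'$, \eqref{eq:face-diag} gives $\phi_{\xi, \Deltav_j}(v_k) \prec \phi_{\xi, \Deltav_j}(\tau')$, so $q_j^{(1)}, q_j^{(2)} \in \phi_{\xi, \Deltav_j}(\tau')$ and $q_j^{(1)}-q_j^{(2)} \in T \lb \phi_{\xi, \Deltav_j}(\tau') \rb$. Moreover, by \eqref{eq:phi} and \eqref{eq:face-diag}, $\delta_{\Deltav_j}\lb \phi_{\xi, \Deltav_j}(\tau') \rb$ is the minimal cone of the normal fan of $\Deltav_j$ containing the normal cone of $\tau'$; in particular every $m$ in the normal cone of $\tau'$ is constant on $\phi_{\xi, \Deltav_j}(\tau')$, whence $T \lb \phi_{\xi, \Deltav_j}(\tau') \rb$ is annihilated by the normal cone of $\tau'$. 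Since the annihilator of the latter is $T(\tau')$, we get $q_j^{(1)}-q_j^{(2)} \in T(\tau') \subseteq \vspan(\tau)$ for every $j$, hence $\lb q_j^{(1)}+e_j \rb - \lb q_j^{(2)}+e_j \rb \in \vspan(\tau)$ and $\vspan(C_{v_1}) \subseteq \vspan(C_{v_2})+\vspan(\tau)$; by symmetry $\vspan(C_{v_1})+\vspan(\tau) = \vspan(C_{v_2})+\vspan(\tau)$, and together with the previous step the two fan structures along $\tau$ coincide.

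The main obstacle is the convex-geometric input $T \lb \phi_{\xi, \Deltav_j}(\tau') \rb \subseteq T(\tau')$ — that the tangent space of the face of $\Deltav_j$ matched to $\tau'$ lies inside the tangent space of $\tau'$ — which is precisely where the refinement hypothesis \pref{cd:refinement} is genuinely used: it forces the normal cone of $\tau'$ in $\xi$ to be contained in the normal cone of $\phi_{\xi, \Deltav_j}(\tau')$ in $\Deltav_j$, which is what makes every support functional selecting $\tau'$ constant on $\phi_{\xi, \Deltav_j}(\tau')$. One has to be a little careful with the identity ``the annihilator of the normal cone of a face equals the tangent space of that face'' when $\xi$ or $\Deltav_j$ fails to be full-dimensional inside $D$ — the kind of elementary statement collected in \pref{sc:lem} — and with checking that the quotient lattice is free and depends only on the subspace $\vspan(C_v)+\vspan(\tau)$, which is routine bookkeeping of the sort done for tropical torus orbits in \pref{sc:toric}.
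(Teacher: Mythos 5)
Your proof is correct and follows the same route as the paper's: both reduce to the single inclusion $\phi_{\xi, \Deltav_j}(v_1) - \phi_{\xi, \Deltav_j}(v_2) \in T\lb\phi_{\xi, \Deltav_j}(\tau')\rb \subset T(\tau') \subset \vspan(\tau)$, which makes the quotient subspace $\vspan(C_v)+\vspan(\tau)$ independent of the vertex. The only (harmless) differences are cosmetic: you make explicit the reduction to $\tau' := \tau\cap\xi$ when $\tau$ itself is not a face of $\xi$, and you re-derive the inclusion $T\lb\phi_{\xi, \Deltav_j}(\tau')\rb \subset T(\tau')$ from the normal-fan refinement, whereas the paper just cites it as \eqref{eq:TT} (Lemma \ref{lm:TT}.2).
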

\begin{proof}
We show that for any polyhedron $\tau \in \scrP$ such that $\tau \prec \xi$ and any vertices $v_1, v_2 \prec \tau$, the fan structures along $\tau$ induced by $S_{v_1}, S_{v_2}$ are equivalent.
Since
$\phi_{\xi, \Deltav_i} \lb v_2 \rb-\phi_{\xi, \Deltav_i}  \lb v_1 \rb \in T \lb \phi_{\xi, \Deltav_i} (\tau) \rb
\subset 
T(\tau)$ by \eqref{eq:TT},
we have 
\begin{align}
\vspan \lb \bigcup_{i=1}^r \phi_{\xi, \Deltav_i}  \lb v_1 \rb \times \lc e_i \rc \rb+ T(\tau)
=\vspan \lb \bigcup_{i=1}^r \phi_{\xi, \Deltav_i}  \lb v_2 \rb \times \lc e_i \rc \rb+ T(\tau).
\end{align}
Since the fan structure along $\tau$ induced by $S_{v_j}$ $(j=1, 2)$ is
\begin{align}
\mathrm{St}(\tau) \hookrightarrow N_\bR \to \left. N_\bR \middle/ \lb \vspan \lb \bigcup_{i=1}^r \phi_{\xi, \Deltav_i}  \lb v_j \rb \times \lc e_i \rc \rb+ T(\tau) \rb \right.,
\end{align}
these two fan structures coincide.
\end{proof}

For each polyhedron $\tau \in \scrP$, we take an element $a_\tau \in \rint (\tau)$ and consider the associated subdivision $\widetilde{\scrP}$ of $\scrP$ that we considered in \eqref{eq:subdivision}.
Let $U_{\xi}, \overline{U}_{\xi} \subset U$ denote the open star of the point $a_{\xi}$ in $\widetilde{\scrP}$ and its closure respectively.
The above fan structures $\lc S_v \rc_{v \prec \xi}$  induce an integral affine structure with singularities on $U_{\xi}$ and $\overline{U}_{\xi}$, and the space $U_{\xi}$ becomes a local model of IAMS in the sense of \pref{df:local-iams}.
We compute the monodromy of the integral affine structure.

\begin{proposition}\label{pr:monodromy}
Let $v_1, v_2 \prec \xi$ be vertices, and $\sigma_1, \sigma_2 \in \scrP(d)$ be maximal-dimensional polytopes.
We consider a loop $\gamma$ that starts from the vertex $v_1$, passes through the interior of $\sigma_1$, the vertex $v_2$, and the interior of $\sigma_2$ in this order, and comes back to the original point $v_1$.
The monodromy transformation $T_\gamma \colon N'' \to N''$ with respect to the loop $\gamma$ is given by
\begin{align}\label{eq:tgamma}
T_\gamma(n)=n +\sum_{i=1}^r \la m_i^{\sigma_2} - m_i^{\sigma_1}, n \ra \cdot \lb \phi_{\xi, \Deltav_i} \lb v_2 \rb- \phi_{\xi, \Deltav_i} \lb v_1 \rb \rb,
\end{align}
where $N'':= \left. N \middle/ \oplus_{i=1}^r \bZ \lb \phi_{\xi, \Deltav_i} \lb v_1 \rb+e_i \rb \right.$ is the integral tangent space at $v_1$, and $m_i^{\sigma_j}$ $(j=1, 2)$ is the element of $\Delta_i \cap M'$ such that $f_i=m_i^{\sigma_j}-e_i^\ast$ on $\sigma_j$.
\end{proposition}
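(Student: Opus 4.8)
The plan is to realize the monodromy along $\gamma$ as the composite of the four transition maps of the integral affine atlas of $U_{\xi}$ that one crosses along the loop, and then to evaluate this composite from the explicit form \eqref{eq:fanstr} of the fan structures. Recall that the atlas of $U_{\xi}$ consists of: near a vertex $v \prec \xi$, the chart $S_v$ with tangent space $N_\bR / \vspan(C_v)$; and on $\Int(\sigma)$ for $\sigma \in \scrP(d)$, the inclusion into $N_\bR$, with tangent space $T(\sigma)$; by \pref{lm:aff-str} these are glued over overlaps by the integral isomorphisms $\left. S_v \right|_{T(\sigma)} \colon T(\sigma) \to N_\bR / \vspan(C_v)$. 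Writing $n_i^{v} := \phi_{\xi, \Deltav_i}(v) \in \Deltav_i \cap N'$ and $V_v := \vspan(C_v) = \oplus_{i=1}^r \bR \lb n_i^{v} + e_i \rb$ (so that $N''_\bR = N_\bR / V_{v_1}$ and $S_v$ is the restriction of the quotient map $N_\bR \to N_\bR / V_v$), and noting that $\gamma$ is a loop in $U_\xi$ only when $v_1, v_2 \prec \sigma_1$ and $v_1, v_2 \prec \sigma_2$, the monodromy is
\begin{align}
T_\gamma = \lb \left. S_{v_1} \right|_{T(\sigma_2)} \rb \circ \lb \left. S_{v_2} \right|_{T(\sigma_2)} \rb^{-1} \circ \lb \left. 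S_{v_2} \right|_{T(\sigma_1)} \rb \circ \lb \left. S_{v_1} \right|_{T(\sigma_1)} \rb^{-1} \colon N_\bR / V_{v_1} \to N_\bR / V_{v_1}.
\end{align}

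Next I would make the ingredients explicit. Since $\sigma_k \subset B$ one has $f_i \equiv 0$ on $\sigma_k$, so by the definition of $m_i^{\sigma_k}$ we get $T(\sigma_k) \subset \bigcap_{i=1}^r \ker \lb m_i^{\sigma_k} - e_i^\ast \rb$; as the covectors $m_i^{\sigma_k} - e_i^\ast$ are linearly independent (their components in $\oplus_{i=1}^r \bZ e_i^\ast$ are $-e_i^\ast$) and $\dim \sigma_k = d$, this is an equality. The heart of the computation is the pairing identity $\la m_l^{\sigma_k} - e_l^\ast,\, n_i^{v} + e_i \ra = -\delta_{li}$, which is immediate from the hypothesis $\la m, n \ra = 0$ for $m \in \Delta_i$, $n \in \Deltav_j$ together with the orthogonality of the splittings $M = M' \oplus \lb \oplus_{i=1}^r \bZ e_i^\ast \rb$ and $N = N' \oplus \lb \oplus_{i=1}^r \bZ e_i \rb$. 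Using it, one checks (by pairing with each $m_l^{\sigma_k} - e_l^\ast$) that for $\tilde n \in N_\bR$ the lift of its class into $T(\sigma_k)$ along $\left. S_v \right|_{T(\sigma_k)}$ equals $\tilde n + \sum_{i=1}^r \la m_i^{\sigma_k} - e_i^\ast, \tilde n \ra \lb n_i^{v} + e_i \rb$.

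Then I would compose the four maps on the class of $\tilde n \in N_\bR$: first lift into $T(\sigma_1)$ to obtain $w_1$; by the pairing identity $\la m_i^{\sigma_2} - e_i^\ast, w_1 \ra = \la m_i^{\sigma_2} - m_i^{\sigma_1}, \tilde n \ra$ (the $e_i^\ast$-terms cancel), so lifting $[w_1]$ into $T(\sigma_2)$ gives $w_2 = w_1 + \sum_i \la m_i^{\sigma_2} - m_i^{\sigma_1}, \tilde n \ra \lb n_i^{v_2} + e_i \rb$; finally reduce $w_2$ modulo $V_{v_1}$, using $[w_1] = n$ and $n_i^{v_1} + e_i \in V_{v_1}$, to get
\begin{align}
T_\gamma(n) = n + \sum_{i=1}^r \la m_i^{\sigma_2} - m_i^{\sigma_1}, n \ra \lb n_i^{v_2} - n_i^{v_1} \rb,
\end{align}
which is \eqref{eq:tgamma}. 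Here one uses that $m_i^{\sigma_2} - m_i^{\sigma_1} \in M'$ descends to $\Hom(N'', \bZ)$, again by the orthogonality hypothesis, and that $T_\gamma \in \GL(N'')$ because each $\left. S_v \right|_{T(\sigma_k)}$ is an integral isomorphism by \pref{lm:aff-str}; as a consistency check, the correction term squares to zero since $\la m_j^{\sigma_a} - m_j^{\sigma_b}, n_i^{v_c} - n_i^{v_d} \ra = 0$.

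The only real difficulty is bookkeeping: fixing the order of the four transition maps so that the correction term appears with sign $+$ rather than $-$ (the reverse order yields $T_\gamma^{-1}$, which genuinely differs as the correction is nilpotent), and keeping straight which lattice each vector or covector belongs to, since the argument constantly moves among $N_\bR$, the quotients $N_\bR / V_v$, and the subspaces $T(\sigma_k)$. The geometric content — that the atlas of $U_\xi$ consists precisely of the fan-structure charts $S_v$ and the inclusion charts on maximal cells, glued by \pref{lm:aff-str} — is already supplied by the preceding lemmas, so no further idea is needed.
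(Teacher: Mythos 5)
Your proposal is correct and follows essentially the same route as the paper: the paper likewise computes the parallel transport by lifting a class of $n$ into $T(\sigma_1)$ via the chart at $v_1$ (obtaining $n+\sum_i\la m_i^{\sigma_1}-e_i^\ast,n\ra(\phi_{\xi,\Deltav_i}(v_1)+e_i)$), evaluating $\la m_j^{\sigma_2}-e_j^\ast,\cdot\ra$ on that lift to get $\la m_j^{\sigma_2}-m_j^{\sigma_1},n\ra$, lifting into $T(\sigma_2)$ via the chart at $v_2$, and reducing modulo $\oplus_i\bZ(\phi_{\xi,\Deltav_i}(v_1)+e_i)$. Your more explicit write-up of the four transition maps and the identity $T(\sigma_k)=\bigcap_i\ker(m_i^{\sigma_k}-e_i^\ast)$ only makes precise what the paper leaves implicit.
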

\begin{proof}
First, notice that one can see from the last statement of \pref{pr:dual} that the element of $\Delta_i \cap M'$ such that $f_i=m_i^{\sigma_j}-e_i^\ast$ on $\sigma_j$ is unique.
We compute the parallel transport of a vector $n \in N''$.
When it arrives in $\rint \lb \sigma_1 \rb$, it becomes
\begin{align}
n + \sum_{i=1}^r \la m_i^{\sigma_1}-e_i^\ast, n \ra \cdot \lb \phi_{\xi, \Deltav_i} \lb v_1 \rb+e_i \rb.
\end{align}
For any $j \in \lc 1, \cdots, r \rc$, we have
\begin{align}
\la m_j^{\sigma_2}-e_j^\ast, n + \sum_{i=1}^r \la m_i^{\sigma_1}-e_i^\ast, n \ra \cdot \lb \phi_{\xi, \Deltav_i} \lb v_1 \rb+e_i \rb \ra
&=\la m_j^{\sigma_2}-e_j^\ast, n \ra - \la m_j^{\sigma_1}-e_j^\ast, n \ra \\
&=\la m_j^{\sigma_2} - m_j^{\sigma_1}, n \ra.
\end{align}
Hence, when the vector $n$ arrives in $\sigma_-$, it becomes 
\begin{align}
n + \sum_{i=1}^r \la m_i^{\sigma_1}-e_i^\ast, n \ra \cdot \lb \phi_{\xi, \Deltav_i} \lb v_1 \rb+e_i \rb
+ \sum_{i=1}^r \la m_i^{\sigma_2} - m_i^{\sigma_1}, n \ra \cdot \lb \phi_{\xi, \Deltav_i} \lb v_2 \rb+e_i \rb.
\end{align}
This is equal to the right hand side of \eqref{eq:tgamma} as an element of $N''= \left. N \middle/ \oplus_{i=1}^r \bZ \lb \phi_{\xi, \Deltav_i} \lb v_1 \rb+e_i \rb \right.$, the integral tangent space at $v_1$.
We obtained the claim.
\end{proof}

\begin{lemma}\label{lm:positive}
The local model of IAMS $U_{\xi}$ is positive in the sense of \pref{df:positive}.
\end{lemma}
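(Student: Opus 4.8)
The plan is to verify the defining inequality $\kappa_{\omega,\rho}\geq 0$ of \pref{df:positive} by hand, feeding the monodromy computation of \pref{pr:monodromy} into it. First I would note that, because the integral affine structure on $U_{\xi}$ is built only from the fan structures $S_v$ along the vertices $v\prec\xi$, the monodromy $\kappa_{\omega,\rho}$ can fail to vanish only for pairs $\omega\in\scrP(1)$, $\rho\in\scrP(d-1)$ with $\omega\prec\xi\prec\rho$: for every other pair the corresponding loop does not link the discriminant locus of $U_{\xi}$, so $\kappa_{\omega,\rho}=0$. So fix such a pair, let $v_+,v_-$ be the two vertices of $\omega$ (both faces of $\xi$), let $\sigma_+,\sigma_-$ be the two maximal cells of $\scrP$ containing $\rho$ (they exist by \pref{cd:loccont}.4, since $\rho\succ\xi$), and let $\gamma$ be the loop of \eqref{eq:monodromy0}. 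Applying \pref{pr:monodromy} to $\gamma$ and comparing \eqref{eq:tgamma} with \eqref{eq:monodromy0} gives the identity
\[
\kappa_{\omega,\rho}\,\la \check{d}_\rho,n\ra\,d_\omega \;=\;\sum_{i=1}^r\la m_i^{\sigma_-}-m_i^{\sigma_+},n\ra\,\lb\phi_{\xi,\Deltav_i}(v_-)-\phi_{\xi,\Deltav_i}(v_+)\rb
\]
for every $n$ in the integral tangent space $N''$ at $v_+$, where $m_i^{\sigma_\pm}\in\Delta_i\cap M'$ is the monomial, unique by \pref{pr:dual}, with $f_i=m_i^{\sigma_\pm}-e_i^\ast$ on $\sigma_\pm$.

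The heart of the argument is then to show that the right-hand side is a non-negative multiple of $d_\omega$ paired with a functional that is non-negative in the direction of $\check{d}_\rho$; I anticipate two ingredients. First, for each $i$ I would show $\phi_{\xi,\Deltav_i}(v_-)-\phi_{\xi,\Deltav_i}(v_+)=c_i\,d_\omega$ with $c_i\in\bZ_{\geq 0}$: membership in $T(\omega)=\bR d_\omega$ is \eqref{eq:TT} (already used in the proof of \pref{pr:monodromy}), and $c_i\geq 0$ follows by choosing $m_+\in\scM(v_+,\xi)$, since then, using \pref{cd:refinement} and \eqref{eq:ss2}, $\phi_{\xi,\Deltav_i}(v_+)$ lies where $\la m_+,\cdot\ra$ is minimal on $\Deltav_i$, so $\la m_+,\phi_{\xi,\Deltav_i}(v_-)-\phi_{\xi,\Deltav_i}(v_+)\ra\geq 0$, while $\la m_+,d_\omega\ra>0$ because $\la m_+,v_--v_+\ra>0$. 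Second, each $m_i^{\sigma_\pm}$ descends to a linear functional $\overline{m}_i^{\sigma_\pm}$ on $N''$: indeed $\la m_i^{\sigma_\pm},\phi_{\xi,\Deltav_j}(v_+)+e_j\ra=0$, which uses $m_i^{\sigma_\pm}\in\Delta_i$ together with the standing hypothesis $\la m,n\ra=0$ for $m\in\Delta_i$, $n\in\Deltav_j$. Furthermore $\overline{m}_i^{\sigma_-}-\overline{m}_i^{\sigma_+}$ vanishes on $\pi_{C_{v_+}}(\rho)$ — because $f_i$ agrees on $\rho=\sigma_+\cap\sigma_-$ — and takes non-negative values on $\pi_{C_{v_+}}(\sigma_+)$ — because on $\sigma_+$ one has $f_i=\la m_i^{\sigma_+}-e_i^\ast,\cdot\ra\leq\la m_i^{\sigma_-}-e_i^\ast,\cdot\ra$, $f_i$ being the minimum over $A_i\ni m_i^{\sigma_-}-e_i^\ast$. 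Here I would use \pref{lm:aff-str} (so $\pi_{C_{v_+}}$ is injective on $B$ and an integral affine isomorphism on each face) together with \pref{cd:loccont'} to view $\sigma_\pm,\rho\subset B$ through their images.

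Finally I would evaluate the displayed identity at $n_0:=\pi_{C_{v_+}}(n_+)-\pi_{C_{v_+}}(n_\ast)$ for chosen points $n_+\in\Int(\sigma_+)$ and $n_\ast\in\Int(\rho)$. Its left-hand side equals $\kappa_{\omega,\rho}\la\check{d}_\rho,n_0\ra d_\omega$ with $\la\check{d}_\rho,n_0\ra>0$, since $\check{d}_\rho$ evaluates $\sigma_+$ positively, and its right-hand side equals $\lb\sum_{i=1}^r c_i\,\la\overline{m}_i^{\sigma_-}-\overline{m}_i^{\sigma_+},n_0\ra\rb d_\omega$ with every term $\geq 0$ by the two ingredients above; hence $\kappa_{\omega,\rho}\la\check{d}_\rho,n_0\ra\geq 0$ and therefore $\kappa_{\omega,\rho}\geq 0$. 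The part I expect to require the most care is not the convexity-type inequalities but the bookkeeping: making precise that it suffices to treat the pairs $\omega\prec\xi\prec\rho$, and checking that $d_\omega$, the $\phi_{\xi,\Deltav_i}(v_\pm)$, and the functionals $m_i^{\sigma_\pm}$ and $\check{d}_\rho$ all descend correctly through the quotient $N''$ underlying the chart $S_{v_+}$.
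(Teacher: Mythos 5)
Your proposal is correct and follows essentially the same route as the paper: apply \pref{pr:monodromy}, and show that each of the two factors $\lb m_i^{\sigma_-}-m_i^{\sigma_+} \rb$ and $\lb \phi_{\xi,\Deltav_i}(v_-)-\phi_{\xi,\Deltav_i}(v_+)\rb$ in \eqref{eq:tgamma} is a non-negative multiple of the primitive vector $\check{d}_\rho$, resp. $d_\omega$, the first by comparing the two monomials of $f_i$ on $\sigma_+$ and $\sigma_-$, the second by the definition of $\phi_{\xi,\Deltav_i}$ via normal cones. You spell out a few steps the paper leaves tacit (the reduction to pairs with $\omega\prec\xi\prec\rho$, the verification that $m_i^{\sigma_\pm}$ descends to $N''$ using the standing orthogonality $\la\Delta_i,\Deltav_j\ra=0$, and the convex-geometry reason that $c_i\geq 0$), which is harmless; the final evaluation at a test vector $n_0$ is just one way of translating ``both factors point the right way'' into the sign statement $\kappa_{\omega,\rho}\geq 0$, and is equivalent to what the paper does.
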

\begin{proof}
Let $\omega \in \scrP(1)$ be an edge whose endpoints are $v_1, v_2$, 
and $\rho \in \scrP(d-1)$ be a polyhedron such that $\omega \prec \rho$ and $\sigma_1, \sigma_2 \in \scrP(d)$ are the maximal-dimensional polyhedra having $\rho$ as their face.
We show the lemma by using \pref{pr:monodromy}.
Since the monomial $m_i^{\sigma_1}-e_i^\ast$ attains the minimum of the polynomial $f_i$ on $\sigma_1$, we have $m_i^{\sigma_1}-e_i^\ast \leq m_i^{\sigma_2}-e_i^\ast$ on $\sigma_1$.
Hence, $m_i^{\sigma_2}-m_i^{\sigma_1} \geq 0$ on $\sigma_1$.
By the same reason, $m_i^{\sigma_2}-m_i^{\sigma_1} \leq 0$ on $\sigma_2$.
We obtain $m_i^{\sigma_2}-m_i^{\sigma_1} = 0$ on $\rho$.
Hence, the vector
$
\lb m_i^{\sigma_2} - m_i^{\sigma_1} \rb
$
is a non-negative multiple of the primitive cotangent vector $\check{d}_\rho \in \Lambda_\rho^{\perp}$ evaluating $\sigma_1$ positively.
Moreover, the vector $\lb \phi_{\xi, \Deltav_i} \lb v_2 \rb- \phi_{\xi, \Deltav_i} \lb v_1 \rb \rb$ is also a non-negative multiple of the primitive tangent vector $d_\omega$ on $\omega$ pointing from $v_1$ to $v_2$.
From these and \eqref{eq:tgamma}, one can see that the  local model of IAMS $U_{\xi}$ is positive.
\end{proof}

We give a sufficient condition for the IAMS $U_{\xi}$ to be quasi-simple/very simple.

\begin{lemma}\label{lm:length1}
Suppose that the following conditions hold:
\begin{enumerate}
\item The affine lengths of edges of $\Delta_i$ and $\Deltav_i$ are all $1$ for all $i \in \lc 1, \cdots, r \rc$.
\item The subspace $\sum_{i=1}^r T \lb \Delta_i \rb$ is an internal direct sum of $\lc T \lb \Delta_i \rb \rc_{i \in \lc 1, \cdots, r \rc}$, 
and the subspace $\sum_{i=1}^r T \lb \Deltav_i \rb$ is an internal direct sum of $\lc T \lb \Deltav_i \rb \rc_{i \in \lc 1, \cdots, r \rc}$.
\end{enumerate}
Then the local model of IAMS $U_{\xi}$ is quasi-simple in the sense of \pref{df:quasi}, and its monodromy polytopes $\Delta_\omega(\xi)$ and $\Deltav_\rho(\xi)$ are given by $\Delta_i$ and $\Deltav_i$ $(1 \leq i \leq r)$.
Furthermore, if the following stronger conditions 
\begin{enumerate}
\item The polytopes $\Delta_i$ and $\Deltav_i$ are standard simplices for all $i \in \lc 1, \cdots, r \rc$.
\item $\lb \sum_{i=1}^r T \lb \Delta_i \rb \rb \cap M'$ is the internal direct sum of $\lc T_\bZ \lb \Delta_i \rb \rc_{i \in \lc 1, \cdots, r \rc}$, and $\lb \sum_{i=1}^r T \lb \Deltav_i \rb \rb \cap N'$ is the internal direct sum of $\lc T_\bZ \lb \Deltav_i \rb \rc_{i \in \lc 1, \cdots, r \rc}$.
\end{enumerate}
are satisfied, then the local model of IAMS $U_{\xi}$ is very simple in the sense of \pref{df:quasi}.
\end{lemma}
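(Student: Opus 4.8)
The plan is to read everything off the monodromy formula \eqref{eq:tgamma} of \pref{pr:monodromy}, together with the two ``primitivity'' observations that already underlie the proof of \pref{lm:positive}. Namely, for an edge $\omega \prec \xi$ with endpoints $v_+, v_-$ and a facet $\rho \succ \xi$ with adjacent maximal cells $\sigma_+, \sigma_-$, the vector $m_i^{\sigma_-} - m_i^{\sigma_+}$ is a non-negative multiple of the primitive covector $\check d_\rho$, and $\phi_{\xi, \Deltav_i}(v_-) - \phi_{\xi, \Deltav_i}(v_+)$ is a non-negative multiple of the primitive vector $d_\omega$: indeed, since $m_i^{\sigma_\pm} \in \Delta_i$ and (by \eqref{eq:TT}) $\phi_{\xi, \Deltav_i}(v_\pm)$ are vertices of $\Deltav_i$ lying in a common face of dimension $\le 1$, these differences lie along an edge of $\Delta_i$ resp.\ $\Deltav_i$ whenever they are non-zero. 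Under the unit-edge-length hypothesis those edges are primitive, so $m_i^{\sigma_-} - m_i^{\sigma_+} \in \lc 0 \rc \cup \lc \pm \check d_\rho \rc$ and $\phi_{\xi, \Deltav_i}(v_-) - \phi_{\xi, \Deltav_i}(v_+) \in \lc 0, d_\omega, -d_\omega \rc$.

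With this in hand I would take $\Omega_i := \lc \omega \in \scrP(1) \mid \omega \prec \xi,\ \phi_{\xi, \Deltav_i}(\omega) \mathrm{\ is\ an\ edge\ of\ } \Deltav_i \rc$ and $R_i := \lc \rho \in \scrP(d-1) \mid \rho \succ \xi,\ m_i^{\sigma_+} \neq m_i^{\sigma_-} \rc$. Disjointness of $\lc \Omega_i \rc$ follows from the direct-sum hypothesis on $\lc T(\Deltav_i) \rc$: if $\omega \in \Omega_i \cap \Omega_j$ with $i \neq j$ then $d_\omega$ would be parallel to an edge of both $\Deltav_i$ and $\Deltav_j$, i.e.\ $d_\omega \in T(\Deltav_i) \cap T(\Deltav_j) = \lc 0 \rc$; likewise $\lc R_i \rc$ is disjoint because $m_i^{\sigma_+} - m_i^{\sigma_-} \in T(\Delta_i) \setminus \lc 0 \rc$. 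Plugging \eqref{eq:tgamma} into \eqref{eq:monodromy0} along the standard loop around $\omega \prec \rho$ gives $\kappa_{\omega, \rho} = \pm \sum_i a_i b_i$, where $a_i \geq 0$ is the $\check d_\rho$-coefficient of $m_i^{\sigma_-} - m_i^{\sigma_+}$ and $b_i \geq 0$ the $d_\omega$-coefficient of $\phi_{\xi, \Deltav_i}(v_-) - \phi_{\xi, \Deltav_i}(v_+)$; since $a_i > 0 \Leftrightarrow \rho \in R_i$ and $b_i > 0 \Leftrightarrow \omega \in \Omega_i$, this vanishes unless $\omega \in \Omega_i$ and $\rho \in R_i$ for a common $i$, which is condition~(1) of \pref{df:quasi} (positivity of $\kappa_{\omega,\rho}$ being \pref{lm:positive}).

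For condition~(2) I would prove the sharper claim $\Delta_\omega(\xi) = \Delta_i$ and $\Deltav_\rho(\xi) = \Deltav_i$ up to translation, for all $\omega \in \Omega_i$, $\rho \in R_i$. Comparing \eqref{eq:tgamma} with \eqref{eq:monodromy2} (resp.\ \eqref{eq:monodromy1}) along the loop through $\sigma_0, \sigma$ (resp.\ from $v_0$ to $v$ through $\sigma_+, \sigma_-$), and using the disjointness just established to kill all summands but the $i$-th, together with $b_i = 1$ (resp.\ $a_i = 1$) forced by the unit-edge-length hypothesis, one gets $m_\omega^{\sigma_0, \sigma} = m_i^\sigma - m_i^{\sigma_0}$ and $n_{v_0, v}^\rho = \phi_{\xi, \Deltav_i}(v) - \phi_{\xi, \Deltav_i}(v_0)$, hence $\Delta_\omega(\xi) = \conv \lc m_i^\sigma \mid \sigma \in \scrP(d),\ \sigma \succ \xi \rc$ and $\Deltav_\rho(\xi) = \conv \lc \phi_{\xi, \Deltav_i}(v) \mid v \in \scrP(0),\ v \prec \xi \rc$ up to translation. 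The dual identity $\conv \lc \phi_{\xi, \Deltav_i}(v) \rc = \Deltav_i$ is immediate from \pref{cd:refinement}, since the normal fan of $\xi$ refines that of $\Deltav_i$ and thus $\phi_{\xi, \Deltav_i}$ sends the vertices of $\xi$ onto those of $\Deltav_i$. The primal identity $\conv \lc m_i^\sigma \rc = \Delta_i$ is the crux: using \pref{cd:loccont'} and the description of stable intersections in \pref{sc:tci}, the maximal cells of $\scrP_B$ are dual to the full-dimensional cells of the common refinement of the normal fans of $\Delta_1, \dots, \Delta_r$ (with $\xi$ lying in their common lineality space), so $m_i^\sigma$ is the vertex of $\Delta_i$ selected by the normal cone of $\Delta_i$ containing the interior of $\theta(\sigma)$; as $\sigma$ ranges over the maximal cells of $\scrP$ adjacent to $\xi$ this vertex ranges over every vertex of $\Delta_i$, \pref{cd:loccont} ensuring that each maximal cell of $\scrP_B$ adjacent to $\xi$ is refined by cells of $\scrP$. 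This last step is the one I expect to be the main obstacle, since it requires carefully unwinding the combinatorics relating $B$, $\scrP_B$ and the chosen refinement $\scrP$.

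Finally, for the \emph{very simple} clause the standard-simplex and lattice direct-sum hypotheses give $\Delta_i(\xi) = \Delta_i$ and $\Deltav_i(\xi) = \Deltav_i$ standard simplices, so it remains to check that $\Delta(\xi) = \conv \lb \bigcup_i \Delta_i(\xi) \times \lc e_i \rc \rb$ and the analogous $\Deltav(\xi)$ are standard. A vertex count against $\dim \Delta(\xi) = \sum_i \dim \Delta_i + (r-1)$ shows $\Delta(\xi)$ is a simplex; choosing a base vertex $(p_1, e_1)$, its edge vectors are $(q - p_1, 0)$ for $q$ a vertex of $\Delta_1(\xi)$ and $(q - p_1, e_i - e_1)$ for $q$ a vertex of $\Delta_i(\xi)$ with $i \geq 2$, and these generate $\bigoplus_i T_\bZ(\Delta_i(\xi)) \oplus \langle e_i - e_1 \rangle_\bZ$, which by the lattice direct-sum hypothesis (together with $\Lambda_\xi^\perp \cap M' = \bigoplus_i T_\bZ(\Delta_i)$ and $\lc e_i \rc$ being a basis of $\bZ^r$) is exactly the affine lattice of $\Delta(\xi)$ in $\lb \Lambda_\xi^\perp \oplus \bZ^r \rb \otimes_\bZ \bR$; hence $\Delta(\xi)$, and symmetrically $\Deltav(\xi)$, is a standard simplex, so $U_\xi$ is very simple in the sense of \pref{df:quasi}.
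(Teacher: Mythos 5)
Your proof takes essentially the same route as the paper's own: read off the constants from the monodromy formula \eqref{eq:tgamma} of \pref{pr:monodromy} together with the primitivity observations already used in \pref{lm:positive}, define $\Omega_i$ and $R_i$ as the loci where the corresponding increment is non-zero, use the direct-sum hypothesis on $\lc T(\Deltav_i) \rc$ and $\lc T(\Delta_i) \rc$ for disjointness, and use the unit-edge-length hypothesis to normalize those increments to $\check d_\rho$ and $d_\omega$ exactly. The step you flag as the main obstacle — that $\conv \lc m_i^{\sigma} \relmid \sigma \in \scrP(d), \sigma \succ \xi \rc$ equals $\Delta_i$ up to translation, and dually $\conv \lc \phi_{\xi, \Deltav_i}(v) \relmid v \prec \xi \rc = \Deltav_i$ — is indeed the point the paper asserts without elaboration, and your extra care there is warranted: the dual identity follows from \pref{cd:refinement} exactly as you say, and for the primal one the facets of $\scrP_B$ adjacent to $\xi$ are dual to the maximal cells of the common refinement of the normal fans of the $\Delta_i$ (so every vertex of $\Delta_i$ is visited), while \pref{cd:loccont} and \pref{cd:loccont'} ensure each such facet arises as $\theta(\sigma)$ for some $\sigma \in \scrP(d)$, $\sigma \succ \xi$.
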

\begin{proof}
Let $\omega \in \scrP(1)$ be an edge and $v_1, v_2 \in \scrP(0)$ be its endpoints.
For any $i \in \lc 1, \cdots, r \rc$, the vector appearing in \eqref{eq:tgamma}
\begin{align}\label{eq:monovect}
\phi_{\xi, \Deltav_i} \lb v_2 \rb- \phi_{\xi, \Deltav_i} \lb v_1 \rb \in T \lb \Deltav_i \rb
\end{align}
is a non-negative multiple of the primitive tangent vector $d_\omega$ on $\omega$ pointing from $v_1$ to $v_2$.
Since the subspace $\sum_{i=1}^r T \lb \Deltav_i \rb$ is an internal direct sum of $\lc T \lb \Deltav_i \rb \rc_{i \in \lc 1, \cdots, r \rc}$, there exists at most one $i \in \lc 1, \cdots, r \rc$ such that \eqref{eq:monovect} is non-zero.
For every $i \in \lc 1, \cdots, r \rc$, we define $\Omega_i \subset \scrP(1)$ to be the subset of edges $\omega \in \scrP(1)$ such that \eqref{eq:monovect} is non-zero.
When \eqref{eq:monovect} is non-zero, it turns out by the first assumption of the lemma that \eqref{eq:monovect} must coincide with the primitive tangent vector $d_\omega$.
Similarly, for $\sigma_1, \sigma_2 \in \scrP(d)$ having a common face $\rho \in \scrP(d-1)$, the vector appearing in \eqref{eq:tgamma}
\begin{align}\label{eq:monovect2}
m_i^{\sigma_2} - m_i^{\sigma_1} \in T \lb \Delta_i \rb
\end{align}
is a non-negative multiple of the primitive cotangent vector $\check{d}_\rho \in \Lambda_\rho^{\perp}$ evaluating $\sigma_1$ positively (as we saw in the proof of \pref{lm:positive}), and there exists at most one $i \in \lc 1, \cdots, r \rc$ such that \eqref{eq:monovect2} is non-zero.
For every $i \in \lc 1, \cdots, r \rc$, we also define $R_i \subset \scrP(d-1)$ to be the subset of polyhedra $\rho \in \scrP(d-1)$ such that \eqref{eq:monovect2} is non-zero.
When \eqref{eq:monovect2} is non-zero, it coincides with the primitive cotangent vector $\check{d}_\rho$.

By \pref{pr:monodromy}, for $\omega \in \Omega_i$ and $\rho \in R_j$,
the monodromy transformations of \eqref{eq:monodromy0}, \eqref{eq:monodromy1}, and \eqref{eq:monodromy2} are 
\begin{align}\label{eq:monodromy00}
T_\omega^\rho(n)&=n+\delta_{i, j} \cdot \la \check{d}_\rho, n \ra \cdot d_\omega \\ \label{eq:monodromy11}
T^{\rho}_{v_1', v_2'}(n)&= n + \la \check{d}_\rho, n \ra \cdot \lb \phi_{\xi, \Deltav_j} \lb v_2' \rb- \phi_{\xi, \Deltav_j} \lb v_1' \rb \rb \\ \label{eq:monodromy22}
T_\omega^{\sigma_1', \sigma_2'}(n)&= n + \la m_i^{\sigma_2'} - m_i^{\sigma_1'}, n \ra \cdot d_\omega,
\end{align}
where $v_1', v_2' \prec \xi$ are vertices and $\sigma_1', \sigma_2' \succ \xi$ are maximal-dimensional polyhedra in $\scrP$.
We can see by \eqref{eq:monodromy00} that the first condition for being quasi-simple is satisfied.
By \eqref{eq:monodromy11} and \eqref{eq:monodromy22}, we can also see that the monodromy polytopes $\Delta_\omega(\xi), \Deltav_\rho(\xi)$ with $\omega \in \Omega_i, \rho \in R_j$
of $U_{\xi}$ coincide with $\Delta_i$ and $\Deltav_j$ up to translation  respectively, and do not depend on the choice of $\omega \in \Omega_i$ and $\rho \in R_j$.
Now one can easily check the statement concerning being very simple too.
\end{proof}

Let $\iota \colon U_{\xi, 0} \hookrightarrow U_{\xi}$ denote the complement of the discriminant locus $\Gamma$.
Let further $\mathrm{Aff}_{U_{\xi, 0}}$ and $\check{\Lambda}$ be the sheaves on $U_{\xi, 0}$ of integral affine functions and of integral cotangent vectors respectively. 
We also consider the sheaves $\mathrm{Aff}_{X(f_1, \cdots, f_r)}$, $\scF^p_Q$, and $\scW_p^Q$ on $X(f_1, \cdots, f_r)$ of integral affine functions, $p$-th multi-cotangent spaces, and $p$-th wave tangent spaces, which we recalled in \pref{sc:rat}.

\begin{theorem}\label{th:local-cont}
Let $p \geq 0$ be any integer.
There are a subset $X \subset X(f_1, \cdots, f_r)$ and a proper continuous map $\delta \colon X \to U_{\xi}$ that 
satisfy the following:
\begin{enumerate}
\item The map $\delta$ induces an isomorphism of sheaves
\begin{align}
\iota_\ast \mathrm{Aff}_{U_{\xi, 0}} \cong \delta_\ast \mathrm{Aff}_X
\end{align}
via the pullback of functions, where $\mathrm{Aff}_X$ is the restriction of the sheaf $\mathrm{Aff}_{X(f_1, \cdots, f_r)}$ to the subset $X \subset X(f_1, \cdots, f_r)$.
\item There is an isomorphism of sheaves of graded rings
\begin{align}\label{eq:sh-gr}
\bigoplus_{p=0}^d \iota_\ast \bigwedge^{p}  \check{\Lambda} \cong \bigoplus_{p=0}^d \delta_\ast \scF^p_\bZ,
\end{align}
where the ring structures on the both sides are induced by wedge products.
\item Consider the canonical morphism
\begin{align}\label{eq:cmor}
\delta_\ast \scF^p_Q \to R \delta_\ast \scF^p_Q.
\end{align}
If $\sum_{i=1}^r T \lb \Delta_i \rb$ is the internal direct sum of $\lc T \lb \Delta_i \rb \rc_{i \in \lc 1, \cdots, r \rc}$ and $\sum_{i=1}^r T \lb \Deltav_i \rb$ is the internal direct sum of $\lc T \lb \Deltav_i \rb \rc_{i \in \lc 1, \cdots, r \rc}$, then 
\eqref{eq:cmor} is an isomorphism for $Q=\bQ$.
If $\lb \sum_{i=1}^r T \lb \Delta_i \rb \rb \cap M'$ is the internal direct sum of $\lc T_\bZ \lb \Delta_i \rb \rc_{i \in \lc 1, \cdots, r \rc}$ and $\lb \sum_{i=1}^r T \lb \Deltav_i \rb \rb \cap N'$ is the internal direct sum of $\lc T_\bZ \lb \Deltav_i \rb \rc_{i \in \lc 1, \cdots, r \rc}$, then \eqref{eq:cmor} is an isomorphism also for $Q=\bZ$.
\item The canonical morphism
\begin{align}
\delta_\ast \scW_p^Q \to R \delta_\ast \scW_p^Q
\end{align}
is also an isomorphism for $Q=\bZ, \bQ, \bR$.
\end{enumerate}
\end{theorem}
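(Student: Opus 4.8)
The plan is to produce $\delta$ by gluing the toric orbit projections $\pi_{C_v}$ of \eqref{eq:proj} along the finitely many vertices of $\xi$, and then to establish (1)--(4) by combining a local analysis over each vertex $v\prec\xi$ with a fibrewise cohomology computation. \emph{Construction of $X$ and $\delta$.} For a vertex $v\prec\xi$ let $C_v\prec C$ be the face of \eqref{eq:ctau} and $\pi_{C_v}\colon X_{C_v}(\bT)\to O_{C_v}(\bT)$ the projection of \eqref{eq:proj}; by \pref{lm:aff-str} its restriction to $B$ is a bijection onto $O_{C_v}(\bT)$ which is an integral affine isomorphism on each face of $B$, and its restriction to $\mathrm{St}(v)\subset B$ is the fan structure $S_v$ of \eqref{eq:fanstr}. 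Set $X_v:=\pi_{C_v}^{-1}\lb S_v\lb\mathrm{St}(v)\cap U_\xi\rb\rb\cap X(f_1,\cdots,f_r)\cap X_{C_v}(\bT)$ and $\delta_v:=S_v^{-1}\circ\pi_{C_v}|_{X_v}\colon X_v\to\mathrm{St}(v)\cap U_\xi$. On an overlap $X_{v_1}\cap X_{v_2}$ both maps recover a point from its $B$-retraction along the respective projection, and these agree because the two projection kernels differ only by directions tangent to the common face of $\xi$ through $v_1$ and $v_2$ -- exactly the identity used in the verification that $\lc S_v\rc_{v\prec\xi}$ satisfies \pref{cd:toric}. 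Thus the $\delta_v$ glue to a continuous map $\delta\colon X:=\bigcup_{v\prec\xi}X_v\to U_\xi$; one has $\delta|_{U_\xi}=\mathrm{id}$, so $\delta$ is surjective, and $\delta$ is a bijection over $U_{\xi,0}$ by \pref{lm:aff-str}. A fibre over a point of $\Gamma$ is the closure in $X_C(\bT)$ of the unbounded piece cut out by the $f_i$ in the $C$-directions, hence compact (and contractible), so properness follows.

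\emph{Properties (1) and (2).} Both are local over the vertices, so fix $v$ and work with $\delta_v$. Since $\pi_{C_v}$ is dual to the inclusion $C_v^\perp\cap M\hookrightarrow C_v^\vee\cap M$, it pulls monomial, hence integral affine, functions back to integral affine functions; conversely an integral affine function on $X_v$ has, at every point, monomial part orthogonal to the contracted $C_v$-directions (otherwise it would not extend continuously over the toric boundary of a fibre), hence is constant along the fibres of $\delta_v$ and descends. This proves (1). For (2), compute $\delta_\ast\scF^p_\bZ$ stalkwise: using the description of the polyhedra $\tau\lb F_1,\cdots,F_r,C_1,\cdots,C_r\rb$ meeting a given fibre (via \pref{pr:dual} and \pref{th:st-intersection}) together with \eqref{eq:Fp}, the stalk at $x\in U_\xi$ is the group of compatible families of multi-cotangent data along the fibre; one checks that this group is exactly the monodromy-invariant subgroup of $\bigwedge^p\check\Lambda$, because the incidence combinatorics of those cells encode precisely the monodromy transvections produced in \pref{pr:monodromy}, so it matches $\lb\iota_\ast\bigwedge^p\check\Lambda\rb_x$. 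All identifications are induced by the linear projection $\pi_{C_v}$ on (co)tangent spaces and are therefore multiplicative for wedge products, which gives the graded-ring isomorphism \eqref{eq:sh-gr}. The $p=1$ case of (2) also follows from (1), by dividing the exponential sequences $0\to\bR\to\mathrm{Aff}\to\scF^1_\bZ\to0$ and $0\to\bR\to\iota_\ast\mathrm{Aff}_{U_{\xi,0}}\to\iota_\ast\check\Lambda\to0$ by $\bR$.

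\emph{Properties (3) and (4).} As $\delta$ is proper, the stalk of $R^q\delta_\ast\scF^p_Q$ (resp.\ of $R^q\delta_\ast\scW_p^Q$) at $x$ is $H^q$ of the fibre $\delta^{-1}(x)$ with coefficients in the restriction of $\scF^p_Q$ (resp.\ of $\scW_p^Q$), so (3) and (4) reduce to the vanishing of these groups for $q\geq1$. For (3): the hypotheses make the stable intersection set-theoretic by \pref{lm:BX}, so $\delta^{-1}(x)$ decomposes as a product of the $r$ fibres of the one-variable modifications attached to the individual $f_i$; each of these is a contractible cone-like subspace on which $\scF^p_Q$ is acyclic, and the Künneth formula for tropical cohomology concludes -- the lattice refinement of the hypothesis being what controls torsion when $Q=\bZ$. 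For (4) no hypothesis is needed: by \pref{pr:w-stalk} the restriction of $\scW_p^Q$ to a fibre is locally constant along the unbounded strata and extended by zero across the bounded part, so its cohomology is the compactly supported cohomology of a contractible locally closed subset and vanishes in every degree, in the spirit of \pref{lm:acyclic}.

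\emph{Main obstacle.} The delicate part is the construction together with the stalkwise matching in (2): one must line up, simultaneously over each stratum, the face poset of the stable intersection (\pref{pr:dual}, \pref{th:st-intersection}), the toric data of the cones $C_\tau$, and the fan and monodromy data of $U_\xi$ (\pref{pr:monodromy}), and the bookkeeping -- which leans on the convex-geometry lemmas of \pref{sc:lem} -- is where the real work lies. The fibre-acyclicity needed for (3) is a secondary but genuine point, since the ``wrong'' fibre, a tropical projective space, would fail it, and it is the hypothesis that keeps the fibres in the acyclic range.
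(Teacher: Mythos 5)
The key gap is in the construction of $X$. You glue only over the vertices $v\prec\xi$, taking $X_v\subset X_{C_v}(\bT)$ and $\delta_v=S_v^{-1}\circ\pi_{C_v}$. But $X_{C_v}(\bT)$ contains only the tropical torus orbits $O_{C'}(\bT)$ with $C'\prec C_v$, and for each vertex $v$ the cone $C_v=\cone\lb\bigcup_i\phi_{\xi,\Deltav_i}(v)\times\lc e_i\rc\rb$ is just the edge generated by the points $\phi_{\xi,\Deltav_i}(v)+e_i$. When $\xi$ (and hence some $\phi_{\xi,\Deltav_i}(\tau)$) has positive dimension, the full cone $C_\tau$ is strictly larger than any $C_v$, and $X(f_1,\cdots,f_r)$ meets orbits $O_{C'}(\bT)$ with $C'\prec C_\tau$ that are faces of no $C_v$. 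These deeper strata are invisible to $\bigcup_{v\prec\xi}X_v$. Already in \pref{eg:ff}: with $\xi=\conv\lc v_1,v_2\rc$ one has $C_{v_1}=\bR_{\geq0}e_1$, $C_{v_2}=\bR_{\geq0}(e_1+ld_2)$, but $X(f_1)$ meets $O_{C}(\bT)$ for $C=C_\xi=\cone\lc e_1,e_1+ld_2\rc$, and that orbit lies in neither $X_{C_{v_1}}(\bT)$ nor $X_{C_{v_2}}(\bT)$. Consequently the fibre of your $\delta$ over $a_\xi$ is the boundary of a cone with its ``tip at infinity'' removed: it is not closed in $X_C(\bT)$, hence not compact, and $\delta$ is not proper. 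This also undermines (3) and (4), since the fibrewise acyclicity depends on the fibre being the full compact polyhedron and not an open subcomplex. The paper's construction avoids this precisely by introducing a subset $V_\tau\subset X_{C_\tau}(\bT)$ and a projection $\pi_{C_\tau}$ for \emph{every} face $\tau\prec\xi$ (not just vertices) and gluing those, which is what brings in the higher torus orbits and forces the technical compatibility Lemma \ref{lm:intersection} (resting on Lemma \ref{lm:tech}) rather than the one-line ``projection kernels differ by tangent directions'' argument.

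A secondary issue: your argument for (3) replaces the paper's Čech-covering induction (Lemmas \ref{lm:i}--\ref{lm:van}) by a product decomposition plus Künneth. The product decomposition \eqref{eq:direct-prod} does exist under the good/very good hypotheses, and the idea is reasonable, but you would need to justify a Künneth theorem for the sheaves $\scF^p_Q$ on a product of rational polyhedral spaces and, for $Q=\bZ$, control the Tor terms --- this is exactly where the ``very good'' integral direct-sum hypothesis has to enter, and it is the part of the argument the paper spends Lemma \ref{lm:GH} on. As written, the Künneth step is asserted rather than proved. Likewise, the stalk computation in (2) --- identifying $(\delta_\ast\scF^p_\bZ)_x$ with the monodromy-invariant subgroup, including the integral saturation --- is the bulk of Section \ref{sc:local-cont2}, and the claim ``one checks that this group is exactly the monodromy-invariant subgroup'' is where the real content lies (the inverse map $\zeta$, independence of the choice of $v_0$, etc.). So even setting aside the construction, the proposal is a plan rather than a proof of (2)--(4).
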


We prove \pref{th:local-cont} by constructing the subset $X$ and the map $\delta$ explicitly.
The rest of this subsection is devoted to constructing them.
The above four properties that they satisfy will be proved in \pref{sc:local-cont2},  \pref{sc:local-cont3}, and \pref{sc:local-cont1}.

For each face $\tau \prec \xi$, we define the subset $W_\tau^\circ \subset U_{\xi}$ by
\begin{align}
W_\tau^\circ :=\bigcup_{\substack{\tau \prec \tau_1 \prec \cdots \prec \tau_l, \\ l \geq 0, \tau_i \in \scrP \\ \xi \in \lc \tau, \tau_1, \cdots, \tau_l \rc}} \rint \lb \conv \lb \lc a_\tau, a_{\tau_1}, \cdots, a_{\tau_l} \rc \rb \rb.
\end{align}
Since every subset $\rint \lb \conv \lb \lc a_{\tau_0}, a_{\tau_1}, \cdots, a_{\tau_l} \rc \rb \rb \subset U_\xi$ appearing on the right hand side of \eqref{eq:uxi} is contained in the subset $W_{\tau_0}^\circ$, one has $U_\xi =\bigsqcup_{\tau \prec \xi} W_{\tau}^\circ$.
We also write the closure of $W_\tau^\circ$ as $W_\tau \subset \overline{U}_{\xi}$.
We consider the projection $\pi_{C_\tau} \colon X_{C_\tau}(\bT) \to O_{C_\tau}(\bT)$ of \eqref{eq:proj} with respect to the cone $C_\tau$ \eqref{eq:ctau} again.

\begin{lemma}\label{lm:inj}
For any face $\tau \prec \xi$, the restriction of the projection $\pi_{C_\tau} \colon X_{C_\tau}(\bT) \to O_{C_\tau}(\bT)$ to the subset $W_\tau$ is injective.
\end{lemma}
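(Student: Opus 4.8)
Here is how I would prove Lemma \ref{lm:inj}.

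The plan is to fix a vertex $v \prec \tau$ and reduce, via \pref{lm:aff-str}, to two separate points: that any vector of $\vspan(C_\tau)$ which is realized as a difference of two points of $B$ already lies in $T(\tau)$, and that the fan structure $S_\tau$ along $\tau$ is injective on $W_\tau$. First I would observe that $v \prec \tau$ implies $\phi_{\xi, \Deltav_i}(v) \subseteq \phi_{\xi, \Deltav_i}(\tau)$ for every $i$, so $C_v \subseteq C_\tau$; since both are faces of $C$ by \pref{lm:cone}, $C_v$ is a face of $C_\tau$, hence $\vspan(C_v) \subseteq \vspan(C_\tau)$ and the linear projection $\pi_{C_\tau}|_{N_\bR}$ factors through $\pi_{C_v}|_{N_\bR}$. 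Now $W_\tau \subseteq \overline{U}_\xi \subseteq U \subseteq B$, the last inclusion holding because by \pref{cd:loccont'} every polytope of $\scrP$ lies in a polyhedron of the complex $\scrP_B$ on $B$; and $\pi_{C_v}|_B$ is injective by \pref{lm:aff-str}. Therefore it is enough to show that for $x, y \in W_\tau$ with $x - y \in \vspan(C_\tau)$ one has first $x - y \in T(\tau)$, and then $x = y$.

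For the first assertion — which uses only $x, y \in B$ — I would use the explicit graph description of $B$ underlying \pref{lm:aff-str}: every point of $B$ has the form $n' + \sum_i \big(\min_{\mu \in \Delta_i} \la \mu, n' \ra\big) e_i$ for a unique $n' \in N'_\bR$. Writing $x, y$ this way, with $N'_\bR$-parts $x', y'$, and decomposing $x - y = u + w$ along the internal direct sum $\vspan(C_\tau) = \vspan(C_v) \oplus \sum_i T(\phi_{\xi, \Deltav_i}(\tau))$ — legitimate since $T(\phi_{\xi, \Deltav_i}(\tau)) \subseteq T(\Deltav_i) \subseteq N'_\bR$ while $\vspan(C_v) \cap N'_\bR = 0$ — the $e_i$-components of $x - y$ come entirely from $u$ and equal $\min_{\mu \in \Delta_i} \la \mu, x' \ra - \min_{\mu \in \Delta_i} \la \mu, y' \ra$. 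Pairing the $N'_\bR$-part of $x - y = u + w$ against $T(\Delta_j)$ and invoking the hypothesis $\la m, n \ra = 0$ for $m \in \Delta_j$, $n \in \Deltav_i$ (so $\Delta_j \perp \phi_{\xi, \Deltav_i}(v)$ and $\Delta_j \perp T(\phi_{\xi, \Deltav_i}(\tau)) \subseteq T(\Deltav_i)$) forces $x' - y' \in \bigcap_j T(\Delta_j)^\perp$; a second pairing of the $N'_\bR$-part against a vertex of $\Delta_i$, using the orthogonality again, then shows each $e_i$-component of $x - y$ vanishes, i.e. $u = 0$. Hence $x - y = w \in \sum_i T(\phi_{\xi, \Deltav_i}(\tau)) \subseteq T(\tau)$, the last inclusion being \eqref{eq:TT}.

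It remains to see that $S_\tau$ is injective on $W_\tau$, for then $x - y \in T(\tau) \subseteq \ker S_\tau$ forces $S_\tau(x) = S_\tau(y)$, hence $x = y$. Here $W_\tau \subseteq \mathrm{St}(\tau)$, since a point of $\conv(a_{\tau_0}, \dots, a_{\tau_l})$ with $\tau \preceq \tau_0 \prec \cdots \prec \tau_l$ lies in $\Int(\tau_l)$ with $\tau_l \succeq \tau$, so $S_\tau$ is defined on $W_\tau$. Put $b_\sigma := S_\tau(a_\sigma)$ for $\sigma \succeq \tau$; then $b_\tau = 0$ and $b_\sigma \in \Int K_\sigma$, and $\sigma \mapsto K_\sigma$ is injective — through $\pi_{C_v}|_B$ one identifies $S_\tau$ on $\mathrm{St}(\tau)$ with the projection of the polyhedral complex $\pi_{C_v}(\scrP)$ along the tangent space of the face $\pi_{C_v}(\tau)$, and distinct faces of a polyhedral complex meeting along a common face have distinct link cones. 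Since $\{K_\sigma\}_{\sigma \succeq \tau}$ is a fan, the simplices $\conv(0, b_{\sigma_1}, \dots, b_{\sigma_l})$ over chains $\tau \prec \sigma_1 \prec \cdots \prec \sigma_l$, together with their faces, have pairwise disjoint relative interiors (the barycentric subdivision of a fan is a subdivision). As $S_\tau$ maps each cell $\conv(a_\tau, a_{\sigma_1}, \dots, a_{\sigma_l})$ of $W_\tau$ affinely and bijectively onto $\conv(0, b_{\sigma_1}, \dots, b_{\sigma_l})$, and distinct cells of $W_\tau$ correspond to distinct chains, $S_\tau|_{W_\tau}$ is injective.

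I expect the last step to be the main obstacle: making rigorous that $W_\tau$ — the portion of the subdivided star of $a_\tau$ running through $\xi$, which is in this way transverse to $\tau$ — is carried by $S_\tau$ isomorphically onto a neighborhood of the origin in the support of the star fan $\{K_\sigma\}_{\sigma \succeq \tau}$; everything else is bookkeeping with the explicit forms of $C_\tau$, of $B$, and of $\pi_{C_v}$.
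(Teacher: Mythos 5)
Your proposal follows the same route as the paper: fix a vertex $v_0 \prec \tau$, factor $\pi_{C_\tau}|_{W_\tau}$ through $\pi_{C_{v_0}}$ (which is injective on $B$ by Lemma \ref{lm:aff-str}), observe that what remains to be killed lies in $T(\tau)$, and then argue that $W_\tau$ meets each translate of $T(\tau)$ in at most one point. The only real difference is that you spell out the transversality of $W_\tau$ to $T(\tau)$ via a barycentric-subdivision argument for the star fan, whereas the paper simply declares it ``obvious from the definition of $W_\tau$''; your step (i) is likewise a hands-on rederivation of the graph description of $B$ that Lemma \ref{lm:aff-str} already packages.
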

\begin{proof}
We fix a vertex $v_0 \prec \tau$.
The restriction of the map $\pi_{C_\tau}$ to $W_\tau \subset N_\bR$ is the composition
\begin{align}\label{eq:compo}
\begin{aligned}
W_\tau \hookrightarrow N_\bR \xrightarrow{\left. \pi_{C_{v_0}} \right|_{N_\bR}} \left. N_\bR \middle/ \vspan \lb \bigcup_{i=1}^r \phi_{\xi, \Deltav_i} (v_0) \times \lc e_i \rc \rb \right. \\
\xrightarrow{\pi_2} \left. N_\bR \middle/ \vspan \lb \bigcup_{i=1}^r \phi_{\xi, \Deltav_i} (\tau) \times \lc e_i \rc \rb \right. =O_{C_\tau}(\bT),
\end{aligned}
\end{align}
where $\pi_2$ is the quotient map.
By \pref{lm:aff-str}, the restriction of the first projection $\left. \pi_{C_{v_0}} \right|_{N_\bR}$ to $W_\tau$ is injective.
The kernel of the second quotient map $\pi_2$ is contained in $T(\tau)$.
It is obvious from the definition of $W_\tau$ that the subset $W_\tau$ is transversal to $T(\tau)$, and the restriction of $\pi_2$ to $\pi_{C_{v_0}} \lb W_\tau \rb$ is also injective.
Hence, the restriction $\left. \pi_{C_\tau} \right|_{W_\tau}$ is injective. 
\end{proof}

For a polyhedron $\sigma=\tau \lb F_1, \cdots, F_r, C_1, \cdots, C_r \rb \in \scrP_{X(f_1, \cdots, f_r)}$ with $F_i \prec \conv (A_i), C_i=\lc 0 \rc \prec \cone(\Deltav_i \times \lc e_i \rc)$ $\lb 1 \leq i \leq r \rb$, we set
\begin{align}\label{eq:Isig}
I_\sigma&:= \lc i \in \lc 1, \cdots r \rc \relmid f_i(n) \neq 0\ \mathrm{on}\ \rint(\sigma) \rc\\ \label{eq:til-sigma}
\tilde{\sigma}&:=\tau \lb F_1', \cdots, F_r', C_1', \cdots, C_r' \rb,
\end{align}
where $F_i'=\conv \lb F_i \cup \lc 0 \rc \rb$ and $C_i'=\lc 0 \rc$ $\lb 1 \leq i \leq r \rb$.

\begin{lemma}\label{lm:sigma}
The set $\tilde{\sigma}$ is a non-empty face of $\sigma$, which is in $\scrP_B$.
Furthermore, one has 
\begin{align}\label{eq:sigma}
\sigma \cap N_\bR = \tilde{\sigma} + \cone \lb \bigcup_{i \in I_\sigma} \Deltav_i \times \lc e_i \rc \rb.
\end{align}
\end{lemma}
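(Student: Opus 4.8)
The plan is to describe $\sigma \cap N_\bR$, $\tilde\sigma$ and $I_\sigma$ completely explicitly in the splitting $N_\bR = N'_\bR \oplus \bR^r$, and then read off the three assertions. Write $n = \lb n', c\rb$ with $c = \lb c_1, \dots, c_r\rb \in \bR^r$, and set $g_i\lb n'\rb := \min_{\mu \in \Delta_i \cap M'} \la \mu, n'\ra$, so that $f_i\lb n\rb = \min\lb 0,\, g_i\lb n'\rb - c_i\rb$ and the space $B$ of \eqref{eq:B} is the graph $\lc \lb n', c\rb \relmid c_i = g_i\lb n'\rb,\ \forall i \rc$. Inspecting $\scrM_i\lb n\rb$ for $n$ in the relative interior of $\sigma$ shows: if $c_i > g_i\lb n'\rb$ on $\Int\lb\sigma\rb$ (equivalently $f_i\lb n\rb \neq 0$, i.e.\ $i \in I_\sigma$) then $F_i = \Phi_i \times \lc -e_i^\ast\rc$, while if $c_i = g_i\lb n'\rb$ on $\Int\lb\sigma\rb$ (i.e.\ $i \notin I_\sigma$) then $F_i = \conv\lb\lc 0\rc \cup \lb \Phi_i \times \lc -e_i^\ast\rc\rb\rb$, where $\Phi_i := \lc \mu \in M'_\bR \relmid \lb\mu, -e_i^\ast\rb \in F_i \rc$ is the face of $\Delta_i$ on which $\la\cdot, n'\ra$ attains its minimum (the same for all $\lb n',c\rb \in \Int\lb\sigma\rb$), and $\Phi_i \neq \emptyset$ since membership of $\sigma$ in $\scrP_{X(f_1, \cdots, f_r)}$ forces $\dim F_i \geq 1$ (\pref{th:st-intersection} with $J = \lc i\rc$). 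Hence in all cases $F_i' = \conv\lb\lc 0\rc \cup \lb \Phi_i \times \lc -e_i^\ast\rc\rb\rb$ and $I_{\tilde\sigma} = \emptyset$. Let $\bar Q \subset N'_\bR$ be the polyhedral cone $\lc n' \relmid \la \mu' - \mu, n'\ra \geq 0 \text{ for all } \mu' \in \Delta_i,\ \mu \in \Phi_i,\ 1 \leq i \leq r\rc$, which by the standard fact $\Int\lb \bigcap_i C_i\rb = \bigcap_i \Int\lb C_i\rb$ for finitely many convex cones sharing a relative interior point (applicable since $\sigma \neq \emptyset$) is the closure of $\lc n' \relmid \text{the minimum face of } \la\cdot,n'\ra \text{ on } \Delta_i \text{ is } \Phi_i,\ \forall i\rc$. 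One then checks
\begin{align}\label{eq:sigma-expl}
\sigma \cap N_\bR &= \lc \lb n', c\rb \relmid n' \in \bar Q,\ c_i = g_i\lb n'\rb\ (i \notin I_\sigma),\ c_i \geq g_i\lb n'\rb\ (i \in I_\sigma) \rc, \\
\tilde\sigma &= \lc \lb n', c\rb \relmid n' \in \bar Q,\ c_i = g_i\lb n'\rb\ \text{for all } i \rc, \notag
\end{align}
noting that the restriction of $g_i$ to $\bar Q$ equals the linear form $\la \mu_i, \cdot\ra$ for any fixed vertex $\mu_i$ of $\Phi_i$ (a lattice point of $\Delta_i$), hence is linear there.

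Granting \eqref{eq:sigma-expl}, the first assertion follows at once. Non-emptiness: for any $\lb n', c\rb \in \sigma$ the point $\lb n', \lb g_1\lb n'\rb, \dots, g_r\lb n'\rb\rb\rb$ lies in $\tilde\sigma$. Face: $\tilde\sigma$ is the locus in $\sigma \cap N_\bR$ on which each inequality $c_i \geq g_i\lb n'\rb = \la\mu_i, n'\ra$ $(i \in I_\sigma)$ is an equality, i.e.\ the face of $\sigma$ on which the integral linear form $\sum_{i \in I_\sigma}\lb e_i^\ast - \mu_i\rb \in M$ — nonnegative on $\sigma$ by \eqref{eq:sigma-expl} — vanishes, and that face is exactly $\tau\lb F_1', \dots, F_r', \lc 0\rc, \dots, \lc 0\rc\rb = \tilde\sigma$. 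Membership in $\scrP_B$: each $F_i'$ contains $0$ and, since $\Phi_i \neq \emptyset$, strictly contains $\lc 0\rc$ — which is precisely the condition for $\tau\lb F_1', \dots, F_r', \lc 0\rc, \dots, \lc 0\rc\rb$ to be supported in $B$.

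For \eqref{eq:sigma} the essential input is the standing hypothesis $\la\mu, \nu\ra = 0$ for all $\mu \in \Delta_i$, $\nu \in \Deltav_j$: translating the $N'_\bR$-coordinate by an element of $\vspan\lb\bigcup_i \Deltav_i\rb$ alters none of the pairings $\la\mu' - \mu, \cdot\ra$ $(\mu' \in \Delta_i, \mu \in \Phi_i)$ nor any of the $g_1, \dots, g_r$, so it preserves both $\bar Q$ and the right-hand sides of \eqref{eq:sigma-expl}. The inclusion $\supseteq$ is then immediate: $\tilde\sigma \subseteq \sigma \cap N_\bR$, and adding $\sum_{i \in I_\sigma} t_i\lb\nu_i, e_i\rb$ with $t_i \geq 0$, $\nu_i \in \Deltav_i$ to a point of $\sigma \cap N_\bR$ keeps $n' \in \bar Q$ and the $g_i$ fixed while only increasing the coordinates $c_i$ with $i \in I_\sigma$. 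For $\subseteq$, given $\lb m', d\rb \in \sigma \cap N_\bR$, put $t_i := d_i - g_i\lb m'\rb \geq 0$ for $i \in I_\sigma$, choose any $\nu_i \in \Deltav_i$, and set $n' := m' - \sum_{i \in I_\sigma} t_i\nu_i$; then $n' \in \bar Q$ and $g_i\lb n'\rb = g_i\lb m'\rb$ for every $i$, so $\lb n', \lb g_1\lb n'\rb, \dots, g_r\lb n'\rb\rb\rb \in \tilde\sigma$ and $\lb m', d\rb$ equals this point plus $\sum_{i \in I_\sigma} t_i\lb\nu_i, e_i\rb \in \cone\lb\bigcup_{i \in I_\sigma} \Deltav_i \times \lc e_i\rc\rb$.

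The one genuinely delicate step is establishing \eqref{eq:sigma-expl}, i.e.\ that $\sigma \cap N_\bR$ and $\tilde\sigma$ really are the closures of the relatively open cells they are built from; this requires the standard facts on relative interiors of intersections of normal cones, together with careful bookkeeping of the $-e_i^\ast$-coordinates and of the perpendicularity $\Delta_i \perp \Deltav_j$. Everything after \eqref{eq:sigma-expl} is a routine verification.
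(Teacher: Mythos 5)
Your proposal is correct, and it follows a genuinely different route from the paper's proof. The paper argues entirely by explicit point constructions: to show $\tilde{\sigma} \neq \emptyset$ it takes $n_0 \in \Int(\sigma)$ and checks that $n_1 := n_0 + \sum_i f_i(n_0)e_i$ satisfies $\scrM_i(n_1) = \lc 0 \rc \cup (F_i \cap M)$, and for the inclusion $\sigma \cap N_\bR \subseteq \tilde{\sigma} + \cone\lb\bigcup_{i\in I_\sigma}\Deltav_i\times\lc e_i\rc\rb$ it picks $n_i \in \Deltav_i$, shows directly that $n_0' := n_0 + \sum_{i \in I_\sigma}f_i(n_0)(n_i+e_i)$ lies in $\tilde{\sigma}$, and observes that $n_0 - n_0'$ lies in the cone since $f_i(n_0) \leq 0$. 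Your $\subseteq$ argument is that same computation transcribed into the coordinates $N'_\bR \oplus \bR^r$ (your $t_i = d_i - g_i(m')$ is the paper's $-f_i(n_0)$, and your $\lb n', (g_1(n'),\dots,g_r(n'))\rb$ is the paper's $n_0'$), but you derive nonemptiness, the face relation, and membership in $\scrP_B$ from an explicit polyhedral description of $\sigma\cap N_\bR$ and $\tilde{\sigma}$ rather than from ad hoc points. That description is what your approach buys: $\sigma \cap N_\bR$ becomes the half-slab $\lc c_i \geq g_i(n'),\ i \in I_\sigma\rc$ over the normal-cone intersection $\bar Q$, with $\tilde{\sigma}$ its floor cut out by the nonnegative integral form $\sum_{i\in I_\sigma}(e_i^\ast - \mu_i)$, so all three assertions become transparent at once. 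The cost, which you rightly flag, is that establishing this description requires the closure/relative-interior argument (the Rockafellar-type fact that relative interiors commute with finite intersections of convex cones having a common relative-interior point, plus the verification that the open cell is the relative interior of its closed description); the paper's point constructions sidestep all of this. Both proofs rest on the same essential input, the orthogonality $\la\Delta_i, \Deltav_j\ra = 0$, which renders translation by $\Deltav_j$-vectors invisible to every $g_i$ and hence to $\bar Q$.
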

\begin{proof}
First, we check $\tilde{\sigma} \neq \emptyset$.
Take a point $n_0 \in \rint \lb \sigma \rb$, and set $n_1:=n_0+\sum_{i=1}^r f_i(n_0)e_i$.
Then we can check $f_i(n_1)=0$ for all $i \in \lc 1, \cdots, r \rc$, and $\scrM_i(n_1)=\lc 0 \rc \cup \lb F_i \cap M \rb$.
Since $\conv \lb \scrM_i(n_1) \rb=\conv \lb \lc 0 \rc \cup \lb F_i \cap M \rb \rb=F_i'$, one has $n_1 \in \tilde{\sigma}$.
In particular, the set $\tilde{\sigma}$ is not empty.
It is now obvious that $\tilde{\sigma}$ is a face of $\sigma$ and is in $\scrP_B$.

Next, we try to show \eqref{eq:sigma}.
For any $i \in \lc 1, \cdots, r \rc$, the monomials of $f_i$ that attain the minimum of $f_i$ on $\rint \lb \sigma \rb$ also attain the minimum of $f_i$ at any point $n$ in $\tilde{\sigma} + \cone \lb \bigcup_{i \in I_\sigma} \Deltav_i \times \lc e_i \rc \rb$.
Hence, in \eqref{eq:sigma}, the right hand side is contained in the left hand side.
We show the opposite inclusion.
Let $n_0 \in \sigma \cap N_\bR$ be an arbitrary element.
Then $f_i(n_0)=0$ for all $i \in \lc 1, \cdots, r \rc \setminus I_\sigma$.
For each $i \in I_\sigma$, take an element $n_i \in \Deltav_i$, and set $n_0':=n_0+ \sum_{i \in I_\sigma} f_i(n_0) \cdot \lb n_i + e_i\rb$.
For any $i \in \lc 1, \cdots, r \rc$ and $m_i \in F_i \setminus \lc 0 \rc$, we have
\begin{align}
\la m_i, n_0' \ra
&=
\left\{
\begin{array}{ll}
\la m_i, n_0 \ra-f_i(n_0) & \lb i \in I_\sigma \rb \\
\la m_i, n_0 \ra & \lb i \in \lc 1, \cdots, r \rc \setminus I_\sigma \rb \\
\end{array}
\right. \\
&=0,
\end{align}
since one has $f_i(n_0)= \la m_i, n_0 \ra$.
We also have 
\begin{align}
f_i(n_0')&=\min_{m \in A_i} \la m, n_0+ \sum_{j \in I_\sigma} f_j(n_0) \cdot \lb n_j + e_j \rb \ra \\
&=
\left\{
\begin{array}{ll}
\min \lc 0, f_i(n_0)-f_i(n_0) \rc & \lb i \in I_\sigma \rb \\
f_i(n_0) & \lb i \in \lc 1, \cdots, r \rc \setminus I_\sigma \rb \\
\end{array}
\right. \\
&=0
\end{align}
for all $i \in \lc 1, \cdots, r \rc$.
It turns out from these that the element $n_0'$ is in $\tilde{\sigma}$.
Since $f_i(n_0) \leq 0$, the element $n_0=n_0'- \sum_{i \in I_\sigma} f_i(n_0) \cdot \lb n_i + e_i\rb$ is in the right hand side of \eqref{eq:sigma}.
We obtained \eqref{eq:sigma}.
\end{proof}

For each face $\tau \prec \xi$, we also set
\begin{align}
V_\tau:= \lb W_\tau + \cone_\bT \lb \bigcup_{i=1}^r \phi_{\xi, \Deltav_i} \lb \tau \rb \times \lc e_i \rc \rb \rb
\cap X(f_1, \cdots, f_r) \subset X_{C_\tau}(\bT),
\end{align}
where the operation $\cone_\bT \lb \bullet \rb$ is the one defined in \pref{eq:tcone}.

\begin{lemma}\label{lm:vtau}
For any $\tau \prec \xi$ and $\sigma \in \scrP_{X(f_1, \cdots, f_r)}$, one has
\begin{align}\label{eq:vtau}
V_{\tau} \cap \sigma=\lb W_{\tau} \cap \tilde{\sigma} \rb + \cone_\bT \lb \bigcup_{i \in I_\sigma} \phi_{\xi, \Deltav_i} (\tau) \times \lc e_i \rc \rb.
\end{align}
\end{lemma}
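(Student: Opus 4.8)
The plan is to establish the two inclusions of \eqref{eq:vtau} separately, reducing everything to the combinatorics of the monomial sets $\scrM_i(\bullet)$ of \eqref{eq:F_i}. Since $\sigma \subset X(f_1, \cdots, f_r)$, the intersection with $X(f_1, \cdots, f_r)$ in the definition of $V_\tau$ is automatic after restricting to $\sigma$, so $V_\tau \cap \sigma = \lb W_\tau + \cone_\bT \lb \bigcup_{i=1}^r \phi_{\xi, \Deltav_i}(\tau) \times \lc e_i \rc \rb \rb \cap \sigma$. I treat first the case where $\sigma$ is a finite polyhedron, so that \pref{lm:sigma} applies. Throughout I use the pairing identities $\la 0, p + e_j \ra = 0$ and $\la m' - e_i^\ast, p + e_j \ra = -\delta_{ij}$, valid for $p \in \phi_{\xi, \Deltav_j}(\tau) \subset \Deltav_j$ and $m' \in \Delta_i$, which follow from the orthogonality hypothesis $\la m, n \ra = 0$ $(m \in \Delta_i,\ n \in \Deltav_j)$ and $\la e_i^\ast, e_j \ra = \delta_{ij}$, recalling that every element of $A_i \setminus \lc 0 \rc$ has the form $m' - e_i^\ast$ with $m' \in \Delta_i$.

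For the inclusion $\supseteq$, take $w \in W_\tau \cap \tilde{\sigma}$ and $c \in \cone_\bT \lb \bigcup_{i \in I_\sigma} \phi_{\xi, \Deltav_i}(\tau) \times \lc e_i \rc \rb$. Since $\phi_{\xi, \Deltav_i}(\tau)$ is a face of $\Deltav_i$, we have $c \in \cone_\bT \lb \bigcup_{i \in I_\sigma} \Deltav_i \times \lc e_i \rc \rb$. When $c$ has finite coefficients, $w + c \in \tilde{\sigma} + \cone \lb \bigcup_{i \in I_\sigma} \Deltav_i \times \lc e_i \rc \rb = \sigma \cap N_\bR$ by \pref{lm:sigma}; when some coefficient is infinite, $w + c$ is a limit of such points, hence lies in the closure $\sigma$. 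In either case $w + c \in \sigma$, and being visibly in $W_\tau + \cone_\bT \lb \bigcup_{i=1}^r \phi_{\xi, \Deltav_i}(\tau) \times \lc e_i \rc \rb$, it belongs to $V_\tau \cap \sigma$.

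For the inclusion $\subseteq$, let $x \in V_\tau \cap \sigma$ be a finite point and write $x = w + c$ with $w \in W_\tau$ and $c \in \cone_\bT \lb \bigcup_{i=1}^r \phi_{\xi, \Deltav_i}(\tau) \times \lc e_i \rc \rb$; grouping together the generators sharing a fixed $e_i$, which is possible by convexity of $\phi_{\xi, \Deltav_i}(\tau)$, I may assume $c = \sum_{i=1}^r \lambda_i(p_i + e_i)$ with $\lambda_i \in \bR_{\geq 0}$ and $p_i \in \phi_{\xi, \Deltav_i}(\tau)$. The pairing identities give $\la m, x \ra = \la m, w \ra$ for $m = 0$, and $\la m, x \ra = \la m, w \ra - \lambda_i$ for $m \in A_i \setminus \lc 0 \rc$. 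As $w \in W_\tau \subset B$, one has $f_i(w) = 0$ and therefore $\min_{m \in A_i \setminus \lc 0 \rc} \la m, w \ra = 0$, so $f_i(x) = \min \lb 0, -\lambda_i \rb = -\lambda_i$. If $i \notin I_\sigma$ then $0 \in F_i$, hence $f_i$ vanishes on $\sigma \cap N_\bR$ and in particular $f_i(x) = 0$, forcing $\lambda_i = 0$. Comparing minimizing sets one finds $\scrM_i(x) = \scrM_i(w)$ if $\lambda_i = 0$, and $\scrM_i(x) = \scrM_i(w) \setminus \lc 0 \rc$ if $\lambda_i > 0$ (in which case $i \in I_\sigma$ and $0 \notin F_i$); either way $F_i \cap A_i \subseteq \scrM_i(x) \subseteq \scrM_i(w)$. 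By the description of $\sigma$ as the closure of the cell dual to $\sum_{i=1}^r F_i$ (cf.\ \pref{pr:dual}), the condition ``$F_i \cap A_i \subseteq \scrM_i(n)$ for all $i$'' cuts out exactly $\sigma \cap N_\bR$ from $N_\bR$; hence $w \in \sigma \cap N_\bR$. Since in addition $0 \in \scrM_i(w)$ for all $i$ (because $f_i(w) = 0$), we obtain $F_i' \cap A_i = \lb F_i \cap A_i \rb \cup \lc 0 \rc \subseteq \scrM_i(w)$ for all $i$, that is, $w \in \tilde{\sigma}$. Finally $c = \sum_{i \in I_\sigma} \lambda_i(p_i + e_i) \in \cone_\bT \lb \bigcup_{i \in I_\sigma} \phi_{\xi, \Deltav_i}(\tau) \times \lc e_i \rc \rb$, so $x = w + c$ lies in the right-hand side of \eqref{eq:vtau}.

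What remains, and what I expect to be the main obstacle, is the bookkeeping at infinity. For an infinite polyhedron $\sigma = \tau \lb F_1, \cdots, F_r, C_1, \cdots, C_r \rb$ one replaces it by its finite parent $\tau \lb F_1, \cdots, F_r, \lc 0 \rc, \cdots, \lc 0 \rc \rb$ of \pref{lm:parent}, which carries the same data $\tilde{\sigma}$ and $I_\sigma$; the genuine issue is a point $x \in V_\tau \cap \sigma$ at infinity, where some coefficient $\lambda_i$ is $\infty$. One handles this either by repeating the $\scrM_i$-analysis inside the torus orbit $O_{C'}(\bT) = N_\bR / \vspan(C')$ containing $x$, where the induced polytopes form a smaller instance of the present setup and the infinite-coefficient directions $p_i + e_i$ are forced to be recession directions of $\sigma$ (so $i \in I_\sigma$); or by noting that both sides of \eqref{eq:vtau} are closures in $X_{C_\tau}(\bT)$ of their intersections with $N_\bR$, controlled via \pref{lm:inj}, which coincide by the finite case. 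The delicate point is that the tropical polynomials $f_i$ do not extend to functions on all of $X_{C_\tau}(\bT)$, so one must check by hand that the decomposition $x = w + c$ and the pairing computation remain meaningful across the stratification by torus orbits.
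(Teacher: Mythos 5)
Your argument is correct and follows essentially the same route as the paper's: both prove $\supseteq$ by combining \pref{lm:sigma} with $\phi_{\xi,\Deltav_i}(\tau)\subset\Deltav_i$, and both prove $\subseteq$ by decomposing $n_0=n_1+n_2$ and observing that adding $n_2$ removes at most the monomial $0$ from each minimizing set $\scrM_i$, forcing $n_1\in\tilde{\sigma}$. Your version spells out the pairing identities and the equality $F_i'\cap A_i=(F_i\cap A_i)\cup\lc 0\rc$ that the paper leaves implicit, and your closing paragraph correctly identifies that the at-infinity case is handled most cleanly by the closure argument (both sides are closed and coincide on $N_\bR$), which is consistent with how the paper implicitly deals with it by taking the closure of \pref{eq:sigma} at the outset.
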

\begin{proof}
By taking the closure of \eqref{eq:sigma}, we obtain $\sigma = \tilde{\sigma} + \cone_\bT \lb \bigcup_{i \in I_\sigma} \Deltav_i \times \lc e_i \rc \rb$.
From this, one can get
\begin{align}
V_{\tau} \cap \sigma
=V_{\tau} \cap \lb \tilde{\sigma} + \cone_\bT \lb \bigcup_{i \in I_\sigma} \Deltav_i \times \lc e_i \rc \rb \rb
\supset \lb W_{\tau} \cap \tilde{\sigma} \rb + \cone_\bT \lb \bigcup_{i \in I_\sigma} \phi_{\xi, \Deltav_i} (\tau) \times \lc e_i \rc \rb.
\end{align}
We show the opposite inclusion.
Since for any $i \in \lc 1, \cdots r \rc \setminus I_\sigma$, the monomial $0$ attains the minimum of the polynomial $f_i$ on $\sigma$, we can see
\begin{align}
V_{\tau} \cap \sigma \subset W_\tau + \cone_\bT \lb \bigcup_{i \in I_\sigma} \phi_{\xi, \Deltav_i} (\tau) \times \lc e_i \rc \rb.
\end{align}
Hence, every element $n_0 \in V_{\tau} \cap \sigma$ can be written as $n_0=n_1+n_2$ with $n_1 \in W_\tau$ and $n_2 \in \cone_\bT \lb \bigcup_{i \in I_\sigma} \phi_{\xi, \Deltav_i} (\tau) \times \lc e_i \rc \rb$.
Since adding the element $n_2 \in \cone_\bT \lb \bigcup_{i \in I_\sigma} \phi_{\xi, \Deltav_i} (\tau) \times \lc e_i \rc \rb$ does not change the set of monomials of $f_i$ attaining the minimum except for the monomial $0$, we have $n_1 \in \tilde{\sigma}$.
Thus in \eqref{eq:vtau}, the left hand side is also contained in the right hand side.
We obtained \eqref{eq:vtau}.
\end{proof}

By \pref{lm:inj}, for each face $\tau \prec \xi$, there uniquely exists a map $t_\tau \colon \pi_{C_\tau} \lb W_\tau \rb \to W_\tau$ such that $t_\tau \circ \left. \pi_{C_\tau} \right|_{W_\tau}$ is the identity map.
We consider the map $\delta_\tau \colon V_\tau \to W_\tau$ defined as the composition
\begin{align}\label{eq:delt-tau0}
\delta_\tau \colon V_\tau \hookrightarrow X_{C_\tau}(\bT) \xrightarrow{\pi_{C_\tau}} \pi_{C_\tau}\lb W_\tau \rb \xrightarrow{t_\tau} W_\tau.
\end{align}
The restriction of this map $\delta_\tau$ to the subset $W_\tau \subset V_\tau$ is the identity map.

\begin{lemma}\label{lm:intersection}
When $V_{\tau_1} \cap V_{\tau_2} \neq \emptyset$ for faces $\tau_1, \tau_2 \prec \xi$, one has $\left. \delta_{\tau_1} \right|_{V_{\tau_1} \cap V_{\tau_2}}=\left. \delta_{\tau_2} \right|_{V_{\tau_1} \cap V_{\tau_2}}$.
\end{lemma}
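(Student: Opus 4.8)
The plan is to evaluate the two maps one polyhedron at a time, using \pref{lm:vtau} to identify each with a ``$W$-component'' of the point. Fix $x\in V_{\tau_1}\cap V_{\tau_2}$ and let $\sigma\in\scrP_{X(f_1,\cdots,f_r)}$ be the polyhedron containing $x$ in its relative interior. By \eqref{eq:vtau}, for $k=1,2$ we may write $x=w_k+c_k$ with $w_k\in W_{\tau_k}\cap\tilde{\sigma}$ and $c_k\in\cone_\bT\lb\bigcup_{i\in I_\sigma}\phi_{\xi,\Deltav_i}(\tau_k)\times\lc e_i\rc\rb$. The generators of this cone all lie in $C_{\tau_k}$ by \eqref{eq:ctau}, so $\pi_{C_{\tau_k}}$ annihilates $c_k$; since $\pi_{C_{\tau_k}}$ is a monoid homomorphism and restricts on $W_{\tau_k}$ to a bijection with inverse $t_{\tau_k}$ (\pref{lm:inj}), we obtain $\delta_{\tau_k}(x)=t_{\tau_k}\lb\pi_{C_{\tau_k}}(x)\rb=t_{\tau_k}\lb\pi_{C_{\tau_k}}(w_k)\rb=w_k$. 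Hence the statement reduces to the single identity $w_1=w_2$. The case $x\in W_{\tau_1}\cap W_{\tau_2}$ (where $c_1=c_2=0$) is immediate, so I would assume $x\notin B$, which forces $c_1,c_2\neq 0$ and $I_\sigma\neq\emptyset$.

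I would then extract two facts. First, writing $c_k=\sum_{i\in I_\sigma}\lambda_i^{(k)}\lb p_i^{(k)}+e_i\rb$ with $p_i^{(k)}\in\phi_{\xi,\Deltav_i}(\tau_k)\subset\Deltav_i\subset N'_\bR$, the orthogonality $\la m,n\ra=0$ ($m\in\Delta_i$, $n\in\Deltav_j$) together with $f_i\equiv 0$ on $\tilde{\sigma}\subset B$ gives, by a computation as in the proof of \pref{lm:sigma}, that $f_i(x)=-\lambda_i^{(k)}$ for every $i\in I_\sigma$. Hence $\lambda_i^{(1)}=\lambda_i^{(2)}$, the points $w_1$ and $w_2$ have the same $e$-coordinates, and $w_1-w_2=c_2-c_1$ lies in $N'_\bR$, in fact in $\sum_{i\in I_\sigma}\lb T(\phi_{\xi,\Deltav_i}(\tau_1))+T(\phi_{\xi,\Deltav_i}(\tau_2))\rb$, and is tangent to $\tilde{\sigma}$. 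Secondly, using the partition $\overline{U}_{\xi}=\bigsqcup_{\tau'\prec\xi}W_{\tau'}^\circ$ and the fact that $W_{\tau'}\subset W_\tau$ whenever $\tau\preceq\tau'$, if $\tau_1'$ and $\tau_2'$ denote the faces with $w_1\in W_{\tau_1'}^\circ$ and $w_2\in W_{\tau_2'}^\circ$ then $\tau_1\preceq\tau_1'$ and $\tau_2\preceq\tau_2'$. Granting that $w_1$ and $w_2$ both lie in $W_{\tau_1}$ and that $w_1-w_2\in\vspan\lb C_{\tau_1}\rb$, the injectivity of $\pi_{C_{\tau_1}}$ on $W_{\tau_1}$ (\pref{lm:inj}) then forces $w_1=w_2$, so the problem reduces to these two containments.

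The step I expect to be the main obstacle is precisely verifying $w_1-w_2\in\vspan\lb C_{\tau_1}\rb$ --- equivalently, that the corrector $c_2=x-\delta_{\tau_2}(x)$ can be written using only the generators $\phi_{\xi,\Deltav_i}(\tau_1)\times\lc e_i\rc$ --- together with $w_2\in W_{\tau_1}$. This is where the hypotheses must be brought in: $\tau_1$ and $\tau_2$ are faces of the common polytope $\xi$ whose normal fan refines that of each $\Deltav_i$ (\pref{cd:refinement}), so the faces $\phi_{\xi,\Deltav_i}(\tau_k)\prec\Deltav_i$ are those selected by the functionals in $\scM(\tau_k,\xi)$ (see \eqref{eq:scM}, \eqref{eq:ss2}), while the combinatorial position of $\tilde{\sigma}$ relative to the stars $W_{\tau_k}$ is controlled by \pref{cd:loccont'} and the fan structures $S_v$ of \eqref{eq:fanstr}. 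I anticipate that the argument reduces to a statement about the faces of the $\Deltav_i$ cut out by these functionals --- roughly, that $\phi_{\xi,\Deltav_i}$ interacts well with the meet and join of $\tau_1,\tau_2$ inside $\xi$ --- to be settled with the convex-geometry lemmas collected in \pref{sc:lem}. Once $w_1=w_2$ is established we get $\delta_{\tau_1}=\delta_{\tau_2}$ on $V_{\tau_1}\cap V_{\tau_2}$; as a byproduct $W_{\tau_1}\cap W_{\tau_2}\neq\emptyset$, consistent with the eventual gluing of the $\delta_\tau$ into the global contraction $\delta\colon X\to U_{\xi}$.
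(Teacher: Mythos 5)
Your reduction is sound as far as it goes: using \pref{lm:vtau} to write $x=w_k+c_k$, observing that $\pi_{C_{\tau_k}}$ kills $c_k$ so that $\delta_{\tau_k}(x)=w_k$, and computing $f_i(x)=-\lambda_i^{(k)}$ to conclude the scalars agree and $w_1-w_2\in N'_\bR$ is tangent to $\tilde\sigma$ --- all of this is correct and matches the preparatory part of the paper's argument. But you stop precisely where the proof begins: the two containments $w_2\in W_{\tau_1}$ and $w_1-w_2\in\vspan(C_{\tau_1})$ are \emph{the} content of the lemma, and you leave them as an anticipation rather than an argument. Worse, the second containment is not true in the form you state it. Taking $\tau_1=v_1$, $\tau_2=v_2$ for an edge $\xi=\conv(v_1,v_2)$ (as in \pref{eg:ff}), one has $w_1-w_2=\lambda(\phi_{\xi,\Deltav_1}(v_2)-\phi_{\xi,\Deltav_1}(v_1))$, which lies in $T(\Deltav_1)\subset N'_\bR$, whereas $\vspan(C_{v_1})\cap N'_\bR=T(\phi_{\xi,\Deltav_1}(v_1))=\{0\}$; the containment you need holds only because the constraint $w_1\in W_{\tau_1},\,w_2\in W_{\tau_2}$ in fact forces $\lambda=0$ or $w_1=w_2$ outright, which is circular as a strategy. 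Isolating it as a standalone claim and then trying to deduce $w_1=w_2$ from it puts the cart before the horse.

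The paper avoids this by not arguing pointwise at all. It decomposes $V_{\tau_1}\cap V_{\tau_2}$ over polyhedra $\sigma$ and torus orbits $O_{C'}(\bT)$, projects by $\pi_{C'}$, reduces to showing the two maps agree on the sets \eqref{eq:intersect'}, and then invokes \pref{lm:tech}.1 to prove that each such set is either empty or a Minkowski sum
\begin{align}
\lb \bigcap_{j=1,2} \pi_{C'}\lb W_{\tau_j}\cap\tilde\tau \rb \rb + \lb \bigcap_{j=1,2}\sum_{i}k_i\,\pi_{C'}\lb\phi_{\xi,\Deltav_i}(\tau_j)\times\lc e_i\rc\rb \rb,
\end{align}
with the first factor equal to $\pi_{C'}(W_{\tau_0}\cap\tilde\tau)$ for the minimal $\tau_0\succ\tau_1,\tau_2$. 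Both contractions are the identity on $W_{\tau_0}$ and the second factor is projected out, giving agreement globally without ever needing to locate a specific $w_k$. Two further pieces are missing from your plan entirely: (i) the bookkeeping over torus orbits (you implicitly restrict to $x\in N_\bR$ by ``assuming $x\notin B$''; the intersection can live on boundary strata and the coefficients $\lambda_i^{(k)}$ can be $\infty$), and (ii) the case where the $W_{\tau_j}$-components of the intersection lie on different faces $\tilde\tau_1\neq\tilde\tau_2$ of $\tilde\sigma$, which the paper handles by constructing a separating affine function $\phi'$ pulled back along $\pi_{C_v}$ to force the intersection into $\tilde\tau_1\cap\tilde\tau_2$. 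These are not corner cases but a substantial fraction of the proof, and you should not expect to get by without them.
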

\begin{proof}
First, we compute the intersection $V_{\tau_1} \cap V_{\tau_2}$.
We have
\begin{align}
V_{\tau_1} \cap V_{\tau_2} = \bigcup_{\sigma} \bigcup_{C' \prec C} V_{\tau_1} \cap V_{\tau_2} \cap \sigma \cap O_{C'} (\bT),
\end{align}
where the union concerning $\sigma$ is taken over $\sigma=\tau \lb F_1, \cdots, F_r,  C_1, \cdots, C_r \rb \in \scrP_{X(f_1, \cdots, f_r)}$ such that $C_i= \lc 0 \rc$ for all $i \in \lc 1, \cdots, r \rc$.
We set $\scrP \ld \tilde{\sigma} \rd:= \lc \tau \in \scrP \relmid \tau \subset \tilde{\sigma} \rc$, where $\tilde{\sigma}$ is the polyhedron  of \eqref{eq:til-sigma}.
Since $\xi \subset D \prec \tilde{\sigma}$, we have $\xi \in \scrP \ld \tilde{\sigma} \rd$.
By \pref{lm:vtau}, for $j=1,2$, we have
\begin{align}
V_{\tau_j} \cap \sigma \cap O_{C'} (\bT) &=
\lc \lb W_{\tau_j} \cap \tilde{\sigma} \rb + \cone_\bT \lb \bigcup_{i \in I_\sigma} \phi_{\xi, \Deltav_i} (\tau_j) \times \lc e_i \rc \rb \rc \cap O_{C'}(\bT)\\ \label{eq:vtauj}
&=\bigcup_{\substack{\tau \in \scrP \ld \tilde{\sigma} \rd\\ \tau \succ \xi}} \lc \lb W_{\tau_j} \cap \tau \rb + \cone_\bT \lb \bigcup_{i \in I_\sigma} \phi_{\xi, \Deltav_i} (\tau_j) \times \lc e_i \rc \rb \rc \cap O_{C'}(\bT).
\end{align}
By \pref{lm:c-face}, the cone $C'$ can be written as $C'= \cone \lb \bigcup_{i \in I'} F_i \times \lc e_i \rc \rb$, where $I' \subset \lc 1, \cdots, r \rc$ and $F_i \prec \Deltav_i$ is a non-empty face.
We can see from \eqref{eq:O-emb} that \eqref{eq:vtauj} is non-empty if and only if
\begin{align}
\cone \lb \bigcup_{i \in I_\sigma} \phi_{\xi, \Deltav_i} (\tau_j) \times \lc e_i \rc \rb \supset C',
\end{align}
which is equivalent to $I' \subset I_\sigma$ and $F_i \subset \phi_{\xi, \Deltav_i} (\tau_j)$ for $i \in I'$.
When this holds, \eqref{eq:vtauj} is equal to
\begin{align}
\bigcup_{\substack{\tau \in \scrP \ld \tilde{\sigma} \rd\\ \tau \succ \xi}} 
\lc \lb W_{\tau_j} \cap \tau \rb 
+ \cone_\bT \lb C' \rb  
+ \sum_{\substack{i \in I_\sigma}} \bigcup_{k_{i,j} \in \bR_{\geq 0}} k_{i, j} \cdot \lb \phi_{\xi, \Deltav_i} (\tau_j) \times \lc e_i \rc \rb \rc \cap O_{C'}(\bT).
\end{align}
We try to show $\delta_{\tau_1} = \delta_{\tau_2}$ on the intersection 
\begin{align}\label{eq:intersect}
\bigcap_{j=1,2}
\lc \lb W_{\tau_j} \cap \tilde{\tau}_j \rb 
+ \cone_\bT \lb C' \rb  
+ \sum_{\substack{i \in I_\sigma}} k_{i, j} \cdot \lb \phi_{\xi, \Deltav_i} (\tau_j) \times \lc e_i \rc \rb \rc \cap O_{C'}(\bT)
\end{align}
for polyhedra $\tilde{\tau}_j \in \scrP \ld \tilde{\sigma} \rd$ such that $\tilde{\tau}_j \succ \xi$ and $k_{i, j} \in \bR_{\geq 0}$ $(j=1, 2)$.
Since $\la A_i, F_k \times \lc e_k \rc \ra=0$ for $i \in I_\sigma \setminus I'$ and $k \in I'$, the function $f_i \colon N_\bR \to \bR$ $(i \in I_\sigma \setminus I')$ can be naturally extended to a function on $X_{C'} (\bT) \supset O_{C'}(\bT)$.
When $k_{i, 1} \neq k_{i, 2}$ for some $i \in I_\sigma \setminus I'$, \eqref{eq:intersect} is empty since the value of the polynomial $f_i$ is $-k_{i, j}$ on 
$\lc \lb W_{\tau_j} \cap \tilde{\tau}_j \rb 
+ \cone_\bT \lb C' \rb  
+ \sum_{\substack{i \in I_\sigma}} k_{i, j} \cdot \lb \phi_{\xi, \Deltav_i} (\tau_j) \times \lc e_i \rc \rb \rc$.
Furthermore, for $i \in I'$, one has
\begin{align}\label{eq:m-subset}
\begin{split}
&\lc \lb W_{\tau_j} \cap \tilde{\tau}_j \rb 
+ \cone_\bT \lb C' \rb  
+ \sum_{\substack{i \in I_\sigma}} k_{i, j} \cdot \lb \phi_{\xi, \Deltav_i} (\tau_j) \times \lc e_i \rc \rb \rc \cap O_{C'}(\bT) \\
&\subset 
\lc \lb W_{\tau_j} \cap \tilde{\tau}_j \rb 
+ \cone_\bT \lb C' \rb  
+ \sum_{\substack{i \in I_\sigma}} \max \lc k_{i, 1}, k_{i, 2} \rc \cdot \lb \phi_{\xi, \Deltav_i} (\tau_j) \times \lc e_i \rc \rb \rc \cap O_{C'}(\bT)
\end{split}
\end{align}
since $F_i \times \lc e_i \rc \subset \phi_{\xi, \Deltav_i} (\tau_j) \times \lc e_i \rc$ for $i \in I'$.
Hence, it suffices to show $\delta_{\tau_1} = \delta_{\tau_2}$ on \pref{eq:intersect} with $k_{i, 1}=k_{i, 2}$ for all $i \in I_\sigma$.
We assume $k_{i, 1}=k_{i, 2}$ for all $i \in I_\sigma$, and  write it as $k_i \in \bR_{\geq 0}$ for short.
Then \pref{eq:intersect} is equal to
\begin{align}\label{eq:intersect'}
\bigcap_{j=1,2}
\lc \pi_{C'} \lb W_{\tau_j} \cap \tilde{\tau}_j \rb 
+ \sum_{\substack{i \in I_\sigma}} k_{i} \cdot \pi_{C'}  \lb \phi_{\xi, \Deltav_i} (\tau_j) \times \lc e_i \rc \rb \rc,
\end{align}
where $\pi_{C'} \colon X_{C'} (\bT) \to O_{C'}(\bT)$ is the projection of \eqref{eq:proj}.
We will show $\delta_{\tau_1} = \delta_{\tau_2}$ on \eqref{eq:intersect'}.

First, we consider the case where $\tilde{\tau}_1=\tilde{\tau}_2=:\tilde{\tau}$.
We will use \pref{lm:tech}, a technical lemma which we will prove later in \pref{sc:lem}.
We will see in the following that by substituting
$\Deltav=\pi_{C'} \lb \tilde{\tau} \rb$, $\Deltav_0=\pi_{C'} (\xi)$, $I=\lc i \in I_\sigma \relmid k_i \neq 0 \rc$, $\Deltav_i=k_i \cdot \pi_{C'} \lb \Deltav_i \times \lc e_i \rc \rb$, and $t=1$ in \pref{lm:tech}, 
the current situation fits in the setup of \pref{lm:tech}.

There is a natural map
\begin{align}\label{eq:vp}
\varphi_{\tilde{\tau}} \colon \scrP_{\pi_{C'} (\tilde{\tau})} \to \scrP_{\tilde{\tau}}, \quad F \mapsto \pi_{C'}^{-1} (F) \cap \tilde{\tau},
\end{align}
where $\scrP_{\pi_{C'} (\tilde{\tau})}$ and $\scrP_{\tilde{\tau}}$ are the sets of faces of the polytopes $\pi_{C'} (\tilde{\tau})$ and $\tilde{\tau}$ respectively (cf.~e.g.~\cite[Lemma 7.10]{MR1311028}).
We have $\pi_{C'} \lb\varphi_{\tilde{\tau}}(F) \rb=F$ for all $F \in \scrP_{\pi_{C'} (\tilde{\tau})}$.
For each face $F \in \scrP_{\pi_{C'} (\tilde{\tau})}$, we consider the point $\pi_{C'} \lb a_{\varphi_{\tilde{\tau}}(F)} \rb \in \rint (F)$, i.e., the projection of the point that we chose for $\varphi_{\tilde{\tau}}(F) \in \scrP$.
It gives rise the set of subsets of \eqref{eq:WtF} for $\Deltav=\pi_{C'} (\tilde{\tau})$.
This includes $\pi_{C'} \lb W_{\tau_j} \cap \tilde{\tau} \rb$ since $F' \in \scrP_{\tilde{\tau}}$ such that $F' \succ \tau_j$ is contained in the image of the map $\varphi_{\tilde{\tau}}$ as we will check below.

We check that $F' \in \scrP_{\tilde{\tau}}$ such that $F' \succ \tau_j$ is contained in the image of the map $\varphi_{\tilde{\tau}}$.
It suffices to show that there is an element $m_{F'} \in {\vspan (C')}^\perp \subset M_\bR$ whose restriction to $\tilde{\tau}$ takes the minimum just along $F' \prec \tilde{\tau}$.
There exists an element $m_{F'}' \in M_\bR$ whose restriction to $\tilde{\tau}$ takes the minimum just along $F' \prec \tilde{\tau}$.
Since the value of $m_{F'}'$ is constant on $F' \succ \tau_j$, we have $m_{F'}'=0$ on $T (\tau_j)$.
Since $F_i \subset \phi_{\xi, \Deltav_i} (\tau_j)$ for $i \in I'$, we also have $T(F_i) \subset T \lb \phi_{\xi, \Deltav_i} (\tau_j) \rb \subset T(\tau_j)$ (cf.~\pref{lm:TT}.2).
We can see that the value of $m_{F'}'$ is constant also on $F_i$ $(i \in I')$, which will be denoted by $c_{i, j} \in \bR$.
For each $i \in I'$, we also take an element $m_i \in \Delta_i \cap M$ such that $(m_i -e_i^\ast)=f_i=0$ on $\tilde{\tau}$.
We set 
\begin{align}
m_{F'}:=m_{F'}'+\sum_{i \in I'} c_{i, j} (m_i -e_i^\ast).
\end{align}
Then the restriction of this to $\tilde{\tau}$ takes the minimum just along $F' \prec \tilde{\tau}$.
Furthermore, since $\la m_{F'}, F_i \times \lc e_i \rc \ra=c_{i, j}-c_{i, j}=0$ for $i \in I'$, one has $m_{F'} \in {\vspan (C')}^\perp$.
Thus $F' \succ \tau_j$ is contained in the image of the map $\varphi_{\tilde{\tau}}$.

Since the normal fans of $\pi_{C'} \lb \xi \rb$ and $\pi_{C'} \lb \Deltav_i \times \lc e_i \rc \rb$ are the restrictions of the normal fans of $\xi$ and $\Deltav_i \times \lc e_i \rc$ to ${\vspan (C')}^\perp \subset M_\bR$ respectively (cf.~e.g.~\cite[Lemma 7.11]{MR1311028}), we can see that we have the map 
$\phi_{\xi, \Deltav_i}':=\phi_{\pi_{C'} (\xi), \pi_{C'} \lb \Deltav_i \times \lc e_i \rc \rb}$
of \eqref{eq:phi} for $\pi_{C'} (\xi), \pi_{C'} \lb \Deltav_i \times \lc e_i \rc \rb$, and the commtative diagram
\begin{align}
  \begin{CD}
     \scrP_{\pi_{C'} (\xi)} @>\phi_{\xi, \Deltav_i}'>> \scrP_{\pi_{C'} \lb \Deltav_i \times \lc e_i \rc \rb} \\
  @V\varphi_{\xi}VV    @VV\varphi_{\Deltav_i \times \lc e_i \rc}V \\
     \scrP_{\xi}  @>\phi_{\xi, \Deltav_i \times \lc e_i \rc}>>  \scrP_{\Deltav_i \times \lc e_i \rc},
  \end{CD}
\end{align}
where the vertical maps are \eqref{eq:vp} for $\xi$ and $\Deltav_i \times \lc e_i \rc$.
We have
\begin{align}\label{eq:phi4}
\phi_{\Deltav_0, \Deltav_i} \lb \pi_{C'} \lb \tau_j \rb \rb
=k_{i} \cdot \phi_{\xi, \Deltav_i}' \lb \pi_{C'} \lb \tau_j \rb \rb
=k_{i} \cdot \pi_{C'} \lb \phi_{\xi, \Deltav_i\times \lc e_i \rc} \lb \tau_j \rb \rb
=k_{i} \cdot \pi_{C'} \lb \phi_{\xi, \Deltav_i} (\tau_j) \times \lc e_i \rc \rb,
\end{align}
where $\Deltav_0=\pi_{C'} (\xi)$ and $\Deltav_i=k_i \cdot \pi_{C'} \lb \Deltav_i \times \lc e_i \rc \rb$.
Here note that multiplying a scalar $k_i \in \bR_{> 0}$ to the polytope does not affect its normal fan.
\eqref{eq:phi4} coincides with the term appearing in \eqref{eq:intersect'}, and the current situation fits in the setup of \pref{lm:tech}.

It turns out by \pref{lm:tech} that the intersection \eqref{eq:intersect'} is empty if 
\begin{align}\label{eq:pp-intersect}
\bigcap_{j=1, 2} \lb \sum_{i \in I_\sigma} k_{i} \cdot \pi_{C'}  \lb \phi_{\xi, \Deltav_i} (\tau_j) \times \lc e_i \rc \rb \rb
\end{align}
is empty, and if \eqref{eq:pp-intersect} is not empty, then \eqref{eq:intersect'} is equal to
\begin{align}
\lc \bigcap_{j=1, 2} \pi_{C'} \lb W_{\tau_j} \cap \tilde{\tau} \rb \rc
+
\lc \bigcap_{j=1, 2} \lb \sum_{i \in I_\sigma} k_{i} \cdot \pi_{C'}  \lb \phi_{\xi, \Deltav_i} (\tau_j) \times \lc e_i \rc \rb \rb \rc.
\end{align}
The latter part $\bigcap_{j=1, 2} \lb \sum_{i \in I_\sigma} k_{i} \cdot \pi_{C'}  \lb \phi_{\xi, \Deltav_i} (\tau_j) \times \lc e_i \rc \rb \rb$ does not affect the values of the maps $\delta_{\tau_1}$ and $\delta_{\tau_2}$.
The former part $\bigcap_{j=1, 2} \pi_{C'} \lb W_{\tau_j} \cap \tilde{\tau} \rb$ is equal to $\pi_{C'} \lb W_{\tau_0} \cap \tilde{\tau} \rb$, where $\tau_0 \prec \xi$ is the minimum face containing both $\tau_1$ and $\tau_2$.
Let $x \in \pi_{C'} \lb W_{\tau_0} \cap \tilde{\tau} \rb$ be an arbitrary point, and $x' \in W_{\tau_0} \cap \tilde{\tau}$ be a point such that $ \pi_{C'} \lb x' \rb=x$.
Since the restriction of $\pi_{C_{\tau_j}}$ to $X_{C'} (\bT) \subset X_{C_{\tau_j}} (\bT)$ factors through the projection $\pi_{C'} \colon X_{C'} (\bT) \to O_{C'} (\bT)$, one has $\delta_{\tau_j}(x')=\delta_{\tau_j}(x)$.
Since the map $\delta_{\tau_j}$ is the identity map on $W_{\tau_j} \supset W_{\tau_0}$, one also has $\delta_{\tau_j}(x')=x'$.
Hence, we have $\delta_{\tau_1}(x)=\delta_{\tau_2}(x)=x'$.
We can see $\delta_{\tau_1} = \delta_{\tau_2}$ on $\pi_{C'} \lb W_{\tau_0} \cap \tilde{\tau} \rb=\bigcap_{j=1, 2} \pi_{C'} \lb W_{\tau_j} \cap \tilde{\tau} \rb$.
Thus we have $\delta_{\tau_1} = \delta_{\tau_2}$ also on \pref{eq:intersect'} in this case.

Next, we consider the case $\tilde{\tau}_1 \neq \tilde{\tau}_2$.
Take a vertex $v \prec \xi$, and consider the projection $\pi_{C_v} \colon X_{C_v}(\bT) \to O_{C_v}(\bT)$ of \eqref{eq:proj} with respect to the cone of \eqref{eq:ctau} with $\tau=v$.
The restriction of this projection $\pi_{C_v}$ to $B$ is bijective by \pref{lm:aff-str}.
We take an affine function $\phi \colon O_{C_v}(\bT) \to \bR$ such that
$\phi >0$ on $\pi_{C_v} \lb \tilde{\tau}_1\rb \setminus \pi_{C_v} \lb \tilde{\tau}_2\rb$, 
$\phi <0 $ on $\pi_{C_v} \lb \tilde{\tau}_2\rb \setminus \pi_{C_v} \lb \tilde{\tau}_1\rb$, 
and $\phi = 0$ on $\pi_{C_v} \lb \tilde{\tau}_1\rb \cap \pi_{C_v} \lb \tilde{\tau}_2\rb$.
We further take its pull-back $\phi':=\phi \circ \pi_{C_v}$ by the projection $\pi_{C_v}$.
Since $\pi_{C_v} \lb \tilde{\tau}_1\rb \cap \pi_{C_v} \lb \tilde{\tau}_2\rb \succ \pi_{C_v} (\xi)$, we have $\phi = 0$ on $\pi_{C_v} \lb \xi \rb$, and $\phi' = 0$ on $\xi$.
In particular, adding vectors in $T \lb \Deltav_i \rb \subset T \lb \xi \rb$ (cf.~\pref{lm:TT}.2) does not affect the value of the function $\phi'$.
Since adding vectors in $\vspan \lb C_v \rb$ also does not affect the value of the function $\phi'$, it turns out that adding vectors in $\vspan \lb C \rb$ does not affect the value of the function $\phi'$.
Hence, the function $\phi'$ can be naturally extended to a function on $X_C(\bT)$.
The values of the function are greater than or equal to $0$ on 
$\lc \pi_{C'} \lb W_{\tau_1} \cap \tilde{\tau}_1 \rb 
+ \sum_{\substack{i \in I_\sigma}} k_{i} \cdot \pi_{C'}  \lb \phi_{\xi, \Deltav_i} (\tau_1) \times \lc e_i \rc \rb \rc$, and are less than or equal to $0$ on
$\lc \pi_{C'} \lb W_{\tau_2} \cap \tilde{\tau}_2 \rb 
+ \sum_{\substack{i \in I_\sigma}} k_{i} \cdot \pi_{C'}  \lb \phi_{\xi, \Deltav_i} (\tau_2) \times \lc e_i \rc \rb \rc$.
It turns out that the intersection \pref{eq:intersect'} must be contained in
\begin{align}
\bigcap_{j=1,2}
\lc \pi_{C'} \lb W_{\tau_j} \cap \tilde{\tau}_1 \cap \tilde{\tau}_2 \rb 
+ \sum_{\substack{i \in I_\sigma}} k_{i} \cdot \pi_{C'}  \lb \phi_{\xi, \Deltav_i} (\tau_j) \times \lc e_i \rc \rb \rc.
\end{align}
We reduced to the case of $\tilde{\tau}_1=\tilde{\tau}_2$, in which we have already shown $\delta_{\tau_1} = \delta_{\tau_2}$.
Thus we obtain the lemma.
\end{proof}

We set
\begin{align}
X':=\bigcup_{\tau \prec \xi} V_\tau \subset X(f_1, \cdots, f_r).
\end{align}
By \pref{lm:intersection}, we can glue the maps $\delta_\tau$ together and obtain a continuous map $\delta' \colon X' \to \overline{U}_{\xi}$.
We set $X:={\delta'}^{-1} (U_\xi)$ and define $\delta \colon X \to U_\xi$ to be the restriction of $\delta'$ to the subset $X \subset X'$.
These give $X$ and $\delta$ in \pref{th:local-cont}.
Since $X'$ is compact and $\overline{U}_{\xi}$ is a Hausdorff space, the map $\delta' \colon X' \to \overline{U}_{\xi}$ is proper.
Therefore, the map $\delta \colon X \to U_\xi$ is also proper.

\begin{definition}\label{df:local-contraction}
We call the map $\delta \colon X \to U_{\xi}$ of \pref{th:local-cont} a \emph{local model of tropical contractions}.
If $\sum_{i=1}^r T \lb \Delta_i \rb$ is the internal direct sum of $\lc T \lb \Delta_i \rb \rc_{i \in \lc 1, \cdots, r \rc}$ and $\sum_{i=1}^r T \lb \Deltav_i \rb$ is the internal direct sum of $\lc T \lb \Deltav_i \rb \rc_{i \in \lc 1, \cdots, r \rc}$, then we say that $\delta$ is \emph{good}.
If $\lb \sum_{i=1}^r T \lb \Delta_i \rb \rb \cap M'$ is an internal direct sum of $\lc T_\bZ \lb \Delta_i \rb \rc_{i \in \lc 1, \cdots, r \rc}$ and $\lb \sum_{i=1}^r T \lb \Deltav_i \rb \rb \cap N'$ is an internal direct sum of $\lc T_\bZ \lb \Deltav_i \rb \rc_{i \in \lc 1, \cdots, r \rc}$, then we say that $\delta$ is \emph{very good}.
\end{definition}

\begin{remark}\label{rm:hyp}
The conditions for being good/very good are the ones that we imposed in \pref{th:local-cont}.3.
When $r=1$ (i.e., the case of contracting a tropical hypersurface), the condition of being good/very good is always obviously satisfied.
\end{remark}

\begin{definition}\label{df:contraction}
We say that a continuous map $\delta \colon V \to B$ from a rational polyhedral space $V$ to an IAMS $B$ is a \emph{tropical contraction} if for any point $x \in B$ there exists a neighborhood $U_x \subset B$ of $x$ such that the restriction of $\delta$ to $\delta^{-1} \lb U_x \rb$ is isomorphic to a local model of tropical contractions.
A tropical contraction is called \emph{good} (resp. \emph{very good}) if the restriction $\left. \delta \right|_{\delta^{-1} \lb U_x \rb}$ is isomorphic to a good (resp. very good) local model of tropical contractions for any $x \in B$.
\end{definition}

\begin{remark}\label{rm:morphism}
There is a notion called \emph{tropical spaces}, which was introduced in \cite{MR4562566}.
It is a generalization of rational polyhedral spaces and IAMS.
We refer the reader to Section 2 in loc.cit.~for its definition.
The local model of IAMS $U_\xi$ constructed above becomes a \emph{tropical piecewise linear space} in the sense of \cite[Definition 2.2]{MR4562566} by the embedding $U_\xi \hookrightarrow N_\bR$, and the \emph{affine structure} in the sense of \cite[Definition 2.7]{MR4562566} is given by $\iota_\ast \mathrm{Aff}_{U_{\xi, 0}}$.
The local model of IAMS $U_\xi$ can be regarded as a tropical space in their sense.
Tropical contractions are \emph{morphisms of tropical spaces} in the sense of \cite[Definition 2.16]{MR4562566}.
\end{remark}

\subsection{Examples of local models of tropical contractions}\label{sc:ex}

\begin{example}\label{eg:ff}
Suppose $d=2, r=1$.
Let $d_1, d_2$ be a basis of $N'$, and $d_1^\ast, d_2^\ast$ be its dual basis.
We consider the lattice polytopes
\begin{align}
\Delta_1:=\conv \lb \lc 0, kd_1^\ast \rc \rb \subset M_\bR', \quad \Deltav_1:=\conv \lb \lc 0, ld_2 \rc \rb \subset N_\bR',
\end{align}
where $k, l$ are positive integers.
The cone $C$ is given by
\begin{align}
C:=\cone \lb \lc e_1, e_1+l d_2 \rc \rb.
\end{align}
The dual cone $C^\vee$ is generated by $d_2^\ast$ and $(l e_1^\ast -d_2^\ast)$.
The polynomial $f_1$ is given by
\begin{align}
f_1(n)=\min \lc 0, \la -e_1^\ast, n \ra, \la -e_1^\ast+k d_1^\ast, n \ra \rc.
\end{align}
The tropical hypersurface $X(f_1) \subset X_C(\bT)$ is shown in \pref{fg:f-f}.
\begin{figure}[htbp]
\begin{center}
\includegraphics[scale=0.55]{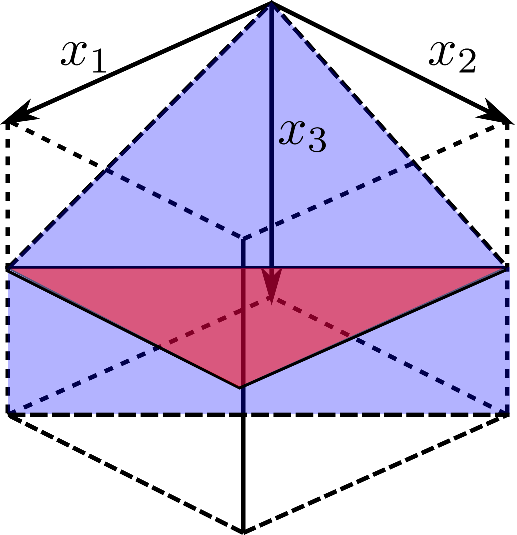}
\end{center}
\caption{The tropical hypersurface $X(f_1)$ and the subset $B$}
\label{fg:f-f}
\end{figure}
It is the union of the blue region and the red region.
The axes $x_1, x_2, x_3$ denote the coordinates on $N_\bR$ defined by $d_2^\ast, (le_1^\ast-d_2^\ast), d_1^\ast$ respectively.
The subset $B$ is the blue region.
The subset $D$ is the line $\bR d_2$, the intersection of the blue region and the red region.
For an example, we set 
\begin{align}
v_1&:=0, v_2:=d_2 \in D \\
\xi&:=\conv \lb \lc v_1, v_2 \rc \rb \subset D \\
\sigma_1&:=\conv \lb \xi \cup \lc -d_1-k e_1 \rc \rb \subset B \\
\sigma_2&:=\conv \lb \xi \cup \lc d_1 \rc \rb \subset B,
\end{align}
and let $\scrP$ be the polyhedral complex consisting of $\sigma_1, \sigma_2$ and all of their faces.
The map $\phi_{\xi, \Deltav_1} \colon \scrP_\xi \to \scrP_{\Deltav_1}$ is given by
\begin{align}
v_1 \mapsto 0, \quad v_2 \mapsto l d_2, \quad \xi \mapsto \Deltav_1,
\end{align}
and we have
\begin{align}
C_{v_1}=\cone \lb \lc e_1 \rc \rb, \quad C_{v_2}=\cone \lb \lc e_1+l d_2 \rc \rb, \quad C_{\xi}=C.
\end{align}
\begin{figure}[htbp]
\begin{center}
\includegraphics[scale=0.65]{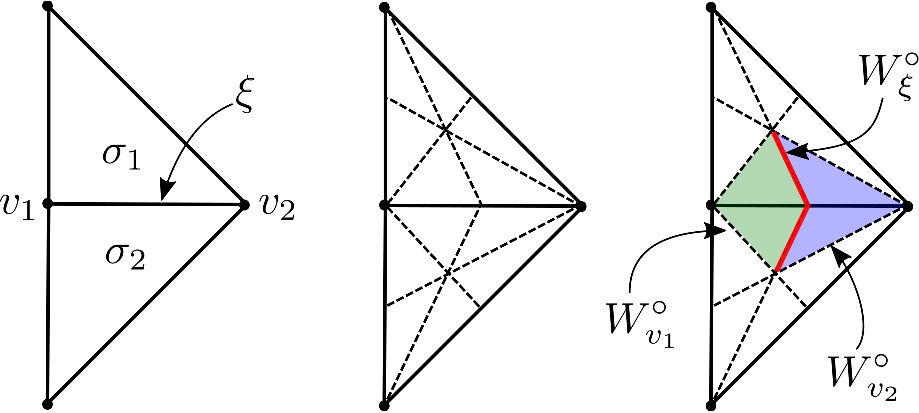}
\end{center}
\caption{The polyhedral complex $\scrP$, its subdivision $\widetilde{\scrP}$, and the subsets $W_{v_1}^\circ, W_{v_2}^\circ, W_{\xi}^\circ$}
\label{fg:scrP}
\end{figure}
The polyhedral complex $\scrP$, its subdivision $\widetilde{\scrP}$, and the subsets $W_{v_1}^\circ, W_{v_2}^\circ, W_{\xi}^\circ$ are shown in \pref{fg:scrP}.
The subset $U_\xi$ is the union of $W_{v_1}^\circ, W_{v_2}^\circ$, and $W_{\xi}^\circ$.
Let $\sigma \subset X(f_1) $ denote the closure of $X(f_1) \setminus B$, i.e., the red triangle in \pref{fg:f-f}.
\pref{fg:contr} shows the subsets $V_{v_1} \cap \sigma, V_{v_2} \cap \sigma, V_{\xi} \cap \sigma$ 
and the contractions $\delta_{v_1}, \delta_{v_2}, \delta_{\xi}$.
The monodromy that we computed in \pref{pr:monodromy} is given by
\begin{align}
T_\gamma (n)=n+kl \sum_{i=1}^r \la d_1^\ast, n \ra \cdot d_2.
\end{align}
The point $a_\xi$ that we chose from the interior of $\xi$ becomes a concentration of $kl$ focus-focus singularities.
\begin{figure}[htbp]
\begin{center}
\includegraphics[scale=0.6]{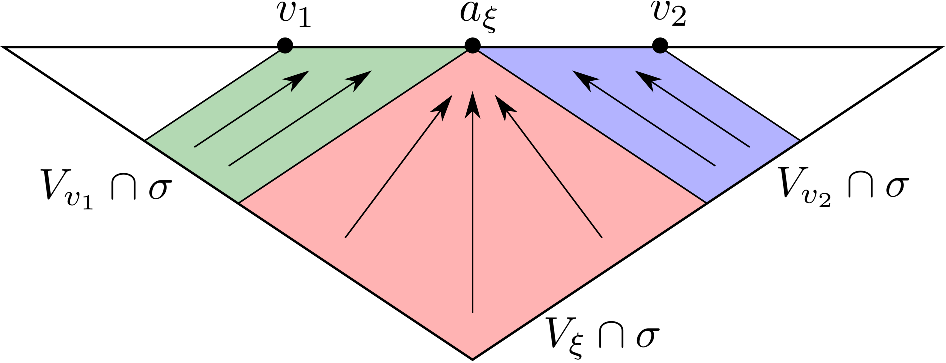}
\end{center}
\caption{The subsets $V_{v_1} \cap \sigma, V_{v_2} \cap \sigma, V_{\xi} \cap \sigma$ 
and the contractions $\delta_{v_1}, \delta_{v_2}, \delta_{\xi}$}
\label{fg:contr}
\end{figure}

\begin{remark}
Kontsevich and Soibelman constructed a $2$-sphere with an integral affine structure with singularities by contracting the Clemens polytope of a degenerating family of K3 surfaces \cite[Section 4.2.5]{MR2181810}.
Their contraction is quite similar to the above contraction of the tropical hypersurface.
Compare the local contraction given in \cite[Section 4.2.5]{MR2181810} to the above one.
\end{remark}
\end{example}

\begin{example}\label{eg:pos-neg}
Suppose $d=3, r=1$.
Let $d_1, d_2, d_3$ be a basis of $N'$, and $d_1^\ast, d_2^\ast, d_3^\ast$ be its dual basis.
We consider the lattice polytopes
\begin{align}\label{eq:3dim-poly}
\Delta_1:=\conv \lb \lc 0, d_1^\ast, d_2^\ast \rc \rb \subset M_\bR', \quad \Deltav_1:=\conv \lb \lc 0, d_3 \rc \rb \subset N_\bR'.
\end{align}
We can apply \pref{lm:length1}.
The monodromy polytopes coincide with \eqref{eq:3dim-poly}.
It turns out that the point $a_\xi$ becomes the so called \emph{positive vertex} which appears in the base spaces of topological $3$-torus fibrations of \cite{MR1821145}.
Similarly, if we consider the lattice polytopes
\begin{align}
\Delta_1:=\conv \lb \lc 0, d_1^\ast \rc \rb \subset M_\bR', \quad \Deltav_1:=\conv \lb \lc 0, d_2, d_3 \rc \rb \subset N_\bR',
\end{align}
the point $a_\xi$ becomes the \emph{negative vertex}.
If we consider the lattice polytopes
\begin{align}
\Delta_1:=\conv \lb \lc 0, d_1^\ast, d_2^\ast, d_1^\ast+d_2^\ast \rc \rb \subset M_\bR', \quad \Deltav_1:=\conv \lb \lc 0, d_3 \rc \rb \subset N_\bR',
\end{align}
the point $a_\xi$ becomes the \emph{positive node} of \cite{MR3228462}.
If we consider the lattice polytopes
\begin{align}
\Delta_1:=\conv \lb \lc 0, d_1^\ast \rc \rb \subset M_\bR', \quad \Deltav_1:=\conv \lb \lc 0, d_2, d_3, d_2+d_3 \rc \rb \subset N_\bR',
\end{align}
the point $a_\xi$ becomes the \emph{negative node}.
We refer the reader to \cite[Example 2.3, Example 2.4, Example 6.1, Example 6.2]{MR3228462} for a description of these singularities.
\end{example}

\begin{example}\label{eg:modification}
Suppose that $r=1$ and $d$ is an arbitrary positive integer.
We consider the case where the lattice polytope $\Deltav_1$ is a point $\lc 0 \rc \subset N_\bR'$.
The cone $C$ is $\bR_{\geq 0} e_1$ and the associated tropical toric variety is $X_C(\bT) = N'_\bR \times \bT$. 
The subspace $B \subset X_C(\bT) = N'_\bR \times \bT$ is 
\begin{align}
B=\lc (n, y) \in N'_\bR \times \bT \relmid y=\min_{m \in \Delta_1} \la m, n \ra \rc.
\end{align}
This is the graph of the function $f_1' \colon N_\bR \to \bT$
\begin{align}
f_1'(n):=\min_{m \in \Delta_1} \la m, n \ra.
\end{align}
The map $\phi_{\xi, \Deltav_1} \colon \scrP_\xi \to \scrP_{\Deltav_1}$ is the constant map to $\lc 0\rc$.
For any choice of the polyhedron $\xi$ and the polyhedral complex $\scrP$, we have $C_\tau=C$ and the map $\pi_{C_\tau} \colon X_{C_\tau}(\bT) \to O_{C_\tau}(\bT)$ coincides with the projection $\pi_C \colon N_\bR' \times \bT \to N_\bR'$ for any $\tau \prec \xi$.
One also has 
$X=\lb U_\xi + \bR_{\geq 0} e_1 \rb \cap X(f_1)$.
The map $t_\tau \colon \pi_{C_\tau} \lb W_\tau \rb \to W_\tau$ that we took for making the contraction $\delta_\tau$ is the restriction of
\begin{align}\label{eq:graph}
N_\bR' \to N_\bR' \times \bT, \quad n \to \lb n, f_1'(n) \rb.
\end{align}
The map \eqref{eq:graph} is a bijection to $B \subset N_\bR' \times \bT$.
By identifying $B$ with $N_\bR'$ by this map, the tropical contraction $\delta$ can be seen as the restriction of the projection $\pi_C \colon N_\bR' \times \bT \to N_\bR'$ to $X \subset X(f_1)$.
This is exactly the tropical modification of Mikhalkin \cite{MR2275625} with respect to the function $f_1'$.
We refer the reader to \cite[Section 3]{MR2275625} or \cite{Kal15} or \cite{Sha15} for details of tropical modifications.
\end{example}

\begin{example}
Suppose that $d=1$ and $r$ is an arbitrary positive integer.
Let $d_1$ be a basis of $N'$, and $d_1^\ast$ be its dual basis.
We consider the lattice polytopes 
\begin{align}
\Delta_i=\conv \lb \lc 0, d_1^\ast \rc \rb \subset M_\bR', \quad \Deltav_i=\lc 0 \rc \subset N_\bR' \quad ( 1 \leq i \leq  r ).
\end{align}
The cone $C$ is $\sum_{i=1}^r \bR_{\geq 0} e_i$, and the associated tropical toric variety is 
$X_C(\bT)=\bR d_1 + \sum_{i=1}^r \bT e_i$.
The stable intersection $X(f_1, \cdots, f_r)$ and its subspace $B$ are given by
\begin{align}
X(f_1, \cdots, f_r)&=\bR_{\geq 0}d_1 \cup \bR_{\geq 0} (-d_1-e_1- \cdots - e_r) \cup \lb \bigcup_{i=1}^r \bT_{\geq 0} e_i \rb \\
B&=\bR_{\geq 0}d_1 \cup \bR_{\geq 0} (-d_1-e_1- \cdots - e_r).
\end{align}
The subspace $D \subset B$ is $\lc 0 \rc \subset N_\bR'$, and the polytope $\xi$ also has to be $\lc 0 \rc$ in this case.
The map $\phi_{\xi, \Deltav_i} \colon \scrP_\xi \to  \scrP_{\Deltav_i}$ is just $\lc 0 \rc \mapsto \lc 0 \rc$, and we have $C_\xi=C$.
The map $\pi_C \colon X_C(\bT) \to O_C(\bT)$ is the projection $\bR d_1 + \sum_{i=1}^r \bT e_i \to \bR d_1$.
The tropical contraction $\delta$ contracts all unbounded edges $\bT_{\geq 0} e_i$ of $X(f_1, \cdots, f_r) $ to the point $\xi =\lc0 \rc$.
\pref{fg:curve} shows the contraction when $r=2$.
The blue lines are $B$ and the red lines are the unbounded edges $\bT_{\geq 0} e_1$ and $\bT_{\geq 0} e_2$.
\begin{figure}[htbp]
\begin{center}
\includegraphics[scale=0.7]{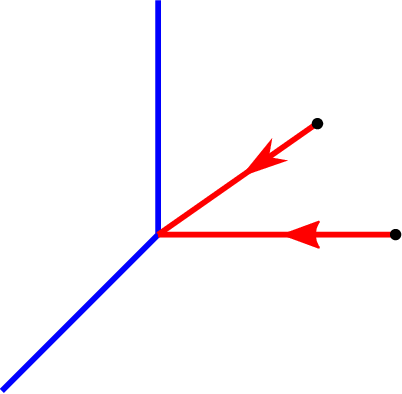}
\end{center}
\caption{A contraction of a tropical curve of valence $4$ to a line.}
\label{fg:curve}
\end{figure}
\end{example}

\subsection{Proof of \pref{th:local-cont}.2}\label{sc:local-cont2}

We prove \pref{th:local-cont}.2 by making a homomorphism $g_x \colon \lb \iota_\ast \bigwedge^{p} \check{\Lambda}\rb_x \to \lb \delta_\ast \scF^p_\bZ \rb_x$ between the stalks for every point $x \in U_{\xi}$ and checking that it defines an isomorphism $\iota_\ast \bigwedge^{p} \check{\Lambda} \to \delta_\ast \scF^p_\bZ$ of sheaves for each $p \geq 0$.
Assume $x \in \rint \lb \conv \lb \lc a_{\tau_0}, a_{\tau_1}, \cdots, a_{\tau_l} \rc \rb \rb \subset W_{\tau_0}^\circ$, where $\tau_0 \prec \tau_1 \prec \cdots \prec \tau_l \in \scrP$, $l \geq 0$, and $\xi \in \lc \tau_0, \cdots, \tau_l \rc$.
Then we have
\begin{align}\label{eq:pre-x}
\delta^{-1} (x) = \lb x + \sum_{\substack{1 \leq i \leq r \\ v \prec \tau_0}} \bT_{\geq 0} \lb \phi_{\xi, \Deltav_i} \lb v \rb + e_i \rb \rb \cap X(f_1, \cdots, f_r) \subset X_{C_{\tau_0}}(\bT).
\end{align}
We take a vertex $v_0 \prec \tau_0$, and set $N'':=\left. N \middle/ \oplus_{i=1}^r \bZ \lb \phi_{\xi, \Deltav_i} \lb v_0 \rb +e_i \rb \right.$, $M'':=\Hom \lb N'', \bZ \rb=\bigcap_{i=1}^r \lb \phi_{\xi, \Deltav_i} \lb v_0 \rb +e_i \rb^\perp$.
The stalk $\lb \iota_\ast \bigwedge^{p} \check{\Lambda}\rb_x$ is a submodule of $\bigwedge^p M''$, and the stalk $\lb \delta_\ast \scF^p_\bZ \rb_x$ is a quotient of a submodule of $\bigwedge^p M$.
We will show that the inclusion
\begin{align}\label{eq:inclv0}
\iota_{v_0} \colon \bigwedge^p M'' \hookrightarrow \bigwedge^p M
\end{align}
induces an isomorphism between the stalks $\lb \iota_\ast \bigwedge^{p} \check{\Lambda}\rb_x$ and $\lb \delta_\ast \scF^p_\bZ \rb_x$.
It is equivalent to show that for a small neighborhood $U_{\delta^{-1}} (x)$ of $\delta^{-1} (x)$ in $X$ and 
a small neighborhood $U_x$ of $x$ in $U_{\xi}$,
the projection
\begin{align}\label{eq:projv0}
\pi_{v_0} \colon \bigwedge^p N \to \bigwedge^p N''
\end{align}
induces an isomorphism between $\scF_p^\bZ \lb U_{\delta^{-1} (x)} \rb$ and $\Hom \lb \lb \iota_\ast \bigwedge^{p} \check{\Lambda} \rb \lb U_x \rb, \bZ \rb$.

First, we compute $\Hom \lb \lb \iota_\ast \bigwedge^{p} \check{\Lambda} \rb \lb U_x \rb, \bZ \rb$.
Since $\lb \iota_\ast \bigwedge^{p} \check{\Lambda} \rb \lb U_x \rb$ consists of integral cotangent vectors that are monodromy invariant on $U_x \setminus \Gamma$, and the fundamental group of $U_x \setminus \Gamma$ is generated by loops that are homotopic to the ones considered in \eqref{eq:monodromy0} with respect to $\omega \in \scrP(1), \rho \in \scrP(d-1)$ such that $\omega \prec \tau_0, \rho \succ \tau_l$, we have
\begin{align}
 \lb \iota_\ast \bigwedge^{p} \check{\Lambda} \rb \lb U_x \rb=\lc \sum_{\substack{\omega \prec \tau_0 \\ \omega \in \scrP(1)}} \sum_{\substack{\rho \succ \tau_l \\ \rho \in \scrP(d-1)}} \lb \id - \bigwedge^{p} T_\omega^\rho \rb \lb \bigwedge^{p} N'' \rb \rc^\perp,
\end{align}
where $T_\omega^\rho$ is the monodromy transformation of \eqref{eq:monodromy0}.
Hence, $\Hom \lb \lb \iota_\ast \bigwedge^{p} \check{\Lambda} \rb \lb U_x \rb, \bZ \rb$ is equal to
\begin{align}\label{eq:homp}
\left. \bigwedge^p N'' \middle/ \ld
\lc \sum_{\substack{\omega \prec \tau_0 \\ \omega \in \scrP(1)}} \sum_{\substack{\rho \succ \tau_l \\ \rho \in \scrP(d-1)}} \lb \id - \bigwedge^{p} T_\omega^\rho \rb \lb \bigwedge^{p} N'' \rb \rc \otimes_\bZ \bR \cap \bigwedge^p N'' 
\rd \right..
\end{align}
For polyhedra $\omega \in \scrP(1)$, $\rho \in \scrP(d-1)$ such that $\omega \prec \tau_0$, $\rho \succ \tau_l$, let $v_1(\omega), v_2(\omega) \in \scrP(0)$ be the vertices of $\omega$, and $\sigma_1(\rho), \sigma_2(\rho) \in \scrP(d)$ be the maximal-dimensional polyhedra containing $\rho$ as their faces. 
From \pref{pr:monodromy}, we can see
\begin{align}\label{eq:pmono}
\lb \id - \bigwedge^{p} T_{\omega}^\rho \rb \lb \bigwedge^{p} N'' \rb
=\lb \bigwedge^{p-1} \Lambda_\rho \rb \wedge \lb \sum_{i=1}^r \la m_i^{\sigma_2(\rho)} - m_i^{\sigma_1(\rho)}, N'' \ra \cdot \lb \phi_{\xi, \Deltav_i}  \lb v_2(\omega) \rb- \phi_{\xi, \Deltav_i}  \lb v_1(\omega) \rb \rb \rb,
\end{align}
where $m_i^{\sigma_j(\rho)} \in \Delta_i \cap M'$ denotes the element such that $m_i^{\sigma_j(\rho)} -e_i^\ast = f_i$ on $\sigma_j(\rho)$.
We set
\begin{align}
I(\rho):=\lc i \in \lc 1, \cdots, r \rc \relmid m_i^{\sigma_1(\rho)} \neq m_i^{\sigma_2(\rho)} \rc.
\end{align}
Then by \eqref{eq:pmono}, the denominator of \eqref{eq:homp} turns out to be equal to
\begin{align}
\lc \sum_{\substack{\rho \succ \tau_l \\ \rho \in \scrP(d-1)}} \lb \bigwedge^{p-1} \Lambda_\rho \rb \wedge \sum_{i \in I(\rho)} T \lb \phi_{\xi, \Deltav_i}  \lb \tau_0 \rb \rb \rc
\otimes_\bZ \bR \cap \bigwedge^p N'',
\end{align}
where $d_\omega$ is a primitive tangent vector on $\omega$.
Therefore, \eqref{eq:homp} is equal to
\begin{align}\label{eq:lp}
\left. \bigwedge^p N'' \middle/ \ld \lc \sum_{\substack{\rho \succ \tau_l \\ \rho \in \scrP(d-1)}} \lb \bigwedge^{p-1} \Lambda_\rho \rb \wedge \sum_{i \in I(\rho)} T \lb \phi_{\xi, \Deltav_i} \lb \tau_0 \rb \rb \rc
\otimes_\bZ \bR \cap \bigwedge^p N'' \rd
\right..
\end{align}

Next, we compute $\scF_p^\bZ \lb U_{\delta^{-1} (x)} \rb$.
\begin{lemma}\label{lm:sttl}
Let $\sigma=\tau \lb F_1, \cdots, F_r, C_1, \cdots, C_r \rb \in \scrP_{X(f_1, \cdots, f_r)}$ be a polyhedron, where $F_i \prec \conv (A_i), C_i \prec \cone(\Deltav_i \times \lc e_i \rc)$.
The following are equivalent:
\begin{enumerate}
\renewcommand{\labelenumi}{(\roman{enumi})}
\item The polyhedron $\sigma$ intersects with $\delta^{-1} (x)$.
\item The polyhedron $\sigma$ intersects with the small neighborhood $U_{\delta^{-1} (x)}$ of $\delta^{-1} (x)$.
\item $F_i \cap M \subset \scrM_i(x)$ and $C_i \prec \cone \lb \phi_{\xi, \Deltav_i} \lb \tau_0 \rb + e_i \rb$ for any $i \in \lc 1, \cdots, r \rc$.
\end{enumerate}
Here $\scrM_i(x)$ is \eqref{eq:F_i} for $n=x$.
\end{lemma}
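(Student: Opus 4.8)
The plan is to establish the cycle of implications $\mathrm{(i)}\Rightarrow\mathrm{(ii)}\Rightarrow\mathrm{(i)}$ together with $\mathrm{(i)}\Leftrightarrow\mathrm{(iii)}$. The implication $\mathrm{(i)}\Rightarrow\mathrm{(ii)}$ is trivial since $\delta^{-1}(x)\subseteq U_{\delta^{-1}(x)}$. For $\mathrm{(ii)}\Rightarrow\mathrm{(i)}$ I would argue by contraposition, using that $\scrP_{X(f_1, \cdots, f_r)}$ is a finite complex of closed polyhedra, that $\delta^{-1}(x)$ is compact because $\delta$ is proper, and hence that a polyhedron disjoint from $\delta^{-1}(x)$ lies at positive distance from it; shrinking $U_{\delta^{-1}(x)}$ once and for all so that it avoids every such polyhedron makes $\mathrm{(ii)}$ equivalent to $\mathrm{(i)}$.

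For $\mathrm{(i)}\Leftrightarrow\mathrm{(iii)}$ the two ingredients are the explicit description \eqref{eq:pre-x} of the fibre and the description of $\sigma$ in \pref{lm:sigma}. The computational heart is the observation that, for every vertex $v\prec\xi$ and every $m\in A_i$, the pairing $\la m, \phi_{\xi, \Deltav_j}(v)+e_j\ra$ equals $0$ if $m=0$ and equals $-\delta_{i,j}$ if $m\in A_i\setminus\lc 0\rc$; this follows from $\la\Delta_i,\Deltav_j\ra=0$ and $A_i\setminus\lc 0\rc\subset \Delta_i\times\lc-e_i^\ast\rc$. Hence, if $n=x+\sum_{i,v}t_{i,v}\lb\phi_{\xi,\Deltav_i}(v)+e_i\rb$ with $t_{i,v}\in\bT_{\geq 0}$ is a point of $\delta^{-1}(x)$, then, since $x\in B$ forces $f_i(x)=0$, one reads off $f_i(n)=-\sum_v t_{i,v}$ and, for each $i$, $\scrM_i(n)\subseteq\scrM_i(x)$ (with $0\in\scrM_i(n)$ only when $\sum_v t_{i,v}=0$), while $n$ lies in the tropical torus orbit $O_{C_n}(\bT)$ with $C_n:=\cone\lb\lc\phi_{\xi,\Deltav_i}(v)+e_i\relmid t_{i,v}=\infty\rc\rb$, a face of $C_{\tau_0}$.

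This yields $\mathrm{(i)}\Rightarrow\mathrm{(iii)}$ directly: if $n\in\sigma\cap\delta^{-1}(x)$, then the cell of $\scrP_{X(f_1, \cdots, f_r)}$ whose relative interior contains $n$ is a face of $\sigma$, so by the inclusion-reversing duality of \pref{pr:dual} its multi-degree is $\sum_i F_i'$ with $F_i'\succeq F_i$; combined with $\scrM_i(n)\subseteq\scrM_i(x)$ (and a limiting argument from $N_\bR$ to treat the directions sent to infinity) this gives $F_i\cap M\subseteq\scrM_i(x)$. Moreover $\sum_i C_i\preceq C_n\preceq C_{\tau_0}$, and since $C_{\tau_0}=\sum_i\cone\lb\phi_{\xi,\Deltav_i}(\tau_0)\times\lc e_i\rc\rb$ is graded by the $e_i$ (cf.~\pref{lm:c-face}, \pref{lm:cone}), passing to the $e_i$-graded pieces of the face $\sum_i C_i$ of $C_{\tau_0}$ gives $C_i\prec\cone\lb\phi_{\xi,\Deltav_i}(\tau_0)+e_i\rb$.

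For $\mathrm{(iii)}\Rightarrow\mathrm{(i)}$ I would construct an explicit point $n\in\sigma\cap\delta^{-1}(x)$: send to $\infty$ exactly those parameters $t_{i,v}$ whose directions $\phi_{\xi,\Deltav_i}(v)+e_i$ span the face $C_i$ of $\cone\lb\phi_{\xi,\Deltav_i}(\tau_0)\times\lc e_i\rc\rb$, using the fact isolated in \pref{sc:lem} that such a face is generated by the rays $\phi_{\xi,\Deltav_i}(v)+e_i$ it contains, and choose the remaining parameters small and generic so that $\conv\scrM_i(n)$ equals $\conv\scrM_i(x)$ when $0\in F_i$ and equals $\conv\lb\scrM_i(x)\setminus\lc0\rc\rb$ otherwise; the first clause of $\mathrm{(iii)}$ guarantees $F_i\subseteq\conv\scrM_i(n)$ in either case. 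Then the cell of $n$ lies in $O_{\sum_i C_i}(\bT)$ and has multi-degree $\succeq\sum_i F_i$, hence is a face of $\sigma$; in particular $n\in\sigma\subseteq X(f_1, \cdots, f_r)$, while $n\in\delta^{-1}(x)$ by \eqref{eq:pre-x}. The delicate point is this last construction, namely arranging that a single point lands in the orbit $O_{\sum_i C_i}(\bT)$ and simultaneously realizes the prescribed position of $F_i$ in $\conv\scrM_i(n)$ for every $i$; this rests on the compatibility, coming from \pref{cd:refinement}, between the faces of $\xi$ and those of the $\Deltav_i$ under the maps $\phi_{\xi,\Deltav_i}$, which is the technical content of \pref{sc:lem}.
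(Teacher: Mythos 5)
Your proof is essentially correct and parallels the paper's argument in its main substance, with two small departures worth noting. First, you cycle $\mathrm{(i)}\Rightarrow\mathrm{(ii)}\Rightarrow\mathrm{(i)}$ via a compactness/finiteness argument (shrink $U_{\delta^{-1}(x)}$ so it avoids every polyhedron missing the compact fibre), whereas the paper proves $\mathrm{(ii)}\Rightarrow\mathrm{(iii)}$ directly by contradiction: if $m\in F_i\cap M$ but $m\notin\scrM_i(x)$, the pairing computation shows $\la m, n\ra - f_i(n) = \la m, x\ra > 0$ stays strictly positive on all of $\delta^{-1}(x)$ (and hence on a small neighborhood), while $f_i = \la m, \cdot\ra$ on $\sigma$, forcing disjointness. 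Both routes are valid and both ultimately rest on the same pairing identity $\la m, \phi_{\xi,\Deltav_j}(v)+e_j\ra = -\delta_{i,j}$ that you isolate. Second, for $\mathrm{(iii)}\Rightarrow\mathrm{(i)}$ your construction is more elaborate than necessary: there is no need to tune the finite parameters so that $\conv\scrM_i(n)$ lands exactly on $\conv\scrM_i(x)$ or $\conv\lb\scrM_i(x)\setminus\lc0\rc\rb$, because $\sigma = \tau(F_1,\dots,F_r,C_1,\dots,C_r)$ is closed and membership only requires $\scrM_i(n)\supseteq F_i\cap M$ together with landing in the correct orbit. The paper simply takes $x_0(r) = x + r\cdot\sum_{i,j} n_{i,j}$ over the ray generators $n_{i,j}$ of the $C_i$ and sends $r\to\infty$; this lands in $O_{\sum_i C_i}(\bT)$ automatically and has $\scrM_i(x_0(r)) = \scrM_i(x)$ or $\scrM_i(x)\setminus\lc0\rc$ according as $C_i=\lc0\rc$ or not, which is enough since $0\notin F_i$ whenever $C_i\neq\lc0\rc$. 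The "delicate point" you flag at the end therefore evaporates once you aim only for closure membership rather than the open cell. Your handling of the $C_i$ part (extracting the $e_i$-graded pieces of a face of $C_{\tau_0}$) is exactly what the paper does.
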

\begin{proof}
It is obvious that $\rm(\hspace{.18em}i\hspace{.18em})$ implies $\rm(\hspace{.08em}ii\hspace{.08em})$.
We show that $\rm(\hspace{.08em}ii\hspace{.08em})$ implies $\rm(i\hspace{-.08em}i\hspace{-.08em}i)$.
If there is some $i \in \lc 1, \cdots, r \rc$ such that $F_i \cap M \not\subset \scrM_i(x)$, 
there exists $m \in F_i \cap M$ such that 
$f_i=m$ on $\sigma$ and $f_i < m$ at the point $x$.
We can see from \eqref{eq:pre-x} that we have $f_i < m$ also on $\delta^{-1} (x)$, and $U_{\delta^{-1} (x)}$.
We get $U_{\delta^{-1}(x)} \cap \sigma = \emptyset$ which contradicts $\rm(\hspace{.08em}ii\hspace{.08em})$.
Hence, we must have $F_i \cap M \subset \scrM_i(x)$ for all $i$.
We can also see from \eqref{eq:pre-x} that $\delta^{-1}(x)$ and $U_{\delta^{-1}(x)}$ intersect with $O_{C'}(\bT)$ if and only if $C' \prec \sum_{i=1}^r \cone \lb \phi_{\xi, \Deltav_i} \lb \tau_0 \rb + e_i \rb$.
We obtain $C_i \prec \cone \lb \phi_{\xi, \Deltav_i} \lb \tau_0 \rb + e_i \rb$ for all $i$.

Next, we show that $\rm(i\hspace{-.08em}i\hspace{-.08em}i)$ implies $\rm(\hspace{.18em}i\hspace{.18em})$.
Let $\lc n_{i, j} \rc_{j \in J_i}$ be the set of generators of $C_i$ $(1 \leq i \leq r)$.
Consider, for instance, the element
\begin{align}
x_0(r)&:=x +  r \cdot \lb \sum_{1 \leq i \leq r} \sum_{j \in J_i} n_{i, j} \rb,
\end{align}
where $r \in \bT_{> 0}$.
One can show $x_0(\infty) \in \sigma \cap \delta^{-1} (x)$ as follows:
One can check
\begin{align}
\scrM_i(x_0(r))
=
\left\{
\begin{array}{ll}
\scrM_i(x) & C_i=\lc 0\rc \\
\scrM_i(x) \setminus \lc 0 \rc & C_i \neq \lc 0\rc \\
\end{array}
\right.
\end{align}
for $r \in \bR_{> 0}$.
When $C_i \neq \lc 0\rc$, one has $F_i \not\ni \lc 0 \rc$.
Hence, for any $i \in \lc 1, \cdots, r \rc$ and $r \in \bR_{> 0}$, we have $\scrM_i(x_0(r)) \supset F_i \cap M$, which implies $x_0(\infty) \in \tau \lb F_1, \cdots, F_r \rb$.
One can straightforwardly check that $x_0(\infty)$ is also in $O_{C'} \lb \bT \rb$ with $C'=\sum_{i=1}^r C_i$.
It is also obvious from $C_i \prec \cone \lb \phi_{\xi, \Deltav_i} \lb \tau_0 \rb + e_i \rb$ that we have $x_0(\infty) \in \delta^{-1} (x)$.
Hence, we get $x_0(\infty) \in \sigma \cap \delta^{-1} (x)$, and $\sigma \cap \delta^{-1} (x) \neq \emptyset$.
\end{proof}

The minimal polyhedron in $\scrP_{X(f_1, \cdots, f_r)}$ containing $x$ is $\theta \lb \tau_l \rb$, where $\theta \colon \scrP \to \scrP_B$ is the map defined in Condition \ref{cd:loccont'}.
It is written as $\theta \lb \tau_l \rb=\tau \lb F_1', \cdots, F_r', C_1', \cdots, C_r' \rb$ with $F_i'= \conv \lb \scrM_i(x) \rb$ and $C_i' = \lc 0 \rc$ $(1 \leq i \leq r)$.
From \pref{lm:sttl}, we can see that a polyhedron $\sigma=\tau \lb F_1, \cdots, F_r, C_1, \cdots, C_r \rb \in \scrP_{X(f_1, \cdots, f_r)}$ with $C_i = \lc 0 \rc$ $(1 \leq i \leq r)$, which intersects with $U_{\delta^{-1} (x)}$ has $\theta \lb \tau_l \rb$ as its face.
For such a polyhedron $\sigma \succ \theta \lb \tau_l \rb$, we can see again from \pref{lm:sttl} that the maximal face $C'$ of $C$ such that $O_{C'}(\bT) \cap \sigma \cap U_{\delta^{-1} (x)} \neq \emptyset$ is 
\begin{align}
C'=\mathrm{cone} \lb \bigcup_{i \in I_\sigma} \phi_{\xi, \Deltav_i} (\tau_0) \times \lc e_i \rc \rb,
\end{align}
where $I_\sigma$ is the one defined in \eqref{eq:Isig}.
It turns out by \pref{eq:Fp} and \pref{df:scF} that $\scF_p^\bZ \lb U_{\delta^{-1} (x)} \rb$ is equal to
\begin{align}\label{eq:fp}
\left. F^\bZ_p \lb \theta \lb \tau_l \rb \rb \middle/ 
\ld \lc \sum_{\substack{\sigma \succ \theta \lb \tau_l \rb \\ \sigma \in \scrP_{X(f_1, \cdots, f_r)}}}
F^\bZ_{p-1} \lb \sigma \rb \wedge \lb \sum_{i \in I_\sigma} \bZ \lb \phi_{\xi, \Deltav_i} \lb\tau_0 \rb +e_i \rb \rb \rc \otimes_\bZ \bR \cap \bigwedge^p N  \rd \right..
\end{align}

We will check that the projection $\eqref{eq:projv0}$ induces a map from \eqref{eq:fp} to \eqref{eq:lp}.
For any polyhedron $\sigma=\tau \lb F_1, \cdots, F_r, C_1, \cdots, C_r \rb \in \scrP_{X(f_1, \cdots, f_r)}$ with $F_i \prec \conv (A_i), C_i=\lc 0 \rc \prec \cone(\Deltav_i \times \lc e_i \rc)$ $\lb 1 \leq i \leq r \rb$, one can see
\begin{align}\label{eq:Lamsig}
\Lambda_\sigma \cong \Lambda_{\tilde{\sigma}} \oplus \lb \bigoplus_{i \in I_\sigma} \bZ \lb \phi_{\xi, \Deltav_i} (v_0)+e_i\rb \rb
\end{align}
from \pref{lm:sigma}.
Notice that we have $\tilde{\sigma} \succ D \supset \Deltav_i$.

\begin{lemma}\label{lm:rho}
For any $\sigma \in \scrP_{X(f_1, \cdots, f_r)}$ such that $\sigma \succ \theta \lb \tau_l \rb$ and any $i \in I_\sigma$, there exists $\rho \in \scrP(d-1)$ such that $\rho \succ \tau_l$, $i \in I(\rho)$ and $\Lambda_\rho \supset \Lambda_{\tilde{\sigma}}$.
\end{lemma}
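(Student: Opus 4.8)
The plan is to translate the assertion into combinatorics of the faces $G_1,\cdots,G_r$ defining $\sigma$, and then to produce $\rho$ as a codimension-one cell of $\scrP$ lying on a wall of the fan $B$ across which the representative monomial of $f_i$ jumps. First set up notation: write $\sigma=\tau\lb G_1,\cdots,G_r,\lc0\rc,\cdots,\lc0\rc\rb$ with $G_j\prec\conv(A_j)$ (note $\tilde{\sigma}$ and $I_\sigma$ are only defined for such finite $\sigma$, see \eqref{eq:Isig} and \eqref{eq:til-sigma}), so that $i\in I_\sigma$ means exactly $0\notin G_i$. I record two facts. First, $\tau_l\subset\tilde{\sigma}$: since $\theta\lb\tau_l\rb\subset B$ we have $0\in\scrM_j(x)$ for all $j$, hence $\conv\lb\scrM_j(x)\cup\lc0\rc\rb=\conv\lb\scrM_j(x)\rb$, and combining with $G_j\subset\conv\lb\scrM_j(x)\rb$ (the meaning of $\sigma\succ\theta\lb\tau_l\rb$) and \pref{pr:dual} gives $\theta\lb\tau_l\rb\prec\tilde{\sigma}$, so $\tau_l\subset\theta\lb\tau_l\rb\subset\tilde{\sigma}$. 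Second, $\dim\tilde{\sigma}\le\dim\sigma-|I_\sigma|\le d-1$: by \pref{pr:dual}, $\dim\sigma-\dim\tilde{\sigma}$ equals $\dim\lb\sum_j\conv(G_j\cup\lc0\rc)\rb-\dim\lb\sum_jG_j\rb$, and for $j\in I_\sigma$ the face $G_j$ lies in $\Delta_j\times\lc-e_j^\ast\rc$ while $0$ has vanishing $e_j^\ast$-component, so adjoining $0$ to $G_j$ gains a direction along the $e_j^\ast$-axis and the $|I_\sigma|$ such directions are independent, while $\dim\sigma\le d$ because $X(f_1,\cdots,f_r)$ has dimension $d$. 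Finally, all cells of $\scrP_B$ are cones with apex the origin (the $f_j$ have vanishing coefficients), so $\Lambda_{\tilde{\sigma}}$ is the tangent lattice of a cone of dimension $\le d-1$.

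Now I build $\rho$. For a maximal cell $\eta\in\scrP(d)$, the polyhedron $\theta(\eta)$ is a $d$-dimensional, hence maximal, cell of $\scrP_B$, and the stability criterion \pref{th:st-intersection} then pins down the monomials of $f_i$ attaining the minimum on $\eta$ to be exactly $0$ and $m_i^{\eta}-e_i^\ast$ for a single $m_i^{\eta}\in\Delta_i\cap M'$; the $m_i^{\eta}-e_i^\ast$ occurring for $\eta\succeq\tilde{\sigma}$ run over the nonzero vertices of the $i$-th face $\conv\lb G_i\cup\lc0\rc\rb$ of $\tilde{\sigma}$. Since $G_i\subset\Delta_i\times\lc-e_i^\ast\rc$, one has $0\notin\aff(G_i)$, so this face is a pyramid over $G_i$ with apex $0$; and \pref{th:st-intersection} applied to the singleton $\lc i\rc$ forces $\dim G_i\ge1$. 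Hence $G_i$ has at least two vertices, so at least two distinct values $m_i^{\eta}$ occur among the maximal cells of $B$ meeting $\tilde{\sigma}$, i.e.\ $\tilde{\sigma}$ lies in a wall of $B$ along which the representative monomial of $f_i$ changes. Using $\tau_l\succeq\xi$ together with \pref{cd:loccont}.1 and \pref{cd:loccont}.4, which make $\scrP$ fill a neighborhood of $\Int(\tau_l)$ inside $B$, I then choose a cell $\rho\in\scrP(d-1)$ with $\tau_l\subset\rho$ that contains all tangent directions of $\tilde{\sigma}$ and lies in such a wall, with its two adjacent maximal cells $\sigma_1(\rho),\sigma_2(\rho)$ on opposite sides; this is possible because $\dim\tilde{\sigma}\le d-1$ and $\tau_l\subset\tilde{\sigma}$. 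Being a cell of $\scrP$ that contains the cell $\tau_l$, the cell $\rho$ has $\tau_l$ as a face, so $\rho\succ\tau_l$; by construction $\Lambda_\rho\supset\Lambda_{\tilde{\sigma}}$; and $m_i^{\sigma_1(\rho)}\neq m_i^{\sigma_2(\rho)}$, i.e.\ $i\in I(\rho)$, as required.

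The main obstacle I anticipate is exactly this last construction step: producing, inside the given complex $\scrP$ — which is only constrained by \pref{cd:loccont} and \pref{cd:loccont'} — a codimension-one cell through $\tau_l$ that absorbs all tangent directions of $\tilde{\sigma}$ and simultaneously separates two maximal cells carrying different monomials $m_i$. It is a local statement about how $B$ and the tropical hypersurface $X(f_i)^\circ$ look near $\tilde{\sigma}$, and it is the only place where the stability criterion \pref{th:st-intersection}, the duality of \pref{pr:dual}, and \pref{cd:loccont'} must be combined; by contrast the inclusion $\tau_l\subset\tilde{\sigma}$ and the dimension bound established in the first paragraph are routine bookkeeping.
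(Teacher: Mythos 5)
The overall route is the right one, but the proof has a genuine gap exactly where you flag it: the final step, where you ``choose a cell $\rho\in\scrP(d-1)$ with $\tau_l\subset\rho$ that contains all tangent directions of $\tilde{\sigma}$ and lies in such a wall,'' is justified only by ``$\dim\tilde{\sigma}\le d-1$ and $\tau_l\subset\tilde{\sigma}$.'' Those two facts do not produce such a cell inside the given complex $\scrP$: a priori a $(d-1)$-cell of $\scrP$ through $\tau_l$ has no reason to contain $T(\tilde{\sigma})$, and one where the monomial of $f_i$ jumps has no reason to exist in $\scrP$ just because the jump happens somewhere in the star of $\tilde{\sigma}$ in $B$. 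Since the whole content of the lemma is this existence statement, asserting it is not enough.

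The paper closes the gap by factoring the construction through the polyhedral structure $\scrP_B$ of $B$ rather than attacking $\scrP$ directly. First, $i\in I_\sigma$ yields a codimension-one cell $\rho'\in\scrP_B(d-1)$ with $\rho'\succ\tilde{\sigma}$ across which $m_i^{\sigma_1}\neq m_i^{\sigma_2}$; because $\rho'$ is a face of $\scrP_B$ containing $\tilde{\sigma}$ as a face, the inclusion $\Lambda_{\rho'}\supset\Lambda_{\tilde{\sigma}}$ is automatic, and $\rho'\supset\theta(\tau_l)\supset\tau_l$ follows from $\tilde{\sigma}\succ\theta(\tau_l)$ (which you did establish). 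Second, since $\tau_l\succ\xi$, \pref{cd:loccont}.4 guarantees that $U$ contains $\Int(\tau_l)$ in its interior, so the cells of $\scrP$ around $\tau_l$ refine $\scrP_B$ there; this is what actually produces $\rho\in\scrP(d-1)$ with $\rho\succ\tau_l$ and $\theta(\rho)=\rho'$. Then $\Lambda_\rho=\Lambda_{\rho'}\supset\Lambda_{\tilde{\sigma}}$ (both are $(d-1)$-dimensional and $\rho\subset\rho'$), and the two maximal cells of $\scrP$ adjacent to $\rho$ map under $\theta$ to the two maximal cells of $\scrP_B$ adjacent to $\rho'$, so the jump $m_i^{\sigma_1}\neq m_i^{\sigma_2}$ persists and $i\in I(\rho)$. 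Your preliminary bookkeeping ($\tau_l\subset\tilde{\sigma}$, the dimension bound, and the observation that $\dim G_i\ge 1$ forces the monomial of $f_i$ to be non-constant near $\tilde{\sigma}$) is fine and consistent with the paper; what is missing is precisely this two-step passage from a wall of $\scrP_B$ to a cell of $\scrP$.
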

\begin{proof}
Since $i \in I_\sigma$, there exists $\rho' \in \scrP_B(d-1)$ such that $\rho' \succ \tilde{\sigma}$ and $m_i^{\sigma_1} \neq m_i^{\sigma_2}$ for the maximal-dimensional polyhedra $\sigma_1, \sigma_2 \succ \rho'$.
Furthermore, $\sigma \succ \theta \lb \tau_l \rb$ implies $\tilde{\sigma} \succ \theta \lb \tau_l \rb$, and $\rho' \supset \tau_l$.
Since $\xi \in \lc \tau_0, \cdots, \tau_l \rc$, we have $\tau_l \succ \xi$.
By the fourth requirement in \pref{cd:loccont} for $\tau_l$, there exists $\rho \in \scrP(d-1)$ such that $\rho \succ \tau_l$ and $\theta(\rho) = \rho'$.
Here we have $i \in I(\rho)$ and $\Lambda_\rho =\Lambda_{\rho'} \supset \Lambda_{\tilde{\sigma}}$.
\end{proof}

By \eqref{eq:Lamsig} and \pref{lm:rho}, we can see that for any $v \prec \tau_0$, $\sigma \in \scrP_{X(f_1, \cdots, f_r)}$ such that $\sigma \succ \theta \lb \tau_l \rb$ and any $i \in I_\sigma$, we have 
\begin{align}
\pi_{v_0} \lb F^\bZ_{p-1} \lb \sigma \rb \wedge \bZ \lb \phi_{\xi, \Deltav_i}(v) +e_i \rb \rb
&\subset \bigwedge^{p-1} \Lambda_{\tilde{\sigma}} \wedge  \bZ \lb \phi_{\xi, \Deltav_i}(v) - \phi_{\xi, \Deltav_i} (v_0) \rb\\
&\subset \sum_{\substack{\rho \succ \tau_l \\ \rho \in \scrP(d-1)}} \bigwedge^{p-1} \Lambda_\rho \wedge \bZ \lb \phi_{\xi, \Deltav_i}(v)-\phi_{\xi, \Deltav_i}(v_0) \rb,
\end{align}
and this is contained in the denominator of \eqref{eq:lp}. 
Hence, the map $\pi_{v_0}$ induces a map from \eqref{eq:fp} to \eqref{eq:lp}.

We will show that the induced map $\scF_p^\bZ \lb U_{\delta^{-1} (x)} \rb \to \Hom \lb \lb \iota_\ast \bigwedge^{p} \check{\Lambda} \rb \lb U_x \rb, \bZ \rb$ is an isomorphism by constructing its inverse map $\zeta$.
Take a polyhedron $\sigma \in \scrP_B(d)$ such that $\sigma \succ \theta(\tau_l)$, and consider the map
$u^\sigma \colon N'' \to N$
defined by
\begin{align}\label{eq:u-sig}
N'' \ni n \mapsto u^\sigma \lb n \rb:=\tilde{n} + \sum_{j=1}^r \la m_j^\sigma-e_j^\ast, \tilde{n} \ra \cdot \lb \phi_{\xi, \Deltav_j}(v_0)+e_j \rb,
\end{align}
where $\tilde{n} \in N$ is an element such that the projection to $N''$ is $n$.
From $\la m_i^\sigma-e_i^\ast, \phi_{\xi, \Deltav_j}(v_0)+e_j \ra=-\delta_{i, j}$, we can see that the element $u^\sigma \lb n \rb$ does not depend on the choice of $\tilde{n}$, and $u^\sigma(n) \in T(\sigma) \cap N$.
\eqref{eq:u-sig} induces the map $\bigwedge^p u^\sigma \colon \bigwedge^p N'' \to \bigwedge^p N$.
Since $\sigma \succ \theta(\tau_l)$, we have $\bigwedge^p u^\sigma \lb v \rb \in F^\bZ_p \lb \theta \lb \tau_l \rb \rb$ for any element $v \in \bigwedge^p N''$.
The equivalence class of $\bigwedge^p u^\sigma \lb v \rb$ in \eqref{eq:fp} does not depend on the choice of $\sigma \in \scrP_B(d)$.
We can check this as follows:
It suffices to show that for any $\rho \in \scrP_B(d-1)$ such that $\rho \succ \theta(\tau_l)$, the equivalence classes of $\bigwedge^p u^{\sigma_1} \lb v \rb$ and $\bigwedge^p u^{\sigma_2} \lb v \rb$ in \eqref{eq:fp} are the same for the polyhedra $\sigma_1, \sigma_2 \in \scrP_B(d)$ containing $\rho$.
We may also assume that $v$ is written as $v=n_1 \wedge \cdots \wedge n_p \in \bigwedge^p N''$ with $n_i \in T(\rho) \cap N$ $(i =2, \cdots, p)$.
It implies that for $i = 2, \cdots, p$, one has $\la m_j^{\sigma_1}-m_j^{\sigma_2}, n_i \ra =0$, and $u^{\sigma_1}(n_i)=u^{\sigma_2}(n_i)$.
Hence, we have 
\begin{align}\label{eq:vv}
\bigwedge^p u^{\sigma_1} \lb v \rb-\bigwedge^p u^{\sigma_2} \lb v \rb
&=\lb \sum_{j=1}^r \la m_j^{\sigma_1}-m_j^{\sigma_2}, n_1 \ra \cdot \lb \phi_{\xi, \Deltav_j} (v_0)+e_j \rb \rb \wedge 
\bigwedge_{i=2}^p u^{\sigma_1}(n_i) \\
&=\lb \sum_{j \in I(\rho)} \la m_j^{\sigma_1}-m_j^{\sigma_2}, n_1 \ra \cdot \lb \phi_{\xi, \Deltav_j} (v_0)+e_j \rb \rb \wedge 
\bigwedge_{i=2}^p u^{\sigma_1}(n_i).
\end{align}
For any $j \in I(\rho)$, there exists a polyhedron $\sigma^j \in \scrP_{X(f_1, \cdots, f_r)}(d)$ such that $\sigma^j \succ \rho$ and $j \in I_{\sigma^j}$.
Since $u^{\sigma_1}(n_i)=u^{\sigma_2}(n_i) \in T(\sigma_1) \cap T(\sigma_2)=T(\rho) \subset T(\sigma^j)$ for $i =2, \cdots, p$, we have 
\begin{align}
\la m_j^{\sigma_1}-m_j^{\sigma_2}, n_1 \ra \cdot \lb \phi_{\xi, \Deltav_j} (v_0)+e_j \rb \wedge 
\bigwedge_{i=2}^p u^{\sigma_1}(n_i)
\subset 
F_{p-1}^\bZ \lb \sigma^j \rb \wedge \bZ \lb \phi_{\xi, \Deltav_j} (v_0)+e_j \rb
\end{align}
for $j \in I(\rho)$.
Hence, we get $\bigwedge^p u^{\sigma_1} \lb v \rb-\bigwedge^p u^{\sigma_2} \lb v \rb=0$ in \eqref{eq:fp}.
It turned out that the map $\bigwedge^p u^{\sigma}$ from $\bigwedge^p N''$ to \eqref{eq:fp} does not depend on the choice of $\sigma \in \scrP_B(d)$.
Let $\zeta \colon \bigwedge^p N'' \to \scF_p^\bZ \lb U_{\delta^{-1} (x)} \rb$ denote the map.

Next, we check that the denominator of \eqref{eq:lp} is sent to $0$ in \eqref{eq:fp} by the map $\zeta$.
For any $\rho \in \scrP(d-1)$ such that $\rho \succ \tau_l$, and $i \in I(\rho)$, we have
\begin{align}\label{eq:xi}
\zeta \lb \lb \bigwedge^{p-1} \Lambda_\rho \rb \wedge T \lb \phi_{\xi, \Deltav_{i}} (\tau_{0}) \rb \rb=
\lb \bigwedge^{p-1} T \lb \rho \rb \rb \wedge T \lb \phi_{\xi, \Deltav_{i}} (\tau_{0}) \rb.
\end{align}
There exists a polyhedron $\sigma_0 \in \scrP_{X(f_1, \cdots, f_r)}(d)$ such that $\sigma_0 \succ \theta(\rho)$ and $i \in I_{\sigma_0}$.
Since $T \lb \sigma_0 \rb \supset T \lb \rho \rb$, \eqref{eq:xi} is contained in 
$\lb F_{p-1}^\bZ \lb \sigma_0 \rb \wedge \sum_{v \prec \tau_0} \bZ \lb \phi_{\xi, \Deltav_{i}}(v)+e_{i} \rb \rb \otimes_\bZ \bR$.
Since $\sigma_0 \succ \theta(\rho) \succ \theta \lb \tau_l \rb$, the denominator of \eqref{eq:lp} is sent to $0$ in \eqref{eq:fp} by the map $\zeta$.
It turned out that $\zeta$ induces a map from \eqref{eq:lp} to \eqref{eq:fp}, which will be denoted also by $\zeta$ in the following.

We check that the map $\zeta$ gives the inverse map.
It is obvious that we have $\pi_{v_0} \circ \zeta=\id$ by the definition of the map $\zeta$.
The equality $\zeta \circ \pi_{v_0} =\id$ can be checked as follows:
From \eqref{eq:Lamsig}, one can easily see that \eqref{eq:fp} is generated by $\lb \sum_{\substack{\sigma \succ \theta(\tau_l) \\ \sigma \in \scrP_B}} \bigwedge^p T(\sigma) \rb \cap \bigwedge^p N$.
For these elements, the map $\zeta \circ \pi_{v_0}$ is obviously identical.
Thus we get $\zeta \circ \pi_{v_0} =\id$, and see that the map $\zeta$ gives the inverse map.

We saw that the projection \eqref{eq:projv0} induces an isomorphism 
\begin{align}\label{eq:proj-ind}
\scF_p^\bZ \lb U_{\delta^{-1} (x)} \rb \to \Hom \lb \lb \iota_\ast \bigwedge^{p} \check{\Lambda} \rb \lb U_x \rb, \bZ \rb,
\end{align}
and the inclusion \eqref{eq:inclv0} induces an isomorphism between the stalks $\lb \iota_\ast \bigwedge^{p} \check{\Lambda}\rb_x$ and $\lb \delta_\ast \scF^p_\bZ \rb_x$.
We write the latter isomorphism as $g_x \colon \lb \iota_\ast \bigwedge^{p} \check{\Lambda}\rb_x \to \lb \delta_\ast \scF^p_\bZ \rb_x$.
We can also check that these maps do not depend on the choice of the vertex $v_0 \prec \tau_0$ as follows:
Let $v_0' \prec \tau_0$ be another vertex, and $N''':=\left. N \middle/ \oplus_{i=1}^r \bZ \lb \phi_{\xi, \Deltav_i} \lb v_0' \rb +e_i \rb \right.$ be the integral tangent space at $v_0'$.
Let further $\scF$ be \pref{eq:lp} (computed for the choice $v_0$), and $\scF'$ be the same one computed for the choice $v_0'$.
Take a facet $\sigma \in \scrP(d)$ such that $\sigma \succ \tau_l$.
By the parallel transport along a path in $\rint \lb \sigma \rb$, we get the identification $\bigwedge^p N'' \cong \bigwedge^p N'''$.
It induces the isomorphism $l \colon \scF \cong \scF' (\cong \Hom \lb \lb \iota_\ast \bigwedge^{p} \check{\Lambda} \rb \lb U_x \rb, \bZ \rb)$.
We can see from the definition of $\bigwedge^p u^\sigma$ that the map 
\begin{align}
\scF \xrightarrow{\zeta} \scF_p^\bZ \lb U_{\delta^{-1}(x)} \rb \xrightarrow{\pi_{v_0'}} \scF'
\end{align}
coincides with the isomorphism $l \colon \scF \to \scF'$.
This implies that the map \eqref{eq:proj-ind} does not depend on the choice of the vertex $v_0$.
Hence, the map $g_x$ also does not depend on the choice of the vertex $v_0$.

Next, we check that the maps $\lc g_x \rc_x$ induce a morphism 
$\iota_\ast \bigwedge^{p} \check{\Lambda} \to \delta_\ast \scF^p_\bZ$ of sheaves.
It suffices to check that for any point $x \in U_\xi$, any section 
$s \in \Gamma \lb U_x, \iota_\ast \bigwedge^{p} \check{\Lambda} \rb \cong \lb \iota_\ast \bigwedge^{p} \check{\Lambda} \rb_x$, 
and any point $y \in U_x$, one has 
\begin{align}\label{eq:gs}
g_y(s_y)=\lb g_x(s_x) \rb_y,
\end{align}
where $s_y, s_x$ are the values of $s$ at $y$ and $x$ respectively.
Here we regard the element $g_x(s_x) \in \lb \delta_\ast \scF^p_\bZ \rb_x=\Gamma \lb U_x, \delta_\ast \scF^p_\bZ \rb$ as a section of $\delta_\ast \scF^p_\bZ$ over $U_x$, and the right hand side denotes its value at $y$.
We will show \eqref{eq:gs}.
Since $U_x$ is a small neighborhood of $x$, there is a natural inclusion $\lb \iota_\ast \bigwedge^{p} \check{\Lambda} \rb_x \hookrightarrow \lb \iota_\ast \bigwedge^{p} \check{\Lambda} \rb_y$.
The value $s_y$ is the image of $s_x$ by this inclusion.
There is also a natural inclusion $\lb \delta_\ast \scF^p_\bZ \rb_x \hookrightarrow \lb \delta_\ast \scF^p_\bZ \rb_y$.
The value $\lb g_x(s_x) \rb_y$ is the image of $g_x(s_x)$ by this inclusion.
Assume $y \in \rint \lb \conv \lc a_{\tau_0'}, a_{\tau_1'}, \cdots, a_{\tau_k'} \rc \rb$, where $\tau_0' \prec \tau_1' \prec \cdots \prec \tau_k' \in \scrP$ and $k \geq 0$.
Then we have $\tau_0' \prec \tau_0$.
If we choose a common vertex $v_0 \prec \tau_0'$ when we think of the images of $s_x, s_y$ by the maps $g_x$, $g_y$, then it is obvious that the both sides of \eqref{eq:gs} are the image of $s_x$ by the inclusion \eqref{eq:inclv0}.
Hence, \eqref{eq:gs} holds.
We proved that we have an isomorphism $\iota_\ast \bigwedge^{p} \check{\Lambda} \to \delta_\ast \scF^p_\bZ$ of sheaves.

It is obvious that \eqref{eq:sh-gr} is an isomorphism of sheaves of graded rings since 
the inclusion $\iota_{v_0} \colon \bigoplus_{p=0}^d \bigwedge^p M'' \hookrightarrow  \bigoplus_{p=0}^d \bigwedge^p M$ inducing \eqref{eq:sh-gr} is a homomorphism of graded rings.

\subsection{Proof of \pref{th:local-cont}.3 and \pref{th:local-cont}.4}\label{sc:local-cont3}

Let $x$ be a point of $U_{\xi}$.
Assume $x \in \rint \lb \conv \lb \lc a_{\tau_0}, a_{\tau_1}, \cdots, a_{\tau_l} \rc \rb \rb$, where $\tau_0 \prec \tau_1 \prec \cdots \prec \tau_l \in \scrP$, $l \geq 0$, and $\xi \in \lc \tau_0, \cdots, \tau_l \rc$ again.
We want
\begin{align}\label{eq:wvani}
\lb R^q \delta_\ast \scW_p^Q \rb_x=H^q \lb \delta^{-1}(x), \left. \scW_p^Q \right|_{\delta^{-1}(x)} \rb=0\\
\lb R^q \delta_\ast \scF^p_Q \rb_x=H^q \lb \delta^{-1}(x), \left. \scF^p_Q \right|_{\delta^{-1}(x)} \rb=0 \label{eq:fvani}
\end{align}
for all integers $q \geq 1$.
By \eqref{eq:Wpx}, one can see that the point $x$ is the minimal stratum in $\delta^{-1} (x)$.
Hence, one can get \eqref{eq:wvani} by \pref{lm:acyclic}.

The rest of this subsection is devoted to show \eqref{eq:fvani}.
In the following, we suppose that $\delta$ is good in the sense of \pref{df:local-contraction}.
We set
\begin{align}
\scrI:= \lc I \subset \lc 1, \cdots, r \rc \relmid  \dim \lb \Delta_i \rb \geq 1 \ \mathrm{for\ all\ } i \in I \rc.
\end{align}

\begin{lemma}\label{lm:i}
For $I \subset \lc 1, \cdots, r \rc$, we consider the polyhedron $\tau \lb F_1, \cdots, F_r, C_1, \cdots, C_r \rb$ with
\begin{align}
F_i 
=
\left\{
\begin{array}{ll}
\Delta_i \times \lc -e_i^\ast \rc  & i \in I \\
\conv(A_i) & i \nin I \\
\end{array}
\right.
, \quad
C_i
=
\left\{
\begin{array}{ll}
\cone (\Deltav_i \times \lc e_i \rc)& i \in I \\
\lc 0 \rc & i \nin I.
\end{array}
\right.
\end{align}
It is non-empty if and only if $I \in \scrI$.
\end{lemma}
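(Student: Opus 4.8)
The plan is to read off the statement from the three clauses of the Condition defining membership in $\scrP_{X(f_1,\cdots,f_r)}$, together with one non-emptiness check, noting that the only clause sensitive to $I$ is the dimension inequality (Condition~2), which will turn out to be equivalent to $I\in\scrI$. First I would check that the tuple $\lb F_1,\cdots,F_r,C_1,\cdots,C_r\rb$ of the lemma satisfies Conditions~1 and~3 for every $I$. For Condition~3, the functional $\sum_{i\nin I}e_i^\ast\in M_\bR$ is non-negative on $C$ and vanishes exactly on $\sum_{i=1}^rC_i=\cone\lb\bigcup_{i\in I}\Deltav_i\times\lc e_i\rc\rb$, so the latter is a face of $C$. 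For Condition~1, by \pref{pr:dual} it suffices to produce a common point of the relative interiors of the cells $\sigma_i\in\Xi_i$ dual to the $F_i$; since all coefficients $a_{i,m}$ vanish one computes, for a fixed $m_i\in\Delta_i\times\lc-e_i^\ast\rc$, that $\Int(\sigma_i)$ is $\lc n\relmid n\perp T(\Delta_i)\rc$ intersected with $\lc\la m_i,n\ra=0\rc$ when $i\nin I$ and with $\lc\la m_i,n\ra<0\rc$ when $i\in I$, and $\sum_{i\in I}e_i$ lies in all of them. Hence $\sum_iF_i$ is a cell of the mixed subdivision $\scrS$, so Condition~1 holds.

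Then I would carry out the dimension count underlying Condition~2. One has $T\lb\Delta_i\times\lc-e_i^\ast\rc\rb=T(\Delta_i)\subset M'_\bR$ and $T\lb\conv(A_i)\rb=T(\Delta_i)\oplus\bR\lb m_i^0-e_i^\ast\rb$ for a vertex $m_i^0$ of $\Delta_i$. Since $\delta$ is good, $\sum_iT(\Delta_i)$ is an internal direct sum inside $M'_\bR$, and the vectors $m_i^0-e_i^\ast$ have linearly independent images $-e_i^\ast$ in $\oplus_i\bR e_i^\ast$; so the tangent space of $\sum_{j\in J}F_j$ is the internal direct sum of $\bigoplus_{j\in J}T(\Delta_j)$ and $\bigoplus_{j\in J\setminus I}\bR\lb m_j^0-e_j^\ast\rb$, giving $\dim\lb\sum_{j\in J}F_j\rb=\sum_{j\in J}\dim\Delta_j+|J\setminus I|$ for all $J\subset\lc1,\cdots,r\rc$. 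Consequently Condition~2 --- that $\dim\lb\sum_{j\in J}F_j\rb\geq|J|$ for all $J$ --- holds if and only if $\dim\Delta_i\geq1$ for every $i\in I$ (necessity via $J=\lc i\rc$, sufficiency from the displayed formula), i.e.\ if and only if $I\in\scrI$. In particular, if $I\nin\scrI$ the tuple fails Condition~2, so $\tau\lb F_1,\cdots,F_r,C_1,\cdots,C_r\rb$ is not an element of $\scrP_{X(f_1,\cdots,f_r)}$; this is the ``only if'' direction.

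For the ``if'' direction, assume $I\in\scrI$. Then the tuple satisfies the full Condition, so $\sigma_0:=\tau\lb F_1,\cdots,F_r,\lc0\rc,\cdots,\lc0\rc\rb\in\scrP_{X(f_1,\cdots,f_r)}$, and it remains to see that $\tau\lb F_1,\cdots,F_r,C_1,\cdots,C_r\rb\neq\emptyset$. I would apply \pref{lm:sigma} to $\sigma_0$: here $I_{\sigma_0}=I$, and the associated finite face $\tilde\sigma_0$, being $\tau\lb\conv(A_1),\cdots,\conv(A_r),\lc0\rc,\cdots,\lc0\rc\rb$, is the linear subspace $D$; hence $\sigma_0\cap N_\bR=D+\cone\lb\bigcup_{i\in I}\Deltav_i\times\lc e_i\rc\rb=D+C'$ with $C':=\sum_{i=1}^rC_i\neq\lc0\rc$. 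Since $C'$ lies in the recession cone of $D+C'$, the closure of $D+C'$ in $X_C(\bT)$ meets $O_{C'}(\bT)$; as this closure is contained in $\sigma_0\subset\tau\lb F_1,\cdots,F_r\rb$, we obtain $\tau\lb F_1,\cdots,F_r\rb\cap O_{C'}(\bT)\neq\emptyset$, and therefore $\tau\lb F_1,\cdots,F_r,C_1,\cdots,C_r\rb$, the closure of that intersection, is non-empty. (Alternatively, the limit as $t\to\infty$ of $\sum_{i\in I}e_i+t\sum_{i\in I}\lb n_i+e_i\rb$ with $n_i\in\Int(\Deltav_i)$ supplies an explicit point of $O_{C'}(\bT)$, checking along the ray that $\conv\lb\scrM_i(\cdot)\rb=F_i$ for all $i$.)

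I expect the main obstacle to be the Condition~2 dimension count --- correctly identifying the tangent spaces of the Minkowski summands $\Delta_i\times\lc-e_i^\ast\rc$ and $\conv(A_i)$ and using goodness to conclude the relevant sum is direct --- together with the care needed in the last step to pass from the finite cell $\sigma_0$ to an actual point of the orbit at infinity $O_{C'}(\bT)$.
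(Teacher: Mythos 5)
Your proof is correct and is, at its core, the same argument as the paper's, but you carry out considerably more bookkeeping. The paper dispenses with the ``only if'' direction in one sentence: when $I\nin\scrI$ there is an $i\in I$ with $\Delta_i$ a point, so $F_i$ is a single monomial and the locus where $\conv(\scrM_i(n))=F_i$ lies off the corner locus $X(f_i)^\circ$; hence the polyhedron meets $X(f_1,\cdots,f_r)$ in the empty set. This is logically the same fact your Condition~2 computation detects (failure of the dimension inequality of \pref{th:st-intersection} exactly when $\dim\Delta_i=0$), but it avoids the explicit formula $\dim\lb\sum_{j\in J}F_j\rb=\sum_{j\in J}\dim\Delta_j+|J\setminus I|$ and the verification that Conditions~1 and~3 hold. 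Those verifications are correct (in particular your observation that $\sum_{i\in I}e_i$ lies in all the relevant relative interiors, and your use of goodness to get the internal direct sum) but they are more than the paper asks for.

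For the ``if'' direction the paper goes straight to the explicit element $n_0=\sum_{i\in I}\sum_{j\in J_i}\infty\lb n_{i,j}+e_i\rb$ (where $\lc n_{i,j}\rc$ are the vertices of $\Deltav_i$) and invokes \pref{th:st-intersection} together with goodness. Your primary route is via \pref{lm:sigma}: identify $\sigma_0\cap N_\bR=D+C'$ and deduce that the closure in $X_C(\bT)$ meets $O_{C'}(\bT)$. This works, though the claim ``$C'$ lies in the recession cone, hence the closure meets $O_{C'}(\bT)$'' deserves one more sentence — one must take a ray in the \emph{relative interior} of $C'$ to land in $O_{C'}(\bT)$ rather than in a smaller orbit; since $C'$ is the full recession cone this is available. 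Your alternative parenthetical construction (the limit $t\to\infty$ of $\sum_{i\in I}e_i+t\sum_{i\in I}(n_i+e_i)$ with $n_i\in\Int(\Deltav_i)$) coincides, up to cosmetics, with the paper's choice of $n_0$.
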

\begin{proof}
When $I \nin \scrI$, there exists some $i \in I$ such that $\Delta_i$ consists of a single point.
Since the tropical hypersurface defined by a tropical polynomial is defined to be the locus where more than one monomial attains the minimum, this polyhedron $\tau \lb F_1, \cdots, F_r, C_1, \cdots, C_r \rb$ is empty in this case.

Next, suppose $I \in \scrI$.
Let $\lc n_{i, j} \rc_{j \in J_i}$ be the set of vertices of $\Delta_i$.
Consider, for instance, the element
$
n_0:=\sum_{i \in I} \sum_{j \in J_i} \infty \lb n_{i, j} + e_i \rb.
$
From \pref{th:st-intersection} and the assumption that $\delta$ is good, we can see that this element $n_0$ is in $X(f_1, \cdots, f_r)$, and in $\tau \lb F_1, \cdots, F_r \rb$.
One can also straightforwardly check that it is also in $O_{C'} \lb \bT \rb$ with $C'=\sum_{i=1}^r C_i$.
Hence, one has $n_0 \in \tau \lb F_1, \cdots, F_r, C_1, \cdots, C_r \rb$, and $\tau \lb F_1, \cdots, F_r, C_1, \cdots, C_r \rb \neq \emptyset$.
\end{proof}

For $I \in \scrI$, we consider the open star $U_I$ of the polyhedron of \pref{lm:i}, i.e.,
\begin{align}
U_{I}:=\bigcup_{\tau \in \scrP_I} \rint (\tau),
\end{align}
where
\begin{align}
\scrP_{I}:=\lc \tau \lb F_1, \cdots, F_r, C_1, \cdots, C_r \rb \in \scrP_{X(f_1, \cdots, f_r)} \relmid 
\begin{array}{l}
0 \nin F_i \ \mathrm{for}\ i \in I, \\
C_i=\lc 0 \rc \ \mathrm{for}\  i \nin I\\
\end{array}
\rc.
\end{align}

\begin{lemma}\label{lm:cover}
The family of subsets $\lc U_I \rc_{I \in \scrI}$ is an open covering of $X \lb f_1, \cdots, f_r \rb$.
\end{lemma}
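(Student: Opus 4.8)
The plan is to reduce the statement to a purely combinatorial assertion about the labels of the polyhedra in $\scrP_{X(f_1, \cdots, f_r)}$. Each $U_I$ is by construction the open star of the polyhedron of \pref{lm:i} in the complex $\scrP_{X(f_1, \cdots, f_r)}$, hence open, and equals $\bigcup_{\tau \in \scrP_I} \Int(\tau)$. Since $X(f_1, \cdots, f_r) = \bigsqcup_{\tau \in \scrP_{X(f_1, \cdots, f_r)}} \Int(\tau)$, it suffices to show that every polyhedron $\tau = \tau(F_1, \cdots, F_r, C_1, \cdots, C_r) \in \scrP_{X(f_1, \cdots, f_r)}$ lies in $\scrP_I$ for some $I \in \scrI$. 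The natural candidate is $I := \lc i \in \lc 1, \cdots, r \rc \relmid C_i \neq \lc 0 \rc \rc$: with this choice the requirement ``$C_i = \lc 0 \rc$ for $i \nin I$'' holds by definition, so what remains is to prove, for every $i$ with $C_i \neq \lc 0 \rc$, that (a) $0 \nin F_i$ and (b) $\dim \Delta_i \geq 1$ (the latter being exactly what is needed for $I \in \scrI$).

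For (a), I would fix $i$ with $C_i \neq \lc 0 \rc$ and set $C' := \sum_{j=1}^r C_j$. Since $\tau \neq \emptyset$, the polyhedron $\tau(F_1, \cdots, F_r)$ meets $O_{C'}(\bT)$ in a point $p$, and we may write $p = \lim_k n_k$ with $n_k \in N_\bR$ satisfying $\conv(\scrM_i(n_k)) = F_i$ for all $i$. Take any monomial $m = m' - e_i^\ast$ of $f_i$ with $m' \in \Delta_i \cap M'$. Using $\la \Delta_j, \Deltav_l \ra = 0$ for all $j, l$ together with $\la M', \bZ^r \ra = 0$, $\la (\bZ^r)^\ast, N' \ra = 0$, and $\la e_j^\ast, e_l \ra = \delta_{j,l}$, one computes $\la -m, \nu + e_l \ra = \delta_{i,l}$ for every generator $\nu + e_l$ of $C$ $(\nu \in \Deltav_l)$; hence $-m \in C^\vee \cap M$, while $\la -m, \bullet \ra$ is strictly positive on $C_i \setminus \lc 0 \rc \subset C'$, so $-m \nin {C'}^\perp$. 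By \eqref{eq:O-emb} this gives $p(-m) = \infty$, so $\la -m, n_k \ra = n_k(-m) \to \infty$, i.e.\ $\la m, n_k \ra \to -\infty$; then $f_i(n_k) \leq \la m, n_k \ra \to -\infty$, so $f_i(n_k) < 0 = \la 0, n_k \ra$ for $k$ large and the monomial $0$ does not attain the minimum of $f_i$ at $n_k$. Thus $0 \nin \scrM_i(n_k)$, and since $0$ is a vertex of $\conv(A_i)$ (the remaining points of $A_i$ all lie in the affine hyperplane $\lc m \relmid \la m, e_i \ra = -1 \rc$), this forces $0 \nin \conv(\scrM_i(n_k)) = F_i$.

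For (b), observe that (a) exhibits $F_i$ as a nonempty face of the pyramid $\conv(A_i)$ with apex $0$ that avoids the apex, so $F_i$ is a face of the base $\Delta_i \times \lc -e_i^\ast \rc$. Part (2) of the condition defining the tuples that index $\scrP_{X(f_1, \cdots, f_r)}$ (i.e.\ \pref{th:st-intersection} with $J = \lc i \rc$) gives $\dim F_i \geq 1$, whence $\dim \Delta_i = \dim \lb \Delta_i \times \lc -e_i^\ast \rc \rb \geq \dim F_i \geq 1$. This establishes (a) and (b), so $I \in \scrI$ and $\tau \in \scrP_I$, i.e.\ $\Int(\tau) \subset U_I$; as $\tau$ was arbitrary, $\bigcup_{I \in \scrI} U_I = X(f_1, \cdots, f_r)$, and each $U_I$ is open. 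The one step requiring real care is (a): one must track precisely how the monomials of $f_i$ degenerate as one approaches the boundary stratum $O_{C'}(\bT)$ of the tropical toric variety, and in particular that a nonzero component $C_i$ of $C'$ is exactly what kills the monomial $0$. An essentially equivalent alternative would be to pass to the unique finite ``parent'' polyhedron $\tau(F_1, \cdots, F_r, \lc 0 \rc, \cdots, \lc 0 \rc)$ furnished by \pref{lm:parent} and use \pref{lm:sigma} to read off its recession directions, which forces $i \in I_{\tau(F_1, \cdots, F_r, \lc 0 \rc, \cdots, \lc 0 \rc)}$ and hence $f_i \neq 0$, i.e.\ $0 \nin F_i$, on its relative interior.
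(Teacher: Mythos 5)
Your proof is correct and follows essentially the same route as the paper: choose $I_0=\lc i \relmid C_i \neq \lc 0 \rc \rc$, show $0 \nin F_i$ for $i \in I_0$ via \eqref{eq:O-emb}, and conclude $\Int(\tau) \subset U_{I_0}$. You simply fill in two details the paper states without elaboration, namely the limit argument showing the monomial $0$ cannot attain the minimum near the stratum $O_{C'}(\bT)$, and the check $\dim \Delta_i \geq 1$ ensuring $I_0 \in \scrI$.
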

\begin{proof}
Let $\tau \lb F_1, \cdots, F_r, C_1, \cdots, C_r \rb \in \scrP_{X(f_1, \cdots, f_r)}$ be an arbitrary polyhedron.
We set 
\begin{align}
I_0:=\lc i \in \lc 1, \cdots r \rc \relmid C_i \neq \lc 0\rc \rc.
\end{align}
One can see from \eqref{eq:O-emb} that if $0 \in F_i$ for some $i \in I_0$, then $\tau \lb F_1, \cdots, F_r, C_1, \cdots, C_r \rb$ is empty.
Hence, we have $0 \nin F_i$ for all $i \in I_0$.
We can see $I_0 \in \scrI$.
The interior of $\tau \lb F_1, \cdots, F_r, C_1, \cdots, C_r \rb$ is contained in $U_{I_0}$.
Therefore, the family of subsets $\lc U_I \rc_{I \in \scrI}$ is an open covering of $X \lb f_1, \cdots, f_r \rb$.
\end{proof}

\begin{lemma}\label{lm:UIcap}
The following hold:
\begin{enumerate}
\item For $I_0, J_0, I_1, J_1 \in \scrI$ such that $I_1 \subset I_0 \subset J_0 \subset J_1$, one has
\begin{align}\label{eq:UIJ}
U_{I_1} \cap U_{J_1} \subset U_{I_0} \cap U_{J_0}. 
\end{align}
\item
Let $\lc I_i \rc_i$ be a subset of $\scrI$.
We set $I':= \bigcup_{i} I_i$ and $I'':=\bigcap_{i} I_i$.
Then one has
\begin{align}\label{eq:UIcap}
\bigcap_{i} U_{I_i}= U_{I'} \cap U_{I''}.
\end{align}
\end{enumerate}
\end{lemma}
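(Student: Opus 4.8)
The plan is to deduce both claims from one pointwise description of the sets $U_I$. Given a point $x$ in the support of $\scrP_{X(f_1, \cdots, f_r)}$, let $\sigma_x=\tau \lb F_1, \cdots, F_r, C_1, \cdots, C_r \rb$ be the unique polyhedron of $\scrP_{X(f_1, \cdots, f_r)}$ with $x \in \Int \lb \sigma_x \rb$ (this uses only that the relative interiors of distinct polyhedra of $\scrP_{X(f_1, \cdots, f_r)}$ are disjoint, which follows from \pref{cd:complex}), and put
\begin{align}
\alpha(x):=\lc i \in \lc 1, \cdots, r \rc \relmid 0 \nin F_i \rc, \qquad \beta(x):=\lc i \in \lc 1, \cdots, r \rc \relmid C_i \neq \lc 0 \rc \rc.
\end{align}
Since $U_I=\bigcup_{\tau \in \scrP_I} \Int(\tau)$, and $\scrP_I$ consists precisely of the polyhedra $\tau \lb F_1, \cdots, F_r, C_1, \cdots, C_r \rb$ with $0 \nin F_i$ for $i \in I$ and $C_i=\lc 0 \rc$ for $i \nin I$, we obtain
\begin{align}\label{eq:pointwiseU}
x \in U_I \iff \sigma_x \in \scrP_I \iff \beta(x) \subseteq I \subseteq \alpha(x).
\end{align}
Recall moreover, from the proof of \pref{lm:cover}, that $\beta(x) \subseteq \alpha(x)$ always, and that $\scrI$ is closed under passing to subsets and under forming unions; hence $I':=\bigcup_i I_i$ and $I'':=\bigcap_i I_i$ again lie in $\scrI$ whenever all $I_i \in \scrI$, so the sets $U_{I'}$, $U_{I''}$ in \eqref{eq:UIcap} are defined.

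Granting \eqref{eq:pointwiseU}, part (1) is immediate: if $x \in U_{I_1} \cap U_{J_1}$, then $\beta(x) \subseteq I_1$, $\beta(x) \subseteq J_1$, $I_1 \subseteq \alpha(x)$ and $J_1 \subseteq \alpha(x)$, and combining these with the hypothesis $I_1 \subseteq I_0 \subseteq J_0 \subseteq J_1$ gives the chain $\beta(x) \subseteq I_1 \subseteq I_0 \subseteq J_0 \subseteq J_1 \subseteq \alpha(x)$; thus $\beta(x) \subseteq I_0 \subseteq \alpha(x)$ and $\beta(x) \subseteq J_0 \subseteq \alpha(x)$, i.e.\ $x \in U_{I_0} \cap U_{J_0}$, which is \eqref{eq:UIJ}.

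For part (2), I would prove both inclusions of \eqref{eq:UIcap} through \eqref{eq:pointwiseU}. A point $x$ lies in $\bigcap_i U_{I_i}$ iff $\beta(x) \subseteq I_i \subseteq \alpha(x)$ for every $i$; intersecting the left inclusions and uniting the right ones over $i$, this is equivalent to $\beta(x) \subseteq I''$ and $I' \subseteq \alpha(x)$, and since $I'' \subseteq I'$ it is in turn equivalent to $\beta(x) \subseteq I'' \subseteq I' \subseteq \alpha(x)$, that is, to the conjunction of $\beta(x) \subseteq I' \subseteq \alpha(x)$ and $\beta(x) \subseteq I'' \subseteq \alpha(x)$, which by \eqref{eq:pointwiseU} says exactly $x \in U_{I'} \cap U_{I''}$. (For the inclusion "$\supseteq$" alone one may alternatively invoke part (1) with $(I_1, I_0, J_0, J_1)=(I'', I_i, I', I')$ for each $i$.)

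The one step with genuine content is the pointwise characterization \eqref{eq:pointwiseU}: it requires that the union of open cells defining $U_I$ be detectable cell by cell — which rests on the disjointness of the relative interiors of the polyhedra of $\scrP_{X(f_1, \cdots, f_r)}$ — together with the evident fact that whether $\sigma_x \in \scrP_I$ depends on $\sigma_x$ only through the combinatorial data $\lb \alpha(x), \beta(x) \rb$. Once this dictionary is in place, \pref{lm:UIcap} reduces to elementary bookkeeping with nested families of subsets of $\lc 1, \cdots, r \rc$, and needs no geometric input beyond \pref{lm:cover}.
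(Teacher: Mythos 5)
Your proof is correct and rests on the same combinatorial fact that drives the paper's proof: a cell $\tau \lb F_1, \cdots, F_r, C_1, \cdots, C_r \rb$ lies in $\scrP_I$ exactly when $\beta \subseteq I \subseteq \alpha$, where $\alpha = \lc i \relmid 0 \nin F_i \rc$ and $\beta = \lc i \relmid C_i \neq \lc 0 \rc \rc$. The paper packages this by identifying each intersection $U_I \cap U_J$ as the open star of a single explicitly labeled polyhedron and then invoking the face relation $\tau_0 \prec \tau_1$, whereas you phrase it as a pointwise membership criterion using the disjointness of relative interiors of cells; the two formulations are interchangeable and the bookkeeping is identical.
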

\begin{proof}
For $i=0, 1$, the intersection $U_{I_i} \cap U_{J_i}$ is 
the open star of $\tau_i:=\tau \lb F_1, \cdots, F_r, C_1, \cdots, C_r \rb$ with
\begin{align}\label{eq:capFC}
F_i 
=
\left\{
\begin{array}{ll}
\Delta_i \times \lc -e_i^\ast \rc  & i \in J_i \\
\conv(A_i) &i \nin J_i \\
\end{array}
\right.
, \quad
C_i
=
\left\{
\begin{array}{ll}
\cone (\Deltav_i \times \lc e_i \rc)& i \in I_i \\
\lc 0 \rc & i \nin I_i.
\end{array}
\right.
\end{align}
Since $\tau_0 \prec \tau_1$, one has \eqref{eq:UIJ}.
The both sides of \eqref{eq:UIcap} are equal to the open star of $\tau \lb F_1, \cdots, F_r, C_1, \cdots, C_r \rb$ with
\begin{align}\label{eq:capFC}
F_i 
=
\left\{
\begin{array}{ll}
\Delta_i \times \lc -e_i^\ast \rc  & i \in \bigcup_i I_i \\
\conv(A_i) &i \nin \bigcup_i I_i \\
\end{array}
\right.
, \quad
C_i
=
\left\{
\begin{array}{ll}
\cone (\Deltav_i \times \lc e_i \rc)& i \in \bigcap_i I_i \\
\lc 0 \rc & i \nin \bigcap_i I_i.
\end{array}
\right.
\end{align}
Hence, one has \eqref{eq:UIcap}.
\end{proof}

We define $I_x \in \scrI$ by
\begin{align}
I_x:=\lc i \in \lc 1, \cdots, r \rc \relmid \scrM_i(x)\setminus \lc 0 \rc \mathrm{\ consists\ of\ more\ than\ one\ element.} \rc,
\end{align}
where $\scrM_i(x)$ is \eqref{eq:F_i} for $n=x$.
For $I \in \scrI$, we also set
\begin{align}
U_{I}^x:=\delta^{-1}(x) \cap U_I.
\end{align}
\begin{lemma}\label{lm:i2}
One has $U_{I}^x \neq \emptyset$ if and only if $I \subset I_x$.
\end{lemma}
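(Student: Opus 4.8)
The plan is to combine the explicit description \pref{eq:pre-x} of the fibre $\delta^{-1}(x)$ with the meeting criterion of \pref{lm:sttl}. Throughout, recall that $x \in B$, so $f_i(x) = 0$ and $0 \in \scrM_i(x)$ for every $i$; that $\scrM_i(n)$ consists of the lattice points of $A_i$ lying on the face of $\conv(A_i)$ on which $\la \bullet, n \ra$ is minimal; and that $\conv(A_i)$ is the pyramid over $\Delta_i \times \lc -e_i^\ast \rc$ with apex $0$ — so a face of $\conv(A_i)$ not containing $0$ is a face of $\Delta_i \times \lc -e_i^\ast \rc$, and hence has at least two lattice vertices once it is positive-dimensional. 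Recall also that $\delta$ is good, i.e.\ $\lc T(\Delta_j) \rc_{j=1}^r$ are linearly independent.

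For the ``only if'' direction, take $n \in U_I^x$ and let $\sigma = \tau \lb F_1, \cdots, F_r, C_1, \cdots, C_r \rb$ be the cell of $\scrP_{X(f_1,\cdots,f_r)}$ with $n \in \Int(\sigma)$, so $\sigma \in \scrP_I$. For $i \in I$ we have $0 \nin F_i$, and the condition defining $\scrP_{X(f_1,\cdots,f_r)}$ applied to $J = \lc i \rc$ gives $\dim F_i \geq 1$; thus $F_i$ is a positive-dimensional face of $\Delta_i \times \lc -e_i^\ast \rc$ and $|F_i \cap M| \geq 2$. Since $\sigma$ meets $\delta^{-1}(x)$, \pref{lm:sttl} gives $F_i \cap M \subseteq \scrM_i(x)$, whence $|\scrM_i(x) \setminus \lc 0 \rc| \geq |F_i \cap M| \geq 2$ and $i \in I_x$. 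Therefore $I \subseteq I_x$.

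For the ``if'' direction, assume $I \subseteq I_x$, fix a vertex $v_0$ of $\tau_0$ (then $v_0 \in \scrP_\xi$, as $\tau_0$ is a face of $\xi$), and set $n := x + \sum_{i \in I} \lb \phi_{\xi, \Deltav_i}(v_0) + e_i \rb \in N_\bR$; this lies in the set on the right-hand side of \pref{eq:pre-x}, so it suffices to prove $n \in X(f_1,\cdots,f_r)$. Using that $\la m, n' \ra = 0$ for $m \in \Delta_i$, $n' \in \Deltav_j$, that $\phi_{\xi,\Deltav_j}(v_0)$ is a face of $\Deltav_j$, and that $\la e_i^\ast, e_j \ra$ equals $1$ if $i = j$ and $0$ otherwise, one computes $\la m, \phi_{\xi,\Deltav_j}(v_0) + e_j \ra$ to be $-1$ if $i = j$ and $0$ if $i \neq j$, for $m \in A_i \setminus \lc 0 \rc$, while $\la 0, \bullet \ra = 0$; together with $\min_{m \in A_i \setminus \lc 0 \rc}\la m, x \ra = 0$ (again since $x \in B$) this yields $\scrM_i(n) = \scrM_i(x) \setminus \lc 0 \rc$ for $i \in I$ and $\scrM_i(n) = \scrM_i(x)$ for $i \nin I$. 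Put $F_i := \conv(\scrM_i(n))$, the face of $\conv(A_i)$ minimising $\la \bullet, n \ra$; then $\sum_i F_i$ is a face of $\sum_i \conv(A_i)$, hence a cell of $\scrS$, and $\sigma := \tau \lb F_1, \cdots, F_r, \lc 0 \rc, \cdots, \lc 0 \rc \rb$ has $n \in \Int(\sigma)$ and lies in $\scrP_I$ (indeed $0$ is a vertex of the face $\conv(\scrM_i(x))$ of $\conv(A_i)$, so $0 \nin F_i$ for $i \in I$). By \pref{th:st-intersection} it remains to check $\dim \lb \sum_{j \in J} F_j \rb \geq |J|$ for every $J$; this is the crux and the only place goodness is used. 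For $j \in I$ one has $T(F_j) \subseteq T(\Delta_j)$ (since $F_j \subseteq \Delta_j \times \lc -e_j^\ast \rc$), and for $j \nin I$ one has $T(F_j) \subseteq T(\conv(A_j)) = T(\Delta_j) + \bR w_j$ with $w_j \in \Delta_j \times \lc -e_j^\ast \rc$; since the $e_j^\ast$-component of every vector in $T(\Delta_k)$ and in $T(F_k)$ with $k \neq j$ vanishes while that of $w_j$ does not, inspecting $e_j^\ast$-components and then invoking the independence of $\lc T(\Delta_j) \rc_j$ shows that $\lc T(F_j) \rc_{j \in J}$ are linearly independent. As $\dim F_j \geq 1$ for every $j$ — for $j \in I$ because $j \in I_x$, and for $j \nin I$ because $\scrM_j(x)$ contains $0$ and a nonzero monomial — we conclude $\dim \lb \sum_{j \in J} F_j \rb = \sum_{j \in J} \dim T(F_j) \geq |J|$. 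Hence $\sigma \in \scrP_{X(f_1,\cdots,f_r)} \cap \scrP_I$, $n \in \Int(\sigma) \subseteq U_I$, and $n \in \delta^{-1}(x)$, so $U_I^x \neq \emptyset$.

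The main obstacle is precisely this final dimension estimate: forcing $\sigma \in \scrP_I$ means deleting the pyramid apex $0$ from $\scrM_i(x)$ for $i \in I$, which can drop the dimension of an individual cell $F_i$, so the inequality survives only because goodness separates the $F_j$ into independent coordinate blocks whose dimensions simply add up. Everything else (the form of $n$, the computation of $\scrM_i(n)$, the combinatorics of faces of $\conv(A_i)$) is routine once the good hypothesis, in force in this subsection, is available.
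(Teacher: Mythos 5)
Your proof is correct and follows essentially the same route as the paper's: the \textit{only if} direction uses the membership criterion of \pref{lm:sttl} applied to the cell of $\scrP_I$ containing $n$ to force $|\scrM_i(x)\setminus\lc 0\rc|\ge 2$, and the \textit{if} direction exhibits the same witness point $x+\sum_{i\in I}(\phi_{\xi,\Deltav_i}(v_0)+e_i)$ and verifies the stable-intersection criterion of \pref{th:st-intersection} via goodness. The main service your write-up performs is spelling out the dimension estimate $\dim\lb\sum_{j\in J}F_j\rb\ge|J|$ (by separating $e_j^\ast$-components and then invoking independence of the $T(\Delta_j)$) and the computation $\scrM_i(n)=\scrM_i(x)\setminus\lc 0\rc$ for $i\in I$, both of which the paper compresses into the phrase ``from \pref{th:st-intersection} and the assumption that $\delta$ is good, we can see that this element is in $X(f_1,\cdots,f_r)$.''
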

\begin{proof}
When $I \not\subset I_x$, there exists some $i \in I$ such that $\scrM_i(x)\setminus \lc 0 \rc$ consists of a single point.
We can see that there is only one monomial that attains the minimum of $f_i$ on $U_I^x$.
Thus we have $U_{I}^x = \emptyset$.

Next, suppose $I \subset I_x$.
Consider, for instance, the element
$
x + \sum_{i \in I} \lb \phi_{\xi, \Deltav_i} \lb v \rb + e_i \rb,
$
where $v \prec \tau_0$ is some vertex.
Again from \pref{th:st-intersection} and the assumption that $\delta$ is good, we can see that this element is in $X(f_1, \cdots, f_r)$.
It is now clear that this is also in $U_I^x$.
Hence, $U_{I}^x \neq \emptyset$.
\end{proof}

We set
$\scrI_x:=\lc I \in \scrI \relmid  I \subset I_x \rc
=\lc I \in \lc 1, \cdots, r \rc \relmid  I \subset I_x \rc
$.

\begin{lemma}
The family of subsets $\lc U_I^x \rc_{I \in \scrI_x}$ is an acyclic covering of $\delta^{-1}(x)$ for the sheaf $\scF^p_Q$ $(p \geq 0, Q=\bZ, \bQ, \bR)$.
\end{lemma}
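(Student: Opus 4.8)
The plan is to follow the argument of \pref{lm:acyclic}, reducing the vanishing of higher cohomology of finite intersections to a computation of $R^q \id_\ast \scF^p_Q$. The covering property is immediate: by \pref{lm:cover} one has $X(f_1, \cdots, f_r) = \bigcup_{I \in \scrI} U_I$, so $\delta^{-1}(x) = \bigcup_{I \in \scrI} U_I^x = \bigcup_{I \in \scrI_x} U_I^x$ by \pref{lm:i2}. For acyclicity, the first step is to reduce to two-fold intersections: given a finite family $\lc I_i \rc_i \subset \scrI_x$ with $\bigcap_i U_{I_i}^x \neq \emptyset$, intersecting \eqref{eq:UIcap} with $\delta^{-1}(x)$ gives $\bigcap_i U_{I_i}^x = U_{I'}^x \cap U_{I''}^x$ where $I' := \bigcup_i I_i$ and $I'' := \bigcap_i I_i$; both $I'$ and $I''$ lie in $\scrI$ and are contained in $I_x$, hence lie in $\scrI_x$, and by \pref{lm:i2} the set $U_{I'}^x \cap U_{I''}^x$ is non-empty. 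So it suffices to show $H^q \lb W, \scF^p_Q \rb = 0$ for all $q \geq 1$, where $W := U_{I'}^x \cap U_{I''}^x$ with $I'' \subset I' \subset I_x$ and $W \neq \emptyset$.

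The second step is to describe $W$ explicitly. As in the proof of \pref{lm:UIcap}, $U_{I'} \cap U_{I''}$ is the open star of the polyhedron $\tau \lb F_1, \cdots, F_r, C_1, \cdots, C_r \rb$ with $F_i = \Delta_i \times \lc -e_i^\ast \rc$, $C_i = \cone \lb \Deltav_i \times \lc e_i \rc \rb$ for $i \in I'$, respectively $i \in I''$, and $F_i = \conv(A_i)$, $C_i = \lc 0 \rc$ otherwise. Intersecting with $\delta^{-1}(x)$ and using the description \eqref{eq:pre-x} of $\delta^{-1}(x)$ together with \pref{lm:sigma} and \pref{lm:sttl}, one sees that, inside the affine chart $X_{C_{\tau_0}}(\bT)$ of \eqref{eq:ctau}, the set $W$ consists of the points of $X(f_1, \cdots, f_r)$ of the form $x$ plus a $\bT_{\geq 0}$-combination of the directions $\phi_{\xi, \Deltav_i}(v) + e_i$ $\lb v \prec \tau_0 \rb$ that are at infinity only along directions indexed by $I''$ and satisfy $f_i < 0$ for every $i \in I'$. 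This is a ``conical'' open subset of the cone $\delta^{-1}(x)$: fixing a vertex $v_0 \prec \tau_0$, one gets a point $p \in W$ in the minimal stratum of $W$ (for the stratification of \pref{df:eigenwave}, computed via \pref{pr:w-stalk}) by pushing $x$ to infinity along $\phi_{\xi, \Deltav_i}(v_0) + e_i$ for $i \in I''$ and moving a positive amount along these directions for $i \in I' \setminus I''$.

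The third step is, exactly as in \pref{lm:acyclic}: such a $W$ admits a fundamental system of neighborhoods $\lc W_j \rc_j$ of $p$ in $W$, together with homeomorphisms $\varphi_j \colon W_j \to W$ given by a rescaling of $X_{C_{\tau_0}}(\bT)$ adapted to the stratification by tropical torus orbits. One checks that each $\varphi_j$ maps the polyhedra of $W$ onto polyhedra and is an integral affine submersion on their relative interiors, so that $\varphi_j^{-1} \scF^p_Q \cong \scF^p_Q$. Hence $H^q \lb W, \scF^p_Q \rb \cong H^q \lb W_j, \scF^p_Q \rb$ for all $j$, so $H^q \lb W, \scF^p_Q \rb$ is the stalk at $p$ of $R^q \id_\ast \scF^p_Q$, which vanishes for $q \geq 1$ because $R^q \id_\ast \scA = 0$ for any sheaf $\scA$ of abelian groups. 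Together with the covering property this proves the lemma.

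I expect the crux to be the third step: constructing the point $p$ and the rescaling homeomorphisms $\varphi_j$ and checking they respect $\scF^p_Q$. Unlike the wave sheaf $\scW_p^Q$ handled in \pref{lm:acyclic}, the sheaf $\scF^p_Q$ has stalks that jump across cells, so the $\varphi_j$ must be cellular — fixing the directions tangent to the polyhedron of $W$ through $p$ and uniformly contracting the transverse directions, orbit by orbit in the chart $X_{C_{\tau_0}}(\bT)$ — and one must verify that the cone structure of $\delta^{-1}(x)$ near $p$, together with the face- and cone-data $F_i, C_i$ cut out in $W$ and described via \pref{lm:c-face}, \pref{lm:sigma} and \pref{lm:sttl}, is invariant under this rescaling. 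This is a careful but essentially routine unwinding of the constructions of \pref{sc:construction}.
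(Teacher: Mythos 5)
Your proposal is correct and follows essentially the same route as the paper: covering from \pref{lm:cover} and \pref{lm:i2}, identification of $\bigcap_i U_{I_i}^x$ via \pref{lm:UIcap} and \pref{lm:sttl} as a neighborhood of a distinguished minimal polyhedron meeting $\delta^{-1}(x)$, a point $y$ in its relative interior, and a fundamental system of neighborhoods with homeomorphisms satisfying $\varphi_j^{-1}\scF_Q^p \cong \scF_Q^p$ so that the $R^q\id_\ast$ argument of \pref{lm:acyclic} applies. The paper is in fact no more detailed than you are on the construction of the $\varphi_j$, which you correctly flag as the crux.
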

\begin{proof}
It is obvious from \pref{lm:cover} and \pref{lm:i2} that $\lc U_I^x \rc_{I \in \scrI_x}$ is an open covering of $\delta^{-1}(x)$.
We check that it is acyclic for $\scF^p_Q$.
Let $\lc I_i \rc_i$ be a subset of $\scrI_x$ such that the intersection $\bigcap_i U_{I_i}^x$ is non-empty.
As we saw in the proof of \pref{lm:UIcap}, the intersection $\bigcap_i U_{I_i}$ is the open star of $\tau \lb F_1, \cdots, F_r, C_1, \cdots, C_r \rb$ with \eqref{eq:capFC}.
One can see from this and \pref{lm:sttl} that any polyhedron in $\bigcap_i \scrP_{I_i}$ that intersects with $\delta^{-1} \lb x \rb$ contains the polyhedron $\tau \lb F_1', \cdots, F_r', C_1', \cdots, C_r' \rb$ with
\begin{align}
F_i'
=
\left\{
\begin{array}{ll}
\conv \lb \scrM_i(x) \setminus \lc 0 \rc \rb  & i \in \bigcup_i I_i \\
\conv \lb \scrM_i(x) \rb &i \nin \bigcup_i I_i \\
\end{array}
\right.
, \quad
C_i'
=
\left\{
\begin{array}{ll}
\cone (\phi_{\xi, \Deltav_i} (\tau_0) \times \lc e_i \rc)& i \in \bigcap_i I_i \\
\lc 0 \rc & i \nin \bigcap_i I_i
\end{array}
\right.
\end{align}
as its face.
Take a point $y \in \rint \lb \tau \lb F_1', \cdots, F_r', C_1', \cdots, C_r' \rb \rb \cap \delta^{-1} \lb x \rb$.
One can take a fundamental system of neighborhoods $\lc W_j \rc_j$ of the point $y$ in $\bigcap_i U_{I_i}^x$, and homeomorphisms $\lc \varphi_j \colon W_j \to \bigcap_i U_{I_i}^x \rc_j$ such that $\varphi_j^{-1} \scF^p_Q=\scF^p_Q$.
By the same argument as the one in \pref{lm:acyclic}, one can get $H^q \lb \bigcap_i U_{I_i}^x, \scF^p_Q \rb=0$ for all integers $q \geq 1$.
Hence, the covering $\lc U_I^x \rc_{I \in \scrI}$ is acyclic for the sheaf $\scF^p_Q$.
\end{proof}

In the following, we will compute the \v{C}ech cohomology group with respect to the covering $\lc U_I^x \rc_{I \in \scrI_x}$.
For an integer $p \geq 0$ and a subset $I \subset \lc 1, \cdots, r \rc$, we set
\begin{align}
G_p^Q(I)&:=\lb \sum_{\substack{\sigma \in \scrP_{I} \\ \sigma \succ \theta \lb \tau_l \rb}} 
\bigwedge^{p} T(\sigma) \rb \cap \bigwedge^p N_Q \\
\Phi_I&:=\sum_{i \in I} \vspan \lb \phi_{\xi, \Deltav_{i}} (\tau_0)+e_i \rb,
\end{align}
where $Q=\bZ, \bQ, \bR$ and $N_Q:=N \otimes_\bZ Q$.
Then similarly to \eqref{eq:fp}, we can write
\begin{align}
\scF_p^Q \lb U_I^x \rb
=
\left. 
G_p^Q(I)
\middle/
\lc \lb G_{p-1}^\bR(I) \wedge \Phi_I \rb \cap \bigwedge^p N_Q \rc
\right.
\end{align}
for $I \in \scrI_x$ and  $p \geq 1$.
We also set
\begin{align}
\scN_p \lb I', I\rb&:=G_{p-1}^\bR(I) \wedge \Phi_{I \setminus I'}
\end{align}
for an integer $p \geq 1$ and subsets $I' \subset I \subset \lc 1, \cdots, r \rc$, and
\begin{align}
H_p^Q(I_0, I_1, I_2):=
\sum_{I_1 \subset I \subset I_2} 
\lb \scN_p \lb I_0, I \rb
\cap \bigwedge^p N_Q \rb
\end{align}
for an integer $p \geq 1$ and subsets $I_0 \subset I_1 \subset I_2 \subset \lc 1, \cdots, r \rc$.
Here since
\begin{align}
G_{p-1}^\bR(I) \wedge \Phi_{I \setminus I_0} 
=G_{p-1}^\bR(I) \wedge \lb \sum_{i \in I \setminus I_0} \Phi_{\lc i \rc} \rb
\subset \sum_{i \in I \setminus I_0} G_{p-1}^\bR(I_1 \cup \lc i \rc) \wedge \Phi_{\lc i \rc}
\end{align}
for any $I_1 \subset I \subset I_2 \subset \lc 1, \cdots, r \rc$, we have
\begin{align}\label{eq:HGP}
H_p^\bR \lb I_0, I_1, I_2 \rb
=\sum_{I_1 \subset I \subset I_2} G_{p-1}^\bR(I) \wedge \Phi_{I \setminus I_0}
=\sum_{i \in I_2 \setminus I_0} G_{p-1}^\bR(I_1 \cup \lc i \rc) \wedge \Phi_{\lc i \rc}.
\end{align}

\begin{lemma}\label{lm:GH}
Let $I_0, I_1, I_2 \in \scrI_x$ be subsets of $\lc1, \cdots, r \rc$ such that $I_0 \subset I_1 \subset I_2$.
Let further $i_0 \in I_2 \setminus I_1$ be an element, and $p \geq 1$ be any integer.
Then the following hold:
\begin{enumerate}
\item One has
\begin{align}\label{eq:HpZ}
H_p^Q(I_0, I_1, I_2)=H_p^\bR(I_0, I_1, I_2) \cap \bigwedge^p N_Q
\end{align}
for $Q=\bQ$.
When $\delta$ is very good, \eqref{eq:HpZ} holds also for $Q=\bZ$.
\item When $\delta$ is very good, one has
\begin{align}\label{eq:GHN}
G_p^\bZ \lb I_1 \cup \lc i_0 \rc \rb
+
H_p^\bZ \lb I_0, I_1, I_2 \setminus \lc i_0 \rc \rb
=
\lc G_p^\bR \lb I_1 \cup \lc i_0 \rc \rb + H_p^\bR \lb I_0, I_1, I_2 \setminus \lc i_0 \rc \rb \rc \cap \bigwedge^p N.
\end{align}
\item One has
\begin{align}\label{eq:GH}
G_p^\bR \lb I_1 \cup \lc i_0 \rc \rb
\cap
H_p^\bR \lb I_0, I_1, I_2 \setminus \lc i_0 \rc \rb
=
H_p^\bR \lb I_0 \cup \lc i_0 \rc, I_1 \cup \lc i_0 \rc, I_2 \rb.
\end{align}
\end{enumerate}
\end{lemma}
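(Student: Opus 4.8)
All three assertions are statements in the linear algebra of the exterior powers $\bigwedge^p N_Q$: in each case one must compare the integral (resp.\ rational, resp.\ real) points of a sum or intersection of $\bR$-subspaces with the corresponding sum or intersection of their integral (resp.\ rational, resp.\ real) points. The plan is to isolate the relevant geometric input and then reduce each part to an elementary fact about (exterior powers of) subspaces compatible with one fixed decomposition of $N$. Fix the vertex $v_0 \prec \tau_0$. Since the normal fan of $\xi$ refines that of $\Deltav_i$ (\pref{cd:refinement}), the face $w_i := \phi_{\xi, \Deltav_i}(v_0)$ is a vertex of $\Deltav_i$, so $w_i \in N'$ and $N = N' \oplus \bigoplus_{i=1}^r \bZ(w_i + e_i)$. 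Using \eqref{eq:sigma}, \eqref{eq:Lamsig} (and \pref{lm:parent} to pass to infinite faces), every $T(\sigma)$ with $\sigma \in \scrP_{X(f_1, \cdots, f_r)}$, $\sigma \succ \theta(\tau_l)$, splits as $T(\sigma) = \lb T(\sigma) \cap N'_\bR \rb \oplus \bigoplus_{i \in I_\sigma} \bR(w_i + e_i)$ with $T(\sigma) \cap N'_\bR$ rational, where $I_\sigma$ is as in \eqref{eq:Isig}; moreover $\Phi_{\lc i \rc} = T\lb \phi_{\xi, \Deltav_i}(\tau_0) \rb \oplus \bR(w_i + e_i)$, and $\Phi_{\lc i \rc} \subseteq T(\sigma)$ whenever $\sigma \in \scrP_I$ with $i \in I$ (since then $i \in I_\sigma$ and $T(\Deltav_i) \subseteq T(\sigma)$). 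Finally, because $\delta$ is good, $\sum_i T(\Deltav_i)$ is an internal direct sum, and combined with the independence of the $e_i$-components this gives that $\sum_{i=1}^r \Phi_{\lc i \rc}$ is an internal direct sum; if $\delta$ is very good then $\lb \sum_i T(\Deltav_i) \rb \cap N'$ is an internal direct sum of the $T_\bZ(\Deltav_i)$, each $T_\bZ\lb \phi_{\xi, \Deltav_i}(\tau_0) \rb = T\lb \phi_{\xi, \Deltav_i}(\tau_0) \rb \cap N'$ is saturated in $T_\bZ(\Deltav_i)$, and hence $\bigoplus_i \lb \Phi_{\lc i \rc} \cap N \rb$ is saturated in $N$.

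For (iii) I would proceed as follows. Since $i_0 \in I_2 \setminus I_1$ and $I_0 \subseteq I_1$, one has $i_0 \notin I_0$, so the two index sets in \eqref{eq:GH} agree: $I_2 \setminus (I_0 \cup \lc i_0 \rc) = (I_2 \setminus \lc i_0 \rc) \setminus I_0$. By \eqref{eq:HGP}, the right-hand side of \eqref{eq:GH} is $\sum_i G_{p-1}^\bR(I_1 \cup \lc i_0, i \rc) \wedge \Phi_{\lc i \rc}$ and the left-hand side is $G_p^\bR(I_1 \cup \lc i_0 \rc) \cap \sum_i G_{p-1}^\bR(I_1 \cup \lc i \rc) \wedge \Phi_{\lc i \rc}$, both sums over $i \in I_2 \setminus (I_0 \cup \lc i_0 \rc)$. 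The inclusion $\supseteq$ follows from $\Phi_{\lc i_0 \rc}, \Phi_{\lc i \rc} \subseteq T(\sigma)$ for $\sigma \in \scrP_{I_1 \cup \lc i_0, i \rc}$ with $\sigma \succ \theta(\tau_l)$, together with the identification (\pref{lm:parent}) between such a $\sigma$ and the finite polyhedron obtained by deleting its infinite $i$-direction, which lies in $\scrP_{I_1 \cup \lc i_0 \rc}$: this places $\bigwedge^{p-1} T(\sigma) \wedge \Phi_{\lc i \rc}$ inside $\bigwedge^p T(\sigma') \subseteq G_p^\bR(I_1 \cup \lc i_0 \rc)$. For $\subseteq$, an element $x$ of the left-hand side decomposes along the internal direct sum $\bigoplus_i \Phi_{\lc i \rc}$ as $x = \sum_i x_i$ with $x_i \in G_{p-1}^\bR(I_1 \cup \lc i \rc) \wedge \Phi_{\lc i \rc}$; comparing this with $x \in G_p^\bR(I_1 \cup \lc i_0 \rc) = \sum_{\sigma \in \scrP_{I_1 \cup \lc i_0 \rc}} \bigwedge^p T(\sigma)$, where every such $\sigma$ has $i_0 \in I_\sigma$ and hence carries the distinguished direction $\bR(w_{i_0} + e_{i_0})$, forces each $x_i$ into $G_{p-1}^\bR(I_1 \cup \lc i_0, i \rc) \wedge \Phi_{\lc i \rc}$.

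For (i) and (ii) the inclusions $\subseteq$ are immediate, so only $\supseteq$ is at issue. When $Q = \bQ$ the argument is formal: each $\scN_p(I_0, I) = G_{p-1}^\bR(I) \wedge \Phi_{I \setminus I_0}$ and each of $G_p^\bR(I_1 \cup \lc i_0 \rc)$, $H_p^\bR(I_0, I_1, I_2 \setminus \lc i_0 \rc)$ is spanned by rational vectors (tangent spaces of rational polytopes, and spans of $\phi_{\xi, \Deltav_i}(\tau_0) + e_i$); intersecting a rational subspace of $\bigwedge^p N_\bR$ with $\bigwedge^p N_\bQ$ yields its $\bQ$-span, and $\bQ$-span commutes with finite sums, which gives \eqref{eq:HpZ} and \eqref{eq:GHN} over $\bQ$. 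When $Q = \bZ$ and $\delta$ is very good I would fix a $\bZ$-basis of $N$ adapted to the decomposition of the first paragraph — extend a joint basis of the saturated chain $T_\bZ\lb \phi_{\xi, \Deltav_i}(\tau_0) \rb \subseteq T_\bZ(\Deltav_i) \subseteq N'$ to a basis of $N'$ and adjoin the $w_i + e_i$ — so that each $\Phi_{\lc i \rc}$ becomes a coordinate subspace with $\Phi_{\lc i \rc} \cap N$ spanned by basis vectors; then \eqref{eq:HpZ} and \eqref{eq:GHN} reduce to the elementary fact that the lattice of integral vectors in a finite sum of coordinate subspaces (resp.\ in $\bigwedge^p$ of such) is the sum of the integral vectors in the summands, which fails exactly when the relevant sublattices are not saturated — explaining why the integral statements require \emph{very good} and not merely good. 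The main obstacle is the bookkeeping behind the first paragraph: verifying the splitting $T(\sigma) = \lb T(\sigma) \cap N'_\bR \rb \oplus \bigoplus_{i \in I_\sigma} \bR(w_i + e_i)$ uniformly over all finite and infinite faces, and, more delicately, checking that the compatibility of the $\Phi_{\lc i \rc}$ with the ambient decomposition survives wedging against the varying — and not individually coordinate — subspaces $T(\sigma)$, so that the elementary lattice combinatorics really applies to the spaces $G_{p-1}^\bR(I_1 \cup \lc i \rc) \wedge \Phi_{\lc i \rc}$ appearing in \eqref{eq:HGP}.
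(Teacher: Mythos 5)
Your proposed decomposition $T(\sigma) = \bigl(T(\sigma)\cap N'_\bR\bigr) \oplus \bigoplus_{i \in I_\sigma} \bR(w_i + e_i)$ is false in general, and this is not a bookkeeping nuisance but a genuine obstruction to your whole strategy. Take $r=2$, $d=2$, $N' = \bZ d_1 \oplus \bZ d_2$, $\Delta_i = \conv(0, d_i^\ast)$, $\Deltav_i = \{0\}$, so that $X(f_1,f_2)$ is the product of two tropical lines. Consider the face $\sigma$ on which $f_1 < 0$, the minimum of $f_1$ is attained at $\{-e_1^\ast, d_1^\ast-e_1^\ast\}$, and the minimum of $f_2$ is attained at $\{0, d_2^\ast-e_2^\ast\}$; concretely, $\sigma$ is the closure of the locus $\{a_1 = 0,\ c_1 > 0,\ a_2 = c_2 < 0\}$. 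Then $I_\sigma = \{1\}$, $T(\sigma) = \bR e_1 + \bR(d_2+e_2)$, and $T(\sigma)\cap N'_\bR = 0$ (zero $e_2$-component forces the $d_2+e_2$ coefficient to vanish). So the right-hand side of your claimed splitting is $\bR e_1$, which is one-dimensional, while $T(\sigma)$ is two-dimensional. The problem is that $T(\tilde\sigma)$ sits in $B$, which is a graph over a quotient of $N_\bR$ rather than a subspace of $N'_\bR$, so its $e_j$-components for $j \notin I_\sigma$ need not vanish; intersecting with $N'_\bR$ can kill directions. Since this splitting is the foundation of your reduction of (i) and (ii) to coordinate-subspace lattice combinatorics, that reduction does not go through as written.

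There is a second, independent gap in (iii). You write that $x \in H_p^\bR(I_0, I_1, I_2\setminus\{i_0\}) = \sum_i G_{p-1}^\bR(I_1\cup\{i\})\wedge\Phi_{\{i\}}$ ``decomposes along the internal direct sum $\bigoplus_i \Phi_{\{i\}}$ as $x = \sum_i x_i$,'' and then argue ``this forces each $x_i$.'' But the fact that $\bigoplus_i \Phi_{\{i\}}$ is a direct sum of subspaces of $N_\bR$ does \emph{not} make the summands $G_{p-1}^\bR(I_1\cup\{i\})\wedge\Phi_{\{i\}}$ into a direct sum inside $\bigwedge^p N_\bR$: the spaces $G_{p-1}^\bR(I_1\cup\{i\})$ for different $i$ can contain each other's $\Phi$-directions (indeed $\Phi_{\{j\}}\subset G_{p-1}^\bR(I_1\cup\{i\})$ whenever $j\in I_1$), so a nonzero decomposable $p$-vector can lie in several summands at once. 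The decomposition $x = \sum_i x_i$ is therefore non-canonical, and ``each $x_i$'' is undefined, so the conclusion does not follow. What the paper actually does is quite different: using the perpendicularity between the $\Delta_i$ and $\Deltav_i$ it constructs a decomposition $N_\bR = N_{0,\bR} \oplus \bigoplus_{i=1}^r N_{i,\bR}$ (real under \emph{good}, integral \eqref{eq:dsum} under \emph{very good}) under which $X(f_1,\dots,f_r) = N_{0,\bR}\times X'(f_1)\times\cdots\times X'(f_r)$ \eqref{eq:direct-prod}, so that \emph{every} $T(\sigma)$ is a genuine product $N_{0,\bR}\oplus T(\sigma_1)\oplus\cdots\oplus T(\sigma_r)$ with $T(\sigma_i)\subset N_{i,\bR}$. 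This gives a true multi-degree grading on $\bigwedge^p N_\bR$ in which $G_p^\bR$ and $H_p^\bR$ are graded subspaces (displays \eqref{eq:GpR}, \eqref{eq:HpR}, \eqref{eq:HpR2}), so intersections and sums can be computed one multi-degree at a time, and the adapted integral basis $\scrA$ built from these factors handles the $\bZ$-statements. Your strategy is a plausible first attempt but needs to be replaced by, or reduced to, that finer product decomposition to be correct.
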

\begin{proof}
First, we will check \eqref{eq:HpZ}.
It is obvious that we have $H_p^Q(I_0, I_1, I_2) \subset H_p^\bR(I_0, I_1, I_2) \cap \bigwedge^p N_Q$ for $p \geq 1$.
The opposite inclusion for $Q=\bQ$ can be checked as follows:
For every subset $I$ such that $I_1 \subset I \subset I_2$, we take a basis $\lc n_i^I \rc_i$ of the vector space $ \scN_p \lb I_0, I \rb \cap \bigwedge^p N_\bQ$ over $\bQ$.
Then this is also a basis of the vector space $\scN_p \lb I_0, I \rb$ over $\bR$.
The set of elements $\lc n_i^I \rc_{i, I}$ generates $H_p^\bR(I_0, I_1, I_2)$.
We take a subset $\lc n_i \rc_i$ of $\lc n_i^I \rc_{i, I}$ so that it forms a basis of $H_p^\bR(I_0, I_1, I_2)$.
Then we have
\begin{align}
H_p^\bR(I_0, I_1, I_2) \cap \bigwedge^p N_\bQ
=\lb \bigoplus_i \bR n_i \rb \cap \bigwedge^p N_\bQ
=\bigoplus_i \bQ n_i
\subset H_p^\bQ(I_0, I_1, I_2).
\end{align}
Hence, we get \eqref{eq:HpZ} for $Q=\bQ$.

Next, we suppose that $\delta$ is very good, and show 
$H_p^\bR(I_0, I_1, I_2) \cap \bigwedge^p N \subset H_p^\bZ(I_0, I_1, I_2)$.
By the assumption $\la m, n \ra=0$ for any $m \in \Delta_i$, $n \in \Deltav_j$ $(1 \leq i, j \leq r)$, we have
\begin{align}
\bigoplus_{i=1}^r  T \lb \Deltav_i \rb
\subset \lb \bigoplus_{i=1}^r T \lb \Delta_i \rb \rb^\perp.
\end{align}
Take sublattices $N_0, N_j' \subset N'$ $\lb 1 \leq j \leq r \rb$ so that we have
\begin{align}\label{eq:sublattice1}
N_0 \oplus \lc \lb \bigoplus_{i=1}^r T \lb \Deltav_i \rb \rb \cap N' \rc&=\lb \bigoplus_{i=1}^r T \lb \Delta_i \rb \rb^\perp \cap N', \\ \label{eq:sublattice2}
N_j' \oplus \lc \lb \bigoplus_{i=1}^r T \lb \Delta_i \rb \rb^\perp \cap N' \rc&= \lb \bigoplus_{i \neq j}^r T \lb \Delta_i \rb \rb^\perp \cap N'.
\end{align}
From the assumption $\lb \bigoplus_{i=1}^r T \lb \Delta_i \rb \rb \cap M'=\bigoplus_{i=1}^r T_\bZ \lb \Delta_i \rb$ and \eqref{eq:sublattice2}, one can see
\begin{align}
\lc \lb \bigoplus_{i=1}^r T \lb \Delta_i \rb \rb^\perp \cap N' \rc \oplus \lb \bigoplus_{j=1}^r N_j' \rb=N'.
\end{align}
By this, \eqref{eq:sublattice1}, and the assumption $\lb \bigoplus_{i=1}^r T \lb \Deltav_i \rb \rb \cap N'=\bigoplus_{i=1}^r T_\bZ \lb \Deltav_i \rb$, one can get
\begin{align}\label{eq:dsum}
N_0 \oplus \bigoplus_{i=1}^r \lb T_\bZ \lb \Deltav_i \rb \oplus N_i' \rb=N', \quad
N_0 \oplus \bigoplus_{i=1}^r \lb T_\bZ \lb \Deltav_i \rb \oplus N_i' \oplus \bZ e_i \rb=N.
\end{align}
We set $N_i:=T_\bZ \lb \Deltav_i \rb \oplus N_i' \oplus \bZ e_i$ $\lb 1 \leq i \leq r \rb$, $N_{i, \bR}:=N_i \otimes_\bZ \bR$ $\lb 0 \leq i \leq r \rb$, and $C_i:= \cone \lb \Deltav_i \times \lc e_i \rc \rb \subset N_{i, \bR}$ $\lb 1 \leq i \leq r \rb$.
Since $C=C_1 \times \cdots \times C_r$, one has
\begin{align}
X_C(\bT)=N_{0, \bR} \times X_{C_1} \lb \bT \rb \times \cdots \times X_{C_r} \lb \bT \rb,
\end{align}
where $X_{C_i} \lb \bT \rb$ denotes the tropical toric variety associated with the cone $C_i \subset N_{i, \bR}$.
Since the tropical polynomial $f_i \colon N_\bR \to \bR$ does not depend on the components $N_{j , \bR}$ $(j \neq i)$, it can be regarded as a function on $N_{i, \bR}$.
Let $X' \lb f_i \rb \subset X_{C_i} \lb \bT \rb$ denote the tropical hypersurface defined by $f_i \colon N_{i, \bR} \to \bR$.
Then we have
\begin{align}\label{eq:direct-prod}
X(f_1, \cdots, f_r)=N_{0, \bR} \times X' \lb f_1 \rb \times \cdots \times X' \lb f_r \rb.
\end{align}
Note that by \pref{lm:BX}, the stable intersection coincides with the set theoretic intersection in the current setup.
Every polyhedron $\sigma \in \scrP_{X(f_1, \cdots, f_r)}$ can be uniquely written as 
\begin{align}
\sigma = N_{0, \bR} \times \sigma_1 \times \cdots \times \sigma_r, 
\end{align}
where $\sigma_i$ is a polyhedron in the natural polyhedral structure $\scrP_{X'(f_i)}$ of $X'(f_i)$.
We write $\theta \lb \tau_l \rb= N_{0, \bR} \times \tilde{\sigma}_1 \times \cdots \times \tilde{\sigma}_r$ 
$(\tilde{\sigma}_i \in \scrP_{X'(f_i)})$.
For $1 \leq i \leq r$ and $j \geq 1$, we define $R_i^j \subset S_i^j \subset T_i^j \subset \bigwedge^{j} N_{i, \bR}$ by
\begin{align}
R_i^j&:=\lb \sum_{\substack{\sigma_i \succ \tilde{\sigma}_i \\ f_i<0\ \mathrm{on}\ \rint (\sigma_i)}} \bigwedge^{j-1} T \lb \sigma_i \rb \rb \wedge \Phi_{\lc i \rc} \\
S_i^j&:=\sum_{\substack{\sigma_i \succ \tilde{\sigma}_i \\ f_i<0\ \mathrm{on}\ \rint (\sigma_i)}} \bigwedge^{j} T \lb \sigma_i \rb \\
T_i^j&:=\sum_{\sigma_i \succ \tilde{\sigma}_i } \bigwedge^{j} T \lb \sigma_i \rb.
\end{align}
We also set $R_i^0 =0$ and $S_i^0 = T_i^0:=\bR$.
Then by using \eqref{eq:HGP}, we can write
\begin{align}\label{eq:HpR}
H_p^\bR \lb I_0, I_1, I_2 \rb
=\bigoplus_{\substack{(p_0, \cdots, p_r) \in \bZ_{\geq 0}^{r+1} \\ p_0+ \cdots +p_r=p}}
\sum_{i \in I_2 \setminus I_0}
\lb \bigwedge^{p_0} N_{0, \bR} \rb 
\wedge
R_i^{p_i}
\wedge 
\lb \bigwedge_{j \in I_1 \setminus \lc i \rc} S_j^{p_j} \rb
\wedge 
\lb 
\bigwedge_{j \in \lc 1, \cdots, r \rc \setminus \lb I_1 \cup \lc i \rc \rb} T_j^{p_j} \rb.
\end{align}
We set
\begin{align}
N_i^{(p_0, \cdots, p_r)}:=
\lb \bigwedge^{p_0} N_{0, \bR} \rb 
\wedge
R_i^{p_i}
\wedge 
\lb \bigwedge_{j \in I_1 \setminus \lc i \rc} S_j^{p_j} \rb
\wedge 
\lb 
\bigwedge_{j \in \lc 1, \cdots, r \rc \setminus \lb I_1 \cup \lc i \rc \rb} T_j^{p_j} \rb.
\end{align}
For every $i \in \lc1, \cdots, r\rc$ and $p_i \in \bZ_{\geq 0}$, take a basis $\lc n_{j}^{p_i} \rc_j$ of 
$T_i^{p_i} \cap \bigwedge^{p_i} N_i$ so that there exist its two subsets that form bases of 
$R_i^{p_i} \cap \bigwedge^{p_i} N_i$ and $S_i^{p_i} \cap \bigwedge^{p_i} N_i$ respectively.
We also choose a basis $\lc n_{j}^{p_0} \rc_j$ of $\bigwedge^{p_0} N_0$.
Consider the set of elements
\begin{align}
\scrA:=\lc \bigwedge_{i=0}^r n_{j_i}^{p_i} \relmid (p_0, \cdots, p_r) \in \bZ_{\geq 0}^{r+1}, p_0+ \cdots +p_r=p
\rc.
\end{align}
From \eqref{eq:dsum}, we can see that this forms a basis of the sublattice in $\bigwedge^p N$ generated by $\scrA$.
There also exists its subset $\scrB \subset \scrA$ that forms a basis of the intersection of \eqref{eq:HpR} and $\bigwedge^p N$.
By the construction of $\scrA$, every element of $\scrB$ is contained in $N_i^{(p_0, \cdots, p_r)} \cap \bigwedge^p N$ for some $(p_0, \cdots, p_r) \in \bZ_{\geq 0}^{r+1} $ and $i \in I_2 \setminus I_0$.
Hence, we have
\begin{align}
\begin{split}
H_p^\bR \lb I_0, I_1, I_2 \rb \cap \bigwedge^p N
=\bigoplus_{n \in \scrB} \bZ n
\subset
\sum_{i \in I_2 \setminus I_0}
\lc \lb
\bigoplus_{\substack{(p_0, \cdots, p_r) \in \bZ_{\geq 0}^{r+1} \\ p_0+ \cdots +p_r=p}} N_i^{(p_0, \cdots, p_r)}
\rb
\cap \bigwedge^p N
\rc \\
= \sum_{i \in I_2 \setminus I_0} \lc \lb G_{p-1}^\bR(I_1 \cup \lc i \rc) \wedge \Phi_{\lc i \rc} \rb \cap \bigwedge^p N \rc
\subset H_p^\bZ(I_0, I_1, I_2).
\end{split}
\end{align}
Thus we obtain \eqref{eq:HpZ} for $Q=\bZ$.

\eqref{eq:GHN} can also be checked as follows:
 $G_p^\bR \lb I_1 \cup \lc i_0 \rc \rb$ is equal to 
\begin{align}\label{eq:GpR}
\bigoplus_{\substack{(p_0, \cdots, p_r) \in \bZ_{\geq 0}^{r+1} \\ p_0+ \cdots +p_r=p}}
\lb \bigwedge^{p_0} N_{0, \bR} \rb 
\wedge 
\lb \bigwedge_{j \in I_1 \cup \lc i_0 \rc} S_j^{p_j} \rb
\wedge 
\lb \bigwedge_{j \in \lc 1, \cdots, r \rc \setminus \lb I_1 \cup \lc i_0 \rc \rb} T_j^{p_j} \rb,
\end{align}
and $H_p^\bR \lb I_0, I_1, I_2 \setminus \lc i_0 \rc \rb$ is equal to
\begin{align}\label{eq:HpR2}
\bigoplus_{\substack{(p_0, \cdots, p_r) \in \bZ_{\geq 0}^{r+1} \\ p_0+ \cdots +p_r=p}}
\sum_{i \in I_2 \setminus \lb I_0 \cup \lc i_0 \rc \rb}
\lb \bigwedge^{p_0} N_{0, \bR} \rb 
\wedge
R_i^{p_i}
\wedge 
\lb \bigwedge_{j \in I_1 \setminus \lc i \rc} S_j^{p_j} \rb
\wedge 
\lb 
\bigwedge_{j \in \lc 1, \cdots, r \rc \setminus \lb I_1 \cup \lc i \rc \rb} T_j^{p_j} \rb.
\end{align}
Hence, there also exist subsets $\scrC, \scrD \subset \scrA$ that form bases of 
$G_p^\bR \lb I_1 \cup \lc i_0 \rc \rb \cap \bigwedge^p N$ and $H_p^\bR \lb I_0, I_1, I_2 \setminus \lc i_0 \rc \rb \cap \bigwedge^p N$ respectively.
Since $G_p^\bZ \lb I_1 \cup \lc i_0 \rc \rb=G_p^\bR \lb I_1 \cup \lc i_0 \rc \rb \cap \bigwedge^p N$ and $H_p^\bZ \lb I_0, I_1, I_2 \setminus \lc i_0 \rc \rb=H_p^\bR \lb I_0, I_1, I_2 \setminus \lc i_0 \rc \rb \cap \bigwedge^p N$, one gets 
\begin{align}
\lc G_p^\bR \lb I_1 \cup \lc i_0 \rc \rb + H_p^\bR \lb I_0, I_1, I_2 \setminus \lc i_0 \rc \rb \rc 
&\cap \bigwedge^p N
=
\lb \bigoplus_{n \in \scrC \cup \scrD} \bR n \rb \cap \bigwedge^p N \\
=\bigoplus_{n \in \scrC \cup \scrD} \bZ n
=
G_p^\bZ \lb I_1 \cup \lc i_0 \rc \rb
&+
H_p^\bZ \lb I_0, I_1, I_2 \setminus \lc i_0 \rc \rb.
\end{align}
We obtained \eqref{eq:GHN}.

Lastly, we check \pref{eq:GH}.
We do not assume that $\delta$ is very good.
We take sublattices $N_0, N_j' \subset N'$ $\lb 1 \leq j \leq r \rb$ again as we did in \eqref{eq:sublattice1} and \eqref{eq:sublattice2}.
When $\delta$ is not very good, we do not have \eqref{eq:dsum} in general.
However, we still have 
$\bigoplus_{i=0}^r N_{i, \bR} =N_\bR$ and 
\begin{align}
X_C(\bT)&=N_{0, \bR} \times X_{C_1} \lb \bT \rb \times \cdots \times X_{C_r} \lb \bT \rb \\
X(f_1, \cdots, f_r)&=N_{0, \bR} \times X' \lb f_1 \rb \times \cdots \times X' \lb f_r \rb,
\end{align}
although the integral structures of both sides of these are different.
We write $\theta \lb \tau_l \rb= N_{0, \bR} \times \tilde{\sigma}_1 \times \cdots \times \tilde{\sigma}_r$ 
$(\tilde{\sigma}_i \in \scrP_{X'(f_i)})$ again.
Then $G_p^\bR \lb I_1 \cup \lc i_0 \rc \rb$ and $H_p^\bR \lb I_0, I_1, I_2 \setminus \lc i_0 \rc \rb$ are written as \eqref{eq:GpR} and \eqref{eq:HpR2}.
The intersection of these two is
\begin{align}
\bigoplus_{\substack{(p_0, \cdots, p_r) \in \bZ_{\geq 0}^{r+1} \\ p_0+ \cdots +p_r=p}}
\sum_{i \in I_2 \setminus \lb I_0 \cup \lc i_0 \rc \rb}
\lb \bigwedge^{p_0} N_{0, \bR} \rb 
\wedge
R_i^{p_i}
\wedge 
\lb \bigwedge_{j \in \lb I_1 \cup \lc i_0 \rc \rb \setminus \lc i \rc} S_j^{p_j} \rb
\wedge 
\lb 
\bigwedge_{j \in \lc 1, \cdots, r \rc \setminus \lb I_1 \cup \lc i, i_0 \rc \rb} T_j^{p_j} \rb,
\end{align}
which is equal to $H_p^\bR \lb I_0 \cup \lc i_0 \rc, I_1 \cup \lc i_0 \rc, I_2 \rb$.
We obtained \eqref{eq:GH}.
\end{proof}

\begin{lemma}\label{lm:injective}
For $I_0, I_1, I_2 \in \scrI_x$, $i_0 \in I_2 \setminus I_1$ of \pref{lm:GH}, and any integer $p \geq 1$, one has
\begin{align}
G_p^Q \lb I_1 \cup \lc i_0 \rc \rb
\cap
H_p^Q \lb I_0, I_1, I_2 \setminus \lc i_0 \rc \rb
=
H_p^Q \lb I_0 \cup \lc i_0 \rc, I_1 \cup \lc i_0 \rc, I_2 \rb
\end{align}
for $Q=\bQ$.
When $\delta$ is very good, this holds also for $Q=\bZ$.
\end{lemma}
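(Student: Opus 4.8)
The plan is to reduce the statement over $\bQ$ and over $\bZ$ to the real identity \eqref{eq:GH}, using the compatibility statements already collected in \pref{lm:GH}. The key preliminary observation is that the sum $\sum_{\sigma} \bigwedge^p T(\sigma)$ entering the definition of $G_p^Q(I)$ is a genuine $\bR$-subspace of $\bigwedge^p N_\bR$ (each $T(\sigma)$ being a linear subspace of $N_\bR$), so that $G_p^\bR(I)$ equals that subspace and consequently $G_p^Q(I) = G_p^\bR(I) \cap \bigwedge^p N_Q$ for $Q = \bQ, \bZ$. On the other hand, \eqref{eq:HpZ} of \pref{lm:GH} is precisely the statement that $H_p^Q(I_0, I_1, I_2) = H_p^\bR(I_0, I_1, I_2) \cap \bigwedge^p N_Q$, valid for $Q = \bQ$ unconditionally and for $Q = \bZ$ when $\delta$ is very good.

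Granting this, the proof is formal. Writing $I_1' := I_1 \cup \lc i_0 \rc$ and $I_2' := I_2 \setminus \lc i_0 \rc$, both $G_p^Q(I_1')$ and $H_p^Q(I_0, I_1, I_2')$ are of the form (a fixed real subspace of $\bigwedge^p N_\bR$) $\cap \bigwedge^p N_Q$, so their intersection is $\lb G_p^\bR(I_1') \cap H_p^\bR(I_0, I_1, I_2') \rb \cap \bigwedge^p N_Q$; by \eqref{eq:GH} the real intersection equals $H_p^\bR(I_0 \cup \lc i_0 \rc, I_1', I_2)$, and applying \eqref{eq:HpZ} once more identifies the whole thing with $H_p^Q(I_0 \cup \lc i_0 \rc, I_1 \cup \lc i_0 \rc, I_2)$. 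For $Q = \bQ$ this is unconditional; for $Q = \bZ$ it goes through under the \emph{very good} hypothesis, which enters only through the integral form of \eqref{eq:HpZ} (ultimately via the direct-sum decompositions \eqref{eq:dsum}).

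The substantive content — the real identity \eqref{eq:GH} and the integral compatibility \eqref{eq:HpZ} — is exactly what \pref{lm:GH} supplies, so there is no real obstacle; the only thing to verify is the elementary fact that intersecting subgroups of $\bigwedge^p N_Q$ of the shape (real subspace) $\cap \bigwedge^p N_Q$ commutes with first taking the real intersection. Once established, this lemma is the step that will make the \v{C}ech complex of the covering $\lc U_I^x \rc_{I \in \scrI_x}$ exact in positive degrees, which is how \eqref{eq:fvani} will be deduced.
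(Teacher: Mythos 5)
Your proposal is correct and follows the same route as the paper: express $G_p^Q$ and $H_p^Q$ as (real subspace) $\cap \bigwedge^p N_Q$ via the definition of $G_p^Q$ and via \eqref{eq:HpZ}, pass the intersection inside (an elementary set-theoretic identity), apply the real equality \eqref{eq:GH}, then use \eqref{eq:HpZ} once more. The only possible difference is presentational — you pause to flag the set-theoretic commutation step explicitly, whereas the paper treats it as immediate — but the mathematical content is identical.
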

\begin{proof}
In either case, by the first and third statements of \pref{lm:GH}, one can obtain
\begin{align}
G_p^Q \lb I_1 \cup \lc i_0 \rc \rb
\cap
H_p^Q \lb I_0, I_1, I_2 \setminus \lc i_0 \rc \rb
&=G_p^\bR \lb I_1 \cup \lc i_0 \rc \rb
\cap
H_p^\bR \lb I_0, I_1, I_2 \setminus \lc i_0 \rc \rb
\cap \bigwedge^p N_Q \\
&=H_p^\bR \lb I_0 \cup \lc i_0 \rc, I_1 \cup \lc i_0 \rc, I_2 \rb \cap \bigwedge^p N_Q \\
&=H_p^Q \lb I_0 \cup \lc i_0 \rc, I_1 \cup \lc i_0 \rc, I_2 \rb.
\end{align}
\end{proof}

\begin{lemma}\label{lm:cosec}
For any integer $p \geq 1$ and $I_0, I_1, I_2 \in \scrI_x$ such that $I_0 \subset I_1 \subset I_2$, one has 
\begin{align}\label{eq:cosections}
\scF_p^Q \lb \bigcup_{I_1 \subset I \subset I_2} U_I^x \cap U_{I \setminus I_0}^x \rb
=
\left. G_p^Q(I_1)
\middle/
H_p^Q(I_0, I_1, I_2)
\right.
\end{align}
for $Q=\bQ$ when $\delta$ is good, and for $Q=\bZ$ when $\delta$ is very good.
\end{lemma}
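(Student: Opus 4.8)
The plan is to prove \eqref{eq:cosections} by induction on $|I_2 \setminus I_1|$. In the base case $I_1 = I_2$ the union in \eqref{eq:cosections} reduces to the single open set $U_{I_1}^x \cap U_{I_1 \setminus I_0}^x$, which is the open star inside $\delta^{-1}(x)$ of a single polyhedron of $\scrP_{X(f_1, \cdots, f_r)}$; its section under the cosheaf $\scF_p^Q$ is computed exactly as the section $\scF_p^Q(U_I^x)$ was computed in the discussion preceding \pref{lm:GH}, the only change being that among the cone directions wedged into the relations only those indexed by $i \in I_1 \setminus I_0$ occur (the legs indexed by $i \in I_0$ are absent from $U_{I_1} \cap U_{I_1 \setminus I_0}$), so that the relations are $(G_{p-1}^\bR(I_1) \wedge \Phi_{I_1 \setminus I_0}) \cap \bigwedge^p N_Q$ while the numerator is still $G_p^Q(I_1)$. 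Since $\scN_p(I_0, I_1) = G_{p-1}^\bR(I_1) \wedge \Phi_{I_1 \setminus I_0}$ and $H_p^Q(I_0, I_1, I_1) = \scN_p(I_0, I_1) \cap \bigwedge^p N_Q$, this is precisely $G_p^Q(I_1) / H_p^Q(I_0, I_1, I_1)$.

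For the inductive step, fix $i_0 \in I_2 \setminus I_1$ and write $W$ for the union in \eqref{eq:cosections}. Splitting the index set $\{I : I_1 \subseteq I \subseteq I_2\}$ according to whether $i_0 \in I$, we get $W = W_1 \cup W_2$, where $W_1$ is the union of \eqref{eq:cosections} for the triple $(I_0, I_1, I_2 \setminus \{i_0\})$ and $W_2$ the one for $(I_0, I_1 \cup \{i_0\}, I_2)$. Using \pref{lm:UIcap} together with the elementary identities $(I \setminus I_0) \cap (I' \setminus I_0) = (I \cap I') \setminus I_0$ and $I \cup I' \cup (I \setminus I_0) \cup (I' \setminus I_0) = I \cup I'$, a short computation identifies $W_1 \cap W_2$ with $\bigcup_{I_1 \cup \{i_0\} \subseteq I \subseteq I_2} U_I^x \cap U_{I \setminus (I_0 \cup \{i_0\})}^x$, i.e. with \eqref{eq:cosections} for the triple $(I_0 \cup \{i_0\}, I_1 \cup \{i_0\}, I_2)$; all three triples have strictly smaller value of $|I_2 \setminus I_1|$. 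The cosheaf property of $\scF_p^Q$ applied to the two-element cover $W = W_1 \cup W_2$ yields the exact sequence
\begin{equation}
\scF_p^Q(W_1 \cap W_2) \xrightarrow{(\alpha, -\beta)} \scF_p^Q(W_1) \oplus \scF_p^Q(W_2) \longrightarrow \scF_p^Q(W) \longrightarrow 0,
\end{equation}
and by the induction hypothesis the first three terms are $G_p^Q(I_1 \cup \{i_0\}) / H_p^Q(I_0 \cup \{i_0\}, I_1 \cup \{i_0\}, I_2)$, $G_p^Q(I_1) / H_p^Q(I_0, I_1, I_2 \setminus \{i_0\})$ and $G_p^Q(I_1 \cup \{i_0\}) / H_p^Q(I_0, I_1 \cup \{i_0\}, I_2)$. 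The map $\alpha$ is induced by the inclusion $G_p^Q(I_1 \cup \{i_0\}) \subseteq G_p^Q(I_1)$ and $\beta$ by the quotient map between the two presentations with common numerator $G_p^Q(I_1 \cup \{i_0\})$; that these are well defined follows from the inclusions among the various $H_p^\bR$'s visible from \eqref{eq:HGP}, combined with \pref{lm:GH}.1 to commute $(-) \cap \bigwedge^p N_Q$ with sums, and \pref{lm:injective} moreover shows $\alpha$ is injective.

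It remains to compute the cokernel. Write $G := G_p^Q(I_1)$, $G' := G_p^Q(I_1 \cup \{i_0\}) \subseteq G$, $H_1 := H_p^Q(I_0, I_1, I_2 \setminus \{i_0\})$ and $H_2 := H_p^Q(I_0, I_1 \cup \{i_0\}, I_2)$. Since $\beta$ is surjective, the image of $(\alpha, -\beta)$ equals $\{(\bar{u}, -\bar{u}) : u \in G'\}$, where $\bar{u}$ denotes the class of $u$ both in $G/H_1$ and in $G'/H_2$; hence $\scF_p^Q(W) \cong (G/H_1 \oplus G'/H_2) / \{(\bar{u}, -\bar{u})\}$, and $g + (H_1 + H_2) \mapsto (\bar{g}, 0)$ is an isomorphism $G/(H_1 + H_2) \xrightarrow{\sim} \scF_p^Q(W)$. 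Finally, splitting the defining sum $H_p^Q(I_0, I_1, I_2) = \sum_{I_1 \subseteq I \subseteq I_2} (\scN_p(I_0, I) \cap \bigwedge^p N_Q)$ according to whether $i_0 \in I$ gives $H_p^Q(I_0, I_1, I_2) = H_1 + H_2$, closing the induction. The step requiring the most care is the integral case $Q = \bZ$: the base-case computation, the identity $H_1 + H_2 = H_p^Q(I_0, I_1, I_2)$, the well-definedness of $\alpha$ and $\beta$, and the identity $G' \cap H_1 = H_p^Q(I_0 \cup \{i_0\}, I_1 \cup \{i_0\}, I_2)$ all hinge on interchanging $(-) \cap \bigwedge^p N$ with sums and intersections of the subspaces involved, which is exactly what \pref{lm:GH}.1, \pref{lm:GH}.2 and \pref{lm:injective} supply and what forces the hypothesis that $\delta$ be very good; over $\bQ$ these interchanges are automatic and only "$\delta$ good" is needed. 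This integrality bookkeeping is the main obstacle, the remaining steps being formal.
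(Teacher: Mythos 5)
Your proof is correct and follows essentially the same route as the paper's: the same induction on $|I_2 \setminus I_1|$, the same base case computation, the same split of the union into two sub-unions via a chosen $i_0 \in I_2 \setminus I_1$, the same identification of the overlap as the union for the triple $(I_0 \cup \{i_0\}, I_1 \cup \{i_0\}, I_2)$, and the same use of \pref{lm:injective} (backed by \pref{lm:GH}) to show the map into the $A_1$-piece is injective so that the cosheaf Mayer--Vietoris sequence yields $G_p^Q(I_1)/(H_1 + H_2)$, after which $H_1 + H_2 = H_p^Q(I_0,I_1,I_2)$ closes the induction. The paper phrases the cokernel computation directly in terms of "one extension map is a quotient, the other is injective" rather than writing out the cosheaf exact sequence explicitly, but the content is identical.
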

\begin{proof}
We check the lemma by induction on $|I_2 \setminus I_1|$.
When $|I_2 \setminus I_1|=0$, we have $I_1=I_2$, and
\begin{align}
\scF_p^Q \lb U_{I_1}^x \cap U_{I_1 \setminus I_0}^x \rb
&=
\left. G_p^Q(I_1)
\middle/
\lc \lb G_{p-1}^\bR(I_1) \wedge \Phi_{I_1 \setminus I_0} \rb \cap \bigwedge^p N_Q \rc
\right. \\
&=
\left. G_{p}^Q(I_1)
\middle/
\lb \scN_p \lb I_0, I_1 \rb \cap \bigwedge^p N_Q \rb
\right. \\
&=
\left. G_{p}^Q(I_1)
\middle/
H_p^Q(I_0, I_1, I_1)
\right. .
\end{align}
Hence, \eqref{eq:cosections} holds in this case.
Assume that the statement holds when $|I_2 \setminus I_1| \leq l$ for some integer $l \geq 0$.
We show that it also holds when $|I_2 \setminus I_1|=l+1$.
Suppose $|I_2 \setminus I_1|=l+1$ and take $i_0 \in I_2 \setminus I_1$.
We have
\begin{align}\label{eq:oset}
\bigcup_{I_1 \subset I \subset I_2} U_I^x \cap U_{I \setminus I_0}^x
=\lb \bigcup_{I_1 \subset J \subset I_2 \setminus \lc i_0 \rc} U_J^x \cap U_{J \setminus I_0}^x \rb 
\cup \lb \bigcup_{I_1 \cup \lc i_0 \rc \subset I \subset I_2} U_I^x \cap U_{I \setminus I_0}^x \rb.
\end{align}
We set
\begin{align}\label{eq:a1a2}
A_1:=\bigcup_{I_1 \subset J \subset I_2 \setminus \lc i_0 \rc} U_J^x \cap U_{J \setminus I_0}^x, \quad
A_2:=\bigcup_{I_1 \cup \lc i_0 \rc \subset I \subset I_2} U_I^x \cap U_{I \setminus I_0}^x.
\end{align}
By \pref{lm:UIcap}.2, we have
\begin{align}
A_1 \cap A_2
&=\bigcup_{I_1 \subset J \subset I_2 \setminus \lc i_0 \rc} \bigcup_{I_1 \cup \lc i_0 \rc \subset I \subset I_2} U_J^x \cap U_{J \setminus I_0}^x \cap U_I^x \cap U_{I \setminus I_0}^x \\ \label{eq:a12}
&=\bigcup_{I_1 \subset J \subset I_2 \setminus \lc i_0 \rc} \bigcup_{I_1 \cup \lc i_0 \rc \subset I \subset I_2} U_{I \cup J}^x \cap U_{\lb I \cap J \rb \setminus I_0}^x.
\end{align}
Since $I \subset I \cup J$ and $I \setminus (I_0 \cup \lc i_0 \rc) \supset \lb I \cap J \rb \setminus I_0$, we can see from \pref{lm:UIcap}.1 that \eqref{eq:a12} is contained in 
\begin{align}\label{eq:a123}
\bigcup_{I_1 \cup \lc i_0 \rc \subset I \subset I_2} U_I^x \cap U_{I \setminus (I_0 \cup \lc i_0 \rc)}^x.
\end{align}
When $J=I \setminus \lc i_0 \rc$, one has $U_{I \cup J}^x \cap U_{\lb I \cap J \rb \setminus I_0}^x=U_I^x \cap U_{I \setminus (I_0 \cup \lc i_0 \rc)}^x$.
Hence, we can see that  $A_1 \cap A_2$ is equal to \eqref{eq:a123}.
By the induction hypothesis, one has
\begin{align}\label{eq:fa12}
\scF_p^Q \lb A_1 \cap A_2\rb&=\left. G_p^Q \lb I_1 \cup \lc i_0 \rc \rb \middle/ H_p^Q \lb I_0 \cup \lc i_0 \rc, I_1 \cup \lc i_0 \rc, I_2 \rb \right. \\ \label{eq:fa1}
\scF_p^Q \lb A_1 \rb&=\left. G_p^Q \lb I_1 \rb \middle/ H_p^Q \lb I_0, I_1, I_2 \setminus \lc i_0 \rc \rb \right. \\
\scF_p^Q \lb A_2 \rb&=\left. G_p^Q \lb I_1 \cup \lc i_0 \rc \rb \middle/ H_p^Q \lb I_0, I_1 \cup \lc i_0 \rc, I_2 \rb \right. .
\end{align}
Since $H_p^Q \lb I_0 \cup \lc i_0 \rc, I_1 \cup \lc i_0 \rc, I_2 \rb \subset H_p^Q \lb I_0, I_1 \cup \lc i_0 \rc, I_2 \rb$, the extension map
$
\scF_p^Q \lb A_1 \cap A_2 \rb \to \scF_p^Q \lb A_2 \rb
$
is a quotient map.
On the other hand, it turns out by \pref{lm:injective} that the extension map
$
\scF_p^Q \lb A_1 \cap A_2 \rb \to \scF_p^Q \lb A_1 \rb
$
is injective.
Hence, we can obtain
\begin{align}
\scF_p^Q \lb \bigcup_{I_1 \subset I \subset I_2} U_I^x \cap U_{I \setminus I_0}^x \rb
&=\scF_p^Q \lb A_1 \cup A_2 \rb \\
&=
\left. G_p^Q(I_1)
\middle/ \lb H_p^Q(I_0, I_1, I_2 \setminus \lc i_0 \rc)+H_p^Q(I_0, I_1 \cup \lc i_0 \rc, I_2)\rb \right. \\
&=\left. G_p^Q(I_1)
\middle/
H_p^Q(I_0, I_1, I_2)
\right. .
\end{align}
Hence, \eqref{eq:cosections} holds when $|I_2 \setminus I_1|=l+1$.
We obtained the lemma.
\end{proof}

\begin{lemma}\label{lm:van}
Let $p, q \geq 1$ be integers, and $I_0, I_1, I_2 \in \scrI_x$ be subsets such that $I_0 \subset I_1 \subset I_2$.
One has
\begin{align}\label{eq:uicoh0}
H^q \lb \bigcup_{I_1 \subset I \subset I_2} U_I^x \cap U_{I \setminus I_0}^x,  \scF^p_Q \rb=0
\end{align}
for $Q=\bQ$ when $\delta$ is good, and for $Q=\bZ$ when $\delta$ is very good.
\end{lemma}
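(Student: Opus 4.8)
The plan is to prove \pref{lm:van} by induction on $l := |I_2 \setminus I_1|$, running in parallel with the Mayer--Vietoris argument already used for \pref{lm:cosec}. When $l = 0$ we have $I_1 = I_2$, so the space in question is the twofold intersection $U_{I_1}^x \cap U_{I_1 \setminus I_0}^x$ of members of the covering $\lc U_I^x \rc_{I \in \scrI_x}$ (note $I_1 \setminus I_0 \subset I_1 \subset I_x$, hence $I_1 \setminus I_0 \in \scrI_x$); the vanishing of $H^q$ for $q \geq 1$ is then exactly the computation carried out when $\lc U_I^x \rc_{I \in \scrI_x}$ was shown above to be an acyclic covering for $\scF^p_Q$, i.e.\ the fundamental-system-of-neighborhoods argument imported from \pref{lm:acyclic}.

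For the inductive step, assume $l \geq 1$, fix $i_0 \in I_2 \setminus I_1$, and write, as in the proof of \pref{lm:cosec}, the set $\bigcup_{I_1 \subset I \subset I_2} U_I^x \cap U_{I \setminus I_0}^x$ as $A_1 \cup A_2$ with $A_1, A_2$ the open subsets of \eqref{eq:a1a2}; recall from there that $A_1 \cap A_2 = \bigcup_{I_1 \cup \lc i_0 \rc \subset I \subset I_2} U_I^x \cap U_{I \setminus (I_0 \cup \lc i_0 \rc)}^x$. Each of $A_1$, $A_2$, $A_1 \cap A_2$ is again of the shape appearing in the lemma, with index data $(I_0, I_1, I_2 \setminus \lc i_0 \rc)$, $(I_0, I_1 \cup \lc i_0 \rc, I_2)$ and $(I_0 \cup \lc i_0 \rc, I_1 \cup \lc i_0 \rc, I_2)$ respectively; all three triples lie in $\scrI_x$ (since $\dim \Delta_{i_0} \geq 1$) and all have width parameter $l - 1$, so the induction hypothesis gives $H^q(A_1, \scF^p_Q) = H^q(A_2, \scF^p_Q) = H^q(A_1 \cap A_2, \scF^p_Q) = 0$ for every $q \geq 1$. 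Feeding this into the Mayer--Vietoris long exact sequence of the open cover $\lc A_1, A_2 \rc$ of $A_1 \cup A_2$ kills $H^q(A_1 \cup A_2, \scF^p_Q)$ for $q \geq 2$ outright (it is sandwiched between two zeros), and leaves, for $q = 1$, only the surjectivity of $H^0(A_1, \scF^p_Q) \oplus H^0(A_2, \scF^p_Q) \to H^0(A_1 \cap A_2, \scF^p_Q)$ to be verified.

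That surjectivity holds because already the single restriction map $\scF^p_Q(A_1) \to \scF^p_Q(A_1 \cap A_2)$ is surjective. Indeed, since $\scF^\bZ_p$ is a cosheaf, the sheaf $\scF^p_Q$ satisfies $\scF^p_Q(A) = \Hom(\scF^\bZ_p(A), \bZ) \otimes_\bZ Q$ with restriction maps transpose to the corestriction maps of $\scF^\bZ_p$; and the corestriction $\scF^Q_p(A_1 \cap A_2) \to \scF^Q_p(A_1)$ was shown to be injective in the proof of \pref{lm:cosec} (this is exactly the place \pref{lm:injective} is invoked). For $Q = \bQ$ the functor $\Hom_\bQ(-, \bQ)$ is exact, so injectivity dualizes to surjectivity. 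For $Q = \bZ$ in the very good case, the integrality statements \eqref{eq:HpZ} and \eqref{eq:GHN} together with the explicit-basis construction in the proof of \pref{lm:GH} show that the corresponding lattice inclusion is saturated (the relevant $\Ext^1$ over $\bZ$ vanishes), so $\Hom(-, \bZ)$ of it remains surjective. The one delicate point is precisely this $Q = \bZ$ case: it is the "very good" hypothesis that forces all the direct-sum decompositions to respect the lattices, and the $\bZ$-version of the conclusion genuinely relies on it, just as in \pref{lm:GH}, \pref{lm:injective} and \pref{lm:cosec}.
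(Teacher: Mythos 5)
Your proof is correct and follows essentially the same route as the paper's: induction on $|I_2\setminus I_1|$, the same decomposition $A_1\cup A_2$ from \eqref{eq:a1a2}, the Mayer--Vietoris/two-set acyclic covering argument, and the reduction of the $H^1$ vanishing to surjectivity of $\scF^p_Q(A_1)\to\scF^p_Q(A_1\cap A_2)$, which is obtained by dualizing the injectivity (resp.\ primitive embedding for $Q=\bZ$) of the corestriction via \pref{lm:injective} and \pref{lm:GH}.2. No gaps.
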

\begin{proof}
We show the statement again by induction on $|I_2 \setminus I_1|$.
When $|I_2  \setminus I_1|=0$, we have $I_1=I_2$, and \eqref{eq:uicoh0} is obvious for any $I_0 \subset I_1$.
Assuming that \eqref{eq:uicoh0} holds when $|I_2 \setminus I_1|=l$, we show that \eqref{eq:uicoh0} holds also when $|I_2 \setminus I_1|=l+1$.
Suppose $|I_1|=k$ and $|I_2 \setminus I_1|=l+1$.
Take an element $i_0 \in I_2 \setminus I_1$.
For the subsets $A_1, A_2$ which we considered in \eqref{eq:a1a2}, by the induction hypothesis, we have 
\begin{align}
H^q \lb A_1, \scF^p_Q \rb
=H^q \lb A_2, \scF^p_Q \rb
=H^q \lb A_1 \cap A_2, \scF^p_Q \rb 
=0
\end{align}
for any integer $q \geq 1$.
Therefore, the two open sets $A_1, A_2$ form an acyclic covering of \eqref{eq:oset}.
It is obvious that the cohomology groups $H^{\geq 2} \lb \scF_Q^p \rb$ of \eqref{eq:oset} are $0$.
In order to prove that the cohomology group $H^{1} \lb \scF_Q^p \rb$ of \eqref{eq:oset} is also $0$, we show that the restriction map
\begin{align}\label{eq:restr}
\scF^p_Q \lb A_1 \rb \to
\scF^p_Q \lb A_1 \cap A_2 \rb
\end{align}
is surjective.
It suffices to show that its dual
$
\scF_p^Q \lb A_1 \cap A_2 \rb \to
\scF_p^Q \lb A_1 \rb
$
is an injection (resp. a primitive embedding) for $Q=\bQ$ (resp. $Q=\bZ$).
It turns out by \pref{lm:cosec} that $\scF_p^Q \lb A_1 \cap A_2 \rb$ and $\scF_p^Q \lb A_1 \rb$ are given by \eqref{eq:fa12} and \eqref{eq:fa1} respectively.
It follows from \pref{lm:injective} that 
$
\scF_p^Q \lb A_1 \cap A_2 \rb \to
\scF_p^Q \lb A_1 \rb
$
is injective.
For $Q=\bZ$, it also follows from \pref{lm:GH}.2 that 
$
\scF_p^Q \lb A_1 \cap A_2 \rb \to
\scF_p^Q \lb A_1 \rb
$
is a primitive embedding.
Thus the map \eqref{eq:restr} is surjective, and the cohomology group $H^{1} \lb \scF_Q^p \rb$ of \eqref{eq:oset} is also $0$.
Hence, \eqref{eq:uicoh0} holds when $|I_2 \setminus I_1|=l+1$.
We obtained the lemma.
\end{proof}

By setting $I_0=I_1=\emptyset, I_2=I_x$ in \pref{lm:van}, we obtain \eqref{eq:fvani} for all integers $q \geq 1$.

\subsection{Proof of \pref{th:local-cont}.1}\label{sc:local-cont1}

Let $x$ be a point in $U_{\xi}$.
By \cite[Proposition 1.29]{MR2213573}, the radiance obstruction of a small neighborhood of $x$ in $U_{\xi}$ is trivial.
Since the radiance obstruction is the extension class of \eqref{eq:exaff2}, it turns out that the sheaf $\iota_\ast \mathrm{Aff}_{U_{\xi, 0}}$ is locally isomorphic to $\bR \oplus \iota_\ast \check{\Lambda}$ around the point $x$.
On the other hand, the wave cohomology group $H^1 \lb \scW_1^\bR \rb$ of the inverse image by $\delta$ of a small neighborhood of $x$ in $U_{\xi}$ is trivial by \eqref{eq:wvani}.
Hence, its eigenwave is also trivial.
By \pref{pr:eigext}, it turns out that the sheaf $\delta_\ast \mathrm{Aff}_X$ is also locally isomorphic to $\bR \oplus \delta_\ast \scF^1_\bZ$ around $x$.

Assume $x \in \rint \lb \conv \lb \lc a_{\tau_0}, a_{\tau_1}, \cdots, a_{\tau_l} \rc \rb \rb$, where $\tau_0 \prec \tau_1 \prec \cdots \prec \tau_l \in \scrP$ and $l \geq 0$.
Take a vertex $v_0 \prec \tau_0$, and set $M'':=\Hom \lb N'', \bZ \rb=\bigcap_{i=1}^r \lb \phi_{\xi, \Deltav_i} \lb v_0 \rb +e_i \rb^\perp$ again.
Around the point $x$, the pullback of functions induces the map 
\begin{align}
\bR \oplus \iota_\ast \check{\Lambda} \to \bR \oplus \scF^1_\bZ, \quad (r, m) \mapsto  \lb r, \iota_{v_0} (m) \rb,
\end{align}
where $\iota_{v_0} \colon M'' \hookrightarrow M$ is the inclusion.
As we showed in the proof of \pref{th:local-cont}.2, this is an isomorphism.
Hence, we have $\iota_\ast \mathrm{Aff}_{U_{\xi, 0}} \cong \delta_\ast \mathrm{Aff}_X$ via the pullback of functions.

\section{Proof of \pref{th:reconstruction}}\label{sc:reconstruction}

Let $B$ be a quasi-simple IAMS of dimension $d$, and $x \in B$ be an arbitrary point.
Since $B$ is quasi-simple, there exists a neighborhood $U_x$ of $x$ that is isomorphic to a quasi-simple local model of IAMS $U_{\xi}$.
We will prove \pref{th:reconstruction} by constructing a local model of tropical contractions to $U_\xi$.

Let $(U, \scrP)$ be the IAMS from which $U_\xi$ is constructed in \pref{sc:local-iams}.
Let further $\Omega_1, \cdots, \Omega_r \subset \lc \omega \in \scrP(1) \relmid \omega \prec \xi \rc$ and $R_1, \cdots, R_r \subset \lc \scrP(d-1) \relmid \rho \succ \xi \rc$ be the disjoint subsets appearing in \pref{df:quasi}.
For $\omega \in \scrP(1)$ and $\rho \in \scrP(d-1)$, the constant $\kappa_{\omega, \rho}$ is $0$ unless $\omega \in \Omega_i$ and $\rho \in R_i$ for some common $i \in \lc 1, \cdots, r \rc$.
The elements $m_{\omega, \rho}:=\kappa_{\omega, \rho} \check{d}_\rho, n_{\omega, \rho}:=\kappa_{\omega, \rho} d_\omega$ are independent of $\omega \in \Omega_i$ and $\rho \in R_i$ respectively (cf.~\cite[Remark 1.61.1]{MR2213573}).
Hence, the constant $\kappa_{\omega, \rho}$ is independent of $\omega \in \Omega_i$ and $\rho \in R_i$.
We write the constant as $\kappa_i$ $(1 \leq i \leq r)$.
We choose positive integers $\lc \alpha_i \rc_{1 \leq i \leq r}$, $\lc \beta_i \rc_{1 \leq i \leq r}$ so that $\kappa_i=\alpha_i \beta_i$ for any $i \in \lc 1, \cdots, r \rc$.
For instance, one can always choose $\alpha_i =1, \beta_i =\kappa_i$ or $\alpha_i =\kappa_i, \beta_i =1$.

Let $v_1, v_2$ be vertices of $\xi$, and $\sigma_1, \sigma_2 \in \scrP(d)$ be maximal-dimensional polytopes.
The elements $n^{\rho}_{v_1, v_2} \in \Lambda_ \xi$ and $m_\omega^{\sigma_1, \sigma_2} \in \Lambda_\xi^{\perp}$ appearing in \pref{eq:monodromy1} and \pref{eq:monodromy2} are also independent of $\rho \in R_i$ and $\omega \in \Omega_i$ respectively again by \cite[Remark 1.61.1]{MR2213573}.
We set
\begin{align}
n_{i}^{v_1, v_2}:= \frac{1}{\alpha_i} n^{\rho}_{v_1, v_2}, \quad
m_i^{\sigma_1, \sigma_2}:=\frac{1}{\beta_i} m_\omega^{\sigma_1, \sigma_2},
\end{align}
where $\rho \in R_i$ and $\omega \in \Omega_i$.
We fix a vertex $v_0$ of $\xi$ and a maximal-dimensional polytope $\sigma_0 \in \scrP(d)$, and define
\begin{align}\label{eq:delv}
\Deltav_i&:= \mathrm{conv} \lc n_i^{v_0, v} \relmid v \in \scrP(0)\ \mathrm{s.t.}\ v \prec \xi \rc \subset \Lambda_ \xi \\ \label{eq:del}
\Delta_i&:= \mathrm{conv} \lc m_i^{\sigma_0, \sigma} \relmid \sigma \in \scrP(d)
\rc \subset \Lambda_\xi^{\perp}.
\end{align}
Since the monodromy polytopes $\Deltav_i(\xi), \Delta_i(\xi)$ are the convex hulls of 
$n^\rho_{v_0, v}=\alpha_i n_i^{v_0, v}$ and $m_\omega^{\sigma_0, \sigma}=\beta_i m_i^{\sigma_0, \sigma}$ respectively, one has $\Deltav_i(\xi)=\alpha_i \cdot \Deltav_i$ and $\Delta_i(\xi)=\beta_i \cdot \Delta_i$.
As mentioned in \pref{sc:iass}, the normal fan of $\xi$ is a refinement of the normal fan of $\Deltav_i(\xi)=\alpha_i \cdot \Deltav_i$. 
The map $\phi_{\xi, \Deltav_i}$ of \eqref{eq:phi} for $\xi, \Deltav_i$ is given by
\begin{align}\label{eq:phin1}
\phi_{\xi, \Deltav_i} \colon \scrP_\xi \to \scrP_{\Deltav_i}, \quad \tau \mapsto 
\mathrm{conv} \lc n_i^{v_0, v} \relmid v \in \scrP(0)\ \mathrm{s.t.}\ v \prec \tau \rc.
\end{align}

Let $N'$ denote the integral tangent space at $v_0$.
We identify the integral tangent space of $\rint(\sigma_0)$ with $N'$ via the parallel transport in the chart of the fan structure at $v_0$.
Let further $e_1, \cdots, e_r$ be the standard basis of $\bZ^r$, and $e_1^\ast, \cdots, e_r^\ast$ be the dual basis of $\lb \bZ^r \rb^\ast$.
We set
\begin{align}
N:=N' \oplus \lb \oplus_{i=1}^r \bZ e_i \rb, \quad M:=M' \oplus \lb \oplus_{i=1}^r \bZ e_i^\ast \rb,
\end{align}
where $M':=\Hom (N', \bZ)$.
The polytopes $\Deltav_i$ and $\Delta_i$ can be thought to sit in $N_\bR':=N' \otimes_\bZ \bR$ and $M_\bR':=M' \otimes_\bZ \bR$ respectively.
We consider the stable intersection of tropical hypersurfaces associated with the polytopes $\lc \Delta_i \rc_{i=1}^r$ and $\lc \Deltav_i  \rc_{i=1}^r$ as we did in the second paragraph of \pref{sc:intro}.
Namely, let $C$ be the cone in $N_\bR$ defined by \eqref{eq:cone} and $f_i \colon N_\bR \to \bR$ $(1 \leq i \leq r)$ be the tropical polynomials defined by \eqref{eq:polynomial} with $A_i:=\lc 0 \rc \cup \lb M \cap \lb \Delta_i \times \lc -e_i^\ast \rc \rb \rb$.
We consider the stable intersection \eqref{eq:intersection} and its closure $X(f_1, \cdots, f_r)$ in the tropical toric variety $X_{C}(\bT) \supset N_\bR$ associated with the cone $C$.
We consider the subset $B \subset N_\bR$ defined by \eqref{eq:B} too.

By the fan structure at $v_0$, the space $U_{\xi}$ can be embedded into $N_\bR' \cong N_\bR / \oplus_{i=1}^r \bR e_i$.
In the following, we think of the space $U_{\xi}$ and every polytope $\tau \in \scrP$ also as subsets of $B$ by 
\begin{align}
U_{\xi} \cong \lb \left. \pi_{C_{v_0}} \right|_B \rb^{-1} \lb U_{\xi} \rb \subset B, \quad 
\tau \cong \lb \left. \pi_{C_{v_0}} \right|_B \rb^{-1} \lb \tau \rb \subset B
\end{align}
respectively, where $C_{v_0} \prec C$ is the cone of \eqref{eq:ctau} with $\tau=v_0$, and the map $\left. \pi_{C_{v_0}} \right|_B$ is the composition
\begin{align}
B \hookrightarrow N_\bR \xrightarrow{\left. \pi_{C_{v_0}} \right|_{N_\bR}} 
\left. N_\bR \middle/ \vspan \lb \bigcup_{i=1}^r \phi_{\xi, \Deltav_i} \lb v_0 \rb \times \lc e_i \rc \rb \right. 
=N_\bR / \oplus_{i=1}^r \bR e_i
\cong N_\bR'.
\end{align}

\begin{lemma}\label{lm:subdiv}
Let $\sigma \in \scrP(d)$ be a maximal-dimensional polytope.
When we think of $\sigma$ as a subset of $B$ as above, one has
\begin{align}
\sigma \subset \lc n \in B \relmid \la m_i^{\sigma_0, \sigma} - e_i^\ast, n \ra =0, 1 \leq i \leq r \rc.
\end{align}
\end{lemma}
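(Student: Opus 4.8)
The statement relates the polytope $\sigma \in \scrP(d)$, embedded in $B$ via the fan structure at $v_0$, to the vanishing locus of the functions $m_i^{\sigma_0, \sigma} - e_i^\ast$ on $B$. The key is to unwind how $B$ is parametrized and how $\sigma$ sits inside it. First I would recall the description of $B$ from \eqref{eq:B2}: choosing a sublattice $N'' \subset N$ complementary to $\oplus_{i=1}^r \bZ(\phi_{\xi, \Deltav_i}(v_0) + e_i)$, and noting here $\phi_{\xi, \Deltav_i}(v_0) = n_i^{v_0, v_0} = 0$, so the splitting is simply $N = N' \oplus (\oplus_i \bZ e_i)$ with $N'' = N'$ and $d_i^\ast = e_i^\ast$. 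Then $B$ is the graph over $N'_\bR$ of the tropical functions $n \mapsto \min_{m \in A_i \setminus \{0\}} \la m + e_i^\ast, n \ra$, i.e. a point $n \in B$ is determined by its image $\pi_{C_{v_0}}(n) \in N'_\bR$, and $\la e_i^\ast, n \ra = \min_{m} \la m, \pi_{C_{v_0}}(n) \ra$ where the minimum runs over the vertices of $\Delta_i$.

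\textbf{Main steps.} The plan is: (1) Identify, for the chosen maximal cell $\sigma_0$, that the monomial $m_i^{\sigma_0, \sigma_0}$ is $0$ (since $\Delta_i = \conv\{m_i^{\sigma_0, \sigma} : \sigma\}$ is built with $m_i^{\sigma_0, \sigma_0}$ as the origin, this follows from the definition in \eqref{eq:del}), hence on $\sigma_0 \subset B$ one has $f_i = -e_i^\ast$, consistent with $\la m_i^{\sigma_0, \sigma_0} - e_i^\ast, n \ra = \la -e_i^\ast, n \ra$ which is indeed the value $f_i(n)$; but we need the sharper claim that $f_i = m_i^{\sigma_0, \sigma} - e_i^\ast$ on the region of $B$ corresponding to $\sigma$. (2) Translate the monodromy data: by \pref{pr:monodromy} (or rather its proof), the integral affine chart at $v_0$ glued across $\sigma_0$ and then to $\sigma$ differs by the transformation recording $m_i^{\sigma_0, \sigma}$; concretely the identification of the tangent space of $\Int(\sigma)$ with $N'$ via parallel transport through $\sigma_0$ means that, in the coordinate on $B$, the cell $\sigma$ is mapped into the locus where the $i$-th monomial attaining the minimum of $f_i$ is exactly $m_i^{\sigma_0,\sigma} - e_i^\ast$. (3) On that locus $f_i(n) = \la m_i^{\sigma_0,\sigma} - e_i^\ast, n\ra$, and since $B$ is the graph $\la e_i^\ast, n\ra = f_i(n) + \la e_i^\ast, n\ra - \dots$ — more precisely $\la e_i^\ast, n \ra + f_i(n)$ unwinds so that $\la m_i^{\sigma_0,\sigma} - e_i^\ast, n\ra = f_i(n) - (f_i(n)) = 0$ on $B$, because $f_i$ restricted to $B$ equals $-e_i^\ast$ identically by construction of \eqref{eq:B}. (4) Conclude $\sigma \subset \{n \in B : \la m_i^{\sigma_0,\sigma} - e_i^\ast, n\ra = 0, 1 \le i \le r\}$.

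\textbf{The key computation.} The heart of the matter is step (2)–(3): showing that the embedding $\tau \mapsto (\left.\pi_{C_{v_0}}\right|_B)^{-1}(\tau) \subset B$ sends the maximal cell $\sigma$ precisely into the region of $B$ where the monomial $m_i^{\sigma_0,\sigma}$ of $f_i$ (after the shift by $e_i^\ast$) realizes the minimum. This is where I would invoke the explicit monodromy computation of \pref{pr:monodromy}: the parallel transport from $v_0$ through $\sigma_0$ to a point in $\Int(\sigma)$ introduces exactly the terms $\la m_i^{\sigma_0} - e_i^\ast, \cdot \ra (\phi_{\xi,\Deltav_i}(v_0) + e_i)$, and matching this against the polyhedral decomposition of $B$ induced by the $f_i$ (the cells $\tau(F_1, \dots, F_r, \dots)$) identifies $\sigma$ with the cell where $\conv(\scrM_i(\cdot)) \ni m_i^{\sigma_0,\sigma}$. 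Once that identification is in hand, the vanishing of $m_i^{\sigma_0,\sigma} - e_i^\ast$ on $\sigma \cap B$ is immediate from $f_i|_B \equiv -e_i^\ast$ (equivalently, from the defining equation of $B$ that the monomials of $A_i \setminus \{0\}$ all take value $0$ on $B$, hence $\la m_i^{\sigma_0,\sigma}, n\ra = 0$ there, while separately $f_i(n) = \la -e_i^\ast, n\ra$, so $\la m_i^{\sigma_0,\sigma} - e_i^\ast, n\ra = \la -e_i^\ast, n\ra - \la -e_i^\ast, n \ra = 0$... wait, more carefully: $\la m_i^{\sigma_0,\sigma}, n \ra = 0$ for $n \in B$ with $m_i^{\sigma_0,\sigma} \ne 0$, and the equation rearranges). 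The main obstacle is thus bookkeeping: tracking through which chart the polytope $\sigma$ is embedded and verifying that the ``origin'' choice $\sigma_0$ for $\Delta_i$ in \eqref{eq:del} is compatible with the identification of the tangent space of $\Int(\sigma_0)$ with $N'$. I expect no deep difficulty beyond carefully matching the sign and translation conventions of \eqref{eq:monodromy2}, \eqref{eq:del}, and the definition \eqref{eq:B} of $B$.
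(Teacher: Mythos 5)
There is a genuine gap in step (2), and it is close to circular. You invoke \pref{pr:monodromy} to conclude that the cell $\sigma$ lands in the locus where $m_i^{\sigma_0,\sigma}-e_i^\ast$ attains the minimum of $f_i$. But in \pref{pr:monodromy}, the symbol $m_i^{\sigma_j}$ is \emph{defined} as the element of $\Delta_i\cap M'$ for which $f_i=m_i^{\sigma_j}-e_i^\ast$ on $\sigma_j$ --- i.e.\ it is read off from the tropical polynomial, and that definition only makes sense once each $\sigma\in\scrP$ is known to lie in a single cell of $\scrP_B$, which is \pref{cd:loccont'}, which is what \pref{lm:subdiv} is being proved in order to establish. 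In the reconstruction setting, $m_i^{\sigma_0,\sigma}$ is instead defined from the \emph{monodromy data} of the given IAMS $U$ (via \eqref{eq:monodromy2}), and the whole content of the lemma is to show that this monodromy-theoretic $m_i^{\sigma_0,\sigma}$ does in fact coincide with the monomial of $f_i$ that attains the minimum over the image of $\sigma$. Your ``matching against the polyhedral decomposition of $B$'' is an assertion of precisely this claim, not a proof of it.

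What is actually needed --- and what the paper supplies --- is a convexity input coming from positivity: one considers the fan $\Sigma_\omega$ in $N'_\bR/\Lambda_\omega$ along an edge $\omega\in\Omega_i$, defines $\psi'_\omega$ by $\psi'_\omega|_{K_\sigma}=-m_i^{\sigma_0,\sigma}$, observes (using \cite[Remark 1.59.1]{MR2213573}, valid since $U_\xi$ is positive) that $\psi'_\omega$ is convex and equals $-\inf_{m\in\Delta_i}\la m,\cdot\ra$, and hence concludes that $m_i^{\sigma_0,\sigma}-e_i^\ast$ attains the minimum of $\min_{m\in A_i\setminus\{0\}}\la m+e_i^\ast,\cdot\ra$ on $\sigma$. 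Positivity is the decisive hypothesis and is entirely absent from your outline. There is also a smaller but real imprecision in step (3): on $B$ one has $f_i\equiv 0$ (by \eqref{eq:B}), not $f_i\equiv -e_i^\ast$; the correct rearrangement is that $B$ is the graph $\la e_i^\ast,n\ra=\min_{m\in A_i\setminus\{0\}}\la m+e_i^\ast,n\ra$ over $N'_\bR$, and once one knows the minimizer on $\sigma$ is $m_i^{\sigma_0,\sigma}-e_i^\ast$ the claimed equality $\la m_i^{\sigma_0,\sigma}-e_i^\ast,n\ra=0$ follows.
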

\begin{proof}
Take an edge $\omega \in \Omega_i$, and consider the fan structure $S_\omega \colon \mathrm{St} \lb \omega \rb \to N'_\bR / \Lambda_\omega$ along $\omega$ induced by the fan structure along a vertex.
The collection of sets
\begin{align}
\Sigma_\omega:=\lc K_\sigma :=\bR_{\geq 0} \cdot S_\omega \lb \sigma \rb \relmid \sigma \succ \omega \rc
\end{align}
is a complete rational fan in $N'_\bR / \Lambda_\omega$ (\cite[Definition 1.35]{MR2213573}).
We define a piecewise linear function $\psi_\omega'$ on the fan $\Sigma_\omega$ by setting $\left. \psi_\omega' \right|_{K_\sigma}:=-m_i^{\sigma_0, \sigma}$ for each maximal dimensional polyhedron $\sigma \succ \tau$.
($\beta_i \psi_\omega'$ coincides with the function $\psi_\omega$ defined in \cite[Remark 1.56]{MR2213573}.)
When $U_\xi$ is positive, the function $\psi_\omega'$ is convex and equal to
\begin{align}
-\inf_{m \in \Delta_i} \la m, \bullet \ra
\end{align}
(cf.~\cite[Remark 1.59.1]{MR2213573}).
This implies that $m:=m_i^{\sigma_0, \sigma}-e_i^\ast$ attains the minimum of the tropical polynomial $\min_{m \in A_i \setminus \lc 0 \rc} \la m+e_i^\ast, - \ra \colon N'_\bR \to \bR$ on $\sigma \subset N_\bR'$.
Since $B$ is the graph of the functions $\min_{m \in A_i \setminus \lc 0 \rc} \la m+e_i^\ast, - \ra$ $(1 \leq i \leq r)$ on $N'_\bR$ as we saw in the proof of \pref{lm:aff-str}, we get
\begin{align}
\sigma \cong \lb \left. \pi_{C_{v_0}} \right|_B \rb^{-1} \lb \sigma \rb \subset \lc n \in B \relmid \la e_i^\ast, n \ra=\la m_i^{\sigma_0, \sigma}, n \ra, 1 \leq i \leq r \rc.
\end{align}
\end{proof}

It turns out by \pref{lm:subdiv} that $U_\xi \subset B$ satisfies \pref{cd:loccont'}.
We can use the local model of tropical contractions $\delta \colon X \to U_\xi$ that we constructed in \pref{sc:construction}.
In order to finish the proof of the statement of \pref{th:reconstruction} for a quasi-simple IAMS $B$, we need to check that the integral affine structure which we constructed for $U_\xi$ in \pref{sc:construction} agrees with its original one of $U_\xi$.

\begin{lemma}\label{lm:aff-str2}
For any vertex $v \prec \xi$, the fan structure along $v$ given by
\begin{align}
S_v' \colon \mathrm{St}(v) \hookrightarrow N_\bR \xrightarrow{\left. \pi_{C_{v}} \right|_{N_\bR}} 
\left. N_\bR \middle/ \vspan \lb \bigcup_{i=1}^r \phi_{\xi, \Deltav_i} \lb v \rb \times \lc e_i \rc \rb \right. 
\end{align}
is is equivalent to the original fan structure $S_v$ of $U_\xi$.
\end{lemma}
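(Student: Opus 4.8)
The plan is to show that the two fan structures $S_v$ and $S_v'$ along a vertex $v\prec\xi$ differ only by an integral linear transformation, which is exactly the definition of equivalence. Both maps are restrictions of the projection along a subspace of $N_\bR$: the original $S_v$ projects $\mathrm{St}(v)$ into $N_\bR'\cong N_\bR/\oplus_{i=1}^r\bR e_i$ after first embedding $\mathrm{St}(v)\hookrightarrow B\subset N_\bR$ via the fan structure at $v_0$ composed with the monodromy identifications, while $S_v'$ projects along $\vspan\bigl(\bigcup_{i=1}^r\phi_{\xi,\Deltav_i}(v)\times\{e_i\}\bigr)$. First I would observe that by \pref{lm:aff-str} the restriction of $\left.\pi_{C_v}\right|_{N_\bR}$ to $B$ is a bijection onto $N''_\bR$, so $S_v'$ can be viewed as a map $\mathrm{St}(v)\to N''_\bR$; similarly the original chart realizes $\mathrm{St}(v)$ inside $N'_\bR$. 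The point is then to identify $N''_\bR$ with $N'_\bR$ compatibly.

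Next I would make the comparison explicit on the level of tangent lattices. The original integral affine structure on $\mathrm{St}(v)$ is obtained from the chart at $v_0$ by parallel transport: moving from the chart at $v_0$ to the chart at $v$ along a path through $\Int(\sigma_0)$ changes coordinates by a transformation determined by the monodromy data, and by \pref{pr:monodromy} (together with the definition of $\phi_{\xi,\Deltav_i}$ in \eqref{eq:phin1} and the definitions \eqref{eq:delv}, \eqref{eq:del} of $\Deltav_i$ and $\Delta_i$) this is precisely the transformation $n\mapsto n+\sum_{i=1}^r\langle m_i^{\sigma_0,\sigma}-e_i^\ast,\tilde n\rangle\cdot\bigl(\phi_{\xi,\Deltav_i}(v)-\phi_{\xi,\Deltav_i}(v_0)\bigr)$ appearing in the construction of $X(f_1,\dots,f_r)$. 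I would use \pref{lm:subdiv}, which says that each maximal polytope $\sigma\succ v$ sits inside $\{n\in B\mid\langle m_i^{\sigma_0,\sigma}-e_i^\ast,n\rangle=0\}$, to check that the inclusion $\mathrm{St}(v)\hookrightarrow N_\bR$ used in $S_v'$ is, cell by cell on the cones $K_\sigma$, the graph of exactly these linear functions — so that projecting along the $e_i$-directions recovers the chart coordinates at $v_0$, while projecting along $\phi_{\xi,\Deltav_i}(v)\times\{e_i\}$ recovers the chart coordinates at $v$. The discrepancy between the two projections is the quotient of $N_\bR$ by $\vspan\bigl(\bigcup_i\phi_{\xi,\Deltav_i}(v)\times\{e_i\}\bigr)$ versus by $\oplus_i\bR e_i$, and since $\phi_{\xi,\Deltav_i}(v)$ is a single integral vector, the induced map $N'_\bR\to N''_\bR$ is an integral linear isomorphism.

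The main obstacle I expect is the bookkeeping needed to verify that the \emph{fan of cones} $\{K_\sigma\}$ transported into $N''_\bR$ by $S_v'$ coincides, as a fan, with the one transported by $S_v$ — not merely that the two ambient linear identifications agree. Concretely one must confirm that for each $\sigma\succ v$, the image $S_v'(\sigma\cap\mathrm{St}(v))$ is the cone $\bR_{\geq 0}\cdot S_v(\sigma)$ pushed through the integral isomorphism, which amounts to chasing the combinatorial correspondence of \pref{pr:dual} and \pref{th:st-intersection} through the projection $\pi_{C_v}$ and matching it against the fan $\Sigma_v$ of \cite[Definition 1.35]{MR2213573}. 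Once the identification of ambient lattices is pinned down using \pref{pr:monodromy} and \pref{lm:subdiv}, this cone-matching should follow formally from the fact that $B$ is (locally near $v$) the graph over $N'_\bR$ of the piecewise-linear functions $\min_{m\in A_i\setminus\{0\}}\langle m+e_i^\ast,-\rangle$, whose domains of linearity are exactly the images of the $\sigma\succ v$; I would conclude that $S_v$ and $S_v'$ are equivalent, hence that the integral affine structure with singularities constructed on $U_\xi$ in \pref{sc:construction} agrees with the original one, completing the proof of \pref{th:reconstruction} in the quasi-simple case.
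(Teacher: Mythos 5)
Your proposal has the right ingredients — you correctly identify \pref{pr:monodromy} and \pref{lm:subdiv} as the two essential inputs — but the overall architecture has a circularity that the paper's proof is specifically designed to avoid. You describe the original fan structure as ``the original $S_v$ projects $\mathrm{St}(v)$ into $N_\bR'$ after first embedding $\mathrm{St}(v)\hookrightarrow B\subset N_\bR$ via the fan structure at $v_0$ composed with the monodromy identifications.'' But $S_v$ is \emph{abstract} data of the given quasi-simple IAMS; it is not handed to you as a projection. The embedding $U_\xi\hookrightarrow B$ is performed using only $S_{v_0}$, and there is no a priori ``monodromy identification'' that realizes $S_v$ inside $N_\bR$ — producing such a realization is precisely the content of the lemma. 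Moreover, the monodromy transformations $T_\omega^\rho, T^\rho_{v_1,v_2}, T_\omega^{\sigma_1,\sigma_2}$ are loop-based invariants (compositions of two transition maps), so knowing them alone does not pin down the individual transition map from the chart at $v_0$ to the chart at $v$ through $\Int(\sigma_0)$; you can only compare a chain of fan structures if you have an anchor.

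The paper supplies that anchor and then propagates: the base case $S_{v_0}=S_{v_0}'$ is a tautology (the embedding $U_\xi\hookrightarrow B$ was defined using $S_{v_0}$), and the inductive step connects $v$ to an already-handled vertex $v'$ by an edge $\omega$, computes the monodromy $T_\omega^\rho$ for the new structure via \pref{pr:monodromy} and \pref{lm:subdiv}, and matches it to the given one; since $S_{v'}\sim S_{v'}'$ and the monodromies agree, $S_v\sim S_v'$ follows. This is the step that your direct comparison cannot reach without secretly using the base-case anchoring plus propagation, i.e., without reproducing the induction. Your worry about ``cone-matching'' is also a red herring once the argument is set up this way: once the ambient identification of lattices is established by the inductive step, the cones match automatically because both fan structures are defined by the same rational polyhedral complex $\scrP$. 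To repair your proof, you should formalize the anchor ($S_{v_0}=S_{v_0}'$) and organize the comparison as an induction on the edge distance from $v_0$ inside $\xi$, with the monodromy computation of \pref{pr:monodromy} plus \pref{lm:subdiv} carrying the inductive step.
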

\begin{proof}
We show the statement by induction on the number of edges of $\xi$ that are necessary to connect $v_0$ and $v$.
It is obvious that we have $S_{v_0}=S_{v_0}'$ for the vertex $v_0$.
Assume that the statement holds for any vertex of $\xi$ that can be connected to $v_0$ by $k$ edges.
We will show the statement for all vertices $v \prec \xi$ that can be connected to $v_0$ by $k+1$ edges.
There exists a vertex $v' \prec \xi$ that can be connected to $v_0$ by $k$ edges and to $v$ by an edge $\omega \prec \xi$.
For any polyhedron $\rho \in \scrP(d-1)$ such that $\rho \succ \xi$, we consider the monodromy transformation $T_\omega^\rho$ that we considered in \eqref{eq:monodromy0} for $\omega, \rho$.
Let $\sigma_+, \sigma_-$ be the maximal-dimensional polyhedra containing $\rho$.
When we consider the integral affine structure determined by $\lc S_v' \rc_{v}$, by \pref{pr:monodromy} and \pref{lm:subdiv}, the monodromy transformation $T_\omega^\rho$ is given by
\begin{align}
T_\omega^\rho(n)
&=n +\sum_{i=1}^r \la m_i^{\sigma_0, \sigma_-}-m_i^{\sigma_0, \sigma_+}, n \ra \cdot \lb \phi_{\xi, \Deltav_i} \lb v \rb- \phi_{\xi, \Deltav_i} \lb v' \rb \rb\\ \label{eq:phin2}
&=n +\sum_{i=1}^r \la m_i^{\sigma_0, \sigma_-}-m_i^{\sigma_0, \sigma_+}, n \ra \cdot 
\lb n_i^{v_0, v}- n_i^{v_0, v'} \rb \\
&=n +\sum_{i=1}^r \la m_i^{\sigma_+, \sigma_-}, n \ra \cdot n_i^{v', v} \\
&=
\left\{ \begin{array}{ll}
    n + \la  \alpha_{i_0} \check{d}_\rho, n \ra \beta_{i_0} d_\omega & \omega \in \Omega_{i_0}, \rho \in R_{i_0} \mathrm{\ for\ some\ common\ } i_0 \in \lc 1, \cdots, r \rc \\
    n & \mathrm{otherwise}.
  \end{array} 
\right.
\end{align}
For \eqref{eq:phin2}, we use \eqref{eq:phin1}.
This coincides with the monodromy of the original integral affine structure determined by the fan structures $\lc S_v \rc_{v}$.
Since the fan structures $S_{v'}$ and $S_{v'}'$ at $v'$ are equivalent by the induction assumption, this implies that the fan structures $S_{v}$ and $S_{v}'$ at $v$ are also equivalent.
Therefore, the claim holds.
\end{proof}

Lastly, we show the statement of \pref{th:reconstruction} for a very simple IAMS $B$.
Suppose $U_\xi$ is very simple.
Then one has $\kappa_i=\alpha_i=\beta_i=1$ and $\Deltav_i(\xi)=\Deltav_i, \Delta_i(\xi)=\Delta_i$.
Since $\Deltav_i, \Delta_i$ are the polytopes that define the local model of tropical contractions, we can easily see from the definition of being very simple that the local model of tropical contractions $\delta$ is very good in the sense of \pref{df:local-contraction}.
Furthermore, as we saw in \eqref{eq:direct-prod}, the space $X \lb f_1, \cdots, f_r \rb (\supset V)$ is isomorphic to the direct product
\begin{align}
N_{0, \bR} \times X' \lb f_1 \rb \times \cdots \times X' \lb f_r \rb,
\end{align}
where $X' \lb f_i \rb$ is the tropical hypersurface defined by $f_i \colon N_{i, \bR} \to \bR$ in the tropical toric variety $X_{C_i} \lb \bT \rb$ with $C_i:=\cone \lb \Deltav_i \times \lc e_i \rc \rb \subset N_{i, \bR}$.
The Newton polytope of $f_i$ is the convex hull of $\lc 0 \rc \cup \lb \Delta_i \times \lc -e_i^\ast \rc \rb$.
$\Deltav_i$ and $\Delta_i$ are standard simplices, and it is easy to see that the tropical hypersurface $X' \lb f_i \rb \subset X_{C_i} \lb \bT \rb$ is a tropical manifold.
Since the direct product of tropical manifolds is also a tropical manifold, the space $X \lb f_1, \cdots, f_r \rb$ is also a tropical manifold.
Thus the domain $V$ of the local model of tropical contractions $\delta$ is a tropical manifold too.

\begin{remark}
There is a condition for IAMS called \emph{semi-simple polytopal}, which was introduced in \cite{RZ20, RZ21}.
We refer the reader to \cite[Section 2]{RZ20} for its definition.
Let $U_\xi$ be a quasi-simple local model of IAMS.
If \eqref{eq:delv} and \eqref{eq:del} constructed for $U_\xi$ are polytopes such that $\sum_{i=1}^r T \lb \Delta_i \rb$ is the internal direct sum of $\lc T \lb \Delta_i \rb \rc_{i \in \lc 1, \cdots, r \rc}$ and $\sum_{i=1}^r T \lb \Deltav_i \rb$ is the internal direct sum of $\lc T \lb \Deltav_i \rb \rc_{i \in \lc 1, \cdots, r \rc}$, then they play the role of the collections of polytopes appearing in \cite[Definition 4]{RZ20}, and it turns out that $U_\xi$ is semi-simple polytopal in their sense.
(In this case, the tropical contraction constructed above is good in the sense of \pref{df:local-contraction}.)
In particular, if a local model of IAMS $U_\xi$ is either simple or very simple, then it is also semi-simple polytopal.
On the other hand, in \pref{df:simple}, we do not impose the \emph{compatibility} in the sense of \cite[Definition 2]{RZ20} on the quasi-simple/very simple IAMS $B$.
\end{remark}

\section{Contractions of tropical Calabi--Yau varieties}\label{sc:cy}

\subsection{Toric degenerations of Calabi--Yau varieties}\label{sc:tcy}

We recall the construction by Gross \cite{MR2198802} of toric degenerations of Calabi--Yau varieties and their dual intersection complexes.
Let $d$ and $r$ be positive integers.
Consider a free $\bZ$-module $M$ of rank $d+r$ and its dual lattice $N:=\Hom(M, \bZ)$.
We set $M_Q:=M \otimes_\bZ Q$ and $N_Q:=N \otimes_\bZ Q=\Hom(M, Q)$ for $Q=\bQ, \bR$ again.
Let $\Delta \subset M_\bR$ be a reflexive polytope, and $\Delta^\ast := \lc n \in N_\bR \relmid \la \Delta, n \ra \geq -1 \rc$ be its polar polytope.
For a face $F \prec \Delta$, its dual face $F^\ast \prec \Delta^\ast$ is defined by
\begin{align}
F^\ast := \lc n \in \Delta^\ast \relmid \la F, n \ra = -1 \rc.
\end{align}
Let $\Sigma \subset N_\bR$ be the normal fan of $\Delta$.
Consider the convex piecewise linear function $\varphi \colon N_\bR \to \bR$ that corresponds to the anti-canonical sheaf on the toric variety associated with the fan $\Sigma$.
Let $\lc e_1, \cdots, e_l \rc$ be the set of primitive generators of one-dimensional cones of the fan $\Sigma$.
Then we have $\varphi(e_i)=1$ for any $i \in \lc 1, \cdots, r \rc$.
A Minkowski decomposition $\Delta =\Delta_1+ \cdots + \Delta_r$ is called a \emph{nef-partition} if the induced decomposition $\varphi = \varphi_1 + \cdots + \varphi_r$ satisfies $\varphi_i(e_j) \in \lc 0, 1\rc$ for any $i, j \in \lc 1, \cdots, r\rc$.
Let $\nabla_i \subset N_\bR$ be the convex hull of $0 \in N_\bR$ and all $e_j$ such that $\varphi_i(e_j) =1$.
Then the Minkowski sum $\nabla=\nabla_1 + \cdots + \nabla_r$ becomes a reflexive polytope, and one has
\begin{align}
\nabla^\ast=\conv \lc \Delta_1, \cdots, \Delta_r \rc, \quad \Delta^\ast=\conv \lc \nabla_1, \cdots, \nabla_r \rc,
\end{align}
where $\nabla^\ast \subset M_\bR$ denotes the polar polytope of $\nabla$ (cf.~\cite[Theorem 4.10]{MR1463173}).
We also know
\begin{align}\label{eq:nabla}
\nabla_i&= \lc n \in N_\bR \relmid \la \Delta_j, n \ra \geq -\delta_{i, j}, \forall j \in \lc 1, \cdots, r \rc \rc \\ \label{eq:lattice-pts}
\partial \Delta^\ast \cap N&=\bigsqcup_{i=1}^r \lb \nabla_i \cap N \setminus \lc 0 \rc \rb, 
\quad \partial \nabla^\ast \cap M=\bigsqcup_{i=1}^r \lb \Delta_i \cap M \setminus \lc 0 \rc \rb
\end{align}
\cite[Proposition 3.13, Corollary 3.23]{MR2405763}.
Let $\Sigmav \subset M_\bR$ be the normal fan of $\nabla$, and $\varphiv \colon M_\bR \to \bR$ be the piecewise linear function corresponding to the anti-canonical sheaf on the toric variety associated with the fan $\Sigmav$.
We also write the decomposition of $\varphiv$ induced by $\nabla=\nabla_1 + \cdots + \nabla_r$ as $\varphiv = \varphiv_1 + \cdots + \varphiv_r$.
For lattice polytopes $\mu \subset \partial \Delta^\ast$ and $\eta \subset \partial \nabla^\ast$, we define
\begin{align}
\beta_i^\ast(\mu)&:= \lc n \in \mu \relmid \varphi_i(n)=1 \rc \subset \nabla_i \cap \partial \Delta^\ast, \quad \rotatebox[origin=c]{180}{$\beta$} (\mu):=\sum_{i=1}^r \beta_i^\ast(\mu) \subset \nabla, \\
\rotatebox[origin=c]{180}{$\beta$} _i^\ast(\eta)&:= \lc m \in \eta \relmid \check{\varphi}_i(m)=1 \rc \subset \Delta_i \cap \partial \nabla^\ast, \quad \beta (\eta):=\sum_{i=1}^r \rotatebox[origin=c]{180}{$\beta$} _i^\ast(\eta) \subset \Delta.
\end{align}
The subsets $\beta_i^\ast(\mu), \rotatebox[origin=c]{180}{$\beta$} _i^\ast(\eta)$ are faces of $\mu, \eta$ respectively.
Since
\begin{align}\label{eq:polytope-b}
\nabla_i \cap \partial \Delta^\ast=\lc n \in \partial \Delta^\ast \relmid \varphi_i(n)=1 \rc, \quad
\Delta_i \cap \partial \nabla^\ast=\lc n \in \partial \nabla^\ast \relmid \check{\varphi}_i(n)=1 \rc
\end{align}
\cite[Proposition 3.19]{MR2405763}, we have
\begin{align}\label{eq:beta_i}
\beta_i^\ast(\mu)=\mu \cap \nabla_i, \quad \rotatebox[origin=c]{180}{$\beta$} _i^\ast(\eta)=\eta \cap \Delta_i.
\end{align}

We take coherent subdivisions $\Sigma' \subset N_\bR, \Sigmav' \subset M_\bR$ of the fans $\Sigma, \Sigmav$ whose fan polytopes (i.e., the convex hulls of primitive generators of all one-dimensional cones) remain to be $\Delta^\ast, \nabla^\ast$ respectively, and consider an integral piecewise linear function $\check{h} \colon M_\bR \to \bR$ that is strictly convex on the fan $\Sigmav'$.
We also assume that the function $\check{h}':=\check{h}-\varphiv \colon M_\bR \to \bR$ is convex (not necessarily strict convex) on $\Sigmav'$.
Let further $\nabla^{\check{h}}, \nabla^{\check{h}'} \subset N_\bR$ denote the Newton polytopes \eqref{eq:newton} of $\check{h}$ and $\check{h}'$.
We set
\begin{align}
\scrR^{\check{h}}_{\Delta^\ast}&:= \lc (F_1, F_2) \relmid F_1 \prec \Delta^\ast,  F_2 \prec \nabla^{\check{h}'}\ \mathrm{such\ that}\ \rotatebox[origin=c]{180}{$\beta$} (F_1) \neq \emptyset, F_1+F_2 \prec \Delta^\ast + \nabla^{\check{h}'}\rc\\
\scrP^{\check{h}}_{\Delta^\ast}&:= \lc \rotatebox[origin=c]{180}{$\beta$} (F_1)+F_2 \relmid (F_1, F_2) \in \scrR^{\check{h}}_{\Delta^\ast} \rc \\
B^{\check{h}}_\nabla&:=\bigcup_{(F_1, F_2) \in \scrR^{\check{h}}_{\Delta^\ast}} \rotatebox[origin=c]{180}{$\beta$} (F_1)+F_2 \subset N_\bR.
\end{align}
One has $B^{\check{h}}_\nabla \subset \partial \nabla^{\check{h}}$ (\cite[Corollary 3.3]{MR2198802}).
We also consider
\begin{align}\label{eq:t-Delta-i}
\tilde{\Delta}_i:=\lc (m, l) \in M_\bR \oplus \bR \relmid m \in \Delta_i, l \geq \check{h}'(m) \rc, \quad 
\tilde{\Delta}:=\sum_{i=1}^r \tilde{\Delta}_i
\end{align}
and the normal fan $\tilde{\Sigma} \subset N_\bR \oplus \bR$ of $\tilde{\Delta}$.
The fan $\tilde{\Sigma}$ is the union of
\begin{enumerate}
\item $\lc \cone \lb F_1 \rb \times \lc 0 \rc \relmid F_1 \prec \Delta^\ast \rc$,
\item $\lc \cone(F_1) \times \lc 0 \rc + \cone \lb F_2 \times \lc 1 \rc \rb \relmid F_1 \prec \Delta^\ast,  F_2 \prec \nabla^{\check{h}'}, F_1+F_2 \prec \Delta^\ast + \nabla^{\check{h}'} \rc$, and 
\item $\lc \cone \lb F_2 \times \lc 1 \rc \rb \relmid F_2 \prec \nabla^{\check{h}'} \rc$
\end{enumerate}
(\cite[Proposition 3.7]{MR2198802}).
The polytope $\tilde{\Delta}$ determines a line bundle on the toric variety associated with $\tilde{\Sigma}$.
It is induced by the piecewise linear function $\tilde{\varphi}$ on $\tilde{\Sigma}$ defined by
\begin{align}
\tilde{\varphi} (\tilde{n}):=-\inf_{\tilde{m}\in \tilde{\Delta}} \la \tilde{m},\tilde{n}\ra,
\end{align}
where $\tilde{n} \in N_\bR \oplus \bR$.
The Minkowski decomposition $\tilde{\Delta}=\sum_{i=1}^r \tilde{\Delta}_i$ induces a decomposition $\tilde{\varphi}=\sum_{i=1}^r \tilde{\varphi}_i$ with $\tilde{\varphi}_i \lb \lb n , 0 \rb \rb=\varphi_i(n)$ and $\tilde{\varphi}_i \lb \lb n , 1 \rb \rb=0$ for any $n \in \nabla^{\check{h}'}$.
We take a subdivision $\tilde{\Sigma}'$ of the fan $\tilde{\Sigma} \subset N_\bR \oplus \bR$ so that it satisfies the following conditions:
\begin{enumerate}
\item The fan $\lc C \cap (N_\bR \oplus \lc 0\rc) \relmid C \in \tilde{\Sigma}' \rc$ coincides with the fan $\Sigma'$.
\item Every $1$-dimensional cone of $\tilde{\Sigma}'$ not contained in $N_\bR \oplus \lc 0\rc$ is generated by a primitive vector $(n, 1)$ with $n \in \nabla^{\check{h}'} \cap N$.
\end{enumerate}
Such a subdivision $\tilde{\Sigma}'$ is called \emph{good} in \cite[Definition 3.8]{MR2198802}.
The fan $\tilde{\Sigma}'$ consists of the following three sorts of cones (\cite[Observation 3.9]{MR2198802}):
\begin{enumerate}
\item cones of the form $\cone(\mu) \times \lc 0 \rc$ for $\cone(\mu) \in \Sigma'$ with $\mu \subset \partial \Delta^\ast$,
\item cones of the form $\cone(\mu) \times \lc 0 \rc + \cone \lb \nu \times \lc 1 \rc \rb$ where $\cone(\mu) \in \Sigma'$ for $\mu \subset \partial \Delta^\ast,  \nu \subset \partial \nabla^{\check{h}'}$, and $\mu + \nu$ is contained in a face of $\Delta^\ast + \nabla^{\check{h}'}$, and
\item cones contained in $\cone(\nabla^{\check{h}'} \times \lc 1\rc)$.
\end{enumerate}
Cones of the second type with $\rotatebox[origin=c]{180}{$\beta$}(\mu) \neq \emptyset$ are called \textit{relevant}.
We set
\begin{align}
\scrR(\tilde{\Sigma}')&:=\lc (\mu, \nu) \relmid 
\begin{array}{l}
\mu \subset \partial \Delta^\ast, \nu \subset \partial \nabla^{\check{h}'} \\
\cone(\mu) \times \lc 0 \rc + \cone \lb \nu \times \lc 1 \rc \rb \mathrm{\ is\ a\ relevant\ cone\ in\ } \tilde{\Sigma}'
\end{array}
\rc \\
\scrP(\tilde{\Sigma}')&:=\lc \rotatebox[origin=c]{180}{$\beta$} (\mu) + \nu \relmid (\mu, \nu) \in \scrR(\tilde{\Sigma}') \rc.
\end{align}
Then one has
\begin{align}
B^{\check{h}}_\nabla=\bigcup_{(\mu, \nu) \in \scrR(\tilde{\Sigma}')} \rotatebox[origin=c]{180}{$\beta$} (\mu) + \nu,
\end{align}
and $\scrP(\tilde{\Sigma}')$ is a polyhedral decomposition of $B^{\check{h}}_\nabla$ \cite[Proposition 3.12, Definition 3.13]{MR2198802}.

For each $\tau \in \scrP(\tilde{\Sigma}')$, we take an element $a_\tau \in \rint (\tau)$, and consider the associated subdivision $\widetilde{\scrP} ( \tilde{\Sigma}' )$ of $\scrP ( \tilde{\Sigma}' )$ that we considered in \eqref{eq:subdivision}.
Let further $\Gamma(\tilde{\Sigma}') \subset B^{\check{h}}_\nabla$ denote the discriminant locus defined by \eqref{eq:discriminant}.
For each vertex $v \in \scrP(\tilde{\Sigma}')$, let $W_v^\circ$ be the union of interiors of all simplices of $\widetilde{\scrP}(\tilde{\Sigma}')$ containing $v$.
The vertex $v$ is written as $v=\rotatebox[origin=c]{180}{$\beta$} (\mu_v) + \nu_v$ with $(\mu_v, \nu_v) \in \scrR(\tilde{\Sigma}')$.
We define a chart
\begin{align}\label{eq:gfanstr}
\psi_v \colon W_v^\circ \to N_\bR / \vspan (\beta^\ast_1(\mu_v), \cdots, \beta^\ast_r(\mu_v))
\end{align}
via the projection.
For each maximal-dimensional face $\sigma \in \scrP(\tilde{\Sigma}')$ of $B$, we define a chart
\begin{align}
\psi_ \sigma \colon \rint(\sigma) \hookrightarrow \aff(\sigma)
\end{align}
via the inclusion.
These charts $\psi_v$ and $\psi_\sigma$ define an integral affine structure on $B^{\check{h}}_\nabla \setminus \Gamma(\tilde{\Sigma}')$, and we obtain the IAMS of dimension $d$, which is equipped with the polyhedral decomposition $(B^{\check{h}}_\nabla, \scrP(\tilde{\Sigma}'))$.

Let $R$ be a discrete valuation ring and $k$-algebra with algebraically closed residue class field $k$.
Let further $X_{\tilde{\Sigma}'}$ be the toric variety over $k$ associated with the fan $\tilde{\Sigma}'$, and $t$ be the regular function on $X_{\tilde{\Sigma}'}$ corresponding to $(0, 1) \in M \oplus \bZ$, which defines the morphism $f \colon X_{\tilde{\Sigma}'} \to \bA^1$.
Let further $\scL_i$ be the line bundle on $X_{\tilde{\Sigma}'}$ associated with the polytope $\tilde{\Delta}_i$, and take a general section $s_i \in \Gamma \lb X_{\tilde{\Sigma}'},  \scL_i \rb$ for every $i \in \lc 1, \cdots, r \rc$.
We consider the variety $\scX \subset X_{\tilde{\Sigma}'}$ defined by
\begin{align}\label{eq:tss}
ts_1+s_1^0= \cdots =ts_r+s_r^0=0,
\end{align}
where $s_i^0 \in \Gamma \lb X_{\tilde{\Sigma}'},  \scL_i \rb$ is the section defined by $(0, 0) \in \tilde{\Delta}_i$.
By restricting the morphism $f \colon X_{\tilde{\Sigma}'} \to \bA^1$ to $\scX$ and taking the base change to $R$ with uniformizing parameter $t$, we obtain a morphism $\scX \to \Spec R$.
By abuse of notation, we also write it as $f \colon \scX \to \Spec R$.
This is a toric degeneration of Calabi--Yau varieties \cite[Theorem 3.10]{MR2198802}, and the IAMS equipped with the polyhedral decomposition $(B^{\check{h}}_\nabla, \scrP(\tilde{\Sigma}'))$ forms its dual intersection complex \cite[Proposition 3.14]{MR2198802}.
We refer the reader to \cite[Definition 4.1]{MR2213573} for the definitions of toric degenerations and their dual intersection complexes.

\begin{remark}
There is also another construction of integral affine spheres with singularities for Calabi--Yau complete intersections by Haase and Zharkov, which was discovered independently \cite{HZ02, MR2187503}.
\end{remark}

\subsection{Tropical Calabi--Yau varieties}\label{sc:tcyci}

The aim of this subsection is to show some propositions that are necessary to prove \pref{th:glcontr}. 
We consider the tropical Calabi--Yau variety corresponding to the toric degeneration that we recalled in the previous subsection.
We work on the same setup as in the previous subsection.
Let $f_i \colon N_\bR \to \bR$ be the tropical polynomial defined by
\begin{align}\label{eq:trop}
f_i (n) :=\min_{m \in \Delta_i \cap M} \lc \check{h}(m)+\la m, n \ra \rc,
\end{align}
and $X(f_i)^\circ \subset N_\bR$ be the tropical hypersurface defined by $f_i$.
We consider the stable intersection
\begin{align}
X(f_1, \cdots, f_r)^\circ :=X(f_1)^\circ \cap_{\mathrm{st}}  \cdots \cap_{\mathrm{st}}  X(f_r)^\circ
\end{align}
and its closure $X(f_1, \cdots, f_r)$ in the tropical toric variety $X_{\Sigma'}(\bT)$ associated with the fan $\Sigma'$.

\begin{proposition}\label{pr:trop-stable}
Let $K$ be the quotient field of $R$, and $f' \colon X \to \Spec K$ be the base change of the toric degeneration $f \colon \scX \to \Spec R$ to $K$.
Then one has
\begin{align}\label{eq:trop-stable}
\trop(X)=X(f_1, \cdots, f_r),
\end{align}
where $\trop(X)$ denotes the tropicalization of $X$.
\end{proposition}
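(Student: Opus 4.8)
\textbf{Proof proposal for \pref{pr:trop-stable}.}
The plan is to identify $X \subset \Spec K \ld M \rd$ (or rather, its intersections with torus orbits of $X_{\Sigma'}$) with a complete intersection of hypersurfaces whose defining Laurent polynomials have tropicalization $f_i$, and then invoke the fact that the tropicalization of a generic complete intersection is the stable intersection of the tropicalizations of the individual hypersurfaces. First I would unwind the toric construction of \pref{sc:tcy}: the subscheme $\scX \subset X_{\tilde{\Sigma}'}$ is cut out by the equations \eqref{eq:tss}, $ts_i+s_i^0=0$, where $s_i$ is a general section of the line bundle $\scL_i$ associated with $\tilde{\Delta}_i$. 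After base change to $K$ with uniformizer $t$, the fiber $X$ lives in the torus of $X_{\Sigma'}$ (together with its toric boundary), and each equation becomes, in the coordinate ring $K \ld M \rd$, a Laurent polynomial $g_i=\sum_{m \in \Delta_i \cap M} c_{i,m} x^m$ whose coefficient $c_{i,m}$ has valuation $v_K(c_{i,m})=\check h(m)$ for the term coming from $(m,\check h'(m)) \in \tilde{\Delta}_i$ via the section $s_i$, and valuation $0$ for the term $(0,0)$ giving $s_i^0$ (the factor $t$ in $ts_i$ contributes $+1$, and $\check h=\check h'+\varphiv$ together with $\varphiv_i(m)=1$ on $\Delta_i$ accounts for the bookkeeping; this is exactly the shift built into the definition of $\tilde{\Delta}_i$ in \eqref{eq:t-Delta-i}). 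Hence by definition of tropicalization of a hypersurface one gets $\trop(V(g_i))=X(f_i)^\circ$ with $f_i$ as in \eqref{eq:trop}, and the extension to $X_{\Sigma'}(\bT)$ matches the closure on both sides orbit by orbit.

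Next I would address genericity. Since the sections $s_i$ are chosen generally, the coefficients $c_{i,m}$ are generic among those of the prescribed valuations, so the hypersurfaces $V(g_i)$ intersect properly and the pair $(V(g_1),\dots,V(g_r))$ is a ``tropically transverse'' or ``generic'' complete intersection in the sense needed to apply the stable-intersection formula. The key input is the standard theorem (e.g.\ \cite[Theorem 3.6.10, 4.6.9]{MR3287221}, together with the correspondence between tropicalization and stable intersection for generic coefficients) that for generic $g_i$ one has
\begin{align}
\trop\lb V(g_1) \cap \cdots \cap V(g_r) \rb
= \trop(V(g_1)) \cap_{\mathrm{st}} \cdots \cap_{\mathrm{st}} \trop(V(g_r))
= X(f_1)^\circ \cap_{\mathrm{st}} \cdots \cap_{\mathrm{st}} X(f_r)^\circ
\end{align}
as weighted balanced polyhedral complexes in $N_\bR$. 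Restricting to the torus gives \eqref{eq:trop-stable} over $N_\bR \subset X_{\Sigma'}(\bT)$; to get the full statement I would run the same argument on each torus orbit $O_C(\bT)$ of $X_{\Sigma'}(\bT)$, using that $X \cap O_C$ is again a generic complete intersection of the restricted hypersurfaces, and that both sides of \eqref{eq:trop-stable} are defined as the disjoint unions (resp.\ closures) of their intersections with the orbits. One also checks that the polyhedral structure $\scrP_{X(f_1,\dots,f_r)}$ from \pref{sc:construction} is compatible with the fan $\Sigma'$ appearing here, so that taking closures in $X_{\Sigma'}(\bT)$ is legitimate.

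The main obstacle I anticipate is the genericity/properness bookkeeping at the toric boundary: one must verify that ``$s_i$ general'' in $\Gamma(X_{\tilde\Sigma'},\scL_i)$ really does force the restrictions to all torus orbits of $X_{\Sigma'}$ to be generic enough for the stable-intersection theorem to apply, and that no unexpected components of $X$ lie in the boundary (this is essentially where the fact that $f\colon\scX\to\Spec R$ is a \emph{toric degeneration}, hence $X$ is a genuine Calabi--Yau complete intersection of the expected dimension $d$, gets used). A secondary technical point is to confirm that the valuations of the coefficients come out exactly as $\check h(m)$ on the nose — this requires carefully tracking the identification of sections of $\scL_i$ with lattice points of $\tilde\Delta_i$ and the role of the uniformizer $t=x^{(0,1)}$ — but this is a routine computation once the setup is laid out. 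I would isolate the genericity statement as a lemma (possibly citing \cite{MR3064984} for the behavior of tropicalization under the $\Spec R\ld M\rd^n$-construction) and treat the orbit-by-orbit reduction as a formal consequence of the definitions of $\trop$ and of $X(f_1,\dots,f_r)$.
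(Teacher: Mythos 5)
Your high-level plan matches the paper's: identify the defining Laurent polynomials $g_i = t s_i + s_i^0$ with tropicalization $f_i$ (your bookkeeping of $\check h = \check h' + \check\varphi$ and the uniformizer $t=x^{(0,1)}$ is exactly right), and then argue that a generic complete intersection tropicalizes to the stable intersection. The citation to Kapranov for $\trop(V(g_i))=X(f_i)^\circ$ and the recognition that genericity at the boundary is the crux are both on target. However, there are two places where the proposal either has a gap or departs from the paper's route.

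The gap is in the ``standard theorem'' you invoke. The references \cite[Theorems 3.6.10, 4.6.9]{MR3287221} say that stable intersection is balanced and characterize which cells survive, but they do \emph{not} say that $\trop$ of an intersection equals the stable intersection of tropicalizations. The statement you need is closer to \cite[Theorem 5.3.3]{MR3064984}, and even that is not a black box here, because your coefficients $c_{i,m}$ are not free generic elements of $K$: their valuations are \emph{prescribed} to be $\check h(m)$ (with only the residues in $k$ left to choose generically). The paper handles this by induction on the number $r'$ of hypersurfaces intersected. The inductive step reduces to showing
\begin{align}
\trop(Y^\circ \cap X_{r_0}^\circ) = \trop(Y^\circ) \cap_{\mathrm{st}} \trop(X_{r_0}^\circ),
\end{align}
which in turn is verified pointwise via initial degenerations: for each $n \in N_\bQ$, the residue-field coefficients $c_{m,\check h'(m)}$ are chosen generically so that $Y^\circ_n$ and $X_{r_0,n}^\circ$ intersect properly or not at all, and there are only finitely many initial degenerations to worry about by \cite[Theorem 2.2.1]{MR2707751}. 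Multiplicities are matched using the intersection-theoretic apparatus of \cite{MR3064984}. Your proposal treats all of this as a quotable fact; spelling it out is the real content.

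The second difference is your orbit-by-orbit treatment of the boundary. The paper does not do this. It works entirely inside the dense torus $T$, establishes $\trop(X\cap T) = X(f_1,\dots,f_r)^\circ$ there, and then appeals to \cite[Lemma 3.1.1]{MR3064984}: since $X$ is irreducible and meets $T$, $\trop(X)$ is the closure of $\trop(X\cap T)$ in $X_{\Sigma'}(\bT)$, which by definition equals $X(f_1,\dots,f_r)$. This avoids entirely the obstacle you flag (verifying genericity of the restrictions to each orbit), whereas your plan would have to confront it. Your orbit-by-orbit approach could in principle be made to work, but it is more delicate and is not how the paper proceeds.
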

\begin{proof}
Since tropicalizations are invariant under field extensions (cf.~e.g.~\cite[Proposition 3.7]{MR3088913}), we may replace the field $K$ with its algebraic closure.
By abuse of notation, let $K$ denote the algebraic closure of the quotient field of $R$, and we work over it in the following.

We follow the proofs of \cite[Proposition 2.7.8, Theorem 5.3.3]{MR3064984}.
Let $\scX_i \subset X_{\tilde{\Sigma}'}$ be the hypersurface defined by $ts_i+s_i^0=0$, and $X_i$ be its base change to $K$.
Let further $T:=\Spec K \ld M \rd$ denote the dense torus.
For $1 \leq r' \leq r$, we will first show
\begin{align}\label{eq:t-stable}
\trop \lb \bigcap_{i=1}^{r'} X_i \cap T \rb=\trop \lb X_1 \cap T \rb \cap_{\mathrm{st}}  \cdots  \cap_{\mathrm{st}} \trop \lb X_{r'} \cap T \rb,
\end{align}
by induction on $r'$.
When $r'=1$, it is obvious.
We assume that it holds when $r' = r_0-1$ with some $r_0 \geq 2$, and show it for $r'=r_0$.
We set $Y^\circ:=\bigcap_{i=1}^{r_0-1} X_i \cap T$ and $X^\circ_{r_0}:=X_{r_0} \cap T$.
If we show
\begin{align}\label{eq:intersect-r_0}
\trop \lb Y^\circ \cap X^\circ_{r_0} \rb=\trop \lb Y^\circ \rb \cap_{\mathrm{st}} \trop \lb X^\circ_{r_0} \rb,
\end{align}
then we can obtain \eqref{eq:t-stable} for $r'=r_0$ by the induction hypothesis.
We will try to prove \pref{eq:intersect-r_0}.
In the following, we use the notations in \pref{sc:toric}.

Let $n \in N_\bQ \cap \trop \lb Y^\circ \rb \cap \trop \lb X_{r_0}^\circ \rb$ be an arbitrary point, and $\trop \lb Y^\circ_n \rb, \trop \lb X_{r_0, n}^\circ \rb$ be the tropicalizations of the initial degenerations $Y^\circ_n$ and $X_{r_0, n}^\circ$ respectively.
Here we think of $Y^\circ_n$ and $X_{r_0, n}^\circ$ as algebraic varieties over the residue class field $k$ with the trivial valuation.
They are rational fans in $N_\bR$.
First, we will show
\begin{align}\label{eq:trivial-in}
\trop \lb Y^\circ_n \cap X_{r_0, n}^\circ \rb=
\trop \lb Y^\circ_n \rb \cap_\mathrm{st} \trop \lb X_{r_0, n}^\circ \rb,
\end{align}
which will be used to show \pref{eq:intersect-r_0}.
Recall that one has
\begin{align}
H^0 \lb X_{\tilde{\Sigma}'}, \scL_i \rb \cong \bigoplus_{(m, z) \in \tilde{\Delta}_i \cap \lb M \times \bZ \rb} k x^m t^z.
\end{align}
We write the section $s_{r_0}$ as $\sum_{m, z} c_{m, z} x^m t^z$ $\lb c_{m, z} \in k \rb$.
Then since $X_{r_0}^\circ$ is defined by $ts_{r_0}+s_{r_0}^0=0$ and the polytope $\tilde{\Delta}_i$ is defined by \eqref{eq:t-Delta-i}, we can see that the initial degeneration $X_{r_0, n}^\circ$ of $X_{r_0}^\circ$ at $n$ is defined in the special fiber $T_n$ of $\Spec R \ld M \rd^n$ by
\begin{align}\label{eq:df-in}
\left\{ \begin{array}{ll}
\lb \sum_{m} c_{m, \check{h}'(m)} x^m t^{\check{h}(m)} \rb t^{-f_{r_0}(n)} & (f_{r_0}(n)<0) \\
1+\sum_{m} c_{m, \check{h}'(m)} x^m t^{\check{h}(m)} & (f_{r_0}(n)=0),
\end{array} 
\right.
\end{align}
where the sum is taken over $m \in \lb \Delta_i \cap M \rb \setminus \lc 0 \rc$ such that $\check{h}(m)+\la m, n \ra=f_{r_0}(n)$.
We take a complete unimodular fan $\Sigma_n$ in $N_\bR$ containing $\trop \lb Y^\circ_n \rb$ and $\trop \lb X_{r_0, n}^\circ \rb$ as its subfans.
Let further $\overline{Y^\circ_{n}}$ and $\overline{X_{r_0, n}^\circ}$ denote the closures of $Y^\circ_{n}$ and $X_{r_0, n}^\circ$ in the toric variety $X_{\Sigma_n}$ associated with $\Sigma_n$.
It turns out by \eqref{eq:df-in} that if we choose $c_{m, \check{h}'(m)} \in k$ for $m \in \lb \Delta_i \cap M \rb \setminus \lc 0 \rc$ such that $\check{h}(m)+\la m, n \ra=f_{r_0}(n)$ to be general when we take the section $s_{r_0}$, 
the initial degeneration $X_{r_0, n}^\circ$ intersects with $Y^\circ_n$ properly, or $X_{r_0, n}^\circ \cap Y^\circ_n =\emptyset$.
Furthermore, for every closed torus-invariant subvariety $V$ in $X_{\Sigma_n}$, $\overline{X_{r_0, n}^\circ} \cap V$ and $\overline{Y^\circ_{n}} \cap V$ intersect with each other properly in $V$, or are disjoint.
For any points $n, n' \in N_\bQ$ sitting in the relative interior of a common face of $\trop \lb Y^\circ \rb \cap \trop \lb X_{r_0}^\circ \rb$, we have the isomorphisms $Y^\circ_n \cong Y^\circ_{n'}$ and $X_{r_0, n}^\circ \cong X_{r_0, n'}^\circ$ induced by an isomorphism $T_n \cong T_{n'}$ (cf.~\cite[Theorem 2.2.1]{MR2707751}), and there are only finitely many faces in $\trop \lb Y^\circ \rb \cap \trop \lb X_{r_0}^\circ \rb$.
Hence, when we take a general section $s_{r_0}$, 
not only $X_{r_0}^\circ$ and $Y^\circ$ but also $X_{r_0, n}^\circ$ and $Y_n^\circ$, $\overline{X_{r_0, n}^\circ} \cap V$ and $\overline{Y^\circ_{n}} \cap V$ intersect properly or are disjoint for any $n \in N_\bQ \cap \trop \lb Y^\circ \rb \cap \trop \lb X_{r_0}^\circ \rb$.

Let $\tau$ be a face of $ \trop \lb Y^\circ_n \rb \cap \trop \lb X_{r_0, n}^\circ \rb$ of codimension $r_0$ in $N_\bR$.
By \cite[Proposition 2.7.7]{MR3064984} and \cite[Example 8.2.7]{MR1644323}, the multiplicity of $\trop \lb Y^\circ_n \rb \cap_\mathrm{st} \trop \lb X^\circ_{r_0, n} \rb  $ along $\tau$ is equal to
\begin{align}\label{eq:tau-multi}
\sum_{Z} i \lb Z, Y^\circ_n \cdot X_{r_0, n}^\circ; T_n \rb m_Z(\tau)
=\sum_{Z} \mathrm{length} \lb \scO_{Z, Y^\circ_n \cap X_{r_0, n}^\circ} \rb m_Z(\tau),
\end{align}
where the sum is over components $Z$ of $Y^\circ_n \cap X_{r_0, n}^\circ$ such that $\trop(Z)$ contains $\tau$.
$Y^\circ_n \cdot X_{r_0, n}^\circ$ is the refined intersection cycle, and $i \lb Z, Y^\circ_n \cdot X_{r_0, n}^\circ; T_n \rb$ is the intersection multiplicity of $Z$ in $Y^\circ_n \cdot X_{r_0, n}^\circ$.
We refer the reader to \cite[Section 8]{MR1644323} for details on these notions.
$m_Z(\tau)$ is the multiplicity of $\trop(Z)$ along $\tau$.
Since the fundamental cycle of $Y^\circ_n \cap X_{r_0, n}^\circ$ is defined to be
\begin{align}
\sum_{Z} \mathrm{length} \lb \scO_{Z, Y^\circ_n \cap X_{r_0, n}^\circ} \rb Z,
\end{align}
it turns out that \eqref{eq:tau-multi} is the multiplicity of the tropicalization of the fundamental cycle of $Y^\circ_n \cap X_{r_0, n}^\circ$ along $\tau$, which is equal to the multiplicity of $\trop \lb Y^\circ_n \cap X_{r_0, n}^\circ \rb$ along $\tau$ by \cite[Corollary 4.4.6]{MR3064984}.
Hence, we obtain \eqref{eq:trivial-in}.

We now go back to showing \eqref{eq:intersect-r_0}.
We first check \eqref{eq:intersect-r_0} as subsets in $N_\bR$.
Let $n \in N_\bQ \cap \trop \lb Y^\circ \rb \cap \trop \lb X_{r_0}^\circ \rb$ be a point.
If $n \nin \trop \lb Y^\circ \rb \cap_{\mathrm{st}} \trop \lb X_{r_0}^\circ \rb$, 
then $\trop \lb Y^\circ_n \rb \cap_\mathrm{st} \trop \lb X_{r_0, n}^\circ \rb$ is empty, since the stars of $n$ in $\trop \lb Y^\circ \rb$ and $\trop \lb X_{r_0}^\circ \rb$ are naturally identified with $\trop \lb Y^\circ_n \rb$ and $\trop \lb X_{r_0, n}^\circ \rb$ respectively (cf.~\cite[Proposition 2.2.3]{MR2707751}).
By \eqref{eq:trivial-in}, one gets $Y^\circ_n \cap X_{r_0, n}^\circ = \emptyset$, which implies $n \nin \trop \lb Y^\circ \cap X_{r_0}^\circ \rb$.
If $n \in \trop \lb Y^\circ \rb \cap_{\mathrm{st}} \trop \lb X_{r_0}^\circ \rb$, then $Y^\circ_n \cap X_{r_0, n}^\circ \neq \emptyset$.
By \cite[Theorem 4.1.3]{MR3064984}, one can see that there exists a point in $Y^\circ \cap X_{r_0}^\circ$, which specializes to a point in $Y^\circ_n \cap X_{r_0, n}^\circ$.
This implies that the initial degeneration $\lb Y^\circ \cap X_{r_0}^\circ \rb_n$ is non-empty.
Hence, we get $n \in \trop \lb Y^\circ \cap X_{r_0}^\circ \rb$.
Thus we obtain 
\begin{align}
\trop \lb Y^\circ \cap X^\circ_{r_0} \rb \cap N_\bQ=\trop \lb Y^\circ \rb \cap_{\mathrm{st}} \trop \lb X^\circ_{r_0} \rb \cap N_\bQ.
\end{align}
By taking the closures, we obtain \eqref{eq:intersect-r_0} as subsets in $N_\bR$.

One can also check that the multiplicities of facets are also equal in \eqref{eq:intersect-r_0} as follows:
Let $\tau$ be a facet of $\trop \lb Y^\circ  \rb \cap_\mathrm{st} \trop \lb X_{r_0}^\circ \rb$, and $n$ be a point in $N_\bQ \cap \rint \lb \tau \rb$.
The multiplicity of $\trop \lb Y^\circ  \rb \cap_\mathrm{st} \trop \lb X_{r_0}^\circ \rb$ along $\tau$ is equal to the multiplicity of $\trop \lb Y^\circ_n \rb \cap_\mathrm{st} \trop \lb X_{r_0, n}^\circ \rb $ along $\tau_n:=\bR_{\geq 0}(\tau-n)$.
As we saw above, it is equal to
\begin{align}
\sum_{Z} i \lb Z, Y^\circ_n \cdot X_{r_0, n}^\circ; T_n \rb m_Z(\tau_n),
\end{align}
where the sum is over components $Z$ of $Y^\circ_n \cap X_{r_0, n}^\circ$ such that $\trop(Z)$ contains $\tau_n$.
By \cite[Theorem 4.4.5]{MR3064984}, it turns out that this is equal to
\begin{align}\label{eq:multi}
\sum_{Z} \lb \sum_{\widetilde{Z}} m ( Z, \widetilde{Z} ) i \lb \widetilde{Z}, Y^\circ \cdot X_{r_0}^\circ; T \rb \rb m_Z(\tau_n),
\end{align}
where the sum of $\widetilde{Z}$ is over components $\widetilde{Z}$ of $Y^\circ \cap X_{r_0}^\circ$ whose closures contain $Z$, and $m ( Z, \widetilde{Z} )$ is the multiplicity of $Z$ in the special fiber of the closure of $\widetilde{Z}$.
Since we have
\begin{align}
m_{\tilde{Z}}(\tau)=\sum_{Z} m ( Z, \widetilde{Z} ) m_Z(\tau_n),
\end{align}
where the sum ranges over components $Z$ of the special fiber of the closure of $\widetilde{Z}$, \eqref{eq:multi} is equal to
\begin{align}
\sum_{\widetilde{Z}} i \lb \widetilde{Z}, Y^\circ \cdot X_{r_0}^\circ; T \rb m_{\tilde{Z}}(\tau)
=\sum_{\widetilde{Z}} \mathrm{length} \lb \scO_{\widetilde{Z}, Y^\circ \cap X_{r_0}^\circ} \rb m_{\tilde{Z}}(\tau) 
=m_{Y^\circ \cap X_{r_0}^\circ} \lb \tau \rb,
\end{align}
where the sum is over components $\widetilde{Z}$ of $Y^\circ \cap X_{r_0}^\circ$ such that $\trop(\widetilde{Z})$ contains $\tau$.
Here we used \cite[Example 8.2.7]{MR1644323} and \cite[Corollary 4.4.6]{MR3064984} again.
We showed \eqref{eq:intersect-r_0} including multiplicities, and \eqref{eq:t-stable}.

When we think of $ts_i+s_i^0$ as an element of $K \ld M \rd$ and tropicalize it, we obtain \eqref{eq:trop}.
By Kapranov's theorem (cf.~e.g.~\cite[Theorem 3.1.3]{MR3287221}), one has
\begin{align}\label{eq:kap}
\trop \lb X_i \cap T \rb=X(f_i)^\circ.
\end{align}
Since the sections $s_i$ $(1 \leq i \leq r)$ are general, the variety $X$ is irreducible and intersects with the dense torus $T$.
By \cite[Lemma 3.1.1]{MR3064984}, the tropicalization of $X$ coincides with the closure of the tropicalization of $X \cap T$.
Hence, we can obtain \eqref{eq:trop-stable} by taking the closure of \eqref{eq:t-stable} with $r'=r$ and using \eqref{eq:kap}.
\end{proof}

We also consider the subset $X(f_1, \cdots, f_r)^c \subset X(f_1, \cdots, f_r)$ where the monomial $0$ attains the minimums of all tropical polynomials $f_i \ (1 \leq i \leq r)$, i.e.,
\begin{align}\label{eq:compact}
X(f_1, \cdots, f_r)^c:= \bigcap_{i=1}^r \lc n \in N_\bR \relmid \exists m_i \in \lb \Delta_i \cap M \rb \setminus \lc 0 \rc \ \mathrm{s.t.}\ f_i(n)=\check{h}(m_i)+\la m_i, n \ra =0 \rc.
\end{align}
This is also a subset of $\partial \nabla^{\check{h}}$.
We consider the maps and the diagram of \pref{eq:poly-fan}, \pref{eq:face-diag} for $\nabla^{\check{h}}, \nabla$:
\begin{align}\label{eq:nabla-diag}
  \begin{CD}
     \scrP_{\nabla^{\check{h}}} @>{\phi}>> \scrP_{\nabla} \\
  @V{\delta_{\check{h}}}VV    @V{\delta_{\check{\varphi}}}VV \\
     \Sigmav'   @>{\iota}>>  \Sigmav
  \end{CD}
\end{align}
Here we write the maps $\phi_{\nabla^{\check{h}}, \nabla}, \delta_{\nabla^{\check{h}}}, \delta_{\nabla}$ as $\phi, \delta_{\check{h}}, \delta_{\check{\varphi}}$ respectively for short.
For every face $G \prec \nabla^{\check{h}}$ and $i \in \lc 1, \cdots, r \rc$, we set
\begin{align}\label{eq:Mi}
M_i(G):=M \cap \rotatebox[origin=c]{180}{$\beta$}_i^\ast \lb \delta_{\check{h}} \lb G \rb \cap \partial \nabla^\ast \rb.
\end{align}
By \eqref{eq:n-cone} and \eqref{eq:beta_i}, one has
\begin{align}
M_i(G)
&=M \cap \rotatebox[origin=c]{180}{$\beta$}_i^\ast \lb
\lc m \in \partial \nabla^\ast \relmid \check{h}(m)+\la m, n \ra =0, \forall n \in G \rc
\rb \\
&=\lc m \in \Delta_i \cap \partial \nabla^\ast \cap M \relmid \check{h}(m)+\la m, n \ra =0, \forall n \in G \rc \\ \label{eq:mbetai}
&=\lc m \in \lb \Delta_i \cap M \rb \setminus \lc 0 \rc \relmid \check{h}(m)+\la m, n \ra =0, \forall n \in G \rc.
\end{align}
Since the tropical polynomial $f_i$ is equal to $0$ on $G \prec \nabla^{\check{h}}$, \eqref{eq:mbetai} implies that the set $M_i(G)$ corresponds to the monomials of $f_i$ that attain the minimum of $f_i$ on $G$ (aside from the monomial $0$).
We define
\begin{align}
\scrP_{X(f_1, \cdots, f_r)^c} 
:= \lc G \prec \nabla^{\check{h}} \relmid 
M_i(G) \neq \emptyset, \forall i \in \lc 1, \cdots, r \rc \rc.
\end{align}

\begin{lemma}
$\scrP_{X(f_1, \cdots, f_r)^c}$ is a natural polyhedral structure of $X(f_1, \cdots, f_r)^c$.
\end{lemma}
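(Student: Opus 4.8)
The claim is that $\scrP_{X(f_1, \cdots, f_r)^c}$, the set of faces $G \prec \nabla^{\check{h}}$ for which $M_i(G) \neq \emptyset$ for all $i$, forms a polyhedral structure on $X(f_1, \cdots, f_r)^c$. The plan is to verify three things: (i) the union $\bigcup_{G \in \scrP_{X(f_1, \cdots, f_r)^c}} G$ equals $X(f_1, \cdots, f_r)^c$; (ii) the collection $\scrP_{X(f_1, \cdots, f_r)^c}$ is closed under passing to faces and satisfies the intersection axiom, i.e.~\pref{cd:complex}; and (iii) the assignment agrees with the natural polyhedral structure coming from the tropical hypersurfaces, so it is genuinely a \emph{natural} polyhedral structure in the sense used before.

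For (i), I would use the translation, established in \eqref{eq:mbetai}, that for a face $G \prec \nabla^{\check{h}}$ the set $M_i(G)$ records exactly the non-zero monomials of $f_i$ attaining its minimum along $G$ (using that $f_i \equiv 0$ on every face of $\nabla^{\check{h}}$, since $B^{\check{h}}_\nabla \subset \partial \nabla^{\check{h}}$ and the corner locus description). First I would note that the faces of $\nabla^{\check{h}}$ cover $\partial \nabla^{\check{h}}$, hence a point $n \in \partial \nabla^{\check{h}}$ lies in $X(f_1, \cdots, f_r)^c$ — meaning each $f_i$ attains its minimum $0$ at $n$ with some non-zero monomial — precisely when the minimal face $G_n$ containing $n$ has $M_i(G_n) \neq \emptyset$ for all $i$, because the set of monomials attaining the minimum is constant on the relative interior of each face and only grows as one passes to subfaces. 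This gives $X(f_1, \cdots, f_r)^c = \bigcup_{G \in \scrP_{X(f_1, \cdots, f_r)^c}} \Int(G)$, and taking closures (together with the face-closedness below) gives the union statement.

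For (ii): if $G' \prec G$ and $G \in \scrP_{X(f_1, \cdots, f_r)^c}$, then since the minimizing monomial set is monotone under face inclusion, $M_i(G) \subset M_i(G')$, so $M_i(G') \supset M_i(G) \neq \emptyset$ and $G' \in \scrP_{X(f_1, \cdots, f_r)^c}$; this is the face-closedness. The intersection axiom is inherited directly from $\scrP_{\nabla^{\check{h}}}$: for $G_1, G_2 \in \scrP_{X(f_1, \cdots, f_r)^c}$ the intersection $G_1 \cap G_2$ is a common face of both in $\scrP_{\nabla^{\check{h}}}$, and by face-closedness it again lies in $\scrP_{X(f_1, \cdots, f_r)^c}$. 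For (iii) I would observe that the polyhedral subdivision of $\partial\nabla^{\check h}$ into the faces $G\in\scrP_{X(f_1,\cdots,f_r)^c}$ coincides cell-by-cell with the common refinement of the polyhedral complexes $\Xi_i$ dual to the subdivisions $\scrS_i$ of the Newton polytopes of the $f_i$ (as recalled in \pref{sc:tci}), restricted to the locus where the constant monomial $0$ is also a minimizer; the duality correspondence of \pref{pr:dual} identifies $M_i(G)$ with the relevant dual cells, so the polyhedral structure is the one induced from the tropical hypersurfaces.

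The main obstacle I anticipate is not any single step but keeping the bookkeeping of the two dual descriptions consistent: on one side $X(f_1, \cdots, f_r)^c$ is cut out inside $N_\bR$ by the equations \eqref{eq:compact}, while $\scrP_{X(f_1, \cdots, f_r)^c}$ is defined via faces of the Newton-type polytope $\nabla^{\check{h}}$ in $N_\bR$ and the operations $\rotatebox[origin=c]{180}{$\beta$}_i^\ast$, $\delta_{\check{h}}$, so one must carefully match the stratification of $\partial\nabla^{\check h}$ by its faces with the stratification of $X(f_1, \cdots, f_r)^c$ by the sets of minimizing monomials — exactly the content captured in \eqref{eq:Mi}--\eqref{eq:mbetai} together with the diagram \eqref{eq:nabla-diag}. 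Once that dictionary is in place, each of (i), (ii), (iii) is a short verification, so I expect the proof to be brief, citing \eqref{eq:mbetai} and \pref{pr:dual} for the key identifications and the fact that $\scrP_{\nabla^{\check h}}$ is itself a polyhedral complex for the combinatorial axioms.
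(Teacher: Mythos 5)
Your parts (i) and (ii) are correct and track the paper's proof closely: the union claim follows from the identification \eqref{eq:mbetai} of $M_i(G)$ with the nonzero minimizers of $f_i$ along $G$, face-closedness follows from the monotonicity $M_i(G_1)\supset M_i(G_2)$ for $G_1\prec G_2$, and the intersection axiom is inherited from $\scrP_{\nabla^{\check h}}$ exactly as you say; the paper's version of (i) is a touch more direct (it shows $G_0\subset X(f_1,\dots,f_r)^c$ for each $G_0$ and, conversely, that the minimal face containing any $n_0$ lies in the collection), but your formulation via relative interiors is equivalent since the collection is closed under faces. Your part (iii), identifying the subdivision with the common refinement of the $\Xi_i$ via \pref{pr:dual}, is not in the paper's proof and is not needed for this lemma; the adjective \emph{natural} is used here only informally (as in the parallel local statement around \eqref{eq:nat-poly}), so the lemma only requires the complex axioms and the covering statement. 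That said, (iii) is not wrong — the compatibility you describe is essentially what \pref{pr:t-sigma} establishes later — it is just extra work for this particular lemma.
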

\begin{proof}
First, we check $\bigcup_{G \in \scrP_{X(f_1, \cdots, f_r)^c}} G=X(f_1, \cdots, f_r)^c$.
For any $G_0 \in \scrP_{X(f_1, \cdots, f_r)^c}$, by \eqref{eq:mbetai}, there exists $m_i \in \lb \Delta_i \cap M \rb \setminus \lc 0 \rc$ such that $\check{h}(m_i)+\la m_i, n \ra =0=f_i(n)$ for any $n \in G_0$.
Hence, one has $G_0 \subset X(f_1, \cdots, f_r)^c$, and $\bigcup_{G \in \scrP_{X(f_1, \cdots, f_r)^c}} G \subset X(f_1, \cdots, f_r)^c$.
On the other hand, for any $n_0 \in X(f_1, \cdots, f_r)^c$, there exists $G_0 \prec \nabla^{\check{h}}$ such that $n_0 \in \rint \lb G_0 \rb$, since $X(f_1, \cdots, f_r)^c \subset \partial \nabla^{\check{h}}$.
For any $i \in \lc 1, \cdots, r \rc$, there exists $m_i \in \lb \Delta_i \cap M \rb \setminus \lc 0 \rc$ such that $f_i(n_0)=\check{h}(m_i)+\la m_i, n_0 \ra =0$.
Since the monomials of $f_i$ that attain the minimum of $f_i$ at $n_0$ also attain the minimum at any point of $G_0$, we also have $f_i(n)=\check{h}(m_i)+\la m_i, n \ra =0$ for any $n \in G_0$.
It turns out by this and \eqref{eq:mbetai} that we have
$G_0 \in \scrP_{X(f_1, \cdots, f_r)^c}$.
We obtained $\bigcup_{G \in \scrP_{X(f_1, \cdots, f_r)^c}} G = X(f_1, \cdots, f_r)^c$.

One can also see that $\scrP_{X(f_1, \cdots, f_r)^c}$ forms a complex as follows:
For $G_1 \prec G_2 \prec \nabla^{\check{h}}$, one has 
$M_i \lb G_1 \rb \supset M_i \lb G_2 \rb$ by \eqref{eq:mbetai}.
Hence, if $G_2 \in \scrP_{X(f_1, \cdots, f_r)^c}$ and $G_1 \prec G_2$, then $G_1 \in \scrP_{X(f_1, \cdots, f_r)^c}$.
If $G_1, G_2 \in \scrP_{X(f_1, \cdots, f_r)^c}$, then $G_1 \cap G_2 \in \scrP_{X(f_1, \cdots, f_r)^c}$.
\end{proof}

\begin{proposition}\label{pr:cpart}
One has $\scrP^{\check{h}}_{\Delta^\ast}=\scrP_{X(f_1, \cdots, f_r)^c}$.
In particular, $X(f_1, \cdots, f_r)^c=B^{\check{h}}_\nabla$.
\end{proposition}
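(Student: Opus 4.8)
The plan is to compare the two complexes face by face through the Minkowski decomposition $\nabla^{\check{h}}=\nabla+\nabla^{\check{h}'}$, reducing the statement to a combinatorial identity about $\nabla=\sum_i\nabla_i$, $\Delta^\ast=\conv\lc\nabla_1,\cdots,\nabla_r\rc$ and their polar polytopes.

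First I would record that $\nabla^{\check{h}}=\nabla+\nabla^{\check{h}'}$. Comparing \eqref{eq:newton} with \eqref{eq:supp}, the function $\varphiv$ is the support function $\check{h}_\nabla$, so its Newton polytope is $\nabla$ by \eqref{eq:supp2}; since $\check{h}=\varphiv+\check{h}'$ with $\check{h}$, $\check{h}'$ convex and piecewise linear on the complete fan $\Sigmav'$, they are the support functions of their Newton polytopes and additivity of support functions gives $\nabla^{\check{h}}=\nabla^{\varphiv}+\nabla^{\check{h}'}=\nabla+\nabla^{\check{h}'}$. Hence $\Sigmav'$ is a common refinement of $\Sigmav$ and of the normal fan of $\nabla^{\check{h}'}$, and every face $G\prec\nabla^{\check{h}}$ has a unique Minkowski decomposition $G=G'+G''$ with $G'\prec\nabla$, $G''\prec\nabla^{\check{h}'}$ (cf.\ \cite[Proposition 7.12]{MR1311028}); tracing through the diagram \eqref{eq:nabla-diag} shows $G'=\phi(G)$, the face of $\nabla$ whose normal cone in $\Sigmav$ is the minimal cone containing $\delta_{\check{h}}(G)$, as in \eqref{eq:ss2}. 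On the other hand, for $(F_1,F_2)\in\scrR^{\check{h}}_{\Delta^\ast}$ the set $\rotatebox[origin=c]{180}{$\beta$}(F_1)=\sum_i\lb F_1\cap\nabla_i\rb$ (using \eqref{eq:beta_i}) is, when nonempty, a face of $\nabla$, so $\rotatebox[origin=c]{180}{$\beta$}(F_1)+F_2$ is a face of $\nabla^{\check{h}}$ whose Minkowski decomposition is $\lb\rotatebox[origin=c]{180}{$\beta$}(F_1),F_2\rb$. Thus both $\scrP^{\check{h}}_{\Delta^\ast}$ and $\scrP_{X(f_1,\cdots,f_r)^c}$ are sets of faces of $\nabla^{\check{h}}$, and it remains to identify which faces occur.

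Next I would unwind the two conditions. By \eqref{eq:mbetai} together with $\lb\Delta_i\cap M\rb\setminus\lc 0\rc\subset\partial\nabla^\ast$ from \eqref{eq:lattice-pts} and \eqref{eq:polytope-b}, one gets $M_i(G)=\delta_{\check{h}}(G)\cap\lb\lb\Delta_i\cap M\rb\setminus\lc 0\rc\rb$, so $\scrP_{X(f_1,\cdots,f_r)^c}$ consists exactly of the faces $G$ whose normal cone $\delta_{\check{h}}(G)$ meets every $\Delta_i\cap M\setminus\lc 0\rc$. Given such a $G$, put $F_2:=G''$ and let $F_1$ be the face of $\Delta^\ast$ minimised by a point $m$ in the relative interior of $\delta_{\check{h}}(G)$; since $m$ lies in the normal cone of $G''$ in $\nabla^{\check{h}'}$ one gets $F_1+F_2\prec\Delta^\ast+\nabla^{\check{h}'}$ automatically, so the whole matter reduces to showing $\rotatebox[origin=c]{180}{$\beta$}(F_1)=\phi(G)$ (forcing $\rotatebox[origin=c]{180}{$\beta$}(F_1)\neq\emptyset$ and $G\in\scrP^{\check{h}}_{\Delta^\ast}$), and conversely that each $G\in\scrP^{\check{h}}_{\Delta^\ast}$ has $M_i(G)\neq\emptyset$ for all $i$. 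Now $\rotatebox[origin=c]{180}{$\beta$}(F_1)=\sum_i\lb F_1\cap\nabla_i\rb$, where $F_1\cap\nabla_i$ is the face of $\nabla_i$ minimised by $m$ when $\varphiv_i(m)=\max_j\varphiv_j(m)$ and is empty otherwise, while $\phi(G)=\sum_i\lb$ face of $\nabla_i$ minimised by $m\rb$; so $\rotatebox[origin=c]{180}{$\beta$}(F_1)=\phi(G)$ if and only if $\varphiv_1(m)=\cdots=\varphiv_r(m)$. Dually, since $\delta_{\check{h}}(G)\cap\partial\nabla^\ast$ lies in the face $H^\ast$ of $\nabla^\ast=\conv\lc\Delta_1,\cdots,\Delta_r\rc$ dual to $\phi(G)$, the condition $M_i(G)\neq\emptyset$ for all $i$ says precisely that $H^\ast$ meets every $\Delta_i$, i.e.\ (again by \eqref{eq:polytope-b}) that $\varphi_1=\cdots=\varphi_r$ on the relative interior of the normal cone $\cone\lb\phi(G)\rb$ of $H^\ast$. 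So everything comes down to the equivalence of these two simultaneous-maximality conditions on the dual faces $\phi(G)\prec\nabla$ and $H^\ast\prec\nabla^\ast$; I would prove this using the convex-geometric facts on nef partitions collected in \pref{sc:lem} and in \cite{MR2405763}, in parallel with Gross's treatment of the subdivided complexes $\scrR(\tilde{\Sigma}')$, $\scrP(\tilde{\Sigma}')$ in \cite[Section 3]{MR2198802}.

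Finally, once $\scrP^{\check{h}}_{\Delta^\ast}=\scrP_{X(f_1,\cdots,f_r)^c}$ is proved, the assertion $X(f_1,\cdots,f_r)^c=B^{\check{h}}_\nabla$ follows at once, each side being the union of the polytopes of the corresponding complex. The main obstacle is the equivalence of the two maximality conditions in the previous paragraph: one must match, across the duality $\nabla\leftrightarrow\nabla^\ast$, the condition that $\delta_{\check{h}}(G)$ contains a nonzero lattice point of each $\Delta_i$ with the condition that $\phi(G)$ is the $\rotatebox[origin=c]{180}{$\beta$}$-image of a face of $\Delta^\ast$, and the delicate point is that $\rotatebox[origin=c]{180}{$\beta$}$ annihilates any face of $\Delta^\ast$ missing one of the $\nabla_i$, so that the simultaneous maximality of all the $\varphiv_i$ along $\delta_{\check{h}}(G)$ is genuinely needed and must be extracted from the lattice-point hypothesis.
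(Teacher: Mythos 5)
Your setup is sound---both complexes consist of faces of $\nabla^{\check{h}}=\nabla+\nabla^{\check{h}'}$, and the task is to match the condition $M_i(G)\neq\emptyset$ against membership in $\scrP^{\check{h}}_{\Delta^\ast}$---but the reduction you carry out before deferring to ``convex-geometric facts'' contains two genuine errors, and the deferred step is exactly where the content of the proposition lies. First, $M_i(G)\neq\emptyset$ is \emph{not} equivalent to ``$H^\ast=\phi(G)^\ast$ meets every $\Delta_i$'': by \eqref{eq:mbetai} it is a condition on the smaller set $\delta_{\check{h}}(G)\cap\partial\nabla^\ast$, a cell of the subdivision of $\partial\nabla^\ast$ induced by $\check{h}$, which is in general a proper subset of $H^\ast$ because $\Sigmav'$ strictly refines $\Sigmav$. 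Collapsing the condition to one on $H^\ast$ discards precisely the subtlety the proposition is about. Second, your choice of $F_1$ as the face of $\Delta^\ast$ minimized by a point $m$ in the relative interior of $\delta_{\check{h}}(G)$ does not work. Take $r=2$ and $\delta_{\check{h}}(G)=\cone\lb \lc m_1,m_2\rc \rb$ with $m_i\in\Delta_i\cap M\setminus\lc 0\rc$: then $\varphiv_i(am_1+bm_2)$ equals $a$ for $i=1$ and $b$ for $i=2$, so for generic interior $m$ the maximum $\max_j\varphiv_j(m)$ is attained by a single index, $F_1\cap\nabla_j=\emptyset$ for the other index, and $\rotatebox[origin=c]{180}{$\beta$}(F_1)=\emptyset$ (an empty Minkowski summand kills the whole sum). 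Your criterion would then wrongly exclude a face $G$ that does lie in $\scrP^{\check{h}}_{\Delta^\ast}$ via a larger $F_1$. The condition actually needed is the existence of a single, generally non-interior, point of $\delta_{\check{h}}(G)$ on which every $\varphiv_i$ takes the value $1$, not the identity $\varphiv_1=\cdots=\varphiv_r$ on $\Int\lb\delta_{\check{h}}(G)\rb$.

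The paper's proof supplies what is missing. For $G\in\scrP_{X(f_1,\cdots,f_r)^c}$ with Minkowski decomposition $G=G_1+G_2$, it takes $F_1:=\lb\sum_{i=1}^r\rotatebox[origin=c]{180}{$\beta$}_i^\ast(G_1^\ast)\rb^\ast$, using \cite[Lemma 2.7]{MR2198802} to see that this is a face of $\Delta^\ast$ with $\rotatebox[origin=c]{180}{$\beta$}(F_1)=G_1$, and witnesses $F_1+F_2\prec\Delta^\ast+\nabla^{\check{h}'}$ by the point $m_0=\sum_i m_i$ with $m_i\in M_i(G)$, which lies in $\delta_{\check{h}}(G)$ and satisfies $\varphiv_j(m_0)=1$ for all $j$; the identification of $F_1^\ast\cap\delta_{\check{h}'}(F_2)$ with $\lc m\in\delta_{\check{h}}(G)\relmid\varphiv_i(m)=1,\forall i\rc$ is the \emph{Claim} in the proof of \cite[Lemma 3.2]{MR2198802}, recorded as \eqref{eq:lem32}. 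For the reverse inclusion the paper argues by contradiction: if $M_{i_0}\lb\rotatebox[origin=c]{180}{$\beta$}(F_1)+F_2\rb=\emptyset$, then by \eqref{eq:lattice-pts} the cone $\delta_{\check{h}}\lb\rotatebox[origin=c]{180}{$\beta$}(F_1)+F_2\rb$ is generated by lattice points of $\bigsqcup_{i\neq i_0}\lb\Delta_i\cap M\setminus\lc 0\rc\rb$, so $\varphiv_{i_0}=0$ on it and the right-hand side of \eqref{eq:lem32} is empty, contradicting the non-emptiness of $F_1^\ast\cap\delta_{\check{h}'}(F_2)$ guaranteed by \cite[Lemma 3.1]{MR2198802}. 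You would need to reproduce both of these arguments, or equivalents, to close your proof.
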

\begin{proof}
First, we show $\scrP^{\check{h}}_{\Delta^\ast} \subset \scrP_{X(f_1, \cdots, f_r)^c}$, i.e., $\rotatebox[origin=c]{180}{$\beta$}(F_1)+F_2 \in \scrP_{X(f_1, \cdots, f_r)^c}$ for any $(F_1, F_2) \in \scrR^{\check{h}}_{\Delta^\ast}$.
By \cite[Lemma 3.2]{MR2198802}, for any $(F_1, F_2) \in \scrR^{\check{h}}_{\Delta^\ast}$, the subset $\rotatebox[origin=c]{180}{$\beta$}(F_1)+F_2$ is a face of $\nabla^{\check{h}}$.
We will check the condition $M_{i} \lb \rotatebox[origin=c]{180}{$\beta$}(F_1)+F_2 \rb \neq \emptyset$.
By the \emph{Claim} in the proof of \cite[Lemma 3.2]{MR2198802}, we have
\begin{align}\label{eq:lem32}
F_1^\ast \cap \delta_{\check{h}'}(F_2)=\lc m \in \cone \lb \rotatebox[origin=c]{180}{$\beta$}(F_1)^\ast \rb \cap \delta_{\check{h}'}(F_2) \relmid \check{\varphi}_i(m)=1, \forall i \in \lc 1, \cdots, r \rc \rc.
\end{align}
Here one has
\begin{align}\label{eq:cones}
\cone \lb \rotatebox[origin=c]{180}{$\beta$}(F_1)^\ast \rb \cap \delta_{\check{h}'}(F_2)
=\delta_{\check{h}} \lb \rotatebox[origin=c]{180}{$\beta$}(F_1)+F_2 \rb,
\end{align}
since
\begin{align}
\cone \lb \rotatebox[origin=c]{180}{$\beta$}(F_1)^\ast \rb
&=\cone \lb \lc m \in \nabla^\ast \relmid \la m, n \ra=-1, \forall n \in \rotatebox[origin=c]{180}{$\beta$}(F_1) \rc \rb \\
&=\cone \lb \lc m \in M_\bR \relmid 
\begin{array}{l}
\la m, n \ra=-1, \forall n \in \rotatebox[origin=c]{180}{$\beta$}(F_1) \\
\la m, n \ra \geq -1, \forall n \in \nabla
\end{array}
\rc \rb \\
&=\delta_{\check{\varphi}} \lb \rotatebox[origin=c]{180}{$\beta$}(F_1) \rb
\end{align}
and $\delta_{\check{\varphi}} \lb \rotatebox[origin=c]{180}{$\beta$}(F_1) \rb \cap \delta_{\check{h}'}(F_2)=\delta_{\check{h}} \lb \rotatebox[origin=c]{180}{$\beta$}(F_1)+F_2 \rb$.
The primitive generators of $1$-dimensional faces of the cone $\delta_{\check{h}} \lb \rotatebox[origin=c]{180}{$\beta$}(F_1)+F_2 \rb \in \Sigmav'$ are elements $m \in \partial \nabla^\ast \cap M$ such that $\check{h}(m)+\la m, n \ra =0$ for all $n \in \rotatebox[origin=c]{180}{$\beta$}(F_1)+F_2$.
Suppose that there is some $i_0 \in \lc 1, \cdots, r \rc$ such that 
$M_{i_0} \lb \rotatebox[origin=c]{180}{$\beta$}(F_1)+F_2 \rb) = \emptyset$.
Then by \eqref{eq:mbetai} and \eqref{eq:lattice-pts}, we can see that \eqref{eq:cones} is generated by elements in 
$\lb \partial \nabla^\ast \cap M \rb \setminus \lb \Delta_{i_0} \cap M \setminus \lc 0 \rc \rb=\bigsqcup_{i \neq i_0} \lb \Delta_i \cap M \setminus \lc 0 \rc\rb$.
This implies $\check{\varphi}_{i_0}=0$ on \eqref{eq:cones}, and it turns out that the right hand side of \eqref{eq:lem32} is empty.
On the other hand, by \cite[Lemma 3.1]{MR2198802}, the left hand side of \eqref{eq:lem32} is non-empty.
Since we obtained a contradiction, one can conclude $M_{i} \lb \rotatebox[origin=c]{180}{$\beta$}(F_1)+F_2 \rb \neq \emptyset$ for all $i \in \lc 1, \cdots, i \rc$.
We obtained $\rotatebox[origin=c]{180}{$\beta$}(F_1)+F_2 \in \scrP_{X(f_1, \cdots, f_r)^c}$.

Next, we check $\scrP^{\check{h}}_{\Delta^\ast} \supset \scrP_{X(f_1, \cdots, f_r)^c}$.
Take an element $G \in \scrP_{X(f_1, \cdots, f_r)^c}$.
Since, in general, a face of the Minkowski sum of polytopes can be uniquely written as the Minkowski sum of faces of the polytopes (cf.~e.g.~\cite[Theorem 3.1.2]{Wei07}), the face $G \prec \nabla^{\check{h}}=\nabla+\nabla^{\check{h}'}$ is uniquely decomposed into a Minkowski sum $G=G_1+G_2$ with $G_1 \prec \nabla, G_2 \prec \nabla^{\check{h}'}$.
We have
\begin{align}\label{eq:delta-dual}
\delta_{\check{\varphi}} \lb G_1 \rb \cap \partial \nabla^\ast
=\lc m \in \partial \nabla^\ast \relmid \check{\varphi}(m)+\la m, n \ra =0, \forall n \in G_1 \rc 
=G_1^\ast.
\end{align}
From this, we can see
\begin{align}
\delta_{\check{h}} \lb G \rb \cap \partial \nabla^\ast
=\delta_{\check{\varphi}} \lb G_1 \rb \cap \delta_{\check{h}'} \lb G_2 \rb \cap \partial \nabla^\ast
=G_1^\ast \cap \delta_{\check{h}'} \lb G_2 \rb
\subset G_1^\ast.
\end{align}
Hence, we have 
$\rotatebox[origin=c]{180}{$\beta$}_i^\ast \lb G_1^\ast \rb \supset 
\rotatebox[origin=c]{180}{$\beta$}_i^\ast \lb \delta_{\check{h}} \lb G \rb \cap \partial \nabla^\ast \rb$.
Since $G$ is an element in $\scrP_{X(f_1, \cdots, f_r)^c}$, we also have $\rotatebox[origin=c]{180}{$\beta$}_i^\ast \lb \delta_{\check{h}} \lb G \rb \cap \partial \nabla^\ast \rb \neq \emptyset$.
We obtain $\rotatebox[origin=c]{180}{$\beta$}_i^\ast \lb G_1^\ast \rb \neq \emptyset$ for all $i \in \lc 1, \cdots, r \rc$.
By substituting $\nabla, G_1$ to $\Delta, \sigma$ in \cite[Lemma 2.7.(a)]{MR2198802} respectively, it turns out that $\sum_{i=1}^r \rotatebox[origin=c]{180}{$\beta$}_i^\ast \lb G_1^\ast \rb$ is a face of $\Delta$.
We set
\begin{align}
F_1:=\lb \sum_{i=1}^r \rotatebox[origin=c]{180}{$\beta$}_i^\ast \lb G_1^\ast \rb \rb^\ast, \quad F_2:=G_2.
\end{align}
We will show that the pair $(F_1, F_2)$ is in $\scrR^{\check{h}}_{\Delta^\ast}$, and satisfies
$\rotatebox[origin=c]{180}{$\beta$}(F_1)+F_2=G$.
From \cite[Lemma 2.7.(a)]{MR2198802} again, we can get $\rotatebox[origin=c]{180}{$\beta$} \lb F_1 \rb=G_1 (\neq \emptyset)$.
Hence, we have $\rotatebox[origin=c]{180}{$\beta$}(F_1)+F_2=G_1+G_2=G$.
The only thing left to be checked is $F_1+F_2 \prec \Delta^\ast + \nabla^{\check{h}'}$.
By \cite[Lemma 3.1]{MR2198802}, this is equivalent to $F_1^\ast \cap \delta_{\check{h}'}(F_2) \neq \emptyset$.
We will show that the right hand side of \eqref{eq:lem32} is non-empty.
By \eqref{eq:cones}, we have 
\begin{align}\label{eq:bf12g}
\cone \lb \rotatebox[origin=c]{180}{$\beta$}(F_1)^\ast \rb \cap \delta_{\check{h}'}(F_2)=\delta_{\check{h}} \lb G \rb.
\end{align}
Take an element $m_i \in \rotatebox[origin=c]{180}{$\beta$}_i^\ast \lb \delta_{\check{h}} \lb G \rb \cap \partial \nabla^\ast \rb$ for every $i \in \lc 1, \cdots, r \rc$, and set $m_0 :=\sum_{i=1}^r m_i$.
Then we have $m_0 \in \delta_{\check{h}} \lb G \rb$ and 
$\check{\varphi}_j \lb m_0 \rb=\sum_{i=1}^r \check{\varphi}_j (m_i)=\sum_{i=1}^r \delta_{i, j}=1$ for any $j \in \lc 1, \cdots, r \rc$.
By combining these and \eqref{eq:bf12g}, we can see that the right hand side of \eqref{eq:lem32} contains the element $m_0$ and is non-empty.
We obtained $F_1+F_2 \prec \Delta^\ast + \nabla^{\check{h}'}$, and $G=\rotatebox[origin=c]{180}{$\beta$}(F_1)+F_2 \in \scrP^{\check{h}}_{\Delta^\ast}$.
\end{proof}

We also consider the map 
\begin{align}\label{eq:theta2}
\theta \colon \scrP(\tilde{\Sigma}') \to \scrP^{\check{h}}_{\Delta^\ast}=\scrP_{X(f_1, \cdots, f_r)^c} \subset \scrP_{\nabla^{\check{h}}}, \quad 
\tau=\rotatebox[origin=c]{180}{$\beta$} (\mu_\tau) + \nu_\tau 
\mapsto
\theta(\tau):= \rotatebox[origin=c]{180}{$\beta$} (F_1) + F_2,
\end{align}
where $F_1 \prec \Delta^\ast,  F_2 \prec \nabla^{\check{h}'}$ are the faces such that $\cone(F_1) \times \lc 0 \rc + \cone \lb F_2 \times \lc 1 \rc \rb$ is the minimal cone in $\tilde{\Sigma}$ containing $C(\mu_\tau) \times \lc 0 \rc + C \lb \nu_\tau \times \lc 1 \rc \rb$.
Note that we have $\mu_\tau \subset F_1, \nu_\tau \subset F_2$.

\begin{lemma}\label{lm:beta}
For any polyhedron $\sigma =\rotatebox[origin=c]{180}{$\beta$} (F_1)+F_2 \in \scrP^{\check{h}}_{\Delta^\ast} \subset  \scrP_{\nabla^{\check{h}}}$, one has 
\begin{align}\label{eq:betas}
\la \rotatebox[origin=c]{180}{$\beta$}_i^\ast (\delta_{\check{h}} \lb \sigma \rb \cap \partial \nabla^\ast),
\beta_j^\ast \lb F_1 \rb \ra=-\delta_{i,j} \quad (1 \leq i, j \leq r).
\end{align}
For any polyhedron $\tau =\rotatebox[origin=c]{180}{$\beta$} (\mu_\tau) + \nu_\tau \in \scrP(\tilde{\Sigma}')$, one also has 
\begin{align}\label{eq:betat}
\la \rotatebox[origin=c]{180}{$\beta$}_i^\ast (\delta_{\check{h}} \lb \theta \lb \tau \rb \rb \cap \partial \nabla^\ast),
\beta_j^\ast \lb \mu_{\tau} \rb \ra=-\delta_{i,j} \quad (1 \leq i, j \leq r).
\end{align}
\end{lemma}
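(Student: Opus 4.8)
The plan is to prove \eqref{eq:betas} first and then deduce \eqref{eq:betat} from it, since $\mu_\tau \subset F_1$ and $\theta(\tau) \supset \tau$ essentially reduce the second identity to the first applied to the face $\sigma = \theta(\tau)$. For \eqref{eq:betas}, fix $\sigma = \rotatebox[origin=c]{180}{$\beta$}(F_1) + F_2 \in \scrP^{\check h}_{\Delta^\ast}$. By \pref{eq:beta_i} we have $\rotatebox[origin=c]{180}{$\beta$}_j^\ast(F_1) = F_1 \cap \nabla_j$, and by \pref{eq:Mi}, \pref{eq:mbetai}, $\rotatebox[origin=c]{180}{$\beta$}_i^\ast(\delta_{\check h}(\sigma) \cap \partial \nabla^\ast) = M_i(\sigma) \cup \{\text{its convex hull}\}$ consists of lattice points $m \in (\Delta_i \cap M)\setminus\{0\}$ with $\check h(m) + \langle m, n\rangle = 0$ for all $n \in \sigma$. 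So I need: for $m \in \Delta_i$ lying in this face and $n \in F_1 \cap \nabla_j$, one has $\langle m, n\rangle = -\delta_{i,j}$.

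The key mechanism is the defining inequality \eqref{eq:nabla} of $\nabla_j$, namely $n \in \nabla_j$ forces $\langle \Delta_i, n\rangle \geq -\delta_{i,j}$, hence $\langle m, n\rangle \geq -\delta_{i,j}$ for our $m \in \Delta_i$. This gives one inequality for free. For the reverse inequality I would exploit that $m$ lies in the face $\delta_{\check h}(\sigma) \cap \partial\nabla^\ast$ of $\nabla^\ast$, together with the decomposition statements used in the proof of \pref{pr:cpart}: as there, write $F_1 = \bigl(\sum_{k=1}^r \rotatebox[origin=c]{180}{$\beta$}_k^\ast(G_1^\ast)\bigr)^\ast$ where $G_1 = \rotatebox[origin=c]{180}{$\beta$}(F_1) \prec \nabla$ and use \cite[Lemma 2.7.(a)]{MR2198802} to identify $\rotatebox[origin=c]{180}{$\beta$}(F_1)^\ast$ and $\delta_{\check\varphi}(\rotatebox[origin=c]{180}{$\beta$}(F_1))$ as in \pref{eq:delta-dual}, \pref{eq:cones}. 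The point is that $m \in \delta_{\check h}(\sigma)\cap \partial\nabla^\ast = G_1^\ast \cap \delta_{\check h'}(F_2) \cap \partial\nabla^\ast \subset G_1^\ast$, and $G_1^\ast = \{m' \in \nabla^\ast : \langle m', n\rangle = -1 \ \forall n \in G_1\}$ where $G_1 = \rotatebox[origin=c]{180}{$\beta$}(F_1) = \sum_k \beta_k^\ast(F_1)$. Thus $\langle m, n\rangle = -1$ whenever $n \in G_1 = \sum_k \beta_k^\ast(F_1)$; combining with $n = n_1 + \cdots + n_r$, $n_k \in \beta_k^\ast(F_1)$, and the bound $\langle m, n_k\rangle \geq -\delta_{i,k}$ (from $n_k \in \nabla_k$, $m \in \Delta_i$), summing gives $-1 = \langle m, n\rangle \geq -\sum_k \delta_{i,k} = -1$, forcing equality in each term: $\langle m, n_k\rangle = -\delta_{i,k}$. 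Taking $n \in \beta_j^\ast(F_1)$ (the $j$-th summand) yields exactly \eqref{eq:betas}. One should also note $\beta_j^\ast(F_1) = F_1 \cap \nabla_j$ is nonempty precisely because $\sigma \in \scrP^{\check h}_{\Delta^\ast}$ ensures $M_j(\sigma) \neq \emptyset$, via the chain of equivalences in \pref{pr:cpart}; I would record this so that the statement is non-vacuous, though strictly the pairing identity holds for any element of $\beta_j^\ast(F_1)$.

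For \eqref{eq:betat}, apply \eqref{eq:betas} to $\sigma = \theta(\tau) = \rotatebox[origin=c]{180}{$\beta$}(F_1) + F_2$ where, as in \eqref{eq:theta2}, $F_1 \prec \Delta^\ast$ is the face with $\cone(F_1)\times\{0\} + \cone(F_2\times\{1\})$ minimal in $\tilde\Sigma$ containing $\cone(\mu_\tau)\times\{0\} + \cone(\nu_\tau\times\{1\})$, so that $\mu_\tau \subset F_1$. It then suffices to show $\beta_j^\ast(\mu_\tau) \subset \beta_j^\ast(F_1)$; by \pref{eq:beta_i} this is $\mu_\tau \cap \nabla_j \subset F_1 \cap \nabla_j$, which is immediate from $\mu_\tau \subset F_1$. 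Hence \eqref{eq:betat} follows termwise from \eqref{eq:betas}.

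The main obstacle I anticipate is bookkeeping with the various dual-face and $\rotatebox[origin=c]{180}{$\beta$}$-operations: making precise that $\delta_{\check h}(\sigma) \cap \partial\nabla^\ast \subset G_1^\ast$ where $G_1 = \rotatebox[origin=c]{180}{$\beta$}(F_1)$, and that the $\rotatebox[origin=c]{180}{$\beta$}_i^\ast$ of this set lands in $\Delta_i$. All the ingredients are already assembled inside the proof of \pref{pr:cpart} (especially \pref{eq:delta-dual}, \pref{eq:cones}, and the Minkowski-decomposition uniqueness \cite[Theorem 3.1.2]{Wei07}), so the argument should be a matter of extracting and recombining those, but one must be careful that the equality $\langle m, n\rangle = -1$ on all of $\rotatebox[origin=c]{180}{$\beta$}(F_1)$ — not merely on each $\beta_k^\ast(F_1)$ separately — is what upgrades the inequalities $\langle m, n_k\rangle \geq -\delta_{i,k}$ to equalities. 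That splitting-and-summing step is the crux and is where I would be most careful; everything else is formal.
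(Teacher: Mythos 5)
Your proposal is correct and follows essentially the same route as the paper's proof: you establish $\delta_{\check h}(\sigma)\cap\partial\nabla^\ast\subset\rotatebox[origin=c]{180}{$\beta$}(F_1)^\ast$ via the unique Minkowski decomposition of $\sigma$ as a face of $\nabla+\nabla^{\check h'}$, then combine the equality $\la\rotatebox[origin=c]{180}{$\beta$}(F_1)^\ast,\rotatebox[origin=c]{180}{$\beta$}(F_1)\ra=-1$ with the termwise inequalities from \eqref{eq:nabla} to force each pairing to equal $-\delta_{i,j}$, and deduce \eqref{eq:betat} from \eqref{eq:betas} using $\mu_\tau\subset F_1$. (One minor slip: early on you wrote $\rotatebox[origin=c]{180}{$\beta$}_j^\ast(F_1)$ where you mean $\beta_j^\ast(F_1)$, as $F_1\prec\Delta^\ast$; you use the correct operator later.)
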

\begin{proof}
First we show \eqref{eq:betas}.
Since a face of the Minkowski sum of polytopes can be uniquely written as the Minkowski sum of faces of the polytopes, \pref{lm:minksum} implies 
\begin{align}\label{eq:phi-b}
\phi \lb \sigma \rb=\rotatebox[origin=c]{180}{$\beta$} (F_1).
\end{align}
From this and \eqref{eq:delta-dual} for $G_1=\phi \lb \sigma \rb$, we can see
\begin{align}
\delta_{\check{h}} \lb \sigma \rb \cap \partial \nabla^\ast 
&\subset \iota \lb \delta_{\check{h}} \lb \sigma \rb \rb \cap \partial \nabla^\ast
= \delta_{\check{\varphi}} \lb \phi \lb \sigma \rb \rb \cap \partial \nabla^\ast
= \phi \lb \sigma \rb^\ast
=\rotatebox[origin=c]{180}{$\beta$} (F_1)^\ast.
\end{align}
Since we have $\la \rotatebox[origin=c]{180}{$\beta$} (F_1)^\ast, \rotatebox[origin=c]{180}{$\beta$} (F_1) \ra=-1$, we get
\begin{align}
\la \rotatebox[origin=c]{180}{$\beta$}_i^\ast (\delta_{\check{h}} \lb \sigma \rb \cap \partial \nabla^\ast),
\beta_1^\ast \lb F_1 \rb+\cdots + \beta_r^\ast \lb F_2 \rb\ra=-1.
\end{align}
From this and \eqref{eq:nabla}, we obtain \eqref{eq:betas}.

Next, we show \eqref{eq:betat}.
When we write $\theta(\tau)=\rotatebox[origin=c]{180}{$\beta$} (F_1)+F_2$, we have $\beta_j^\ast \lb \mu_\tau \rb \subset \beta_j^\ast \lb F_1 \rb$ for all $j \in \lc 1, \cdots, r \rc$, since $\mu_\tau \subset F_1$.
\eqref{eq:betat} follows from this and \eqref{eq:betas} for $\sigma=\theta(\tau)$.
\end{proof}

\begin{lemma}\label{lm:bpoints}
Let $v=\rotatebox[origin=c]{180}{$\beta$} (\mu_v) + \nu_v \in \scrP(\tilde{\Sigma}')$ be a vertex, and $\sigma \in \scrP(\tilde{\Sigma}')$ be a maximal-dimensional polyhedron.
 Then the following hold:
 \begin{enumerate}
 \item Both of the subsets $\beta_i^\ast \lb \mu_{v} \rb \subset \nabla_i \cap N$ and 
 $M_i \lb \theta (\sigma) \rb \subset \Delta_i \cap M$ consist of a single point.
\item Both $\lc \beta_i^\ast \lb \mu_{v} \rb \relmid 1 \leq i \leq r \rc$ and $\lc M_i \lb \theta (\sigma) \rb \relmid 1 \leq i \leq r \rc$ are linearly independent and generate a primitive sublattice in $N$ and $M$ respectively.
\item When $v \prec \sigma$, one has
\begin{align}\label{eq:mibetaj}
\la M_i \lb \theta (\sigma) \rb,
\beta_j^\ast \lb \mu_{v} \rb \ra=-\delta_{i,j} \quad (1 \leq i, j \leq r).
\end{align}
\end{enumerate}
\end{lemma}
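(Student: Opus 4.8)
The plan is to prove the three assertions of \pref{lm:bpoints} by combining the combinatorial description of $\scrP(\tilde{\Sigma}')$ recalled from \cite{MR2198802} with \pref{lm:beta}. Recall that a vertex $v = \rotatebox[origin=c]{180}{$\beta$}(\mu_v) + \nu_v$ corresponds to a relevant cone $\cone(\mu_v) \times \lc 0 \rc + \cone(\nu_v \times \lc 1 \rc)$ of $\tilde{\Sigma}'$, and a maximal-dimensional polyhedron $\sigma$ corresponds to a maximal relevant cone; maximality of $\sigma$ forces the corresponding cone to be $d$-dimensional after quotienting, which pins down the combinatorial type of $\theta(\sigma)$.

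For assertion (1), the statement for $\beta_i^\ast(\mu_v)$ should follow from the fact that $v$ is a vertex: since $\rotatebox[origin=c]{180}{$\beta$}(\mu_v) = \sum_{i=1}^r \beta_i^\ast(\mu_v)$ is a point (being a vertex of $B^{\check h}_\nabla$), and this is a Minkowski sum of the faces $\beta_i^\ast(\mu_v) \prec \mu_v$, each summand $\beta_i^\ast(\mu_v)$ must itself be a single point; here one uses that the $\beta_i^\ast(\mu_v)$ live in the linearly independent subspaces spanned by the $\nabla_i$ (which follows from \eqref{eq:lattice-pts} together with the nef-partition structure). For $M_i(\theta(\sigma))$, I would argue dually: by \eqref{eq:mbetai} the set $M_i(\theta(\sigma))$ is the set of monomials of $f_i$ other than $0$ attaining the minimum on $\theta(\sigma)$, and since $\sigma$ is maximal-dimensional, $\theta(\sigma)$ is a maximal cell of $\scrP_{X(f_1,\cdots,f_r)^c}$, which by duality (\pref{pr:dual} applied in the present setting, or directly from the good subdivision $\tilde\Sigma'$) forces each $f_i$ to have exactly two minimizing monomials $0$ and one other, hence $|M_i(\theta(\sigma))| = 1$.

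For assertion (2), linear independence and primitivity of $\lc \beta_i^\ast(\mu_v) \rc_i$ follows from the nef-partition decomposition $\Delta^\ast = \conv\lc \nabla_1, \cdots, \nabla_r \rc$ and the disjoint union \eqref{eq:lattice-pts}: the $\beta_i^\ast(\mu_v)$ are vertices lying in $\nabla_i \cap N$, and since $v = \rotatebox[origin=c]{180}{$\beta$}(\mu_v) + \nu_v$ is a vertex of a cell in a polyhedral decomposition arising from the good subdivision $\tilde\Sigma'$, the cone $\cone(\mu_v)$ in $\Sigma'$ must be generated by these primitive vectors, which are automatically a subset of a $\bZ$-basis because $\tilde\Sigma'$ is (unimodular along the relevant rays, or more precisely because each one-dimensional cone is generated by a lattice point of $\nabla^\ast$). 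The dual statement for $\lc M_i(\theta(\sigma)) \rc_i$ follows from primitivity of the rays of the maximal cone $\delta_{\check h}(\theta(\sigma)) \in \Sigmav'$ together with \eqref{eq:mbetai}. Assertion (3) is then immediate from \eqref{eq:betat} of \pref{lm:beta} once we know from (1) that $\beta_j^\ast(\mu_v)$ and $\rotatebox[origin=c]{180}{$\beta$}_i^\ast(\delta_{\check h}(\theta(\sigma)) \cap \partial \nabla^\ast) = M_i(\theta(\sigma))$ are both single points: one substitutes $\tau = \sigma$, notes $v \prec \sigma$ so $\mu_v \subset \mu_\sigma$ hence $\beta_j^\ast(\mu_v) \subset \beta_j^\ast(\mu_\sigma)$, and since both sides of \eqref{eq:betat} and \eqref{eq:mibetaj} are pairings of single points, the required equality $\la M_i(\theta(\sigma)), \beta_j^\ast(\mu_v) \ra = -\delta_{i,j}$ follows directly.

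The main obstacle I anticipate is carefully justifying the \emph{uniqueness} (single-point-ness) claims in (1): one must rule out that $\beta_i^\ast(\mu_v)$ or $M_i(\theta(\sigma))$ degenerates to a higher-dimensional face or becomes empty. For emptiness this is handled by the defining conditions of $\scrP(\tilde\Sigma')$ (relevance of cones, i.e. $\rotatebox[origin=c]{180}{$\beta$}(\mu) \neq \emptyset$) and of $\scrP_{X(f_1,\cdots,f_r)^c}$ ($M_i \neq \emptyset$ for all $i$), but for the "single point" part one genuinely needs the vertex/maximal-cell hypothesis combined with the Minkowski-sum uniqueness principle (cf.~\cite[Theorem 3.1.2]{Wei07}) and the transversality built into the disjoint decomposition \eqref{eq:lattice-pts}: the dimension count $\dim \rotatebox[origin=c]{180}{$\beta$}(\mu_v) = \sum_i \dim \beta_i^\ast(\mu_v) = 0$ forces each summand to be zero-dimensional, and dually $\dim \delta_{\check h}(\theta(\sigma)) = d+r$ (full-dimensional cone, as $\theta(\sigma)$ is a vertex of $\scrP_{\nabla^{\check h}}$ — wait, rather $\theta(\sigma)$ is maximal-dimensional in $B^{\check h}_\nabla$, so its normal cone is as small as possible) pins down $|M_i| = 1$. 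Getting these dimension bookkeeping arguments exactly right, using the precise form of the good subdivision, is where the real work lies; the rest is formal manipulation of the pairings established in \pref{lm:beta}.
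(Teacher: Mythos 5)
The main gap is structural: you treat the three assertions in the stated order $1, 2, 3$, but the paper proves assertion~$3$ \emph{first} and then leverages \eqref{eq:mibetaj} to get both the single-point claim for $M_i(\theta(\sigma))$ in assertion~$1$ and the linear independence and primitivity in assertion~$2$. Your ordering is not a harmless stylistic choice here, because the alternative argument you offer for assertion~$2$ does not go through: knowing that each one-dimensional cone of $\Sigma'$ is generated by a primitive lattice vector (a point of $\Delta^\ast \cap N$) does \emph{not} imply that the set of generators of a higher-dimensional cone spans a \emph{primitive} sublattice. (In $\bZ^2$ the primitive vectors $(1,0)$ and $(1,2)$ generate a sublattice of index $2$.) That implication would require the fan $\Sigma'$ to be unimodular, which is not assumed in this section. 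The paper sidesteps this entirely: once one has $\la M_i(\theta(\sigma)), \beta_j^\ast(\mu_v) \ra = -\delta_{ij}$, the families $\lc \beta_i^\ast(\mu_v) \rc_i$ and $\lc M_i(\theta(\sigma)) \rc_i$ are dual to each other under the lattice pairing, which immediately gives both linear independence over $\bZ$ \emph{and} primitivity, with no unimodularity hypothesis needed. You also need \eqref{eq:mibetaj} as an input to the dimension count that forces $|M_i(\theta(\sigma))| = 1$; your bookkeeping there is off in any case, since $\dim \delta_{\check h}(\theta(\sigma)) = (d+r) - \dim \theta(\sigma) = r$, not $d+r$.

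What is correct in your proposal: the argument that $\beta_i^\ast(\mu_v)$ is a single point because $v = \beta_1^\ast(\mu_v)+\cdots+\beta_r^\ast(\mu_v)+\nu_v$ is a Minkowski decomposition of a point matches the paper (you don't actually need the independence of the $T(\nabla_i)$ for this step, since a Minkowski sum of nonempty convex sets equal to a point forces every summand to be a point). And you correctly observe that assertion~$3$ follows directly from \eqref{eq:betat} in \pref{lm:beta} by substituting $\tau = \sigma$ and using $\mu_v \subset \mu_\sigma$. The fix is simply to carry that observation out \emph{first}, and then use the resulting pairing to handle assertions~$1$ (the $M_i$ part) and~$2$ in the manner described above, rather than appealing to fan unimodularity.
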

\begin{proof}
First, we check \eqref{eq:mibetaj}.
Suppose $v \prec \sigma=\rotatebox[origin=c]{180}{$\beta$} (\mu_\sigma) + \nu_\sigma$.
Then \eqref{eq:mibetaj} follows from \eqref{eq:betat} for $\sigma$, since $M_i \lb \theta (\sigma) \rb \subset \rotatebox[origin=c]{180}{$\beta$}_i^\ast (\delta_{\check{h}} \lb \theta \lb \sigma \rb \rb \cap \partial \nabla^\ast)$ and $\mu_v \subset \mu_\sigma$.

Next, we check the statement for $\beta_i^\ast \lb \mu_{v} \rb$.
Since we have $v=\rotatebox[origin=c]{180}{$\beta$} (\mu_v) + \nu_v=\beta_1^\ast \lb \mu_{v} \rb + \cdots + \beta_r^\ast \lb \mu_{v} \rb +\nu_v$, it is obvious that the subset $\beta_i^\ast \lb \mu_{v} \rb \subset \nabla_i$ is a point.
As stated in \cite{MR2198802} just before Proposition 3.14 in loc.cit., the set $\mu_{v}$ is a convex hull of $r$ points $n_1, \cdots, n_r \in N$ such that $\varphi_i(n_j)= \delta_{i, j}$.
It turns out that the subset $\beta_i^\ast \lb \mu_{v} \rb \subset \nabla_i$ is the point $n_i \in N$.
For the vertex $v$, take a maximal-dimensional polyhedron $\sigma \in \scrP(\tilde{\Sigma}')$ such that $\sigma \succ v$.
From $\theta (\sigma) \in \scrP^{\check{h}}_{\Delta^\ast}$ and \pref{pr:cpart}, one can see $M_i \lb \theta (\sigma) \rb \neq \emptyset$.
By this and \eqref{eq:mibetaj}, one can see that $\lc \beta_i^\ast \lb \mu_{v} \rb \relmid 1 \leq i \leq r \rc$ is linearly independent and generates a primitive sublattice in $N$. 

The statement for $M_i \lb \theta (\sigma) \rb$ can also be checked as follows:
We know $M_i \lb \theta (\sigma) \rb \neq \emptyset$.
As mentioned just after \eqref{eq:mbetai}, the set of indices $M_i \lb \theta (\sigma) \rb \cup \lc 0 \rc$ corresponds to the monomials of $f_i$ that attain the minimum of $f_i$ on $\theta (\sigma)$.
Since $\theta (\sigma)$ is a maximal-dimensional polyhedron, by the last statement of \pref{pr:dual} and \eqref{eq:mibetaj}, we can see that $M_i \lb \theta (\sigma) \rb$ consists of a single point.
It is also obvious again from \eqref{eq:mibetaj} that $\lc M_i \lb \theta (\sigma) \rb \relmid 1 \leq i \leq r \rc$ are linearly independent and generates a primitive sublattice in $M$.
\end{proof}

In the same way as we did in the local case \pref{eq:nat-poly}, one can construct a natural polyhedral structure also for the space $X(f_1, \cdots, f_r)$, which will be denoted by $\scrP_{X(f_1, \cdots, f_r)}$.
Each polyhedron $\sigma$ in $\scrP_{X(f_1, \cdots, f_r)}$ is labeled by the set of indices of monomials attaining the minimum of $f_i$ on $\sigma$ and the cone corresponding to the tropical torus orbit containing $\rint \lb \sigma \rb$.
For a polyhedron $\sigma \in \scrP_{X(f_1, \cdots, f_r)}$ such that $\sigma \cap N_\bR \neq \emptyset$, we set
\begin{align}
\scrM_i (\sigma) =\lc m \in \Delta_i \cap M \relmid \check{h}(m)+\la m, n \ra =f_i(n), \forall n \in \sigma \cap N_\bR \rc
\quad \lb i \in \lc 1, \cdots, r \rc \rb.
\end{align}
This is the set of indices of monomials of $f_i$ that attain the minimum of $f_i$ on $\sigma \cap N_\bR$.
As in \eqref{eq:Isig}, we also set
\begin{align}\label{eq:Isig'}
I_\sigma:= \lc i \in \lc 1, \cdots r \rc \relmid f_i(n) \neq 0\ \mathrm{on}\ \rint(\sigma) \rc.
\end{align}

\begin{proposition}\label{pr:t-sigma}
For a polyhedron $\sigma \in \scrP_{X(f_1, \cdots, f_r)}$ satisfying $\sigma \cap N_\bR \neq \emptyset$, there exists a polyhedron $\tilde{\sigma} \in \scrP^{\check{h}}_{\Delta^\ast}=\scrP_{X(f_1, \cdots, f_r)^c}$ such that 
$\tilde{\sigma} \prec \sigma$ and $\scrM_i (\tilde{\sigma}) = \scrM_i (\sigma) \cup \lc 0 \rc$ for all $i \in \lc 1, \cdots, r \rc$.
Furthermore, when we write $\tilde{\sigma}$ as $\tilde{\sigma}=\rotatebox[origin=c]{180}{$\beta$} (F_1)+F_2$ with $\lb F_1, F_2 \rb \in \scrR^{\check{h}}_{\Delta^\ast}$, one has
\begin{align}\label{eq:sigma2}
\sigma \cap N_\bR = \tilde{\sigma} + \cone \lb \bigcup_{i \in I_\sigma} \beta_i^\ast \lb F_1 \rb \rb.
\end{align}
\end{proposition}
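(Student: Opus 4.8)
The plan is to prove \pref{pr:t-sigma} as the global counterpart of \pref{lm:sigma}: I would exhibit an explicit point of $\tilde{\sigma}$ obtained from a generic point of $\sigma \cap N_\bR$ by translating along lattice vectors lying in the polytopes $\nabla_i$, and then identify the recession cone of $\sigma \cap N_\bR$ by duality.

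First I would fix $n_0 \in \Int(\sigma \cap N_\bR)$. Since $\check{h}$ is piecewise linear with respect to a fan, one has $\check{h}(0)=0$, and since $0 \in \Delta_i \cap M$ the monomial $0$ contributes $0$ to $f_i$; hence $f_i(n_0) \leq 0$ for every $i$, with strict inequality precisely for $i \in I_\sigma$. For each $i \in I_\sigma$ I would choose a lattice point $w_i \in \nabla_i \cap N$ dual to the minimizing cell $\conv(\scrM_i(\sigma))$ of $\Delta_i$, in the sense that $\langle m, w_i \rangle = -1$ for all $m \in \scrM_i(\sigma)$ while $\langle m, w_i \rangle = 0$ for all $m \in \scrM_j(\sigma)$ with $j \neq i$; the existence of such a $w_i$ follows from \eqref{eq:nabla}, \eqref{eq:lattice-pts} and \eqref{eq:polytope-b} together with the nef-partition hypotheses, and it plays the exact role of the property $\langle m, e_j \rangle = -\delta_{i,j}$ of the vectors $e_j$ in the proof of \pref{lm:sigma}. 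Setting $n_1 := n_0 + \sum_{i \in I_\sigma} f_i(n_0)\, w_i$ and using that $f_i(n_0) \leq 0$ and $\langle m, w_i\rangle \geq -\delta_{i,k}$ for $m \in \Delta_k$ (by \eqref{eq:nabla}), I would verify, by exactly the computation with minima of affine functions carried out in \pref{lm:sigma}, that $f_i(n_1) = 0$ for all $i$ and that the monomials of $f_i$ attaining its minimum at $n_1$ are precisely $\scrM_i(\sigma) \cup \lc 0 \rc$. Consequently $n_1 \in \partial \nabla^{\check{h}}$, the face $\tilde{\sigma} \prec \nabla^{\check{h}}$ containing $n_1$ in its relative interior lies in $X(f_1,\cdots,f_r)^c$, and by \pref{pr:cpart} it belongs to $\scrP^{\check{h}}_{\Delta^\ast}$; moreover $\scrM_i(\tilde{\sigma}) = \scrM_i(\sigma) \cup \lc 0 \rc$ by \eqref{eq:mbetai}, and $\tilde{\sigma} \prec \sigma$ because every minimizing monomial of $f_i$ on $\sigma \cap N_\bR$ still minimizes $f_i$ on $\tilde{\sigma}$.

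For the identity \eqref{eq:sigma2}, write $\tilde{\sigma} = \rotatebox[origin=c]{180}{$\beta$}(F_1) + F_2$ with $(F_1, F_2) \in \scrR^{\check{h}}_{\Delta^\ast}$. By \eqref{eq:beta_i} one has $\beta_i^\ast(F_1) = F_1 \cap \nabla_i$, and \pref{lm:beta}, applied to $\tilde{\sigma}$, shows that for $i \in I_\sigma$ a point of $\beta_i^\ast(F_1)$ pairs as $-\delta_{i,j}$ against $\scrM_j(\tilde{\sigma}) \setminus \lc 0 \rc$; hence adding a point of $\cone(\bigcup_{i \in I_\sigma}\beta_i^\ast(F_1))$ to a point of $\tilde{\sigma}$ pushes each $f_i$ ($i \in I_\sigma$) below $0$ while leaving the minimizing monomials of every $f_j$ (apart from $0$) unchanged, so that $\tilde{\sigma} + \cone(\bigcup_{i \in I_\sigma}\beta_i^\ast(F_1)) \subseteq \sigma \cap N_\bR$. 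For the reverse inclusion I would invoke \pref{pr:dual}: $\sigma \cap N_\bR$ is the cell dual to $\sum_i \conv(\scrM_i(\sigma))$ in the mixed $\check{h}$-subdivision of $\Delta = \sum_i \Delta_i$, so its recession cone is the intersection over $i$ of the normal cones of $\conv(\scrM_i(\sigma))$ in $\Delta_i$; a direct computation, parallel to the second half of \pref{lm:sigma}, identifies this recession cone with $\cone(\bigcup_{i \in I_\sigma}\beta_i^\ast(F_1))$ and $\tilde{\sigma}$ with the complementary face of $\sigma \cap N_\bR$, which yields \eqref{eq:sigma2}.

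The main obstacle is the combinatorial core that this proof shares with \pref{lm:sigma}: producing the lattice vectors $w_i \in \nabla_i \cap N$ with the prescribed pairings, and verifying that the recession cone of the dual cell $\sigma \cap N_\bR$ equals $\cone(\bigcup_{i \in I_\sigma}\beta_i^\ast(F_1))$. Both depend on the nef-partition description \eqref{eq:nabla}--\eqref{eq:polytope-b} of how the $\nabla_i$ are positioned against the $\Delta_j$, on \pref{pr:cpart} (so that the compact part $X(f_1,\cdots,f_r)^c$ may be replaced by the combinatorially defined $B^{\check{h}}_\nabla$), and on \pref{lm:beta}; granted these, the remaining verifications are the same finite manipulations of minima of affine functions as in \pref{sc:construction}.
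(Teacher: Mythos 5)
There is a genuine gap in the first half of your argument. The translation step that works in \pref{lm:sigma} relies on the local-model hypothesis that $\la m, n \ra = 0$ for every $m \in \Delta_i$ and $n \in \Deltav_j$; this gives $\la m, n_j + e_j \ra = -\delta_{ij}$ for \emph{all} monomials $m \in A_i \setminus \lc 0 \rc$, which is precisely what lets one conclude $f_i(n_0') = 0$ after adding $\sum_{i \in I_\sigma} f_i(n_0)(n_i + e_i)$. In the nef-partition setting one only has the inequality $\la m, w \ra \geq -\delta_{ij}$ for $m \in \Delta_i$, $w \in \nabla_j$, with no upper bound, so for a non-minimizing monomial $m'$ the term $f_k(n_0)\la m', w_k \ra$ may be large and negative, and the computation that would give $f_i(n_1) \geq 0$ collapses. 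Moreover, the vectors $w_i$ you want — lattice points of $\nabla_i$ pairing as $-\delta_{ij}$ against $\scrM_j(\sigma)$ — need not exist at all: $\conv(\scrM_j(\sigma))$ is a cell of the coherent subdivision of $\Delta_j$ induced by $\check{h}$, and if it is not contained in a proper face of $\Delta_j$ no such $w_i$ can satisfy the required equalities. Supplying these $w_i$ is in effect equivalent to the existence statement you are trying to prove, and it is exactly the step the paper handles by a different route: working on the dual side via the mixed subdivision $\scrS$, invoking \pref{cl:HZ05} to describe its cells in terms of the subdivision $\scrL$ of $\nabla^\ast$, and then carefully constructing a cell $\tilde{F} = \sum_i \tilde{F}_i$ with $\tilde{F}_i \cap M = (F_i \cap M) \cup \lc 0 \rc$.

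Your sketch of the second half is closer in spirit to the paper's — the paper also passes through the recession cone, via \pref{th:lau} — but note that $\conv(\ext(\sigma \cap N_\bR)) = \tilde{\sigma}$ already uses the first half, and the identification of $\rec(\sigma \cap N_\bR)$ with $\cone(\bigcup_{i \in I_\sigma}\beta_i^\ast(F_1))$ is substantially more than ``a direct computation parallel to \pref{lm:sigma}'': the paper must first show that the recession cone is generated by its rays by exhibiting $\lc 0 \rc$ as a face, then for each ray locate a one-dimensional face of $\sigma$ with a single $i_0 \in I_\sigma$ (using that the faces $\beta_i^\ast(F_1')$ are linearly independent by \pref{lm:bpoints}), and finally rewrite the resulting description via \pref{lm:beta}, \eqref{eq:nabla}, \eqref{eq:lattice-pts}, and \cite[Lemma 2.7.(b)]{MR2198802}. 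So the reverse inclusion also cannot be dispatched by the translation shortcut; it genuinely requires the nef-partition combinatorics that the paper unpacks.
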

\begin{proof}
First, we show the former claim.
Recall that the polyhedral subdivision $\Xi$ of $N_\bR$ induced by the union of tropical hypersurfaces $X(f_i)^\circ$ $(1 \leq i \leq r)$ is dual to the mixed subdivision $\scrS$ of $\Delta$ induced by $\check{h}$ (\pref{pr:dual}).
The stable intersection $X(f_1, \cdots, f_r)^\circ \subset N_\bR$ is the support a subcomplex of $\Xi$, and whether each cell of $\Xi$ is contained in the stable intersection $X(f_1, \cdots, f_r)^\circ$ or not is determined by \pref{th:st-intersection}.
We write the element in the mixed subdivision $\scrS$ corresponding to $\sigma \cap N_\bR \in \Xi$ as $F:=\sum_{i=1}^r F_i \subset \Delta$ $\lb F_i \subset \Delta_i, \dim F_i \geq 1 \rb$.
Then $F_i \cap M=\scrM_i(\sigma)$.
Finding the polyhedron $\tilde{\sigma}$ in the claim is equivalent to find an element $\tilde{F}=\sum_{i=1}^r \tilde{F_i} \in \scrS$ $\lb \tilde{F_i} \subset \Delta_i \rb$ such that $\tilde{F_i} \cap M=(F_i \cap M) \cup \lc 0 \rc$.
We will try to find such an element $\tilde{F}$.

Let $\scrL$ be the subdivision of $\nabla^\ast$ induced by $\check{h}$.
The following claim is shown in the proof of \cite[Proposition 2.4]{MR2187503}.
\begin{claim}\label{cl:HZ05}
The mixed subdivision $\scrS$ of $\Delta$ consists of 
$\lc \sum_{i=1}^r \lb \eta \cap \Delta_i \rb \relmid \eta \in \scrL, \eta \ni 0 \rc$
and their faces.
\end{claim}
It turns out by this that there exists an element $\eta \in \scrL$ such that $F=\sum_{i=1}^r F_i \prec \sum_{i=1}^r \lb \eta \cap \Delta_i \rb$.
We have $F_i \prec \lb \eta \cap \Delta_i \rb$.
There exists an element $\eta' \in \scrL$ such that $\eta' \subset \partial \nabla^\ast$ and $\eta =\conv \lb \eta' \cup \lc 0 \rc \rb$.
Take a facet $G \prec \nabla^\ast$ such that $\eta' \subset G$, and let $n_0 \in N$ be the vertex of $\nabla$ that is dual to $G$.
Then we have $\la \eta', n_0 \ra=-1$.
Since $\dim \lb \eta \cap \Delta_i \rb \geq \dim F_i \geq 1$, we can see that the set $\eta \cap \Delta_i$ contains a point in $\eta'$.
It turns out that the restriction of $n_0$ to $\eta \cap \Delta_i$ takes the minimum along $\eta' \cap \Delta_i$.
Hence, we have
\begin{align}\label{eq:f1}
\sum_{i=1}^r \lb \eta' \cap \Delta_i \rb \prec \sum_{i=1}^r \lb \eta \cap \Delta_i \rb.
\end{align}
Let $F_i' \prec F_i$ be the face along which the restriction of $n_0$ to $F_i$ takes the minimum.
We have 
\begin{align}\label{eq:f2}
\sum_{i=1}^r F_i' \prec \sum_{i=1}^r F_i \prec \sum_{i=1}^r \lb \eta \cap \Delta_i \rb.
\end{align}
Since $\dim F_i \geq 1$, the set $F_i$ also contains a point in $\eta'$.
It turns out that we have $F_i' \subset \lb \eta' \cap \Delta_i \rb$.
By combining this, \eqref{eq:f1}, and \eqref{eq:f2}, one gets
\begin{align}\label{eq:F'}
\sum_{i=1}^r F_i' \prec \sum_{i=1}^r \lb \eta' \cap \Delta_i \rb.
\end{align}
Since $\lb F_i \cap M \rb \subset \lb \eta \cap M \rb = \lb \eta' \cap M \rb \cup \lc 0 \rc$, we also have
\begin{align}\label{eq:FiM}
\lb F_i' \cap M \rb \cup \lc 0 \rc = (F_i \cap M) \cup \lc 0 \rc.
\end{align}

We set $F':=\conv \lb \bigcup_{i=1}^r F_i' \rb \subset \eta'$.
By \eqref{eq:beta_i}, we have
\begin{align}\label{eq:F''}
F' \cap \Delta_i=\lc m \in F' \relmid \check{\varphi}_i (m)=1 \rc=F_i'
\end{align}
since $\check{\varphi}_i=\delta_{i, j}$ on $F_j' \subset \lb \eta' \cap \Delta_j \rb$.
We also have $F' \prec \eta'$ which can be shown as follows:
We will make an element $n_0' \in N_\bR$ such that the restriction $\left. n_0' \right|_{\eta'}$ takes the minimum along $F'$.
By \eqref{eq:F'}, there exists an element $n_0'' \in M_\bR$ such that the restriction $\left. n_0'' \right|_{\eta' \cap \Delta_i}$ takes the minimum along $F_i' \prec \lb \eta' \cap \Delta_i \rb$ for all $i \in \lc 1, \cdots, r \rc$.
Let $c_i \in \bR$ denote the minimum value of $\left. n_0'' \right|_{\eta' \cap \Delta_i}$ $(1 \leq i \leq r)$.
Since $\eta'$ is contained in a face of $\nabla^\ast$, there are also elements $n_i \in N_\bR$ $(1 \leq i \leq r)$ such that $\left. \check{\varphi}_i \right|_{\eta'}=n_i$.
The value of $n_i$ is constantly $1$ on $\eta' \cap \Delta_i$, and is $0$ on $\eta' \cap \Delta_j$ for $j \neq i$.
We set $n_0':=n_0''-\sum_{i=1}^r c_i n_i \in N_\bR$.
Then the restriction $\left. n_0' \right|_{\eta' \cap \Delta_i}$ takes the minimum value $0$ along $F_i'$.
On the other hand, by \eqref{eq:lattice-pts}, we have 
\begin{align}\label{eq:lattice-ptse}
\eta' \cap M = \bigsqcup_{i=1}^r \lb \eta' \cap \Delta_i \cap M \rb.
\end{align}
Let $\eta'' \prec \eta'$ be the face along which the restriction $\left. n_0' \right|_{\eta'}$ takes the minimum.
We will see $F' = \eta''$.
Since $\eta''$ is a lattice polytope, it contains a point in the lattice $M$.
Hence, it turns out by \eqref{eq:lattice-ptse} that the minimum value of the restriction $\left. n_0' \right|_{\eta'}$ is $0$, and $F' \subset \eta''$.
By \eqref{eq:lattice-ptse} again, one can also obtain 
\begin{align}
\eta'= \conv \lb \eta' \cap M \rb
=\conv \lb \bigcup_{i=1}^r \lb \eta' \cap \Delta_i \cap M \rb \rb
=\conv \lb \bigcup_{i=1}^r \lb \eta' \cap \Delta_i \rb \rb,
\end{align}
from which we can see $F' = \eta''$.
Therefore, one can conclude $F' \prec \eta'$.
In particular, one has $F' \in \scrL$.

We set $F'':=\conv \lb F' \cup \lc 0 \rc \rb \in \scrL$, $\tilde{F_i}:=F'' \cap \Delta_i$, and $\tilde{F}:=\sum_{i=1}^r \tilde{F_i}$.
By \pref{cl:HZ05} again, one can get $\tilde{F} \in \scrS$.
Furthermore, by \eqref{eq:F''} and \eqref{eq:FiM}, one has 
\begin{align}
\tilde{F_i} \cap M = \lb F' \cap \Delta_i \cap M \rb \cup \lc 0 \rc=\lb F_i' \cap M \rb \cup \lc 0 \rc = (F_i \cap M) \cup \lc 0 \rc.
\end{align}
Thus one can conclude the former claim of the proposition.

Next, we show \eqref{eq:sigma2}.
When $I_\sigma = \emptyset$, one has $\tilde{\sigma}=\sigma$ and the claim is clear.
We suppose $I_\sigma \neq \emptyset$ in the following.
In general, for a polyhedron $P=\lc x \in \bR^n \relmid A x \geq b \rc$, where $A$ is a $m \times n$ matrix and $b \in \bR^m$ is a vector, a $0$-dimensional face (vertex) of $P$ is called an \emph{extreme point}, and the cone 
$\rec (P):=\lc x \in \bR^n \relmid p+x \in P, \forall p \in P \rc$
is called the \emph{recession cone} of $P$. 
One has $\rec (P)=\lc x \in \bR^n \relmid A x \geq 0 \rc$ (cf.~e.g.~\cite[Proposition 1.12 $\rm(\hspace{.18em}i\hspace{.18em})$]{MR1311028}).
In order to prove the lemma, we use the following theorem:

\begin{theorem}{\rm(cf.~e.g.~\cite[Theorem 4.24]{MR3060144})}\label{th:lau}
Let $P$ be a polyhedron in $\bR^n$, and $\ext (P) \subset P$ denote the set of extreme points of $P$. 
When $\ext (P) \neq \emptyset$, one has 
\begin{align}
P=\conv \lb \ext (P) \rb+\rec (P).
\end{align}
\end{theorem}

We will compute $\conv \lb \ext \lb \sigma \cap N_\bR \rb \rb$ and $\rec \lb \sigma \cap N_\bR \rb$.
First, we compute the former one.
If a point $v \in \sigma \cap N_\bR$ is an extreme point, then the polyhedron $\tilde{v} \in \scrP_{X \lb f_1, \cdots, f_r \rb^c}$ of the former claim of the proposition for $v$ is equal to $v$ since $\tilde{v} \prec v$ and $\dim v=0$.
Therefore, we have $\scrM_i(v)=\scrM_i \lb \tilde{v} \rb \ni 0$.
Since $v \prec \sigma \cap N_\bR$, we also have $\scrM_i(v) \supset \scrM_i(\sigma)$.
By combining these, we obtain $\scrM_i(v) \supset \scrM_i \lb \sigma \rb \cup \lc 0 \rc=\scrM_i \lb \tilde{\sigma} \rb$, which implies $v \prec \tilde{\sigma}$.
On the contrary, since $\tilde{\sigma}$ is a face of $\sigma$, all vertices of $\tilde{\sigma}$ are extreme points of $\sigma$.
Therefore, we get
\begin{align}\label{eq:ext-s}
\conv \lb \ext \lb \sigma \cap N_\bR \rb \rb=\tilde{\sigma}.
\end{align}

Next, we compute $\rec \lb \sigma \cap N_\bR \rb$.
By \eqref{eq:mbetai} and $\scrM_i (\tilde{\sigma}) = \scrM_i (\sigma) \cup \lc 0 \rc$, one can see
\begin{align}\label{eq:MM}
\scrM_i \lb \tilde{\sigma} \rb=M_i \lb \tilde{\sigma} \rb \cup \lc 0 \rc, 
\quad \scrM_i \lb \sigma \rb=
\left\{ 
\begin{array}{ll}
M_i \lb \tilde{\sigma} \rb & i \in I_\sigma \\
M_i \lb \tilde{\sigma} \rb \cup \lc 0 \rc & i \nin I_\sigma.
  \end{array} 
\right.
\end{align}
From this, we can get
\begin{align}
\begin{split}
\sigma \cap N_\bR = 
&\bigcap_{i \in I_\sigma} \lc n \in N_\bR \relmid 
\begin{array}{l}
\check{h}(m) +\la m, n \ra \leq \check{h}(m') +\la m', n \ra \\ 
\forall m \in M_i \lb \tilde{\sigma} \rb, 
\forall m' \in \Delta_i \cap M
\end{array}
\rc \\
&\cap
\bigcap_{i \nin I_\sigma} \lc n \in N_\bR \relmid 
\begin{array}{l}
0= \check{h}(m) +\la m, n \ra \leq \check{h}(m') +\la m', n \ra \\ 
\forall m \in M_i \lb \tilde{\sigma} \rb, 
\forall m' \in \Delta_i \cap M
\end{array}
\rc.
\end{split}
\end{align}
Therefore, the recession cone of $\sigma \cap N_\bR$ is given by
\begin{align}\label{eq:rec0}
\bigcap_{i \in I_\sigma} \lc n \in N_\bR \relmid 
\begin{array}{l}
\la m, n \ra \leq \la m', n \ra \\ 
\forall m \in M_i \lb \tilde{\sigma} \rb, 
\forall m' \in \Delta_i \cap M
\end{array}
\rc \cap
\bigcap_{i \nin I_\sigma} \lc n \in N_\bR \relmid 
\begin{array}{l}
0= \la m, n \ra \leq \la m', n \ra \\ 
\forall m \in M_i \lb \tilde{\sigma} \rb, 
\forall m' \in \Delta_i \cap M
\end{array}
\rc.
\end{align}
One can show that this is equal to
\begin{align}\label{eq:rec1}
\cone \lb 
\bigcup_{i \in I_\sigma} \lc n \in N_\bR \relmid 
\begin{array}{l}
-1= \la m, n \ra \leq \la m', n \ra \\ 
\forall m \in M_i \lb \tilde{\sigma} \rb, \forall m' \in \Delta_i \cap M \\
0= \la m'', n \ra \leq \la m''', n \ra \\
\forall m'' \in M_j \lb \tilde{\sigma} \rb, \forall m''' \in \Delta_j \cap M, \forall j \neq i \\
\end{array}
\rc
\rb 
\end{align}
as follows:
It is obvious that \eqref{eq:rec1} is contained in \eqref{eq:rec0}.
We will check that \eqref{eq:rec0} is contained in \eqref{eq:rec1}.
The cone \eqref{eq:rec0} contains
\begin{align}\label{eq:rec-1}
\bigcap_{i=1}^r \lc n \in N_\bR \relmid 
\begin{array}{l}
0= \la m, n \ra \leq \la m', n \ra \\ 
\forall m \in M_i \lb \tilde{\sigma} \rb, 
\forall m' \in \Delta_i \cap M
\end{array}
\rc
=\rec \lb \tilde{\sigma} \rb
\end{align}
as its face.
Furthermore, since $\tilde{\sigma} \subset \nabla^{\check{h}}$ is bounded, we also have $\rec \lb \tilde{\sigma} \rb=\lc 0 \rc$.
It turns out that the cone \eqref{eq:rec0} has $\lc 0 \rc$ as its face.
Hence, the cone \eqref{eq:rec0} is generated by generators of its $1$-dimensional faces (cf.~e.g.~\cite[Lemma 1.2.15]{MR2810322}).
Therefore, it suffices to show that all $1$-dimensional faces of \eqref{eq:rec0} are contained in \eqref{eq:rec1}.
Let $F$ be a $1$-dimensional face of \eqref{eq:rec0}.
There exists a face $\tau \prec \sigma$ such that $\rec \lb \tau \cap N_\bR \rb=F$ (cf.~\cite[Lemma 3.5]{MR2846179}).
Here we assume $\dim \tau=1$ by replacing $\tau$ with its face if necessary.
We will show that $\rec \lb \tau \cap N_\bR \rb$ is contained in \eqref{eq:rec1}.
We set $I_\tau:=\lc i \in \lc 1, \cdots, r \rc \relmid f_i (n) < 0 \mathrm{\ on \ } \rint \lb \tau \rb \rc \subset I_\sigma$, and let $\tilde{\tau} \in \scrP^{\check{h}}_{\Delta^\ast}=\scrP_{X(f_1, \cdots, f_r)^c}$ be the polyhedron of the former claim of the proposition for $\tau$.
Note that $\tilde{\tau}$ is a point. 
By \eqref{eq:MM} for $\tau$, we have
\begin{align}\label{eq:scrM}
\sum_{i=1}^r \conv \lb \scrM_i \lb \tau \rb \rb
=
\lb \sum_{i \in I_\tau} \conv \lb M_i \lb \tilde{\tau} \rb \rb \rb+
\lb \sum_{i \nin I_\tau} \conv \lb M_i \lb \tilde{\tau} \rb \cup \lc 0 \rc \rb \rb.
\end{align}
This must be of codimension $1$ in $M_\bR$ by the last statement of \pref{pr:dual}, since $\dim \tau=1$.
When we write $\tilde{\tau}=\rotatebox[origin=c]{180}{$\beta$} (F_1')+F_2'$ with $\lb F_1', F_2' \rb \in \scrR^{\check{h}}_{\Delta^\ast}$, the set $\beta_i^\ast (F_1')$ $(1 \leq i \leq r)$ consists of a single point by \pref{lm:bpoints}.1.
We can see from \eqref{eq:betas} for $\tilde{\tau}$ that $\beta_i^\ast (F_1')$ $(i \in I_\tau)$ is constant on \eqref{eq:scrM}.
Since $\lc \beta_i^\ast (F_1') \relmid 1 \leq i \leq r \rc$ is linearly independent by \pref{lm:bpoints}.2, the number of elements in $I_\tau$ must be less than $2$.
If $I_\tau = \emptyset$, then we get $\tau=\tilde{\tau}$ which contradicts $1=\dim \tau > \dim \tilde{\tau}=0$.
Hence, we can conclude that the set $I_\tau$ consists of a single element, which will be denoted by $i_0 \in I_\sigma$.
Since the recession cone of $\tau \cap N_\bR$ is 
\begin{align}
\lc n \in N_\bR \relmid 
\begin{array}{l}
\la m, n \ra \leq \la m', n \ra \\ 
\forall m \in M_{i_0} \lb \tilde{\tau} \rb, 
\forall m' \in \Delta_{i_0} \cap M
\end{array}
\rc \cap
\bigcap_{i \neq i_0} \lc n \in N_\bR \relmid 
\begin{array}{l}
0= \la m, n \ra \leq \la m', n \ra \\ 
\forall m \in M_i \lb \tilde{\tau} \rb, 
\forall m' \in \Delta_i \cap M
\end{array}
\rc
\end{align}
and $M_{i} \lb \tilde{\tau} \rb \supset M_{i} \lb \tilde{\sigma} \rb$, one can see that $\rec \lb \tau \cap N_\bR \rb =F$ is contained in \eqref{eq:rec1}.
Thus \eqref{eq:rec0} is equal to \eqref{eq:rec1}.

By \eqref{eq:nabla} and \eqref{eq:lattice-pts}, we can see that \eqref{eq:rec1} is equal to
\begin{align}\label{eq:n-nabla} \nonumber
&\cone \lb 
\bigcup_{i \in I_\sigma} \lc n \in \nabla_i \relmid 
\begin{array}{l}
-1= \la m, n \ra, \forall m \in \delta_{\check{h}} \lb \tilde{\sigma} \rb \cap \partial \nabla^\ast \cap \Delta_i \cap M \\
0= \la m', n \ra, \forall m' \in \delta_{\check{h}} \lb \tilde{\sigma} \rb \cap \partial \nabla^\ast \cap \Delta_j \cap M, \forall j \neq i
\end{array}
\rc
\rb \\
&=\cone \lb 
\bigcup_{i \in I_\sigma} \lc n \in \nabla_i \relmid 
\begin{array}{l}
-\check{\varphi}_i (m)= \la m, n \ra, \forall m \in \delta_{\check{h}} \lb \tilde{\sigma} \rb \cap \partial \nabla^\ast \cap M
\end{array}
\rc
\rb \\
&=\cone \lb 
\bigcup_{i \in I_\sigma} \lc n \in \nabla_i \relmid 
\begin{array}{l}
-\check{\varphi}_i (m)= \la m, n \ra, \forall m \in \iota \lb \delta_{\check{h}} \lb \tilde{\sigma} \rb \rb \cap \partial \nabla^\ast
\end{array}
\rc
\rb,
\end{align}
where $\iota \colon \Sigmav' \to \Sigmav$ is the map appearing in \eqref{eq:nabla-diag}.
The last equality holds since the normal fan $\Sigmav$ of $\nabla=\sum_{i=1}^r \nabla_i$ is finer than the normal fan of $\nabla_i$.
On the other hand, by \eqref{eq:phi-b} and \eqref{eq:delta-dual}, one can get
\begin{align}
\iota \lb \delta_{\check{h}} \lb \tilde{\sigma} \rb \rb \cap \partial \nabla^\ast 
=\delta_{\check{\varphi}} \lb \phi \lb \tilde{\sigma} \rb \rb \cap \partial \nabla^\ast
=\delta_{\check{\varphi}} \lb \rotatebox[origin=c]{180}{$\beta$}(F_1) \rb \cap \partial \nabla^\ast
=\rotatebox[origin=c]{180}{$\beta$} (F_1)^\ast.
\end{align}
Furthermore, by \cite[Lemma 2.7.(b)]{MR2198802}, one also has
\begin{align}
\beta_i^\ast \lb F_1\rb
&=\lc n \in \nabla_i \relmid \la m , n \ra= -\check{\varphi}_i(m), \forall m \in \rotatebox[origin=c]{180}{$\beta$} (F_1)^\ast \rc.
\end{align}
By these, we obtain
$
\rec \lb \sigma \cap N_\bR \rb=\cone \lb \bigcup_{i \in I_\sigma} \beta_i^\ast \lb F_1 \rb \rb.
$
By combining this, \eqref{eq:ext-s}, and \pref{th:lau}, we obtain \eqref{eq:sigma2}.
\end{proof}

\subsection{Proof of \pref{th:glcontr}}\label{sc:glcontr}

We have shown $\trop \lb X \rb=X(f_1, \cdots, f_r)$ (\pref{pr:trop-stable}) and $B^{\check{h}}_\nabla=X(f_1, \cdots, f_r)^c$ (\pref{pr:cpart}).
These imply $B^{\check{h}}_\nabla \subset \trop \lb X \rb$, i.e., \pref{th:glcontr}(1).
We will prove \pref{th:glcontr}(2) by constructing a map 
\begin{align}
\delta \colon 
X(f_1, \cdots, f_r) \to X(f_1, \cdots, f_r)^c
\end{align}
concretely.

For $\tau =\rotatebox[origin=c]{180}{$\beta$} (\mu_\tau) + \nu_\tau \in \scrP(\tilde{\Sigma}')$, we fix a vertex $v_0=\rotatebox[origin=c]{180}{$\beta$} (\mu_{v_0}) + \nu_{v_0}$ and a maximal-dimensional polyhedron $\sigma_0=\rotatebox[origin=c]{180}{$\beta$} (\mu_{\sigma_0}) + \nu_{\sigma_0} \in \scrP(\tilde{\Sigma}')$ such that $v_0 \prec \tau$ and $\sigma_0 \succ \tau$.
For $i \in \lc 1, \cdots, r \rc$, we set 
\begin{align}
e_i&:=\beta_i^\ast \lb \mu_{v_0} \rb \in \nabla_i \cap N \\
e_i^\ast&:=-M_i \lb \theta \lb \sigma_0 \rb \rb \in \Delta_i \cap M,
\end{align}
where $\theta$ is the map defined in \eqref{eq:theta2}, and $M_i \lb \theta \lb \sigma_0 \rb \rb$ is \eqref{eq:Mi} with $G=\theta \lb \sigma_0 \rb$.
These are points that satisfy $\la e_i^\ast, e_j \ra=\delta_{i, j}$ and generate a primitive sublattice in $N$ and $M$ respectively by \pref{lm:bpoints}.
We also set $M':=\bigcap_{i=1}^r e_i^\perp \subset M$ and $N':=\bigcap_{i=1}^r \lb e_i^\ast \rb ^\perp \subset N$.
Then we have $M \cong M' \oplus \lb \oplus_{i=1}^r \bZ e_i^\ast \rb$ and $N \cong N' \oplus \lb \oplus_{i=1}^r \bZ e_i \rb$.
We define
\begin{align}
\Delta_{\tau, i}&:=\rotatebox[origin=c]{180}{$\beta$}_i^\ast (\delta_{\check{h}} \lb \theta \lb \tau \rb \rb \cap \partial \nabla^\ast)+e_i^\ast \subset M_\bR \\
\Deltav_{\tau, i}&:=\beta_i^\ast \lb \mu_{\tau} \rb- e_i \subset N_\bR.
\end{align}
We also write $N_\bR':=N' \otimes_\bZ \bR$ and $M_\bR':=M' \otimes_\bZ \bR$.

\begin{lemma}\label{lm:2prop}
One has 
\begin{enumerate}
\item $\Delta_{\tau, i} \subset M_\bR'$ and $\Deltav_{\tau, i} \subset N_\bR'$ $(1 \leq i \leq r)$.
\item $\la m, n \ra=0$ for any $m \in \Delta_{\tau, i}$, $n \in \Deltav_{\tau, j}$ $(1 \leq i, j \leq r)$.
\end{enumerate}
\end{lemma}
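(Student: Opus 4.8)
The plan is to derive both assertions from the pairing identities of \pref{lm:beta}---chiefly \eqref{eq:betat}---from the normalization $\la e_i^\ast, e_j \ra = \delta_{i,j}$ recorded right after the definition of $e_i, e_i^\ast$ (a consequence of \pref{lm:bpoints}), and from the compatibility of the face relation in $\scrP(\tilde{\Sigma}')$ with containment of the $\mu$-parts: $v_0 \prec \tau$ forces $\mu_{v_0} \subseteq \mu_\tau$ and $\tau \prec \sigma_0$ forces $\mu_\tau \subseteq \mu_{\sigma_0}$. This last fact is part of Gross's description of $\scrP(\tilde{\Sigma}')$ in \cite[Section 3]{MR2198802} (a cell $\rotatebox[origin=c]{180}{$\beta$}(\mu)+\nu$ is a face of $\rotatebox[origin=c]{180}{$\beta$}(\mu')+\nu'$ precisely when $\mu\subseteq\mu'$ and $\nu\subseteq\nu'$); combined with $\beta_k^\ast(\mu) = \mu \cap \nabla_k$ from \eqref{eq:beta_i}, it gives $\beta_k^\ast(\mu_{v_0}) \subseteq \beta_k^\ast(\mu_\tau)$ and $\beta_k^\ast(\mu_\tau) \subseteq \beta_k^\ast(\mu_{\sigma_0})$. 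I will also use that $M_j(\theta(\sigma_0)) \subseteq \rotatebox[origin=c]{180}{$\beta$}_j^\ast(\delta_{\check{h}}(\theta(\sigma_0)) \cap \partial\nabla^\ast)$ by \eqref{eq:Mi}, so that \eqref{eq:betat} for $\sigma_0$ applies to the point $M_j(\theta(\sigma_0))$.

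For (1), recall $M_\bR' = \{ m \in M_\bR : \la m, e_k \ra = 0,\ \forall k \}$ and $N_\bR' = \{ n \in N_\bR : \la e_k^\ast, n \ra = 0,\ \forall k \}$. A point of $\Delta_{\tau,i}$ has the form $m_0 + e_i^\ast$ with $m_0 \in \rotatebox[origin=c]{180}{$\beta$}_i^\ast(\delta_{\check{h}}(\theta(\tau)) \cap \partial\nabla^\ast)$. Since $e_k = \beta_k^\ast(\mu_{v_0}) \in \beta_k^\ast(\mu_\tau)$, \eqref{eq:betat} gives $\la m_0, e_k \ra = -\delta_{i,k}$, hence $\la m_0 + e_i^\ast, e_k \ra = -\delta_{i,k} + \delta_{i,k} = 0$ for all $k$, so $\Delta_{\tau,i} \subseteq M_\bR'$. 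Dually, a point of $\Deltav_{\tau,i}$ has the form $n_0 - e_i$ with $n_0 \in \beta_i^\ast(\mu_\tau) \subseteq \beta_i^\ast(\mu_{\sigma_0})$; applying \eqref{eq:betat} to $\sigma_0$ gives $\la M_j(\theta(\sigma_0)), n_0 \ra = -\delta_{i,j}$, i.e.\ $\la e_j^\ast, n_0 \ra = \delta_{i,j}$, hence $\la e_j^\ast, n_0 - e_i \ra = \delta_{i,j} - \delta_{i,j} = 0$ for all $j$, so $\Deltav_{\tau,i} \subseteq N_\bR'$.

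For (2), write $m = m_0 + e_i^\ast \in \Delta_{\tau,i}$ and $n = n_0 - e_j \in \Deltav_{\tau,j}$ as above and expand by bilinearity:
\begin{align}
\la m, n \ra = \la m_0, n_0 \ra - \la m_0, e_j \ra + \la e_i^\ast, n_0 \ra - \la e_i^\ast, e_j \ra .
\end{align}
The first term equals $-\delta_{i,j}$ by \eqref{eq:betat} for $\tau$ (applied to $m_0$ and $n_0 \in \beta_j^\ast(\mu_\tau)$); the second equals $-\delta_{i,j}$ and the third equals $\delta_{i,j}$ by the two computations in (1); and the fourth equals $\delta_{i,j}$. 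Therefore $\la m, n \ra = -\delta_{i,j} - (-\delta_{i,j}) + \delta_{i,j} - \delta_{i,j} = 0$.

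The only step that is not pure bookkeeping is the face-poset compatibility $\mu_{v_0} \subseteq \mu_\tau$, $\mu_\tau \subseteq \mu_{\sigma_0}$, which I expect to be the main point to pin down carefully; it can either be quoted from \cite[Section 3]{MR2198802} or deduced directly from the definition \eqref{eq:theta2} of $\theta$ by tracking inclusions of the associated cones in $\tilde{\Sigma}'$. The $-1$-normalizations throughout come from the polar-duality pairing $\la F, F^\ast \ra = -1$.
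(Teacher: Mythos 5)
Your proof is correct and follows essentially the same route as the paper's: both arguments reduce to \pref{lm:beta}, the normalization $\la e_i^\ast, e_j \ra = \delta_{i,j}$ from \pref{lm:bpoints}, and the face-poset inclusions $\beta_k^\ast(\mu_{v_0}) \subset \beta_k^\ast(\mu_\tau) \subset \beta_k^\ast(\mu_{\sigma_0})$ coming from $v_0 \prec \tau \prec \sigma_0$. The only stylistic difference is that you expand the pairing in part (2) into four terms, whereas the paper packages it into a single application of \eqref{eq:betat} for $\tau$ after the two orthogonalities in part (1) are in hand.
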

\begin{proof}
By \eqref{eq:betat} and $e_j=\beta_j^\ast \lb \mu_{v_0} \rb \subset \beta_j^\ast \lb \mu_{\tau} \rb$, one has
\begin{align}
\la \Delta_{\tau, i}, e_j \ra
=\la \rotatebox[origin=c]{180}{$\beta$}_i^\ast (\delta_{\check{h}} \lb \theta \lb \tau \rb \rb \cap \partial \nabla^\ast)+e_i^\ast, e_j \ra=-\delta_{i,j}+\delta_{i,j}=0
\end{align}
for any $j \in \lc 1, \cdots, r \rc$.
By \eqref{eq:betat} for $\sigma_0$,
$e_j^\ast = -M_j \lb \theta \lb \sigma_0 \rb \rb \subset -\rotatebox[origin=c]{180}{$\beta$}_j^\ast (\delta_{\check{h}} \lb \theta \lb \sigma_0 \rb \rb \cap \partial \nabla^\ast)$, and $\beta_i^\ast \lb \mu_{\tau} \rb \subset \beta_i^\ast \lb \mu_{\sigma_0} \rb$, one has
\begin{align}
\la e_j^\ast, \Deltav_{\tau, i} \ra
=\la e_j^\ast, \beta_i^\ast \lb \mu_{\tau} \rb- e_i \ra=\delta_{i,j}-\delta_{i,j}=0
\end{align}
for any $j \in \lc 1, \cdots, r \rc$.
Thus we get $\Delta_{\tau, i} \subset M_\bR'$ and $\Deltav_{\tau, i} \subset N_\bR'$.
By \eqref{eq:betat} for $\tau$, we can also get
\begin{align}
\la \Delta_{\tau, i}, \Deltav_{\tau, j} \ra
=\la \rotatebox[origin=c]{180}{$\beta$}_i^\ast (\delta_{\check{h}} \lb \theta \lb \tau \rb \rb \cap \partial \nabla^\ast)+e_i^\ast, \beta_j^\ast \lb \mu_{\tau} \rb- e_j \ra=0.
\end{align}
\end{proof}

\begin{lemma}
For any polyhedron $\tau =\rotatebox[origin=c]{180}{$\beta$} (\mu_\tau) + \nu_\tau \in \scrP(\tilde{\Sigma}')$, one has 
\begin{align}
\cone \lb \mu_\tau \rb =\cone \lb \bigcup_{i=1}^r \Deltav_{\tau, i} \times \lc e_i \rc \rb,
\end{align}
where $\Deltav_{\tau, i} \times \lc e_i \rc \subset N_\bR' \times \lc e_i \rc \subset N_\bR=N_\bR' \oplus \lb \oplus_{i=1}^r \bR e_i \rb$.
\end{lemma}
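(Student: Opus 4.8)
The plan is to reduce the claimed equality to an elementary statement about the polytope $\mu_\tau \subset \partial \Delta^\ast$ alone. First I would unwind the left-hand factor under the identification $N_\bR = N_\bR' \oplus \lb \oplus_{i=1}^r \bR e_i \rb$: by \pref{lm:2prop}.1 we have $\Deltav_{\tau, i} \subset N_\bR'$, so a point of the subset $\Deltav_{\tau, i} \times \lc e_i \rc$ of $N_\bR$ is $v + e_i$ with $v \in \Deltav_{\tau, i}$. Since $\Deltav_{\tau, i} = \beta_i^\ast \lb \mu_\tau \rb - e_i$ by definition, the translation by $-e_i$ cancels the $e_i$ in the second factor and $\Deltav_{\tau, i} \times \lc e_i \rc = \beta_i^\ast \lb \mu_\tau \rb$ as subsets of $N_\bR$. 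Thus the lemma is equivalent to the identity $\cone \lb \mu_\tau \rb = \cone \lb \bigcup_{i=1}^r \beta_i^\ast \lb \mu_\tau \rb \rb$.

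This I would prove by a double inclusion. Each $\beta_i^\ast \lb \mu_\tau \rb$ is a face of $\mu_\tau$, hence contained in $\mu_\tau$, which immediately gives the inclusion $\supset$. For the inclusion $\subset$, I would use that $\mu_\tau$ is a lattice polytope (it arises from the good subdivision $\tilde{\Sigma}'$; cf.~\cite[Observation 3.9]{MR2198802}), so $\mu_\tau = \conv \lb \mu_\tau \cap N \rb \subset \cone \lb \mu_\tau \cap N \rb$, and it then suffices to check that every lattice point $m \in \mu_\tau \cap N$ lies in some $\beta_i^\ast \lb \mu_\tau \rb$. Since $\mu_\tau \subset \partial \Delta^\ast$, \eqref{eq:lattice-pts} gives $m \in \partial \Delta^\ast \cap N = \bigsqcup_{i=1}^r \lb \nabla_i \cap N \setminus \lc 0 \rc \rb$, so $m \in \nabla_i$ for some $i \in \lc 1, \cdots, r \rc$, and hence $m \in \mu_\tau \cap \nabla_i = \beta_i^\ast \lb \mu_\tau \rb$ by \eqref{eq:beta_i}. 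Combining these gives $\mu_\tau \cap N \subset \bigcup_{i=1}^r \beta_i^\ast \lb \mu_\tau \rb$ and therefore $\cone \lb \mu_\tau \rb \subset \cone \lb \mu_\tau \cap N \rb \subset \cone \lb \bigcup_{i=1}^r \beta_i^\ast \lb \mu_\tau \rb \rb$, as required.

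There is no genuine difficulty here; the argument is purely bookkeeping. The only two points that need a little care are (i) correctly reading the symbol $\Deltav_{\tau, i} \times \lc e_i \rc$ inside the direct sum so that the defining shift by $-e_i$ is absorbed — which relies on \pref{lm:2prop}.1 — and (ii) recording that $\mu_\tau$ is a lattice polytope, hence the convex hull of its own lattice points, so that \eqref{eq:lattice-pts} can be applied pointwise. Both facts are already in use elsewhere in this subsection.
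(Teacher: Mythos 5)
Your proof is correct and follows the same route as the paper's: both reduce the statement to showing that $\mu_\tau$ (resp.\ its cone) equals the convex hull (resp.\ cone) of $\bigcup_i \beta_i^\ast(\mu_\tau)$, with the nontrivial inclusion following from \eqref{eq:lattice-pts} and \eqref{eq:beta_i}. The only cosmetic difference is that the paper proves the stronger convex-hull identity $\mu_\tau = \conv\lb \bigcup_i \beta_i^\ast(\mu_\tau)\rb$ by checking vertices of $\mu_\tau$, whereas you check all lattice points; both are valid since $\mu_\tau$ is a lattice polytope.
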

\begin{proof}
We will prove
\begin{align}\label{eq:mu}
\mu_\tau= \conv \lb \bigcup_{i=1}^r \beta_i^\ast \lb \mu_\tau \rb \rb,
\end{align}
from which the lemma follows by taking $\cone$.
By \eqref{eq:beta_i}, it is obvious that the right hand side is contained in the left hand side.
We check the opposite inclusion.
Since $\mu_\tau$ is a lattice polytope, it suffices to check that all vertices of $\mu_\tau$ are contained in $\bigcup_{i=1}^r \beta_i^\ast \lb \mu_\tau \rb$.
Let $v \prec \mu_\tau$ be an arbitrary vertex.
Since $\mu_\tau \subset \partial \Delta^\ast$, \eqref{eq:lattice-pts} implies that the vertex $v$ sits in $\lb \nabla_i \cap N \setminus \lc 0 \rc \rb$ for some $i \in \lc 1, \cdots, r \rc$.
By \eqref{eq:beta_i}, we get $v \in \beta_i^\ast \lb \mu_\tau \rb$.
We obtained \eqref{eq:mu}.
\end{proof}

For every polyhedron $\tau=\rotatebox[origin=c]{180}{$\beta$} (\mu_\tau) + \nu_\tau \in \scrP(\tilde{\Sigma}')$, we set
\begin{align}\label{eq:ctau'}
C_\tau:=\cone \lb \bigcup_{i=1}^r \Deltav_{\tau, i} \times \lc e_i \rc \rb=\cone \lb \mu_\tau \rb \in \Sigma'.
\end{align}
Let $U_\tau \subset X(f_1, \cdots, f_r)^c$ denote the open star of $a_{\tau}$ in $\widetilde{\scrP}(\tilde{\Sigma}')$, i.e.,
\begin{align}
U_\tau := \bigcup_{\substack{\tau_0 \prec \tau_1 \prec \cdots \prec \tau_l, \\ l \geq 0, \tau_i \in \scrP(\tilde{\Sigma}'), \\ \tau \in \lc \tau_0, \cdots, \tau_l \rc}} \rint \lb \conv \lb \lc a_{\tau_0}, a_{\tau_1}, \cdots, a_{\tau_l} \rc \rb \rb.
\end{align}
For each face $\tau' \prec \tau$, we define
\begin{align}\label{eq:Wtautau}
W_{\tau', \tau}^\circ&:=\bigcup_{\substack{\tau' \prec \tau_1 \prec \cdots \prec \tau_l, \\ l \geq 0, \tau_i \in \scrP(\tilde{\Sigma}'), \\ \tau \in \lc \tau', \tau_1, \cdots, \tau_l \rc}} \rint \lb \conv \lb \lc a_{\tau'}, a_{\tau_1}, \cdots, a_{\tau_l} \rc \rb \rb \subset U_\tau.
\end{align}
Then one has $U_\tau =\bigsqcup_{\tau' \prec \tau} W_{\tau', \tau}^\circ$.
We also define 
\begin{align}\label{eq:Vtautau}
V_{\tau', \tau}^\circ&:=\lb W_{\tau', \tau}^\circ + \cone_\bT \lb \bigcup_{i=1}^r \beta_i^\ast \lb \mu_{\tau'} \rb \rb
 \rb 
 \cap X(f_1, \cdots, f_r) \subset X_{C_{\tau'}}(\bT) \subset X_{\Sigma'}(\bT) \\
X_\tau^\circ&:=\bigcup_{\tau' \prec \tau} V_{\tau', \tau}^\circ \subset X(f_1, \cdots, f_r) \cap X_{C_\tau}(\bT)\\
f_{\tau, i}(n)&:=\min_{m \in A_{\tau, i}} \lc \check{h}(m)+\la m, n \ra \rc,
\end{align}
where $A_{\tau, i}:= \lc 0 \rc \cup \lb M \cap \lb \Delta_{\tau, i} \times \lc -e_i^\ast \rc \rb \rb=\lc 0 \rc \cup M_i \lb \theta (\tau) \rb$.

\begin{lemma}\label{lm:domi-mono}
Let $\tau=\rotatebox[origin=c]{180}{$\beta$} (\mu_\tau) + \nu_\tau \in \scrP(\tilde{\Sigma}')$ be a polyhedron.
For $i \in \lc1, \cdots, r \rc$, we set 
\begin{align}
f_{\tau, i}'(n):=\min_{m \in A_{\tau, i}} \la m, n \ra.
\end{align}
Then the pair $\lb X_\tau^\circ, U_\tau \rb$ is isomorphic to the pair $\lb X, U_\xi \rb$ of a local model of tropical contractions associated with the stable intersection $X \lb f_{\tau, 1}', \cdots, f_{\tau, r}' \rb$ of the tropical hypersurfaces defined by $f_{\tau, i}'$ $(1 \leq i \leq r)$ in the tropical toric variety $X_{C_\tau}(\bT)$ via translation by some vector $n_0 \in N_\bR$.
\end{lemma}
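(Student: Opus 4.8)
The plan is to reduce \pref{lm:domi-mono} to the local construction of \pref{sc:construction} by exhibiting an explicit integral affine chart that identifies the germ of $X(f_1, \cdots, f_r)$ along the relevant piece with the corresponding germ of $X(f_{\tau,1}', \cdots, f_{\tau,r}')$. First I would fix $\tau = \rotatebox[origin=c]{180}{$\beta$}(\mu_\tau) + \nu_\tau$ together with the chosen $v_0 \prec \tau$ and $\sigma_0 \succ \tau$, so that the lattice splittings $M \cong M' \oplus (\oplus_i \bZ e_i^\ast)$, $N \cong N' \oplus (\oplus_i \bZ e_i)$ from \pref{lm:bpoints} are in force, and record that by \pref{lm:2prop} the polytopes $\Delta_{\tau,i} \subset M_\bR'$, $\Deltav_{\tau,i} \subset N_\bR'$ satisfy the orthogonality hypothesis $\langle m, n \rangle = 0$ required in the second paragraph of \pref{sc:intro}. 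By \eqref{eq:ctau'} the cone attached to the local model built from $\{\Delta_{\tau,i}\}$, $\{\Deltav_{\tau,i}\}$ is exactly $C_\tau = \cone(\mu_\tau)$, so the two spaces live in the same tropical toric variety $X_{C_\tau}(\bT) \subset X_{\Sigma'}(\bT)$.

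Next I would produce the translation vector $n_0$. The point is that $f_{\tau,i}(n) = \min_{m \in A_{\tau,i}} \{\check h(m) + \langle m, n\rangle\}$ differs from $f_{\tau,i}'(n) = \min_{m \in A_{\tau,i}} \langle m, n\rangle$ only by the constants $\check h(m)$, and since $A_{\tau,i} = \{0\} \cup M_i(\theta(\tau))$ with $M_i(\theta(\tau))$ consisting precisely of monomials attaining the minimum of $f_i$ (equivalently of $\check h$-value) on $\theta(\tau)$ by \eqref{eq:mbetai}, one has $\check h(m) + \langle m, n\rangle = 0$ simultaneously for all $m \in M_i(\theta(\tau))$ at any point $n$ in $\delta_{\check h}(\theta(\tau)) \cap \partial\nabla^\ast$'s dual face --- that is, on $\theta(\tau)$ itself. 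Concretely I would choose $n_0$ to be any lattice point in the relative interior of $\theta(\tau)$ (which lies in $B^{\check h}_\nabla = X(f_1,\cdots,f_r)^c$, the minimal stratum), and check that translation by $n_0$ sends $f_{\tau,i}'$ to (a constant plus) $f_{\tau,i}$, hence identifies the tropical hypersurfaces $X(f_{\tau,i}')^\circ + n_0$ with the local germ of $X(f_i)^\circ$ near $\theta(\tau)$; \pref{lm:BX} and \pref{th:st-intersection} then give the matching of stable intersections since only the face poset near $\theta(\tau)$ matters. The key computation here is \pref{pr:t-sigma}, which already tells us exactly how polyhedra $\sigma \in \scrP_{X(f_1,\cdots,f_r)}$ meeting $N_\bR$ decompose near their "compact part" $\tilde\sigma$ as $\tilde\sigma + \cone(\cup_{i \in I_\sigma} \beta_i^\ast(F_1))$, which is precisely the shape \eqref{eq:sigma} of the local model.

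Then I would verify that the identification respects the distinguished subsets: the local model's $U_\xi$ is the open star of $a_\xi$ with $\xi = \sum_i \Deltav_{\tau,i}$ (a valid choice of $\xi \subset D$ satisfying \pref{cd:refinement}), and under the translation by $n_0$ this matches $U_\tau$, the open star of $a_\tau$ in $\widetilde{\scrP}(\tilde\Sigma')$; similarly the decompositions $U_\tau = \bigsqcup_{\tau' \prec \tau} W_{\tau',\tau}^\circ$ and $X_\tau^\circ = \bigcup_{\tau' \prec \tau} V_{\tau',\tau}^\circ$ from \eqref{eq:Wtautau}, \eqref{eq:Vtautau} correspond term-by-term to the local $U_\xi = \bigsqcup_{\tau' \prec \xi} W_{\tau'}^\circ$ and $X' = \bigcup_{\tau' \prec \xi} V_{\tau'}$, via the maps $\phi_{\xi, \Deltav_i}$ of \eqref{eq:phin1} being realized geometrically by $\tau' \mapsto$ convex hull of the $\beta_i^\ast(\mu_{\tau'})$. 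Finally I would confirm \pref{cd:loccont} and \pref{cd:loccont'} hold for this $\scrP$ (manifold-with-boundary property of $U_\tau$, the face $\xi$ being in $\scrP$, and every polytope of $\scrP$ sitting in a polyhedron of $\scrP_B$), using \pref{lm:beta} and \pref{lm:subdiv}-type arguments --- here \pref{lm:subdiv} from \pref{sc:reconstruction} is the analogue and the same reasoning via the piecewise linear function $\psi_\omega'$ applies.

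The main obstacle I anticipate is \textbf{the bookkeeping of the two lattice structures}: the global object $X(f_1,\cdots,f_r)$ carries the lattice $N$ coming from the Batyrev--Borisov data, whereas the abstract local model is built from the split lattice $N' \oplus (\oplus_i \bZ e_i)$, and one must check the identification of \pref{lm:bpoints} is not merely an isomorphism of $\bR$-vector spaces but of lattices --- this is exactly where the primitivity statements in \pref{lm:bpoints}.2 are used. A secondary subtlety is that $f_{\tau,i}$ involves the nonzero constants $\check h(m)$ while $f_{\tau,i}'$ does not, so one has to be careful that the translation $n_0$ absorbs all of these constants simultaneously for all $i$; this works precisely because $n_0$ lies in $\theta(\tau)$, where by construction all the relevant monomials of each $f_i$ are simultaneously minimal and equal to zero, which is the content built into the definition of $\scrP_{X(f_1,\cdots,f_r)^c}$ and \pref{pr:cpart}. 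Once these two points are settled, the rest is a matter of tracking definitions through the translation isomorphism.
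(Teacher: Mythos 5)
Your overall strategy is the same as the paper's: split the argument into (a) showing that on $X_\tau^\circ$ only the monomials indexed by $A_{\tau,i}$ can attain the minimum of $f_i$, (b) absorbing the constants $\check{h}(m)$ by a translation, and (c) matching the resulting data with the local-model construction of \pref{sc:construction} via \pref{lm:2prop}, \eqref{eq:ctau'}, and the identity $\phi_{\tau,\Deltav_{\tau,i}}(\tau') = \beta_i^\ast(\mu_{\tau'}) - e_i$. Your translation vector is essentially the right one up to sign: the paper takes $n_0$ with $\check{h}(m) = \la m, n_0 \ra$ on the cone $\delta_{\check{h}}(\theta(\tau)) \in \Sigmav'$, using that $\check{h}$ is linear there; restricted to $A_{\tau,i}$ this agrees with taking minus a point of $\theta(\tau)$, as you propose.

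There is, however, one concrete error in step (c): the choice of $\xi$. You take $\xi = \sum_i \Deltav_{\tau,i}$ and assert its open star matches $U_\tau$. But $\Deltav_{\tau,i} = \beta_i^\ast(\mu_\tau) - e_i$, so $\sum_i \Deltav_{\tau,i}$ is only a translate of $\sum_i \beta_i^\ast(\mu_\tau) = \rotatebox[origin=c]{180}{$\beta$}(\mu_\tau)$, whereas $\tau = \rotatebox[origin=c]{180}{$\beta$}(\mu_\tau) + \nu_\tau$ carries the additional Minkowski summand $\nu_\tau \subset \nabla^{\check{h}'}$, which in general has positive dimension. The open star of $a_{\sum_i \Deltav_{\tau,i}}$ is then not $U_\tau + n_0$. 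The correct choice, and the one the paper makes, is $\xi = \tau + n_0$ itself; this is admissible for \pref{cd:refinement} precisely because $\tau$ is a Minkowski sum having each $\beta_i^\ast(\mu_\tau)$ as a summand, so its normal fan refines that of $\Deltav_{\tau,i}$. A second, softer issue is that step (a) is under-justified on the unbounded part of $X_\tau^\circ$: each $V_{\tau',\tau}^\circ$ extends to infinity along $\cone_\bT(\bigcup_i \beta_i^\ast(\mu_{\tau'}))$, and to rule out monomials outside $A_{\tau,i}$ entering the minimum there one needs the explicit comparison $\la \Delta_{\tau,i} - e_i^\ast, \beta_j^\ast(\mu_\tau) \ra = -\delta_{i,j}$ from \eqref{eq:betat} against $\la \Delta_i, \beta_j^\ast(\mu_\tau) \ra \geq -\delta_{i,j}$ from \eqref{eq:nabla}; appealing to \pref{pr:t-sigma} and the face poset near $\theta(\tau)$ does not by itself control the directions at infinity. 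Both points are repairable without changing your architecture.
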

\begin{proof}
First, we show that the subset $X_\tau^\circ \subset X(f_1, \cdots, f_r) $ coincides with a subset of the stable intersection $X \lb f_{\tau, 1}, \cdots, f_{\tau, r} \rb$ of the tropical hypersurfaces $X(f_{\tau, i})$ $(1 \leq i \leq r)$ in the tropical toric variety $X_{C_\tau}(\bT)$.
It suffices to show that all the indices of monomials of $f_i$ that attain the minimum of $f_i$ at some point in $X_\tau^\circ$ are contained in $A_{\tau, i}$.
Recall that $A_{\tau, i}=\lc 0 \rc \cup M_i \lb \theta (\tau) \rb \subset \Delta_i \cap M$ is the set of the indices of monomials of $f_i$ that attain the minimum of $f_i$ on $\theta \lb \tau \rb$.
Since any point in $U_\tau$ sits in $\rint (F)$ for some $F \in \scrP_{\nabla^{\check{h}}}$ such that $F \succ \theta \lb \tau \rb$, 
all the indices of monomials of $f_i$ that attain the minimum of $f_i$ at some point in $U_\tau$ are contained in $A_{\tau, i}$.
Furthermore, since we have $\la \Delta_{\tau, i} -e_i^\ast, \beta_j^\ast \lb \mu_\tau \rb \ra=-\delta_{i, j}$ by \eqref{eq:betat}, and 
$\la \Delta_i, \beta_j^\ast \lb \mu_\tau \rb \ra \geq -\delta_{i, j}$ by \eqref{eq:nabla}, it turns out that all the indices of monomials of $f_i$ that attain the minimum at some point in $X_\tau^\circ \setminus U_\tau$ are also contained in $A_{\tau, i}$.

Next, we show that the stable intersection $X \lb f_{\tau, 1}, \cdots, f_{\tau, r} \rb$ is isomorphic to the stable intersection $X \lb f_{\tau, 1}', \cdots, f_{\tau, r}' \rb$.
Since $\check{h}$ is linear on every cone in $\Sigmav'$ and $\delta_{\check{h}} \lb \theta \lb \tau \rb \rb \in \Sigmav'$, there exists $n_0 \in N_\bR$ such that $\check{h}(m)=\la m, n_0 \ra$ for any $m \in \delta_{\check{h}} \lb \theta \lb \tau \rb \rb$.
Since $\delta_{\check{h}} \lb \theta \lb \tau \rb \rb \supset A_{\tau, i}$, one has
\begin{align}
f_{\tau, i}(n)=\min_{m \in A_{\tau, i}} \lc \la m, n_0 \ra +\la m, n \ra \rc=\min_{m \in A_{\tau, i}} \la m, n+n_0 \ra=f_{\tau, i}'(n+n_0)
\end{align}
for any $i \in \lc 1, \cdots, r \rc$.
Hence, the stable intersections $X \lb f_{\tau, 1}, \cdots, f_{\tau, r} \rb$ and $X \lb f_{\tau, 1}', \cdots, f_{\tau, r}' \rb$ are isomorphic via translation $+n_0$.

We set $X:=X_\tau^\circ+n_0$ and $U_\xi:=U_\tau+n_0$.
Since $U_\tau \subset X(f_1, \cdots, f_r)^c$, the monomial $0$ attains the minimum of every polynomial $f_{i}$ $(1 \leq i \leq r)$ on $U_\tau$.
Hence, the monomial $0$ also attains the minimum of every polynomial $f_{\tau, i}'$ $(1 \leq i \leq r)$ on $U_\xi$.
Two properties stated in \pref{lm:2prop} are the ones that we imposed for polytopes $\Delta_i$ and $\Deltav_i$ when we constructed local models of tropical contractions.
Furthermore, since 
$\tau=\rotatebox[origin=c]{180}{$\beta$} (\mu_\tau) + \nu_\tau =\sum_{i=1}^r \beta_i^\ast \lb \mu_{\tau} \rb +\nu_\tau$ and $\Deltav_{\tau, i}:=\beta_i^\ast \lb \mu_{\tau} \rb- e_i$,
the normal fan of $\tau \in \scrP(\tilde{\Sigma}')$ is a refinement of the normal fan of $\Deltav_{\tau, i}$.
This is the condition that we imposed for $\xi$ in \pref{cd:refinement}.
We have the map \eqref{eq:phi} for $\tau, \Deltav_{\tau, i}$
\begin{align}
\phi_{\tau, \Deltav_{\tau, i}} \colon \scrP_{\tau} \to \scrP_{\Deltav_{\tau, i}}.
\end{align}
By \pref{lm:minksum}, it turns out that for all $\tau' =\rotatebox[origin=c]{180}{$\beta$} (\mu_{\tau'}) + \nu_{\tau'} \in \scrP_{\tau}$, one has 
\begin{align}\label{eq:phi-tau}
\phi_{\tau, \Deltav_{\tau, i}}  \lb \tau' \rb = \beta_i^\ast \lb \mu_{\tau'} \rb-e_i=\Deltav_{\tau', i}.
\end{align}
From these and the construction of $U_\tau$ and $X_\tau^\circ$, we can see that $\lb X, U_\xi \rb$ is a pair of a local model of tropical contractions, and the pair $\lb X_\tau^\circ, U_\tau \rb$ is isomorphic to it via translation by $n_0 \in N_\bR$.
\end{proof}

By \pref{lm:domi-mono}, there is a map
\begin{align}\label{eq:del-tau}
\delta_\tau \colon X_\tau^\circ \to U_\tau
\end{align}
that is isomorphic to a local model of tropical contractions via the translation $+n_0 \in N_\bR$.
Its restriction to $V_{\tau', \tau}^\circ$ is the composition
\begin{align}\label{eq:del-tau'}
V_{\tau', \tau}^\circ \hookrightarrow X_{C_{\tau'}}(\bT) \xrightarrow{\pi_{C_{\tau'}}} \pi_{C_{\tau'}}\lb W_{\tau', \tau}^\circ \rb \xrightarrow{t_{\tau'}} W_{\tau', \tau}^\circ,
\end{align}
where $t_{\tau'} \colon \pi_{C_{\tau'}} \lb W_{\tau', \tau}^\circ \rb \to W_{\tau', \tau}^\circ$ is a map such that 
$t_{\tau'} \circ \pi_{C_{\tau'}}$ is the identity map on $W_{\tau', \tau}^\circ$.
The fan structures \eqref{eq:fanstr} that are constructed for the local model $U_\xi$ also agree with the fan structures defined in \pref{eq:gfanstr}, since $\phi_{\tau, \Deltav_{\tau, i}}  \lb v \rb +e_i= \beta_i^\ast \lb \mu_{v} \rb$ for every vertex $v$ of $\tau$ by \eqref{eq:phi-tau}.

We set $V_{\tau}^\circ:=V_{\tau, \tau}^\circ$ and $W_{\tau}^\circ:=W_{\tau, \tau}^\circ$ for short, and write their closures as $V_{\tau}$ and $W_{\tau}$ respectively.
They are 
\begin{align}\label{eq:wtau}
W_{\tau}&=\bigcup_{\substack{\tau \prec \tau_1 \prec \cdots \prec \tau_l, \\ l \geq 0, \tau_i \in \scrP(\tilde{\Sigma}')}} \conv \lb \lc a_\tau, a_{\tau_1}, \cdots, a_{\tau_l} \rc \rb
=\bigcup_{\substack{\tau \prec \tau_1 \prec \cdots \prec \tau_l, \\ l \geq 0, \tau_i \in \scrP(\tilde{\Sigma}')}} \rint \lb \conv \lb \lc a_{\tau_1}, \cdots, a_{\tau_l} \rc \rb \rb  \\
V_{\tau}&=\lb W_{\tau} + \cone_\bT \lb \bigcup_{i=1}^r \beta_i^\ast \lb \mu_{\tau} \rb \rb
 \rb 
 \cap X(f_1, \cdots, f_r) \subset X_{C_{\tau}}(\bT) \subset X_{\Sigma'}(\bT).
\end{align}
In the union in \eqref{eq:wtau}, $\tau_1$ may coincide with $\tau$.

\begin{lemma}\label{lm:vtau2}
Let $\sigma \in \scrP_{X(f_1, \cdots, f_r)}$ be a polyhedron such that $\sigma \cap N_\bR \neq \emptyset$, 
and $\tilde{\sigma} \in \scrP^{\check{h}}_{\Delta^\ast}=\scrP_{X(f_1, \cdots, f_r)^c}$ be the polyhedron of \pref{pr:t-sigma} for $\sigma$.
For any polyhedron $\tau \in \scrP(\tilde{\Sigma}')$, one has
\begin{align}\label{eq:vtaus}
V_{\tau}^\circ \cap \sigma
&=
\left\{
\begin{array}{ll}
\lb W_{\tau}^\circ \cap \tilde{\sigma} \rb + \cone_\bT \lb \bigcup_{i \in I_\sigma} \beta_i^\ast \lb \mu_{\tau} \rb \rb & \tau \subset \tilde{\sigma} \\
\emptyset & \mathrm{otherwise} \\
\end{array}
\right. \\ \label{eq:vtaus'}
V_{\tau} \cap \sigma
&=
\left\{
\begin{array}{ll}
\lb W_{\tau} \cap \tilde{\sigma} \rb + \cone_\bT \lb \bigcup_{i \in I_\sigma} \beta_i^\ast \lb \mu_{\tau} \rb \rb & \tau \subset \tilde{\sigma} \\
\emptyset & \mathrm{otherwise,} \\
\end{array}
\right.
\end{align}
where $I_\sigma$ is the one defined in \eqref{eq:Isig'}.
\end{lemma}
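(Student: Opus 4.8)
The statement is the global counterpart of \pref{lm:vtau}, and the plan is to deduce it from that lemma by way of \pref{lm:domi-mono}, using \pref{pr:t-sigma} to pin down the polyhedron $\tilde{\sigma}$ that appears in the formula. Since $V_\tau^\circ, V_\tau \subset X_\tau^\circ$, it suffices to compute $V_\tau^\circ \cap \lb \sigma \cap X_\tau^\circ \rb$ and $V_\tau \cap \lb \sigma \cap X_\tau^\circ \rb$. By the first paragraph of the proof of \pref{lm:domi-mono}, every monomial of $f_i$ attaining the minimum of $f_i$ at a point of $X_\tau^\circ$ lies in $A_{\tau,i}$; hence if $\sigma \cap X_\tau^\circ \neq \emptyset$ then $\scrM_i (\sigma) \subset A_{\tau,i}$ for all $i$, so $\sigma$ is simultaneously a polyhedron of the natural polyhedral structure of the stable intersection $X \lb f_{\tau,1}, \cdots, f_{\tau,r} \rb$, which by \pref{lm:domi-mono} is, after translating by the vector $n_0$ of that lemma, the space $X \lb f_{\tau,1}', \cdots, f_{\tau,r}' \rb$ underlying a local model of tropical contractions with base polytope $\xi := \tau + n_0$ and defining polytopes $\Delta_{\tau,i}, \Deltav_{\tau,i}$.

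Next I would translate everything by $n_0$ and apply \pref{lm:vtau} with $\xi$ playing the role of the base polytope. Using the commutativity and associativity of Minkowski sums, $V_{\tau,\tau}^\circ + n_0$ is exactly the set obtained from the local $V_\xi$ by replacing the closed star $W_\xi$ with its open core $W_\xi^\circ = W_{\tau}^\circ + n_0$; and by \eqref{eq:phi-tau}, together with the identity $\Deltav_{\tau,i} \times \lc e_i \rc = \beta_i^\ast \lb \mu_\tau \rb$ obtained by unwinding the embedding $N_\bR' \times \lc e_i \rc \hookrightarrow N_\bR$, the factor $\phi_{\xi, \Deltav_{\tau,i}} \lb \xi \rb \times \lc e_i \rc = \Deltav_{\tau,i} \times \lc e_i \rc$ becomes $\beta_i^\ast \lb \mu_\tau \rb$. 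Reading off the conclusion of \pref{lm:vtau} (its pointwise ``base point must lie in $\tilde{\sigma}$'' argument is unaffected by passing from $W_\xi$ to $W_\xi^\circ$) then gives $V_{\tau,\tau}^\circ \cap \sigma = \lb W_\tau^\circ \cap \tilde{\sigma}' \rb + \cone_\bT \lb \bigcup_{i \in I_\sigma} \beta_i^\ast \lb \mu_\tau \rb \rb$, with $\tilde{\sigma}'$ the face of $\sigma$ produced by \pref{lm:sigma}. It remains to check $\tilde{\sigma}' = \tilde{\sigma}$, the polyhedron of \pref{pr:t-sigma}: both satisfy $\scrM_i (\tilde{\sigma}') = \scrM_i (\sigma) \cup \lc 0 \rc = \scrM_i (\tilde{\sigma})$ for all $i$, and a cell of $\scrP^{\check{h}}_{\Delta^\ast} = \scrP_{X(f_1, \cdots, f_r)^c}$ is determined by its tuple of minimizing monomials, so they coincide.

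For the emptiness clause, suppose $\tau \not\subset \tilde{\sigma}$. Any $w \in W_{\tau}^\circ$ lies in the relative interior of a simplex $\conv \lb \lc a_\tau, a_{\tau_1}, \cdots, a_{\tau_l} \rc \rb$, so if $w \in \tilde{\sigma}$, a closed convex set, then $a_\tau \in \tilde{\sigma}$. Since $\scrP(\tilde{\Sigma}')$ subdivides $\scrP^{\check{h}}_{\Delta^\ast}$ one has $a_\tau \in \Int \lb \theta(\tau) \rb$, whence $\theta(\tau)$ is a face of $\tilde{\sigma}$ and $\tau \subset \theta(\tau) \subset \tilde{\sigma}$, a contradiction; thus $W_{\tau}^\circ \cap \tilde{\sigma} = \emptyset$, and the formula above gives $V_{\tau,\tau}^\circ \cap \sigma = \emptyset$. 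The closure statement \eqref{eq:vtaus'} then follows by passing to closures in \eqref{eq:vtaus}, using that $V_\tau$ is the closure of $V_\tau^\circ$, that $W_\tau$ is the closure of $W_\tau^\circ$, and that the cone factor is already closed in $X_{C_\tau}(\bT)$.

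The main obstacle I anticipate is the bookkeeping in this reduction: carefully matching the open sets $V_{\tau',\tau}^\circ, W_{\tau',\tau}^\circ$ with their closures after the translation by $n_0$, and confirming the identification $\tilde{\sigma}' = \tilde{\sigma}$ cleanly rather than re-deriving \pref{pr:t-sigma}. An alternative that sidesteps the translation bookkeeping, at the cost of repeating part of the argument of \pref{lm:vtau}, is to prove \eqref{eq:vtaus} directly: take the closure of the decomposition $\sigma \cap N_\bR = \tilde{\sigma} + \cone \lb \bigcup_{i \in I_\sigma} \beta_i^\ast \lb F_1 \rb \rb$ of \pref{pr:t-sigma}; obtain the inclusion $\supset$ from $\beta_i^\ast \lb \mu_\tau \rb \subset \beta_i^\ast \lb F_1 \rb$, valid because $\mu_\tau \subset F_1$ once $\tau \subset \tilde{\sigma}$; and obtain $\subset$ by using \eqref{eq:betat} to see that moving off a point of $W_\tau^\circ$ in a $\beta_i^\ast \lb \mu_\tau \rb$-direction with $i \notin I_\sigma$ strictly lowers $f_i$, so only the directions with $i \in I_\sigma$ can contribute, while the base point is forced into $\tilde{\sigma}$ by matching minimizing monomials via \eqref{eq:mbetai} and \eqref{eq:MM}.
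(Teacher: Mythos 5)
Your closing ``alternative'' is in fact the paper's actual proof: take the closure of \eqref{eq:sigma2} to get $\sigma = \tilde{\sigma} + \cone_\bT \lb \bigcup_{i \in I_\sigma} \beta_i^\ast \lb F_1 \rb \rb$, deduce $\supset$ from $\mu_\tau \subset F_1$, and prove $\subset$ by writing $n_0 = n_1 + n_2$ with $n_1 \in W_\tau^{(\circ)}$ and $n_2$ in the cone, using \eqref{eq:MbDb} to see that adding $n_2$ only removes the monomial $0$ from the minimizing sets, so that $n_1$ inherits $\scrM_i(\sigma) \cup \lc 0 \rc = \scrM_i(\tilde{\sigma})$ and lies in $\tilde{\sigma}$. (The paper handles the emptiness clause by exhibiting a monomial that minimizes on $\tilde{\sigma}$ but not on $W_\tau$ nor on $V_\tau$, rather than by your face-of-$\nabla^{\check{h}}$ argument; both work.) Had you committed to that route, the proof would be essentially correct and identical in approach.

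The primary route you propose, however, has a genuine gap in the closed case \eqref{eq:vtaus'}. First, the containment $V_\tau \subset X_\tau^\circ$ is false: by \eqref{eq:wtau}, $W_\tau \setminus W_\tau^\circ$ consists of simplices $\Int \lb \conv \lb \lc a_{\tau_1}, \cdots, a_{\tau_l} \rc \rb \rb$ with $\tau_1 \succ \tau$, $\tau_1 \neq \tau$, which lie in $W_{\tau_1}^\circ \subset U_{\tau_1}$ and not in $U_\tau = \delta_\tau \lb X_\tau^\circ \rb$; so $V_\tau$ is not contained in the domain of the local model you want to invoke, and \pref{lm:vtau} cannot be applied to it directly. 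Second, deducing \eqref{eq:vtaus'} from \eqref{eq:vtaus} ``by passing to closures'' requires the identity $V_\tau \cap \sigma = \overline{V_\tau^\circ \cap \sigma}$, and only the inclusion $\supset$ is automatic: a point of $V_\tau \cap \sigma$ could a priori be a limit of points of $V_\tau^\circ$ none of which lie in $\sigma$, so the inclusion $\subset$ needs an argument you do not give. (There is also nontrivial bookkeeping in identifying the local $\tilde{\sigma}$ of \pref{lm:sigma}, a cell of $\scrP_{X(f'_{\tau,1},\cdots,f'_{\tau,r})}$, with the global $\tilde{\sigma}$ of \pref{pr:t-sigma}, since the local polynomials only see the monomials in $A_{\tau,i}$ -- you flag this yourself.) The clean fix is precisely your alternative: run the two-inclusion argument once, simultaneously for $W_\tau^\circ$ and $W_\tau$, as the paper does.
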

\begin{proof}
We write $\tilde{\sigma}=\rotatebox[origin=c]{180}{$\beta$} (F_1)+F_2$ with $\lb F_1, F_2 \rb \in \scrR^{\check{h}}_{\Delta^\ast}$.
When $\tau$ is not contained in $\tilde{\sigma}$, there exist some $j \in \lc1, \cdots, r \rc$ and $m_j \in \lb \Delta_j \cap M \rb \setminus \lc 0 \rc$ such that $\check{h} \lb m_j \rb+\la m_j, \bullet \ra$ attains the minimum of $f_i$ on $\tilde{\sigma}$, but not on $\rint \lb \tau \rb$.
We can see that $\check{h} \lb m_j \rb+\la m_j, \bullet \ra$ does not attain the minimum of $f_i$ also on $W_{\tau}$.
By \pref{pr:t-sigma}, we can also see that $\check{h} \lb m_j \rb+\la m_j, \bullet \ra$ attains the minimum on $\sigma$.
On the other hand, we have 
\begin{align}\label{eq:MbDb}
\la M_j \lb \theta (\tau) \rb, \beta_i^\ast \lb \mu_{\tau} \rb \ra=-\delta_{i, j}, \quad
\la \Delta_j, \beta_i^\ast \lb \mu_{\tau} \rb \ra \geq -\delta_{i, j}
\end{align}
by \eqref{eq:betat} and \eqref{eq:nabla} respectively.
By these, one can see that $\check{h} \lb m_j \rb+\la m_j, \bullet \ra$ does not attain the minimum of $f_i$ also on $V_{\tau}$.
Therefore, one can conclude $V_{\tau} \cap \sigma \neq \emptyset$ and $V_{\tau}^\circ \cap \sigma \neq \emptyset$.

Next, suppose $\tau \subset \tilde{\sigma}$.
Then one has $\mu_{\tau} \subset F_1$.
By taking the closure of \pref{eq:sigma2}, we obtain
\begin{align}
\sigma = \tilde{\sigma} + \cone_\bT \lb \bigcup_{i \in I_\sigma} \beta_i^\ast \lb F_1 \rb \rb.
\end{align}
We can see from this that the right hand sides of \eqref{eq:vtaus} and \eqref{eq:vtaus'} are contained in the left hand sides.
We show the opposite inclusions.
Since the monomial $0$ attains the minimum of the polynomial $f_i$ on $\sigma$ for $i \nin I_\sigma$,
we can see from \eqref{eq:MbDb} that we have
\begin{align}
V_{\tau}^\circ \cap \sigma \subset W_{\tau}^\circ + \cone_\bT \lb \bigcup_{i \in I_\sigma} \beta_i^\ast \lb \mu_{\tau} \rb \rb, \quad
V_{\tau} \cap \sigma \subset W_{\tau} + \cone_\bT \lb \bigcup_{i \in I_\sigma} \beta_i^\ast \lb \mu_{\tau} \rb \rb.
\end{align}
Every element $n_0 \in V_{\tau}^\circ \cap \sigma$ (resp. $n_0 \in V_{\tau} \cap \sigma$) can be written as $n_0=n_1+n_2$ with $n_1 \in W_{\tau}^\circ$ (resp. $n_1 \in W_{\tau}$) and $n_2 \in \cone_\bT \lb \bigcup_{i \in I_\sigma} \beta_i^\ast \lb \mu_{\tau} \rb \rb$.
We will show $n_1 \in \tilde{\sigma}$ by checking that the set of the indices of monomials of $f_i$ attaining the minimum at $n_1$ contains $\scrM_i \lb \tilde{\sigma} \rb=\scrM_i \lb \sigma \rb \cup \lc 0 \rc$.
It is obvious that the monomial $0$ attains the minimum at $n_1$.
We can also see again from \eqref{eq:MbDb} that adding the element $n_2 \in \cone_\bT \lb \bigcup_{i \in I_\sigma} \beta_i^\ast \lb \mu_{\tau} \rb \rb$ to $n_1$ does not change the set of monomials of $f_i$ attaining the minimum except for the monomial $0$.
Since $n_0 \in \sigma$, the set of the indices of monomials of $f_i$ attaining the minimum at $n_0$ contains $\scrM_i \lb \sigma \rb$.
Hence, the set of the indices of monomials of $f_i$ attaining the minimum at $n_1$ also contains $\scrM_i \lb \sigma \rb$.
Thus we obtain $n_1 \in \tilde{\sigma}$, and can conclude that in \eqref{eq:vtaus} and \eqref{eq:vtaus'}, the left hand sides are also contained in the right hand sides.
\end{proof}

\begin{lemma}\label{lm:restriction2}
When $X_{\tau_1}^\circ \cap X_{\tau_2}^\circ \neq \emptyset$ for polyhedra $\tau_1, \tau_2 \in \scrP(\tilde{\Sigma}')$, the restrictions to $X_{\tau_1}^\circ \cap X_{\tau_2}^\circ$ of the contractions $\delta_{\tau_1} \colon X_{\tau_1}^\circ \to U_{\tau_1}$, $\delta_{\tau_2} \colon X_{\tau_2}^\circ \to U_{\tau_2}$ of \eqref{eq:del-tau} coincide with each other.
\end{lemma}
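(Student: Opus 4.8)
The plan is to transport the proof of \pref{lm:intersection}, the local counterpart of this statement, to the present global setting, replacing the explicit local constructions \pref{lm:vtau} and \pref{lm:aff-str} by \pref{lm:vtau2}, \pref{pr:t-sigma} and \pref{lm:domi-mono}.

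First I would fix a point $x \in X_{\tau_1}^\circ \cap X_{\tau_2}^\circ$ and locate it combinatorially: there are faces $\tau_1' \prec \tau_1$ and $\tau_2' \prec \tau_2$, a polyhedron $\sigma \in \scrP_{X(f_1, \cdots, f_r)}$ and a face $C' \prec C$ with $C' \prec C_{\tau_1'}$ and $C' \prec C_{\tau_2'}$ such that $x \in V_{\tau_1', \tau_1}^\circ \cap V_{\tau_2', \tau_2}^\circ \cap \sigma \cap O_{C'}(\bT)$. Applying \pref{lm:vtau2} to each piece $V_{\tau_j', \tau_j}^\circ$ — exactly as \pref{lm:vtau} is used inside the proof of \pref{lm:intersection}, so that $V_{\tau_j', \tau_j}^\circ \cap \sigma = \lb W_{\tau_j', \tau_j}^\circ \cap \tilde{\sigma} \rb + \cone_\bT \lb \bigcup_{i \in I_\sigma} \beta_i^\ast(\mu_{\tau_j'}) \rb$ and is nonempty only when $\tau_j \subset \tilde{\sigma}$ — one sees that both $\tau_1$ and $\tau_2$ are faces of the polyhedron $\tilde{\sigma} \in \scrP^{\check{h}}_{\Delta^\ast}$ attached to $\sigma$ by \pref{pr:t-sigma}. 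Consequently $X_{\tau_1}^\circ \cap X_{\tau_2}^\circ$ decomposes, orbit by orbit, into pieces of the form
\begin{align}
\bigcap_{j=1,2} \lc \lb W_{\tau_j', \tau_j} \cap \tilde{\tau}_j \rb + \sum_{i \in I_\sigma} k_{i, j} \cdot \beta_i^\ast(\mu_{\tau_j'}) \rc \cap O_{C'}(\bT),
\end{align}
with $\tilde{\tau}_j \in \scrP(\tilde{\Sigma}')$, $\tilde{\tau}_j \subset \tilde{\sigma}$, and $k_{i, j} \in \bT_{\geq 0}$.

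Next I would project by $\pi_{C'}$ onto the tropical torus orbit $O_{C'}(\bT)$, through which both $\delta_{\tau_1}$ and $\delta_{\tau_2}$ factor since $\pi_{C_{\tau_j'}}$ restricted to $X_{C'}(\bT) \subset X_{C_{\tau_j'}}(\bT)$ does. As in \pref{lm:intersection} I would first reduce, by the separating-affine-function argument used there for the case $\tilde{\tau}_1 \neq \tilde{\tau}_2$ (now based on a chart $\pi_{C_v}$ at a vertex $v$ of $\tilde{\sigma}$, which restricts to a bijection on $X(f_1, \cdots, f_r)^c$ near $\tilde{\sigma}$ by the global form of \pref{lm:aff-str} provided by \pref{lm:domi-mono}), to the case $\tilde{\tau}_1 = \tilde{\tau}_2 =: \tilde{\tau}$, and then, by the inclusion argument around \eqref{eq:m-subset}, to the case $k_{i, 1} = k_{i, 2} =: k_i$. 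Then I would invoke \pref{lm:tech}, substituting $\Deltav = \pi_{C'}(\tilde{\tau})$, $\Deltav_0 = \pi_{C'}(\tau_0)$ for a common face $\tau_0$ of $\tau_1$ and $\tau_2$, $\Deltav_i = k_i \cdot \pi_{C'}(\beta_i^\ast(\mu_\tau))$ and $t = 1$, using \pref{lm:minksum} and \eqref{eq:phi-tau} for the normal-fan identifications. This shows the piece above is empty unless $\bigcap_{j=1,2} \sum_{i \in I_\sigma} k_i \cdot \pi_{C'}(\beta_i^\ast(\mu_{\tau_j'}))$ is nonempty, and that when nonempty it equals
\begin{align}
\lc \bigcap_{j=1,2} \pi_{C'} \lb W_{\tau_j', \tau_j} \cap \tilde{\tau} \rb \rc + \lc \bigcap_{j=1,2} \sum_{i \in I_\sigma} k_i \cdot \pi_{C'} \lb \beta_i^\ast(\mu_{\tau_j'}) \rb \rc .
\end{align}
The second summand affects neither $\delta_{\tau_1}$ nor $\delta_{\tau_2}$; the first equals $\pi_{C'}(W_\ast \cap \tilde{\tau})$ for a piece $W_\ast$ contained in both $W_{\tau_1', \tau_1}$ and $W_{\tau_2', \tau_2}$ (the analogue of $W_{\tau_0}$ in \pref{lm:intersection}), and for any $x$ in it a preimage $x' \in W_\ast \cap \tilde{\tau}$ under $\pi_{C'}$ satisfies $\delta_{\tau_j}(x) = \delta_{\tau_j}(x') = x'$ for $j = 1, 2$, because $\delta_{\tau_j}$ factors through $\pi_{C'}$ and, by \eqref{eq:Wtautau} and \eqref{eq:Vtautau}, is the identity on $W_{\tau_j', \tau_j}$. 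Hence $\delta_{\tau_1}(x) = \delta_{\tau_2}(x) = x'$; tracing this through the decomposition of the previous step gives $\delta_{\tau_1} = \delta_{\tau_2}$ on $X_{\tau_1}^\circ \cap X_{\tau_2}^\circ$, with common value lying in $U_{\tau_1} \cap U_{\tau_2}$.

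The main obstacle I anticipate is verifying that, after translating by the vector $n_0$ of \pref{lm:domi-mono} and projecting to $O_{C'}(\bT)$, the polytopes $\pi_{C'}(\tilde{\tau})$, $\pi_{C'}(\tau_j)$ and $\pi_{C'}(\beta_i^\ast(\mu_{\tau_j'}))$ genuinely satisfy the hypotheses of \pref{lm:tech} — most delicately, that every face $F' \succ \tau_j'$ of $\tilde{\tau}$ lies in the image of the map $\varphi_{\tilde{\tau}}$ of \eqref{eq:vp}, i.e.\ that there is an element of $\vspan(C')^\perp$ whose restriction to $\tilde{\tau}$ attains its minimum precisely along $F'$. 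This is the analogue of the corresponding step in the proof of \pref{lm:intersection}, but here it has to be re-derived from the monodromy data $M_i(\theta(\tau))$ and $\beta_i^\ast(\mu_\tau)$ via \pref{lm:beta} and \pref{lm:bpoints} rather than from the abstract maps $\phi_{\xi, \Deltav_i}$; once it is in place, the remainder is the same diagram chase as in the local proof.
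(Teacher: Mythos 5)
Your plan captures the overall shape but diverges from the paper in two places, one of which is a genuine gap.

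First, you keep both layers of indices $\tau_j' \prec \tau_j$ throughout; the paper opens with a reduction you skip: on $V_{\tau',\tau_j}^\circ \subset V_{\tau',\tau'}^\circ$ the map $\delta_{\tau_j}$ agrees with $\delta_{\tau'}$ (both are given by \eqref{eq:del-tau'}), so one may replace $\tau_j$ by $\tau_j'$ and it suffices to prove $\delta_{\tau_1} = \delta_{\tau_2}$ on $V_{\tau_1}^\circ \cap V_{\tau_2}^\circ$ with a single layer of indices. More importantly, for the piece \eqref{eq:intersect2} the paper observes that whenever there is some $\xi \in \scrP(\tilde{\Sigma}')$ with $\tau_1, \tau_2 \prec \xi \prec \tilde{\tau}_1, \tilde{\tau}_2$, this piece sits inside $\overline{X_\xi^\circ}$, which by \pref{lm:domi-mono} is literally (the closure of) a local model, so \pref{lm:intersection} applies verbatim; in particular the case $\tilde{\tau}_1 = \tilde{\tau}_2$ is immediate. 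You instead propose re-running the \pref{lm:tech} computation from scratch, which forces you to re-verify the hypotheses of that lemma (your acknowledged "main obstacle": that faces $F' \succ \tau_j$ of $\tilde{\tau}$ lie in the image of $\varphi_{\tilde{\tau}}$, and that the normal-fan refinement holds after projection). The paper avoids all of that by reusing \pref{lm:intersection}. Also, as a small point, your substitution $\Deltav_0 = \pi_{C'}(\tau_0)$ for "a common face $\tau_0$ of $\tau_1$ and $\tau_2$" is backwards: in \pref{lm:tech} one needs $\tau_1, \tau_2$ to be \emph{faces of} $\Deltav_0$, so $\tau_0$ must \emph{contain} $\tau_1, \tau_2$, not be a common face of them.

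The real gap is the reduction from $\tilde{\tau}_1 \neq \tilde{\tau}_2$ to $\tilde{\tau}_1 = \tilde{\tau}_2$. You want to transplant the $\pi_{C_v}$-chart argument from \pref{lm:intersection}, appealing to a "global form of \pref{lm:aff-str} provided by \pref{lm:domi-mono}" to get bijectivity of $\pi_{C_v}$ on $X(f_1,\ldots,f_r)^c$ near $\tilde{\sigma}$. But that is not what \pref{lm:domi-mono} gives: it provides a local-model isomorphism of $(X_\tau^\circ, U_\tau)$ around each open star $U_\tau$, not a graph structure of $B^{\check{h}}_\nabla$ over a linear space on a neighborhood of the whole cell $\tilde{\sigma}$. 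The local proof also needs a fixed central $\xi$ with all $\Deltav_i \subset T(\xi)$ to conclude that the pulled-back separating function is constant along the $\Deltav_i$-directions and along $\vspan(C)$; there is no such single $\xi$ in the global picture. The paper circumvents this entirely by building the separating function not from a chart but from the fan $\tilde{\Sigma}'$ in $N_\bR \oplus \bR$: it takes the cones $C_j = \cone(\mu_{\tilde{\tau}_j}) \times \{0\} + \cone(\nu_{\tilde{\tau}_j} \times \{1\})$ of $\tilde{\Sigma}'$, picks a linear $\tilde{\phi}$ on $N_\bR \oplus \bR$ that is positive on $C_1 \setminus (C_1 \cap C_2)$, negative on $C_2 \setminus (C_1 \cap C_2)$, and zero on $C_1 \cap C_2 \supset C$, and restricts it. This is the new ingredient that makes the global statement go through; it does not appear in your plan, and without it or an equivalent substitute the $\tilde{\tau}_1 \neq \tilde{\tau}_2$ step does not close.
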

\begin{proof}
One has $X_\tau^\circ=\bigcup_{\tau' \prec \tau} V_{\tau', \tau}^\circ$, and the restriction of the contraction $V_{\tau'}^\circ=V_{\tau', \tau'}^\circ \to W_{\tau', \tau'}^\circ=W_{\tau'}^\circ$ \eqref{eq:del-tau'} 
to $V_{\tau', \tau}^\circ \subset V_{\tau', \tau'}^\circ$ coincides with the contraction $V_{\tau', \tau}^\circ \to W_{\tau', \tau}^\circ$ \eqref{eq:del-tau'}.
Hence, it suffices to show that the contractions $\delta_{\tau_1}, \delta_{\tau_2}$ coincide on $V_{\tau_1}^\circ \cap V_{\tau_2}^\circ$.
Recall that local models of tropical contractions were originally constructed by gluing maps \eqref{eq:delt-tau0} between closed subsets $V_\tau, W_\tau$.
The contraction $\left. \delta_{\tau_j} \right|_{V_{\tau_j}^\circ}$ $(j=1, 2)$ naturally extends to a map $V_{\tau_j} \to W_{\tau_j}$ so that it agrees with \eqref{eq:delt-tau0}.
We write the extension $V_{\tau_j} \to W_{\tau_j}$ also as $\delta_{\tau_j}$.
We will show that $\delta_{\tau_1}$ and $\delta_{\tau_2}$ coincide on  $V_{\tau_1} \cap V_{\tau_2}$.
We have
\begin{align}
V_{\tau_1} \cap V_{\tau_2} = \bigcup_{\sigma} \bigcup_{C \in \Sigma'} V_{\tau_1} \cap V_{\tau_2} \cap \sigma \cap O_C(\bT),
\end{align}
where the union of $\sigma$ is taken over $\sigma \in \scrP_{X(f_1, \cdots, f_r)}$ such that $\sigma \cap N_\bR \neq \emptyset$.
We will show $\delta_{\tau_1}=\delta_{\tau_2}$ on $V_{\tau_1} \cap V_{\tau_2} \cap \sigma \cap O_C(\bT)$.
When $I_\sigma =\emptyset$, we have $\tilde{\sigma}=\sigma$, and $V_{\tau_1} \cap V_{\tau_2} \cap \sigma= W_{\tau_1} \cap W_{\tau_2} \cap \sigma$ by \pref{lm:vtau2}.
Since both $\delta_{\tau_1}$ and $\delta_{\tau_2}$ are the identity on there, we get $\delta_{\tau_1}=\delta_{\tau_2}$ on $V_{\tau_1} \cap V_{\tau_2} \cap \sigma$.
We suppose $I_\sigma \neq \emptyset$ in the following.
Since $V_{\tau_j} \cap \sigma= \emptyset$ unless $\tau_j \subset \tilde{\sigma}$ $(j=1, 2)$ by \pref{lm:vtau2}, we also assume $\theta (\tau_j) \prec \tilde{\sigma}$ $(j=1, 2)$ in the following.

We set $\scrP \ld \tilde{\sigma} \rd:= \lc \tau \in \scrP(\tilde{\Sigma}') \relmid \tau \subset \tilde{\sigma} \rc$.
Then we have $\bigcup_{\tau \in \scrP \ld \tilde{\sigma} \rd} \tau=\tilde{\sigma}$.
For $\tau \in \scrP \ld \tilde{\sigma} \rd$, we have $W_{\tau_j} \cap \tau = \emptyset$ unless $\tau \succ \tau_j$.
Hence, by \pref{lm:vtau2}, we have
\begin{align}
V_{\tau_j} \cap \sigma \cap O_C(\bT)&=\lc \lb W_{\tau_j} \cap \tilde{\sigma} \rb + \cone_\bT \lb \bigcup_{i \in I_\sigma} \beta_i^\ast \lb \mu_{\tau_j} \rb \rb \rc \cap O_C(\bT) \\ \label{eq:v-tau-sigma}
&=\bigcup_{\substack{\tau \in \scrP \ld \tilde{\sigma} \rd \\ \tau \succ \tau_j}} \lc \lb W_{\tau_j} \cap \tau \rb+ \cone_\bT \lb \bigcup_{i \in I_\sigma} \beta_i^\ast \lb \mu_{\tau_j} \rb \rb \rc \cap O_C(\bT).
\end{align}
We can see from \eqref{eq:O-emb} that this is non-empty if and only if
$\cone \lb \bigcup_{i \in I_\sigma} \beta_i^\ast \lb \mu_{\tau_j} \rb \rb \supset C$.
When this holds, \eqref{eq:v-tau-sigma} is equal to
\begin{align}
\bigcup_{\substack{\tau \in \scrP \ld \tilde{\sigma} \rd \\ \tau \succ \tau_j}} \lc \lb W_{\tau_j} \cap \tau \rb
+ \cone_\bT(C)
+\sum_{i \in I_\sigma} \bigcup_{k_{i, j} \in \bR_{\geq 0}} k_{i, j} \cdot \beta_i^\ast \lb \mu_{\tau_j} \rb \rc \cap O_C(\bT).
\end{align}
We try to show $\delta_{\tau_1} = \delta_{\tau_2}$ on the intersection
\begin{align}\label{eq:intersect2}
\bigcap_{j=1, 2} 
\lc \lb W_{\tau_j} \cap \tilde{\tau}_j \rb
+ \cone_\bT(C)
+\sum_{i \in I_\sigma} k_{i, j} \cdot \beta_i^\ast \lb \mu_{\tau_j} \rb \rc \cap O_C(\bT).
\end{align}
for polyhedra $\tilde{\tau}_j \in \scrP \ld \tilde{\sigma} \rd$ such that $\tilde{\tau}_j \succ \tau_j$ and $k_{i, j}\in \bR_{\geq 0}$ $(j=1, 2)$.
When there is a polyhedron $\xi \in \scrP(\tilde{\Sigma}')$ such that $\tau_1, \tau_2 \prec \xi \prec \tilde{\tau}_1, \tilde{\tau}_2$, it is essentially proved in \pref{lm:intersection} 
since $\overline{X_{\tau=\xi}^\circ}$ which contains \eqref{eq:intersect2} is isomorphic to (the closure of) a local model of tropical contractions by \pref{lm:domi-mono}, and we have $\beta_i^\ast \lb \mu_{\tau_j} \rb=\phi_{\xi, \Deltav_{\xi, i}} (\tau_j) \times \lc e_i \rc$ by \eqref{eq:phi-tau}.
In particular, we already know $\delta_{\tau_1} = \delta_{\tau_2}$ on \eqref{eq:intersect2} when $\tilde{\tau}_1=\tilde{\tau}_2$.

We consider the case where $\tilde{\tau}_1 \neq \tilde{\tau}_2$.
We write them as $\tilde{\tau}_j =\rotatebox[origin=c]{180}{$\beta$} (\mu_{\tilde{\tau}_j}) + \nu_{\tilde{\tau}_j}$.
We set
\begin{align}
C_j:= \cone \lb \mu_{\tilde{\tau_j}} \rb \times \lc 0 \rc + \cone \lb \nu_{\tilde{\tau_j}} \times \lc 1 \rc \rb.
\end{align}
They are cones in $\tilde{\Sigma}' \subset N_\bR \oplus \bR$.
There exists a linear function $\tilde{\phi} \colon N_\bR \oplus \bR \to \bR$ such that
$\tilde{\phi} >0$ on $C_1 \setminus \lb C_1 \cap C_2 \rb$, 
$\tilde{\phi} <0 $ on $C_2 \setminus \lb C_1 \cap C_2 \rb$, 
and $\tilde{\phi} = 0$ on $C_1 \cap C_2$.
We define $\phi \colon N_\bR \to \bR$ by $\phi(n):=\tilde{\phi} \lb (n, 0) \rb$ and $l :=\phi \lb (0, 1)\rb \in \bR$.
Since $\tilde{\phi} = 0$ on $C \subset \cap_{j=1, 2} \cone \lb \bigcup_{i \in I_\sigma} \beta_i^\ast \lb \mu_{\tau_j} \rb \rb \subset C_1 \cap C_2$, we have $\phi=0$ on $\vspan (C)$.
Hence, $\phi$ can be extended to a function on $X_C(\bT) \supset O_C \lb \bT \rb$.
One has
\begin{align}
\phi(n)=
\left\{ \begin{array}{ll}
\mathrm{positive}  & n \in \mu_{\tilde{\tau}_1} \setminus \lb \mu_{\tilde{\tau}_1} \cap \mu_{\tilde{\tau}_2} \rb \\
\mathrm{negative} & n \in \mu_{\tilde{\tau}_2} \setminus \lb \mu_{\tilde{\tau}_1} \cap \mu_{\tilde{\tau}_2} \rb \\
 0 & n \in \mu_{\tilde{\tau}_1} \cap \mu_{\tilde{\tau}_2},
\end{array} 
\right.
\quad
 \phi(n)=
\left\{ \begin{array}{ll}
\mathrm{greater\ than\ } -l  & n \in \nu_{\tilde{\tau}_1} \setminus \lb \nu_{\tilde{\tau}_1} \cap \nu_{\tilde{\tau}_2} \rb \\
\mathrm{less\ than\ } -l & n \in \nu_{\tilde{\tau}_2} \setminus \lb \nu_{\tilde{\tau}_1} \cap \nu_{\tilde{\tau}_2} \rb \\
-l & n \in \nu_{\tilde{\tau}_1} \cap \nu_{\tilde{\tau}_2}.
\end{array} 
\right.
\end{align}
Since $\tilde{\tau}_j =\lb \sum_{i=1}^r \beta_i^\ast (\mu_{\tilde{\tau_j}}) \rb + \nu_{\tilde{\tau}_j}$ and $\tilde{\tau}_1 \cap \tilde{\tau}_2=\lb \sum_{i=1}^r \beta_i^\ast \lb \mu_{\tilde{\tau}_1} \cap \mu_{\tilde{\tau}_2} \rb \rb+\lb \nu_{\tilde{\tau}_1} \cap \nu_{\tilde{\tau}_2} \rb$, we can see
\begin{align}\label{eq:phi-n1}
\phi(n)=
\left\{ \begin{array}{ll}
\mathrm{greater\ than\ } -l  & n \in \tilde{\tau}_1 \setminus \lb \tilde{\tau}_1 \cap \tilde{\tau}_2 \rb \\
\mathrm{less\ than\ } -l & n \in \tilde{\tau}_2 \setminus  \lb \tilde{\tau}_1 \cap \tilde{\tau}_2 \rb \\
-l & n \in \tilde{\tau}_1 \cap \tilde{\tau}_2.
\end{array} 
\right.
\end{align}
We also have
\begin{align}\label{eq:phi-n2}
\phi(n)=
\left\{ \begin{array}{ll}
\mathrm{greater\ than\ or\ equal\ to\ } 0 & n \in k_i \cdot \beta_i^\ast \lb \mu_{\tilde{\tau}_1} \rb \\
\mathrm{less\ than\ or\ equal\ to\ } 0 & n \in k_i \cdot \beta_i^\ast \lb \mu_{\tilde{\tau}_2} \rb.
\end{array} 
\right.
\end{align}
We can see from \eqref{eq:phi-n1} and \eqref{eq:phi-n2} that the intersection \pref{eq:intersect2} must be contained in
\begin{align}
\bigcap_{j=1, 2} 
\lc \lb W_{\tau_j} \cap \tilde{\tau}_1 \cap \tilde{\tau}_2 \rb
+ \cone_\bT(C)
+\sum_{i \in I_\sigma} k_{i, j} \cdot \beta_i^\ast \lb \mu_{\tau_j} \rb \rc \cap O_C(\bT).
\end{align}
We reduced to the case of $\tilde{\tau}_1 = \tilde{\tau}_2$.
We obtained the lemma.
\end{proof}

By \pref{lm:restriction2}, it turns out that we can construct the map
\begin{align}\label{eq:cy-contra}
\delta \colon \bigcup_{\tau \in \scrP(\tilde{\Sigma}')} X_\tau^\circ \to X(f_1, \cdots, f_r)^c
\end{align}
by gluing the maps $\delta_{\tau} \colon X_{\tau}^\circ \to U_{\tau}$ together.
By proving the following lemma, we can see that the map \eqref{eq:cy-contra} gives the tropical contraction of \pref{th:glcontr}.

\begin{lemma}
One has $X(f_1, \cdots, f_r)=\bigcup_{\tau \in \scrP(\tilde{\Sigma}')} X_\tau^\circ$.
\end{lemma}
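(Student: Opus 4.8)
The plan is to prove the two inclusions separately. The inclusion $\bigcup_{\tau \in \scrP(\tilde{\Sigma}')} X_\tau^\circ \subseteq X(f_1, \cdots, f_r)$ is immediate: each $V_{\tau', \tau}^\circ$ is by construction a subset of $X(f_1, \cdots, f_r)$, hence so is every $X_\tau^\circ = \bigcup_{\tau' \prec \tau} V_{\tau', \tau}^\circ$. So the content is the reverse inclusion, and the key inputs are \pref{pr:t-sigma}, \pref{pr:cpart}, and \pref{lm:vtau2}.

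For the reverse inclusion, fix $x \in X(f_1, \cdots, f_r)$ and let $\sigma \in \scrP_{X(f_1, \cdots, f_r)}$ be the minimal polyhedron containing $x$. First suppose $\sigma \cap N_\bR \neq \emptyset$. By \pref{pr:t-sigma} there is $\tilde{\sigma} = \rotatebox[origin=c]{180}{$\beta$}(F_1) + F_2 \in \scrP^{\check{h}}_{\Delta^\ast} = \scrP_{X(f_1, \cdots, f_r)^c}$ with $\tilde{\sigma} \prec \sigma$, and, taking closures, $\sigma = \tilde{\sigma} + \cone_\bT\!\big(\bigcup_{i \in I_\sigma} \beta_i^\ast(F_1)\big)$. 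Since $\tilde{\sigma} \subset B^{\check{h}}_\nabla = X(f_1, \cdots, f_r)^c$ and the open stars $\{U_\tau\}_{\tau \in \scrP(\tilde{\Sigma}')}$ of the vertices $a_\tau$ of the subdivision $\widetilde{\scrP}(\tilde{\Sigma}')$ cover $X(f_1, \cdots, f_r)^c$, one wants to write $x = y + z$ with $y \in \tilde{\sigma}$ lying in some $W_{\tau', \tau}^\circ$ and $z \in \cone_\bT\!\big(\bigcup_i \beta_i^\ast(\mu_{\tau'})\big)$. Granting such a choice, note that for any $\tau$ occurring in the chain defining $W_{\tau', \tau}^\circ$ one has $\tau \subset \tilde{\sigma}$: the point $y$ lies in the relative interior of $\conv(a_{\tau'}, a_{\tau_1}, \cdots, a_{\tau_l})$, its minimal polyhedron $\tau_l$ of $\scrP(\tilde{\Sigma}')$ is contained in $\tilde{\sigma}$ because $\scrP(\tilde{\Sigma}')$ subdivides $\scrP^{\check{h}}_{\Delta^\ast}$ (\pref{pr:cpart}) and $y \in \tilde{\sigma}$, whence $\tau \preceq \tau_l \subset \tilde{\sigma}$. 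Then \pref{lm:vtau2} identifies $V_{\tau}^\circ \cap \sigma$ (and via $\tau' \prec \tau$ also $V_{\tau',\tau}^\circ \cap \sigma$) with the relevant Minkowski sum, so $x \in V_{\tau', \tau}^\circ \subset X_\tau^\circ$. When instead $\Int(\sigma) \subset O_{C'}(\bT)$ for a non-trivial $C' \in \Sigma'$, the same reasoning applies after replacing $\tilde{\sigma}$ by the corresponding face of $B^{\check{h}}_\nabla$ and $\sigma$ by its recession description from \pref{pr:t-sigma}; equivalently, one reduces to the finite case by moving along the cone directions into $X(f_1, \cdots, f_r)^c$. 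Either way $x \in \bigcup_\tau X_\tau^\circ$, finishing the proof.

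The main obstacle is precisely the simultaneous choice in the middle step: one must select the polyhedron $\tau' \in \scrP(\tilde{\Sigma}')$ and the decomposition $x = y + z$ so that $z$ lies in the \emph{small} cone $\cone_\bT\!\big(\bigcup_i \beta_i^\ast(\mu_{\tau'})\big)$ and not merely in $\cone_\bT\!\big(\bigcup_{i \in I_\sigma} \beta_i^\ast(F_1)\big)$, while $y$ stays in the bounded face $\tilde{\sigma}$ and near $a_{\tau'}$. This is a compatibility between the recession directions of $\sigma$ and the simplicial subdivision $\widetilde{\scrP}(\tilde{\Sigma}')$ of $\tilde{\sigma}$, and it is here that the nef‑partition structure — through $\scrP(\tilde{\Sigma}')$ refining $\scrP^{\check{h}}_{\Delta^\ast}$ and the identities $\la M_i(\theta(\sigma)), \beta_j^\ast(\mu_v)\ra = -\delta_{i,j}$ controlling how the cone directions $\beta_i^\ast(F_1)$ sit over the cells of $\widetilde{\scrP}(\tilde{\Sigma}')$ — must be used; the at‑infinity case additionally requires tracking which tropical torus orbit $\Int(\sigma)$ meets.
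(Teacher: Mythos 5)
Your reduction to the key step is correct — the easy inclusion, passing to a minimal $\sigma$, invoking \pref{pr:t-sigma} for $\tilde{\sigma}$ and $\sigma\cap N_\bR = \tilde{\sigma}+\cone\bigl(\bigcup_{i\in I_\sigma}\beta_i^\ast(F_1)\bigr)$, and reducing to exhibiting a decomposition $x=y+z$ compatible with the subdivision. But you then explicitly flag the compatibility between the recession directions and $\widetilde{\scrP}(\tilde{\Sigma}')$ as ``the main obstacle'' without resolving it; that obstacle is precisely the content of the lemma. The proof cannot stop there. The paper's actual argument handles it in two stages that your sketch does not supply. First, \pref{lm:tech}.2 is used to rewrite $\bigcup_{\tau}\bigl(W_\tau\cap\tau' + \sum_{i\in I_\sigma} k_i\cdot\beta_i^\ast(\mu_\tau)\bigr)$ (for $\tau'\in\scrP[\tilde\sigma]$, $\tau\prec\tau'$) as $\tau' + \sum_{i\in I_\sigma} k_i\cdot\beta_i^\ast(\mu_{\tau'})$ — this is what lets one drop the $W_\tau$'s from the bookkeeping, and you never invoke it. Second, the remaining identity $\bigcup_{\tau'\in\scrP[\tilde\sigma]}\bigl(\tau' + \sum_{i\in I_\sigma} k_i\cdot\beta_i^\ast(\mu_{\tau'})\bigr) = \tilde\sigma + \sum_{i\in I_\sigma} k_i\cdot\beta_i^\ast(F_1)$ is proved by lifting a point $n$ on the right to $(n,1)\in N_\bR\oplus\bR$, placing it in a cone of $\tilde\Sigma'$ refining $\cone(F_1)\times\{0\}+\cone(F_2\times\{1\})$, and then computing the values $\tilde\varphi_i(n,1)$ to split $n$ across $\beta_i^\ast(\mu_{\tau_0})$ and $\nu_{\tau_0}$. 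This is exactly the ``nef-partition identities must be used'' step that you gesture at; without actually carrying out the $\tilde\varphi_i$ computation you have not shown that $z$ lands in the small cone $\cone\bigl(\bigcup_i\beta_i^\ast(\mu_{\tau'})\bigr)$ rather than merely in $\cone\bigl(\bigcup_i\beta_i^\ast(F_1)\bigr)$.

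A second, smaller gap: your treatment of the at-infinity case (``the same reasoning applies after replacing $\tilde\sigma$ by the corresponding face'') is not what the paper does, and as phrased it would require reproving the finite-case analysis orbit by orbit. The paper instead observes that it suffices to treat $\sigma$ with $\sigma\cap N_\bR\neq\emptyset$ (infinite polyhedra are faces of such $\sigma$), and that $\bigcup_\tau V_\tau^\circ\cap\sigma$ is closed — so containment of $\sigma\cap N_\bR$ extends to its closure $\sigma$ automatically. You should either reproduce that closure argument or justify your orbit-by-orbit reduction carefully; as written, it is too vague to check.
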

\begin{proof}
It is obvious that the right hand side is contained in the left hand side.
Since $V_{\tau', \tau}^\circ \subset V_{\tau', \tau'}^\circ$ for $\tau' \prec \tau$, one has 
\begin{align}
\bigcup_{\tau \in \scrP(\tilde{\Sigma}')} X_\tau^\circ
=\bigcup_{\tau \in \scrP(\tilde{\Sigma}')} \bigcup_{\tau' \prec \tau} V_{\tau', \tau}^\circ
=\bigcup_{\tau \in \scrP(\tilde{\Sigma}')} \bigcup_{\tau' \prec \tau} V_{\tau', \tau'}^\circ
=\bigcup_{\tau \in \scrP(\tilde{\Sigma}')} V_{\tau}^\circ.
\end{align}
Hence, it suffices to show $\sigma \subset \bigcup_{\tau \in \scrP(\tilde{\Sigma}')} V_\tau^\circ \cap \sigma$ for all $\sigma \in \scrP_{X(f_1, \cdots, f_r)}$ such that $\sigma \cap N_\bR \neq \emptyset$.
By \pref{lm:vtau2}, we have 
\begin{align}
\bigcup_{\tau \in \scrP(\tilde{\Sigma}')} V_\tau^\circ \cap \sigma
&=\bigcup_{\tau \in \scrP \ld \tilde{\sigma} \rd} \lc \lb W_{\tau}^\circ \cap \tilde{\sigma} \rb + \cone_\bT \lb \bigcup_{i \in I_\sigma} \beta_i^\ast \lb \mu_\tau \rb \rb \rc \\
&=
\bigcup_{\tau \in \scrP \ld \tilde{\sigma} \rd}
\bigcup_{\substack{\tau' \in \scrP \ld \tilde{\sigma} \rd \\ \tau' \succ \tau}} 
\lc 
\lb W_{\tau}^\circ \cap \tau' \rb+ 
 \cone_\bT \lb \bigcup_{i \in I_\sigma} \beta_i^\ast \lb \mu_\tau \rb \rb
\rc,
\end{align}
where $\scrP \ld \tilde{\sigma} \rd:= \lc \tau \in \scrP(\tilde{\Sigma}') \relmid \tau \subset \tilde{\sigma} \rc$.
This is equal to
\begin{align}\label{eq:tech}
\bigcup_{\tau \in \scrP \ld \tilde{\sigma} \rd}
\bigcup_{\substack{\tau' \in \scrP \ld \tilde{\sigma} \rd \\ \tau' \succ \tau}}  
\lc \lb W_{\tau} \cap \tau' \rb + \cone_\bT \lb \bigcup_{i \in I_\sigma} \beta_i^\ast \lb \mu_\tau \rb \rb \rc
\end{align}
since one has
\begin{align}\label{eq:wboundary}
\lb W_{\tau} \cap \tau' \rb \setminus \lb W_{\tau}^\circ \cap \tau' \rb=\bigcup_{\substack{\tau \prec \tau_1 \prec \cdots \prec \tau_l \prec \tau', \\ \tau_i \in \scrP(\tilde{\Sigma}'), \tau_1 \neq \tau}}
\rint \lb \conv \lb \lc a_{\tau_1}, \cdots, a_{\tau_l} \rc \rb \rb
\end{align}
by \eqref{eq:wtau} and $\rint \lb \conv \lb \lc a_{\tau_1}, \cdots, a_{\tau_l} \rc \rb \rb$ in \pref{eq:wboundary} is contained in $W_{\tau_1}^\circ \cap \tau'$, and $\beta_i^\ast \lb \mu_\tau \rb \subset \beta_i^\ast \lb \mu_{\tau_1} \rb$.
Hence, one has
\begin{align}
\bigcup_{\tau \in \scrP(\tilde{\Sigma}')} V_\tau^\circ \cap \sigma \cap N_\bR
&=
\bigcup_{\tau \in \scrP \ld \tilde{\sigma} \rd}
\bigcup_{\substack{\tau' \in \scrP \ld \tilde{\sigma} \rd \\ \tau' \succ \tau}}  
\lc 
\lb W_{\tau} \cap \tau' \rb+ \cone \lb \bigcup_{i \in I_\sigma} \beta_i^\ast \lb \mu_\tau \rb \rb \rc \\ \label{eq:tech'}
&=
\bigcup_{\tau \in \scrP \ld \tilde{\sigma} \rd}
\bigcup_{\substack{\tau' \in \scrP \ld \tilde{\sigma} \rd \\ \tau' \succ \tau}}
\bigcup_{k_i \in \bR_{\geq 0}}
\lc
\lb W_{\tau} \cap \tau' \rb+ 
\sum_{i \in I_\sigma}  
k_i \cdot \beta_i^\ast \lb \mu_\tau \rb
\rc.
\end{align}
We use \pref{lm:tech}.2 for this.
Substitute $\Deltav_0=\Deltav=\tau'$, $I=\lc i \in I_\sigma \relmid k_i \neq 0 \rc$, $\Deltav_i=k_i \cdot \beta_i^\ast \lb \mu_{\tau'} \rb$, and $t=1$ in \pref{lm:tech}.
It turns out that \eqref{eq:tech'} is equal to 
\begin{align}\label{eq:srhs}
\bigcup_{k_i \in \bR_{\geq 0}} 
\bigcup_{\tau' \in \scrP \ld \tilde{\sigma} \rd}
\lb 
\tau' + \sum_{i \in I_\sigma} k_i \cdot \beta_i^\ast \lb \mu_{\tau'}\rb 
\rb.
\end{align}
Let $\lb F_1, F_2 \rb \in \scrR^{\check{h}}_{\Delta^\ast}$ be the element such that $\tilde{\sigma}=\rotatebox[origin=c]{180}{$\beta$} (F_1)+F_2$.
Then by \pref{eq:sigma2}, we have
\begin{align}\label{eq:slhs}
\sigma \cap N_\bR = \tilde{\sigma} + \cone \lb \bigcup_{i \in I_\sigma} \beta_i^\ast \lb F_1 \rb \rb
=\bigcup_{k_i \in \bR_{\geq 0}} \lb \tilde{\sigma} + \sum_{i \in I_\sigma} k_i \cdot \beta_i^\ast \lb F_1 \rb \rb.
\end{align}
If we show
\begin{align}\label{eq:last}
\bigcup_{\tau' \in \scrP \ld \tilde{\sigma} \rd} 
\lb 
\tau' + \sum_{i \in I_\sigma} k_i \cdot \beta_i^\ast \lb \mu_{\tau'}\rb 
\rb
=\tilde{\sigma} + \sum_{i \in I_\sigma} k_i \cdot \beta_i^\ast \lb F_1 \rb,
\end{align}
then it implies $\sigma \cap N_\bR = \bigcup_{\tau \in \scrP \ld \tilde{\sigma} \rd} V_\tau^\circ \cap \sigma \cap N_\bR$.
Since $\bigcup_{\tau \in \scrP \ld \tilde{\sigma} \rd} V_\tau^\circ \cap \sigma$ which is equal to \eqref{eq:tech} is closed, one can get  
\begin{align}
\sigma 
= \overline{\sigma \cap N_\bR}
=\overline{\bigcup_{\tau \in \scrP \ld \tilde{\sigma} \rd} V_\tau^\circ \cap \sigma \cap N_\bR} 
\subset \bigcup_{\tau \in \scrP \ld \tilde{\sigma} \rd} V_\tau^\circ \cap \sigma
\end{align}
and conclude $\sigma \subset \bigcup_{\tau \in \scrP(\tilde{\Sigma}')} V_\tau^\circ \cap \sigma$.
We will try to show \eqref{eq:last} in the following.

It is obvious that the left hand side of \eqref{eq:last} is contained in the right hand side.
We will prove the opposite inclusion.
Take an arbitrary point $n \in \tilde{\sigma} + \sum_{i \in I_\sigma} k_i \beta_i^\ast \lb F_1 \rb$.
Since 
\begin{align}\label{eq:til-sigma2}
\tilde{\sigma} + \sum_{i \in I_\sigma} k_i \cdot \beta_i^\ast \lb F_1 \rb
=\sum_{i \in I_\sigma} (k_i+1) \cdot \beta_i^\ast \lb F_1 \rb 
+\sum_{i \nin I_\sigma} \beta_i^\ast \lb F_1 \rb
+F_2,
\end{align}
we have $(n, 1) \in \cone (F_1) \times \lc 0 \rc + \cone (F_2 \times \lc 1 \rc) \in \tilde{\Sigma}$.
Since $\tilde{\Sigma}'$ is a refinement of $\tilde{\Sigma}$, there exists a polyheron $\tau_0=\rotatebox[origin=c]{180}{$\beta$} (\mu_{\tau_0}) + \nu_{\tau_0} \in \scrP \ld \tilde{\sigma} \rd$ such that $(n, 1) \in \cone (\mu_{\tau_0}) \times \lc 0 \rc + \cone (\nu_{\tau_0} \times \lc 1 \rc) \in \tilde{\Sigma}'$.
We will check 
$
n \in \tau_0 + \sum_{i \in I_\sigma} k_i \cdot \beta_i^\ast \lb \mu_{\tau_0}\rb,
$
from which we can get the inclusion that we want.

By \eqref{eq:til-sigma2}, the element $(n, 1)$ can be written as $(n, 1)=(n_0, 0)+(n_1, 1)$ with $n_0 \in \sum_{i \in I_\sigma} (k_i+1) \cdot \beta_i^\ast \lb F_1 \rb 
+\sum_{i \nin I_\sigma} \beta_i^\ast \lb F_1 \rb$ and $n_1 \in F_2$.
Since $\tilde{\varphi}_i \lb (n, 1)\rb=\tilde{\varphi}_i \lb (n_0, 0)\rb+\tilde{\varphi}_i \lb (n_1, 1)\rb=\varphi_i \lb n_0 \rb$, we have
\begin{align}
\tilde{\varphi}_i \lb (n, 1)\rb
=
\left\{ \begin{array}{ll}
    k_i + 1 & i \in I_\sigma \\
    1 & i \nin I_\sigma.
  \end{array} 
\right.
\end{align}
We can also write $(n,1)=(n', 0)+(n'', 1)$ with $n' \in \cone (\mu_{\tau_0})$ and $n'' \in \nu_{\tau_0}$.
Since
$\tilde{\varphi}_i \lb (n, 1)\rb=\tilde{\varphi}_i \lb (n', 0)\rb+\tilde{\varphi}_i \lb (n'', 1)\rb=\varphi_i \lb n' \rb$, we have
\begin{align}\label{eq:varphi1}
\varphi_i \lb n' \rb
=
\left\{ \begin{array}{ll}
    k_i + 1 & i \in I_\sigma \\
    1 & i \nin I_\sigma.
  \end{array} 
\right.
\end{align}
Since $n' \in \cone(\mu_{\tau_0})$, we can also write $n'=\sum_{j} r_j n_j$, where $n_j$ is a vertex of $\mu_{\tau_0}$ and $r_j \in \bR_{>0}$.
When $n_j \in \beta_k^\ast \lb \mu_{\tau_0} \rb$, we have $\varphi_i(n_j)=\delta_{i, k}$.
Hence, one has
\begin{align}\label{eq:varphi2}
\varphi_i \lb n' \rb=\sum_{\substack{j \ \mathrm{s.t.} \\ n_j \in \beta_i^\ast \lb \mu_{\tau_0} \rb}} r_j.
\end{align}
By \eqref{eq:varphi1} and \eqref{eq:varphi2}, we get
\begin{align}
\sum_{\substack{j \ \mathrm{s.t.} \\ n_j \in \beta_i^\ast \lb \mu_{\tau_0} \rb}} r_j n_j
\in
\left\{ \begin{array}{ll}
    (k_i + 1) \cdot \beta_i^\ast \lb \mu_{\tau_0} \rb & i \in I_\sigma \\
    \beta_i^\ast \lb \mu_{\tau_0} \rb & i \nin I_\sigma.
  \end{array} 
\right.
\end{align}
By using \eqref{eq:lattice-pts} and \eqref{eq:beta_i}, we obtain
\begin{align}
n'
=\sum_{j} r_j n_j
=\sum_{i=1}^r \sum_{\substack{j \ \mathrm{s.t.} \\ n_j \in \beta_i^\ast \lb \mu_{\tau_0} \rb}} r_j n_j
\in \sum_{i \in I_\sigma} (k_i+1) \cdot \beta_i^\ast \lb \mu_{\tau_0}\rb 
+\sum_{i \nin I_\sigma} \beta_i^\ast \lb \mu_{\tau_0}\rb.
\end{align}
This implies
\begin{align}
n=n'+n'' \in 
\sum_{i \in I_\sigma} (k_i+1) \cdot \beta_i^\ast \lb \mu_{\tau_0}\rb 
+\sum_{i \nin I_\sigma} \beta_i^\ast \lb \mu_{\tau_0}\rb
+\nu_{\tau_0}
=
\tau_0 + \sum_{i \in I_\sigma} k_i \cdot \beta_i^\ast \lb \mu_{\tau_0}\rb.
\end{align}
We proved the lemma.
\end{proof}

\begin{example}\label{eg:contraction}
Consider the case $d=2, r=1$, and
\begin{align}
\Delta&=\nabla^\ast=\conv \lb \lc (3, -1, -1), (-1, 3, -1), (-1, -1, 3), (-1,-1,-1) \rc \rb \subset M_\bR \cong \bR^3 \\
\Delta^\ast&=\nabla=\conv \lb \lc (1, 0, 0), (0, 1, 0), (0, 0, 1), (-1,-1,-1) \rc \rb \subset N_\bR \cong \bR^3.
\end{align}
Suppose further that $\Sigma'=\Sigma, \Sigmav'=\Sigmav$, $\check{h}=\check{\varphi}$, and $\tilde{\Sigma}'=\tilde{\Sigma}$.
One has $B_\nabla^{\check{h}}=X(f_1)^c=\partial \nabla$, and $\scrP(\tilde{\Sigma}')=\scrP_{X(f_1)^c}$ consists of all proper faces of $\nabla$.
\pref{fg:surf} shows the tropical hypersurface $X \lb f_1 \rb \subset X_\Sigma (\bT)$ and a tropical contraction $\delta \colon X(f_1) \to B_\nabla^{\check{h}}$ of \pref{th:glcontr} around $\tau=\conv \lb \lc (1, 0, 0), (0, 1, 0) \rc \rb \prec \nabla$.
Black points in \pref{fg:surf} show discriminant points $\Gamma(\tilde{\Sigma}') \subset B^{\check{h}}_\nabla$.

\begin{figure}[hbtp]
\hspace{5cm}
\begin{center}
	\includegraphics[scale=0.8]{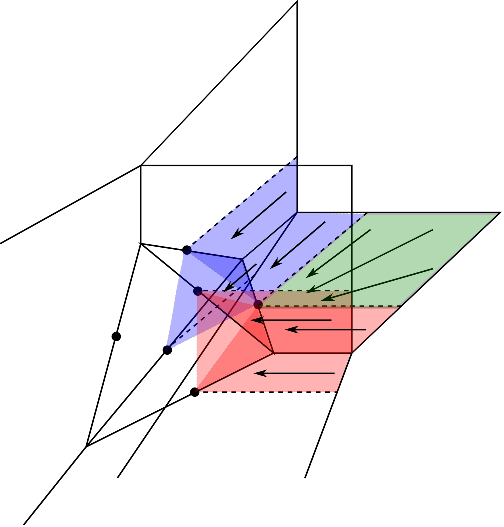}
	\caption{A tropical contraction of the tropical hypersurface $X(f_1)$}
	\label{fg:surf}
	\end{center}
\end{figure}
\end{example}

\begin{remark}\label{rm:cy-good}
When the intersections $\Sigma' \cap \partial \Delta^\ast$ and $\Sigmav' \cap \partial \nabla^\ast$ consist only of elementary simplices, $\Sigma'$ and $\Sigmav'$ give a maximal projective crepant partial resolution of the toric varieties associated with $\Sigma$ and $\Sigmav$ respectively \cite[Theorem 2.2.24]{MR1269718}, and the IAMS $(B^{\check{h}}_\nabla, \scrP(\tilde{\Sigma}'))$ is simple in the sense of Gross--Siebert program \cite[Theorem 3.16]{MR2198802}.
In this case, for any polyhedron $\tau =\rotatebox[origin=c]{180}{$\beta$} (\mu_\tau) + \nu_\tau \in \scrP(\tilde{\Sigma}')$, the polytope $\mu_\tau$ is an elementary simplex.
Since $\Deltav_{\tau, i}$ $(1 \leq i \leq r)$ coincide with faces of $\mu_\tau$ up to translation, we can see that $\sum_{i=1}^r T \lb \Deltav_{\tau, i} \rb$ is the internal direct sum of $\lc T \lb \Deltav_{\tau, i} \rb \rc_{i \in \lc 1, \cdots, r \rc}$.
Similarly, we can also see that $\sum_{i=1}^r T \lb \Delta_{\tau, i} \rb$ is the internal direct sum of $\lc T \lb \Delta_{\tau, i} \rb \rc_{i \in \lc 1, \cdots, r \rc}$.
$\Deltav_{\tau, i}, \Delta_{\tau, i}$ are the polytopes that define the local contraction around $\tau$, and we can see that the tropical contraction $\delta \colon X(f_1, \cdots, f_r) \to B_\nabla^{\check{h}}$ of \pref{th:glcontr} is good in the sense of \pref{df:contraction}.
One can also see by the same observation that the tropical contraction $\delta$ is very good if the intersections $\Sigma' \cap \partial \Delta^\ast$ and $\Sigmav' \cap \partial \nabla^\ast$ consist only of standard simplices.
In this case, the IAMS $(B^{\check{h}}_\nabla, \scrP(\tilde{\Sigma}'))$ is also very simple in the sense of \pref{df:simple} by \pref{lm:length1}.

On the other hand, as mentioned in \pref{rm:hyp}, when $r=1$ and we contract a tropical hypersurface, the conditions of being good and very good are obviously satisfied, and the tropical contraction $\delta$ is always very good.
\end{remark}

\section{Tropical contractions and (co)homology groups}\label{sc:coh}

\subsection{Tropical homology groups of IAMS}

In this subsection, we define tropical homology groups and tropical Borel--Moore homology groups of IAMS following the idea of \cite{MR4637248} to use Verdier dual.
We also define them as singular and cellular homology groups imitating the definitions for tropical varieties in \cite[Section 2]{MR3330789}.
Let $B$ be an IAMS, and $\check{\Lambda}$ be the locally constant sheaf of integral cotangent vectors on the smooth part $\iota \colon B_0 \hookrightarrow B$ of $B$.
We set $\Gamma:=B \setminus B_0$.

\begin{definition}\label{df:trophom}
For $Q=\bZ, \bQ, \bR$ and integers $p, q \geq 0$, the \emph{$(p,q)$-th tropical Borel--Moore homology group} $H_{p,q}^{\mathrm{BM}} \lb B, Q \rb$ and 
the \emph{$(p,q)$-th tropical homology group with compact support} $H_{p,q} \lb B, Q \rb$ of $B$ are defined by
\begin{align}
H_{p,q}^{\mathrm{BM}} \lb B, Q \rb&:=R^{-q} \Gamma R \cHom \lb \iota_\ast \bigwedge^{p} \check{\Lambda} \otimes Q, \omega_B \rb \\
\quad H_{p,q} \lb B, Q \rb&:=R^{-q} \Gamma_c R \cHom \lb \iota_\ast \bigwedge^{p} \check{\Lambda} \otimes Q, \omega_B \rb,
\end{align}
where $\omega_B \in D^b(\bZ_B)$ is the dualizing complex of $B$.
\end{definition}

\begin{definition}\label{df:fine}
A \textit{fine polyhedral structure} $\scrF$ of an IAMS $B$ is a finite family $\lc \tau \rc_{\tau \in \scrF}$ of closed subsets $\tau \subset B$ such that 
\begin{itemize}
\item Each $\tau \in \scrF$ is equipped with a homeomorphism $\phi_\tau$ to a rational polyhedron $\tilde{\tau}$ in $N_\bR$, and hence has the set of faces defined as inverse images of faces of $\tilde{\tau}$. 
\item $\scrF$ forms a complex, i.e., $\scrF$ satisfies \pref{cd:complex}.
\item $B=\bigcup_{\tau \in \scrF} \tau$, and  there is a subcomplex $\scrD \subset \scrF$ such that $\Gamma=\bigcup_{\tau \in \scrD} \tau$.
\item For any $\tau \in \scrF \setminus \scrD$, there is a chart $\psi_i \colon U_i \to N_\bR$ of $B_0$ such that $U_i \supset \bigcup_{\sigma \succ \tau} \phi_\sigma^{-1} \lb \rint(\tilde{\sigma}) \rb$ and $\psi_i  \circ \phi_\sigma^{-1} |_{\rint(\tilde{\sigma})}$ is an integral affine isomorphism onto its image for any $\sigma \succ \tau$.
\end{itemize}
For every integer $k \geq 0$, we also set $\scrF(k):=\lc \sigma \in \scrF \relmid \dim (\sigma)=k \rc$.
\end{definition}

\begin{example}\label{eg:fine}
In Construction \ref{construction}, we constructed an IAMS $B$ starting with a rational polytopal complex $\scrP$.
We chose a point $a_\tau \in \rint \lb \tau \rb$ for every $\tau \in \scrP$, and considered the associated subdivision $\widetilde{\scrP}$ \eqref{eq:subdivision} of $\scrP$.
If we take a rational point $a_\tau$ for every $\tau \in \scrP$, then $\widetilde{\scrP}$ becomes a fine polyhedral structure of $B$.
\end{example}

Let $\lb B, \scrF \rb$ be an IAMS equipped with a fine polyhedral structure.

\begin{definition}\label{df:singular}
Let $\Delta^q$ denote the standard $q$-simplex.
We say that a singular $q$-simplex $\delta \colon \Delta^q \to B$ \emph{respects the polyhedral structure $\scrF$} if for each face $\Delta' \prec \Delta^q$ there exists a polyhedron $\tau \in \scrF$ such that $\delta \lb \rint(\Delta') \rb \subset \rint(\tau)$.
A \emph{tropical $(p, q)$-simplex with respect to the polyhedral structure $\scrF$} is a pair $(\delta, s)$ of a singular $q$-simplex $\delta$ respecting $\scrF$ and $s \in \Hom \lb \lb \iota_\ast \left. \bigwedge^{p}\check{\Lambda} \rb \right|_{\delta \lb \Delta^q \rb}, \bZ_{\delta \lb \Delta^q \rb} \rb$.
Let $S_{p,q}(B; \scrF)$ denote the free abelian group generated by tropical $(p,q)$-simplices with respect to $\scrF$.
Pulling back a tropical $(p,q)$-simplex $(\delta, s)$ via the inclusion of the $i$-th facet $\epsilon^q_i \colon \Delta^{q-1} \hookrightarrow \Delta^q$ yields a tropical $(p,q-1)$-simplex 
$\partial_{p,q,i}(\delta, s)= 
\lb \delta \circ \epsilon^q_i, 
\left. s \right|_{\delta \lb \epsilon^q_i (\Delta^{q-1}) \rb}
\rb$. 
Extending $\partial_{p,q,i}$ by linearity and taking alternating sums, one defines the differentials 
\begin{align}
\partial_{p,q} :=\sum_{i=0}^q (-1)^i \partial_{p,q,i}
\end{align}
and obtains a chain complexes $\lb S_{p,\bullet}(B; \scrF), \partial_{p,\bullet} \rb$.
We call its homology groups
\begin{align}
H_{p, q}^{\mathrm{sing}} \lb B; \scrF \rb :=H_q \lb \lb S_{p,\bullet}(B; \scrF), \partial_{p,\bullet} \rb \rb
\end{align}
the \emph{singular tropical homology groups of $B$ with respect to $\scrF$}. 
We also define $S_{p, q}^\mathrm{lf} \lb B; \scrF \rb$ to be the abelian group of formal infinite sums of elements of $S_{p,q}(B; \scrF)$ with the condition that locally only finitely many tropical simplices have non-zero coefficients.
We call its homology groups
\begin{align}
H_{p, q}^{\mathrm{lf}} \lb B; \scrF \rb :=H_q \lb \lb S_{p,\bullet}^{\mathrm{lf}} (B; \scrF), \partial_{p,\bullet} \rb \rb
\end{align}
the \emph{locally finite tropical homology groups of $B$ with respect to $\scrF$}.
\end{definition}

\begin{proposition}{\rm(cf.~\cite[Theorem 4.20]{MR4637248})}\label{pr:sing}
Let $B$ be an IAMS equipped with a fine polyhedral structure $\scrF$. 
Then there are natural isomorphisms
\begin{align}
H_{p, q}^{\mathrm{lf}} \lb B; \scrF \rb \cong H^{\mathrm{BM}}_{p,q}(B, \bZ), \quad 
H_{p, q}^{\mathrm{sing}} \lb B; \scrF \rb \cong H_{p,q}(B, \bZ).
\end{align}
\end{proposition}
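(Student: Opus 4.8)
The plan is to reduce the statement to its sheaf-theoretic counterpart and then transfer the proof from the tropical-variety setting, where the analogous result is \cite[Theorem 4.19]{GS19}. First I would observe that the target groups $H^{\mathrm{BM}}_{p,q}(B,\bZ)$ and $H_{p,q}(B,\bZ)$ of \pref{df:trophom} are, by definition, the hypercohomology (with full support, resp.\ compact support) of the Verdier dual $D \lb \iota_\ast \bigwedge^{p} \check{\Lambda} \rb = R \cHom \lb \iota_\ast \bigwedge^{p} \check{\Lambda}, \omega_B \rb$. Since $\iota_\ast \bigwedge^{p} \check{\Lambda}$ is a constructible sheaf with respect to the fine polyhedral structure $\scrF$ (each stratum $\mathrm{Int}(\tau)$, $\tau \in \scrF \setminus \scrD$, carries a local system, and it is $0$ off such strata up to the pushforward along $\iota$), the Verdier dual has a description in terms of the cellular cochain complex of the cosheaf obtained by taking, on each open cell $\mathrm{Int}(\tau)$, the stalk $\lb \iota_\ast \bigwedge^{p} \check{\Lambda} \rb_x \otimes \mathrm{or}_\tau$ twisted by the orientation and shifted by $-\dim \tau$. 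This is the same mechanism as in \cite{GS19}; the only point to verify is that $B$, being a topological manifold, behaves well enough that $\omega_B \cong \bZ_B[d]$ (orientation issues are harmless since one works with the relative orientation sheaves of the cells).

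Next I would set up the cellular complex explicitly. Using the standard resolution of $\omega_B$ by the complex built from $j_{\tau !} \bZ_{\mathrm{Int}(\tau)}[\dim\tau]$ over $\tau \in \scrF$ (with the cellular differentials coming from incidences of faces), one computes
\begin{align}
R\cHom \lb \iota_\ast \bigwedge^{p} \check{\Lambda}, \omega_B \rb
\end{align}
as the complex whose $(-q)$-th cohomology in degree of a cell $\tau$ records $\Hom \lb F^p_\bZ(\tau), \bZ \rb$-type data, matching the cellular tropical chain groups of \pref{df:cellular}. This identifies $H^{\mathrm{BM}}_{p,q}(B,\bZ)$ and $H_{p,q}(B,\bZ)$ with the cellular tropical (Borel--Moore, resp.\ ordinary) homology groups with respect to $\scrF$; this is the content of the cellular comparison that \cite[Section 4]{GS19} establishes in the tropical case, and the proof there is formal once one knows the constructibility and the dualizing-complex description, so it transfers verbatim. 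Then I would compare the cellular and singular versions: a standard subdivision/acyclic-models argument shows that for a fixed fine polyhedral structure $\scrF$ the singular complex $S_{p,\bullet}(B;\scrF)$ (resp.\ the locally finite version) is quasi-isomorphic to the cellular one, by showing the relative singular chains of a cell modulo its boundary, twisted by $\bigwedge^p\check\Lambda$, are concentrated in the top degree with the expected coefficient group — this is again the argument of \cite[Theorem 4.19]{GS19}, using that each $\mathrm{Int}(\tau)$ is convex and the local system is trivial there.

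The main obstacle I expect is the bookkeeping at the discriminant locus $\Gamma$: unlike the tropical-variety case, here the coefficient sheaf is $\iota_\ast \bigwedge^{p}\check\Lambda$, a pushforward from the open dense $B_0$, so its stalks on cells $\tau \subset \Gamma$ are the monodromy-invariants of $\bigwedge^p\check\Lambda$ around $\tau$, and one must check that the cellular and singular complexes see exactly these invariant subspaces with the correct restriction maps. Concretely, the delicate step is verifying that for $\tau \in \scrD$ the natural map from the stalk $\lb \iota_\ast \bigwedge^p\check\Lambda \rb_x$ at $x\in\mathrm{Int}(\tau)$ to the sections over a punctured neighborhood is an isomorphism onto the monodromy invariants, and that these fit into the incidence differentials compatibly — but this is precisely the kind of local computation already carried out in the excerpt (it is the same phenomenon as in the proof of \pref{th:local-cont}.2, where $\lb \iota_\ast\bigwedge^p\check\Lambda\rb(U_x)$ is computed as monodromy invariants). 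Once that local identification is in place, both isomorphisms follow by gluing the local quasi-isomorphisms via a Mayer--Vietoris / spectral sequence argument, exactly as in \cite{GS19}, so the bulk of the work is citing and adapting rather than reproving.
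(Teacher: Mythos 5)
Your proposal is correct and follows essentially the same route as the paper: the paper likewise reduces to \cite[Theorem 4.19]{GS19} by noting that the relative interiors of the cells of $\scrF$ form an admissible stratification in the sense of \cite[Definition A.3]{GS19} and that $\iota_\ast \bigwedge^{p}\check{\Lambda}$ restricts to a locally free sheaf of finite rank on each stratum (your monodromy-invariants check at $\Gamma$ is exactly this point), after which the argument of \cite{GS19} transfers verbatim. The only minor caveat is your aside that $\omega_B \cong \bZ_B[d]$; the paper only establishes $\omega_B \cong \iota_\ast\bigwedge^{d}\check{\Lambda}[d]$ later and under a monodromy hypothesis, but this is not needed for the present proposition.
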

The analogous statement to the above one for rational polyhedral spaces is shown in \cite[Theorem 4.20]{MR4637248}.
The set of relative interiors of elements in the fine polyhedral structure $\scrF$ defines an admissible stratification on $B$ in the sense of \cite[Definition A.3]{MR4637248}.
Furthermore, since the restriction of the sheaf $\iota_\ast \bigwedge^{p} \check{\Lambda}$ to every stratum is locally free of finite rank, one can also use \cite[Proposition A.9]{MR4637248}.
Thus the above statement can also be proved in exactly the same way as \cite[Theorem 4.20]{MR4637248}.
We omit to repeat the proof.

\begin{corollary}
The homology groups $H_{p, q}^{\mathrm{lf}} \lb B, \scrF \rb$ and $H_{p, q}^{\mathrm{sing}} \lb B, \scrF \rb$ do not depend on the choice of the fine polyhedral structure $\scrF$ of $B$.
\end{corollary}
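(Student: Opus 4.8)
The plan is to deduce the corollary directly from \pref{pr:sing}. The essential observation is that the groups appearing on the right-hand sides there, namely the tropical Borel--Moore homology group $H^{\mathrm{BM}}_{p,q}(B,\bZ)$ and the tropical homology group $H_{p,q}(B,\bZ)$ of \pref{df:trophom}, are built only out of the sheaf $\iota_\ast \bigwedge^{p}\check{\Lambda}$ on $B$, the dualizing complex $\omega_B \in D^b(\bZ_B)$, and the functors $R\Gamma$ and $R\Gamma_c$. None of these data refer to a polyhedral structure, so both groups are invariants of the IAMS $B$ alone.

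First I would record this independence and fix integers $p,q \geq 0$. Then, for an arbitrary fine polyhedral structure $\scrF$ of $B$, I would invoke \pref{pr:sing} to get the natural isomorphisms $H_{p,q}^{\mathrm{lf}}(B;\scrF) \cong H^{\mathrm{BM}}_{p,q}(B,\bZ)$ and $H_{p,q}^{\mathrm{sing}}(B;\scrF) \cong H_{p,q}(B,\bZ)$. Given a second fine polyhedral structure $\scrF'$, composing these with the analogous isomorphisms for $\scrF'$ produces canonical identifications $H_{p,q}^{\mathrm{lf}}(B;\scrF) \cong H_{p,q}^{\mathrm{lf}}(B;\scrF')$ and $H_{p,q}^{\mathrm{sing}}(B;\scrF) \cong H_{p,q}^{\mathrm{sing}}(B;\scrF')$, which is exactly the assertion of the corollary.

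The one point that needs care --- and the step I expect to be the main obstacle --- is verifying that the isomorphisms produced by \pref{pr:sing} are genuinely canonical, i.e.\ do not depend on auxiliary choices in their construction, since otherwise the composite above is not well-defined. This amounts to checking the naturality of the comparison underlying \pref{pr:sing}, which follows the pattern of \cite[Theorem 4.19]{GS19} using the admissible stratification by the relative interiors of the cells of $\scrF$ together with \cite[Proposition A.9]{GS19}: concretely, the class represented by the singular chain of a tropical $(p,q)$-simplex $(\delta,s)$ should be determined by $\delta$ and $s$ alone. If one prefers an explicit comparison rather than routing through \pref{df:trophom}, one could instead pass to a common fine polyhedral refinement $\scrF''$ of $\scrF$ and $\scrF'$ --- constructed by subdividing $B$ rationally and compatibly on each chart of $B_0$ and near $\Gamma$ --- and check that the subdivision chain maps $S_{p,\bullet}(B;\scrF) \to S_{p,\bullet}(B;\scrF'')$ and $S^{\mathrm{lf}}_{p,\bullet}(B;\scrF) \to S^{\mathrm{lf}}_{p,\bullet}(B;\scrF'')$ are quasi-isomorphisms intertwining the isomorphisms of \pref{pr:sing}; but this refinement construction is itself a nontrivial bookkeeping step, which is the reason the Verdier-dual route of \pref{df:trophom} is the cleaner one.
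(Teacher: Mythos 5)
Your proposal is correct and is essentially the paper's own argument: the paper likewise observes that $H^{\mathrm{BM}}_{p,q}(B,\bZ)$ and $H_{p,q}(B,\bZ)$ are defined without reference to any polyhedral structure, so independence of $\scrF$ follows immediately from \pref{pr:sing}. Your additional remarks on canonicity and on common refinements are reasonable caution but go beyond what the paper needs, since the corollary only asserts independence up to isomorphism.
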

\begin{proof}
Since we do not use polyhedral structures $\scrF$ for defining $H^{\mathrm{BM}}_{p,q}(B, \bZ)$ and $H_{p,q}(B, \bZ)$, the claim is obvious from \pref{pr:sing}.
\end{proof}

Consider the one-point compactification $B \cup \lc \infty \rc$ of $B$.
When $\lc \infty \rc \cup \lc \rint \lb \tau \rb \rc_{\tau \in \scrF}$ gives a regular CW complex structure of $B \cup \lc \infty \rc$, the space $B$ equipped with the partition $\lc \rint \lb \tau \rb \rc_{\tau \in \scrF}$ forms a \emph{cell complex} in the sense of \cite{MR2634247}.
We refer the reader to \cite[Section 4.1]{MR3259939} for its definition.
The fine polyhedral structure $\scrF$ also forms the category whose objects are the polyhedra in $\scrF$ and morphisms are given by
\begin{eqnarray}
\Hom \lb \tau, \sigma \rb:=
\left\{
\begin{array}{ll}
\id & \tau \prec \sigma \\
\emptyset & \mathrm{otherwise} \\
\end{array}
\right.
\end{eqnarray}
for $\tau, \sigma \in \scrF$.
Let $\scrF^\mathrm{op}$ denote its opposite category.
Consider the functor from $\scrF^\mathrm{op}$ to the category of $\bZ$-modules assigning 
$F_p^\bZ(\tau):=\Hom \lb \lb \iota_\ast \left. \bigwedge^{p}\check{\Lambda} \rb \right|_{\rint(\tau)}, \bZ_{\rint(\tau)} \rb$ 
to each polyhedron $\tau \in \scrF$, and 
the extension map
$
e_{\tau, \sigma} \colon F_p^\bZ(\sigma) \to F_p^\bZ(\tau)
$
induced by the inclusion $\iota_\ast \left. \bigwedge^{p}\check{\Lambda} \right|_{\rint(\tau)} \hookrightarrow \iota_\ast \left. \bigwedge^{p}\check{\Lambda} \right|_{\rint(\sigma)}$
to each pair $\tau \prec \sigma$.
It forms a cellular cosheaf (cf.~e.g.~\cite[Definition 4.1.6]{MR3259939}), which will be denoted by $F_p^\bZ$.
We consider the cosheaf homology group and the Borel--Moore cosheaf homology group (cf.~e.g.~\cite[Section 6]{MR3259939}) of the cellular cosheaf $F_p^\bZ$.
They are the homology groups of the chain complexes 
$\lb C_{p, \bullet} \lb B; \scrF \rb,  \partial_{p, \bullet} \rb$ and $\lb C_{p, \bullet}^{\mathrm{BM}} \lb B; \scrF \rb,  \partial_{p, \bullet} \rb$ defined by
\begin{align}
C_{p, q} \lb B; \scrF \rb&:= \bigoplus_{\tau \in \scrF(q): \mathrm{compact}} F_p^\bZ(\tau), \quad
C_{p, q}^{\mathrm{BM}} \lb B; \scrF \rb:= \bigoplus_{\tau \in \scrF(q)} F_p^\bZ(\tau)
 \\ \label{eq:cboundary}
\partial_{p, q}&:=\sum_{\substack{\tau \prec \sigma \in \scrF(q) \\ \dim \tau=q-1}} \ld \tau : \sigma \rd e_{\tau, \sigma} 
\colon C_{p, q}^{(\mathrm{BM})} \lb B; \scrF \rb \to C_{p, q-1}^{(\mathrm{BM})} \lb B; \scrF \rb.
\end{align}
Here $\ld \tau : \sigma \rd$ is the sign determined as follows:
We first fix an orientation of every polyhedron $\tau \in \scrF$.
For each pair $\tau \prec \sigma$, we define $\ld \tau : \sigma \rd:=1$ if the orientation of $\tau$ coincides with the orientation of $\partial \sigma$ induced by the orientation of $\sigma$, and $\ld \tau : \sigma \rd:=-1$ otherwise.

\begin{definition}\label{df:cellular}
We write the cosheaf homology group and the Borel--Moore cosheaf homology group of the cellular cosheaf $F_p^\bZ$ as 
\begin{align}
H_{p, q}^{\mathrm{cell}} \lb B; \scrF \rb:=H_q \lb \lb C_{p,\bullet} (B; \scrF), \partial_{p,\bullet} \rb \rb, \quad
H_{p, q}^{\mathrm{cBM}} \lb B; \scrF \rb:=H_q \lb \lb C_{p,\bullet}^{\mathrm{BM}} (B; \scrF), \partial_{p,\bullet} \rb \rb
\end{align}
and call them the \emph{cellular tropical homology group} and the \emph{cellular tropical Borel--Moore homology group of $B$} respectively.
\end{definition}

\begin{proposition}{\rm(cf.~\cite[Proposition 2.2]{MR3330789}, \cite[Remark 2.8]{MR3894860})}\label{pr:cell}
Let $B$ be an IAMS equipped with a fine polyhedral structure $\scrF$. 
Then there are natural isomorphisms
\begin{align}
H_{p, q}^{\mathrm{cell}} \lb B; \scrF \rb
\cong
H_{p, q}^{\mathrm{sing}} \lb B; \scrF \rb, \quad
H_{p, q}^{\mathrm{cBM}} \lb B; \scrF \rb
\cong
H_{p, q}^{\mathrm{lf}} \lb B; \scrF \rb.
\end{align}
\end{proposition}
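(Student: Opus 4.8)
\textbf{Proof proposal for \pref{pr:cell}.}

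The plan is to reduce the statement to the known comparison between singular and cellular tropical (co)homology for rational polyhedral spaces, namely \cite[Proposition 2.2]{MR3330789} and \cite[Remark 2.8]{MR3894860}, exactly as \pref{pr:sing} reduces to \cite[Theorem 4.19]{GS19}. Concretely, first I would observe that a fine polyhedral structure $\scrF$ of $B$ satisfies all the combinatorial hypotheses used in the cellular-to-singular comparison in the tropical setting: the relative interiors $\lc \Int(\tau) \rc_{\tau \in \scrF}$ form a stratification of $B$, each $\tau$ is a polyhedron with a well-defined face poset, $\scrF$ forms a complex in the sense of \pref{cd:complex}, and (after passing to the one-point compactification and using that the $a_\tau$ may be chosen to make $\lc \infty \rc \cup \lc \Int(\tau) \rc_{\tau \in \scrF}$ a regular CW structure, as in \pref{eg:fine}) the partition $\lc \Int(\tau) \rc$ is a cell complex in the sense of \cite{MR3259939}. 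The only extra ingredient beyond the tropical case is that the coefficient system is the constructible sheaf $\iota_\ast \bigwedge^p \check{\Lambda}$ rather than $\scF^p_\bZ$; the key point is that, by the last bullet of \pref{df:fine}, this sheaf is \emph{locally constant of finite rank on each open star} $\bigcup_{\sigma \succ \tau}\Int(\sigma)$ for $\tau \in \scrF \setminus \scrD$, and its restriction to every stratum $\Int(\tau)$ is locally free of finite rank. Hence the functor $\tau \mapsto F_p^\bZ(\tau) = \Hom\lb \iota_\ast \left.\bigwedge^p\check\Lambda\right|_{\Int(\tau)}, \bZ_{\Int(\tau)} \rb$ with extension maps $e_{\tau,\sigma}$ genuinely defines a cellular cosheaf on $\lb B, \scrF \rb$, and everything needed to run the argument of \cite[Proposition 2.2]{MR3330789} is in place.

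Next I would carry out the comparison in two steps, mirroring the tropical proof. The first step is to set up, for each $p$, a morphism of chain complexes from the cellular complex $C_{p,\bullet}(B;\scrF)$ to (a suitable subdivision-compatible subcomplex of) the singular complex $S_{p,\bullet}(B;\scrF)$: to a cellular chain supported on $\tau \in \scrF(q)$ one associates a sum of tropical $(p,q)$-simplices obtained from a fixed barycentric-type triangulation of $\tau$, with the sheaf component $s$ restricted from the generator of $F_p^\bZ(\tau)$ along each simplex. One checks this commutes with the differentials $\partial_{p,\bullet}$, the matching of signs being the reason one fixes orientations of the $\tau \in \scrF$ and uses the incidence numbers $\ld \tau : \sigma \rd$ in \eqref{eq:cboundary}. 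The second step is to prove this morphism is a quasi-isomorphism; this is done stratum by stratum, using that on $\Int(\tau)$ the coefficient sheaf is locally constant of finite rank, so the relevant local computation is the usual fact that singular homology of a (relatively open) cell with constant coefficients is concentrated in degree zero. For the locally finite / Borel--Moore versions one repeats the same argument with infinite (locally finite) chains, noting that $C_{p,q}^{\mathrm{BM}}(B;\scrF)$ ranges over all $\tau \in \scrF(q)$ (not just compact ones) precisely to match the locally finite singular chains $S_{p,q}^{\mathrm{lf}}(B;\scrF)$. Finally, combining the quasi-isomorphism with the long exact sequences (or a spectral sequence of the stratified space) gives the natural isomorphisms $H_{p,q}^{\mathrm{cell}} \cong H_{p,q}^{\mathrm{sing}}$ and $H_{p,q}^{\mathrm{cBM}} \cong H_{p,q}^{\mathrm{lf}}$.

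The main obstacle I expect is not the topology of cells but the \emph{sheaf-theoretic bookkeeping across the discriminant locus} $\Gamma = \bigcup_{\tau \in \scrD}\tau$: the coefficient sheaf $\iota_\ast\bigwedge^p\check\Lambda$ is genuinely non-locally-constant along $\Gamma$ (it picks out monodromy-invariant cotangent vectors), so the local triviality used in the quasi-isomorphism step holds only on the strata $\Int(\tau)$ and one must be careful that the extension maps $e_{\tau,\sigma}$ for $\tau \in \scrD$ are exactly the ones induced by the sheaf structure, and that the barycentric triangulation refining $\scrF$ still respects $\scrF$ in the sense of \pref{df:singular}. This is precisely the place where the defining bullets of \pref{df:fine} (existence of a chart $\psi_i$ with $U_i \supset \bigcup_{\sigma \succ \tau}\phi_\sigma^{-1}(\Int(\tilde\sigma))$ for $\tau \notin \scrD$, and $\scrD$ being a subcomplex) are used, and I would make sure to invoke them explicitly rather than argue as if $\check\Lambda$ extended over all of $B$. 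Once this is handled, the argument is essentially that of \cite[Proposition 2.2]{MR3330789} and \cite[Remark 2.8]{MR3894860} verbatim, so as in the remark after \pref{pr:sing} I would state that the proof is identical to the cited references and omit the routine details.
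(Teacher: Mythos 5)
Your proposal is correct and follows the same route as the paper: reduce to \cite[Proposition 2.2]{MR3330789} and \cite[Remark 2.8]{MR3894860} after observing that the argument there only uses that the coefficient system is constant on each cell, which holds for $\iota_\ast\bigwedge^p\check\Lambda$ on the strata $\Int(\tau)$ of a fine polyhedral structure. The paper simply states this reduction and omits details; your proposal fills in the bookkeeping (the cell complex structure, the local constancy on open stars of cells in $\scrF\setminus\scrD$ via the last bullet of \pref{df:fine}, and the care needed along $\Gamma$), all of which is consistent with the paper's one-line justification.
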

The analogous statement to the above one for singular/cellular tropical homology groups of tropical varieties is shown in \cite[Proposition 2.2]{MR3330789}.
There is also an explanation for generalizing \cite[Proposition 2.2]{MR3330789} to Borel--Moore homology groups in \cite[Remark 2.8]{MR3894860}.
Since in the proof of \cite[Proposition 2.2]{MR3330789}, we only use the condition that the coefficient is constant on every cell, the above statement can also be proved in the same way as \cite[Proposition 2.2]{MR3330789}.
We omit to repeat the proof also for this.

\begin{remark}\label{rm:sheaf-homology}
There is another approach by Ruddat \cite{MR4347312} to homology theory for IAMS using sheaf homology.
We refer the reader to \cite[Section V\hspace{-.1em}I-12]{MR1481706} for the definition of (singular) sheaf homology.
In general, the sheaf homology $H_q \lb B, \iota_\ast \bigwedge^p \Lambda \rb$ considered in \cite{MR4347312} is not isomorphic to the homology group $H_{p,q}(B, \bZ)$ or $H_{p,q}^{\mathrm{BM}} \lb B, \bZ \rb$ of \pref{df:trophom}.
For instance, consider a closed integral affine disk $B$ of dimension $2$ with two focus-focus singularities.
Let $\lc e_1, e_2 \rc$ be a basis of the integral tangent space at a smooth point in $B$, and $k \geq 1$ be an integer. 
Assume that the monodromy invariant subspaces of those two singular points are generated by $e_1+ke_2$ and $e_1$ respectively.
The disk $B$ is shown on the left in \pref{fg:2ff}.
The dots are the singular points, and the dotted lines show the monodromy invariant subspaces.
\begin{figure}[htbp]
\begin{center}
\includegraphics[scale=0.6]{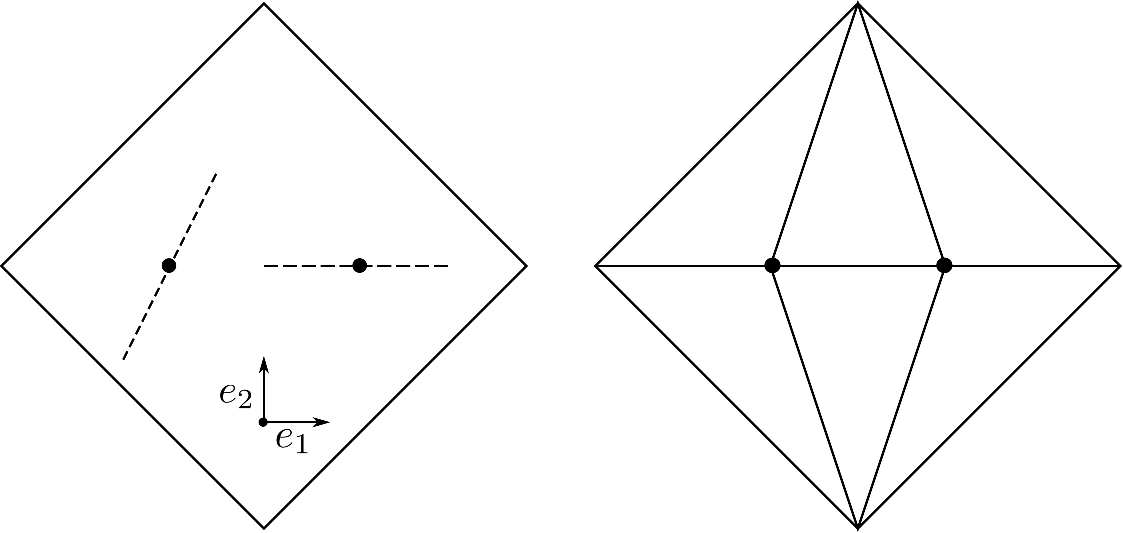}
\end{center}
\caption{The disk $B$ with two focus-focus singularities and its cellular decomposition}
\label{fg:2ff}
\end{figure}
The sheaf homology group $H_1 \lb B, \iota_\ast \Lambda \rb$ of $B$ is isomorphic to $\bZ/k\bZ$.
See \cite[Introduction 7, Remark 9]{MR4347312} for how to compute it.
On the other hand, one can also see $H_{1,1}(B, \bZ)=H_{1,1}^{\mathrm{BM}} \lb B, \bZ \rb=0$, for instance, by taking a cellular decomposition of $B$ shown on the right in \pref{fg:2ff} and computing its celluar homology group.
Notice that the coefficient group $F_1^\bZ$ is $\left. \lb \bZ e_1 \oplus \bZ e_2 \rb \middle/ \bZ \lb e_1+ke_2 \rb \right.$ and $\left. \lb \bZ e_1 \oplus \bZ e_2 \rb \middle/ \bZ e_1 \right.$ at those two singular points respectively, and is $\bZ e_1 \oplus \bZ e_2$ on any other cells.
\end{remark}

\begin{example}\label{eg:1-cycle}
Consider an integral affine disk $B$ of dimension $2$ with two focus-focus singularities $p_1, p_2$ whose monodromy invariant subspaces are parallel.
Let $\lc e_1, e_2 \rc$ be a basis of the integral tangent space at a smooth point in $B$ again, and assume that the monodromy invariant subspaces are generated by $e_1$.
We recall two types of tropical $1$-cycles considered in \cite{MR4347312}:
One is called a \emph{goggle cycle} in \cite[Figure 0.2]{MR4347312}.
It is shown on the left side of \pref{fg:goggle}.
It consists of two loops going around the singular point and a path connecting those two loops, which are equipped with coefficient as shown in the figure.
It is a \emph{tropical $1$-cycle} in the sense of \cite[Definition 1.2]{MR4179831}.
It defines a cycle class in the singular sheaf homology group $H_1\lb B, \iota_\ast \Lambda \rb$.
Tropical $1$-cycles are used to compute period integrals of toric degenerations in \cite{MR4179831}.
The other one is called \emph{Symington's (\cite{MR2024634}) relative cycle} in \cite[Figure 0.7]{MR4347312}.
It is shown on the right side of \pref{fg:goggle}.
It is a path with coefficient $e_1$ that connects two singular points $p_1, p_2$.
\begin{figure}[htbp]
\begin{center}
\includegraphics[scale=0.75]{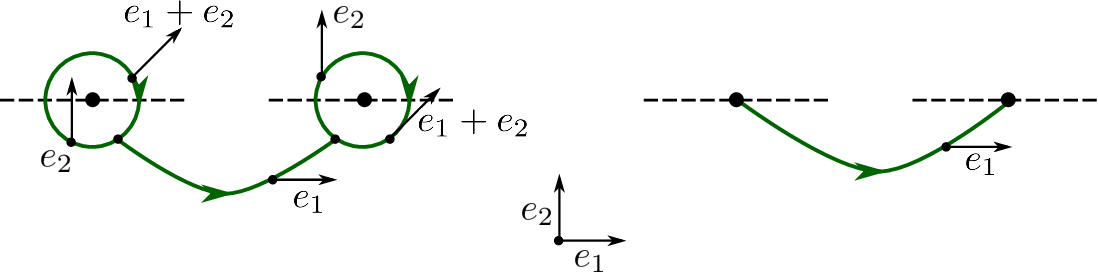}
\end{center}
\caption{A goggle cycle and Symington's relative cycle}
\label{fg:goggle}
\end{figure}
Since the stalks of $\iota_\ast \Lambda$ at the focus-focus singularities $p_1, p_2$ are isomorphic to $\bZ e_1 \subset \bZ e_1 \oplus \bZ e_2$ and the boundary of Symington's relative cycle is not zero, it does not define a cycle class in the sheaf homology group $H_1\lb B, \iota_\ast \Lambda \rb$.
However, one can think of it as a cycle defining a class in the relative homology group $H_1 \lb B, \Gamma, \iota_\ast \Lambda \rb$. (This is why it is called a relative cycle.)
This type of relative cycles is used for constructing deformations of tropical polarized K3 surfaces in \cite{MR21}.
As explained in \cite[Figure 0.7]{MR4347312}, the natural map $H_1\lb B, \iota_\ast \Lambda \rb \to H_1 \lb B, \Gamma, \iota_\ast \Lambda \rb$ sends the class defined by the goggle cycle in $H_1\lb B, \iota_\ast \Lambda \rb$ to the class defined by Symington's relative cycle in $H_1 \lb B, \Gamma, \iota_\ast \Lambda \rb$.

Both Symington's relative cycle and the goggle cycle also define a non-trivial cycle class in the tropical homology group $H_{1, 1}(B, \bZ)$.
Notice that we do not need to consider the relative homology group for this since the coefficient group 
$\Hom \lb \lb \iota_\ast \left. \check{\Lambda} \rb \right|_{p_i}, \bZ_{p_i} \rb$ at the focus-focus singularities $p_1, p_2$ is $\lb \bZ e_1 \oplus \bZ e_2 \rb/\bZ e_1$ and the boundary of Symington's relative cycle also becomes zero.
These two cycle classes also differ only by a $1$-boundary and define the same class in $H_{1, 1}(B, \bZ)$.
\end{example}

\begin{example}\label{eg:2-cycle}
Let $\lb B, \scrP, \phi \rb$ be a tropical conifold in the sense of \cite[Definition 6.3]{MR3228462}.
A \emph{tropical $2$-cycle} $(S, j, v)$ on $\lb B, \scrP, \phi \rb$ in the sense of \cite[Definition 7.2]{MR3228462} defines an element of $H_{1,2} \lb B, \bZ \rb$ as follows:
We choose a rational point $a_\tau \in \rint \lb \tau \rb$ for every $\tau \in \scrP$, and consider the associated subdivision $\widetilde{\scrP}$ \eqref{eq:subdivision} of $\scrP$.
As mentioned in \pref{eg:fine}, $\widetilde{\scrP}$ becomes a fine polyhedral structure of $B$ in the sense of \pref{df:fine}.
Take a simplicial subdivision $\tilde{S}$ of $S$ so that the restriction of the map $j$ to each simplex respects the polyhedral structure $\widetilde{\scrP}$.
For each singular $2$-simplex $\left. j \right|_{\Delta^2} \colon \Delta^2 \to B$ $(\Delta^2 \in \tilde{S})$, the restriction of the integral parallel vector field $v$ to the image of $\rint \lb \Delta^2 \rb$ by the map $j$ defines an element of $\Hom \lb \lb \iota_\ast \left. \check{\Lambda} \rb \right|_{j \lb \Delta^2 \rb}, \bZ_{j \lb \Delta^2 \rb} \rb \cong \left. \Lambda \right|_{j \lb \rint \lb \Delta^2 \rb \rb}$.
We obtain the following tropical $(1, 2)$-simplex in the sense of \pref{df:singular}:
\begin{align}
s:= \sum_{\Delta^2 \in \tilde{S}} 
\lb \left. j \right|_{\Delta^2} \colon \Delta^2 \to B, \left. v \right|_{j \lb \rint \lb \Delta^2 \rb \rb} \rb
 \in S_{1,2} \lb B; \widetilde{\scrP} \rb.
\end{align}
By the map $j$, any edge $\Delta^1 \in \tilde{S}$ in the boundary of $S$ is sent to the discriminant $\Gamma$ of $B$ (the condition (i) of \cite[Definition 7.2]{MR3228462}).
The condition (viii) of \cite[Definition 7.2]{MR3228462} requires that the annihilator of $v$ in the cotangent space at a point in a small neighborhood of $j(\Delta^1)$ coincides with the cotangent subspace that is invariant with respect to the monodromy operator around $j(\Delta^1)$.
Since the monodromy invariant cotangent subspace coincides with $\lb \iota_\ast \left. \check{\Lambda} \rb \right|_{j(\rint \lb \Delta^1 \rb)}$, one has $v=0$ as an element of the coefficient group
\begin{align}
\Hom \lb \lb \iota_\ast \left. \check{\Lambda} \rb \right|_{j(\Delta^1)}, \bZ_{j(\Delta^1)} \rb
\cong
\Hom \lb \lb \iota_\ast \left. \check{\Lambda} \rb \right|_{j(\rint \lb \Delta^1 \rb )}, \bZ_{j(\rint \lb \Delta^1 \rb)} \rb
.
\end{align}
Hence, the boundary $\partial_{2,1} (s)$ does not have support on $j(\partial S)$.
Furthermore, the condition (vii) of \cite[Definition 7.2]{MR3228462} (the \emph{balancing condition} along interior edges in $S$) ensures that coefficients of interior edges in $\partial_{2,1} (s)$ are also all $0$.
Therefore, we have $\partial_{2,1} (s)=0$ and $s$ defines an element of $H_{1,2}^{\mathrm{sing}} \lb B; \widetilde{\scrP} \rb$.

As mentioned in \cite[Introduction 4]{MR4347312}, a tropical $2$-cycle $(S, j, v)$ also defines an element of the relative sheaf homology group $H_2 \lb B, \Gamma, \iota_\ast \Lambda \rb$, although it does not define an element of the sheaf homology group $H_2 \lb B, \iota_\ast \Lambda \rb$ as well as Symington's relative cycle mentioned in \pref{eg:1-cycle}.
\end{example}

\subsection{Comparison of tropical (co)homology groups}

In this subsection, we prove \pref{cr:cohisom} and \pref{cr:wave}.
Let $\delta \colon V \to B$ be a tropical contraction.

\begin{proof}[Proof of \pref{cr:cohisom}]
In the following, $\Gamma_\Phi$ denotes the global section functor $\Gamma$ or the functor of global sections with compact support $\Gamma_c$.
Since the tropical contraction $\delta \colon V \to B$ is proper, one has $\delta_!=\delta_\ast$.
If we assume that $\delta$ is a good (resp. very good) tropical contraction in the sense of \pref{df:contraction}, then we have
\begin{align}\label{eq:lems1}
R^q \Gamma_\Phi \lb V, \scF^p_Q \rb
&\cong R^q \Gamma_\Phi \lb B, R \delta_\ast \scF^p_Q \rb \\ \label{eq:proper1} 
&\cong R^q \Gamma_\Phi \lb B, \iota_\ast \bigwedge^{p} \check{\Lambda} \otimes_\bZ Q \rb
\end{align}
and 
\begin{align}\label{eq:proper2}
R^{-q} \Gamma_\Phi R \cHom \lb \scF^p_Q, \omega_V \rb
& \cong R^{-q} \Gamma_\Phi R \cHom \lb R \delta_\ast \scF^p_Q, \omega_B \rb \\ \label{eq:lems2}
& \cong R^{-q} \Gamma_\Phi R \cHom \lb \iota_\ast \bigwedge^{p} \check{\Lambda} \otimes_\bZ Q, \omega_B \rb
\end{align}
for $Q=\bQ$ (resp. $Q=\bZ$), where $\omega_V \in D^b(\bZ_V)$ and $\omega_B \in D^b(\bZ_B)$ are the dualizing complexes of $V$ and $B$.
Here we use $\delta_!=\delta_\ast$ for \eqref{eq:lems1} and \pref{eq:proper2}.
We also use \pref{th:local-cont}.2 and \pref{th:local-cont}.3 for \pref{eq:proper1} and \eqref{eq:lems2}.
\end{proof}

\begin{proof}[Proof of \pref{cr:wave}]
The map \eqref{eq:natu} is given by
\begin{align}\label{eq:lems4}
R^q \Gamma_V \scW_p^Q \cong 
R^q \Gamma_B R \delta_\ast \scW_p^Q \cong 
R^q \Gamma_B \delta_\ast \scW_p^Q =
R^q \Gamma_B \delta_\ast \cHom \lb \scF^p_\bZ, Q \rb \\ \label{eq:lems5}
\to R^q \Gamma_B \cHom \lb \delta_\ast \scF^p_\bZ, \delta_\ast Q \rb=
R^q \Gamma_B \cHom \lb \iota_\ast \bigwedge^p \check{\Lambda}, Q \rb=
R^q \Gamma_B \lb \iota_\ast \bigwedge^p \Lambda \otimes_\bZ Q \rb,
\end{align}
where we use \pref{th:local-cont}.4 in \eqref{eq:lems4} and \pref{th:local-cont}.2 in \eqref{eq:lems5}.

Suppose that the tropical contraction $\delta$ is good.
Recall that the eigenwave of $V$ and the radiance obstruction of $B$ are the extension classes of the exact sequences \eqref{eq:expseq} and \eqref{eq:exaff2} respectively (see \pref{sc:radiance} and \pref{pr:eigext}).
Consider the tensor products of these exact sequences with $\bQ$
\begin{align}\label{eq:q-tensor1}
0 \to \bR \to \iota_\ast \mathrm{Aff}_{B_0} \otimes_\bZ \bQ \to \iota_\ast \check{\Lambda} \otimes_\bZ \bQ \to 0 \\ \label{eq:q-tensor2}
0 \to \bR \to \mathrm{Aff}_V \otimes_\bZ \bQ \to \scF^1_\bQ \to 0.
\end{align}
The extension classes of \eqref{eq:exaff2} and \eqref{eq:expseq} coincide with the extension classes of \eqref{eq:q-tensor1} and \eqref{eq:q-tensor2} respectively.
We further consider the following diagram:
\begin{align}
\begin{CD}
\Hom \lb \scF^1_\bQ, \scF^1_\bQ \rb  @>\partial>> R^1 \Gamma_V R \cHom \lb \scF^1_\bQ, \bR \rb @<<< R^1 \Gamma_V \cHom \lb \scF^1_\bQ, \bR \rb\\
@|                     @V\cong VV  @V\cong VV \\
\Gamma_B \delta_\ast \cHom \lb \scF^1_\bQ, \scF^1_\bQ \rb  @. R^1 \Gamma_B R \delta_\ast R \cHom \lb \scF^1_\bQ, \bR \rb @. R^1 \Gamma_B \delta_\ast \cHom \lb \scF^1_\bQ, \bR \rb \\
@VVV                    @VVV  @VVV \\
\Gamma_B \cHom \lb \delta_\ast \scF^1_\bQ, \delta_\ast \scF^1_\bQ \rb  @. R^1 \Gamma_B R \cHom \lb \delta_\ast \scF^1_\bQ, R \delta_\ast \bR \rb @. R^1 \Gamma_B \cHom \lb \delta_\ast \scF^1_\bQ, \bR \rb \\
@V\cong VV                    @V\cong VV  @V\cong VV \\
\Hom \lb \iota_\ast \check{\Lambda} \otimes_\bZ \bQ, \iota_\ast \check{\Lambda} \otimes_\bZ \bQ \rb  @>\partial>>  R^1 \Gamma_B R \cHom \lb \iota_\ast \check{\Lambda} \otimes_\bZ \bQ, \bR \rb @<<< R^1 \Gamma_B \cHom \lb \iota_\ast \check{\Lambda} \otimes_\bZ \bQ, \bR \rb\\
\end{CD}
\end{align}
Here the maps $\partial$ are the connecting homomorphisms of the long exact sequences arising from $\Hom \lb \scF_\bQ^1, \bullet \rb$ and $\Hom \lb \iota_\ast \check{\Lambda} \otimes_\bZ \bQ, \bullet \rb$.
It is obvious that the right square commutes.
When $\delta$ is good, it turns out by \pref{th:local-cont} that for an injective resolution $0 \to I^\bullet \to J^\bullet \to K^\bullet \to 0$ of \eqref{eq:q-tensor2}, the pushforward $0 \to \delta_\ast I^\bullet \to \delta_\ast J^\bullet \to \delta_\ast K^\bullet \to 0$ becomes an injective resolution of \eqref{eq:q-tensor1}, and it is straightforward to check that the left square in the above diagram also commutes.
The composition of the rightmost vertical maps corresponds to the map \eqref{eq:natu} with $p=q=1$ and $Q=\bR$.
The composition of the leftmost vertical maps sends $\id \in \Hom \lb \scF^1_\bQ, \scF^1_\bQ \rb$ to $\id \in \Hom \lb \iota_\ast \check{\Lambda} \otimes_\bZ \bQ, \iota_\ast \check{\Lambda} \otimes_\bZ \bQ \rb$.
Since the extension classes of \eqref{eq:q-tensor1} and \eqref{eq:q-tensor2} are the images of these $\id$ by the horizontal maps, the map \eqref{eq:natu} with $p=q=1$ and $Q=\bR$ sends the eigenwave of $V$ to the radiance obstruction of $B$.
\end{proof}

\section{Technical lemmas}\label{sc:lem}

\subsection{Lemmas on the map $\phi_{\Deltav', \Deltav}$}

Let $\Deltav, \Deltav' \subset N_\bR$ be rational polytopes such that the normal fan $\Sigmav' \subset M_\bR$ of $\Deltav'$ is a refinement of the normal fan $\Sigmav \subset M_\bR$ of $\Deltav$.
Consider the map $\phi_{\Deltav', \Deltav} \colon \scrP_{\Deltav'} \to \scrP_{\Deltav}$ of \eqref{eq:phi}, where $ \scrP_{\Deltav},  \scrP_{\Deltav'}$ are the sets of faces of $\Deltav, \Deltav'$.
 
\begin{lemma}\label{lm:TT}
The following hold:
\begin{enumerate}
\item For $\sigma \in \scrP_{\Deltav}$, one has
\begin{align}\label{eq:TM}
T \lb \sigma \rb = \lb \overline{\scM(\sigma, \Deltav)} \rb^\perp,
\end{align}
where $T (\sigma)$ and $\scM(\sigma, \Deltav)$ are the ones defined in \eqref{eq:T} and \eqref{eq:scM} respectively.
\item 
For $\sigma \in \scrP_{\Deltav'}$, one has 
\begin{align}\label{eq:TT}
T \lb \phi_{\Deltav', \Deltav} (\sigma) \rb \subset T (\sigma).
\end{align}
In particular, we have $T \lb \Deltav \rb \subset T \lb \Deltav' \rb$.
\end{enumerate}
\end{lemma}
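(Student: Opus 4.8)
The plan is to establish part~1, i.e.\ \eqref{eq:TM}, as a purely convex-geometric statement about normal cones, and then to deduce part~2, i.e.\ \eqref{eq:TT}, formally from it using the factorization $\phi_{\Deltav', \Deltav} = \delta_\Deltav^{-1} \circ \iota \circ \delta_{\Deltav'}$ and the fact that $(\,\bullet\,)^\perp$ reverses inclusions. For the inclusion $T(\sigma) \subseteq \lb \overline{\scM(\sigma, \Deltav)} \rb^\perp$ in \eqref{eq:TM}, note that if $m \in \overline{\scM(\sigma, \Deltav)}$ then $-\la m, n \ra = \check{h}_\Deltav(m)$ for every $n \in \sigma$ by \eqref{eq:n-cone}, so $\la m, \bullet \ra$ is constant on $\sigma$; hence $\la m, n_2 - n_1 \ra = 0$ for all $n_1, n_2 \in \sigma$, so $m$ annihilates $T(\sigma)$. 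For the reverse inclusion I would fix $n_0 \in \lb \overline{\scM(\sigma, \Deltav)} \rb^\perp$ and a point $n_1 \in \Int(\sigma)$ and show $n_1 \pm \varepsilon n_0 \in \sigma$ for all small $\varepsilon > 0$, which forces $n_0 \in T(\sigma)$. The relevant observations are: $\overline{\scM(\sigma, \Deltav)}$ contains $T(\Deltav)^\perp$ (by \eqref{eq:n-cone}, since a function that is constant on $\Deltav$ minimizes on $\sigma$), so $n_0 \in T(\Deltav)$ and thus $n_1 \pm \varepsilon n_0 \in \aff(\Deltav)$; each defining inequality of $\Deltav$ that is active at $n_1 \in \Int(\sigma)$ is active on all of $\sigma$, so its inner normal $m_i$ satisfies $\scN(m_i, \Deltav) \supseteq \sigma$ and hence $m_i \in \overline{\scM(\sigma, \Deltav)}$ by \eqref{eq:n-cone}, whence $\la m_i, n_0 \ra = 0$ and the inequality remains satisfied after perturbation; and, picking $m_0 \in \scM(\sigma, \Deltav)$, which also lies in $\overline{\scM(\sigma, \Deltav)}$ so that $\la m_0, n_0 \ra = 0$, the description \eqref{eq:ss} of $\sigma$ shows $n_1 \pm \varepsilon n_0$ actually lies in $\sigma$.

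For part~2, I would write $\tau := \phi_{\Deltav', \Deltav}(\sigma)$. By the definition of $\phi_{\Deltav', \Deltav}$ and the bijectivity of $\delta_\Deltav$ (see \eqref{eq:poly-fan} and the diagram \eqref{eq:face-diag}) we have $\delta_\Deltav(\tau) = \iota \lb \delta_{\Deltav'}(\sigma) \rb$; that is, $\overline{\scM(\tau, \Deltav)}$ is the minimal cone of $\Sigmav$ containing $\overline{\scM(\sigma, \Deltav')}$, so in particular $\overline{\scM(\sigma, \Deltav')} \subseteq \overline{\scM(\tau, \Deltav)}$. Taking orthogonal complements and applying \eqref{eq:TM} once to $\tau \prec \Deltav$ and once to $\sigma \prec \Deltav'$ gives $T(\tau) = \lb \overline{\scM(\tau, \Deltav)} \rb^\perp \subseteq \lb \overline{\scM(\sigma, \Deltav')} \rb^\perp = T(\sigma)$, which is \eqref{eq:TT}. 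For the final assertion $T(\Deltav) \subseteq T(\Deltav')$ I would specialize to $\sigma = \Deltav'$: by \eqref{eq:n-cone}, $\delta_{\Deltav'}(\Deltav') = \overline{\scM(\Deltav', \Deltav')} = T(\Deltav')^\perp$ is the lineality space, hence the minimal cone, of $\Sigmav'$; since $\Sigmav'$ refines $\Sigmav$, the minimal cone $T(\Deltav)^\perp \in \Sigmav$ is a union of cones of $\Sigmav'$, each of which contains $T(\Deltav')^\perp$, so $T(\Deltav')^\perp \subseteq T(\Deltav)^\perp$ and therefore $\iota \lb T(\Deltav')^\perp \rb = T(\Deltav)^\perp$, i.e.\ $\phi_{\Deltav', \Deltav}(\Deltav') = \Deltav$; now \eqref{eq:TT} yields $T(\Deltav) \subseteq T(\Deltav')$.

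The only genuinely non-formal step is the reverse inclusion in \eqref{eq:TM}, equivalently the statement that the normal cone $\overline{\scM(\sigma, \Deltav)}$ linearly spans all of $T(\sigma)^\perp$ rather than a proper subspace; this is where the $\varepsilon$-perturbation argument above (or, alternatively, the dimension count $\dim \overline{\scM(\sigma, \Deltav)} + \dim \sigma = \dim N_\bR$) is needed, and it could equally well be quoted from a standard reference on polytopes such as \cite{MR1311028}. Everything afterwards --- the deduction of \eqref{eq:TT} and of $T(\Deltav) \subseteq T(\Deltav')$ --- is bookkeeping with $(\,\bullet\,)^\perp$ and with the combinatorics of the refinement map $\iota \colon \Sigmav' \to \Sigmav$, so I expect no further difficulty there.
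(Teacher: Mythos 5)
Your proof is correct and follows essentially the same route as the paper: the paper also notes that $T(\sigma) \subseteq \lb \overline{\scM(\sigma, \Deltav)} \rb^\perp$ is immediate and closes part~1 by the dimension count $\dim \overline{\scM(\sigma,\Deltav)} + \dim\sigma = \dim N_\bR$ (the alternative you mention, in place of your $\varepsilon$-perturbation), and deduces part~2 exactly as you do, from $\overline{\scM(\sigma,\Deltav')} \subset \overline{\scM(\phi_{\Deltav',\Deltav}(\sigma),\Deltav)}$ by taking annihilators and then substituting $\sigma = \Deltav'$. Your verification that $\phi_{\Deltav',\Deltav}(\Deltav')=\Deltav$ is a detail the paper leaves implicit but is correct.
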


\begin{proof}
It is obvious that the left hand side of \eqref{eq:TM} is contained in the right hand side.
Since the dimensions of both sides are $\dim \sigma$, we get \eqref{eq:TM}.

Concerning \eqref{eq:TT}, one has $\overline{\scM \lb \sigma, \Deltav' \rb} \subset \overline{\scM \lb \phi_{\Deltav', \Deltav} (\sigma), \Deltav \rb}$.
By taking the annihilators of these and using \eqref{eq:TM} for $\sigma$ and $\phi_{\Deltav', \Deltav} (\sigma)$, we obtain \eqref{eq:TT}.
By substituting $\Deltav'$ to $\sigma$ in \eqref{eq:TT}, we also get  $T \lb \Deltav \rb \subset T \lb \Deltav' \rb$.
\end{proof}

For a face $\sigma \prec \Deltav'$, we set $\tilde{\sigma}:=\phi_{\Deltav', \Deltav} (\sigma) \prec \Deltav$.
\begin{lemma}\label{lm:face-phi}
The normal fan $\Sigmav_\sigma$ of $\sigma$ is a refinement of the normal fan $\Sigmav_{\tilde{\sigma}}$ of $\tilde{\sigma}$, and one can consider the map $\phi_{\sigma, \tilde{\sigma}} \colon \scrP_{\sigma} \to \scrP_{\tilde{\sigma}}$ of \eqref{eq:phi} for $\sigma, \tilde{\sigma}$.
For any face $\tau \prec \sigma$, one has
$
\phi_{\sigma, \tilde{\sigma}} \lb \tau \rb=\phi_{\Deltav', \Deltav} \lb \tau \rb.
$
\end{lemma}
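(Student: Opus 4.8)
The plan is to prove the statement by unwinding the definitions of the normal-fan refinement maps and the bijections $\delta_{\Deltav}$ of \eqref{eq:poly-fan}, reducing everything to the elementary fact that the cone description of faces via $\scM(\cdot, \cdot)$ is compatible with passing to faces. First I would establish the preliminary refinement claim: since $\Sigmav_\sigma$ is the normal fan of $\sigma$, its maximal cones are the $\overline{\scM(\tau, \sigma)}$ for vertices $\tau \prec \sigma$, and similarly for $\Sigmav_{\tilde\sigma}$. To see that $\Sigmav_\sigma$ refines $\Sigmav_{\tilde\sigma}$, one observes that $\sigma$ is a face of $\Deltav'$ and $\tilde\sigma = \phi_{\Deltav', \Deltav}(\sigma)$ is the corresponding face of $\Deltav$, so $T(\sigma) \supset T(\tilde\sigma)$ by \pref{lm:TT}.2, and the normal fan of a face of a polytope $P$ (living in the quotient $M_\bR / T(P)^{\ast}$-type space, or rather described by \cite[Lemma 7.11]{MR1311028}) is the image/restriction of the normal fan of $P$. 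Since $\Sigmav'$ refines $\Sigmav$, the induced fans on the faces inherit this refinement; I would cite the cited lemma from \cite{MR1311028} rather than reprove it.

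Next I would identify the map $\phi_{\sigma, \tilde\sigma}$ concretely. By definition \eqref{eq:phi}, $\phi_{\sigma, \tilde\sigma} = \delta_{\tilde\sigma}^{-1} \circ \iota_{\sigma} \circ \delta_{\sigma}$, where $\iota_\sigma \colon \Sigmav_\sigma \to \Sigmav_{\tilde\sigma}$ sends each cone to the minimal cone of $\Sigmav_{\tilde\sigma}$ containing it. Using the characterization \eqref{eq:ss2}, for $\tau \prec \sigma$ we can write $\phi_{\sigma, \tilde\sigma}(\tau) = \{ n \in \tilde\sigma \mid -\langle m_0, n\rangle = \check h_{\tilde\sigma}(m_0)\}$ for any $m_0 \in \scM(\tau, \sigma)$, and in fact $\scM(\tau, \sigma) \subset \scM(\phi_{\sigma,\tilde\sigma}(\tau), \tilde\sigma)$. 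The strategy is then to compare this with $\phi_{\Deltav', \Deltav}(\tau)$: for $\tau \prec \sigma \prec \Deltav'$ we likewise have $\phi_{\Deltav', \Deltav}(\tau) = \{ n \in \Deltav \mid -\langle m_1, n\rangle = \check h_{\Deltav}(m_1)\}$ with $m_1 \in \scM(\tau, \Deltav')$. The key point is that a supporting functional that picks out $\tau$ inside the small polytope $\sigma$ can be modified (by adding an element of $\overline{\scM(\sigma, \Deltav')}$, which vanishes identically on $T(\sigma) \supset T(\tau)$) to one that picks out $\sigma$ inside $\Deltav'$ along $\tau$, i.e. one in $\scM(\tau, \Deltav')$; conversely restriction of a functional from $\Deltav'$ to $\sigma$ sends $\scM(\tau,\Deltav')$-type elements to $\scM(\tau,\sigma)$-type elements. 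This lets me show both containments $\phi_{\sigma, \tilde\sigma}(\tau) \subset \phi_{\Deltav', \Deltav}(\tau)$ and $\supset$, and since both are faces of $\Deltav$ that agree as sets, they are equal.

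Concretely, for the containment I would argue: pick $m \in \scM(\tau, \Deltav')$; then $m \in C^\vee$-type cone data shows $-\langle m, \cdot\rangle$ attains $\check h_{\Deltav'}$ exactly along $\tau$, hence in particular along $\sigma \succ \tau$ it is constant, so the restriction of $m$ to $\sigma$ lies in $\scM(\tau, \sigma)$ (after possibly subtracting the constant value, which does not change the face picked out). Thus $\phi_{\sigma,\tilde\sigma}(\tau)$, computed using this restricted $m$, equals the face of $\tilde\sigma$ along which $-\langle m, \cdot \rangle$ is minimized; but $\tilde\sigma = \phi_{\Deltav',\Deltav}(\sigma)$ sits inside $\Deltav$ in such a way that $m \in \scM(\phi_{\Deltav',\Deltav}(\tau), \Deltav)$ as well (by \eqref{eq:ss2} applied to the pair $\Deltav', \Deltav$), giving $\phi_{\sigma, \tilde\sigma}(\tau) = \phi_{\Deltav',\Deltav}(\tau)$ directly. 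The main obstacle I anticipate is bookkeeping the two different ``ambient'' spaces $\Deltav'$ versus $\sigma$ and $\Deltav$ versus $\tilde\sigma$ — making sure the normal-fan restriction maps, the bijections $\delta$, and the supporting functionals are all consistently transported, and in particular that the minimal-cone-containing operation $\iota$ behaves functorially under passing to faces. Once the commutative-diagram compatibility \eqref{eq:face-diag} is invoked for both pairs and the face-restriction lemma from \cite{MR1311028} is in hand, the identity should fall out formally; I do not expect any genuinely hard estimate, only careful diagram chasing.
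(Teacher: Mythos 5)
Your proposal is correct and follows essentially the same route as the paper: the refinement claim is checked by identifying the normal fan of a face with the star of the corresponding cone and using that $\delta_{\Deltav}(\tilde\sigma)$ is the \emph{minimal} cone of $\Sigmav$ containing $\delta_{\Deltav'}(\sigma)$, and the identity $\phi_{\sigma,\tilde\sigma}(\tau)=\phi_{\Deltav',\Deltav}(\tau)$ is obtained by choosing $m_0\in\scM(\tau,\Deltav')\subset\scM(\tau,\sigma)$ and applying \eqref{eq:ss2} to both pairs, noting $\phi_{\Deltav',\Deltav}(\tau)\prec\tilde\sigma$. (Minor slip: $-\la m_0,\cdot\ra$ is not constant on $\sigma$ but attains its maximum on $\sigma$ exactly along $\tau$, so $m_0$ itself lies in $\scM(\tau,\sigma)$ with no modification needed.)
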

\begin{proof}
First, we check that the fan $\Sigmav_\sigma$ is a refinement of the fan $\Sigmav_{\tilde{\sigma}}$.
One has
\begin{align}
\Sigmav_\sigma&=\lc C+ \vspan \lb \delta_{\Deltav'} (\sigma) \rb \relmid C \in \Sigmav', C \succ \delta_{\Deltav'} (\sigma)
\rc \\
\Sigmav_{\tilde{\sigma}}&=
\lc C'+ \vspan \lb \delta_{\Deltav} (\tilde{\sigma}) \rb \relmid C' \in \Sigmav, C' \succ \delta_{\Deltav} (\tilde{\sigma})
\rc,
\end{align}
where $\delta_{\Deltav'}, \delta_{\Deltav}$ are the maps of \eqref{eq:poly-fan} for $\Deltav', \Deltav$.
Let $C \in \Sigmav'$ be an arbitrary cone such that $C \succ \delta_{\Deltav'} (\sigma)$.
Take the minimal cone $C' \in \Sigmav$ containing $C$.
Since $C' \supset C \succ \delta_{\Deltav'} (\sigma)$ and $\delta_{\Deltav} (\tilde{\sigma})$ is the minimal cone in $\Sigmav$ containing $\delta_{\Deltav'} (\sigma)$, one has $C' \succ \delta_{\Deltav} (\tilde{\sigma})
$.
Hence, $C'+ \vspan \lb \delta_{\Deltav} (\tilde{\sigma}) \rb \in \Sigmav_{\tilde{\sigma}}$.
Furthermore, one also has $C+ \vspan \lb \delta_{\Deltav'} (\sigma) \rb \subset C'+ \vspan \lb \delta_{\Deltav} (\tilde{\sigma}) \rb$ since $C \subset C'$ and $\delta_{\Deltav'} (\sigma) \subset \delta_{\Deltav} (\tilde{\sigma})$.
Hence, the fan $\Sigmav_\sigma$ is a refinement of the fan $\Sigmav_{\tilde{\sigma}}$.

The latter statement can be checked as follows:
For a face $\tau \prec \sigma$, take a point $m_0 \in \scM(\tau, \Deltav')$.
Since we have \eqref{eq:ss2} and $\phi_{\Deltav', \Deltav} \lb \tau \rb \prec \phi_{\Deltav', \Deltav} \lb \sigma \rb=:\tilde{\sigma}$, one can get
\begin{align}
\phi_{\Deltav', \Deltav} \lb \tau \rb&=\lc n \in \Deltav \relmid -\la m_0, n \ra=-\inf_{n' \in \Deltav} \la m_0, n' \ra \rc \\
&=\lc n \in \tilde{\sigma} \relmid -\la m_0, n \ra=-\inf_{n' \in \tilde{\sigma}} \la m_0, n' \ra \rc.
\end{align}
Since $m_0 \in \scM(\tau, \Deltav') \subset \scM(\tau, \sigma)$, we can see again from \eqref{eq:ss2} that this is equal to $\phi_{\sigma, \tilde{\sigma}} \lb \tau \rb$.
\end{proof}

Consider the Minkowski sum $\Deltav_+:=\sum_{i \in I} \Deltav_i$.
Then the normal fan of $\Deltav_+$ is a subdivision of the normal fan $\Sigmav_i$ of $\Deltav_i$ for all $i \in I$.
We also consider the map $\phi_{\Deltav_+, \Deltav_i} \colon \scrP_{\Deltav_+} \to \scrP_{\Deltav_i}$ of \eqref{eq:phi} for $\Deltav_+, \Deltav_i$.
\begin{lemma}{\rm(cf.~e.g.~\cite[Theorem 3.1.2]{Wei07})}\label{lm:minksum}
For any face $\tau \in \scrP_{\Deltav_+}$ of $\Deltav_+$, one has 
\begin{align}
\tau=\sum_{i \in I} \phi_{\Deltav_+, \Deltav_i} (\tau).
\end{align}
\end{lemma}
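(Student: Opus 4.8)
The plan is to reduce the statement to the defining property of the map $\phi_{\Deltav_+, \Deltav_i}$ in terms of normal fans, combined with the additivity of support functions under Minkowski sums. First I would fix a face $\tau \in \scrP_{\Deltav_+}$ and choose an element $m_0 \in \scM(\tau, \Deltav_+)$; such an element exists by the discussion following \eqref{eq:scM}, and by definition of $\scM$ we then have $\tau = \scN(m_0, \Deltav_+)$, i.e.\ $\tau$ is the face of $\Deltav_+$ on which the linear functional $\la m_0, \bullet \ra$ attains its minimum $-\check{h}_{\Deltav_+}(m_0)$. Since the normal fan of $\Deltav_+$ refines each $\Sigmav_i$ (cf.~\cite[Proposition 7.12]{MR1311028}), the cone $\overline{\scM(\tau, \Deltav_+)}$ is contained in a cone of $\Sigmav_i$, so by \eqref{eq:ss2} applied with $\Deltav' = \Deltav_+$ and $\Deltav = \Deltav_i$ the same vector $m_0$ computes $\phi_{\Deltav_+, \Deltav_i}(\tau) = \scN(m_0, \Deltav_i)$ for every $i \in I$.

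Next I would prove both inclusions by decomposing points along the Minkowski sum. Iterating \eqref{eq:supp2} gives $\check{h}_{\Deltav_+} = \sum_{i \in I} \check{h}_{\Deltav_i}$, hence $\inf_{n \in \Deltav_+} \la m_0, n \ra = \sum_{i \in I} \inf_{n_i \in \Deltav_i} \la m_0, n_i \ra$. If $n \in \tau$, write $n = \sum_{i \in I} n_i$ with $n_i \in \Deltav_i$; then $\sum_i \la m_0, n_i \ra = \la m_0, n \ra = \sum_i \inf_{n_i' \in \Deltav_i} \la m_0, n_i' \ra$ while each term satisfies $\la m_0, n_i \ra \geq \inf_{n_i' \in \Deltav_i} \la m_0, n_i' \ra$, so every inequality is an equality and $n_i \in \scN(m_0, \Deltav_i) = \phi_{\Deltav_+, \Deltav_i}(\tau)$; thus $\tau \subseteq \sum_{i \in I} \phi_{\Deltav_+, \Deltav_i}(\tau)$. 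Conversely, if $n = \sum_{i \in I} n_i$ with $n_i \in \scN(m_0, \Deltav_i)$, then $n \in \Deltav_+$ and $\la m_0, n \ra = \sum_i \inf_{n_i' \in \Deltav_i} \la m_0, n_i' \ra = \inf_{n' \in \Deltav_+} \la m_0, n' \ra$, so $n \in \scN(m_0, \Deltav_+) = \tau$; thus $\sum_{i \in I} \phi_{\Deltav_+, \Deltav_i}(\tau) \subseteq \tau$. Combining the two inclusions gives $\tau = \sum_{i \in I} \phi_{\Deltav_+, \Deltav_i}(\tau)$.

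There is no serious conceptual obstacle here — this is the classical fact that every face of a Minkowski sum of polytopes is the Minkowski sum of the faces of the summands cut out by a common outer normal, so one could simply cite \cite[Theorem 3.1.2]{Wei07}. The only points requiring care are bookkeeping the sign conventions linking $\check{h}_\Deltav$, the infimum of $\la m, \bullet \ra$, and the sets $\scN(\bullet, \Deltav)$, $\scM(\bullet, \Deltav)$, and checking that the chosen $m_0 \in \scM(\tau, \Deltav_+)$ is legitimately usable in \eqref{eq:ss2} for each summand $\Deltav_i$, which is exactly the refinement property of the normal fan of a Minkowski sum. Including the short argument above keeps the section self-contained.
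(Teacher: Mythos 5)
Your proof is correct and follows essentially the same route as the paper: pick $m_0 \in \scM(\tau, \Deltav_+)$, use \eqref{eq:ss} and \eqref{eq:ss2} to identify $\tau$ and each $\phi_{\Deltav_+, \Deltav_i}(\tau)$ as the faces where $\la m_0, \bullet \ra$ attains its minimum, and then invoke the Minkowski decomposition of the minimizing face. The only difference is that the paper simply cites \cite[Theorem 3.1.2]{Wei07} for the middle step $\scN(m_0, \Deltav_+) = \sum_i \scN(m_0, \Deltav_i)$, whereas you supply the short self-contained argument via additivity of support functions.
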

\begin{proof}
Take a point $m_0 \in \scM(\tau, \Deltav_+)$.
Then we have
\begin{align}\label{eq:ss'}
\tau&=\lc n \in \Deltav_+ \relmid -\la m_0, n \ra=-\inf_{n' \in \Deltav_+} \la m_0, n' \ra \rc \\ \label{eq:mink-decomp}
&=\sum_{i \in I} \lc n \in \Deltav_i \relmid -\la m_0, n \ra=-\inf_{n' \in \Deltav_i} \la m_0, n' \ra \rc \\ \label{eq:ss2'}
&=\sum_{i \in I} \phi_{\Deltav_+, \Deltav_i} (\tau).
\end{align}
The equalities \eqref{eq:ss'} and \eqref{eq:ss2'} are due to \eqref{eq:ss} and \eqref{eq:ss2} respectively.
For the equality \eqref{eq:mink-decomp}, see, for instance, \cite[Theorem 3.1.2]{Wei07} and its proof.
\end{proof}

\subsection{A lemma on the set $W_\tau$}

Let $\Deltav$ be a rational polytope in $N_\bR$, and $\Deltav_0$ be its face.
Let further $I$ be a finite set, and $\lc \Deltav_i \rc_{i \in I}$ be lattice polytopes in $N_\bR$.
We suppose that the normal fan $\Sigmav_0$ of $\Deltav_0$ is a subdivision of the normal fan $\Sigmav_i$ of $\Deltav_i$ for all $i \in I$.
We consider the map $\phi_{\Deltav_0, \Deltav_i} \colon \scrP_{\Deltav_0} \to \scrP_{\Deltav_i}$ of \eqref{eq:phi} for $\Deltav_0, \Deltav_i$.
For each face $\tau \in \scrP_{\Deltav}$ of $\Deltav$, we choose a point $a_\tau \in \rint(\tau)$.
For a face $\tau \in \scrP_{\Deltav_0}$ of $\Deltav_0$ and a face $F \in \scrP_{\Deltav}$ of $\Deltav$ such that $F \succ \Deltav_0$, we define the subset $W_\tau \lb F \rb \subset F$ by
\begin{align}\label{eq:WtF}
W_\tau(F):=\bigcup_{\substack{\tau \prec \tau_1 \prec \cdots \prec \tau_l \prec F, \\ l \geq 0}} \conv \lb \lc a_\tau, a_{\tau_1}, \cdots, a_{\tau_l} \rc \rb.
\end{align}
Let further $t \in \bR$ be a positive number, and consider the family of subsets
\begin{align}
\lc \widetilde{W}_\tau^t (F):=t \cdot W_\tau(F) + \sum_{i \in I} \phi_{\Deltav_0, \Deltav_i} \lb \tau \rb \rc_{\tau \in \scrP_{\Deltav_0}}.
\end{align}

\begin{lemma}\label{lm:tech}
For any positive number $t \in \bR$, the following hold:
\begin{enumerate}
\item  For $\tau_1, \tau_2 \in \scrP_{\Deltav_0}$, we have $\bigcap_{j=1, 2} \widetilde{W}_{\tau_j}^{t}(F) \neq \emptyset$ if and only if $\bigcap_{j=1, 2} \lb \sum_{i \in I} \phi_{\Deltav_0, \Deltav_i} \lb \tau_j \rb \rb \neq \emptyset$.
Furthermore, when $\bigcap_{j=1, 2} \lb \sum_{i \in I} \phi_{\Deltav_0, \Deltav_i} \lb \tau_j \rb \rb \neq \emptyset$, one has 
\begin{align}\label{eq:tech1}
\bigcap_{j=1, 2} \widetilde{W}_{\tau_j}^{t} (F)=
t \cdot \lb \bigcap_{j=1, 2} W_{\tau_j}(F) \rb + \bigcap_{j=1, 2} \lb \sum_{i \in I} \phi_{\Deltav_0, \Deltav_i} \lb \tau_j \rb \rb.
\end{align}
\item When $\Deltav_0=\Deltav$, one has
\begin{align}\label{eq:tech2}
\bigcup_{\tau \in \scrP_{\Deltav_0}} \widetilde{W}_{\tau}^t (\Deltav_0)=
t \cdot \Deltav_0 + \sum_{i\in I} \Deltav_i.
\end{align}
\end{enumerate}
\end{lemma}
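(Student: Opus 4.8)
The plan is to prove both statements by relating everything to the map $\phi_{\Deltav_0, \Deltav_i}$ and the combinatorial structure of the subdivision $\widetilde{\scrP}$, using the technical lemmas in \pref{sc:lem} on the map $\phi_{\Deltav', \Deltav}$. Since replacing each $\Deltav_i$ by $t^{-1} \cdot \Deltav_i$ (which does not change its normal fan) and the point $a_\tau$ by $t^{-1} a_\tau$ rescales $\widetilde{W}_\tau^t(F)$ by $t$, it suffices to treat the case $t = 1$; I would record this reduction at the start. For the remainder write $W_\tau := W_\tau(F)$, $\Phi_\tau := \sum_{i \in I} \phi_{\Deltav_0, \Deltav_i}(\tau)$, so $\widetilde{W}_\tau^1(F) = W_\tau + \Phi_\tau$.

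For part 1, the key point is that $W_\tau \subset \Deltav_0$ always contains $a_\tau \in \Int(\tau)$ and sits ``above'' $\tau$ in the subdivision, while by \pref{lm:TT}.2 the tangent directions of $\Phi_\tau = \sum_i \phi_{\Deltav_0, \Deltav_i}(\tau)$ lie in $T(\Phi_{\Deltav_0}) \subset T(\Deltav_0)$ but more importantly the affine span of $\Phi_\tau$ is transversal to the ``vertical'' directions used to build $W_\tau$. First I would show the easy direction: if $n_1 + p_1 = n_2 + p_2$ with $n_j \in W_{\tau_j}$, $p_j \in \Phi_{\tau_j}$, then projecting along a suitable linear functional (one that is constant on $\Deltav_0$ and separates the relevant faces — concretely, use that $W_{\tau_j}$ and $\Phi_{\tau_j}$ sit in complementary affine subspaces after translation, coming from the Minkowski-sum decomposition \pref{lm:minksum} applied to $\Deltav_+ := \sum_i \Deltav_i$ and the fact that $\Deltav_0$ is disjoint in tangent directions from the $\phi$-images up to the ambient structure) forces $n_1 = n_2$ and $p_1 = p_2$. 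This gives that the intersection $\bigcap_j \widetilde{W}_{\tau_j}^1(F)$ is non-empty iff both $\bigcap_j W_{\tau_j} \neq \emptyset$ and $\bigcap_j \Phi_{\tau_j} \neq \emptyset$, and then equals the Minkowski sum of the two. But $\bigcap_j W_{\tau_j}$ is always non-empty: it equals $W_{\tau_0}$ where $\tau_0 \prec \Deltav_0$ is the minimal face containing both $\tau_1$ and $\tau_2$ when such a face exists, and one checks directly from \pref{eq:WtF} that $W_{\tau_1} \cap W_{\tau_2} = W_{\tau_1 \cap \tau_2}$ is non-empty (note $\tau_1 \cap \tau_2 \in \scrP_{\Deltav_0}$ by \pref{cd:complex}), so the non-emptiness of the full intersection is governed solely by $\bigcap_j \Phi_{\tau_j}$. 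This yields \eqref{eq:tech1}.

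For part 2, with $\Deltav_0 = \Deltav$, the left side is $\bigcup_{\tau \in \scrP_{\Deltav_0}} (W_\tau + \Phi_\tau)$. The inclusion $\subset$ is immediate once one knows $W_\tau \subset \Deltav_0$ and $\Phi_\tau = \sum_i \phi_{\Deltav_0, \Deltav_i}(\tau) \subset \sum_i \Deltav_i$. For $\supset$, take $x \in \Deltav_0 + \sum_i \Deltav_i$ and write $x = n + \sum_i n_i$ with $n \in \Deltav_0$, $n_i \in \Deltav_i$. First, since $\{W_\tau\}_{\tau \in \scrP_{\Deltav_0}}$ is precisely the set of maximal cells of the barycentric-type subdivision $\widetilde{\scrP}$ of $\Deltav_0$ restricted to $F = \Deltav_0$ (each point of $\Deltav_0$ lies in $W_\tau$ for $\tau$ the minimal face $\tau_0$ of the chain it belongs to), there is $\tau$ with $n \in W_\tau$. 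The remaining task is to show that $\sum_i n_i$ can be taken in $\Phi_\tau = \sum_i \phi_{\Deltav_0, \Deltav_i}(\tau)$ after possibly enlarging/adjusting $\tau$ along the chain; here I would use that $\phi_{\Deltav_0, \Deltav_i}(\tau)$ is the face of $\Deltav_i$ "facing the same way" as $\tau$ faces in $\Deltav_0$ (via \pref{eq:ss2}), combined with the compatibility of the chain structure: if $n$ lies in the relative interior of a cell $\conv(a_{\tau}, a_{\tau_1}, \dots, a_{\tau_l})$ of $\widetilde{\scrP}$ with $\tau \prec \tau_1 \prec \cdots \prec \tau_l$, then each $n_i$ can be chosen in $\phi_{\Deltav_0, \Deltav_i}(\tau)$ by choosing a linear functional minimized on $\tau$ and observing it is then minimized on all $\phi_{\Deltav_0, \Deltav_i}(\tau)$ and on the corresponding faces of the $\Deltav_i$. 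I anticipate the main obstacle is exactly this last point — making the face-selection argument for part 2 fully precise — since it requires simultaneously tracking the chain $\tau \prec \tau_1 \prec \cdots$ defining the cell of $\widetilde{\scrP}$ containing $n$ and the Minkowski decomposition of $\sum_i n_i$, and showing these can be made compatible; the clean way is probably to argue cell-by-cell on $\widetilde{\scrP}$ and invoke \pref{lm:minksum} and \pref{lm:face-phi} to reduce to the single face $\tau$.
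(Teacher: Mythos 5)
Your proposal for part 1 rests on a false premise. You claim that after translation $W_{\tau_j}$ and $\Phi_{\tau_j}:=\sum_{i\in I}\phi_{\Deltav_0,\Deltav_i}(\tau_j)$ ``sit in complementary affine subspaces,'' so that a decomposition $n_j+p_j$ of a point of $\widetilde{W}_{\tau_j}^1(F)$ is unique and the intersection of the two Minkowski sums splits as the Minkowski sum of the intersections. But the hypotheses force exactly the opposite: the normal fan of $\Deltav_0$ refines that of each $\Deltav_i$, so by \pref{lm:TT}.2 one has $T(\Deltav_i)\subset T(\Deltav_0)$, and the summands $W_\tau$ and $\Phi_\tau$ occupy overlapping (often identical) tangent directions. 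In the example following the lemma, $\Deltav_0$ and $\Deltav_1$ are both full-dimensional in $\bR^2$, and a point of $W_\tau+\Phi_\tau$ has many decompositions; no linear projection separates the factors. Establishing that $\bigcap_j\widetilde{W}_{\tau_j}^{t}(F)$ nevertheless equals $t\cdot\bigl(\bigcap_j W_{\tau_j}(F)\bigr)+\bigcap_j\Phi_{\tau_j}$ is precisely the hard content of the lemma, and your argument does not supply it. (A smaller error: $W_{\tau_1}\cap W_{\tau_2}$ is $W_{\tau_0}$ for $\tau_0$ the \emph{minimal common coface} of $\tau_1,\tau_2$, not $W_{\tau_1\cap\tau_2}$.) The paper's proof instead establishes, after translating so that $a_{\Deltav_0}=0\in\Int(\Deltav_0)$, a disjoint ``radial'' decomposition
\begin{align}
t \cdot \Deltav_0 + \sum_{i\in I} \Deltav_i
=\Bigl( \sum_{i\in I} \Deltav_i \Bigr) \sqcup
\bigsqcup_{t' \in \lb 0, t\rd} \partial
\Bigl( t' \cdot \Deltav_0 + \sum_{i\in I} \Deltav_i \Bigr)
\end{align}
via support functions, decomposes each boundary layer into faces by \pref{lm:minksum}, writes $t\cdot W_\tau$ as $\bigsqcup_{t'}t'\cdot V_\tau$, and then runs an induction on $\dim\Deltav_0$, applying the inductive hypothesis to the facets $\sigma$ via \pref{lm:face-phi}. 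Nothing in your outline replaces this inductive mechanism.

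For part 2 you correctly identify the obstacle — a given decomposition $x=n+\sum_i n_i$ need not be compatible with the chain determining which $W_\tau$ contains $n$, and the correct $\tau$ may require a different decomposition — but you do not resolve it; the paper resolves it with the same radial decomposition and dimension induction. Your opening reduction to $t=1$ by rescaling the $\Deltav_i$ is harmless, but the two substantive steps of the proof are missing.
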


\begin{example}
Suppose $\dim N_\bR=2$, $I= \lc 1 \rc$ and $t=1$.
Consider the case where
\begin{align}
\Deltav_1 = \conv \lb \lc (0, 0), (0, 1), (1, 0) \rc \rb, \quad \Deltav_0 =\Deltav= \conv \lb \lc (4, -2), (-2, 4), (0, -2), (-2, 0) \rc \rb.
\end{align}
\pref{fg:W} shows the polytope $\Deltav_0$ and the subsets $W_\tau:=W_\tau(\Deltav_0), \widetilde{W}_\tau^t:=\widetilde{W}_{\tau}^{t} (\Deltav_0)$.
The faces of $\Deltav_0$ shown in the figure are
\begin{align}
v:=\lc (0, -2) \rc, \tau_1:=\conv \lb \lc (4, -2), (0, -2) \rc \rb, \tau_2:=\conv \lb \lc (0, -2), (-2, 0) \rc \rb.
\end{align}
\begin{figure}[htbp]
\begin{center}
\includegraphics[scale=0.8]{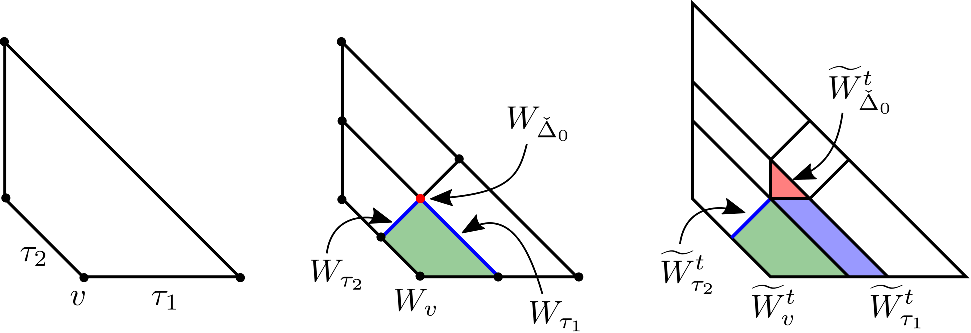}
\end{center}
\caption{The polytope $\Deltav_0$ and the subsets $W_\tau, \widetilde{W}_\tau^t$}
\label{fg:W}
\end{figure}
\end{example}

\begin{proof}[Proof of \pref{lm:tech}]
First, we consider the case where $\Deltav_0=\Deltav=F$.
We write $W_\tau(\Deltav_0)$ and $\widetilde{W}_{\tau}^{t} (\Deltav_0)$ as $W_\tau$ and $\widetilde{W}_{\tau}^{t}$ respectively for short.
We prove the first statement of the lemma by induction on the dimension of $\Deltav_0$.
When $\dim \Deltav_0=0$, all polytopes $\Deltav_0, \Deltav_i$ are a single point, 
and the statement is obvious in this case.
Assume that the statement holds when $\dim \Deltav_0=k$.
We will show that it holds also when $\dim \Deltav_0=k+1$.
Suppose $\dim \Deltav_0=k+1$ in the following.
Translating the polytope $\Deltav_0$ (resp. $\Deltav_i$ $(i \in I)$) by some vector $n \in N_\bR$ just causes the translation by the vector $t \cdot n$ (resp. $n$) to $\widetilde{W}_{\tau_j}^{t} (F)$ and the right hand side of \eqref{eq:tech1}.
Hence, we may assume that the point $a_{\Deltav_0} \in \rint \lb \Deltav_0 \rb$ that we choose for $\tau=\Deltav_0$ is $0 \in N_\bR$ and $0 \in \rint \lb \Deltav_i \rb$ for all $i \in I$ without loss of generality.
Furthermore, since we have $\Deltav_0 \subset T(\Deltav_0)$ and $\Deltav_i \subset T(\Deltav_i) \subset T(\Deltav_0)$ by \pref{lm:TT}.2, we may also assume that $\dim \Deltav_0$ coincides with the dimension of the ambient space $N_\bR$ by replacing $N_\bR$ with $T(\Deltav_0)$ if necessary.
By this procedure, the normal fans $\Sigmav_0$ and $\Sigmav_i$ $(i \in I)$ are replaced with their quotients by the subspace $\lb T(\Deltav_0) \rb^\perp \subset M_\bR$.
(Note that all cones in the normal fans $\Sigmav_0$ or $\Sigmav_i$ $(i \in I)$ contain $\lb T(\Deltav_0) \rb^\perp$.)

First, we will show
\begin{align}\label{eq:ladec0}
t \cdot \Deltav_0 + \sum_{i\in I} \Deltav_i
=\lb \sum_{i\in I} \Deltav_i \rb \sqcup 
\bigsqcup_{t' \in \lb 0, t\rd} \partial 
\lb t' \cdot \Deltav_0 + \sum_{i\in I} \Deltav_i \rb.
\end{align}
We set $\Deltav_1:=\sum_{i\in I} \Deltav_i$, and let $\check{h}_0, \check{h}_1 \colon M_\bR \to \bR$ be the support functions \eqref{eq:supp} of $\Deltav_0, \Deltav_1$.
Let further $A \subset M$ be the set of primitive generators of $1$-dimensional cones in $\Sigmav_0$.
$\dim \Deltav_0=\dim N_\bR$ implies that all cones in $\Sigmav_0$ have $\lc 0 \rc$ as their face.
Since the normal fan $\Sigmav_1$ of $\Deltav_1$ is coarser than $\Sigmav_0$ and $\check{h}_0, \check{h}_1$ are strictly convex with respect to $\Sigmav_0, \Sigmav_1$, we have
\begin{align}\label{eq:b-del-1}
\Deltav_i&= \lc n \in N_\bR \relmid \min_{m \in M_\bR} \lc \check{h}_i (m) + \la m, n \ra \rc \geq 0 \rc\\ \label{eq:del-i}
&= \lc n \in N_\bR \relmid \min_{m \in A} \lc \check{h}_i (m) + \la m, n \ra \rc \geq 0 \rc \\ \label{eq:b-del-0}
\partial \Deltav_0&= \lc n \in N_\bR \relmid \min_{m \in A} \lc \check{h}_i (m) + \la m, n \ra \rc =0 \rc,
\end{align}
where $i=0, 1$.
Here \eqref{eq:b-del-1} is due to \eqref{eq:supp2}.
Furthermore, since the normal fan of $\lb t' \cdot \Deltav_0+\Deltav_1 \rb$ with $t' \in \lb 0, t \rd$ is the common refinement of $\Sigmav_0$ and $\Sigmav_i$ $(i \in I)$, which coincides with $\Sigmav_0$, we also have
\begin{align}\label{eq:s-del}
t' \cdot \Deltav_0+\Deltav_1&= \lc n \in N_\bR \relmid \min_{m \in A} \lc \lb t' \cdot \check{h}_0+\check{h}_1 \rb (m) + \la m, n \ra \rc \geq 0 \rc\\ \label{eq:b-del}
\partial \lb t' \cdot \Deltav_0+\Deltav_1 \rb&= \lc n \in N_\bR \relmid \min_{m \in A} \lc \lb t' \cdot \check{h}_0+\check{h}_1 \rb (m) + \la m, n \ra \rc = 0 \rc.
\end{align}
\eqref{eq:s-del} is also due to \eqref{eq:supp2}.
Since the polytope $\Deltav_0$ contains $0 \in N_\bR$ in its interior, one can get
\begin{align}\label{eq:hvp}
\check{h}_0 (m) > 0,  \forall m \in A
\end{align}
by \eqref{eq:del-i} and \eqref{eq:b-del-0}.
We can see from \eqref{eq:del-i}, \eqref{eq:b-del}, and \eqref{eq:hvp} that the sets $\Deltav_1$, $\partial \lb t' \cdot \Deltav_0+\Deltav_1 \rb$, and $\partial \lb t'' \cdot \Deltav_0+\Deltav_1 \rb$ with $t' \neq t''$ are all disjoint.
Furthermore, in \eqref{eq:ladec0}, the right hand side is contained in the left hand side since $\Deltav_1=0+\Deltav_1 \subset t \cdot \Deltav_0 +\Deltav_1$ and $\partial \lb t' \cdot \Deltav_0 +\Deltav_1 \rb \subset \lb t' \cdot \Deltav_0 +\Deltav_1 \rb \subset \lb t \cdot \Deltav_0 +\Deltav_1 \rb$.
We check the opposite inclusion.
Take an element $n_0 \in \lb t \cdot \Deltav_0+\Deltav_1 \rb$.
Then we have $\min_{m \in A} \lc \lb t \cdot \check{h}_0+\check{h}_1 \rb (m) + \la m, n_0 \ra \rc \geq 0$ by \eqref{eq:s-del}.
From \eqref{eq:hvp}, we can see that we have $\min_{m \in A} \lc \check{h}_1(m) + \la m, n_0 \ra \rc \geq 0$ or there exists $t' \in \lb 0, t \rd$ such that $\min_{m \in A} \lc \lb t' \cdot \check{h}_0+\check{h}_1 \rb (m) + \la m, n_0 \ra \rc = 0$.
In the former case, we have $n_0 \in \Deltav_1$ by \eqref{eq:del-i}.
In the latter case, we have $n_0 \in \partial \lb t' \cdot \Deltav_0+\Deltav_1 \rb$ by \eqref{eq:b-del}.
Hence, the element $n_0$ is contained in the right hand side of \eqref{eq:ladec0} in either case.
We obtained \eqref{eq:ladec0}.

By using \pref{lm:minksum} to $\lb t' \cdot \Deltav_0 + \sum_{i\in I} \Deltav_i \rb$, one can get
\begin{align}\label{eq:mink-b}
\partial \lb t' \cdot \Deltav_0 + \sum_{i\in I} \Deltav_i \rb
=
\bigcup_{\substack{\sigma \prec \lb t' \cdot \Deltav_0 + \sum_{i\in I} \Deltav_i \rb \\ \dim \sigma=k}} \sigma
=
\bigcup_{\substack{\sigma \in \scrP_{\Deltav_0} \\\dim \sigma=k}} \lb t' \cdot \sigma + \sum_{i\in I} \phi_{\Deltav_0, \Deltav_i} \lb \sigma\rb \rb.
\end{align}
By combining this and \eqref{eq:ladec0}, we get 
\begin{align} \label{eq:ladec}
t \cdot \Deltav_0 + \sum_{i\in I} \Deltav_i
=\lb \sum_{i\in I} \Deltav_i \rb \sqcup 
\bigsqcup_{t' \in \lb 0, t\rd} \bigcup_{\substack{\sigma \in \scrP_{\Deltav_0}  \\\dim \sigma=k}} \lb t' \cdot \sigma + \sum_{i\in I} \phi_{\Deltav_0, \Deltav_i} \lb \sigma\rb \rb.
\end{align}
For each face $\tau \in \scrP_{\Deltav_0}$ such that $\tau \neq \Deltav_0$, we also set
\begin{align}
V_\tau:=\bigcup_{\substack{\tau \prec \tau_1 \prec \cdots \prec \tau_l \neq \Deltav_0 \\ l \geq 0, \tau_i \in \scrP_{\Deltav_0} }} \conv \lb \lc a_\tau, a_{\tau_1}, \cdots, a_{\tau_l} \rc \rb.
\end{align}
Since $\conv \lb \lc a_\tau, a_{\tau_1}, \cdots, a_{\tau_l}, a_{\Deltav_0}=0 \rc \rb = \bigsqcup_{s \in \ld 0, 1 \rd} s \cdot \conv \lb \lc a_\tau, a_{\tau_1}, \cdots, a_{\tau_l} \rc \rb$, one has
\begin{align}\label{eq:WV}
t \cdot W_\tau=\bigsqcup_{t' \in \ld 0, t\rd}  t' \cdot V_\tau.
\end{align}
From this, we get
\begin{align}\label{eq:wtt}
\widetilde{W}_\tau^{t}=t \cdot W_\tau + \sum_{i \in I} \phi_{\Deltav_0, \Deltav_i} \lb \tau \rb
=\bigsqcup_{t' \in \ld 0, t\rd}  t' \cdot V_\tau 
+ \sum_{i \in I} \phi_{\Deltav_0, \Deltav_i} \lb \tau \rb.
\end{align}
Here we have 
\begin{align}
t' \cdot V_\tau + \sum_{i \in I} \phi_{\Deltav_0, \Deltav_i} \lb \tau \rb 
\subset \bigcup_{\substack{\sigma \succ \tau, \sigma \in \scrP_{\Deltav_0} \\ \dim \sigma =k}}
\lb t' \cdot \sigma + \sum_{i \in I} \phi_{\Deltav_0, \Deltav_i} \lb \sigma \rb \rb
\subset \partial \lb t' \cdot \Deltav_0 + \sum_{i\in I} \Deltav_i \rb
\end{align}
by \eqref{eq:mink-b}.
Hence, for faces $\tau_1, \tau_2 \in \scrP_{\Deltav_0}$ and a $k$-dimensional face $\sigma \in \scrP_{\Deltav_0}$, one can see from \eqref{eq:ladec} and \eqref{eq:wtt} that we have
\begin{align}
\widetilde{W}_{\tau_j}^{t} \cap \lb \sum_{i\in I} \Deltav_i \rb
&=\lb \sum_{i \in I} \phi_{\Deltav_0, \Deltav_i} \lb \tau_j \rb \rb \\
\widetilde{W}_{\tau_j}^{t} \cap \lb t' \cdot \sigma + \sum_{i\in I} \phi_{\Deltav_0, \Deltav_i} \lb \sigma \rb \rb
&=
\lb t' \cdot V_{\tau_j} + \sum_{i \in I} \phi_{\Deltav_0, \Deltav_i} \lb \tau_j \rb \rb
\cap
\lb t' \cdot \sigma + \sum_{i\in I} \phi_{\Deltav_0, \Deltav_i} \lb \sigma \rb \rb \\
&=
\left\{
\begin{array}{ll}
t' \cdot \lb V_{\tau_j} \cap \sigma \rb+ \sum_{i \in I} \phi_{\Deltav_0, \Deltav_i} \lb \tau_j \rb & \sigma \succ \tau_j \\
\emptyset & \mathrm{otherwise},
\end{array}
\right.
\end{align}
where $j=1 ,2$ and $t' \in \lb 0, t \rd$.
Therefore, one has
\begin{align}\label{eq:0l}
\bigcap_{j=1, 2} \widetilde{W}_{\tau_j}^{t} \cap \lb \sum_{i\in I} \Deltav_i \rb
&=\bigcap_{j=1, 2} \lb \sum_{i \in I} \phi_{\Deltav_0, \Deltav_i} \lb \tau_j \rb \rb \\ \label{eq:tl}
\bigcap_{j=1, 2} \widetilde{W}_{\tau_j}^{t} \cap \lb t' \cdot \sigma + \sum_{i\in I} \phi_{\Deltav_0, \Deltav_i} \lb \sigma \rb \rb
&=
\left\{
\begin{array}{ll}
\bigcap_{j=1, 2} \lc
t' \cdot \lb V_{\tau_j} \cap \sigma \rb+ \sum_{i \in I} \phi_{\Deltav_0, \Deltav_i} \lb \tau_j \rb
\rc & \sigma \succ \tau_1, \tau_2 \\ 
\emptyset & \mathrm{otherwise}. \\
\end{array}
\right.
\end{align}
Suppose $\sigma \succ \tau_1, \tau_2$.
We use the induction hypothesis for the right hand side of \eqref{eq:tl}.
By \pref{lm:face-phi}, one can substitute $\sigma$ and $\phi_{\Deltav_0, \Deltav_i} \lb \sigma \rb$ to $\Deltav_0$ and $\Deltav_i$ in the statement of \pref{lm:tech}.1.
It turns out that the right hand side of \eqref{eq:tl} is non-empty if and only if $\bigcap_{j=1, 2} \lb \sum_{i \in I} \phi_{\Deltav_0, \Deltav_i} \lb \tau_j \rb \rb \neq \emptyset$, and when $\bigcap_{j=1, 2} \lb \sum_{i \in I} \phi_{\Deltav_0, \Deltav_i} \lb \tau_j \rb \rb \neq \emptyset$, it is equal to
\begin{align}\label{eq:tl2}
t' \cdot \lb V_{\tau_1} \cap V_{\tau_2} \cap \sigma \rb+ \bigcap_{j=1, 2} \lb \sum_{i \in I} \phi_{\Deltav_0, \Deltav_i} \lb \tau_j \rb \rb.
\end{align}
By combining this, \eqref{eq:0l}, \eqref{eq:ladec} and \eqref{eq:WV}, we obtain
\begin{align}
\bigcap_{j=1, 2} \widetilde{W}_{\tau_j}^{t} 
&=
\bigsqcup_{t' \in \ld 0, t\rd} 
\bigcup_{\substack{\sigma \in \scrP_{\Deltav_0} \\ \dim \sigma=k}}
\lc 
t' \cdot \lb V_{\tau_1} \cap V_{\tau_2} \cap \sigma \rb + 
\bigcap_{j=1, 2} \lb \sum_{i \in I} \phi_{\Deltav_0, \Deltav_i} \lb \tau_j \rb \rb
\rc \\
&=t \cdot \lb W_{\tau_1} \cap W_{\tau_2} \rb + \bigcap_{j=1, 2} \lb \sum_{i \in I} \phi_{\Deltav_0, \Deltav_i} \lb \tau_j \rb \rb.
\end{align}
We obtained \eqref{eq:tech1}.

Next, we prove the second statement.
We will do it by induction on the dimension of $\Deltav_0$ again.
This statement is also obvious when $\dim \Deltav_0=0$.
Assume that the statement holds when $\dim \Deltav_0=k$.
We will show that it holds also when $\dim \Deltav_0=k+1$.
Suppose $\dim \Deltav_0=k+1$.
It is obvious that in \eqref{eq:tech2}, the left hand side is contained in the right hand side.
We will prove the opposite inclusion.
Take an element $n \in \lb t \cdot \Deltav_0 + \sum_{i\in I} \Deltav_i \rb$.
By \eqref{eq:ladec}, we have either $n \in \lb \sum_{i\in I} \Deltav_i \rb$ or $n \in \lb t' \cdot \sigma + \sum_{i\in I} \phi_{\Deltav_0, \Deltav_i} \lb \sigma \rb \rb$ for some $k$-dimensional face $\sigma \in \scrP_{\Deltav_0}$ and $t' \in \lb 0, t \rd$.
When $n \in \lb \sum_{i\in I} \Deltav_i \rb$, consider $\widetilde{W}_{\tau}^t$ with $\tau=\Deltav_0$, which is
\begin{align}
\widetilde{W}_{\Deltav_0}^t=t \cdot W_{\Deltav_0} + \sum_{i \in I} \phi_{\Deltav_0, \Deltav_i} \lb \Deltav_0 \rb
=t \cdot \lc 0 \rc + \sum_{i\in I} \Deltav_i
=\sum_{i\in I} \Deltav_i.
\end{align}
Hence, we have $n \in \widetilde{W}_{\Deltav_0}^t \subset \bigcup_{\tau \in \scrP_{\Deltav_0}} \widetilde{W}_{\tau}^t$.
Next suppose $n \in \lb t' \cdot \sigma + \sum_{i\in I} \phi_{\Deltav_0, \Deltav_i} \lb \sigma \rb \rb$.
One can apply the induction hypothesis for the polytopes $\sigma, \lc \phi_{\Deltav_0, \Deltav_i} \lb \sigma \rb \rc_{i \in I}$ again by \pref{lm:face-phi}.
We obtain
\begin{align}
n \in \lb t' \cdot \sigma + \sum_{i\in I} \phi_{\Deltav_0, \Deltav_i} \lb \sigma \rb \rb
=\bigcup_{\tau \prec \sigma} \lc t' \cdot \lb W_{\tau} \cap \sigma \rb + \sum_{i \in I} \phi_{\Deltav_0, \Deltav_i} \lb \tau \rb \rc.
\end{align}
Since we also have
$t' \cdot \lb W_{\tau} \cap \sigma \rb \subset t' \cdot V_{\tau} \subset t \cdot W_{\tau}$, we get
\begin{align}
n \in \bigcup_{\tau \prec \sigma} \lb t \cdot W_{\tau} + \sum_{i \in I} \phi_{\Deltav_0, \Deltav_i} \lb \tau \rb \rb
=\bigcup_{\tau \prec \sigma} \widetilde{W}_{\tau}^t 
\subset \bigcup_{\tau \in \scrP_{\Deltav_0}} \widetilde{W}_{\tau}^t.
\end{align}
We obtained \eqref{eq:tech2}.

Lastly, we prove the first statement in the case where we do not necessary have $\Deltav_0=\Deltav=F$.
For a face $F \in \scrP_{\Deltav}$ such that $F \succ \Deltav_0$, take a sequence of faces $\Deltav_0=F_0 \prec F_1 \prec \cdots \prec F_k=F$ such that $\dim F_{i+1}=\dim F_i+1$.
The claim has been shown for $F_0=\Deltav_0$.
We assume that the claim holds for $F_i$, and show it also for $F_{i+1}$.
We may assume $a_{F_{i+1}}=0$ by translation of the polytope $\Deltav$.
Then for any face $\tau \in \scrP_{\Deltav_0}$, we have
\begin{align}\label{eq:tWF}
t \cdot W_{\tau}(F_{i+1})&=\bigsqcup_{t' \in \ld 0, t \rd} t' \cdot W_{\tau} (F_i) \\
\widetilde{W}_{\tau}^{t} (F_{i+1})&=\bigsqcup_{t' \in \ld 0, t \rd} t' \cdot W_{\tau} (F_i)
+ \sum_{i \in I} \phi_{\Deltav_0, \Deltav_i} \lb \tau_j \rb.
\end{align}
For $t_1, t_2 \in \ld 0, t \rd$, two subsets $\lb t_j \cdot W_{\tau_j} (F_i)
+ \sum_{i \in I} \phi_{\Deltav_0, \Deltav_i} \lb \tau_j \rb \rb$ with $j=1, 2$ do not intersect if $t_1 \neq t_2$.
Hence, we have
\begin{align}\label{eq:WF}
\bigcap_{j=1, 2} \widetilde{W}_{\tau_j}^{t} (F_{i+1})=
\bigcup_{t' \in \ld 0, t \rd} 
\bigcap_{j=1, 2} \lb t' \cdot W_{\tau_j} (F_i)
+ \sum_{i \in I} \phi_{\Deltav_0, \Deltav_i} \lb \tau_j \rb \rb
=\bigcup_{t' \in \ld 0, t \rd} \bigcap_{j=1, 2} \widetilde{W}_{\tau_j}^{t'} (F_{i}).
\end{align}
By hypothesis, this is non-empty if and only if $\bigcap_{j=1, 2} \lb \sum_{i \in I} \phi_{\Deltav_0, \Deltav_i} \lb \tau_j \rb \rb \neq \emptyset$, and if this holds, then \eqref{eq:WF} is equal to
\begin{align}\label{eq:WF2}
\bigcup_{t' \in \ld 0, t \rd} 
\lc t' \cdot \lb \bigcap_{j=1, 2} W_{\tau_j}(F_i) \rb + \bigcap_{j=1, 2} \lb \sum_{i \in I} \phi_{\Deltav_0, \Deltav_i} \lb \tau_j \rb \rb \rc.
\end{align}
Here one has $\bigcap_{j=1, 2} W_{\tau_j}(F_i)=W_{\tau_0}(F_i)$, where $\tau_0 \in \scrP_{\Deltav_0}$ is the minimal face such that $\tau_0 \succ \tau_1, \tau_2$.
By this and \eqref{eq:tWF}, we can see that \eqref{eq:WF2} is equal to
\begin{align}
t \cdot W_{\tau_0}(F_{i+1}) + \bigcap_{j=1, 2} \lb \sum_{i \in I} \phi_{\Deltav_0, \Deltav_i} \lb \tau_j \rb \rb
=
t \cdot \lb \bigcap_{j=1, 2} W_{\tau_j}(F_{i+1}) \rb + \bigcap_{j=1, 2} \lb \sum_{i \in I} \phi_{\Deltav_0, \Deltav_i} \lb \tau_j \rb \rb.
\end{align}
We obtained the claim for $F_{i+1}$.
Thus we can conclude it also for any face $F \succ \Deltav_0$.
\end{proof}

\bibliographystyle{amsalpha}
\bibliography{bibs}

\providecommand{\bysame}{\leavevmode\hbox to3em{\hrulefill}\thinspace}
\providecommand{\MR}{\relax\ifhmode\unskip\space\fi MR }
\providecommand{\MRhref}[2]{%
  \href{http://www.ams.org/mathscinet-getitem?mr=#1}{#2}
}
\providecommand{\href}[2]{#2}
\begin{thebibliography}{CnBM14}

\bibitem[Bat94]{MR1269718}
Victor~V. Batyrev, \emph{Dual polyhedra and mirror symmetry for {C}alabi-{Y}au
  hypersurfaces in toric varieties}, J. Algebraic Geom. \textbf{3} (1994),
  no.~3, 493--535. \MR{1269718}

\bibitem[BB96]{MR1463173}
Victor~V. Batyrev and Lev~A. Borisov, \emph{On {C}alabi-{Y}au complete
  intersections in toric varieties}, Higher-dimensional complex varieties
  ({T}rento, 1994), de Gruyter, Berlin, 1996, pp.~39--65. \MR{1463173}

\bibitem[BB07]{BB07}
Benoit Bertrand and Frederic Bihan, \emph{Euler {C}haracteristic of real
  nondegenerate tropical complete intersections}, arXiv:0710.1222, 2007.

\bibitem[BGS11]{MR2846179}
Jos\'{e}~Ignacio Burgos~Gil and Mart\'{\i}n Sombra, \emph{When do the recession
  cones of a polyhedral complex form a fan?}, Discrete Comput. Geom.
  \textbf{46} (2011), no.~4, 789--798. \MR{2846179}

\bibitem[BN08]{MR2405763}
Victor Batyrev and Benjamin Nill, \emph{Combinatorial aspects of mirror
  symmetry}, Integer points in polyhedra---geometry, number theory,
  representation theory, algebra, optimization, statistics, Contemp. Math.,
  vol. 452, Amer. Math. Soc., Providence, RI, 2008, pp.~35--66. \MR{2405763}

\bibitem[Bre97]{MR1481706}
Glen~E. Bredon, \emph{Sheaf theory}, second ed., Graduate Texts in Mathematics,
  vol. 170, Springer-Verlag, New York, 1997. \MR{1481706}

\bibitem[CGM22]{MR4562566}
Renzo Cavalieri, Andreas Gross, and Hannah Markwig, \emph{Tropical {$\psi$}
  classes}, Geom. Topol. \textbf{26} (2022), no.~8, 3421--3524. \MR{4562566}

\bibitem[CLS11]{MR2810322}
David~A. Cox, John~B. Little, and Henry~K. Schenck, \emph{Toric varieties},
  Graduate Studies in Mathematics, vol. 124, American Mathematical Society,
  Providence, RI, 2011. \MR{2810322}

\bibitem[CnBM14]{MR3228462}
Ricardo Casta\~{n}o Bernard and Diego Matessi, \emph{Conifold transitions via
  affine geometry and mirror symmetry}, Geom. Topol. \textbf{18} (2014), no.~3,
  1769--1863. \MR{3228462}

\bibitem[Cur14]{MR3259939}
Justin~Michael Curry, \emph{Sheaves, cosheaves and applications}, ProQuest LLC,
  Ann Arbor, MI, 2014, Thesis (Ph.D.)--University of Pennsylvania. \MR{3259939}

\bibitem[Ful98]{MR1644323}
William Fulton, \emph{Intersection theory}, second ed., Ergebnisse der
  Mathematik und ihrer Grenzgebiete. 3. Folge. A Series of Modern Surveys in
  Mathematics [Results in Mathematics and Related Areas. 3rd Series. A Series
  of Modern Surveys in Mathematics], vol.~2, Springer-Verlag, Berlin, 1998.
  \MR{1644323}

\bibitem[GH84]{MR760977}
William Goldman and Morris~W. Hirsch, \emph{The radiance obstruction and
  parallel forms on affine manifolds}, Trans. Amer. Math. Soc. \textbf{286}
  (1984), no.~2, 629--649. \MR{760977}

\bibitem[Gro01]{MR1821145}
Mark Gross, \emph{Topological mirror symmetry}, Invent. Math. \textbf{144}
  (2001), no.~1, 75--137. \MR{1821145}

\bibitem[Gro05]{MR2198802}
\bysame, \emph{Toric degenerations and {B}atyrev-{B}orisov duality}, Math. Ann.
  \textbf{333} (2005), no.~3, 645--688. \MR{2198802}

\bibitem[GS06]{MR2213573}
Mark Gross and Bernd Siebert, \emph{Mirror symmetry via logarithmic
  degeneration data. {I}}, J. Differential Geom. \textbf{72} (2006), no.~2,
  169--338. \MR{2213573}

\bibitem[GS10]{MR2669728}
\bysame, \emph{Mirror symmetry via logarithmic degeneration data, {II}}, J.
  Algebraic Geom. \textbf{19} (2010), no.~4, 679--780. \MR{2669728}

\bibitem[GS11]{MR2846484}
\bysame, \emph{From real affine geometry to complex geometry}, Ann. of Math.
  (2) \textbf{174} (2011), no.~3, 1301--1428. \MR{2846484}

\bibitem[GS23]{MR4637248}
Andreas Gross and Farbod Shokrieh, \emph{A sheaf-theoretic approach to tropical
  homology}, J. Algebra \textbf{635} (2023), 577--641. \MR{4637248}

\bibitem[Gub13]{MR3088913}
Walter Gubler, \emph{A guide to tropicalizations}, Algebraic and combinatorial
  aspects of tropical geometry, Contemp. Math., vol. 589, Amer. Math. Soc.,
  Providence, RI, 2013, pp.~125--189. \MR{3088913}

\bibitem[GW00]{MR1863732}
Mark Gross and P.~M.~H. Wilson, \emph{Large complex structure limits of {$K3$}
  surfaces}, J. Differential Geom. \textbf{55} (2000), no.~3, 475--546.
  \MR{1863732}

\bibitem[HZ02]{HZ02}
Christian Haase and Ilia Zharkov, \emph{Integral affine structures on spheres
  and torus fibrations of {C}alabi--{Y}au toric hypersurfaces {I}},
  arXiv:math/0205321, 2002.

\bibitem[HZ05]{MR2187503}
\bysame, \emph{Integral affine structures on spheres: complete intersections},
  Int. Math. Res. Not. (2005), no.~51, 3153--3167. \MR{2187503}

\bibitem[IKMZ19]{MR3961331}
Ilia Itenberg, Ludmil Katzarkov, Grigory Mikhalkin, and Ilia Zharkov,
  \emph{Tropical homology}, Math. Ann. \textbf{374} (2019), no.~1-2, 963--1006.
  \MR{3961331}

\bibitem[JRS18]{MR3894860}
Philipp Jell, Johannes Rau, and Kristin Shaw, \emph{Lefschetz {$(1,1)$}-theorem
  in tropical geometry}, \'{E}pijournal G\'{e}om. Alg\'{e}brique \textbf{2}
  (2018), Art. 11, 27. \MR{3894860}

\bibitem[JSS19]{MR3903579}
Philipp Jell, Kristin Shaw, and Jascha Smacka, \emph{Superforms, tropical
  cohomology, and {P}oincar\'{e} duality}, Adv. Geom. \textbf{19} (2019),
  no.~1, 101--130. \MR{3903579}

\bibitem[Kaj08]{MR2428356}
Takeshi Kajiwara, \emph{Tropical toric geometry}, Toric topology, Contemp.
  Math., vol. 460, Amer. Math. Soc., Providence, RI, 2008, pp.~197--207.
  \MR{2428356 (2010c:14078)}

\bibitem[Kal15]{Kal15}
Nikita Kalinin, \emph{A guide to tropical modifications}, arXiv:1509.03443,
  2015.

\bibitem[KS94]{MR1299726}
Masaki Kashiwara and Pierre Schapira, \emph{Sheaves on manifolds}, Grundlehren
  der Mathematischen Wissenschaften [Fundamental Principles of Mathematical
  Sciences], vol. 292, Springer-Verlag, Berlin, 1994, With a chapter in French
  by Christian Houzel, Corrected reprint of the 1990 original. \MR{1299726}

\bibitem[KS06]{MR2181810}
Maxim Kontsevich and Yan Soibelman, \emph{Affine structures and
  non-{A}rchimedean analytic spaces}, The unity of mathematics, Progr. Math.,
  vol. 244, Birkh\"auser Boston, Boston, MA, 2006, pp.~321--385. \MR{2181810}

\bibitem[Lau13]{MR3060144}
Niels Lauritzen, \emph{Undergraduate convexity}, World Scientific Publishing
  Co. Pte. Ltd., Hackensack, NJ, 2013, From Fourier and Motzkin to Kuhn and
  Tucker. \MR{3060144}

\bibitem[Mik06]{MR2275625}
Grigory Mikhalkin, \emph{Tropical geometry and its applications}, International
  {C}ongress of {M}athematicians. {V}ol. {II}, Eur. Math. Soc., Z\"urich, 2006,
  pp.~827--852. \MR{2275625}

\bibitem[MPS21]{MPS21}
Enrica Mazzon and L\'{e}onard Pille-Schneider, \emph{Toric geometry and
  integral affine structures in non-archimedean mirror symmetry},
  arXiv:2110.04223, 2021.

\bibitem[MR21]{MR21}
Diego Matessi and Helge Ruddat, \emph{Deformations of tropical polarized {$K3$}
  surfaces}, Preprint, 2021.

\bibitem[MS15]{MR3287221}
Diane Maclagan and Bernd Sturmfels, \emph{Introduction to tropical geometry},
  Graduate Studies in Mathematics, vol. 161, American Mathematical Society,
  Providence, RI, 2015. \MR{3287221}

\bibitem[MZ14]{MR3330789}
Grigory Mikhalkin and Ilia Zharkov, \emph{Tropical eigenwave and intermediate
  {J}acobians}, Homological mirror symmetry and tropical geometry, Lect. Notes
  Unione Mat. Ital., vol.~15, Springer, Cham, 2014, pp.~309--349. \MR{3330789}

\bibitem[NXY19]{MR3946280}
Johannes Nicaise, Chenyang Xu, and Tony~Yue Yu, \emph{The non-archimedean {SYZ}
  fibration}, Compos. Math. \textbf{155} (2019), no.~5, 953--972. \MR{3946280}

\bibitem[Oda88]{MR922894}
Tadao Oda, \emph{Convex bodies and algebraic geometry}, Ergebnisse der
  Mathematik und ihrer Grenzgebiete (3) [Results in Mathematics and Related
  Areas (3)], vol.~15, Springer-Verlag, Berlin, 1988, An introduction to the
  theory of toric varieties, Translated from the Japanese. \MR{922894}

\bibitem[OP13]{MR3064984}
Brian Osserman and Sam Payne, \emph{Lifting tropical intersections}, Doc. Math.
  \textbf{18} (2013), 121--175. \MR{3064984}

\bibitem[Pay09]{MR2511632}
Sam Payne, \emph{Analytification is the limit of all tropicalizations}, Math.
  Res. Lett. \textbf{16} (2009), no.~3, 543--556. \MR{2511632 (2010j:14104)}

\bibitem[PS22]{PS22}
L\'{e}onard Pille-Schneider, \emph{Hybrid toric varieties and the
  non-archimedean {SYZ} fibration on {C}alabi-{Y}au hypersurfaces},
  arXiv:2210.05578, 2022.

\bibitem[RS20]{MR4179831}
Helge Ruddat and Bernd Siebert, \emph{Period integrals from wall structures via
  tropical cycles, canonical coordinates in mirror symmetry and analyticity of
  toric degenerations}, Publ. Math. Inst. Hautes \'{E}tudes Sci. \textbf{132}
  (2020), 1--82. \MR{4179831}

\bibitem[Rud10]{MR2681794}
Helge Ruddat, \emph{Log {H}odge groups on a toric {C}alabi-{Y}au degeneration},
  Mirror symmetry and tropical geometry, Contemp. Math., vol. 527, Amer. Math.
  Soc., Providence, RI, 2010, pp.~113--164. \MR{2681794}

\bibitem[Rud21]{MR4347312}
\bysame, \emph{A homology theory for tropical cycles on integral affine
  manifolds and a perfect pairing}, Geom. Topol. \textbf{25} (2021), no.~6,
  3079--3132. \MR{4347312}

\bibitem[RZ]{RZ21}
Helge Ruddat and Ilia Zharkov, \emph{Topological {S}trominger-{Y}au-{Z}aslow
  fibrations}, in preparation.

\bibitem[RZ21]{RZ20}
\bysame, \emph{Compactifying torus fibrations over integral affine manifolds
  with singularities}, 2019-20 MATRIX Annals, MATRIX Book Series 4, 2021.

\bibitem[Sha15]{Sha15}
Kristin Shaw, \emph{Tropical surfaces}, arXiv:1506.07407, 2015.

\bibitem[She85]{MR2634247}
Allen~Dudley Shepard, \emph{A {CELLULAR} {DESCRIPTION} {OF} {THE} {DERIVED}
  {CATEGORY} {OF} {A} {STRATIFIED} {SPACE}}, ProQuest LLC, Ann Arbor, MI, 1985,
  Thesis (Ph.D.)--Brown University. \MR{2634247}

\bibitem[Spe05]{MR2707751}
David~E. Speyer, \emph{Tropical geometry}, ProQuest LLC, Ann Arbor, MI, 2005,
  Thesis (Ph.D.)--University of California, Berkeley. \MR{2707751}

\bibitem[Sym03]{MR2024634}
Margaret Symington, \emph{Four dimensions from two in symplectic topology},
  Topology and geometry of manifolds ({A}thens, {GA}, 2001), Proc. Sympos. Pure
  Math., vol.~71, Amer. Math. Soc., Providence, RI, 2003, pp.~153--208.
  \MR{2024634}

\bibitem[SYZ96]{MR1429831}
Andrew Strominger, Shing-Tung Yau, and Eric Zaslow, \emph{Mirror symmetry is
  {$T$}-duality}, Nuclear Phys. B \textbf{479} (1996), no.~1-2, 243--259.
  \MR{1429831}

\bibitem[Wei07]{Wei07}
Christophe Weibel, \emph{Minkowski {S}ums of {P}olytopes: {C}ombinatorics and
  {C}omputation}, 2007, Ph.D. thesis, \'{E}cole Polytechnique F\'{e}d\'{e}rale
  de Lausanne.

\bibitem[Yam24]{Yam24}
Yuto Yamamoto, \emph{Non-archimedean {SYZ} fibrations via tropical
  contractions}, arXiv:, 2024.

\bibitem[Zie95]{MR1311028}
G\"{u}nter~M. Ziegler, \emph{Lectures on polytopes}, Graduate Texts in
  Mathematics, vol. 152, Springer-Verlag, New York, 1995. \MR{1311028}

\end{thebibliography}

\end{document}